\newcommand{\qq}{{\mathfrak{q}}} % the projections in Morita equivalences
\newcommand{\pp}{{\mathfrak{p}}} % the projections in a principal bundle
\definecolor{darkgreen}{rgb}{0.0, 0.5, 0.0}
\newcommand{\n}{\nabla}
\renewcommand{\d}{\mathrm d}               % differential of functions, forms, etc.
\newcommand{\Lie}{\boldsymbol{\pounds}}    % Lie derivative
\newcommand{\X}{\ensuremath{\mathfrak{X}}} % Vector fields
\newcommand{\red}{{\mathrm{red}}} % subscript to denote reduced geometric structure
\renewcommand{\top}{{\mathrm{top}}} % superscript to denote top degree classes
\newcommand{\bas}{{\mathrm{bas}}} % subscript to denote basic forms
\newcommand{\can}{{\mathrm{can}}} % subscript to denote canonical simplectic form
\newcommand{\lin}{{\mathrm{lin}}} % subscript to denote linear model
\newcommand{\mon}{{\mathrm{mon}}} % subscript to denote monodromy
\newcommand{\Per}{{\mathrm{Per}}} % group of periods
\newcommand{\Lagr}{{\mathrm{Lagr}}} % subscript to denote Lagrangian
\newcommand{\cl}{{\mathrm{cl}}} % subscript to denote closed forms
\newcommand{\hol}{{\mathrm{hol}}} % subscript to denote holonomy
\newcommand{\opp}{{\mathrm{opp}}} % subscript to denote opposite
\newcommand{\reg}{{\mathrm{reg}}} % subscript to regular
\newcommand{\princ}{{\mathrm{princ}}} % subscript to principal
\renewcommand{\ss}{{\mathrm{ss}}} % subscript to semsimple
   \renewcommand{\a}{\alpha}
   \renewcommand{\b}{\beta}
  \newcommand{\w}{\omega}
\newcommand{\N}{\mathbb{N}}
\newcommand{\R}{\mathbb{R}}
\newcommand{\Q}{\mathbb{Q}}
\renewcommand{\S}{\mathbb{S}}
\newcommand{\T}{\mathbb{T}}
\newcommand{\Z}{\mathbb{Z}}
\newcommand{\Oga}{\Omega_{X}}
\newcommand{\DD}{\mathrm{c_2}}
\newcommand{\dd}{\mathrm{c_2}}
  \newcommand{\ucT}{\underline{\mathcal{T}}}
     \newcommand{\cA}{\mathcal{A}}
     \newcommand{\cB}{\mathcal{B}}
     \newcommand{\cC}{\mathcal{C}}
   \newcommand{\cD}{\mathcal{D}}
      \newcommand{\cE}{\mathcal{E}}
   \newcommand{\cF}{\mathcal{F}}
   \newcommand{\cG}{\mathcal{G}}
        \newcommand{\cK}{\mathcal{K}}
    \newcommand{\cH}{\mathcal{H}}
\newcommand{\cM}{\mathcal{M}}
  \newcommand{\cN}{\mathcal{N}}
  \newcommand{\cO}{\mathcal{O}}
  \newcommand{\cS}{\mathcal{S}}
    \newcommand{\cT}{\mathcal{T}}
  \newcommand{\cU}{\mathcal{U}}
    \newcommand{\eE}{\mathscr{E}}
\newcommand{\G}{\mathcal{G}}            % Lie groupoid
\renewcommand{\O}{\mathcal{O}}             % Orbit of a Lie groupoid
\DeclareMathOperator{\Mon}{Mon}         % Monodromy groupoid
\newcommand{\s}{\mathbf{s}}             % source map
\renewcommand{\H}{\mathcal{H}}          % Lie subgroupoid
\renewcommand{\gg}{\mathfrak{g}}        % Lie algebra
\newcommand{\hh}{\mathfrak{h}}          % Lie subalgebra
\renewcommand{\tt}{\mathfrak{t}}        % Lie subalgebra of maximal torus
  \renewcommand{\aa}{\mathfrak{a}}          % interior of the Weyl alcove
\newcommand{\cc}{\mathfrak{c}}          % interior of the Weyl chamber
\newcommand{\zz}{\mathfrak{z}}          % Center of theLie algebra 
\newcommand{\tto}{\rightrightarrows}    % Arrows of a groupoid
\DeclareMathOperator{\Ker}{Ker}           % kernel
\DeclareMathOperator{\im}{Im}           % kernel
\DeclareMathOperator{\Ad}{Ad}           % Adjoint
\DeclareMathOperator{\Aff}{Aff}         % Affine transformations
\DeclareMathOperator{\GL}{GL}           % General Linear...
\DeclareMathOperator{\Hol}{Hol}         % Holonomy
\DeclareMathOperator{\dev}{dev}           % developing map
\DeclareMathOperator{\Span}{Span}    %span
\DeclareMathOperator{\Cospan}{Cospan}    %span
\DeclareMathOperator{\pr}{pr}      % projection
\DeclareMathOperator{\Tor}{Tor}         % for torsors
\DeclareMathOperator{\Bun}{Bun}         % for the space of bundles
\DeclareMathOperator{\Gerbes}{Gerb}         % for the space of gerbes
\newcommand{\Emb}{\mathrm{\mathbb{E}mb}} % the Embedding category
\DeclareMathOperator{\vol}{vol}           % volume
\DeclareMathOperator{\Haar}{Haar}    % Haar density
\newcommand{\action}{\curvearrowright} % left action
\DeclareMathOperator{\Dh}{DH}         % Duistermaat-Heckman
\renewcommand{\DH}{\Dh}
\renewcommand{\Gauge}[2]{#2\stackMath\stackunder[1pt]{\star}{\scriptscriptstyle #1}#2} 
\newcommand{\varnu}{\mathrm{var}_{\nu}} % for the variation (when defining symplectic area) 
\newcommand{\helpI}{\mathrm{var}}
\newcommand{\Iaff}{\helpI_{\varpi}^{\Aff}} 
\newcommand{\Ilin}{\helpI_{\varpi}^{\lin}} % \newcommand{\Ilin}{{\mathrm{Lin}}_{\varpi}}
\newcommand{\helpVar}{\mathrm{var}} 
\newcommand{\Var}{\helpVar_{\varpi}} 
\newcommand{\Varb}{\helpVar_{0}} % this is for when we fix a base point                        
\newcommand{\helpVspace}{\mathscr{Var}}
\newcommand{\Vspace}{\helpVspace_{\varpi}} % \newcommand{\VV}{\mathcal{V}} 
\newcommand{\VspaceZ}{\helpVspace_{\varpi, \Z}} % THIS IS FOR THE INTEGRAL LATTICE INSIDE $\Vspace$
\newcommand{\Vspaceb}{\helpVspace_{\varpi, b}} % THIS IS FOR THE INTEGRAL FIBER OF $\Vspace$ AT $b$
\newcommand{\HH}{\mathcal{H}} 
\newcommand{\cBG}{\mathcal{B}} 
\newcommand{\cHE}{\mathcal{H}^{\cBG}}
\DeclareMathOperator{\Graph}{Graph}    %span
\newcommand{\HolX}{\Hol_X(M, \pi)}          
\newcommand{\HolXD}{\Hol_X(M, L)}
\numberwithin{equation}{section}
\newtheorem{theorem}{Theorem}[subsection]
\newtheorem{lemma}[theorem]{Lemma}
\newtheorem{proposition}[theorem]{Proposition}
\newtheorem{corollary}[theorem]{Corollary}
\newtheorem{conjecture}[theorem]{Conjecture}
\theoremstyle{definition}
\newtheorem{definition}[theorem]{Definition}
\newtheorem{example}[theorem]{Example}
\newtheorem{remark}[theorem]{Remark}
\begin{document}
%%%%%%%%%%%%%%%%%%%%%%%%%%%%%%%%%%%%%%%%%%
\title[Regular PMCTs]{Regular Poisson manifolds of compact types}

% author one information
\author{Marius Crainic}
\address{Depart. of Math., Utrecht University, 3508 TA Utrecht,
The Netherlands}
\email{m.crainic@uu.nl}

% author two information
\author{Rui Loja Fernandes}
\address{Department of Mathematics, University of Illinois at Urbana-Champaign, 1409 W. Green Street, Urbana, IL 61801 USA}
\email{ruiloja@illinois.edu}

% author three information
\author{David Mart\'inez Torres}
\address{Departamento de Matemtica, PUC-Rio, R. Mq. S. Vicente 225, Rio de Janeiro 22451-900, Brazil}
\email{dfmtorres@gmail.com}

\thanks{MC and DMT were partially supported by the NWO Vici Grant no. 639.033.312.
RLF was partially supported by NSF grant DMS-1710884, a Simons Fellowship in Mathematics and FCT/Portugal}

\begin{abstract}
This is the second paper of a series dedicated to the study of Poisson structures of compact types (PMCTs). In this paper, we focus on regular PMCTs, exhibiting a rich transverse geometry.  We show that their leaf spaces are integral affine orbifolds. We prove that the cohomology class of the leafwise symplectic form varies linearly and that there is a distinguished polynomial function describing the leafwise sympletic volume. The leaf space of a PMCT carries a natural Duistermaat-Heckman measure and a Weyl type integration formula holds. We introduce the notion of a symplectic gerbe, and we show that they obstruct realizing PMCTs as the base of a symplectic complete isotropic fibration (a.k.a.~a non-commutative integrable system).
\end{abstract}

\subjclass[2010]{53D17 (58H05)}

\keywords{Poisson manifold, symplectic groupoid, integral affine structure, symplectic gerbe}

\maketitle

%{\bf French title:} Vari\'et\'es de Poisson r\'eguli\`eres de types compacts.
%
%\medskip
%
%{\bf French abstract:}  Nous consacrons une suite d'articles aux vari\'et\'es de Poisson de types compacts (nous emploierons simplement l'acronyme PMCTs). Ce travail, qui est le second de cette suite, se concentre sur les PMCTs r\'eguli\`eres, et explore leur riche g\'eom\'etrie transverse. Nous montrons que l'espace de leurs feuilles sont des orbi-vari\'et\'es affines enti\`eres. Nous \'etablissons une d\'ependance lin\'eaire de la classe de cohomologie de la structure symplectique dont h\'eritent les feuilles et exhibons un polyn\^ome qui d\'ecrit le volume symplectique des feuilles. Nous \'equipons l'espace des feuilles d'un PMCT d'une mesure de Duistermaat-Heckman naturelle et donnons une formule d'int\'egration de type Weyl.
%Nous introduisons enfin la notion de gerbe symplectique et montrons que celles-ci sont l'obstruction \`a la construction de la PMCT comme la base d'une fibration symplectique compl\`ete \`a fibres isotropes (autrement dit, un syst\`eme int\'egrable non-commutatif).
%
%\medskip
%
%{\bf French keywords:} vari\'et\'e de Poisson, groupo\"{\i}de symplectique, structure affine enti\`ere, gerbe symplectique.
%
%\medskip

\setcounter{tocdepth}{1}
\tableofcontents

%%%%%%%%%%%%%%%%%%%%%%%%
%%%%%%%%%%%%%%%%%%%%%%%%
%%%%%%%%%%%%%%%%%%%%%%%%
%%%%%%%%%%%%%%%%%%%%%%%%
%%%%%%%%%%%%%%%%%%%%%%%%
%%%%%%%%%%%%%%%%%%%%%%%%
\section{Introduction}
%%%%%%%%%%%%%%%%%%%%%%%%
%%%%%%%%%%%%%%%%%%%%%%%%
%%%%%%%%%%%%%%%%%%%%%%%%
%%%%%%%%%%%%%%%%%%%%%%%%
%%%%%%%%%%%%%%%%%%%%%%%%
%%%%%%%%%%%%%%%%%%%%%%%%

This is the second manuscript of a series of works devoted to the study of \emph{Poisson structures of compact types} (PMCTs). 
These are the analogues in Poisson Geometry of compact Lie groups in Lie theory. In the first paper of this series \cite{CFMa} 
we have discussed general properties, described several examples, and outlined our general plan. In this paper, which is self-contained, we focus on regular PMCTs
and we discover a very rich transverse geometry, where several structures, both classical and new, interact with each other in a non-trivial way.
These include orbifold structures, integral affine structures, symplectic gerbes, etc. Moreover, we find that celebrated results, like the Duistermaat-Heckman 
Theorem on the linear variation of the symplectic class in the cohomology of reduced spaces, the polynomial behavior of the Duistermaat-Heckman measure, the Atiyah-Guillemin-Sternberg Convexity Theorem, or the Weyl Integration Formula, fit perfectly into the world of PMCTs, arising as particular statements of general results concerning PMCTs.

% What is a PMCT?
Given a Poisson manifold $(M, \pi)$ we will look at s-connected integrations $(\cG, \Omega)$,
which are symplectic Lie groupoids of compact type. At the level of Lie groupoids, there are several compact types $\cC$
characterized by possible conditions on $\cG$:
\begin{equation}
\label{types}
\mathcal{C}\in \{ \textrm{proper,\ s-proper,\ compact}\},
\end{equation}
that is,  Hausdorff Lie groupoids with proper anchor map, proper source map, and compact manifold of arrows, respectively.
For example, when $\cG= G\times M$ comes from a Lie group acting on a manifold $M$, the three conditions correspond to the properness of the action, the compactness of $G$, and the compactness of both $G$ and $M$, respectively. Therefore, one says that  the Poisson manifold $(M, \pi)$ is of:
\begin{itemize}
\item \emph{$\mathcal{C}$-type} if it has an s-connected integration $(\cG, \Omega)$ with property $\cC$;
\item \emph{strong $\cC$-type} if its canonical integration $\Sigma(M, \pi)$ has property $\cC$.
\end{itemize}

% What is the class of PMCTs that we discuss in this paper?
A Poisson manifold $(M, \pi)$ comes with a partition into symplectic leaves, generalizing the partition by coadjoint orbits from Lie theory.
In this paper, we consider PMCTs where the dimension of the leaves is constant, leaving the non-regular case to the next paper 
in the series \cite{CFMc}. This gives rise to a regular foliation $\cF_{\pi}$ on $M$, so, in some sense, we are looking at symplectic 
foliations from the perspective of Poisson Geometry. 

% First result: the rich geometries found in the leaf space
For a general regular Poisson manifold, the leaf space
\[ B= M/\cF_{\pi}\]
is very pathological. However, for us, the first immediate consequence of any of the compactness conditions is that $B$ is Hausdorff. Moreover, we will see that it comes with a very rich geometry, illustrated in the following theorem, which collects several results spread throughout the paper:

\begin{theorem}\label{thm:main1}
Given a regular Poisson manifold $(M, \pi)$ of proper type and an s-connected, proper symplectic integration $(\cG, \Omega)$:
\begin{enumerate}[(a)]
\item The space $B$ of symplectic leaves comes with an {\rm orbifold structure} $\cBG= \cBG(\cG)$; 
%which is smooth when the leaves are 1-connected;
\item There is an induced {\rm integral affine structure} $\Lambda$ on $\cBG$;
\item The classical effective orbifold underlying $\cBG$ is good;
\item There is a {\rm symplectic $\cT$-gerbe} over $\cBG$, where $\cT$ is the symplectic torus bundle induced by $\Lambda$. This gerbe is classified by the {\rm Lagrangian Dixmier-Douady class}:
\[ \DD(\cG, \Omega)\in H^2(\cBG, \cT_{\Lagr}).\]
\item The class $\DD(\cG,\Omega)$ vanishes if and only if $(M, \pi)$ admits a {\rm proper isotropic realization} $q: (X, \Oga)\to (M, \pi)$ for which $\cG\cong \cBG_X(M, \pi)$, a natural symplectic integration constructed from $X$ and the orbifold structure $\cBG$.
\end{enumerate}
\end{theorem}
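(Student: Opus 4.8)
The plan is to realise the symplectic $\cT$-gerbe of part (d) as the gerbe whose local objects are the \emph{local} proper isotropic realizations of $(M,\pi)$ inducing $\cG$, and then to read off (e) from the classification of gerbes banded by an abelian sheaf. The two ingredients that make this a gerbe are: first, that over a sufficiently fine orbifold atlas $(M,\pi)$ admits local proper isotropic realizations whose associated symplectic integration is the restriction of $\cG$ — the local action–angle normal form underlying parts (a)--(d); and second, that any two such local realizations over the same chart are related by an isomorphism of isotropic realizations which is unique up to post-composition with fibrewise translation by a Lagrangian section of $\cT$. Granting this, the automorphism sheaf of a local object is $\cT_{\Lagr}$, the gerbe is the one of part (d), and its band class is $\DD(\cG,\Omega)$.

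\emph{($\Leftarrow$).} If $q\colon(X,\Oga)\to(M,\pi)$ is a proper isotropic realization with $\cG\cong\cBG_X(M,\pi)$, then the restrictions of $X$ over the charts of a good orbifold atlas of $\cBG$ form a compatible family of local objects, i.e.\ a global object of this gerbe, so its band class $\DD(\cG,\Omega)$ vanishes. Equivalently, the \v Cech $2$-cocycle comparing the local normal-form models with the restrictions of $X$ is a coboundary. Note that the hypothesis $\cG\cong\cBG_X(M,\pi)$ is exactly what makes $X$ an object of the gerbe attached to $\cG$ and not of some other one.

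\emph{($\Rightarrow$).} Suppose $\DD(\cG,\Omega)=0$. Using part (c), fix a good orbifold atlas $\{(U_i,G_i)\}$ of $\cBG$ and, over each $U_i$, a $G_i$-equivariant local proper isotropic realization $X_i$ with $\cBG_{X_i}(M,\pi)\cong\cG|_{U_i}$, built from a local normal-form model attached to the integral affine structure $\Lambda$ and the leafwise symplectic form. Choose isomorphisms $\phi_{ij}$ of isotropic realizations over $U_{ij}$; on triple overlaps $\phi_{ij}\phi_{jk}\phi_{ki}$ is fibrewise translation by a Lagrangian section $c_{ijk}$, and $(c_{ijk})$ is a $\cT_{\Lagr}$-valued $2$-cocycle representing $\DD(\cG,\Omega)$. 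Vanishing of this class gives $c_{ijk}=(\delta b)_{ijk}$ for a $1$-cochain $b_{ij}\in\cT_{\Lagr}(U_{ij})$; modifying each $\phi_{ij}$ by the translation by $b_{ij}$ — which is symplectic precisely because $b_{ij}$ is Lagrangian — produces gluing data satisfying the cocycle identity and compatible both with the local symplectic forms and with the identifications $\cBG_{X_i}(M,\pi)\cong\cG|_{U_i}$. Gluing the $X_i$ equivariantly with respect to the $G_i$ yields a manifold $X$ with a surjective submersion $q\colon X\to M$, a symplectic form $\Oga$, and an isomorphism $\cBG_X(M,\pi)\cong\cG$; since the fibres of $q$ are compact tori (the fibres of $\cT$ pulled back to $M$), $q$ is complete and proper, so $q\colon(X,\Oga)\to(M,\pi)$ is a proper isotropic realization of the required type.

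The hard part is the converse, and within it two points demand care: carrying out the gluing $G_i$-equivariantly and checking that the assembled form $\Oga$ is globally well defined and non-degenerate — this is exactly where goodness of the orbifold (part (c)) and the Lagrangian (hence symplectic) nature of the corrections $b_{ij}$ are used — and tracking the identifications $\cBG_{X_i}(M,\pi)\cong\cG|_{U_i}$ through the modification of the gluing maps by the $b_{ij}$, so that one recovers $\cG$ itself rather than merely some proper integration of $(M,\pi)$. All of this is the regular-Poisson, groupoid-refined version of the Dazord--Delzant--Duistermaat classification of complete isotropic realizations, with $\DD(\cG,\Omega)$ playing the role of the obstruction to the existence of a global one.
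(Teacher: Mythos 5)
Your proposal addresses only parts (d) and (e); parts (a)--(c) are invoked but never proved, and they are not formalities. Part (a) requires showing that the bundle of tori $\cT(\cG)$ is a \emph{closed} subgroupoid of $\cG$ so that the quotient $\cBG(\cG)$ is a (Hausdorff, proper) foliation groupoid; part (b) requires proving that the lattice $\Lambda_{\cG}=\Ker(\exp)\subset\nu^*(\cF_\pi)$ is a smooth subbundle which is Lagrangian in $(T^*M,\omega_{\can})$ (the computation $i_{X_\alpha}\Omega=t^*\alpha$ and $0=(\phi^1_{X_\alpha})^*\Omega-\Omega=t^*\d\alpha$); and part (c) is a Molino-type structure theorem: one must pass to the linear holonomy cover $M^{\lin}$, show its induced foliation is simple with smooth integral affine leaf space $B^{\lin}$, and exhibit $B$ as $B^{\lin}/\Gamma^{\lin}$. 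None of this appears in your argument, so as a proof of the stated theorem it is incomplete.

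For (d)--(e) your route is a translation of the paper's into the ``stack of local trivializations'' picture: the paper defines the gerbe as a Morita class of central extensions $1\to\cT_M\to(\cG,\Omega)\to\cE\to 1$ and extracts the cocycle $c_{ijk}=\widetilde g_{ij}\widetilde g_{jk}\widetilde g_{ki}$ from local \emph{Lagrangian} lifts of the transition data, which is exactly your comparison cocycle once the local objects are taken to be the restrictions $t\colon t^{-1}(s_i(V_i))\to p^{-1}(V_i)$ of $\cG$ itself to local transversals. The two load-bearing local facts are, however, asserted rather than proved. Local existence of a proper isotropic realization over each chart with associated integration $\cG|_{U_i}$ does \emph{not} follow from a ``local action--angle normal form'': the Dazord--Delzant obstruction lives in $H^2(p^{-1}(U_i),\underline{\cT}_{\cl})$, which need not vanish for contractible $U_i$ since the leaves carry topology; the correct (and easy) source of local objects is the restriction of $\cG$ to a transversal, which you never mention. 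Local uniqueness up to translation by $\ucT_{\Lagr}$ is the local Duistermaat--Dazord--Delzant classification (vanishing of $H^1(U_i,\ucT_{\Lagr})\cong H^2(U_i,\cO_\Lambda)$ on contractible charts) and also needs an argument. Finally, the identification $\cG\cong\cBG_X(M,\pi)$ after gluing -- which you correctly single out as the hard point -- is precisely the gauge-groupoid reconstruction $\cG\cong\Gauge{\cT}{X}$ from the Morita bibundle $X\times_M\cBG$, and you leave it unresolved; note also that goodness of the orbifold is not actually needed here (the paper works with the embedding category of an arbitrary proper \'etale atlas), so your appeal to (c) in the equivariant gluing is a detour rather than the crux.
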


% Remark about smooth vs orbifold
The presence of an orbifold structure on the leaf space which, in general, is non-effective,
gives rise to several technical difficulties throughout the discussion. When the symplectic leaves are 1-connected,
then $B$ is just a smooth manifold, and no further complications arise from orbifolds. In this case, all the other main features of PMCTs are already present, and it includes interesting examples, such as the regular coadjoint orbits or the principal conjugacy classes of a compact Lie group. For that reason, in the general discussion we will often consider this case first.

% Second result: linear variation of cohomology
The different geometric structures present on the leaf space of a PMCT, mentioned in the previous theorem, interact nicely with the leafwise symplectic geometry. One illustration of this interaction is the \emph{linear variation of symplectic forms in cohomology}, generalizing the classical Duistermaat-Heckman Theorem. For simplicity, we concentrate on the smooth case, where the leaves are 1-connected. Then to each $b\in B$ corresponds a symplectic leaf $(S_b, \omega_b)$, and the cohomologies  $H^2(S_b)$ yield a bundle $\cH\to B$. The cohomology class of the leafwise symplectic form defines a section of this bundle:
\[ B\ni b \mapsto [\omega_b]\in \cH_b= H^2(S_b).\]

In the s-proper case, the leaves are compact and $\cH$ is a smooth flat vector bundle over $B$. The flat connection is the so called \emph{Gauss-Manin connection} and arises from the underlying integral cohomology. Using parallel transport, one can compare classes $[\omega_b]$ at distinct points $b\in B$, once a path has been fixed. On the other hand, the integral affine structure on $B$ of the previous theorem determines a developing map, defined on the universal cover of $B$:
\[ \dev: \widetilde{B} \to \R^q\quad (q=\dim B).\]
Denoting the Chern classes of the principal torus bundle $t:s^{-1}(x_0)\to S_{b_0}$, where $s$ and $t$ are the source/fiber of the s-proper integration, by
\[ c_1, \ldots, c_q\in H^2(S_{b_0}),\]
the linear variation theorem can be stated as follows:

\begin{theorem}\label{thm:main2} If $(M, \pi)$ is a regular, s-proper Poisson manifold, with 1-connected symplectic leaves, then for any path $\gamma$ in $B$ starting at $b_0$ one has
\[ \gamma^*([\w_{\gamma(1)}])= [\omega_{b_0}]+ \dev^{1}(\gamma)c_1+ \ldots + \dev^{q}(\gamma)c_q.\]
Similar formulas hold for a general Poisson manifold of s-proper type.
\end{theorem}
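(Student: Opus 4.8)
The plan is to establish the identity first for short paths contained in a single affine chart around $b_0$, where it becomes the Poisson-geometric incarnation of the classical Duistermaat--Heckman argument --- a local coupling normal form followed by Chern--Weil --- and then to globalize by homotopy invariance and concatenation. For the reduction, note that both sides of the asserted formula depend only on the homotopy class of $\gamma$ rel endpoints: the left-hand side because the Gauss--Manin connection is flat, and $\dev^i(\gamma)$ because by definition it is the $i$-th coordinate of $\dev(\widetilde\gamma(1))-\dev(\widetilde\gamma(0))$ for a lift $\widetilde\gamma$ to $\widetilde B$, so it sees only the endpoints of the lift. Both sides are moreover additive under concatenation of paths: on the right, one uses that along a path the affine coordinates and the integral Chern classes $c_1,\dots,c_q$ of $t\colon s^{-1}(x)\to S$ ($S$ the leaf through $x$) transform contragrediently under the integral linear holonomy of $\Lambda$, so that $\dev^1(\gamma)c_1+\dots+\dev^q(\gamma)c_q$ is the value of a well-defined $H^2(S_{b_0})$-valued pairing; one also uses that these integral classes are Gauss--Manin parallel, which holds because the torus bundles $s^{-1}(x)\to S$ are the leafwise restrictions of one bundle over $M$. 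Subdividing $\gamma$, we may therefore assume that $\gamma$ lies in a small neighborhood $V$ of $b_0$.

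Next I would set up the local model over $V$. Restrict $(\cG,\Omega)$ to a tubular neighborhood of the $\cG$-orbit $S_{b_0}$ in $M$. The isotropy $\cG_{x_0}$ is a compact abelian Lie group with Lie algebra the abelian conormal space $\nu^*_{x_0}$, hence a torus $\T^q$ with $q=\dim B$; fix compatibly a basis of its period lattice $\Lambda_{b_0}=\ker(\exp)\subset\nu^*_{x_0}$ --- this defines $c_1,\dots,c_q$ --- and the resulting integral affine chart $V\cong$ ball around $0$ in $\R^q$. By the local normal form for regular Poisson manifolds of s-proper type near a leaf (equivalently, the linearization theorem for proper symplectic groupoids near an orbit), a neighborhood of $S_{b_0}$ is Poisson-diffeomorphic to the \emph{coupling model} built from the principal $\T^q$-bundle $P:=s^{-1}(x_0)\xrightarrow{\,t\,}S_{b_0}$, a principal connection $\theta$ on $P$ with curvature $F=(F_1,\dots,F_q)\in\Omega^2(S_{b_0};\R^q)$, and the leafwise form $\omega_{b_0}$. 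Since the transverse Poisson structure vanishes in the regular case, the normal representation of $\cG_{x_0}$ on $\nu_{x_0}\cong\R^q$ is trivial and the normal bundle $\nu_{S_{b_0}}$ is flat, so the model is $S_{b_0}\times\R^q$ as a manifold, with the leaf $S_{b_0}\times\{\mu\}$ carrying the coupling symplectic form determined by $\omega_{b_0}$ and $F$, whose class is
\[ [\omega_\mu]\;=\;[\omega_{b_0}]+\mu^1[F_1]+\dots+\mu^q[F_q]\qquad\text{in }H^2(S_{b_0}),\quad \mu\text{ near }0. \]
Finally, one reads off from the model that the induced integral affine structure on the leaf space is the standard one on $\R^q$, so $\dev$ restricts to the identity on $V$, and that the Gauss--Manin connection on $\cH|_V$ is the product connection in the trivialization $\mu\mapsto H^2(S_{b_0}\times\{\mu\})\cong H^2(S_{b_0})$.

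Chern--Weil theory then identifies $[F_i]$ with $c_i$ (fixing the normalization of $\theta$ and the orientation of the model so that the curvature represents the integral Chern classes with the correct sign). Substituting, and using the product Gauss--Manin trivialization to compute $\gamma^*$, the displayed identity becomes exactly the asserted formula for a path $\gamma$ from $b_0$ into $V$; together with the reduction step this proves the statement when the leaves are 1-connected. For a Poisson manifold merely of s-proper type, Theorem~\ref{thm:main1} endows the leaf space with an integral affine \emph{orbifold} structure, and the same argument applies verbatim on the source of an orbifold chart --- equivalently, $\cG$-equivariantly on $M$ --- since 1-connectedness of the leaves was used only to know that $B$ is a manifold; one interprets $\dev$ as the developing map of this orbifold and $\cH$ as the corresponding flat $V$-bundle.

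The main obstacle is the local model of the second step: beyond producing the coupling normal form for the Poisson bivector, one must check that the \emph{same} model simultaneously linearizes the integral affine structure on $B$ and the flat Gauss--Manin structure on $\cH$, so that in the model the developing map is the standard affine coordinate and the Gauss--Manin connection is the product connection. Granting this compatibility, what remains is the Chern--Weil computation above together with the homotopy-invariance and concatenation bookkeeping of the first step, both of which are routine.
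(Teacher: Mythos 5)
Your strategy is coherent and it is genuinely different from the paper's: you transplant the classical Duistermaat--Heckman argument (coupling normal form near a leaf, Chern--Weil, then globalization by the cocycle property of both sides), whereas the paper never invokes a normal form. The paper's route is: (i) a direct Lie-derivative computation (Proposition \ref{lin-var-as-var-sympl-areas}) showing that the Gauss--Manin derivative $\nabla_v\varpi$ equals the curvature class $[\Omega_\tau(v)]$ of a splitting of \eqref{pre-transitive-seq}, which needs only a complement $D\subset TM$ and an extension $\widetilde{\omega}$ of the leafwise form; (ii) integrality and $\Pi_1(B)$-equivariance of $\Ilin$ via the inclusion $\cN_\mon\subset\Lambda_\cG$ (Proposition \ref{I-lin-iso}); (iii) the exact affine formula by recognizing $\Var$ and $\Iaff\circ\dev$ as two groupoid $1$-cocycles on $\Pi_1(B)$ integrating the same Lie algebroid cocycle, hence equal (Theorem \ref{thm-affine-iso}); and (iv) the identification $\Ilin(\lambda_i)=c_i$ from the Atiyah sequence of $t:s^{-1}(x)\to S$ (Corollary \ref{cor:DH2}). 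Your reduction-and-concatenation step is a hands-on version of (iii) and is fine, provided one keeps in mind that the individual $c_i$ are \emph{not} globally parallel --- they transform by the integral linear holonomy as in \eqref{for-constr-PMCTS} --- so that only the pairing $\sum_i\dev^i(\gamma)c_i$ is well defined; you essentially say this, although the phrase ``these integral classes are Gauss--Manin parallel'' is only correct locally, after a flat choice of lattice frame.

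The compatibility you ``grant'' at the end is, however, not a routine afterthought: the assertion that in the coupling model the induced lattice $\Lambda_\cG$ is the standard $\Z^q$ (so that $\dev=\mathrm{id}$ on the chart and the $\mu^i$ are integral affine coordinates), combined with $[\omega_\mu]=[\omega_{b_0}]+\sum_i\mu^i[F_i]$, \emph{is} the local form of the theorem. As written, your local step therefore comes close to assuming what is to be proved on each chart; to close it one must actually construct the model symplectic groupoid (e.g.\ $(P\times P\times\gg_{x_0}^*)/\T^q$ with the form inherited from $T^*P$) and compute the kernel of its fiberwise exponential, and then invoke the symplectic linearization theorem for proper groupoids to transport this to $(M,\pi)$ --- a substantially heavier input than the paper uses. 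Note also that the first-order statement $\nabla_v\varpi=\sum_i v^i c_i$ at every point, together with equivariance, already integrates to the exact formula along any path, so the full normal form is not needed; this is precisely how the paper avoids the issue. Finally, the claim that the general s-proper case follows ``verbatim on the source of an orbifold chart'' hides the replacement of $\cH$ by the representation $\cH^{\cB}$ with fibers $H^2(\cB(x,-))$ and of $c_i$ by the Chern classes of $\cG(x,-)\to\cB(x,-)$, carried out in Section \ref{sec:lin-var-2}; since the symplectic leaves need not be the $s$-fibers of the orbifold atlas, this is more than a change of chart.
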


% Third result: Duistermaat-Heckman measure
One can also look at volume forms instead. Assume as before that we have an s-proper integration $(\cG, \Omega)$ of $(M, \pi)$. Pushing forward the Liouville measure associated to $\Omega$, one obtains the \emph{Duistermaat-Heckman measure} on the leaf space:
\[ \mu_{DH}\in \cM(B).\]
On the other hand, the integral affine structure on $B$ gives rise to another measure, $\mu_{\Aff}\in \cM(B)$. The classical result on the polynomial behavior of the Duistermaat-Heckman measure is a special case of the following general result for PMCTs:

\begin{theorem}\label{thm:main3}
If $(M, \pi)$ is a regular Poisson manifold, with s-connected, s-proper integration $(\cG, \Omega)$, then:
\[
\mu_{\DH}^\Omega= (\iota \cdot \vol)^2 \mu_{\Aff},
\]
where $\vol:B\to\R$ is the leafwise symplectic volume function and $\iota:B\to \N$ counts the number of connected components of the isotropy group $\cG_x$ ($x\in S_b$). Moreover, $(\iota \cdot \vol)^2$ is a polynomial relative to the orbifold integral affine structure on $B$.
\end{theorem}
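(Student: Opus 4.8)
The identity is local on $B$, so I would fix $b_0\in B$, choose an orbifold chart $\widetilde U/\Gamma$ around $b_0$ with $\widetilde U\subset\R^q$ equipped with integral affine coordinates $b=(b^1,\dots,b^q)$ adapted to $\Lambda$ (Theorem~\ref{thm:main1}(b)), and prove both assertions over this chart; note that an s-proper integration is in particular proper, so all of the structure of Theorem~\ref{thm:main1} is available. The geometric heart of the matter is the structure of the fibre of $\cG\to B$ over $b$, namely $\cG|_{S_b}=s^{-1}(S_b)$. This submanifold is \emph{coisotropic} in $(\cG,\Omega)$ — a symplectic leaf is coisotropic in $(M,\pi)$, its conormal lying in $\ker\pi^\sharp$, and $s$ is a Poisson submersion — and its characteristic foliation is the (connected) orbit foliation of the identity component $(\cG_x)^0\cong\T^q$ of the isotropy acting on $\cG$ by right translations, equivalently of the symplectic torus bundle $\cT$ of Theorem~\ref{thm:main1}(d). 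Since $\Omega$ restricts to $\cG|_{S_b}$ as $t^{*}\omega_b-s^{*}\omega_b$ and $(t,s)\colon\cG|_{S_b}\to S_b\times S_b$ is a principal $\cG_x$-bundle, the coisotropic reduction $\underline{\cG|_{S_b}}$ is a principal $\pi_0(\cG_x)$-bundle over the symplectic manifold $(S_b\times S_b,\;t^{*}\omega_b-s^{*}\omega_b)$. In particular its symplectic volume is a universal power of $\iota=|\pi_0(\cG_x)|$ times $\vol(S_b)^2$, the square reflecting the pair-of-leaves structure of the reduced space.

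Next I would carry out the Duistermaat-Heckman fibre integration of the Liouville measure of $(\cG,\Omega)$ along $\cG\to B$. The torus bundle $\cT$ acts freely on $\cG$ with isotropic orbits, and — this is precisely how the integral affine structure is constructed (Theorem~\ref{thm:main1}(b),(d)) — the affine coordinates $b^i$ are action variables for this action, with $\cG\to B$ its moment map; so the standard Duistermaat-Heckman argument (local normal form near a regular level) identifies the pushforward of the Liouville measure with the reduced symplectic volume of $\underline{\cG|_{S_b}}$ against the lattice-normalised affine measure. Combining with the previous paragraph, $\mu_{\DH}$ thus equals $\vol(S_b)^2$, decorated by powers of $\iota$, times an affine measure; the final bookkeeping matches these powers of $\iota$ with the orbifold normalisation of $\mu_{\Aff}$ (which on a chart differs from honest Lebesgue by the reciprocal of the order of the band, the finite quotient in $1\to(\cG_x)^0\to\cG_x\to\pi_0(\cG_x)\to1$ encoded in $\cBG(\cG)$ — Theorem~\ref{thm:main1}(a)) to produce $\mu_{\DH}=(\iota\cdot\vol)^2\mu_{\Aff}$. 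I expect this matching to be the main obstacle: one has to keep careful track of how the period lattice $\Lambda$, the torus bundle $\cT$, the finite covers relating the leaves ``upstairs'' and ``downstairs'' in a chart, and the stacky versus honest conventions for $\mu_{\Aff}$ and for $\vol$ all interact, and this is exactly where the possible non-effectivity of $\cBG$ is delicate.

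Finally, polynomiality is the Duistermaat-Heckman polynomiality fed by Theorem~\ref{thm:main2}. Away from the lower strata of the chart, $\iota$ is locally constant (equal to the order of the band), and the linear variation theorem gives $[\omega_b]=[\omega_{b_0}]+\sum_i b^i c_i$ with $c_i\in H^2(S_{b_0})$ the Chern classes of the principal torus bundle $t\colon s^{-1}(x_0)\to S_{b_0}$, whence
\[
\vol(b)^2=\frac{1}{(k!)^2}\,\Big\langle\,\big([\omega_{b_0}]+\textstyle\sum_i b^i c_i\big)^{k},\,[S_{b_0}]\,\Big\rangle^{2}\qquad(2k=\dim S_{b_0})
\]
is a polynomial of degree at most $2k$ in the affine coordinates; hence so is $(\iota\cdot\vol)^2$ there. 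This polynomial extends over the whole chart because across a lower stratum the isotropy grows by a finite factor while the leaf is simultaneously quotiented by that same factor, so the jumps of $\iota$ and of $\vol$ cancel in the product. Being such a polynomial in the affine coordinates of every orbifold chart is, by definition, what it means for $(\iota\cdot\vol)^2$ to be polynomial relative to the orbifold integral affine structure on $B$ — and, as this argument makes plain, it is precisely the combination $\iota\cdot\vol$, and not $\vol$ by itself, that is well-behaved.
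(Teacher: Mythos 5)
Your outline is correct and reaches the theorem by a genuinely different route from the paper's, so let me compare. The paper factors the pushforward through $M$: it first integrates the Liouville density along the $s$-fibres of $\cG\to M$ and shows, by a pointwise linear-algebra computation with the two short exact sequences coming from $\d s$ and $\d t$ (normalising the isotropy direction by Haar measure and using the $\Omega$-orthogonality of the $s$- and $t$-fibres together with multiplicativity), that the result is exactly $\iota\cdot\vol\cdot\rho_M$ where $\rho_M$ is the Hamiltonian-invariant density representing $\mu_{\Aff}$; the second factor of $\iota$ and of $\vol$ then come for free from the already-established Weyl-type Fubini formula $p_*\mu_M=\iota\cdot\vol\cdot\mu_{\Aff}$. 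You instead push forward in one step along $p\circ s\colon\cG\to B$, recognising $s^{-1}(S_b)$ as a coisotropic whose reduction is an $\iota$-fold cover of $(S_b\times S_b,\,t^*\omega_b-s^*\omega_b)$, and invoking the classical Duistermaat--Heckman normal form for the locally free Hamiltonian $\T^q$-action generated by $\cT$ (which is legitimate: over an integral affine chart the closed lattice sections $\d b^i$ trivialise $\cT$ and exhibit the chart coordinates as action variables). This is more conceptual and makes the appearance of $\vol^2$ transparent, but it defers precisely the point where the paper invests its effort — pinning down the constant, i.e. that the two powers of $\iota$ arise one from the covering $\underline{\cG|_{S_b}}\to S_b\times S_b$ and one from the orbifold normalisation of $\mu_{\Aff}$, neither more nor fewer. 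Your accounting of these two factors is the correct one (it matches the paper's: one $\iota$ from the Haar normalisation of the isotropy, one from Lemma \ref{lem:orbif-measures}), but as you yourself flag, this is the part that would need to be written out in full. For polynomiality, your stratum-by-stratum argument with cancelling jumps is a hands-on version of the paper's slicker observation: $(\iota\cdot\vol)^2$ and the square of the symplectic volume of $\cBG(x,-)$ are Casimirs agreeing up to a locally constant integer ratio, and the latter is polynomial by the orbifold variation theorem; note that in the non-simply-connected case the Chern classes $c_i$ and the pairing in your displayed formula should live on $\bar S=\cBG(x,-)$ rather than on $S_{b_0}$ (which is what makes $\iota\cdot\vol$, rather than $\vol$, the polynomial quantity — exactly the point you end on).
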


The previous theorem has an interesting version already on $M$, where we obtain two measures, $\mu_{\DH}^M$ and $\mu_{M}^{\Aff}= \mu_{M}$, both induced by densities $\rho^{M}_{{\DH}}$ and $\rho_{M}$, which are invariant under all Hamiltonian flows. Our study of such invariant densities yields the following Fubini type theorem:

\begin{theorem}\label{thm:main4}
If $(M, \pi)$ is a regular Poisson manifold, with proper integration $(\cG, \Omega)$, then for any $f\in C_{c}^{\infty}(M)$:
\[ \int_M f(x) \,\d\mu_{M}(x) =\int_B \left(\iota(b) \int_{S_b}f(y)\,\d\mu_{S_b}(y) \right)\,\d\mu_{\Aff}(b),\]
where $\mu_{S_b}$ is the Liouville measure of the symplectic leaf $S_b$, and $\iota:B\to \N$ is the function that for each $b\in B$ counts the number of connected components of the isotropy group $\G_x$ ($x\in S_b$).
\end{theorem}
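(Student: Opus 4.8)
The formula is an equality of two Radon measures on $M$, hence a local statement; choosing a partition of unity subordinate to a cover of $M$ we may assume $f$ is supported in an arbitrarily small saturated neighbourhood of a single leaf $S_0=S_{b_0}$. The backbone is a disintegration argument: by construction $\rho_M$ is invariant under all Hamiltonian flows of $(M,\pi)$, so the conditional measures of $\mu_M$ along $\cF_\pi$ are invariant under every leafwise Hamiltonian diffeomorphism. On a connected symplectic manifold the only locally finite Borel measure invariant under all Hamiltonian diffeomorphisms is a constant multiple of the Liouville measure (in Darboux charts such a measure is translation-invariant, hence a multiple of Lebesgue, and the multiple is constant by connectedness). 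Therefore $\mu_M$ disintegrates over $B$ as $\mu_M=\int_B c(b)\,\mu_{S_b}\,\d\nu(b)$, with $\nu$ the leaf-space push-forward of $\mu_M$ and $c\colon B\to\R_{>0}$, and the theorem becomes the equality of measures $c\cdot\nu=\iota\cdot\mu_{\Aff}$ on $B$ --- a purely transverse statement that it suffices to verify over one integral affine orbifold chart around $b_0$.

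To verify it one uses the local normal form of the proper symplectic groupoid $(\cG,\Omega)$ near $S_0$ supplied by the structure theory developed earlier (the orbifold and integral affine structures, and the linearization of proper groupoids): a saturated neighbourhood of $S_0$ is modelled on an explicit bundle of symplectic manifolds over a chart $\widetilde U\subseteq\R^q$ carrying the lattice $\Lambda$ that underlies the affine structure, together with the isotropy data --- the identity components $\cG_x^0\cong\R^q/\Lambda$ and the finite component groups $\pi_0(\cG_x)$. In this model two things must be checked. First, the model is ``linear'' in the leaf directions, so that the conditional measure of $\mu_M$ on each leaf is precisely its Liouville measure; this gives $c\equiv 1$. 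Second, $\mu_{\Aff}$ is the Lebesgue measure $\d b^1\cdots\d b^q$ normalized by $\Lambda$, whereas $\rho_M$ is normalized using the entire source fibre $s^{-1}(x)$, whose generic fibre is the full isotropy $\cG_x$ and not merely its identity component $\cG_x^0$ --- and the former covers the latter with degree $|\pi_0(\cG_x)|=\iota(b)$. Equivalently, $\iota(b)$ is the order of the ineffective isotropy of the orbifold $\cBG$ at $b$, measuring the discrepancy between honest leafwise integration on $M$ and integration against the orbifold integral affine measure on $B$. Following this factor through the model yields $\nu=\iota\cdot\mu_{\Aff}$, which together with $c\equiv 1$ is the claimed identity. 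As a consistency check, pairing the formula with the leafwise symplectic volume function and pushing forward to $B$ reproduces the statement of Theorem~\ref{thm:main3}.

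The principal obstacle is the second point above: extracting the factor $\iota(b)=|\pi_0(\cG_x)|$ with the correct exponent from the normal form, that is, the careful bookkeeping of how the component groups of the isotropy of a proper, possibly non-effective symplectic groupoid interact with the canonical measures on $M$ and $B$. A secondary technical point is globalization --- patching the local identities through the partition of unity and, when $\iota$ fails to be locally constant, noting that it jumps only along a set of lower dimension which is $\mu_{\Aff}$-null in $B$ and $\mu_M$-null in $M$ (the transverse factor of $\mu_M$ being $\mu_{\Aff}$), so that the identity obtained on the open dense locus where all the structures trivialise extends by continuity.
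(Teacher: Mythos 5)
Your overall strategy --- factor $\mu_M$ as leafwise Liouville measure tensored with a transverse affine density, and account for the correction $\iota$ as an orbifold-isotropy phenomenon --- is the same as the paper's, which proves a general Fubini formula for proper foliation groupoids (Proposition \ref{prop:Fubini-foliations}) by a double fiber-integration over $s^{-1}(T)$ for a complete transversal $T$, and then applies it to the atlas $\cBG(\cG)$ of Theorem \ref{thm-reg-fol2}. However, two steps of your argument as written do not go through.

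First, the disintegration $\mu_M=\int_B c(b)\,\mu_{S_b}\,\d\nu(b)$ with $\nu=p_*\mu_M$ is ill-posed in the generality of the statement: the theorem assumes only properness, so the leaves may be non-compact, $p:M\to B$ need not be proper, and $p_*\mu_M$ need not be a Radon measure. Even when the leaves are compact, taking $\nu$ to be the push-forward forces $c(b)=\vol(S_b)^{-1}$, which contradicts your later claim $c\equiv 1$. What is actually true --- and is built into the definition rather than derived --- is that $\rho_M=\frac{|\omega_{\cF_\pi}^{\top}|}{\top!}\otimes\rho^{\nu}_{\Aff}$ (see \eqref{proof-dec-0} and Proposition \ref{prop:hamil:invariant:correspondence}), so the only remaining content is transverse bookkeeping. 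Second, and more seriously, your mechanism for producing the factor $\iota$ misidentifies the relevant objects: $\rho_M$ is \emph{not} ``normalized using the entire source fibre $s^{-1}(x)$'' --- that description applies to the Duistermaat--Heckman density $\rho^{M}_{\DH}$ of Theorem \ref{thm:main3}, a different measure. In Theorem \ref{thm:main4} the factor $\iota(b)=|\cBG(\cG)_x|$ enters through the definition of $\mu_{\Aff}$ as an \emph{orbifold} measure: the correspondence of Lemma \ref{lem:orbif-measures} between measures on $B$ and invariant measures on a transversal is implemented by $p_!(f)(p(x))=\sum_{g\in s^{-1}(x)}f(t(g))$, a sum over $s$-fibers of the foliation atlas $\cBG(\cG)$, and dually $t:\cBG(\cG)(x,-)\to S_b$ is an $\iota(b)$-fold cover. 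The finite covering you invoke is this one, not anything involving $s^{-1}(x)\subset\cG$, whose $t$-fibers over $S_b$ are the positive-dimensional groups $\cG_x$; and $\iota(b)$ is the order of the \emph{full} isotropy of the atlas $\cBG(\cG)$, not of its ineffective part (these differ for classical orbifolds, where the ineffective isotropy is trivial but $\iota$ need not be). Since you yourself single out this bookkeeping as ``the principal obstacle,'' the proof is incomplete until it is carried out correctly, e.g.\ by the paper's computation of $\int_{s^{-1}(T)}\frac{1}{|\cE(t(g),T)|}f(t(g))\,\d t^*\mu_M(g)$ in two ways.
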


We shall see in \cite{CFMc} that a similar theorem is valid for all, including non-regular, PMCTs. This theorem includes, as a special instance, the classical Weyl Integration Formula.

% Organization of the paper along with some other results

The rest of this paper is organized into 8 sections and 2 appendices.

Section \ref{sec:foliations} is devoted to foliations and orbifolds, recalling Haefliger's approach to transversal geometry, fixing the necessary framework, but also illustrating the various compactness properties (\ref{types}) in the simpler context of foliations. In this section, the orbifold structure on the leaf space of a PMCT, stated in part (a) of Theorem \ref{thm:main1}, is shown to exist.

Section \ref{sec:integral:affine} includes some basics on Integral Affine Geometry and describes its relationship with Poisson Geometry. Besides proving part (b) of Theorem \ref{thm:main1}, we discover new Poisson invariants, the so-called \emph{extended monodromy groups} which give rise to obstructions to s-properness, but which are interesting also for general Poisson manifolds. 

Sections  \ref{sec:lin-var-1} and Section \ref{sec:lin-var-2} concern Theorem \ref{thm:main2}, on the linear variation of the cohomology class of the leafwise symplectic form. We first treat the case of smooth leaf space and then the orbifold case. Both these sections start by revisiting the developing map for integral affine structures from a novel groupoid perspective. That allows for a global formulation, free of choices, which is more appropriate for our purposes. We also obtain a decomposition result for Poisson manifolds of s-proper type which, from the point of view of classification, indicates two types of building blocks: (i) the strong proper ones with full variation, and (ii) the ones with no variation, corresponding to symplectic fibrations over integral affine manifolds.

Section \ref{sec:measures} discusses the Duistermaat-Heckman measures on PMCTs and on their leaf spaces, its relationship with the measures determined by the integral affine structures, and the interaction with the Liouville measure on the symplectic leaves, leading to proofs of Theorem \ref{thm:main3}, on the polynomial nature of the Duistermaat-Heckman measure, and the integration formula of Theorem \ref{thm:main4}.

Section \ref{sec:realizations} explains the relationship between PMCTs and proper isotropic realizations, which appears in part (e) of Theorem \ref{thm:main1}.
For any proper isotropic realization $q: (X, \Oga)\to (M, \pi)$ we introduce a ``holonomy groupoid relative to $X$'', $\Hol_X(M, \pi)$, which is usually smaller than the canonical integration $\Sigma(M, \pi)$, and hence has better chances to be proper. The groupoids $\Hol_X(M, \pi)$ not only arise in many examples, but are an important concept. Indeed, recall that foliations come with two standard s-connected integrations: the largest one which is the monodromy groupoid $\Mon(M, \cF)$ and the smallest one which is the holonomy groupoid $\Hol(M, \cF)$. In Poisson geometry, the integration $\Sigma(M, \pi)$ is the analogue of $\Mon(M, \cF)$ but, in general, there is no analogue of the holonomy groupoid. Our results suggest that, in Poisson Geometry, instead of looking for the smallest integration, one should look for the smallest one that acts on a given symplectic realization. This property characterizes $\Hol_X(M, \pi)$ uniquely.

Sections  \ref{sec:gerbes:manifolds} and \ref{sec:gerbes:orbifolds} describe our theory of \emph{symplectic gerbes}, first in the smooth case and then in the orbifold case, proving parts (d) and (e) of Theorem \ref{thm:main1}. Our departure point is the usual theory of $\S^1$-gerbes, which we first extend to $\cT$-gerbes, where $\cT$ is an arbitrary torus bundle over a manifold $B$. For the symplectic theory, we need to look at a \emph{symplectic torus bundles} $(\cT, \omega_{\cT})$ over an orbifold $B$ or, equivalently, integral affine structures on $B$. In the more standard theory ones looks at central extensions of $\cT$ by Lie groupoids, while in the symplectic theory we look at central extensions of $(\cT, \omega_{\cT})$ by symplectic groupoids.
The main conclusion is that, while $\cT$-gerbes are classified by their Dixmier-Douady classes, living in $H^2(B,\ucT)$, for symplectic gerbes one obtains a Lagrangian Dixmier-Douady class which gives rise to a group isomorphism
\[ \DD: \Gerbes_{B}(\cT, \omega_{\cT}) \to H^2(B, \ucT_{\Lagr}),\]
where $\ucT_{\Lagr}$ is the sheaf of Lagrangian sections of $(\cT, \omega_{\cT})$.

Appendix \ref{appendix:moment:maps} gives some background on actions of symplectic groupoids, Hamiltonian $\cG$-spaces, 
and symplectic Morita equivalence, which are relevant for the paper. Appendix \ref{appendix:molino} is of a very different nature: we show there how one can adapt (part of) Molino's approach of Riemannian foliations to the context of integral affine geometry, to prove that integral affine orbifolds are good, i.e. quotients of a discrete integral affine group action. While this is relevant for PMCTs and we make good use of it, we believe it may be of independent interest.

\vspace*{.1in}

%%% Comments on examples and twisted Dirac  %%%%
As we develop the theory of PMCTs, we will explain how to adapt it to {\it Dirac manifolds}. The first motivation for this arises from the extension of the results of this paper from regular to arbitrary PMCTs, since we will introduce in \cite{CFMc} a desingularization procedure which will turn a PMCT into a {\it regular, Dirac} manifold (without changing the leaf space or the compactness type!). The second motivation comes from Lie theory and the striking similarity between the geometry of (co)adjoint orbits and the one of conjugacy classes (see e.g. \cite{DK}). While coadjoint orbits fit into Poisson Geometry, conjugacy classes belong to the world of (twisted) Dirac Geometry. Hence the Dirac framework allows us to (re)cover even more fundamental examples.

However, in order to get a faster grasp of the results and new techniques introduced here, the reader may choose to skip, in a first reading, all sections concerning Dirac Geometry. The same applies to the sections on orbifolds, since the rich geometry that comes with PMCTs is present already when the leaf space is smooth.

% Throughout the paper we give many examples illustrating our theory, including examples related to Lie theory, both at the Lie algebra level and at
% the group level. The latter examples actually belong to the world of (twisted) Dirac Geometry, and  we also develop
% a corresponding theory of \emph{Dirac manifolds of compact type}. Besides covering very fundamental examples, the theory
% of DMCTs will be crucial in \cite{CFMc} to extend all the results on regular PMCTs obtained in these paper to arbitrary PMCTS. In fact, in \cite{CFMc}, we will introduce
% a desingularization procedure which turns a PMCT into a regular DMCT (without changing the leaf space!).

% The reader who wishes to get a faster grasp of the results and new techniques introduced here can skip, in a first reading, all sections concerning either orbifolds or DMCTs, without affecting the logic dependence of later sections. 
% %These include the following sections: \ref{ssec:Orbifolds}, \ref{ssec:IAS-orbifol} (up to Definition \ref{def:transverse:int:affine}), \ref{ssec:The twisted case}, \ref{sec:lin-var-2} and \ref{sec:gerbes:orbifolds}. Section \ref{sec:measures} requires some material from \ref{ssec:Orbifolds}.

{\bf Acknowledgments.} The work of N.-T.~Zung \cite{Zu} on proper symplectic grou\-poids should be considered as a precursor of the theory of PMCTs. 
However, Zung focuses his attention on the symplectic groupoid, instead of the underlying Poisson manifold. A.~Weinstein's work on measures on stacks \cite{We} was a source of inspiration for our study of measures. Our theory of symplectic gerbes can be viewed as a symplectic version of I.~Moerdijk's work on regular proper groupoids \cite{Moe03}, but with a richer geometric flavor that includes the connection to the Delzant-Dazord theory of isotropic fibrations \cite{DaDe}.
% although we follow here a more direct approach that avoids using the stack encoded by the symplectic groupoids (the precise relationship is explained in \cite{CrMe}. 
% We will also make intensive use of Haefliger's philosophy on the transversal geometry of foliations and on orbifolds \cite{Hae}. 
We would also like to acknowledge the gracious support of IMPA, the University of Utrecht and the University of Illinois at Urbana-Champagne, at various stages of these project.

%%%%%%%%%%%%%%%%%%%%%%%%
%%%%%%%%%%%%%%%%%%%%%%%%
%%%%%%%%%%%%%%%%%%%%%%%%
%%%%%%%%%%%%%%%%%%%%%%%%
%%%%%%%%%%%%%%%%%%%%%%%%
%%%%%%%%%%%%%%%%%%%%%%%%
%%%%%%%%%%%%%%%%%%%%%%%%
%%%%%%%%%%%%%%%%%%%%%%%%
\section{PMCTs, foliations of compact types and orbifolds}
\label{sec:foliations}
%%%%%%%%%%%%%%%%%%%%%%%%
%%%%%%%%%%%%%%%%%%%%%%%%
%%%%%%%%%%%%%%%%%%%%%%%%
%%%%%%%%%%%%%%%%%%%%%%%%
%%%%%%%%%%%%%%%%%%%%%%%%
%%%%%%%%%%%%%%%%%%%%%%%%
%%%%%%%%%%%%%%%%%%%%%%%%
%%%%%%%%%%%%%%%%%%%%%%%%

Recall that given a foliation $\cF$ on $M$ the associated distribution can be thought of as a  Lie algebroid 
 with anchor the inclusion and bracket the restriction of the Lie bracket of vector fields. This Lie algebroid is well-known to be integrable --
 for example by the holonomy groupoid (see Section \ref{ssec:foliation-groupoids}). Therefore, for any of the compactness types (\ref{types}), the notion of $\mathcal{C}$-type (respectively, strong $\mathcal{C}$-type)  makes 
sense for any foliated manifold $(M, \cF)$: one requires the existence of a source connected (respectively, source 1-connected) Hausdorff Lie groupoid
integrating $\cF$  having property $\mathcal{C}$. 

In this section we shall make a detailed study of these compactness types of foliations. 
On the one hand, foliations of compact types are easier to handle than regular Poisson manifolds of compact types, but they
still exhibit phenomena/properties that will persist in the Poisson case. 
Thus the analysis of the former will play a guiding role in the analysis of the latter. On the other hand, 
if a regular Poisson manifolds is of (strong) $\mathcal{C}$-type, then so is the underlying symplectic foliation. Hence, 
the results in this section have immediate applications to the Poisson case. 

As (rough) main goal of this section, we mention here:

\begin{theorem}\label{thm-reg-fol} If $(M,\pi)$ is a regular Poisson manifold of $\mathcal{C}$-type, then the symplectic foliation $\cF_\pi$
is of $\mathcal{C}$-type.  As a consequence, the space of symplectic leaves
\[ B= M/\cF_{\pi}\]
is an  orbifold.  More precisely, any integration $\cG$ of $(M, \pi)$ of $\mathcal{C}$-type gives rise to an integration $\cBG(\cG)$ of $\cF_\pi$ of $\mathcal{C}$-type, which induces an orbifold structure on $B$. 
\end{theorem}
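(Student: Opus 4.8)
The plan is to realize $\cBG(\cG)$ as an explicit quotient of the symplectic groupoid $\cG$. First I would reduce to the proper case: in each of the three cases of \eqref{types} the source map is proper and, since $\cG$ is Hausdorff, so is the anchor $(s,t)\colon\cG\to M\times M$; hence we may assume throughout that $\cG$ is s-connected and proper. Next, recall that the Lie algebroid of $(\cG,\Omega)$ is the cotangent algebroid $T^*M$ of $(M,\pi)$, with anchor $\pi^\sharp\colon T^*M\to TM$. Regularity of $\pi$ means $\pi^\sharp$ has constant rank, so that $T\cF_\pi=\pi^\sharp(T^*M)$ is a subbundle, the conormal bundle $\nu^*\cF_\pi=\Ker\pi^\sharp=(T\cF_\pi)^{\circ}$ is a bundle of abelian Lie algebras and an ideal of $T^*M$, and we obtain a short exact sequence of Lie algebroids over $M$,
\[
0\longrightarrow \nu^*\cF_\pi\longrightarrow T^*M\xrightarrow{\ \pi^\sharp\ } T\cF_\pi\longrightarrow 0 .
\]
The idea is then to integrate the quotient $T\cF_\pi$ by dividing $\cG$ by a subgroupoid integrating the ideal $\nu^*\cF_\pi$.

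Concretely, I would consider the family of identity components of the isotropy groups,
\[
\cT:=\bigsqcup_{x\in M}\cG_x^{0}\ \subseteq\ \cG .
\]
Because $\cG$ is proper, each $\cG_x$ is a compact Lie group whose Lie algebra $\nu_x^*\cF_\pi$ is abelian of dimension $q=\dim B$; hence $\cG_x^{0}\cong\T^{q}$, independent of $x$. The key technical point, which I expect to be the main obstacle, is to prove that $\cT$ is a closed, embedded, normal Lie subgroupoid of $\cG$ --- a bundle of $q$-tori over $M$. Normality is automatic (conjugation preserves identity components of isotropy), but smoothness and closedness need an argument: for a general proper groupoid the topological type of the isotropy may jump, and it is precisely the regularity of $\pi$ (constant isotropy dimension $q$) together with properness (compact, hence toral, identity components) that forces $\cG_x^{0}$ to be \emph{the} $q$-dimensional subtorus of the nearby isotropy. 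I would make this precise using the local normal form (linearization) theorem for proper Lie groupoids around an orbit, which in a saturated neighbourhood of each leaf presents $\cG$ by a local model built from a compact group action and exhibits the $\cG_x^{0}$ as assembling into a torus subbundle.

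Granting that $\cT\subseteq\cG$ is such a subgroupoid, its right multiplication action on $\cG$ is free (group elements act freely by multiplication in a groupoid) and proper (it is an action of a bundle of compact groups), so the quotient $\cBG(\cG):=\cG/\cT$ is a Lie groupoid over $M$, with quotient map $p\colon\cG\to\cBG(\cG)$ a surjective submersion intertwining source and target. Its Lie algebroid is $T^*M/\nu^*\cF_\pi\cong T\cF_\pi$; its isotropy groups are the discrete groups $\cG_x/\cG_x^{0}=\pi_0(\cG_x)$; and its orbits are the $p$-images of the orbits of $\cG$, i.e. the symplectic leaves, which are exactly the leaves of $\cF_\pi$. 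Since $\bar s^{-1}(x)=p\big(s^{-1}(x)\big)$ is the continuous image of a connected set, $\cBG(\cG)$ is again s-connected, and it is Hausdorff because $\cT$ is closed in $\cG$. Thus $\cBG(\cG)$ is an s-connected integration of $\cF_\pi$ by a foliation groupoid.

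It remains to transfer the compactness type and to extract the orbifold structure. For any compact $K\subseteq M\times M$ we have $(\bar s,\bar t)^{-1}(K)=p\big((s,t)^{-1}(K)\big)$, a continuous image of a compact set; likewise $\bar s^{-1}(x)=p\big(s^{-1}(x)\big)$ and $\cBG(\cG)=p(\cG)$. Hence $\cBG(\cG)$ is proper, s-proper, or compact whenever $\cG$ is, which proves that $\cF_\pi$ is of $\cC$-type. Finally, restricting the proper foliation groupoid $\cBG(\cG)\tto M$ to a complete transversal $\Sigma\subseteq M$ (a $q$-dimensional submanifold meeting every leaf transversally) yields $\cBG(\cG)_\Sigma:=s^{-1}(\Sigma)\cap t^{-1}(\Sigma)\tto\Sigma$, which is an étale groupoid (since $\cBG(\cG)$ has discrete isotropy and $\Sigma$ has dimension complementary to the leaves), proper and Hausdorff, and which presents $B=\Sigma/\cBG(\cG)_\Sigma$ as a topological space; a proper étale groupoid is an orbifold groupoid, and a change of complete transversal produces a Morita equivalent one, so $B$ acquires a well-defined orbifold structure, namely the structure $\cBG=\cBG(\cG)$ of Theorem~\ref{thm:main1}(a).
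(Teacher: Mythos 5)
Your proof follows the same route as the paper's (see Theorem \ref{thm-reg-fol2}): identify the family $\cT(\cG)$ of identity components of the isotropy groups as a \emph{closed} smooth torus subbundle — the paper simply cites Moerdijk's work on regular proper groupoids for this, whereas you sketch it via the linearization theorem, which is the standard way to prove that fact — and then pass to the quotient foliation groupoid $\cBG(\cG)=\cG/\cT$, transferring the compactness type by the image-of-a-compact-set argument and reading off the orbifold structure. One small slip in your opening reduction: in the ``proper'' case of \eqref{types} the source map is \emph{not} proper (only the anchor is), but the conclusion you actually need — that each of the three types implies properness of $\cG$ — still holds, so the rest of the argument is unaffected.
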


We shall see in the next sections that \emph{symplectic} integrations of $(M, \pi)$ induce several geometric structures on the orbifold $B$. For that reason this section pays special attention to geometric structures on leaf spaces of foliations and on orbifolds.

% We will see in the next sections, that \emph{symplectic} integrations $(M,\cF_\pi)$ induce several geometric structures on the orbifold $B$. For that reason, in this section we also discuss geometric structures on leaf spaces of foliations and on orbifolds.

\begin{remark}[Classical compact foliations]\label{classical-compact} In classical Foliation Theory the notion of a compact foliation
refers to a foliation all whose leaves are compact (see, e.g., \cite{EMS,Ep}). This property does not refer to any of the integrations of $\cF$.
We will clarify later how this classical notion is related to our compactness types.
In this regard, since the pioneering work of A.~Haefliger, Lie groupoids (in particular, the holonomy  groupoid) have been extensively used in the study of
the transverse geometry of foliations. Our approach follows the same spirit, defining compactness type of foliations in terms of
groupoids integrating them.
\end{remark}

%%%%%%%%%%%%%%%%%%%%%%%
%%%%%%%%%%%%%%%%%%%
%%%%%%%%%%%%%%%%%%%%%
\subsection{The monodromy and holonomy groupoids}\label{ssec:foliation-groupoids}
%%%%%%%%%%%%%%%%%%%%%%%
%%%%%%%%%%%%%%%%%%%
%%%%%%%%%%%%%%%%%%%%%
The Lie groupoids that integrate foliations are called \textbf{foliation groupoids}. 
They are easy to characterize, since, in general, the Lie algebra of the isotropy group of a Lie groupoid
is precisely the kernel of the anchor of its Lie algebroid:

\begin{proposition}[\cite{CrMo}]\label{fol-groupoid-isotropy} A Lie groupoid $\G$ is a foliation groupoid iff all the iso\-tro\-py groups $\cG_x$ are discrete.
\end{proposition}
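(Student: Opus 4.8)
The plan is to reduce the statement to the relation, recalled just before it, between the isotropy Lie groups of $\G$ and the kernel of the anchor of its Lie algebroid. Write $A=\mathrm{Lie}(\G)$ for the Lie algebroid of $\G$, with anchor $\rho\colon A\rmap TM$. By definition $\G$ is a foliation groupoid precisely when $A$ is isomorphic, as a Lie algebroid, to $T\cF$ for some regular foliation $\cF$, and this happens if and only if $\rho$ is injective. Indeed, if $\rho$ is injective then, $A$ being a vector bundle, $\im\rho\subseteq TM$ is a subbundle of rank $\rank A$; it is involutive because $\rho$ intertwines the algebroid bracket with the Lie bracket of vector fields, so by Frobenius it integrates to a foliation $\cF$, and $\rho\colon A\to T\cF$ is then a Lie algebroid isomorphism. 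Conversely, if $A\cong T\cF$ the anchor is conjugate to the inclusion $T\cF\hookrightarrow TM$, which is injective. So it suffices to prove: $\rho$ is injective $\iff$ every isotropy group $\G_x$ is discrete.

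For this, recall that each $\G_x=s^{-1}(x)\cap t^{-1}(x)$ is a Lie group: it is the fiber over $x$ of the surjective submersion $t\colon s^{-1}(x)\to\cO_x$ onto the orbit through $x$, hence an embedded submanifold of $s^{-1}(x)$ closed under the smooth groupoid operations. A Lie group is discrete if and only if it is zero-dimensional, i.e.\ if and only if its Lie algebra vanishes. By the general fact stated in the excerpt (cf.\ \cite{CrMo}), the Lie algebra of $\G_x$ is exactly $\Ker\bigl(\rho_x\colon A_x\to T_xM\bigr)$. Therefore $\G_x$ is discrete if and only if $\Ker\rho_x=0$.

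Both implications are now immediate. If $\G$ is a foliation groupoid then $\rho$ is injective, so $\Ker\rho_x=0$ for every $x\in M$, hence every $\G_x$ is discrete. Conversely, if every $\G_x$ is discrete then $\Ker\rho_x=0$ for every $x$; thus $\rho$ is fiberwise injective, hence (again because $A$ is a vector bundle) injective as a bundle morphism, and by the reduction of the first paragraph $\G$ is a foliation groupoid.

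The only step that is more than bookkeeping is the identification $\mathrm{Lie}(\G_x)=\Ker\rho_x$, which is the general structural fact about Lie groupoids recalled before the statement; once it is granted the argument is purely formal. One minor technical point is worth noting: a foliation groupoid need not be Hausdorff (the holonomy groupoid being the prototype), but this does not affect anything, since the source fibers $s^{-1}(x)$ and the isotropy groups $\G_x$ are nonetheless honest Hausdorff manifolds, so ``discrete'' is unambiguous and equivalent to zero-dimensionality.
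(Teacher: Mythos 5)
Your proof is correct and follows exactly the route the paper indicates: the identification $\mathrm{Lie}(\G_x)=\Ker\rho_x$ stated just before the proposition reduces discreteness of the isotropy groups to fiberwise injectivity of the anchor, and the Frobenius argument identifies injective-anchor algebroids with tangent algebroids of foliations. The paper itself defers the proof to \cite{CrMo}, but this is the standard argument and there is nothing to add.
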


Since any foliation $\cF$ on $M$ is integrable as a Lie algebroid, it has a unique (smooth) source $1$-connected integration, called the
\textbf{the monodromy groupoid} of $\cF$ and denoted by
\[ \Mon(M, \cF) \tto M .\]
The arrows in this groupoid are the leafwise homotopy classes (relative to the end-points) of leafwise curves in $M$ (see \cite{MM}).

Every foliation has yet another s-connected canonical integration: the {\bf holonomy groupoid} of $\cF$, denoted by
\[\Hol(M, \cF)\tto M.\]
The arrows are now equivalence classes of leafwise paths where two paths are identified if they induce the same germ of holonomy transformation. 
From their definitions, we have a morphism of Lie groupoids which is a local diffeomorphism:
\begin{equation}\label{mon-hol} 
\hol: \Mon(M, \cF) \to \Hol(M, \cF).
\end{equation}

The relevance of the holonomy groupoid in studying the transverse geometry of foliations stems from
 the fact that any other s-connected Lie groupoid integrating $\cF$ lies above it. More
precisely:

\begin{theorem}[\cite{CrMo,Ph}]\label{seq-mon-G-hol}  For any s-connected integration $\cE$ of a foliation $\cF$ on $M$,
there is a natural factorization of (\ref{mon-hol}) into a composition of surjective submersions compatible with the groupoid structure:
\[ 
\xymatrix{ \Mon(M, \cF) \ar[r]^-{h_{\cE}} & \cE \ar[r]^-{\hol_{\cE}}\ar[r] &  \Hol(M, \cF)} .
\]
\end{theorem}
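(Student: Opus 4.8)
The plan is to produce the two morphisms $h_\cE$ and $\hol_\cE$ directly, using the universal property of the monodromy groupoid together with a careful analysis of what holonomy means for an arbitrary s-connected integration. First I would recall the universal property of $\Mon(M,\cF)$: it is the source $1$-connected integration of the Lie algebroid $T\cF$, so for any Lie groupoid $\cE$ integrating $T\cF$ there is a unique morphism $h_\cE\colon \Mon(M,\cF)\to\cE$ integrating the identity on $T\cF$, and since $\cE$ is s-connected this $h_\cE$ is a surjective submersion (it is a quotient map of groupoids over $M$, fiberwise a covering map followed by quotient, hence a submersion). Concretely, $h_\cE$ sends a leafwise homotopy class of a path $\g$ to its $\cE$-development $\dev_\cE(\g)$, the endpoint of the $\cE$-path obtained by integrating the tautological $\cE$-path associated to $\g$; homotopy invariance of this construction is exactly the statement that $\cE$ integrates $T\cF$. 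The nontrivial point is the existence of the second map.

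For $\hol_\cE\colon \cE\to\Hol(M,\cF)$, the key observation is that an s-connected foliation groupoid acts on its base by germs of diffeomorphisms of local transversals: given $g\in\cE$ with $\s(g)=x$, $\t(g)=y$, one chooses a bisection (local section of $\s$) through $g$, which exists because $\cE$ is a Lie groupoid with $\s$ a submersion, and the induced germ of transverse diffeomorphism $y\mapsto x$ is independent of the choice of bisection precisely because the isotropy groups of $\cE$ are discrete (Proposition \ref{fol-groupoid-isotropy}) and $\cE$ is s-connected, so any two bisections through $g$ differ by something tangent to the leaves. This assignment $g\mapsto[\text{germ of transverse holonomy}]$ is a groupoid morphism $\cE\to\Hol(M,\cF)$ covering the identity on $M$, and one checks it is smooth (by working in foliation charts) and a surjective submersion, using that $\cE$ is s-connected so every arrow is reached by a leafwise path, and that the holonomy of a leafwise path depends only on its $\cE$-class. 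Finally, the composition $\hol_\cE\circ h_\cE\colon\Mon(M,\cF)\to\Hol(M,\cF)$ sends a leafwise homotopy class to the germ of its holonomy transformation, which is by definition the map $\hol$ of \eqref{mon-hol}; hence the triangle commutes, and compatibility with the groupoid structures is built into the construction.

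The main obstacle I anticipate is the well-definedness and smoothness of $\hol_\cE$, i.e. showing that the germ of transverse holonomy attached to an arrow $g\in\cE$ genuinely depends only on $g$ and not on the chosen bisection, and that the resulting set-map is a submersion of manifolds rather than merely a groupoid morphism on underlying sets. This is where one really uses the hypothesis that $\cE$ is a \emph{foliation} groupoid (discrete isotropy) and s-connected: discreteness forces two bisections through $g$ to have the same $1$-jet transverse to $\cF$, hence the same germ of transverse action, while s-connectedness lets one propagate this local statement along leaves to cover all of $\cE$. Once this is in place, surjectivity of both $h_\cE$ and $\hol_\cE$ is immediate from s-connectedness (every arrow in $\cE$, resp.\ $\Hol$, is represented by a leafwise path), and the submersion property follows by passing to local foliation charts, where all three groupoids take the standard form and the maps become the obvious projections. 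I would cite \cite{CrMo} and \cite{Ph} for the detailed verification that this transverse-holonomy morphism exists and is a submersion, as the statement attributes the result to them.
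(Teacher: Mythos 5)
The paper offers no proof of this statement---it is quoted from \cite{CrMo} and \cite{Ph}---but your proposal correctly reconstructs the standard argument of those references: $h_{\cE}$ is obtained from Lie's second theorem by integrating the identity of the algebroid $\cF$ (a covering map on $s$-fibres, hence a surjective submersion once $\cE$ is $s$-connected), and $\hol_{\cE}$ is the transverse-germ map, which exists precisely because $\cE$ is a foliation groupoid. One small correction to your justification of well-definedness: ``same transverse $1$-jet'' does not imply ``same germ''; the right argument is that two bisections through $g$ differ by a bisection through a unit, whose induced local diffeomorphism preserves every leaf (arrows of $\cE$ join points of the same leaf), hence acts trivially on local transversals modulo plaques---equivalently, the restriction of $\cE$ to local transversals is \'etale, so the germ (\ref{germ-of-bisection}) is canonically attached to $g$.
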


Recall that we are only interested in Hausdorff Lie groupoids, although even very elementary foliations can have
non-Hausdorff monodromy and/or holonomy groupoids. Moreover, one can have one of them being Hausdorff while the other one is not, and even
both not being Hausdorff but there exists a Hausdorff one in between them! The monodromy groupoid is Hausdorff iff the foliation does not
have vanishing cycles \cite{AH1}, but no geometric criteria characterizing the Hausdorffness
of other foliation groupoids (e.g., the holonomy groupoid) is known. 

\begin{example}[Simple foliations]\label{ex-simple-foliations} If $p: M\to B$ is a submersion with connected fibers, 
then the fibers of $p$ define a foliation $\mathcal{F}$ on $M$, called a \emph{simple foliation}.
The holonomy groupoid of $\mathcal{F}$ is the {\bf submersion groupoid of $p$}, 
consisting of pairs of points in $M$ that are in the same fiber of $p$:
 \[\Hol(M, \cF)= M\times_{B} M\tto M,\]
where $(x,y)\in M\times_B M$ is thought of as an arrow from $y$ to $x$.
This groupoid is Hausdorff but the monodromy groupoid of $\cF$ may fail to be Hausdorff:
e.g., the fiber above $0$ of the first projection $p:\R^{3}\setminus \{0\}\to\R$ contains a vanishing cycle.
% if one considers, e.g., $p:\R^{3}\setminus \{0\}\to\R$ the first projection then the fiber above $0$ contains a vanishing cycle.
\end{example}

\begin{example}[One sided holonomy] 
\label{ex-one-sided-hol}
On the cylinder $M=\S^1\times\R$ consider the foliation $\cF$ given by the orbits of the vector field 
$X=\frac{\partial}{\partial \theta}+f(t) \frac{\partial}{\partial t}$,
where $f(t)$ is a smooth function with $f(t)=0$ for $t\le 0$ and $f(t)>0$ for $t>0$. $\cF$ has closed leaves $\S^1\times \{t\}$ for
$t\le 0$ and open leaves for $t>0$. It follows that there are no vanishing cycles, so $\Mon(M,\cF)$ is Hausdorff. 
The leaf $\S^1\times \{0\}$ has one-sided holonomy, so the leaves with $t<0$ give cycles with trivial
holonomy that converge to a cycle at $t=0$ with non-trivial holonomy. Hence, $\Hol(M,\cF)$ is non-Hausdorff.
\end{example}

Recall that the {\bf linear holonomy} of a foliation $\cF$ on $M$ along a leafwise path $\gamma\subset S$ from $x$ to $y$ 
is, by definition, the linearization of the holonomy parallel transport along $\gamma$.
Identifying the tangent spaces of the transversals with the normal spaces $\nu_x(S)= T_xM/T_xS$,
the linear holonomy becomes a map:
\begin{equation}\label{lin-hol-ref} 
\hol_{\gamma}^{\lin}:=\d_x \hol_\gamma: \nu_x(S) \to \nu_y(S) .
\end{equation}
It can also be described directly as the parallel transport associated to the so-called Bott connection.
The linear holonomy groups are then defined similarly, by identifying loops that induce the same linear holonomies:
\[ \Hol_{x}^{\lin}(M, \cF):= \Mon_x(M, \cF)/\textrm{linear\ holonomy\ equivalence} .\]

Similarly, one can also define the linear holonomy groupoid $\Hol^{\lin}(M, \cF)$. The resulting quotient map
\[ \hol^{\lin}: \Mon(M, \cF)\to \Hol^{\lin}(M, \cF)\]
will factor through the holonomy groupoid, giving rise to a morphism of groupoids
$\lin: \Hol(M, \cF)\to \Hol^{\lin}(M, \cF)$.
However, in general, $\Hol^{\lin}(M, \cF)$ will only be a topological groupoid: it follows from Theorem \ref{seq-mon-G-hol} that, for 
$\Hol^{\lin}(M, \cF)$ to admit a Lie groupoid structure such that $\lin$ is smooth, the holonomy must coincide with 
the linear holonomy, i.e. $\lin$ must be 1-1. When this happens, we say that $(M, \cF)$ \textbf{has linear holonomy}.
As a consequence of Bochner's linearization theorem, this is the case whenever the holonomy groups are finite. 
On the other hand, the foliation in Example \ref{ex-one-sided-hol} does not have linear holonomy.

\begin{example}[Linear foliations]\label{ex-local-linear-models} A class of examples that is relevant for us, since 
they provide the (linear)  local  models that appear in local Reeb stability and are intimately
related to our compactness types, is obtained as follows. One starts with a connected manifold $S$ and:
\begin{enumerate}[(i)]
\item $\hat{S}\to S$ a 
%{ (regular}\footnote{{All covering spaces in this paper will be regular, so they come with a group of Deck transformations which is the quotient of $\pi_1(S)$ by the \emph{normal} subgroup $\pi_1(\hat{S})$}}{)} 
covering space with group $\Gamma$;
\item a representation $\Gamma\to\GL(V)$ on a vector space $V$ of dimension $q$.
\end{enumerate}
The associated {\bf linear local model} $(\hat{S}\times_{\Gamma} V,\mathcal{F}_{\lin})$ is the foliation of the quotient:
\[ \hat{S}\times_{\Gamma} V:=(\hat{S}\times V)/\Gamma\]
obtained from the trivial codimension $q$ product foliation $\{\hat{S}\times\{v\}\}_{v\in V}$. Note that $S$ sits canonically inside the linear local model 
as the leaf corresponding to $0\in V$.

This construction has a groupoid version which gives us an integrating foliation groupoid for $\mathcal{F}_{\lin}$. More precisely,  
$\hat{S}$ is replaced by the pair groupoid $\hat{S}\times \hat{S} \tto \hat{S}$. The product of this groupoid with $V$ 
(viewed as a groupoid with units only) gives rise to a groupoid $\hat{S}\times \hat{S}\times V\tto \hat{S}\times V$, where
$\Gamma$ acts freely and properly by groupoid automorphisms (again by the diagonal action). Hence, we have a quotient groupoid:
\begin{equation}\label{fol-gpds-loc-mod}
 (\hat{S}\times \hat{S})\times_{\Gamma} V\tto \hat{S}\times_{\Gamma} V .
\end{equation}
One readily checks that this is a foliation groupoid, and that the induced foliation on its base is precisely $\mathcal{F}_{\lin}$. 
However, this groupoid may sit strictly between the monodromy and holonomy groupoids.
In fact, the monodromy groupoid is obtain as a special case of this construction: 
\begin{equation}\label{mon-loc-mod}
\Mon(M,\cF_\lin)=(\widetilde{S}\times \widetilde{S})\times_{\pi_1(S)} V\tto \widetilde{S}\times_{\pi_1(S)} V,
\end{equation}
where $\pi_1(S)$ acts on $V$ via the homomorphism $\pi_1(S)\to \Gamma\to\GL(V)$.

We summarize the previous discussion in the following result (for item (ii) see Example \ref{ex-local-linear-models-doi}):

\begin{proposition}\label{gpd-of-the-local-model} The  linear local model $\mathcal{F}_{\lin}$ is a foliation with linear holonomy and s-connected integration the Lie groupoid  (\ref{fol-gpds-loc-mod}). Moreover, this groupoid:
\begin{enumerate}[(i)]
\item coincides with $\Mon(M, \mathcal{F}_{\lin})$ iff $\hat{S}$ is simply connected.
\item coincides with $\Hol(M, \mathcal{F}_{\lin})$ iff the action of $\Gamma$ on $V$ is effective.
\end{enumerate}
\end{proposition}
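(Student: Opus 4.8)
The plan is to compute directly the source-fibres, the holonomy transformations and the isotropy groups of the Lie groupoid $\cE:=(\hat{S}\times\hat{S})\times_{\Gamma}V$ of \eqref{fol-gpds-loc-mod}, and then to feed the outcome into Theorem~\ref{seq-mon-G-hol}. Since $\Gamma$ acts freely and properly on $\hat{S}\times V$, a neighbourhood of any point of $\hat{S}\times_{\Gamma}V$ is modelled on $U\times W$ with $U\subseteq\hat{S}$, $W\subseteq V$ open, carrying the product foliation $\{U\times\{w\}\}_{w\in W}$. In particular $\cF_{\lin}$ is a foliation, a transversal through $[(\hat{x},v_0)]$ is canonically a neighbourhood of $v_0$ in $V$, and the leaf through $[(\hat{x},v_0)]$ is the normal covering $\hat{S}/\Gamma_{v_0}$, where $\Gamma_{v_0}$ is the stabiliser of $v_0$ in $\Gamma$ (which acts freely and properly discontinuously). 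Lifting a leafwise loop $\gamma$ at $[(\hat{x}_0,v_0)]$ to a path in $\hat{S}\times\{v_0\}$ from $(\hat{x}_0,v_0)$ to $(g\cdot\hat{x}_0,v_0)$, with $g\in\Gamma_{v_0}$ the image of $[\gamma]$ under $\pi_1(\hat{S}/\Gamma_{v_0})\twoheadrightarrow\Gamma_{v_0}$, one checks that the holonomy transformation along $\gamma$ is the germ at $v_0$ of the linear automorphism of $V$ determined by $g$. Hence the holonomy of $\cF_{\lin}$ is already linear, so $\cF_{\lin}$ has linear holonomy, and the holonomy group at a point of the leaf $\hat{S}/\Gamma_{v_0}$ is the image of $\Gamma_{v_0}$ in $\GL(V)$.

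Working with $\cE=(\hat{S}\times\hat{S}\times V)/\Gamma$ (diagonal action) and using freeness of the $\Gamma$-action on $\hat{S}$, one finds that the source-fibre of $\cE$ over $[(\hat{y}_0,v_0)]$ is $\{[(\hat{x},\hat{y}_0,v_0)]:\hat{x}\in\hat{S}\}\cong\hat{S}$ and that the isotropy group of $\cE$ at $[(\hat{x}_0,v_0)]$ is $\{[(\hat{x}_0,k\hat{x}_0,v_0)]:k\in\Gamma_{v_0}\}\cong\Gamma_{v_0}$, which is discrete. So by Proposition~\ref{fol-groupoid-isotropy}, $\cE$ is a foliation groupoid; one checks that it integrates $\cF_{\lin}$, and it is s-connected since $\hat{S}$ is connected. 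This proves the first assertion. For (i): $\cE$ is s-simply-connected precisely when its source-fibre $\hat{S}$ is simply connected; since $\Mon(M,\cF_{\lin})$ is the \emph{unique} s-$1$-connected integration of $\cF_{\lin}$, we conclude $\cE\cong\Mon(M,\cF_{\lin})$ iff $\hat{S}$ is simply connected. (When it is, $\hat{S}$ is the universal cover of $S$ and $\Gamma=\pi_1(S)$, so $\cE$ is literally the groupoid \eqref{mon-loc-mod}.)

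For (ii), Theorem~\ref{seq-mon-G-hol} furnishes a surjective submersion and groupoid morphism $\hol_{\cE}\colon\cE\to\Hol(M,\cF_{\lin})$ over $\mathrm{id}_{M}$. Such a morphism is an isomorphism as soon as it is injective on every isotropy group: if $\hol_{\cE}(a)=\hol_{\cE}(b)$ then $a$ and $b$ share source and target, so $ab^{-1}$ is an isotropy element mapped to a unit, forcing $a=b$; being a bijective submersion between equidimensional manifolds, $\hol_{\cE}$ is then a diffeomorphism. By the holonomy computation of the first paragraph, the map $\hol_{\cE}$ induces on the isotropy group $\cE_{x}\cong\Gamma_{v_0}$ (at a point of the leaf $\hat{S}/\Gamma_{v_0}$) the quotient onto the holonomy group $\Hol_{x}(M,\cF_{\lin})\cong\Gamma_{v_0}/(\Gamma_{v_0}\cap K)$, where $K=\ker(\Gamma\to\GL(V))$. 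This is injective for every $x$ precisely when $K=\{e\}$, i.e.\ precisely when $\Gamma$ acts effectively on $V$ — and taking $v_0=0$, so that $\Gamma_{v_0}=\Gamma$, shows effectiveness is also necessary. The one genuinely delicate point is the identification of the transversals and holonomy of the linear local model together with the accompanying isotropy bookkeeping; once that is in hand, part (ii) is formal given Theorem~\ref{seq-mon-G-hol}.
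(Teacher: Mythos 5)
Your argument is correct, and for the first claim and part (i) it follows the same lines as the paper: identify the $s$-fibres of $(\hat{S}\times\hat{S})\times_{\Gamma}V$ with $\hat{S}$ and the isotropy groups with the stabilizers $\Gamma_{v_0}$, observe the groupoid is an $s$-connected foliation groupoid integrating $\cF_{\lin}$ (Proposition \ref{fol-groupoid-isotropy}), and use that $\Mon(M,\cF_{\lin})$ is the unique $s$-$1$-connected integration, so equality holds precisely when the $s$-fibre $\hat{S}$ is simply connected (this is exactly how the paper recovers \eqref{mon-loc-mod}).

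For part (ii) your route differs from the paper's in a way worth noting. The paper restricts the groupoid \eqref{fol-gpds-loc-mod} to the complete transversal $V\hookrightarrow\hat{S}\times_{\Gamma}V$, identifies the restriction with the action groupoid $\Gamma\ltimes V$ (Example \ref{ex-local-linear-models-doi}), and then invokes the abstract characterization of holonomy groupoids as the $s$-connected \emph{effective} foliation groupoids (Proposition \ref{prop-red-compl-transv}); effectiveness of $\Gamma\ltimes V$ is immediately equivalent to effectiveness of the linear action. You instead compute the leaves $\hat{S}/\Gamma_{v_0}$ and their holonomy groups $\Gamma_{v_0}/(\Gamma_{v_0}\cap K)$ by hand, and test injectivity of the canonical surjection $\hol_{\cE}$ of Theorem \ref{seq-mon-G-hol} on isotropy groups, using $v_0=0$ to get necessity. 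Both arguments turn on the same elementary fact — the germ of a linear map at a point is trivial iff the map is trivial — but the paper's version is shorter and reuses a general criterion, while yours has the side benefit of explicitly exhibiting the leaves, the holonomy groups, and the kernel of $\cE\to\Hol(M,\cF_{\lin})$ as the sub-bundle of isotropy coming from $\Gamma_{v_0}\cap K$, information that is useful elsewhere (e.g.\ in the proof of Theorem \ref{prop-fol-crit-C}, where the reduction $P_0=P/K$ plays exactly this role). No gaps.
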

\end{example}

\subsection{Foliation versus \'etale groupoids} Recall that an {\bf \'etale groupoid} is a Lie groupoid whose source map 
is a local diffeomorphism. Typical examples of \'etale groupoids include:
\begin{itemize}
\item the identity groupoid $M\tto M$ of a manifold, 
\item the action groupoid associated to a discrete group action on a manifold. 
\end{itemize}
The fundamental example coming from foliation theory is the restriction 
of the holonomy groupoid of $(M, \cF)$ to a complete transversal $T$ (i.e., a transversal intersecting all the leaves):
\[ \left.\Hol(M, \cF)\right|_T \tto T.\]

Historically, \'etale groupoids associated with a foliation were introduced via pseudogroups, as the objects that encode the transverse geometry of the foliation (see Remark \ref{rmk-Transversal geometric structures} below). The main point about \'etale groupoids is that they can be handled very much as usual manifolds. 
The resulting theory should be viewed as a study of ``singular spaces", namely, the orbit spaces of the \'etale groupoids. The role of the \'etale groupoid is to provide a ``desingularization" of the singular space. 

A foliation groupoid $\cE\tto M$ and the \'etale groupoid 
\begin{equation}\label{ET} 
\eE_T:= \left( \left.\cE\right|_T \tto T\right)
\end{equation}
obtained by restricting $\cE$ to a complete transversal $T$ for $\cF$ have the same leaf space. This passage to the \'etale groupoid depends on the choice of a transversal $T$ but, modulo the appropriate notion of equivalence, called Morita equivalence (see Section \ref{ssec:Morita}), this choice is irrelevant.  An entirely similar story holds for any Lie groupoid $\cG$, with the exception that the restriction to a complete transversal is not \'etale unless $\cG$ is a foliation groupoid.
% Actually, an entirely similar story holds in greater generality for any Lie groupoid $\cG$, with the exception that the restriction to a complete transversal is not \'etale unless $\cG$ is a foliation groupoid, and this will be relevant for us later.

A fundamental property that will be used repeatedly is the following: in an \'etale groupoid $\eE_T\tto T$, any arrow $g: x\to y$ induces a germ of diffeomorphisms:
\begin{equation}\label{germ-of-bisection}
\sigma_{g}: (T, x) \to (T, y).
\end{equation}  
To define it choose a neighborhood $U$ of $g$ in $\eE_T$ where both $s$ and $t$ restrict to local
diffeomorphisms and then take the germ at $x$ of $\sigma_g:= (t|_U)\circ (s|_U)^{-1}$.

Given a foliation groupoid $\cE\tto M$, if the restriction $\eE_{T}$ to some complete transversal $T$ is effective then the same
holds for any other transversal. In such case we say that $\cE$ is an {\bf effective foliation groupoid}. One can characterize holonomy groupoids as follows:

\begin{proposition}[\cite{CrMo}]
\label{prop-red-compl-transv} 
A foliation groupoid $\cE\tto M$ is the holonomy groupoid of the induced
foliation on the base iff $\cE$ is s-connected and effective. 
\end{proposition}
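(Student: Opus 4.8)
The plan is to show that, under the hypotheses, the canonical map $\hol_{\cE}\colon \cE\to \Hol(M,\cF)$ provided by Theorem~\ref{seq-mon-G-hol} is an isomorphism of Lie groupoids, and conversely that the holonomy groupoid always has the two stated properties. For the ``if'' direction, assume $\cE$ is s-connected and effective. Since $\cE$ is a foliation groupoid, the anchor of its Lie algebroid is injective --- its kernel is the Lie algebra of the isotropy groups, discrete by Proposition~\ref{fol-groupoid-isotropy} --- so its image is a constant-rank involutive subbundle of $TM$, hence integrates to a regular foliation $\cF$ on $M$ whose leaves are the orbits of $\cE$. Now $\cE$ is an s-connected integration of $\cF$, so Theorem~\ref{seq-mon-G-hol} provides a morphism of Lie groupoids $\hol_{\cE}\colon \cE\to \Hol(M,\cF)$, over $\mathrm{id}_M$, which is a surjective submersion. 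Since an injective submersion is a local diffeomorphism, it suffices to prove that $\hol_{\cE}$ is injective: it is then a bijective local diffeomorphism, i.e.\ a diffeomorphism, and being a groupoid morphism, an isomorphism of Lie groupoids.

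To prove injectivity, fix a complete transversal $T$ and pass to the associated \'etale groupoid $\eE_T$ as in (\ref{ET}); since $\hol_{\cE}$ is over $\mathrm{id}_M$ it restricts to a local diffeomorphism $\eE_T\to\Hol(M,\cF)|_T$ over $\mathrm{id}_T$. Being a local diffeomorphism commuting with $s$ and $t$, this restriction intertwines the germ maps (\ref{germ-of-bisection}): $\sigma_{\hol_{\cE}(g)}=\sigma_g$ for every arrow $g$ of $\eE_T$. Hence if $g,h\colon x\to y$ are arrows of $\eE_T$ with $\hol_{\cE}(g)=\hol_{\cE}(h)$, then $\sigma_g=\sigma_{\hol_{\cE}(g)}=\sigma_{\hol_{\cE}(h)}=\sigma_h$, so $\sigma_{gh^{-1}}$ is the identity germ at $y$, and effectiveness of $\cE$ forces $gh^{-1}=1_y$, i.e.\ $g=h$. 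For arbitrary $g,h\colon x\to y$ in $\cE$ with $\hol_{\cE}(g)=\hol_{\cE}(h)$, pick a point $x'\in T$ on the orbit through $x$ and arrows $a\colon x\to x'$, $b\colon y\to x'$ of $\cE$ (possible since $T$ is complete); conjugating $g$ and $h$ by $a$ and $b$ yields arrows $\tilde g,\tilde h$ of $\eE_T$ with $\hol_{\cE}(\tilde g)=\hol_{\cE}(\tilde h)$, hence $\tilde g=\tilde h$, and therefore $g=h$. Thus $\hol_{\cE}$ is injective, and the ``if'' direction follows.

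For the ``only if'' direction, $\Hol(M,\cF)$ is s-connected by construction, its source fibers being the images under the map (\ref{mon-hol}) of the connected source fibers of $\Mon(M,\cF)$. It is also effective: an arrow of $\Hol(M,\cF)|_T$ is a holonomy class $[\gamma]$ of a leafwise path from $x\in T$ to $y\in T$, and the germ $\sigma_{[\gamma]}$ it induces on $T$ is precisely the germ of its holonomy transformation; if this germ is trivial at $x$, then $[\gamma]$ has trivial germ of holonomy and so $[\gamma]=1_x$ by the very definition of the holonomy groupoid.

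The step I expect to be the main obstacle is the identity $\sigma_{\hol_{\cE}(g)}=\sigma_g$ in the second paragraph --- verifying carefully that $\hol_{\cE}$ leaves unchanged the transverse holonomy germ of an arrow, so that effectiveness genuinely transfers along it; once this is in place, the remainder is a routine combination of Theorem~\ref{seq-mon-G-hol} with elementary differential topology.
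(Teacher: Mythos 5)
Your proof is correct. The paper itself offers no argument for this proposition --- it is quoted from \cite{CrMo} --- and your proof is essentially the standard one from that reference: reduce to the \'etale restrictions over a complete transversal, observe that the canonical map $\hol_{\cE}$ of Theorem~\ref{seq-mon-G-hol} preserves the germs (\ref{germ-of-bisection}) since it is a local diffeomorphism commuting with $s$ and $t$, and conclude that effectiveness is exactly injectivity of $\hol_{\cE}$, while s-connectedness and effectiveness of $\Hol(M,\cF)$ itself are built into its definition.
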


\begin{example}\label{ex-local-linear-models-doi} Consider the linear foliation $\cF_\lin$ in Example
\ref{ex-local-linear-models} associated with a $\Gamma$-cover $\hat{S}\to S$ and a linear action $\Gamma\action V$. 
A  complete transversal to $\cF_\lin$ is furnished by $V$ sitting inside the linear local model $\hat{S}\times_{\Gamma} V$ 
as $v\mapsto [x, v]$, where $x\in \hat{S}$ is fixed. The restriction of the Lie groupoid  (\ref{fol-gpds-loc-mod}) to this transversal is 
isomorphic to the action groupoid $\Gamma\ltimes V\tto V$. Therefore, Proposition \ref{prop-red-compl-transv} immediately implies 
part (ii) in Proposition \ref{gpd-of-the-local-model}.
\end{example}

\subsection{Morita equivalence}\label{ssec:Morita}
Morita equivalence is relevant to our story for it is the equivalence that reflects the ``transverse geometry'' or the ``geometry of the leaf space''. Let
us recall its precise definition using bibundles \cite{Hae}. For more details we refer to \cite{ALR,BW, Ha08, Lerman, Mrcun}.

A  {\bf Morita equivalence} between two Lie groupoids $\cG_i\tto M_i$, $i\in \{1, 2\}$, also called a {\bf Morita bibundle},
denoted by $P: \cG_1\simeq \cG_2$ and illustrated by the diagram
\[
\xymatrix{
 \G_1 \ar@<0.25pc>[d] \ar@<-0.25pc>[d]  & \ar@(dl, ul) & P\ar[dll]^-{\qq_1}\ar[drr]_-{\qq_2} & \ar@(dr, ur)   & \cG_2 \ar@<0.25pc>[d] \ar@<-0.25pc>[d]\\
M_1 & & & & M_2}
\]
is given by a smooth manifold $P$, endowed with:
\begin{itemize}
\item surjective submersions $\qq_1:P\to M_1$ and $\qq_2:P\to M_2$;
\item  commuting groupoid actions on $P$ of $\cG_1$ from the left, making $\qq_2:P\to M_2$ into a principal $\cG_1$-bundle, 
and of $\cG_2$ from the right, making $\qq_1:P\to M_1$ into a principal $\cG_2$-bundle. 
\end{itemize}

Given a Morita equivalence $P: \cG_1\simeq \cG_2$ one finds that:
\begin{enumerate}[(i)]
\item there is a homeomorphism of the orbit spaces $M_1/\cG_1$ and $M_2/\cG_2$, where two orbits
$\mathcal{O}_i\subset M_i$ correspond to each other iff
$\qq_{1}^{-1}(\mathcal{O}_1)= \qq_{2}^{-1}(\mathcal{O}_2)$;
\item if $x_1\in\O_1$ and $x_2\in \O_2$ are points in orbits in this correspondence, then the isotropy 
Lie groups $\cG_{1, x_1}$ and $\cG_{2, x_2}$ are isomorphic;
\item the groupoid $\cG_1$ is proper/Hausdorff iff the groupoid $\cG_2$ is.
\end{enumerate}
% These properties illustrate how Morita equivalence codifies the ``geometry of the leaf space". 

\begin{example}[Gauge groupoids]\label{ex:gauge} 
Given a Morita equivalence $P$ as above, the groupoid $\cG_1$ can be recovered from $\cG_2$ together with $P$ and its structure of principal $\cG_2$-bundle over $M_1$: $\cG_1$ will be isomorphic to the {\bf gauge groupoid} 
\[ \Gauge{\cG_2}{P}:=\left( P\times_{M_2} P/\cG_2 \tto M_1 \right),\]
the quotient of the submersion groupoid associated to $\qq_2: P\to M_2$ (Example \ref{ex-simple-foliations}) modulo the (diagonal) action of $\cG_2$. The isomorphism is induced by the division map $P\times_{M_2}P\to \cG_1$.
\end{example}

For a foliation groupoid $\cE\tto M$ and any complete transversal $T$, the groupoids $\cE$ and $\eE_{T}$ (see \eqref{ET}) are Morita equivalent: $P:= t^{-1}(T)$ defines a Morita bibundle, where $\qq_1$ and $\qq_2$ are the restrictions of $s$ and $t$, respectively, and the actions are given by
the multiplication of $\cE$. This leads to the following characterization of foliation groupoids, which is a refinement of Proposition \ref{fol-groupoid-isotropy}:

\begin{proposition}[\cite{CrMo}] 
A Lie groupoid $\cE$ is a (proper) foliation groupoid iff  it is Morita equivalent to a (proper) \'etale groupoid.
\end{proposition}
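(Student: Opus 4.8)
The plan is to derive both implications directly from Proposition~\ref{fol-groupoid-isotropy} — the characterisation of foliation groupoids as the Lie groupoids with discrete isotropy — together with the invariance properties (i)--(iii) of Morita equivalence recorded above and the concrete Morita bibundle $P=t^{-1}(T)$ relating a foliation groupoid to its restriction $\eE_T$ along a complete transversal. Essentially no new geometry is needed; the work is in assembling these ingredients and in one transversality check.

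For the forward implication, suppose $\cE\tto M$ is a foliation groupoid, with induced foliation $\cF$ on $M$. First I would choose a complete transversal $T$ — a submanifold of $M$ meeting every orbit transversally, e.g.\ a disjoint union of small transverse disks — and form the restriction $\eE_T:=\cE|_T\tto T$, which is a Lie groupoid because $T$ is transverse to the orbits. The key point is then that $\eE_T$ is \'etale: over a point $x\in T$ its source fibre is $(t|_{s^{-1}(x)})^{-1}(T\cap L_x)$, where $L_x$ is the leaf through $x$; since the isotropy of $\cE$ is discrete (Proposition~\ref{fol-groupoid-isotropy}), the target map $t|_{s^{-1}(x)}\colon s^{-1}(x)\to L_x$ is a submersion between manifolds of the same dimension, hence a local diffeomorphism, and $T\cap L_x$ is discrete because $T$ is transverse to $\cF$, so the source fibre is discrete and $s\colon\eE_T\to T$ is a local diffeomorphism. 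Finally, $\cE$ and $\eE_T$ are Morita equivalent via $P=t^{-1}(T)$ (as recalled just above the statement), and if $\cE$ is proper then so is $\eE_T$ by the Morita invariance of properness, property~(iii).

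For the converse, suppose $\cE\simeq\cH$ for some \'etale groupoid $\cH\tto T$. Since the source map of $\cH$ is a local diffeomorphism, every source fibre — and in particular every isotropy group $\cH_y\subset s^{-1}(y)$ — is discrete. By property~(ii), each isotropy group $\cE_x$ is isomorphic to an isotropy group of $\cH$, hence discrete; so $\cE$ is a Lie groupoid with discrete isotropy and is therefore a foliation groupoid by Proposition~\ref{fol-groupoid-isotropy}. If in addition $\cH$ is proper, then $\cE$ is proper by property~(iii), which finishes the argument.

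The only step demanding genuine verification is the \'etaleness of $\eE_T$ in the forward implication: one must see that restricting to a complete transversal trades the positive-dimensional source fibres of $\cE$ for discrete ones, and that this happens precisely because the isotropy is discrete — this is the transversality/dimension bookkeeping sketched above, and it is exactly the reason "foliation groupoid" is the right hypothesis (for a general Lie groupoid the restriction to a complete transversal is not \'etale). Everything else is a routine application of Proposition~\ref{fol-groupoid-isotropy} and of the invariance properties (i)--(iii) of Morita equivalence.
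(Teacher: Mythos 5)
Your proof is correct and follows exactly the route the paper sets up (it only cites \cite{CrMo} for this statement): the forward direction is the Morita equivalence $\cE\simeq\eE_T$ via the bibundle $P=t^{-1}(T)$, with étaleness of $\eE_T$ coming from discreteness of the isotropy plus transversality of $T$, and the converse is Proposition~\ref{fol-groupoid-isotropy} combined with the Morita invariance of isotropy groups and of properness. The transversality/dimension bookkeeping you flag as the one nontrivial step is handled correctly.
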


\begin{remark}[Haefligers's approach to transverse geometric structures]\label{rmk-Transversal geometric structures} Let us call {\bf a Haefliger sheaf on $\R^q$} any sheaf on $\R^q$ that comes together with an action of local diffeomorphisms $\phi: U\to V$ between opens in $\R^q$; that means that any such $\phi$ induces a bijection $\phi_*: \cS(U)\to \cS(V)$ and $\phi_*$ is compatible with the restriction maps. More formally, $\cS$ is a $\Gamma^q$-sheaf, where $\Gamma^q\tto \R^q$ is the Haefliger groupoid, whose space of arrows consists of germs of local diffeomorphisms, with the sheaf topology. A good example to keep in mind is %$\cS=\Omega^\bullet$, 
the sheaf of differential forms.

Fix such a Haefliger sheaf $\cS$. Using local charts, $\cS$ extends to all $q$-dimensional manifolds, giving rise to a functor defined on the category $\textrm{Man}^q$ consisting of $q$-dimensional manifolds and local diffeomorphisms between them. 
This extension is unique if we require it to have the same properties as $\cS$, but now with respect to diffeomorphisms between manifolds; it will be denoted by the same letter $\cS$.

Given an \'etale groupoid $\eE\rightrightarrows T$ over a $q$-dimensional manifold, the sheaf property allows us to define $\cS(\eE)$ as the set of $\eE$-invariant structures on the base,
\[ \cS(\eE):= \cS(T)^{\eE} .\]
More precisely, for any arrow $g$ of  $\eE$ we have a germ $\sigma_g$ of a local diffeomorphism of $T$ from $s(g)$ to $t(g)$, see
(\ref{germ-of-bisection}), and $u\in \cS(T)^\eE$ iff $\sigma_g$ takes $\textrm{germ}_{s(g)}(u)$ to $\textrm{germ}_{t(g)}(u)$, for any $g\in\eE$. In other words, $\cS(T)$ is a $\eE$-sheaf and $\cS(\eE)$ is the space of its invariant sections. Moreover, any Morita equivalence $\eE_1 \simeq \eE_2$ of 
\'etale groupoids induces a bijection $\cS(\eE_1)\cong \cS(\eE_2)$, and this construction is natural with respect to composition of Morita equivalences.

The last property allows us to further extend $\cS$ to arbitrary foliation groupoids $\cE\tto M$ by making use of the restrictions
$\eE_{T}$ (see (\ref{ET})). To make the definition independent of the choice of $T$, we define $\cS(\cE)$ as the set of collections 
\[  u= \{u^{T}\}\]
of elements $u^T\in \cS(\eE_T)$, one for each complete transversal $T$, with the property that 
for any two such transversals $T_1$ and $T_2$, $u^{T_1}$ to $u^{T_2}$ correspond to each other via the map
$\cS(\eE_{T_1}) \cong \cS(\eE_{T_2})$ induced by the natural Morita equivalence between 
$\eE_{T_1} $ and $\eE_{T_2}$ (i.e. the composition of the Morita equivalences $\eE_{T_1} \simeq \cE \simeq \eE_{T_2}$ or, more directly, the Morita equivalence defined by the Morita bibundle $\eE(T_1, T_2)$ of arrows starting in $T_1$ and landing in $T_2$). With this, it is clear that once a complete transversal $T$ is fixed, the obvious map 
$\cS(\cE) \to \cS(\eE_T)$ is 1-1.

Therefore, given a codimension $q$ foliated manifold $(M, \cF)$ one can define the set $\cS(M/\cF)$ 
of {\bf transverse $\cS$-structures} on $(M, \cF)$ as $\cS(\Hol(M,\cF))$. For instance, if one considers differential forms on manifolds, the space of transverse forms for $(M, \cF)$ is the space $\Omega^{\bullet}(T)^{\Hol(M,\cF)}$ of differential forms on a complete transversal $T$, invariant under holonomy. Similarly for transverse Riemannian metrics, transverse measures, transverse symplectic forms, etc. 

One can often remove the ambiguity coming from the choice of a complete transversal by representing a transverse $\cS$-structure 
directly at the level of $M$. For instance, in the case of $\cS= \Omega^{\bullet}$ one looks at the basic forms 
\[ \Omega^{\bullet}(M)_{\cF-\bas}:= \{ \omega\in \Omega^{\bullet}(M): i_{V}\omega= 0, \ \Lie_{V}\omega= 0 \ \ \ \textrm{for\ all}\ V\in \Gamma(\cF)\}.\]
Restriction from $M$ to any complete transversal $T$ induces an isomorphism:
% \begin{equation}\label{basic-forms-on-T}
\[
\Omega^{\bullet}(M)_{\cF-\bas}\cong \Omega^{\bullet}(T)^{\Hol(M,\cF)}, \omega\mapsto \omega|_{T}.
\]
% \end{equation}
Hence, $\Omega^{\bullet}(M)_{\cF-\bas}$ yields a concrete realization of $\cS(M/\cF)$ at the level of $M$. 
\end{remark}

\subsection{Foliations of $\mathcal{C}$-types} 
We turn now to the study of foliations of compact types. Any such foliation has linear holonomy. In fact, we have:

\begin{lemma} \label{lemma-gen-pr-gpds} 
Let $(M,\cF)$ be a foliation of proper (respectively, s-proper) type. Then
its leaves  are closed embedded (respectively, compact) submanifolds, the holonomy groups are finite,
and the orbit space is Hausdorff. If $(M,\cF)$ is strong proper, then the leaves
have finite fundamental group.
\end{lemma}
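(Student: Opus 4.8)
The plan is to work directly with an integrating groupoid and push all the information down to $\cF$. By definition of $\cC$-type there is an s-connected Hausdorff Lie groupoid $\cE\tto M$ integrating $\cF$ and having property $\cC$; by Proposition \ref{fol-groupoid-isotropy} it is a foliation groupoid, so all isotropy groups $\cE_x$ are discrete, and since it is s-connected its orbits are exactly the leaves of $\cF$. I would first reduce the s-proper case to the proper one: if the source $s$ is proper then for compacts $K,K'\subseteq M$ one has $(t,s)^{-1}(K\times K')=t^{-1}(K)\cap s^{-1}(K')$, a closed subset of the compact set $s^{-1}(K')$, hence compact; so $\cE$ is proper. (The same observation shows compact type $\Rightarrow$ s-proper type.) Thus it suffices to treat a proper $\cE$, recording separately that s-properness moreover gives compact source fibres.

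Assume $\cE$ is proper. Then $\cE_x=(t,s)^{-1}(x,x)$ is compact and discrete, hence finite. To pass to the holonomy groups I would use Theorem \ref{seq-mon-G-hol}: the canonical map $\hol_\cE\colon\cE\to\Hol(M,\cF)$ is a surjective submersion of groupoids over $M$, so it restricts to a surjection $\cE_x\twoheadrightarrow\Hol_x(M,\cF)$ on isotropy, and a quotient of a finite group is finite. (This is the step that lets us work with an arbitrary integration rather than the holonomy groupoid itself.)

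For the leaves being closed embedded submanifolds, the key point is that the inclusion of the leaf $L=\cO_x=t(s^{-1}(x))$ into $M$ is a \emph{proper} map: the orbit map $\mu\colon s^{-1}(x)\to M$, $g\mapsto t(g)$, has $\mu^{-1}(K)=(t,s)^{-1}(K\times\{x\})$ compact for every compact $K$, and $\mu$ factors as a surjective submersion $s^{-1}(x)\to L$ followed by the inclusion $L\hookrightarrow M$, so the latter inherits properness. A proper injective immersion between manifolds is a closed embedding (properness into a locally compact Hausdorff space yields a closed map, a closed continuous injection is a homeomorphism onto its image, and an immersion which is topologically an embedding is a submanifold); hence $L$ is closed and embedded, and if $\cE$ is moreover s-proper then $L=t(s^{-1}(x))$ is compact, being the continuous image of a compact set. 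For the orbit space $B=M/\cF=M/\cE$: the orbit equivalence relation $R=(t,s)(\cE)\subseteq M\times M$ is closed since $(t,s)$ is proper, while the quotient map $\pi\colon M\to B$ is open because $\pi^{-1}(\pi(U))=t(s^{-1}(U))$ is open; an open quotient by a closed equivalence relation is Hausdorff. (Alternatively one could pass to a complete transversal $T$, observe that $\eE_T$ is proper \'etale by Morita-invariance of properness, and cite the standard structure theory of proper \'etale groupoids, but the direct argument is short.)

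Finally, for strong proper type the canonical integration is the monodromy groupoid $\Mon(M,\cF)$, whose isotropy at $x$ is the group $\pi_1(L_x,x)$ of leafwise-homotopy classes of leafwise loops at $x$; applying the argument of the second paragraph with $\cE=\Mon(M,\cF)$ shows $\pi_1(L_x,x)$ is discrete and compact, hence finite (and strong proper type implies proper type, so all the earlier conclusions hold too). The step I expect to require the most care is the embeddedness of the leaves — one must check that the leaf topology coincides with the subspace topology, which is exactly where properness of the anchor $(t,s)$, and not merely closedness of the orbits, enters; the rest is a combination of Proposition \ref{fol-groupoid-isotropy}, Theorem \ref{seq-mon-G-hol}, and elementary point-set topology of proper maps.
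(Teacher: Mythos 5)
Your proof is correct and follows essentially the same route as the paper: the paper simply cites the standard facts that a proper Lie groupoid has closed embedded (resp.\ compact) orbits, compact isotropy groups and Hausdorff orbit space, and then notes that the isotropy groups of an s-connected integration surject onto the holonomy groups (and, for strong proper type, that the isotropy of $\Mon(M,\cF)$ is $\pi_1$ of the leaf). You have merely supplied the point-set details of that cited ``simple topological argument,'' all of which check out.
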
 

\begin{proof}
A simple topological argument implies that for a proper (respectively, s-proper) Lie groupoid all the orbits are closed embedded (respectively, compact) submanifolds, the isotropy groups are compact, and the orbit space, furnished with the quotient topology, is Hausdorff (see e.g. \cite{CS,We3,Zu}). For an s-connected integration $\cE$ of $\cF$, the isotropy groups of $\cE$ surject onto the holonomy groups. 
\end{proof}

% We will discuss later whether the properties stated Lemma  \ref{lemma-gen-pr-gpds}  characterize properness and/or s-properness of a foliation.

While strong $\mathcal{C}$-types were defined using the monodromy groupoids, we claim that the $\mathcal{C}$-types can be checked using the holonomy groupoids:

\begin{theorem}\label{prop-fol-crit-C} 
A foliation is of $\mathcal{C}$-type iff its holonomy groupoid has property $\mathcal{C}$.
\end{theorem}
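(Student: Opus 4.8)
The plan is to prove the two implications separately. The ``if'' direction is immediate: $\Hol(M,\cF)$ is always an s-connected Lie groupoid integrating $\cF$, and having property $\mathcal{C}$ presupposes Hausdorffness (by the definition of the compactness types), so if $\Hol(M,\cF)$ has property $\mathcal{C}$ then it is itself a witness that $\cF$ is of $\mathcal{C}$-type.

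For the ``only if'' direction I would fix an s-connected Hausdorff integration $\cE$ of $\cF$ with property $\mathcal{C}$ and use the factorization of Theorem \ref{seq-mon-G-hol}: the surjective submersion and groupoid morphism over $\mathrm{id}_M$
\[ \hol_{\cE}\colon \cE \longrightarrow \Hol(M,\cF). \]
The key point is that $\hol_{\cE}$ is an open continuous surjection whose induced equivalence relation $R\subset\cE\times\cE$ is \emph{closed}, so that $\Hol(M,\cF)=\cE/R$ is Hausdorff. Since $\hol_{\cE}$ is a groupoid morphism, its fibres are cosets of $N:=\ker(\hol_{\cE})$ (the preimage of the units $M\subset\Hol(M,\cF)$), so $R=\{(g,kg):g\in\cE,\ k\in N,\ s(k)=t(g)\}$; and a limit $(g_n,k_ng_n)\to(a,b)$ in $\cE\times\cE$ forces $k_n=(k_ng_n)g_n^{-1}\to ba^{-1}$, so $R$ is closed provided $N$ is closed in $\cE$. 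To see that $N$ is closed I would use that property $\mathcal{C}$ always forces $\cF$ to be of proper type (compact $\Rightarrow$ s-proper $\Rightarrow$ proper), so by Lemma \ref{lemma-gen-pr-gpds} the holonomy groups are finite and hence, by Bochner's theorem, $\cF$ has linear holonomy; then $\Hol(M,\cF)=\Hol^{\lin}(M,\cF)$, so $N$ is precisely the set of arrows $g\in\cE$ acting as the identity on the normal fibre $\nu_{s(g)}$ of $\nu=TM/T\cF$ under the (smooth) linear holonomy action of $\cE$. This locus is cut out of the closed isotropy subset $\cE_M=(s,t)^{-1}(\Delta_M)$ by a closed condition, so $N$ is closed.

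Once Hausdorffness is established, property $\mathcal{C}$ for $\Hol(M,\cF)$ is purely formal. From $(s,t)_{\cE}=(s,t)_{\Hol}\circ\hol_{\cE}$, the surjectivity of $\hol_{\cE}$, and the elementary fact that if $g\circ f$ is proper and $f$ is surjective then $g$ is proper, one deduces that $(s,t)_{\Hol}$ is proper whenever $\cE$ is proper; the same argument applied to $s_{\cE}=s_{\Hol}\circ\hol_{\cE}$ handles the s-proper case; and in the compact case $\Hol(M,\cF)=\hol_{\cE}(\cE)$ is the continuous image of a compact space, hence compact. The hard part is the Hausdorffness of $\Hol(M,\cF)$: this is the one step carrying genuine content, and it hinges on the fact that ``trivial linear holonomy'' is a closed condition while ``trivial holonomy germ'' is not (compare Example \ref{ex-one-sided-hol}) --- which is exactly why the finiteness of the holonomy groups, and hence the presence of linear holonomy, coming from Lemma \ref{lemma-gen-pr-gpds} is what makes the argument work.
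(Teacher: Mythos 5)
Your proof is correct, but it takes a genuinely different route from the paper's. The paper reduces everything to the local normal form of Theorem \ref{thm-gen-model}: it restricts to small saturated opens, identifies the restriction of $\Hol(M,\cF)$ there with the explicit groupoid $(P\times_V P)/\Gamma$ of (\ref{new-model-gpd}) — after first passing to the effective quotient $P_0=P/K$, $\Gamma_0=\Gamma/K$ and invoking Proposition \ref{prop-red-compl-transv} — and then reads off Hausdorffness and property $\mathcal{C}$ from that explicit model. You instead argue globally: writing $\Hol(M,\cF)$ as the quotient of $\cE$ by $N=\ker(\hol_{\cE})$ via Theorem \ref{seq-mon-G-hol}, you reduce Hausdorffness to the closedness of $N$, which you obtain by identifying $N$ with the kernel of the smooth normal representation of $\cE$ on $\nu(\cF)$; that identification is exactly the statement that $\cF$ has linear holonomy, which follows from the finiteness of the holonomy groups (Lemma \ref{lemma-gen-pr-gpds}) together with the Bochner remark recorded in Section \ref{ssec:foliation-groupoids}. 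Both arguments therefore rest on the same Bochner-type linearization input, but yours bypasses the normal-form machinery entirely and isolates the precise mechanism by which Hausdorffness can fail (``trivial linear holonomy'' is closed, ``trivial holonomy germ'' is not — cf. Example \ref{ex-one-sided-hol}), whereas the paper's route has the advantage of simultaneously producing the local model, which it needs elsewhere. The remaining descent of properness, s-properness and compactness along the surjective submersion $\hol_{\cE}$ is, as you say, formal in both treatments.
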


The proof of Theorem \ref{prop-fol-crit-C} is deferred until the next section, where we discuss normal forms for foliations. For now, we look at some examples. One should keep in mind that a foliation $(M,\cF)$ is of compact type iff it is s-proper and $M$ is compact.

\begin{example}\label{ex-simple-foliations-2} A simple foliation $(M,\cF)$ as in Example \ref{ex-simple-foliations} is always of proper type, it is of s-proper type iff $p$ is proper and it is of compact type iff $M$ is compact.
\end{example}

%v \begin{example}\label{ex-simple-foliations-2} A simple foliation $(M,\cF)$ is:
% \begin{itemize}
% \item always of proper type;
% \item of s-proper type if $p$ is proper;
% \item of compact type if $M$ is compact.
% \end{itemize}
% \end{example}

\begin{example}\label{ex-local-linear-models-2}  For the linear foliation $(\hat{S}\times_{\Gamma} V, \cF_{\lin})$ the explicit integrations \eqref{fol-gpds-loc-mod} and \eqref{mon-loc-mod} together with  Proposition \ref{gpd-of-the-local-model} imply that $\cF_{\lin}$ is:
\begin{itemize} 
 \item proper (respectively, $s$-proper) iff $\Gamma$ is finite (respectively, $\Gamma$ is finite and $S$ is compact);
 \item strong proper (respectively, strong s-proper) iff $\pi_1(S)$ is finite (respectively, $\pi_1(S)$ is finite
 and $S$ is compact);
 \item never of compact type.
\end{itemize}
\end{example}

Note that in the definition of $\cC$-type one requires the foliation groupoid to be s-connected. One should be aware of the following phenomena:
\begin{enumerate}[(i)]
\item If a foliation groupoid $\cE\tto M$ is proper, then passing to its source connected component $\cE^0\subset \cE$
may destroy properness.
  \item If a foliation $(M, \cF)$ is of proper type and $U\subset M$ is open, then $(U, \cF|_{U})$ may fail to be of proper type. 
\end{enumerate}
In fact, notice that for a foliation $(M, \cF)$ and an open set $U\subset M$, the leaves of $\cF|_{U}$
are the connected components of the intersections of the leaves of $\cF$ with $U$. Although the restriction $\Hol(M, \cF)|_U$ still integrates $\cF|_{U}$, it may fail to be s-connected. Passing to the associated s-connected groupoid, one gets precisely the holonomy groupoid of $\cF|_{U}$:
\begin{equation}\label{hol-restr-to-U} 
\Hol(U, \cF|_{U})= (\Hol(M, \cF)|_{U})^{0} .
\end{equation}
Examples illustrating (i) and (ii) can then be constructed even starting from a simple foliation.
For instance, consider $M= \R^2$ with the foliation induced by the second projection
and restrict it to $U= \R^2\setminus \{0\}$.
Then $(U, \cF|_{U})$ is not proper because its leaf space is not Hausdorff. In particular,
$\Hol(M, \cF)|_{U}$ is proper 
while its source connected component is not.

Lemma \ref{lemma-gen-pr-gpds} gives necessary conditions for the properness of a foliation, but they are not
sufficient as illustrated by the next two examples: 

\begin{example}\label{counter-ex-proper-fol}
There are foliations where all leaves are embedded, the holonomy groups are finite, and the leaf space is Hausdorff,
but are not of proper type:  consider the linear foliation of the M\"obius band $M$ by circles. The middle circle $C$
is the only leaf with non-trivial holonomy.

Restrict now this foliation to the open $U$ obtained from $M$ by removing one point in  $C$.
In this way, the leaf space remains unchanged, is Hausdorff, and the leaves clearly have the desired properties.
However, the holonomy groupoid is not proper, as can be seen by considering the holonomy group of the initial foliation, given by
(\ref{fol-gpds-loc-mod}), and then restricting as in (\ref{hol-restr-to-U}). From Theorem \ref{prop-fol-crit-C} we conclude 
that $(U, \cF|_{U})$ is not of proper type.
\end{example}

\begin{example} 
There are foliations where all leaves are embedded, the homotopy groups are finite, and the leaf space is Hausdorff,
but are not of strong proper type:  consider the first projection $p:\R^5\to\R$ and on the fiber $p=0$ remove the complement
of a tubular neighborhood of an embedding $\mathbb{P}^2\subset \{0\}\times\R^4$. The resulting submersion 
$p:M\to\R$ defines a simple foliation whose leaves have the desired properties. Since a curve in $\mathbb{P}^2$ which is non-trivial in homotopy 
is a vanishing cycle,  the monodromy groupoid cannot be Hausdorff. However, being an instance of a simple foliation, it is of proper type. 
\end{example}

These examples indicate that properness is more difficult to check directly. The situation is quite different for $s$-properness and compactness when the local Reeb stability implies a converse to Lemma  \ref{lemma-gen-pr-gpds}. This brings us to normal forms.

\subsection{Normal forms}
We start with the the standard local Reeb stability:

% The simplest local normal form for a foliation is the well-known local Reeb stability:

\begin{theorem}[Local Reeb stability]\label{local-Reeb-stab} Let $(M, \cF)$ be a codimension $q$ foliation. If $S$ is a compact leaf with finite holonomy 
group, then there exists a saturated neighborhood $U$ of $S$ and a foliated isomorphism
\[ (U, \cF|_{U})\overset{\cong}{\longrightarrow} (\hat{S}\times_{\Gamma} \nu_x(S) , \cF_{\lin}),\]
where the right hand side is the linear model associated to the linear holonomy action of the holonomy group
$\Gamma= \Hol_x(M, \cF)$ at some point $x\in S$ (Example \ref{ex-local-linear-models}). 
\end{theorem}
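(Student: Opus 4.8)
The plan is to prove local Reeb stability by constructing the foliated isomorphism in three stages: first producing a transverse slice with finite holonomy action, then saturating it to a neighborhood of $S$, and finally identifying the result with the linear local model.

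First I would fix a point $x \in S$ and a small transversal $T_x \cong \nu_x(S)$ at $x$. Since $S$ is compact, the holonomy pseudogroup of $\cF$ restricted to $S$, acting on $T_x$, is finitely generated; since the holonomy group $\Gamma = \Hol_x(M,\cF)$ is finite, after shrinking $T_x$ one can arrange that the holonomy action of $\Gamma$ on $T_x$ is well-defined and that all holonomy transformations along loops based at $x$ are restrictions of this $\Gamma$-action — i.e. the germ-level action integrates to an honest finite group action on a small enough slice. This is the standard Reeb argument: compactness of $S$ lets one choose a finite foliated cover of $S$ by plaque-boxes, the cocycle of transition holonomies is locally constant near $x$ because $\Gamma$ is finite, and one uses an averaging/shrinking argument (Bochner linearization is \emph{not} needed here since we only want a saturated model, not a linear one — linearity comes from the hypothesis that $(M,\cF)$ has linear holonomy, which by the discussion before Example~\ref{ex-local-linear-models} is automatic when holonomy groups are finite).

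Next I would take $\hat{S} \to S$ to be the holonomy cover of $S$, which is a $\Gamma$-principal bundle since $\Gamma$ is finite, and form the associated bundle $\hat{S} \times_\Gamma T_x$. Parallel transport along leafwise paths gives a well-defined map from a saturated neighborhood $U$ of $S$ to this associated bundle: a point near $S$ lies on a nearby leaf, and following it back to the slice $T_x$ along a leafwise path gives an element of $T_x$ well-defined up to the $\Gamma$-action, hence a point of $\hat{S} \times_\Gamma T_x$. One checks this map is a foliated diffeomorphism onto a neighborhood of the zero section: it is a local diffeomorphism by the local product structure of the foliation, it is injective on a saturated neighborhood because distinct nearby leaves stay distinct (here one uses that $S$ has finite, hence ``closed'', holonomy so nearby leaves do not accumulate pathologically), and it clearly sends plaques to plaques, so the trivial product foliation $\{\hat S \times \{v\}\}$ pushes to $\cF|_U$. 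Identifying $T_x$ with $\nu_x(S)$ via the exponential map of an auxiliary metric, and using linear holonomy to see the $\Gamma$-action is the linear one, yields exactly $(\hat{S} \times_\Gamma \nu_x(S), \cF_{\lin})$.

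The main obstacle is the \textbf{global injectivity/saturation step}: making precise that the holonomy action on the slice genuinely integrates to a finite group action on a \emph{fixed} neighborhood independent of the chosen leafwise path, and that the resulting map is injective on a saturated open set rather than just an immersion. This requires the uniformity coming from compactness of $S$ together with finiteness of $\Gamma$ — one needs that the collection of holonomy germs along \emph{all} loops is realized by the single finite group acting on one slice, which is where the ``finite holonomy'' hypothesis is essential and where care is needed to avoid circularity with the monodromy/holonomy comparison of Theorem~\ref{seq-mon-G-hol}. Everything else (local product structure, associated bundle construction, transport of the foliation) is routine.
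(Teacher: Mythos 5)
The paper does not actually prove Theorem \ref{local-Reeb-stab} where it is stated; it quotes it as the standard Reeb stability theorem and later recovers it as the $s$-proper specialization of Theorem \ref{thm-gen-model}, whose proof is groupoid-theoretic: restrict a proper integrating groupoid to a transversal, linearize the resulting proper \'etale groupoid around the fixed point $x$ (linearization of proper \'etale groupoids plus Bochner), and reconstruct $\cE|_U$ from the bibundle $P=\hat{S}\times\nu_x(S)$ via the gauge construction. Your proposal follows the classical textbook route instead (integrate the germ-level holonomy to an honest finite group action on a slice, pass to the holonomy cover, and map a saturated neighborhood onto the associated bundle by holonomy transport). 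That route is perfectly viable and is the standard proof of this statement; what the paper's route buys is the generalization to non-compact leaves of proper foliations (Theorem \ref{thm-gen-model}), which your argument would not give.

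There is, however, one genuine error in your write-up: the parenthetical claim that Bochner's linearization theorem is not needed. The target of the isomorphism is the \emph{linear} model $\hat{S}\times_{\Gamma}\nu_x(S)$, with $\Gamma$ acting through its linear holonomy representation, so at some point the honest finite group action of $\Gamma$ on the slice $T_x$ must be conjugated to its linearization on $\nu_x(S)$. The property you invoke --- that $(M,\cF)$ ``has linear holonomy'' whenever the holonomy groups are finite --- only says that $\lin:\Hol(M,\cF)\to\Hol^{\lin}(M,\cF)$ is injective, i.e.\ that holonomy germs are \emph{determined by} their linear parts; it does not say the germs \emph{are} linear in some chart, and the paper obtains that injectivity precisely ``as a consequence of Bochner's linearization theorem'', so your appeal is circular. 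Concretely, your final step of ``identifying $T_x$ with $\nu_x(S)$ via the exponential map of an auxiliary metric'' intertwines the $\Gamma$-action with the linear one only if the metric is $\Gamma$-invariant, and averaging a metric over the finite group and using its exponential map at the fixed point is exactly the proof of Bochner's theorem. The fix is standard (average and apply Bochner, as the paper does in step 2 of the proof of Theorem \ref{thm-gen-model}), but as written the linearization step is unjustified.
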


As promised, an immediate consequence is:

\begin{theorem}\label{thm-s-proper} A foliation is s-proper (respectively, strong s-proper) iff all its  leaves are compact and 
have finite holonomy (respectively, fundamental group). 
\end{theorem}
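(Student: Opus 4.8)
The plan is to prove the equivalence in two directions, using Theorem \ref{prop-fol-crit-C} to reduce ``s-proper type'' (resp. ``strong s-proper type'') to a statement about the holonomy groupoid $\Hol(M,\cF)$ (resp. the monodromy groupoid $\Mon(M,\cF)$), and then using Local Reeb Stability (Theorem \ref{local-Reeb-stab}) to build the global properness statement out of the local models.

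\emph{($\Rightarrow$).} Suppose $\cF$ is s-proper. By Theorem \ref{prop-fol-crit-C}, $\Hol(M,\cF)$ is s-proper, hence by Lemma \ref{lemma-gen-pr-gpds} all leaves of $\cF$ are compact and the holonomy groups (which are quotients of the isotropy groups of any s-connected integration) are finite. For the strong case, one uses that strong s-proper type means $\Mon(M,\cF)$ is s-proper; again Lemma \ref{lemma-gen-pr-gpds} applies, and there it is recorded that s-proper (even just proper) forces the isotropy groups to be compact, while here they are also discrete, hence finite --- and for $\Mon(M,\cF)$ the isotropy group at $x\in S$ is exactly $\pi_1(S,x)$ (the leafwise homotopy classes of loops), so $\pi_1(S)$ is finite.

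\emph{($\Leftarrow$).} This is the substantive direction. Assume every leaf is compact with finite holonomy group. Fix a leaf $S$ and $x\in S$; by Theorem \ref{local-Reeb-stab} there is a saturated neighborhood $U_S$ of $S$ with a foliated isomorphism $(U_S,\cF|_{U_S})\cong(\hat S\times_\Gamma \nu_x(S),\cF_\lin)$ where $\Gamma=\Hol_x(M,\cF)$ is finite. By Example \ref{ex-local-linear-models-doi} (or Example \ref{ex-local-linear-models-2}), the holonomy groupoid of this local model is Morita equivalent to the action groupoid $\Gamma\ltimes \nu_x(S)$, which is proper since $\Gamma$ is finite; and it is s-proper precisely because $S$ is compact (the base $\hat S\times_\Gamma\nu_x(S)$ has $S$ as a deformation retract, so the source fibers, which fiber over $\hat S$, are proper over the source fibers of $\Gamma\ltimes\nu_x(S)$ which are finite). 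Hence $\Hol(M,\cF)|_{U_S}$ restricted to its s-connected component --- which by \eqref{hol-restr-to-U} is $\Hol(U_S,\cF|_{U_S})$ --- is s-proper. The task is then to patch: cover $M$ by such saturated $U_S$'s, observe that $B=M/\cF$ is covered by the corresponding open sets $U_S/\cF$, each of which is (by the local model) of the form $\nu_x(S)/\Gamma$, a Hausdorff locally compact space; conclude that $B$ is Hausdorff and that source-properness of $\Hol(M,\cF)$ can be verified locally over $B$. Concretely, source-properness of $\Hol(M,\cF)$ is equivalent to compactness of $s^{-1}(K)$ for $K\subset M$ compact, and such $K$ meets only finitely many of the saturated pieces after shrinking, so $s^{-1}(K)$ is a finite union of pieces each contained in a source fiber set of one of the s-proper local holonomy groupoids --- this gives compactness. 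Then Theorem \ref{prop-fol-crit-C} upgrades ``$\Hol(M,\cF)$ is s-proper'' to ``$\cF$ is of s-proper type''. The strong case runs identically, replacing $\Gamma=\Hol_x(M,\cF)$ by $\pi_1(S)$ and the local model \eqref{fol-gpds-loc-mod} by \eqref{mon-loc-mod}: finiteness of $\pi_1(S)$ makes $\pi_1(S)\ltimes\nu_x(S)$ proper, compactness of $S$ makes $\Mon$ of the local model s-proper, and these patch to give $\Mon(M,\cF)$ s-proper.

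\emph{Main obstacle.} The delicate point is the patching argument in the ($\Leftarrow$) direction: from local s-properness of holonomy groupoids over the pieces $U_S$ one must deduce genuine s-properness of the global $\Hol(M,\cF)$. One has to be careful that the local models glue as \emph{groupoids} (not just that the foliations glue), that source fibers of $\Hol(M,\cF)$ really decompose according to a cover of $M$, and that the needed compactness of preimages $s^{-1}(K)$ is not spoiled by the gluing; making $B$ Hausdorff first (so that compact subsets behave well) and then working fiberwise over $B$ is the clean way to organize this. I expect the paper handles this via the ``normal forms'' technology announced just before the statement, essentially reducing the global claim to the already-analyzed linear local models together with the Morita-invariance of properness from Section \ref{ssec:Morita}.
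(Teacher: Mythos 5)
Your proposal is correct and follows essentially the same route as the paper: the forward direction is Lemma \ref{lemma-gen-pr-gpds}, and the converse reduces, via Local Reeb Stability, to checking s-properness of the linear local models, which is Example \ref{ex-local-linear-models-2}. The paper compresses your patching discussion into the single remark that ``it suffices to check s-properness on saturated neighborhoods''; your expanded version (covering a compact $K\subset M$ by finitely many saturated opens, shrinking to compacts $K_i$, and using that $\Hol(M,\cF)|_U=\Hol(U,\cF|_U)$ for saturated $U$) is exactly the justification that remark leaves implicit.
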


\begin{proof}
We are left to prove the converse implication. For that notice that it suffices to check s-properness on saturated neighborhoods, i.e., by the previous theorem, on the linear models.  But this was already remarked in Example \ref{ex-local-linear-models-2}. 
\end{proof}

Theorem \ref{thm-s-proper} shows that foliations of s-proper type are the same thing as classical
compact foliations (Remark \ref{classical-compact}) all of whose leaves have finite holonomy. While the leaves of 
an arbitrary classical compact foliation need not have finite holonomy, one of the main results in the subject 
states that for compact foliations the following two conditions are equivalent (\cite{EMS,Ep}):
\begin{enumerate}[(a)]
  \item All leaves have finite holonomy.
  \item The leaf space  is Hausdorff.
\end{enumerate}
% Therefore, we can restate the part of Theorem \ref{thm-s-proper} concerning s-properness as follows:

Hence, the part of Theorem \ref{thm-s-proper}  concerning s-properness  can be restated as:

\begin{corollary}
A foliation is s-proper iff all its leaves are compact and its leaf space is Hausdorff.
\end{corollary}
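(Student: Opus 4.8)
The plan is to derive this statement immediately by combining Theorem~\ref{thm-s-proper} with the classical equivalence recalled just above it: for a \emph{compact} foliation (one all of whose leaves are compact), having all holonomy groups finite is equivalent to the leaf space being Hausdorff (\cite{EMS,Ep}). No new geometric input is needed; the proof is purely an assembly of these two facts.

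First I would treat the forward implication. Suppose $(M,\cF)$ is s-proper. By Theorem~\ref{thm-s-proper} all its leaves are compact and all holonomy groups $\Hol_x(M,\cF)$ are finite. In particular $(M,\cF)$ is a compact foliation, so the cited equivalence applies and shows that the leaf space $M/\cF$ is Hausdorff. Hence s-properness yields both of the asserted properties. (The Hausdorffness of the leaf space is in any case also part of Lemma~\ref{lemma-gen-pr-gpds}, so for this direction one does not even need the Epstein-type result.)

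For the converse, assume all leaves of $(M,\cF)$ are compact and that $M/\cF$ is Hausdorff. Then $(M,\cF)$ is again a compact foliation, and now the other direction of the equivalence of \cite{EMS,Ep} forces every holonomy group $\Hol_x(M,\cF)$ to be finite. Applying Theorem~\ref{thm-s-proper} once more, we conclude that $(M,\cF)$ is s-proper, completing the proof.

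The only substantive ingredient beyond bookkeeping is the nontrivial topological theorem of \cite{EMS,Ep}, and there is no real obstacle in the assembly itself. If one insisted on avoiding that citation, the hard part would be re-deriving, for a compact foliation, the finiteness of holonomy from Hausdorffness of the leaf space — essentially the analysis of the "bad set" and the Epstein stratification — which is far beyond what is needed here; accordingly I would simply invoke the cited result.
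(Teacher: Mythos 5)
Your proof is correct and is exactly the argument the paper intends: the corollary is stated immediately after recalling the Epstein–Millett–Sullivan equivalence between finite holonomy and Hausdorff leaf space for compact foliations, and is obtained by combining that with Theorem~\ref{thm-s-proper} precisely as you do. No differences to report.
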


What about the proper case? For that we need a version of local Reeb stability which holds on saturated neighborhoods of non-compact 
leaves. To achieve such a ``normal form" one needs to enlarge the class of ``local models" allowed. 

\begin{example}[Non-linear local models]
The new local models start with the following data (compare with the linear local models in Example \ref{ex-local-linear-models}):
\begin{enumerate}[(i)]
\item A finite group $\Gamma$ acting linearly on a finite dimensional vector space $V$.
\item A connected manifold $P$ endowed with a free and proper action of $\Gamma$.
\item A $\Gamma$-equivariant submersion $\mu: P\to V$ with connected fibers.
\end{enumerate}
The foliation $\cF(\mu)$ by the fibers of $\mu$ descends to the quotient modulo $\Gamma$ and the new local model is
the resulting foliated manifold
\begin{equation}\label{new-model} 
(P/\Gamma, \cF(\mu)/\Gamma ) .
\end{equation}
The foliations arising in this way are still of proper type. To see this, we exhibit a proper integrating  foliation groupoid. We start with the submersion groupoid associated to $\mu$ (Example \ref{ex-simple-foliations}), denoted $P\times_{V} P$, and we consider the diagonal action of $\Gamma$. The action is free, proper and by groupoid automorphisms. Therefore
\begin{equation} \label{new-model-gpd} 
(P\times_V P)/\Gamma \tto P/\Gamma,
\end{equation}
is a Lie groupoid, which is easily seen to be a proper integration of $\cF(\mu)/\Gamma$. Of course, this is just an instance of the gauge construction from 
Example \ref{ex:gauge}. 

Notice that if one starts with a $\Gamma$-cover $\hat{S}\to S$ and a representation $\Gamma\to \GL(V)$, 
letting $P= \hat{S}\times V$ and $\mu:P\to V$ be the second projection, one recovers the local linear models of Example \ref{ex-local-linear-models}, together with their integration.
\end{example}

Here is our version of Reeb stability for non-compact leaves of proper foliations:

\begin{theorem}\label{thm-gen-model}
If $(M, \cF)$  is a foliation of proper type and $S$ is a leaf, then there exists  data (i)-(iii) as above,
a saturated neighborhood $U$ of $S$,  and a diffeomorphism of foliated manifolds
\[ (U, \cF|_{U})\cong (P/\Gamma, \cF(\mu)/\Gamma) \]
sending $S$ to the leaf $\mu^{-1}(0)/\Gamma$. 
\end{theorem}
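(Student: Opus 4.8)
The plan is to deduce the non-compact Reeb stability statement from the structure theory of proper foliation groupoids, mimicking the way the linear local model arises from local Reeb stability (Theorem \ref{local-Reeb-stab}) but now working at the level of the holonomy groupoid restricted to a transversal. By Theorem \ref{prop-fol-crit-C}, since $(M,\cF)$ is of proper type, the holonomy groupoid $\cG = \Hol(M,\cF)$ is a proper foliation groupoid. Fix a point $x$ in the leaf $S$. The first step is to choose a slice transversal $T$ through $x$ and pass to the étale groupoid $\eE_T = \cG|_T$, which is a proper étale groupoid over the $q$-manifold $T$. Proper étale groupoids are, near any point, linearizable: the isotropy group $\Gamma := \cG_x = \Hol_x(M,\cF)$ is finite, and by the local model / linearization theorem for proper étale groupoids there is a $\Gamma$-invariant neighborhood $V \subset T$ of $x$ on which $\eE_T|_V$ is isomorphic to the action groupoid $\Gamma \ltimes V$, with $\Gamma$ acting linearly (identifying $V$ with a ball in $\nu_x(S)$ via the linear holonomy action — this is exactly the data (i) of the non-linear model).

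The second step is to reconstruct the saturated neighborhood $U$ of $S$ from this transverse data together with the leaf $S$ itself. Set $P := t^{-1}(V) \subset \cG$ — the space of arrows of $\cG$ ending in $V$. The source map $s: P \to M$ has image a saturated neighborhood $U$ of $S$ (the union of leaves through $V$), and $s: P \to U$ is a surjective submersion with connected fibers once $V$ is taken small and connected, because the fibers are the $s$-fibers of the s-connected groupoid $\cG$. Meanwhile $t: P \to V$ is the other submersion, and $\cG|_V$ acts on $P$ on the right; since $\cG|_V \cong \Gamma \ltimes V$, this right action is a free proper action of the finite group $\Gamma$ on $P$ (freeness because $\cG$ is a foliation groupoid, hence has discrete isotropy, and the restricted action of the finite isotropy $\Gamma$ on the bibundle is free). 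Thus $P$ carries a $\Gamma$-equivariant submersion $\mu := t: P \to V$ with connected fibers, $\Gamma$ acts freely and properly on $P$, and $U = P/\Gamma = s(P)$ with $\cF|_U$ induced by the fibers of $\mu$: this is precisely the data (i)--(iii) and the identification $(U,\cF|_U) \cong (P/\Gamma, \cF(\mu)/\Gamma)$. That $S$ corresponds to $\mu^{-1}(0)/\Gamma$ is immediate since $\mu^{-1}(0) = t^{-1}(x)$ and $s(t^{-1}(x))$ is the leaf $S$.

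The verification that the foliation on $U$ agrees under this identification with $\cF|_U$ is the routine bookkeeping: the leaf through $s(p)$ is $s$ applied to the $\cG$-orbit of $p$ in $P$, which is exactly the $\mu$-fiber $t^{-1}(t(p))$ pushed down, matching the fibers of $\mu$ modulo $\Gamma$. One should also check that $U$ is genuinely saturated, which follows because $P$ is closed under the left $\cG$-action and $s$ is equivariant, so $s(P)$ is a union of leaves, and that $U$ can be shrunk to a tube around $S$ by choosing $V$ small (using properness to ensure the $s$-fibers over $V$ stay in a neighborhood of $S$).

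**The main obstacle** will be establishing that the bibundle $P = t^{-1}(V)$ really fits the hypotheses (i)--(iii) with the required regularity — in particular that $s: P \to U$ has \emph{connected} fibers and that the $\Gamma$-action on $P$ is free and proper so that $P/\Gamma$ is a manifold. The connectedness of the $s$-fibers is where s-connectedness of $\Hol(M,\cF)$ is used, but one must be careful that passing to the restriction over $V$ does not break it — this is the same subtlety flagged in \eqref{hol-restr-to-U}, and the fix is to take $V$ connected and simply connected (a ball) so that $t^{-1}(V)$ deformation retracts onto $t^{-1}(x)$, keeping $s$-fibers connected. The freeness of the $\Gamma$-action, though, rests cleanly on Proposition \ref{fol-groupoid-isotropy} (discrete isotropy) plus the local linearization $\cG|_V \cong \Gamma \ltimes V$, so once the linearization of the proper étale groupoid $\eE_T$ near $x$ is in hand — which is the genuine input, the étale analogue of Theorem \ref{local-Reeb-stab} — the rest is assembling the gauge construction of Example \ref{ex:gauge} and Example \ref{ex-simple-foliations} in reverse.
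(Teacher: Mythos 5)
Your overall route is the same as the paper's: restrict a proper integration to a slice transversal through $x\in S$, linearize the resulting proper \'etale groupoid at the fixed point $x$ to obtain $\Gamma\ltimes V$ with $\Gamma$ finite acting linearly on $V\cong\nu_x(S)$, take the Morita bibundle $P=t^{-1}(V)$ implementing $\cE|_U\simeq\Gamma\ltimes V$, and run the gauge construction of Example \ref{ex:gauge} backwards. There are, however, two concrete problems. The first is a circularity: you launch the argument by invoking Theorem \ref{prop-fol-crit-C} to get properness of $\Hol(M,\cF)$, but in the paper's logical order that theorem is proved \emph{after}, and by means of, the normal form of Theorem \ref{thm-gen-model} (one needs the local model even to verify that $\Hol(M,\cF)$ is Hausdorff, which is not known a priori). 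The repair is painless and is what the paper does: work with an arbitrary s-connected proper integration $\cE$, whose existence is the definition of proper type, and take $\Gamma=\cE_x$ --- which surjects onto, but need not equal, the holonomy group; the statement only asks for \emph{some} finite $\Gamma$.

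The second problem is your claim that $s\colon P\to U$ is a submersion with connected fibers ``because the fibers are the $s$-fibers of the s-connected groupoid,'' together with the proposed rescue by deformation retraction in your final paragraph. This is false and aims at the wrong map: the fibers of $s|_P$ are $s^{-1}(y)\cap t^{-1}(V)$, i.e.\ exactly the $\Gamma$-orbits of the right action --- discrete sets, not connected (were they connected they would be single points and $\Gamma$ would be trivial). What condition (iii) of the local model actually requires is connectedness of the fibers of $\mu=t\colon P\to V$; since $P=t^{-1}(V)$, these are the \emph{full} $t$-fibers $t^{-1}(v)$ of $\cE$, connected precisely because $\cE$ is s-connected, with no shrinking or retraction needed. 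Correspondingly, the identification $U\cong P/\Gamma$ does not rest on any connectedness of $s$-fibers but on principality of the right $\Gamma\ltimes V$-action: two arrows in $P$ with the same source differ by a unique arrow of $\cE|_V\cong\Gamma\ltimes V$. With these two corrections your argument coincides with the paper's proof.
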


This result is in fact  the ``improved local model" for proper groupoids of \cite{CS}, applied to foliation groupoids. For foliation groupoids however, both the local model as well as the proof, are simpler. We will  sketch here an argument which, modulo some small adaptations,  can be applied also in the Poisson context \cite{CFMc}.

%We will sketch here the argument since we will need in \cite{CFMc} an analogous result in Poisson Geometry which follows along the same steps. Moreover, the proof is simpler for foliation groupoids and it will also make clear where the data (i)-(iii) for the local model comes from.

\begin{proof} 
Fix an s-connected, proper integrating groupoid $\cE\tto M$ and let $x\in S$. One divides the proof into the following steps:
\vskip 5 pt

1) Choose a small transversal $T$ to the foliation with $T\cap S=\{x\}$. The restriction
$\eE= \cE|_{T}$ is a proper \'etale groupoid which has $x$ as a fixed point.
\vskip 5 pt

2) For any proper \'etale groupoid $\eE \tto T$ with a fixed point $x$, there exists a saturated neighborhood $V\subset T$ of $x$ 
together with an action of the (finite) isotropy group $\eE_x$ on $V$ such that 
\[ \eE|_{V} \cong \eE_x\ltimes V \cong  \eE_x\ltimes T_x V.\]
The first isomorphism follows from the linearization of proper \'etale groupoids around fixed points
(\cite[Proposition 5.30]{MM} or \cite[Section 6]{We3}), while the second one follows from Bochner's Linearization Theorem (\cite{DK}).

\vskip 5 pt

3) In our case, we have $\eE_x=\cE_x$ and $T_x V=\nu_x(S)$. Hence, if we consider the saturation $U\subset M$ of $V$, then there is a Morita equivalence: 
\[ \cE|_{U} \simeq \cE_x\ltimes \nu_x(S).\]
\vskip 5 pt

4) For a bibundle $P$ that implements this Morita equivalence,
\[
\xymatrix{
 \cE|_{U} \ar@<0.25pc>[d] \ar@<-0.25pc>[d]  & \ar@(dl, ul) & P\ar[dll]^-{\qq_1}\ar[drr]_-{\qq_2} & \ar@(dr, ur)   & \cE_x\ltimes \nu_x(S) \ar@<0.25pc>[d] \ar@<-0.25pc>[d]\\
U & & & & \nu_x(S)  }
\]
the right action amounts to a free and proper action of $\cE_x$ on $P$ for which $\mu$ is an equivariant surjective submersion.
\vskip 5 pt

5) Therefore, using the gauge construction (Example \ref{ex:gauge}), 
$\cE|_U$ can be recovered from the groupoid on the right and the bibundle $P$. In our case this translates simply into the desired isomorphism:
\begin{equation}\label{nform-groupoid-proper}
\xymatrix@=0.3pc{ 
\cE|_U \ar@<0.25pc>[dd] \ar@<-0.25pc>[dd] & & (P\times_{\nu_x} P)/\Gamma \ar@<0.25pc>[dd] \ar@<-0.25pc>[dd] \\
& \cong & \\
U & & P/\Gamma}
\end{equation}
which sends the class of a pair $(p, q)\in P\times_{\nu_x} P$ to the unique arrow $g\in \cE|_{U}$ with the property that $p= gp$.
\end{proof}

%\begin{remark}
One can recover the $s$-proper case as follows. Letting $\Gamma=\cE_x$, note that $\hat{S}=s^{-1}(x)$ is a $\Gamma$-covering of $S$ and that $ \Gamma$ acts on $\nu_x(S)$ via the linear holonomy. One then checks that $P=\hat{S}\times \nu_x(S)$ is a bibundle implementing the Morita equivalence between $\cE|_{U}$ and $\cE_x\ltimes \nu_x(S)$, giving rise to the isomorphism:
\begin{equation}\label{nform-groupoid-s-proper}
\xymatrix@=0.3pc{ 
\cE|_U \ar@<0.25pc>[dd] \ar@<-0.25pc>[dd] & & (\hat{S}\times \hat{S})\times_{\Gamma} \nu_x(S) \ar@<0.25pc>[dd] \ar@<-0.25pc>[dd] \\
& \cong & \\
(U, \cF|_{U}) & & (\hat{S}\times_{\Gamma} \nu_x(S), \cF_{\lin})  }
\end{equation}
In this way, we recover the linear local model of Example \ref{ex-local-linear-models}. We conclude that the Local Reeb Stability Theorem applies to any leaf $S$ of an $s$-proper foliations and, in fact, the diffeomorphism (\ref{nform-groupoid-s-proper}) is nothing but a groupoid version of Theorem \ref{local-Reeb-stab} for \emph{any} $s$-proper foliation.
%\end{remark}

% Once we have normal forms for proper foliations, we can show that the $\mathcal{C}$-type of a foliation can be tested via its holonomy groupoid.

\begin{proof}[Proof of Theorem  \ref{prop-fol-crit-C}] We have to show that if an integration $\cE$ of $(M,\cF)$ is of $\mathcal{C}$-type, then 
the same holds for the holonomy groupoid $\Hol(M,\cF)$. It is clear that such properties descend to quotients provided the latter are Hausdorff. To check that 
$\Hol(M, \cF)$ is indeed Hausdorff one proceeds, again, by restricting to small enough saturated opens and checking it for the local model (\ref{new-model}). 

If the action of $\Gamma$ on $V$ is effective, then so is the action of $\Gamma$ on $P\times_V P$. Proposition \ref{prop-red-compl-transv} then 
implies that (\ref{new-model-gpd}) must be the holonomy groupoid of (\ref{new-model}) and we are done. 
The general case can be reduced to the effective one as follows: the representation $\rho: \Gamma\to\GL(V)$ has image and kernel denoted by 
 $\Gamma_0$ and $K$, respectively. By construction, $\Gamma_0$ acts effectively on $V$. Consider $P_0= P/K$.
Then the action of $\Gamma$ on $P$ descends to an action of $\Gamma_0$ on $P_0$ (still free and proper) and $p$ descends to $p_0: P_0\to V$. It is clear that $P/\Gamma= P_0/\Gamma_0$. 
\end{proof}

\subsection{Orbifolds}
\label{ssec:Orbifolds}

% We recall that p
Proper foliation groupoids serve as atlases for orbifold structures:

\begin{definition}
\label{def-orbifold-fol} 
Let $B$ be a topological space. An {\bf orbifold atlas} on $B$ is a pair $(\cB, p)$ consisting of:
\begin{itemize}
\item a proper foliation groupoid $\cB\tto M$;
\item a homeomorphism $p: M/\cB{\to} B$ between the space of orbits of $\cB$  and $B$.
\end{itemize}
An {\bf orbifold} is a pair $(B,\cB)$ consisting of a space $B$ and an orbifold atlas $\cB$ on $B$.
\end{definition}

Two orbifold atlases $(\cB_i, p_i)$, $i\in \{1, 2\}$, are said to be {\bf equivalent} if there exists a Morita equivalence $\cB_1\simeq \cB_2$ with the property that the induced homeomorphism on the orbit spaces is compatible with $p_1$ and $p_2$.
%
%\comment{modified the second sentence to avoid repetition}
%{
%\begin{remark}[The \'etale viewpoint] 

It follows from the previous discussion that, by passing to a transversal $T$, one can always use orbifold atlases which are \'etale.
While nowadays one uses general {\it foliation} groupoids \cite{Moe02}, the first approaches to orbifolds via groupoids  
used only \'etale atlases (see e.g. \cite{ALR}). Although \'etale atlases are often advantageous, restricting to them is unnatural, not only conceptually, but also from the point of view of concrete examples. For example, quotients $M/G$ of proper, locally free, actions of Lie groups come with an obvious choice of orbifold atlas (the action groupoid), but not with a canonical \'etale one. PMCTs will provide similar examples. 

%\end{remark}
\begin{remark} \label{rk-orbifold-subtlety}
In the existing literature, orbifolds are often defined as ``a space with an equivalence class of orbifold atlases", while an atlas is interpreted as a
``presentation" of the orbifold \cite{Moe02, ALR}. However, note that: 
\begin{itemize}
\item two equivalent atlases can be equivalent in many, very distinct, ways, and
\item two different equivalences give rise to different ways of passing from one atlas to the other.
\end{itemize}
Hence, not fixing an atlas gives rise to subtleties. This shows up already when defining, in an atlas-independent way, a {\it morphism} between orbifolds, or a {\it vector bundle} over an orbifold.
This kind of problems can be solved by developing a rather heavy categorical language \cite{Lerman}. In practice, we do not have to deal with such issues and all orbifolds arise with a canonical orbifold atlas, in the sense of our definition. Given an orbifold $(B, \cB)$, it is sometimes advantageous to pass to a more convenient atlas $\cE$: however, such a passing will always be done via a {\it specified} Morita equivalence $Q_\cE:\cE\simeq \cB$. 
\end{remark}

\begin{example}[Manifolds and smooth orbifolds] Any manifold $B$ can be seen as an orbifold by using the trivial groupoid $B\tto B$ as an atlas.
Such an orbifold is called a {\bf smooth orbifold}. Note that an orbifold is smooth precisely when a/any defining atlas has no isotropy. Other equivalent \'etale atlases are provided by choosing a manifold atlas $\{U_i, \phi_i\}_{i\in I}$ for $B$ and considering the associated covering groupoid, i.e. the \'etale groupoid with space of objects the disjoint union $\coprod_{i} U_i$ and one arrow from $(x, i)$ to $(x, j)$ for each $x\in U_i\cap U_j$. 

On the other hand, for any orbifold $B$ one may also talk about the {\bf smoothness of the underlying topological space}: one requires the topological space $B$ to admit a smooth structure such that, for some orbifold atlas $\cB\tto M$, the quotient map $p: M\to B$ is a submersion. This condition does not depend on the choice of the atlas and determines a unique smooth structure on $B$, if it exists.  

Obviously, for a smooth orbifold the underlying topological space is smooth. However, it is important to keep in mind that the underlying topological space maybe smooth while the orbifold itself may still fail to be a smooth orbifold, for the orbifold atlases may have non-trivial isotropy. A simple example is obtained by taking the action groupoid $\Gamma\ltimes B\tto B$ associated with a trivial action of a finite group. Our study of PMCTs will give rise to much more interesting examples- where the isotropy information is important and cannot be disregarded.
\end{example}

\begin{example}[Classical orbifolds]\label{Classical orbifolds} Originally, orbifold structures on a space $B$ were defined in complete analogy with smooth structures, 
but using charts that identify the opens in $B$ with quotients $\R^n/\Gamma$ of finite groups $\Gamma$ acting effectively on $\R^n$ \cite{Satake,Thur}. 
With the appropriate compatibility between such charts, one obtains the notion of a \emph{classical orbifold atlas}.  Similarly to the covering groupoids above for manifolds, such an atlas can be organized into a proper \'etale groupoid whose space of orbits is $B$ \cite{ALR,MM}. Therefore, the classical notion of orbifold can be seen as particular classes of orbifolds in the sense of Definition \ref{def-orbifold-fol}. Here, following \cite{ALR, Moe02}, we will adopt the following equivalent working definition: a {\bf classical orbifold} is an orbifold for which the defining atlas is effective. 
\end{example}

Notice that the subtleties related to orbifolds atlases mentioned in Remark \ref{rk-orbifold-subtlety} are not present in the case of classical orbifolds: 

% \begin{lemma} For any two equivalent effective  orbifold atlases $\cB_{i}\tto T_i$ the Morita equivalence $Q: \cB_1\simeq \cB_2$ (inducing the identity on $B$) is unique up to isomorphism. 
% \end{lemma}
\begin{lemma}  
\label{lem:classical:orbifold}
For a classical orbifold $B$, an equivalence $Q: \cB_1\simeq \cB_2$ between two orbifold atlases for $B$ is unique up to isomorphism. 
\end{lemma}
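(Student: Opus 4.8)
The plan is to reduce the uniqueness claim to a statement about Morita bibundles between \'etale groupoids and then exploit effectiveness. First I would reduce to the \'etale case: by the discussion preceding the lemma, any orbifold atlas $\cB_i$ for $B$ is Morita equivalent (via a \emph{specified} equivalence, as in Remark \ref{rk-orbifold-subtlety}) to an effective \'etale groupoid $\eE_i$, obtained by restricting to a complete transversal; composing, it suffices to show that two equivalences $Q, Q' : \eE_1 \simeq \eE_2$ of effective \'etale groupoids inducing the \emph{same} homeomorphism on orbit spaces (compatible with the $p_i$) differ by an isomorphism of bibundles. So the heart of the matter is: effective \'etale groupoids have no nontrivial automorphisms of the identity bibundle, and more generally two bibundles inducing the same map on orbit spaces are isomorphic.

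Next I would set up the comparison. Given $Q, Q': \eE_1 \simeq \eE_2$, form the ``difference'' bibundle $Q^{-1} \circ Q' : \eE_1 \simeq \eE_1$ (using the inverse bibundle and composition of Morita equivalences); this is a self-Morita-equivalence of $\eE_1$ inducing the identity on the orbit space $B$. The claim then becomes: every self-equivalence of an effective \'etale groupoid that is the identity on orbits is isomorphic, as a bibundle, to the identity bibundle $\eE_1$ itself. To see this, I would work locally: over a small orbifold chart, $\eE_1$ looks like an action groupoid $\Gamma \ltimes V$ with $\Gamma$ finite acting effectively on $V$ (one may use the linearization around fixed points, as in step 2) of the proof of Theorem \ref{thm-gen-model}). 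A self-equivalence of $\Gamma \ltimes V$ fixing the orbit space pointwise is, by the theory of Morita bibundles between action groupoids of finite groups, given by an automorphism of $\Gamma$ together with a $\Gamma$-equivariant diffeomorphism of $V$ covering the identity on $V/\Gamma$; effectiveness of the $\Gamma$-action forces both to be trivial (the diffeomorphism is the identity, hence the automorphism of $\Gamma$ is inner and can be absorbed), so the bibundle is isomorphic to the trivial one. Patching these local trivializations using that the isomorphisms are themselves unique (again by effectiveness — there is no room for a nontrivial transition) yields a global isomorphism with the identity bibundle.

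The main obstacle I anticipate is the patching/globalization step: one must check that the locally-defined isomorphisms to the trivial bibundle agree on overlaps, which again should follow from effectiveness (uniqueness of the local isomorphisms leaves no cocycle freedom), but making this rigorous requires a careful sheaf-theoretic or cocycle argument of exactly the flavor the authors have already developed in Remark \ref{rmk-Transversal geometric structures} for Haefliger sheaves. An alternative, cleaner route that avoids explicit patching: phrase everything in terms of the automorphism $2$-group of $\eE_1$ in the bicategory of groupoids and bibundles, and show that for an effective \'etale groupoid the group $\pi_0$ of self-equivalences acting trivially on orbits is itself trivial — this is essentially the statement that an effective orbifold has no ``hidden'' automorphisms, which is classical \cite{MM, ALR}. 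In the write-up I would likely cite this last fact and only sketch the local model computation, since the paper's stated policy (Remark \ref{rk-orbifold-subtlety}) is to avoid heavy categorical machinery.
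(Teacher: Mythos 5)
The paper offers no proof of this lemma; it is stated as a classical fact about effective orbifolds (the relevant statement can be found in \cite{MM,ALR}), so there is no argument of the authors' to compare yours against. Your reconstruction is the standard one and is essentially correct: reducing to \'etale atlases via complete transversals, passing to the self-equivalence $\overline{Q}\circ Q'$ of a single atlas over the identity of $B$, and identifying the crux as the two facts that (a) a proper effective \'etale groupoid admits no self-equivalence over $\mathrm{id}_B$ other than the trivial bibundle, and (b) the trivial bibundle over an effective \'etale groupoid has no nontrivial automorphisms (a bibundle automorphism is given by a section $x\mapsto g_x$ of the isotropy whose associated germs $\sigma_{g_x}$ are identities, which effectiveness forces to be the unit section). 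Point (b) is exactly what removes the cocycle freedom and makes your local trivializations glue, so the patching step you worry about is genuinely unproblematic.

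Two spots deserve more care than you give them. In the linearized model $\Gamma\ltimes V$, the \'etale map $f:V\to V$ covering $\mathrm{id}_{V/\Gamma}$ is \emph{not} the identity in general; the actual content is that $f(v)\in\Gamma\cdot v$ for all $v$ forces $f$ to agree with the action of a \emph{single} element $\gamma_0\in\Gamma$ (the assignment $v\mapsto\gamma(v)$ is locally constant on the open dense locus where $\Gamma$ acts freely, and smoothness of $f$ rules out jumping between different group elements across the walls when that locus is disconnected); only after absorbing $\gamma_0$ into the trivialization does effectiveness kill the residual automorphism of $\Gamma$. Your phrase ``the diffeomorphism is the identity'' skips precisely this constancy argument. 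Second, properness of the atlases (built into Definition \ref{def-orbifold-fol}) is what yields the finite local model via linearization and hence the local step; effectiveness alone would not suffice, so properness should be invoked explicitly. With these two points spelled out, the proof is complete.
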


Since any (proper) \'etale groupoid has an associated effective (proper) \'etale groupoid, we see that any orbifold has an {\bf underlying classical orbifold} structure. In this terminology, the smoothness of the underlying topological space of an orbifold is equivalent to the condition that its underlying classical orbifold is smooth. A general orbifold structure can be seen as a classical orbifold together with extra data, which is codified in the isotropy groups of the orbifold atlases.

\begin{example}[Good orbifolds]
A large class of examples of orbifolds arise as quotients $M/\Gamma$ for proper effective actions of discrete groups $\Gamma$: the action groupoid $\Gamma\ltimes M$ gives an orbifold atlas. 
Orbifolds of this type are called {\bf good orbifolds} \cite{ALR,Thur}.
% called {\bf developable orbifolds} or {\bf good orbifolds}.
\end{example}

\begin{example}[Foliations of $\cC$-type and orbifolds]
\label{ex:proper:foliations:orbifold}
For a foliation  $(M,\cF)$ of $\cC$-type, any s-connected, proper integration $\cE\tto M$ makes the leaf space $B=M/\cF$ into an orbifold $(B,\cE)$. 
Different integrations give different orbifold structures. However, the holonomy groupoid $\Hol(M,\cF)\tto M$ provides a smallest integration (Theorem \ref{seq-mon-G-hol}), which is proper (Theorem \ref{prop-fol-crit-C}) and effective  (Proposition \ref{prop-red-compl-transv}). Hence, the underlying classical orbifold of any orbifold defined by a foliation $(M,\cF)$ of $\cC$-type has atlas $\Hol(M,\cF)\tto M$.
\end{example}

\begin{remark}[Geometric structures on orbifolds] 
\label{rmk:structures on orbifolds} 
Haefliger's approach to transverse structures, discussed in Remark \ref{rmk-Transversal geometric structures}, when applied to the orbifold atlases allows one to consider various geometric structures on orbifolds, such as vector bundles, differential forms, Riemannian structures, etc: if $\cS$ is a Haefliger sheaf on $\R^q$ then $\cS(B)$ for a $q$-dimensional orbifold $(B,\cB)$ is defined by applying $\cS$ to the orbifold atlas $\cB\tto M$. 
% Note that 
Given some other atlas $Q_\cE:\cE\simeq \cB$ we have an induced isomorphism $\cS(\cE)\cong\cS(\cB)$, which in general depends on the Morita equivalence $Q_\cE$. 
\end{remark}

We can now return to regular PMCTs, the conclusion being that their leaf spaces are orbifolds. More precisely, each s-connected, proper integration gives rise to a orbifold structure on the leaf space, so one has the following more precise version of Theorem \ref{thm-reg-fol}: 

\begin{theorem}\label{thm-reg-fol2} If $(M, \pi)$ is a regular Poisson manifold of $\mathcal{C}$-type and $\cG$ is an s-connected integration of $(M, \pi)$ having property $\mathcal{C}$, then $\cG$ fits into a short exact sequence of Lie groupoids 
\[ \xymatrix{1\ar[r] & \cT(\cG) \ar[r] & \G \ar[r] & \cBG(\cG) \ar[r]& 1},\]
where:
\begin{enumerate}[(i)]
\item $\cT(\cG)$ is a smooth bundle of tori consisting of the identity connected components of the isotropy Lie groups $\G_x$.
\item $\cBG(\cG)$ is an s-connected foliation groupoid integrating $\cF_{\pi}$ satisfying property $\mathcal{C}$.
\end{enumerate}
In particular, $\cG$ induces an orbifold structure on the leaf space $B= M/\cF_{\pi}$, with $\cBG(\cG)$ as orbifold atlas.
The underlying classical orbifold has atlas $\Hol(M,\cF_\pi)\tto M$.
\end{theorem}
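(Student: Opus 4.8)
The plan is to establish the short exact sequence first, and then read off the orbifold statement from it using the results already proved for foliations. Let $(\cG, \Omega)$ be an s-connected integration of $(M,\pi)$ with property $\mathcal{C}$. The Lie algebroid of $\cG$ is $T^*M$ (with the cotangent/Poisson algebroid structure), whose anchor is $\pi^\sharp$, and the kernel of $\pi^\sharp$ over a point $x$ is the conormal space $\nu_x(S_b)^* = (T_xS_b)^\perp$, which integrates to the isotropy Lie group $\cG_x$. Thus $\cG_x$ is abelian with Lie algebra $(T_xS_b)^\perp$ of dimension $q = \operatorname{codim}\cF_\pi$. The first step is to show that the identity component $\cG_x^0$ is compact, hence a torus $\T^q$: this is where property $\mathcal{C}$ (proper, or s-proper, or compact) enters — for a proper groupoid the isotropy groups are compact (Lemma \ref{lemma-gen-pr-gpds}-type argument, cf. the references to \cite{CS,We3,Zu}), and a connected compact abelian Lie group is a torus. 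One then needs that these fit together into a \emph{smooth} bundle of tori $\cT(\cG) = \bigcup_x \cG_x^0 \subset \cG$; this follows because $\cG_x^0$ depends continuously on $x$ and all the groups have the same dimension $q$, so local triviality can be obtained from the exponential map of the (abelian) isotropy, exactly as in the standard regularity arguments for proper groupoids. I would state this as step (i).

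For step (ii), set $\cBG(\cG) := \cG/\cT(\cG)$, the quotient by the (closed, normal, wide) subgroupoid $\cT(\cG)$. Since $\cT(\cG)$ is a smooth bundle of compact groups acting freely (by multiplication) the quotient $\cBG(\cG)$ is a Lie groupoid over $M$; its Lie algebroid is $T^*M/\ker\pi^\sharp \cong \operatorname{im}\pi^\sharp = T\cF_\pi$, so $\cBG(\cG)$ integrates $\cF_\pi$. It is s-connected because $\cG$ is and the quotient map is a surjective submersion with connected fibers. Its isotropy groups are $\cG_x/\cG_x^0$, which are discrete, so by Proposition \ref{fol-groupoid-isotropy} it is a foliation groupoid. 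That $\cBG(\cG)$ again has property $\mathcal{C}$ is immediate: $M$ and the arrow space of $\cBG(\cG)$ are quotients of those of $\cG$ by a proper (indeed compact-fiber) action, so compactness and source-properness descend; Hausdorffness descends because the quotient of a Hausdorff space by a proper free group-bundle action is Hausdorff, and the anchor/source maps factor through $\cG$'s. Alternatively, one can invoke Morita-invariance of property $\mathcal{C}$ from Section \ref{ssec:Morita}. This proves the exact sequence and (i)–(ii).

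Given the exact sequence, the orbifold conclusion is now essentially a citation: $\cBG(\cG)$ is an s-connected \emph{proper} foliation groupoid integrating $\cF_\pi$, so by Definition \ref{def-orbifold-fol} the pair $(\cBG(\cG), p)$, with $p: M/\cBG(\cG) \xrightarrow{\ \cong\ } B = M/\cF_\pi$ the tautological homeomorphism, is an orbifold atlas on $B$. (Note $\cG$ and $\cBG(\cG)$ have the same orbits, since $\cT(\cG)$ is contained in the isotropy, hence the same leaf space $B$; and $B$ is Hausdorff by Lemma \ref{lemma-gen-pr-gpds}.) This is precisely the assertion of Theorem \ref{thm-reg-fol} made concrete. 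For the last sentence, by Theorem \ref{prop-fol-crit-C} the foliation $\cF_\pi$ is of $\mathcal{C}$-type, so by Example \ref{ex:proper:foliations:orbifold} the holonomy groupoid $\Hol(M,\cF_\pi)$ is proper and effective, hence by Proposition \ref{prop-red-compl-transv} it is the canonical effective atlas; and since the underlying classical orbifold of \emph{any} orbifold is obtained by passing to the associated effective étale groupoid, and $\cBG(\cG)$ has $\Hol(M,\cF_\pi)$ as its effective quotient (via the factorization of Theorem \ref{seq-mon-G-hol}), the underlying classical orbifold of $(B,\cBG(\cG))$ has atlas $\Hol(M,\cF_\pi)$.

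The main obstacle I anticipate is step (i): proving that $\cT(\cG) = \bigsqcup_x \cG_x^0$ is an embedded \emph{smooth} subbundle of $\cG$ — i.e.\ that the identity components vary smoothly and the bundle is locally trivial. Compactness of each $\cG_x$ is routine from properness, and the torus identification is then automatic, but the smooth dependence on $x$ requires a local-model/slice argument for proper groupoids (or an appeal to the normal-form results in the style of \cite{CS,MM}), and one must check that no jumping of the component group $\pi_0(\cG_x)$ interferes with smoothness of the \emph{identity} component. Everything downstream — forming the quotient groupoid, identifying its algebroid, transporting property $\mathcal{C}$, and invoking the foliation results for the orbifold claim — is then formal.
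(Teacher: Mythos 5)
Your proposal is correct and follows essentially the same route as the paper's (very short) proof: identify the abelian isotropy Lie algebras with $\nu^*(\cF_\pi)$, use property $\mathcal{C}$ to get compact isotropy so that $\cG_x^0$ is a torus, observe that $\cT(\cG)$ is a closed smooth subgroupoid so the quotient $\cBG(\cG)$ is a Lie groupoid of $\mathcal{C}$-type integrating $\cF_\pi$, and then read off the orbifold statements from the foliation results. The single technical point you flag as the main obstacle — smoothness/closedness of the bundle of identity components — is exactly the step the paper does not prove either but delegates to Moerdijk's work on regular proper groupoids \cite{Moe03}.
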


\begin{proof} This is basically proven in \cite{Moe03} in the context of regular groupoids. The main remark is that $\cT(\cG)$ is a 
\emph{closed} subgroupoid of $\cG$. This implies not only that $\cBG(\cG)$ is a Lie groupoid, but also  that it is of $\mathcal{C}$-type, and in particular
\emph{Hausdorff}, whenever $\cG$ is. In our case, since the isotropy Lie algebras are abelian and $\cG_x$ are compact, $\cT(\cG)$ will be a bundle of tori. 
\end{proof}

The reader will notice that the integration in Theorem \ref{thm-reg-fol2} does not need to be symplectic. We now turn to the implications of considering symplectic integrations.

%%%%%%%%%%%%%%%%%%%%%%%%
%%%%%%%%%%%%%%%%%%%%%%%%
%%%%%%%%%%%%%%%%%%%%%%%%
%%%%%%%%%%%%%%%%%%%%%%%%
%%%%%%%%%%%%%%%%%%%%%%%%
%%%%%%%%%%%%%%%%%%%%%%%%
%%%%%%%%%%%%%%%%%%%%%%%%
\section{Integral affine structure}
\label{sec:integral:affine}
%%%%%%%%%%%%%%%%%%%%%%%%
%%%%%%%%%%%%%%%%%%%%%%%%
%%%%%%%%%%%%%%%%%%%%%%%%
%%%%%%%%%%%%%%%%%%%%%%%%
%%%%%%%%%%%%%%%%%%%%%%%%
%%%%%%%%%%%%%%%%%%%%%%%%
%%%%%%%%%%%%%%%%%%%%%%%%
%%%%%%%%%%%%%%%%%%%%%%%%

Integral affine structures form another type of geometric structure that plays a crucial role in the study of compactness in Poisson Geometry. 
We initiate their study in this section. First, we start by recalling some basic definitions and
properties of integral affine structures on manifolds. Then we discuss transverse integral affine structures on foliated 
manifolds and their relation to integral affine structures on orbifolds. This will set the stage 
to prove the main result of this section, which improves on the orbifold structure on
the leaf spaces of PMCTs constructed in Theorem \ref{thm-reg-fol2}. A simplified version can be stated as follows:

\begin{theorem}
\label{thm-reg-fol-integ} For any regular Poisson manifold $(M, \pi)$ of $\mathcal{C}$-type its leaf space $B= M/\cF_{\pi}$ is
an integral affine orbifold: any s-connected symplectic integration $(\cG,\Omega)$ of $(M,\pi)$ having property $\cC$ gives rise to an integral orbifold structure on $B$. 
Moreover, the underlying classical orbifold is good.
\end{theorem}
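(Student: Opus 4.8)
The plan is to upgrade the orbifold of Theorem~\ref{thm-reg-fol2} by equipping its atlas $\cBG(\cG)$ with a transverse integral affine structure, and then to derive the last sentence from the Molino-type result of Appendix~\ref{appendix:molino}. Since each $\cC$-type forces properness, Theorem~\ref{thm-reg-fol2} applies: $\cG$ fits in a short exact sequence $1\to\cT(\cG)\to\cG\to\cBG(\cG)\to1$ in which $\cBG(\cG)\tto M$ is a proper foliation groupoid integrating $\cF_\pi$ (the orbifold atlas of $B$) and $\cT(\cG)\to M$ is a smooth bundle of tori whose fiber at $x$ is $\cT(\cG)_x$, the identity component of the compact isotropy group $\cG_x$; its fiber dimension equals $q:=\dim B$, the codimension of $\cF_\pi$.

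First I would identify the lattice. Because $(\cG,\Omega)$ is a symplectic integration of $(M,\pi)$, its Lie algebroid is the cotangent Lie algebroid, with anchor $\pi^\sharp$; so the isotropy Lie algebra at $x$ is $\gg_x=\ker\pi^\sharp_x=\nu^*_x$, the conormal space at $x$ of the leaf through it, and this bundle of isotropy algebras is abelian by regularity. Hence $\cT(\cG)_x=\nu^*_x/\Lambda_x$ with $\Lambda_x:=\ker\big(\exp_x\colon\nu^*_x\to\cT(\cG)_x\big)$, and \emph{properness is used precisely here}: it makes $\cT(\cG)_x$ a compact $q$-torus, so $\Lambda_x$ has full rank $q$. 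Assembling these, $\Lambda=\bigsqcup_x\Lambda_x$ is a smooth, full-rank lattice subbundle of the conormal bundle $\nu^*(\cF_\pi)\subset T^*M$ --- smooth because $\cT(\cG)$ is locally trivial as a bundle of tori. Two properties of $\Lambda$ must then be checked against the definition of a transverse integral affine structure recalled in the preliminaries of this section: (i) $\Lambda$ is parallel for the Bott connection of $\cF_\pi$ on $\nu^*(\cF_\pi)$, which holds because the $\cG$-action on $\gg$ by the adjoint representation integrates the Bott connection and preserves $\Lambda$ --- here $\Ad_g$ is the derivative of conjugation by $g$, which carries $\cT(\cG)_x$ onto $\cT(\cG)_y$ for $g\colon x\to y$; and (ii) $\Lambda$ is locally spanned by closed sections --- and \emph{it is here that the symplectic form $\Omega$ enters}, through the closedness property underlying the Dazord--Delzant theory of isotropic realizations. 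Thus $\Lambda$ is a transverse integral affine structure on $(M,\cF_\pi)$.

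Then I would descend it to the orbifold. Since the isotropy of $\cG$ is abelian, the adjoint action $\Ad_g$ of an arrow $g$ on $\gg$, together with its restriction carrying $\Lambda$ to $\Lambda$, depends only on the image of $g$ in the foliation groupoid $\cBG(\cG)$; hence $\Lambda$ is $\cBG(\cG)$-invariant, not merely invariant under leafwise transport. By the Haefliger-sheaf formalism of Remarks~\ref{rmk-Transversal geometric structures} and~\ref{rmk:structures on orbifolds} --- under which a transverse integral affine structure is exactly a $\cBG(\cG)$-invariant one on a complete transversal --- the lattice $\Lambda$ descends to a well-defined (independent of the complete transversal) integral affine structure on the orbifold $(B,\cBG(\cG))$. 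This is the integral orbifold structure induced by $(\cG,\Omega)$, and it manifestly depends on $\cG$.

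For the last assertion, being a transverse structure this integral affine structure passes to the underlying classical (effective) orbifold of $B$, whose atlas is $\Hol(M,\cF_\pi)$ by Theorem~\ref{thm-reg-fol2}; so that classical orbifold is an integral affine orbifold, and its goodness follows from Appendix~\ref{appendix:molino}, where Molino's theory of Riemannian foliations is adapted to prove that every classical integral affine orbifold is a quotient $N/\Gamma$ of an integral affine manifold $N$ by a proper discrete group $\Gamma$ of integral affine transformations. I expect this to be the only genuinely non-formal step: once the transverse-geometry machinery of Section~\ref{sec:foliations} is in place, the construction above is essentially bookkeeping --- the delicate items being the smoothness and $\cBG(\cG)$-invariance of $\Lambda$ in the presence of non-effective isotropy, and the invocation of Dazord--Delzant closedness --- whereas goodness rests on a substantial Molino-type argument (a transverse parallelism, a developing map for the integral affine structure, and a closure/density argument).
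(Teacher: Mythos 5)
Your overall architecture coincides with the paper's: Theorem \ref{thm-reg-fol2} supplies the atlas $\cBG(\cG)$, the lattice $\Lambda_{\cG}=\Ker(\exp)\subset\nu^*(\cF_\pi)$ supplies the transverse integral affine structure (this is Theorem \ref{thm-lattice-proper-case}), the Haefliger formalism of Example \ref{ex-trIAS-orbifolds} moves it to the orbifold, and goodness is the Molino-type Theorem \ref{Molino-theorem} of Appendix \ref{appendix:molino}. Two of your steps are lighter than they should be but are repairable: smoothness of $\Lambda_{\cG}$ is cleaner if, instead of asserting local triviality of $\cT(\cG)$, you exhibit $\Lambda_{\cG}$ as the kernel of the locally free, fiberwise transitive action of the bundle of abelian groups $(\nu^*(\cF_\pi),+)$ on $\cT(\cG)$ given by the flows of the right-invariant vector fields $X_\alpha$; and the $\cBG(\cG)$-invariance you single out is automatic, since $\cBG(\cG)$ is s-connected, so the germs \eqref{germ-of-bisection} it induces on transversals are holonomy germs and a transverse structure is holonomy-invariant by definition.

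The genuine gap is your item (ii), the closedness of local sections of $\Lambda_{\cG}$ --- the one place where $\Omega$ is actually used, as you correctly sense, and the heart of the theorem. You delegate it to ``the closedness property underlying the Dazord--Delzant theory of isotropic realizations'', but that theory does not apply to $(\cG,\Omega)$: neither $t:\cG\to M$ nor $s:\cG\to M$ is an isotropic realization (the $t$-fibers have dimension $\dim M$ and are the symplectic orthogonals of the $s$-fibers, hence not isotropic unless $\pi=0$), and $\cT(\cG)\to M$ is a fibration of a presymplectic submanifold of $\cG$, not of a symplectic manifold, so the Dazord--Delzant hypotheses fail and the property cannot simply be imported. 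What is needed is the multiplicative analogue of that computation, carried out on the groupoid: for $\alpha\in\Gamma(\Lambda_{\cG}|_U)$ the right-invariant vector field $X_\alpha$ satisfies $i_{X_\alpha}\Omega=t^*\alpha$, its time-one flow is the identity because $\alpha$ takes values in the lattice, and therefore
\[
0=(\phi^{1}_{X_\alpha})^*\Omega-\Omega=\int_{0}^{1}(\phi^{\tau}_{X_\alpha})^*\,\d\, i_{X_\alpha}\Omega~\d\tau=t^*\d\alpha ,
\]
whence $\d\alpha=0$ since $t$ is a submersion. (Closedness together with $\alpha\in\Gamma(\nu^*(\cF_\pi))$ already forces $\alpha$ to be $\cF_\pi$-basic, so your separate Bott-parallelism check (i) is subsumed by (ii) via Proposition \ref{prop-transv-int-affine-folklore}.) Once this is supplied, the descent to the classical orbifold with atlas $\Hol(M,\cF_\pi)$ and the appeal to the Molino-type theorem are exactly as in the paper.
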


For a foliated manifold $(M,\cF)$, a transverse integral affine structure is described by a collection of subgroups of its conormal bundle $\nu^*(\cF)$, as will be recalled below. On the other hand, the monodromy groups of a Poisson manifold \cite{CF2}, an invariant which characterizes its integrability, is  
another collection of subgroups of $\nu^*(\cF_\pi)$. Another goal of this section is to describe the role of the monodromy groups in the integral affine geometry of PMCTs. 
In this study, a new invariant of Poisson structures, called the \emph{extended monodromy groups}, will emerge.

%%%%%%%%%%%%%%%%%%%%%%%%%%%%%%
%%%%%%%%%%%%%%%%%%%%%%%%%%%%%%
%%%%%%%%%%%%%%%%%%%%%%%%%%%%%%
%%%%%%%%%%%%%%%%%%%%%%%%%%%%%%
%%%%%%%%%%%%%%%%%%%%%%%%%%%%%%
%%%%%%%%%%%%%%%%%%%%%%%%%%%%%%
\subsection{Integral affine structures on manifolds} 
\label{Integral affine structures on manifold} 
%%%%%%%%%%%%%%%%%%%%%%%%%%%%%%
%%%%%%%%%%%%%%%%%%%%%%%%%%%%%%
%%%%%%%%%%%%%%%%%%%%%%%%%%%%%%
%%%%%%%%%%%%%%%%%%%%%%%%%%%%%%
%%%%%%%%%%%%%%%%%%%%%%%%%%%%%%
%%%%%%%%%%%%%%%%%%%%%%%%%%%%%%

We will denote by $\Aff_\Z(\R^q)=\GL_\Z(\R^q)\ltimes \R^q$, the group of integral affine transformations, consisting of transformations of the type:
\begin{equation}
\label{IA-transf}
\R^q\to \R^q, \ \ x\mapsto A(x)+v,
\end{equation}
with $v\in \R^q$, $A\in \GL_\Z(\R^q)$. 
Integral affine structures on manifolds can be described in several
equivalent ways; we start with the most natural one, in terms of atlases. 
\begin{definition} An {\bf integral affine structure} on a $q$-dimensional manifold $B$ is a choice of a maximal
atlas $\{(U_i,\phi_i):{i\in I}\}$ with the property that each transition function 
\[ \phi_{j}\circ\phi^{-1}_i:\phi_i(U_i\cap U_j)\to \phi_j(U_i\cap U_j), \]
is (the restriction of) an integral affine transformation. Any chart $(U_i,\phi_i)$ is called an {\bf integral affine coordinate system}.
\end{definition}

More efficient descriptions are given in terms of latices.  By a {\bf lattice $\Lambda$ in a vector space} $V$
we mean a discrete subgroup of $(V, +)$ of maximal rank. We can always choose a basis $\{v_1, \ldots, v_q\}$ for $V$ such that:
\[ \Lambda= \Z v_1+ \cdots + \Z v_q. \]
The {\bf dual lattice} $\Lambda^{\vee}\subset V^*$ of the lattice $\Lambda$ is defined by 
\[ V^{\vee}= \{\xi\in V^*: \xi(\lambda) \in \Z,\ \ \forall\ \lambda\in \Lambda \}.\]
By a {\bf lattice on a vector bundle} $E\to B$ we mean a sub-bundle 
\[ E_\Z=\bigcup_{b\in B}\Lambda_b\subset E, \] 
consisting of lattices $\Lambda_b\subset E_b$. We say that it is smooth if, locally around each point $b_0\in B$, one can write
\[ \Lambda_b=  \Z\lambda_1(b)+ \ldots + \Z_q \lambda_q(b)\]
for some smooth local sections $\lambda_i$ of $E$.  An {\bf integral vector bundle} is a pair $(E,E_\Z)$ where $E_\Z$ is a smooth lattice in $E$.
An integral vector bundle $(E,E_\Z)$ comes with a {\bf canonical flat linear connection} $\nabla$: 
the one defined by the condition that all the local sections of $E_{\Z}$ are flat.

We can now state some alternative descriptions of integral affine structures, which will be useful later. 

\begin{proposition}\label{prop-int-affine-folklore} If $B$ is a q-dimensional
manifold, then there is a 1-1 correspondence between:
\begin{enumerate}[(i)]
\item An integral affine atlas $\{(U_i,\phi_i):{i\in I}\}$ on $B$;
\item A lattice $\Lambda^{\vee}\subset TB$ satisfying one of the following equivalent conditions:
\begin{enumerate}[(a)]
\item $\Lambda^{\vee}$ is smooth and any two local vector fields with values in  $\Lambda^{\vee}$ commute.
\item $\Lambda^{\vee}$ is smooth and the induced flat connection on $TB$ is torsion free.
\end{enumerate}
\item A lattice $\Lambda\subset T^*B$ satisfying one of the following equivalent conditions:
\begin{enumerate}[(a)]
\setcounter{enumii}{3}
\item $\Lambda$ is smooth and all its local sections are closed 1-forms.
\item $\Lambda$ is a Lagrangian submanifold of $(T^*B, \omega_\can)$.
\end{enumerate}
\end{enumerate}
In this 1-1 correspondence, $\Lambda$ and $\Lambda^{\vee}$ correspond to the lattices:
\[ 
\Lambda^{\vee}_b:= \Z\left.\frac{\partial}{\partial x_1}\right|_b+ \cdots+ \Z\left.\frac{\partial}{\partial x_q}\right|_b,\qquad
\Lambda_b:= \Z\left. \d x_1\right|_b+ \cdots+ \Z\left. \d x_q\right|_b,
\]
where $(x_1, \ldots, x_q)$ is any integral affine coordinate system around $b\in B$. 
\end{proposition}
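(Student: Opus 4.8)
The plan is to prove the equivalences (i)$\Leftrightarrow$(ii) and (i)$\Leftrightarrow$(iii) directly --- reading the lattices off an integral affine atlas via the displayed (co)frame formulas and, conversely, reconstructing an atlas from a lattice --- and then to dispose of the sub-equivalences (a)$\Leftrightarrow$(b), (c)$\Leftrightarrow$(d) by purely local arguments. The one structural fact I would use throughout is that $\GL_\Z(\R^q)$, viewed inside $\mathrm{Mat}_q(\R)\cong\R^{q^2}$ as a set of integer matrices, is a \emph{discrete} subgroup of $\GL(\R^q)$; hence a continuous $\GL_\Z(\R^q)$-valued function on a connected set is constant.

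\textbf{From an atlas to the lattices.} Given the atlas, I would define $\Lambda^\vee\subset TB$ and $\Lambda\subset T^*B$ fiberwise by the formulas in the statement, using any integral affine chart. Well-definedness is immediate: on an overlap the transition is $y=Ax+v$ with $A, A^{-1}$ integer, so the induced changes of frame $\partial/\partial x_k=\sum_l A_{lk}\,\partial/\partial y_l$ and $\d x_k=\sum_l (A^{-1})_{kl}\,\d y_l$ preserve the respective $\Z$-spans; smoothness is clear since the coordinate (co)frames generate. Then (a) holds because coordinate vector fields commute, (c) because the $\d x_k$ are exact, and the explicit formulas hold by construction. Since in every such chart $\Lambda_b$ is the dual lattice of $\Lambda^\vee_b$, and the dual of a smooth lattice is again a smooth lattice, the data in (ii) and (iii) are fiberwise dual to each other, so it suffices to treat one of the two and transport the conclusion.

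\textbf{From a lattice back to an atlas.} This is where the real work lies, and I expect it to be the main obstacle --- though only in the sense of requiring external input, not of being deep. Starting from a smooth $\Lambda^\vee$ satisfying (a): around each point I would pick a smooth local $\Z$-basis $\lambda_1,\dots,\lambda_q$; being pairwise commuting and pointwise independent, the standard straightening theorem for commuting vector fields yields a chart with $\lambda_k=\partial/\partial x_k$. For two such charts, the Jacobian $J(b)$ of the transition is the change of $\Z$-basis between the two coordinate frames of the \emph{same} lattice $\Lambda^\vee_b$, hence $J(b)\in\GL_\Z(\R^q)$ for all $b$; by the discreteness remark $J$ is locally constant, so the transition is genuinely integral affine. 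The case of $\Lambda\subset T^*B$ with (c) is parallel: the Poincaré lemma lets one write a local closed $\Z$-basis as $\d x_1,\dots,\d x_q$ (the $x_i$ forming a chart by pointwise independence), and the same discreteness step applies; alternatively one simply dualizes the previous case. These charts assemble into an integral affine atlas whose associated lattice, via the first part, is the one we started with, closing the loop.

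\textbf{The sub-equivalences.} For (a)$\Leftrightarrow$(b): a smooth $\Lambda^\vee$ determines the flat connection $\nabla$ whose flat local sections are exactly the sections of $\Lambda^\vee$; its torsion is tensorial and, on a local lattice frame $\lambda_i$, equals $-[\lambda_i,\lambda_j]$ since $\nabla\lambda_i=0$, so --- the $\lambda_i$ spanning $TB$ pointwise --- torsion-freeness is precisely the commuting condition. For (c)$\Leftrightarrow$(d): locally $\Lambda$ is the disjoint union over $n\in\Z^q$ of the graphs of the $1$-forms $\sigma_n=\sum_k n_k\lambda_k$, so its tangent spaces are the images of $\d\sigma_n$; using the standard identity $\sigma^*\omega_\can=\pm\,\d\sigma$ for a $1$-form $\sigma$, the graph of $\sigma_n$, being half-dimensional, is Lagrangian iff it is isotropic iff $\sigma_n$ is closed, whence $\Lambda$ is Lagrangian iff every $\lambda_k$ is closed. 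The only remaining care, beyond this, goes into fixing the sign convention for $\omega_\can$ and into checking that ``smoothness'' of a lattice is preserved when passing to a local frame and to the dual lattice.
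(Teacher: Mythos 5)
Your proposal is correct, and the argument is the standard one: the paper itself states this proposition as folklore (note the label) and gives no proof, so there is nothing to diverge from. All the key points are in place --- the discreteness of $\GL_\Z(\R^q)$ forcing transition Jacobians to be locally constant, the simultaneous straightening of a commuting lattice frame (resp.\ the Poincar\'e lemma for a closed lattice coframe), the observation that smooth sections of a lattice bundle are locally constant integer combinations of a lattice frame (which is what reduces conditions (a) and (c) to statements about a single frame), and the tensoriality of the torsion together with $\sigma^*\omega_\can=\pm\,\d\sigma$ for the sub-equivalences.
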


% The proof is a simple exercise.

\begin{example}\label{ex-gen-IAS} Consider an integral affine group, i.e. a subgroup $\Gamma \subset \Aff_\Z(\R^q)$ of the group of transformations of type (\ref{IA-transf}). For explicit examples, it is useful to write an element $\gamma\in\Gamma$ in split form:
\[ \gamma= (u_\gamma, A_{\gamma}) \in \R^q\times \GL_\Z(\R^q). \]
The subgroup condition is equivalent to the following two conditions:
\begin{enumerate}[(i)]
\item $\gamma\mapsto A_\gamma$ defines a linear representation $\rho^{\lin}:\Gamma\to\GL_\Z(\R^q)$;
\item $\gamma\mapsto u_{\gamma}$ defines a 1-cocycle for $\Gamma$, i.e. 
$u_{\gamma\gamma'}= u_{\gamma}+ A_{\gamma}(u_{\gamma'}),\quad\forall\ \gamma, \gamma'\in \Gamma$.
\end{enumerate}
We say that the subgroup $\Gamma$ is of {\bf orbifold type} if the affine action on $\R^q$ is proper
and of {\bf smooth type} if the action is proper and free. In the smooth case,
\[ B:= \R^q/\Gamma\]
comes with an integral affine structure induced from the the standard integral affine structure on $\R^q$. Integral affine manifolds
which are quotients of $\R^q$ by smooth integral affine groups are called {\bf complete}.

The space $B$ can be obtained in stages. First, the split short exact sequence
% \begin{equation}\label{st-ex-seq-iAS} 
\[
\xymatrix{ 0\ar[r] & \R^q \ar[r] &\Aff_\Z(\R^q) \ar[r] & \GL_\Z(\R^q) \ar[r] & 0}
\]
% \end{equation}
restricts to $\Gamma$, yielding a short exact sequence: 
\[ \xymatrix{ 0\ar[r] & \Gamma^{\mathrm{tr}} \ar[r] &\Gamma \ar[r] & \Gamma^{\lin} \ar[r] & 0} \]
where:
\[ \Gamma^{\lin}= \{A_{\gamma}: \gamma\in \Gamma\},\quad
\Gamma^{\mathrm{tr}}= \{v_{\gamma}: \gamma\in \Gamma, A_{\gamma}= \mathrm{Id}\}.
\]
The translational part $\Gamma^{\mathrm{tr}}$ is a discrete subgroup of $(\R^q, +)$. Its rank $r$ is called the {\bf translational rank} of the integral affine group $\Gamma$ and defines a $\Gamma^\lin$-covering of $B$:
\[ B^{\lin}:= B/\Gamma^{\mathrm{tr}}\cong \T^r\times \R^{q-r}. \]

Here are two distinct examples of integral affine structures on the 2-torus. For the first one, we consider the subgroup $\Gamma\subset\Aff_{\Z}(\R^{2})$ generated by the translations $\gamma_1: (x, y)\mapsto (x+1, y)$ and $\gamma_2: (x, y)\mapsto (x, y+1)$ or, 
in the split notations, 
\[ \gamma_1= \left( (1, 0), \left[ \begin{array}{cc} 1 & 0 \\ 0 & 1 \end{array}   \right] \right), \ \gamma_2= \left( (0, 1), \left[ \begin{array}{cc} 1 & 1 \\ 0 & 1 \end{array}   \right] \right).\]
These two transformations commute and generate the abelian subgroup 
\[ \Gamma= \left\{ \gamma^{n}_{1}\gamma^{m}_{2}= \left((n,m),\left[ \begin{array}{cc} 1 & 0 \\ 0 & 1 \end{array}   \right]\right): n, m\in \Z\right\}. \]
Of course, this is just $\Z\times \Z$ with its standard action on $\R^2$, inducing the standard integral affine structure on the 2-torus $B= \R^2/\Gamma=\T^2$. Note that in this case $\Gamma$ has translational rank $2$ since we have:
\[ \Gamma^{\lin}=\left\{ \left[ \begin{array}{cc} 1 & 0 \\ 0 & 1 \end{array}   \right]\right\},\qquad \Gamma^{\mathrm{tr}}=\Z\times\Z\subset \R^2. \]

For the second example, we consider $\Gamma\subset\Aff_{\Z}(\R^{2})$ generated by 
\[ \gamma_1= \left( (1, 0), \left[ \begin{array}{cc} 1 & 0 \\ 0 & 1 \end{array}   \right]\right) , \ \gamma_2= \left( (0, 1), \left[ \begin{array}{cc} 1 & 1 \\ 0 & 1 \end{array}   \right] \right) .\]
Again, these commute, so they generate a subgroup isomorphic to $\Z\times \Z$, 
\[ \Gamma= \left\{ \gamma^{n}_{1}\gamma^{m}_{2}= \left((n+ \frac{m(m-1)}{2}, m),\left[ \begin{array}{cc} 1 & m \\ 0 & 1 \end{array} \right]\right): n, m\in \Z\right\}.\]
The quotient $B=\R^2/\Gamma$ is still \emph{diffeomorphic} to the 2-torus $\T^2$, but with a new integral affine structure. In this case $\Gamma$ has translational rank $1$ since we have:
\[ \Gamma^{\lin}= \left\{ \left[ \begin{array}{cc} 1 & m \\ 0 & 1 \end{array}   \right] : m\in \Z \right\},\ \Gamma^{\mathrm{tr}}= \Z(1, 0).\]. 
\end{example}

Integral affine structures look, at first, deceivingly simple. However, even some of the simplest 
questions are surprisingly hard to answer. For instance, we mention here an integral affine version of an
old conjecture in affine geometry:

\begin{conjecture}[Markus conjecture -- integral affine version]\label{Markus-conj}
Any compact integral affine manifold must be complete, i.e. of the form $\R^q/\Gamma$ for some smooth 
integral affine subgroup $\Gamma\subset  \Aff_\Z(\R^q)$. 
\end{conjecture}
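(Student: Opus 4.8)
The plan is to work throughout with the developing map. Fix a universal cover $\widetilde{B}\to B$; the integral affine atlas patches together into a local diffeomorphism $\dev\colon\widetilde{B}\to\R^q$, equivariant for the affine holonomy homomorphism $\rho=(\rho^{\mathrm{tr}},\rho^{\lin})\colon\pi_1(B)\to\Aff_\Z(\R^q)$, and ``complete'' means exactly that $\dev$ is a diffeomorphism (equivalently, a covering map onto $\R^q$, since $\R^q$ is simply connected). So the goal is to upgrade the local diffeomorphism $\dev$ to a global one under the hypothesis that $B$ is compact.

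First I would exploit what makes the integral case special: since every transition function lies in $\GL_\Z(\R^q)\ltimes\R^q$, which consists of volume-preserving affine transformations, the standard density $|\d x_1\wedge\cdots\wedge\d x_q|$ is globally well defined on $B$; equivalently, the lattice $\Lambda^{\vee}\subset TB$ of Proposition \ref{prop-int-affine-folklore} has constant covolume. Thus a compact integral affine manifold \emph{automatically} carries a parallel volume density, and the statement is precisely the $\Aff_\Z$-case of the Markus conjecture ``parallel volume $\Rightarrow$ complete''. In particular one inherits the structure theory available for affine manifolds with parallel volume — radiant pieces are excluded (a compact radiant affine manifold carries no parallel volume), and the developing image is either all of $\R^q$ or misses a ``large'' invariant set. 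This also pins down where the difficulty lives, since the classical version is itself open.

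Next comes a reduction organised by the linear holonomy $\rho^{\lin}\colon\pi_1(B)\to\GL_\Z(\R^q)=\GL(q,\Z)$, whose image is — crucially — a \emph{discrete} subgroup of $\GL(q,\R)$. If that image is finite, pass to the corresponding finite cover $\widehat{B}\to B$: it carries an affine structure with purely translational holonomy, so the coordinate vector fields $\partial/\partial x_i$ are globally defined, parallel, commuting and complete; hence $\R^q$ acts on $\widehat{B}$ freely and transitively, $\widehat{B}=\R^q/\Z^q$ is a complete affine torus, and completeness descends to $B$. If the image is infinite but virtually solvable, then by Mal'cev's theorem it is virtually polycyclic, hence so is $\pi_1(B)/\ker\rho$, and one can appeal to the known cases of the Markus conjecture for distal/(virtually) nilpotent and, more generally, solvable holonomy with parallel volume (Fried--Goldman--Hirsch, Fried), the point being that the parallel volume is supplied automatically and that no radiant summand can occur. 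The remaining case is when $\rho^{\lin}(\pi_1 B)$ contains a nonabelian free group; here one would try to run a Molino-type argument in the spirit of Appendix \ref{appendix:molino}: take the closure of the holonomy, split off its unimodular unipotent and semisimple constituents, and induct on $q$ along an invariant affine fibration, using at each stage the discreteness of $\rho^{\lin}(\pi_1 B)$ in $\GL(q,\R)$ and the exclusion of all radiant/homothety phenomena.

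The main obstacle is exactly this last case: taming a compact affine manifold with ``large'' linear holonomy is the content of the still-open classical Markus conjecture, and although integrality buys real extra rigidity — discreteness of the linear holonomy in $\GL(q,\R)$, and the impossibility of homothety holonomy since a homothety of ratio $\neq\pm1$ cannot preserve a lattice — I do not see how to convert that rigidity into a complete argument. So the realistic output of this plan is a proof for solvable (hence ``generic'') linear holonomy, with the free-subgroup case remaining genuinely conjectural.
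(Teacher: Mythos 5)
This statement is not a theorem of the paper: it is stated (and labelled) as a \emph{conjecture}, the integral affine version of the Markus conjecture, and the authors offer no proof --- indeed they introduce it precisely as an example of a ``surprisingly hard'' open question, noting in Remark \ref{rk:affine-structures} that integral affine structures always carry an invariant density, so that the integral version is a special case of the classical Markus conjecture (``parallel volume $\Rightarrow$ complete''), which is itself open. Your proposal correctly identifies all of this, and your reductions are essentially sound: the automatic parallel density from $\det = \pm 1$, the finite-linear-holonomy case via a finite cover with translational holonomy (minor quibble: the quotient there is $\R^q/L$ for some lattice $L$ of translations, not necessarily $\Z^q$, but any such quotient is complete and completeness descends along finite covers), and the appeal to Fried--Goldman--Hirsch-type results for nilpotent/distal holonomy with parallel volume.

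The genuine gap is the one you name yourself: when the linear holonomy contains a nonabelian free group, no argument is known, and the extra rigidity coming from integrality (discreteness of $\rho^{\lin}(\pi_1 B)$ in $\GL(q,\R)$, exclusion of homotheties) has not been converted into a proof by anyone. So what you have written is an accurate assessment of the state of the art and a proof in special cases, but not a proof of the statement --- which is consistent with the paper, since the paper proves nothing here either. There is no discrepancy to resolve beyond recording that both you and the authors leave the statement open.
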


\begin{remark}[Affine structures] \label{rk:affine-structures}
Affine structures on $B$ are defined, via atlases, as above, except that the changes of coordinates (\ref{IA-transf}) are only
affine (i.e., $v\in \R^q$ and $A\in\GL(\R^q)$). The analogue of the 1-1 correspondence from the previous proposition
states that they correspond to flat torsion free connections on $TB$. In this context one can talk about {\bf invariant} (or parallel) densities and volume forms by requiring invariance with respect to parallel transport of the connection. The standard Markus conjecture states that a compact affine structure with an invariant density has a complete connection.

Integral affine structures always admit invariant positive densities (or volume forms, in the orientable case): one sets $\mu:=|\d x_1\wedge \ldots \wedge \d x_q|$ for any choice of integral affine local coordinates $(x_1, \ldots, x_n)$.  An interesting question that seems to be still open is whether, conversely, any affine structure that admits an invariant density comes from an integral affine structure.
% It seems that it is not known if integral affine structures can be characterized as the affine structures admitting invariant densities (respectively, volume forms in the oriented case).
\end{remark}

 A integral structure $E_{\Z}$ on a vector bundle $E\to B$ gives 
rise to a bundle of tori $\cT:=E/E_{\Z}$. Conversely, given a bundle of tori $\cT\to B$, the Lie algebras of the fibers give rise
to a vector bundle $E\to B$, while the kernels of the exponential maps give rise to an integral structure $E_{\Z}$ on $E$ such that $\cT\equiv E/E_\Z$.

The very first indication of the close relationship between PMCTs and integral affine structures arises from the symplectic version of this correspondence, i.e., by considering proper integrations of the zero Poisson structure $\pi\equiv 0$ on $B$. Such an integration is the same thing as  a {\bf symplectic torus bundle over $B$}, i.e. a (smooth) bundle of tori $p: \cT\to B$ together with a symplectic form $\omega_{\cT}$ which is multiplicative in the sense that
\begin{equation}
\label{multiplicativity-omega-ct} 
m^*(\omega_{\cT})= \pr_{1}^{*}(\omega_{\cT})+ \pr_{2}^{*}(\omega_{\cT})
\end{equation}
where $m, \pr_1, \pr_2: \cT\times_{M} \cT\to \cT$ are the bundle multiplication and the two projections, respectively.

\begin{proposition}\label{prop:IAS-sympl-torus} If $\Lambda\subset T^*B$ defines an integral affine structure on $B$ then
\[ \cT_{\Lambda}:= T^*B/\Lambda\]
is a torus bundle and the standard symplectic form $\omega_\can$ on $T^*B$ descends to a symplectic form on $\cT_{\Lambda}$, 
making it into a symplectic torus bundle. Moreover, this gives rise to a bijection:
\[ 
\left\{\txt{integral affine\\ structures on $B$\\ \,}\right\}
\stackrel{1-1}{\longleftrightarrow}
\left\{\txt{isomorphism classes of\\ symplectic torus bundles over $B$\\ \,} \right\}
\]
\end{proposition}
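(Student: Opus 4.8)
The plan is to verify that the construction $\Lambda \mapsto \cT_\Lambda := T^*B/\Lambda$ lands in symplectic torus bundles, and then to exhibit an inverse. First I would check that $\omega_{\can}$ descends to $\cT_\Lambda$. The deck transformations of $T^*B \to \cT_\Lambda$ are the fiberwise translations by sections of $\Lambda$; by Proposition \ref{prop-int-affine-folklore}(d) these local sections are \emph{closed} $1$-forms, and translation of $T^*B$ by a closed $1$-form $\alpha$ is a symplectomorphism of $(T^*B, \omega_{\can})$ since $\tau_\alpha^*\omega_{\can} = \omega_{\can} + \pi_B^*\d\alpha = \omega_{\can}$. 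Hence $\omega_{\can}$ is invariant and descends to a closed $2$-form $\omega_{\cT}$ on $\cT_\Lambda$, which is non-degenerate because the projection $T^*B \to \cT_\Lambda$ is a local diffeomorphism. Multiplicativity \eqref{multiplicativity-omega-ct} follows because fiberwise addition on $\cT_\Lambda$ is covered by fiberwise addition $T^*B \times_B T^*B \to T^*B$, under which $\omega_{\can}$ is evidently additive (in linear fiber coordinates $\omega_{\can} = \sum \d p_i \wedge \d x^i$, and the $p_i$ add). So $(\cT_\Lambda, \omega_{\cT})$ is a symplectic torus bundle, and an isomorphism of integral affine structures (a diffeomorphism of $B$ respecting the lattices) clearly induces an isomorphism of the associated symplectic torus bundles, so the assignment is well-defined on isomorphism classes.

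For the inverse, start from a symplectic torus bundle $p \colon \cT \to B$ with multiplicative $\omega_{\cT}$. As noted just before the proposition, the fiberwise Lie algebra bundle $E \to B$ together with the exponential kernels gives an integral vector bundle $(E, E_\Z)$ with $\cT \cong E/E_\Z$. The key point is that multiplicativity of $\omega_{\cT}$ forces $\cT$ to be \emph{abelian-symplectic} in a way that identifies $E$ with $T^*B$ canonically: the fibers $p^{-1}(b)$ are isotropic (a compact connected Lagrangian-dimensional subgroup of a symplectic manifold on which the group structure is multiplicative must be isotropic — apply \eqref{multiplicativity-omega-ct} along the fiber, where $m$ restricts to the group multiplication), hence of dimension $\le q$; counting dimensions, $\dim \cT = 2q$ forces the fibers to be Lagrangian of dimension exactly $q$. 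A Lagrangian torus fibration with a multiplicative symplectic form is precisely the data of an integral affine structure on the base: contraction $v \mapsto \iota_v \omega_{\cT}$ along vertical vectors gives a bundle isomorphism $E \xrightarrow{\sim} T^*B$ (using that $E$, the vertical bundle along the identity section, is Lagrangian and $\omega_{\cT}$ is non-degenerate), and one checks this carries $E_\Z$ to a lattice $\Lambda \subset T^*B$ whose local sections are closed — the closedness is equivalent to $\omega_{\cT}$ being a genuine (closed) symplectic form, via the standard computation relating $\d\omega_{\cT}$ to the failure of the period $1$-forms to be closed. Then Proposition \ref{prop-int-affine-folklore}(d) gives the integral affine structure. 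Finally one verifies the two constructions are mutually inverse: starting from $\Lambda$, the vertical bundle of $\cT_\Lambda$ is canonically $T^*B$ and $\iota_\bullet \omega_{\can}$ is the identity, recovering $\Lambda$; and starting from $(\cT, \omega_{\cT})$, the identification $E \cong T^*B$ intertwines $\omega_{\cT}$ with the descended $\omega_{\can}$, so $\cT \cong \cT_\Lambda$ as symplectic torus bundles.

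I expect the main obstacle to be the inverse direction — specifically, pinning down \emph{canonically} (not just up to isomorphism) the identification of the vertical bundle $E$ of $\cT$ with $T^*B$ and checking that the image lattice consists of closed forms. The first half of this requires using multiplicativity to see that $\omega_{\cT}$ is determined in "linear" terms along the fibers and that the contraction map is an isomorphism; the second half is the computation showing that the obstruction to closedness of the period forms is exactly $\d\omega_{\cT} = 0$. The forward direction and the well-definedness on isomorphism classes are routine by comparison. I would present the forward direction in detail and then argue the inverse more briefly, invoking that a Lagrangian torus fibration with multiplicative symplectic form is the classical geometric incarnation of an integral affine base (this is the content of, e.g., the Arnol'd--Liouville picture globalized), with the lattice being the period lattice of the action coordinates.
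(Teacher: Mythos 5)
Your forward direction is correct and is a mild variant of the paper's: where you compute $\tau_\alpha^*\omega_\can=\omega_\can+\pi_B^*\d\alpha$ directly for translation by a closed $1$-form, the paper realizes the translation as the time-$1$ flow of the vector field $X_\alpha$ with $i_{X_\alpha}\omega_\can=p^*\alpha$ and checks $\Lie_{X_\alpha}\omega_\can=p^*\d\alpha=0$; both arguments rest on the same closedness of the local sections of $\Lambda$, and your version is, if anything, more economical. The injectivity on isomorphism classes is also handled in essentially the same way (the paper shows an isomorphism over $\mathrm{id}_B$ is induced by a fiberwise linear map $A_b$ carrying $\Lambda_1$ to $\Lambda_2$, and symplecticity forces $A_b=\mathrm{Id}$; your "canonicity of the inverse construction" is the same point).

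The backward direction is where you diverge from the paper, and where your proposal has a genuine gap — one you flag yourself but do not close. Your plan is to identify the vertical bundle $E$ with $T^*B$ by contraction $v\mapsto\iota_v\omega_{\cT}$ and then assert that under this identification $\exp^*\omega_{\cT}$ becomes $\omega_\can$, so that the period lattice lands in $T^*B$ as a Lagrangian lattice. That middle assertion — that multiplicativity forces a closed multiplicative $2$-form on a vector bundle (here $\exp^*\omega_{\cT}$ on $E$, multiplicative for fiberwise addition) to be of the form $\sigma^*\omega_\can$ for a bundle map $\sigma:E\to T^*B$ — is exactly the crux, and it is a nontrivial fact: it is the result of \cite{BCWZ} that the paper invokes explicitly when proving the presymplectic analogue, Proposition \ref{prop:IAS-presympl-torus}. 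Without it, your claim that "the closedness of the lattice sections is equivalent to $\d\omega_{\cT}=0$ via the standard computation" has nothing to compute with. The paper's own proof of the Proposition sidesteps this entirely by Lie theory: since $(\cT,\omega_{\cT})$ is an s-connected symplectic integration of $(B,0)$ and the Weinstein groupoid of $(B,0)$ is $(T^*B,\omega_\can)$, there is automatically a morphism of symplectic groupoids $q:(T^*B,\omega_\can)\to(\cT,\omega_{\cT})$ which is a fiberwise covering; its kernel is the lattice $\Lambda$, the descent of $\omega_\can$ is then immediate from $q^*\omega_{\cT}=\omega_\can$, and closedness of the sections of $\Lambda$ follows as in the forward direction. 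To complete your route you must either prove the linearity of multiplicative $2$-forms on vector bundles or cite it; alternatively, replace that step by the universal property of $\Sigma(B,0)$. A smaller point: your dimension count "$\dim\cT=2q$ forces the fibers to be Lagrangian" presupposes that the torus fibers have dimension $q$; this should be deduced (e.g.\ the unit section is isotropic by the same multiplicativity trick you use for the fibers, giving $q\le f$ alongside $f\le q$), or taken from the fact that $\cT$ integrates the algebroid $T^*B$.
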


\begin{proof} 
Let us start by remarking that given an integral affine structure $\Lambda\subset T^*B$ we have a smooth, free, and proper action of the bundle 
$\Lambda$ on the bundle of abelian groups $T^*B\to B$ by translations:
\[ \xi_b\mapsto \xi_b+ \alpha(b),\quad \alpha\in \Gamma(\Lambda). \] 
Hence, $\cT_{\Lambda}:= T^*B/\Lambda$ is a torus bundle. The canonical symplectic form $\omega_\can$ on $T^*B$ descends to $\cT_{\Lambda}$ iff $\omega_\can$ is invariant under this action. For a fixed $\alpha\in \Gamma(\Lambda)$, one checks easily that the translation by $\alpha$ is the time-1 flow of the vector field $X_{\alpha}$ given by $i_{X_\alpha}\omega_\can= p^*\alpha$, where $p: T^*B\to B$ is the projection. Denoting by $\phi_\a^t$ the flow of $X_\alpha$ and by $m_t:T^*B\to T^*B$ fiberwise multiplication by $t$, one finds that:
\[ m_t^*\omega_\can=t\omega_\can,\quad \phi_\a^{t}= m_t \circ \phi_{\alpha}^{1}\circ m_{1/t}.\]
So $\omega_\can$ is invariant under $\phi^1$ iff it is invariant under $\phi_a^t$, which will follow if:
\[ 0=\Lie_{X_\alpha}\omega_\can=\d i_{X_\alpha}\omega_\can=p^*\d\alpha, \quad \forall \alpha\in\Gamma(\Lambda). \]
But this follows from the fact that all sections of $\Lambda$ are closed (Proposition \ref{prop-int-affine-folklore}).
Since $\omega_\can\in\Omega^2(T^*B)$ is multiplicative, the same holds for the induced symplectic form $\w_{\cT_\Lambda}\in\Omega^2(\cT_{\Lambda})$.
We conclude that $(\cT_{\Lambda},\w_{\cT_\Lambda})$ is a symplectic torus bundle.

Conversely, let $(\cT, \w_\cT)$ be a symplectic torus bundle over $B$. It is a s-connected integration of the zero Poisson structure on $B$. Since the Weinstein groupoid of $(B, 0)$ is $(T^*B, \omega_\can)$, it follows that there is a morphism of symplectic groupoids:
\[ q: (T^*B, \omega_\can) \to (\cT, \omega_\cT),\]
which is a local diffeomorphism. The restriction of $q$ to a fiber gives a Lie group covering $q:T^*_bB\to \cT_b$, so 
its kernel is a lattice $\Lambda_b\in T^*_bB$. Since $q$ is a local diffeomorphism, for each $\alpha_0\in\Lambda_{b_0}$ 
there exists a unique smooth local section $\alpha\in\Gamma(T^*B)$ such that $\alpha(b_0)=\alpha_0$ and $\alpha(b)\in\Lambda_b$.
It follows that $\Lambda\subset T^*B$ is smooth. We conclude that the map $q$ factors through an isomorphism:
\[
\xymatrix{(T^*B,\w_\can)\ar[d] \ar[r]^q & (\cT,\w_\cT)\\ (\cT_\Lambda,\w_{\cT_\Lambda}) \ar[ru]_{\cong}} 
\]
This shows that  $\omega_\can$ descends to $T^*B/\Lambda$, i.e., that is invariant under the action of $\Lambda$. As in the first part of the proof, then every section of $\Lambda$ is closed, so $\Lambda$ is an integral affine structure on $B$, by Proposition \ref{prop-int-affine-folklore}.

To complete the proof it remains to show that if $\Lambda_1,\Lambda_2\subset T^*B$ are any integral affine structures and there is an 
isomorphism of symplectic torus bundle covering the identity, $\phi:(\cT_{\Lambda_1},\w_{\cT_{\Lambda_1}})\to (\cT_{\Lambda_2},\w_{\cT_{\Lambda_2}})$,  
then $\Lambda_1=\Lambda_2$. For that, observe that any such (possibly, non-symplectic) isomorphism, being continuous and additive on the fibers, must be induced by a bundle map $\hat{\phi}:T^*B\to T^*B$ of the form:
\[ \hat{\phi}:(b,\alpha)\mapsto (b,A_b(\alpha)), \quad (b\in B),\]
where $A_b:T^*_b B\to T^*_bB$ are linear isomorphisms with $A_b(\Lambda_1)=\Lambda_2$. 
To see that $A_b=$Id, so that $\Lambda_1=\Lambda_2$, one now uses that $\phi$ preserves the symplectic forms. 
\end{proof}

Integral affine structures are very closely related to Lagrangian fibrations. Indeed, any symplectic torus bundle fibers in a Lagrangian way over its base. Conversely, if  $q: (X, \Oga)\to B$ is a Lagrangian fibration with compact, connected fibers, then 
$B$ has an induced integral affine structure $\Lambda_{X}$ given by:
% Symplectic torus bundles are examples of Lagrangian fibrations and, indeed, integral 
% affine structures are very closely related to Lagrangian fibrations $q: (X, \Oga)\to B$ with compact, connected, fibers: 
% in this case $B$ has an induced integral affine structure $\Lambda_{X}$ which is defined by
\begin{equation}
\label{LambdaX-Lagr}
\Lambda_{X, x}:=\{\alpha\in T^*_xB: \phi^1_{\alpha_X}=\text{id}\},
\end{equation}
where $ \phi^t_{\alpha_X}$ denotes the flow of the vector field $\alpha_X$ on the fiber $q^{-1}(x)$ defined by:
\[
i_{\alpha_X}\Oga=q^*\alpha.
\]

We now give a Poisson geometric interpretation of this construction, which will serve as inspiration later on. First, the Lagrangian fibration condition is equivalent  to the fact that 
\[ q: (X, \Oga)\to (B, 0).\]
is a Poisson map into $B$ with the zero Poisson structure $\pi\equiv 0$, i.e. that $(X, \Oga)$ is a symplectic realization of the Poisson manifold $(B, 0)$. By the general properties of symplectic realizations \cite{CF2} (see also Section \ref{ssec:sympl-realizs-Hamilt} below), it follows that the Lie algebroid and the canonical integration act on the realization. In our case the Lie algebroid is $T^*B$ with the zero bracket and anchor (hence just a bundle of abelian Lie algebras) and the induced action on $X$ is 
\[ \sigma:\Omega^1(B)\to \X(X),\quad  \alpha\mapsto \alpha_X. \]
The canonical integration is the symplectic groupoid $(T^*B,\omega_\can)$, where $T^*B$ is viewed now as a bundle of abelian Lie groups, and the integration of the infinitesimal action $\sigma$ is the groupoid action:
\[
\xymatrix{
 (T^*B,\omega_\can) \ar[d]  & \ar@(dl, ul) & (X,\Oga),\ar[dll]^-{q}\\
(B,0) &   } \qquad\qquad \alpha\cdot u:=\phi^1_{\alpha_X}(u). 
\]
This action is locally free. Moreover, it is symplectic in the sense that (compare with (\ref{multiplicativity-omega-ct}) and with Appendix  \ref{App:Hamiltonian}):
\[
m^*(\Oga)= \pr_{1}^{*}(\omega_{\can})+ \pr_{2}^{*}(\Oga),
\]
where $m: T^*B\times_{B}X\to X$ is the action and $\pr_i$ are the projections.

Now the lattice (\ref{LambdaX-Lagr}) is precisely the isotropy of this action. Hence the corresponding symplectic torus bundle $\cT= T^*B/\Lambda_X$, a symplectic groupoid integrating $(B, 0)$, arises as the quotient of $T^*B$ which acts freely on $X$: 
\[
\xymatrix{
 (\cT,\omega_\cT) \ar[d]  & \ar@(dl, ul) & (X,\Oga)\ar[dll]^-{q}\\
(B,0) &   }
\]
The action is still symplectic, hence $q:X\to B$ is a symplectic principal $\cT$-bundle, or a free Hamiltonian $\cT$-space (see Appendix \ref{appendix:moment:maps}). Conversely, any such symplectic principal bundle is a Lagrangian fibration with compact, connected, fibers:

\begin{proposition}\label{pp:Lagr-fibr}
Any Lagrangian fibration $q:(X,\Oga)\to B$ with compact and connected fibers induces an integral affine structure $\Lambda$ on $B$, yielding a proper integration of $(B, 0)$, i.e. a symplectic torus bundle $\cT_{\Lambda}$ over $B$, for which it becomes a symplectic principal $\cT_{\Lambda}$-bundle.

Conversely, for any symplectic torus bundle $\cT_{\Lambda}$ over $B$, a symplectic principal $\cT_{\Lambda}$-bundle $q:(X,\Oga)\to B$ is a Lagrangian fibration with compact, connected fibers, inducing the integral affine structure $\Lambda$.
\end{proposition}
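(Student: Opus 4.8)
The plan is to extract the integral affine structure directly from the Poisson-geometric picture recalled just above the statement. For a Lagrangian fibration $q\colon(X,\Oga)\to B$ one has the infinitesimal action $\sigma\colon\Omega^1(B)\to\X(X)$, $\alpha\mapsto\alpha_X$ with $i_{\alpha_X}\Oga=q^*\alpha$, and the claim to be proved is that $\Lambda_X$, the isotropy of the integrated action (formula \eqref{LambdaX-Lagr}), is an integral affine structure; everything else will then follow from Propositions \ref{prop-int-affine-folklore} and \ref{prop:IAS-sympl-torus}.

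For the first direction, I would begin by recording the two infinitesimal facts: writing $V:=\ker\d q$ for the (Lagrangian) vertical bundle, $i_{\alpha_X}\Oga=q^*\alpha$ kills $V$, so each $\alpha_X$ is $\Oga$-orthogonal to $V$, hence tangent to the fibres; and at $u\in q^{-1}(x)$ the covectors $(q^*\alpha)(u)$ exhaust the annihilator $V_u^\circ$, so the vectors $\alpha_X(u)$ exhaust $V_u$. Since $\alpha_X|_{q^{-1}(x)}$ depends only on $\alpha(x)\in T^*_xB$, the fibres are compact (so the flows are complete), and the $\alpha_X$ pairwise commute ($T^*B$ being the abelian Lie algebroid of $(B,0)$), the time-one flows $\phi^1_{\alpha_X}$ assemble into an action of the bundle of abelian Lie groups $T^*B\to B$ on $X\to B$ (for which $q$ is the moment map); by the spanning property together with connectedness of the fibres this action is fibrewise transitive, and it is locally free because $\alpha_X(u)=0$ forces $\alpha(x)=0$. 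Hence each fibre is $q^{-1}(x)\cong T^*_xB/\Lambda_{X,x}$ with $\Lambda_{X,x}$ discrete; being a compact quotient of dimension $\dim B$ it is a full lattice. For smoothness of $\Lambda_X\subset T^*B$ I would run the usual parametrised-ODE argument: over a neighbourhood $U\subseteq B$ with a section $\sigma_0$ of $q$, the map $T^*U\to X$, $\xi\mapsto\phi^1_{\xi_X}(\sigma_0(q_B(\xi)))$, is fibrewise a covering, hence a local diffeomorphism, and $\Lambda_X|_U$ is the preimage of $\sigma_0(U)$; composing a local inverse with $\sigma_0^{-1}$ produces smooth local sections of $\Lambda_X$. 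Thus $\cT_{\Lambda_X}:=T^*B/\Lambda_X$ is a torus bundle and $q$ a (free, proper, hence) principal $\cT_{\Lambda_X}$-bundle.

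To see that $\Lambda_X$ is \emph{integral} affine, i.e.\ Lagrangian in $(T^*B,\omega_\can)$ (Proposition \ref{prop-int-affine-folklore}), I would invoke that the $T^*B$-action is symplectic, $m^*\Oga=\pr_1^*\omega_\can+\pr_2^*\Oga$ on $T^*B\times_BX$ --- a general property of symplectic realizations (\cite{CF2}; see the discussion above and Appendix \ref{appendix:moment:maps}) --- and restrict this identity to the submanifold $\Lambda_X\times_BX$. There $m$ coincides with $\pr_2$, since an element of the isotropy lattice acts trivially on the whole fibre (the action being abelian and fibrewise transitive); so $\pr_1^*(\omega_\can|_{\Lambda_X})=0$, and since $\pr_1\colon\Lambda_X\times_BX\to\Lambda_X$ is a surjective submersion, $\omega_\can|_{\Lambda_X}=0$. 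As $\dim\Lambda_X=\tfrac12\dim T^*B$, the lattice $\Lambda_X$ is Lagrangian; Proposition \ref{prop:IAS-sympl-torus} then makes $\cT_{\Lambda_X}$ a symplectic torus bundle, and the multiplicativity of the $T^*B$-action descends along $T^*B\to\cT_{\Lambda_X}$, so $q\colon(X,\Oga)\to B$ is a symplectic principal $\cT_{\Lambda_X}$-bundle. For the converse, given a symplectic torus bundle $\cT_\Lambda$ (equivalently, by Proposition \ref{prop:IAS-sympl-torus}, an integral affine structure $\Lambda$) and a symplectic principal $\cT_\Lambda$-bundle $q\colon(X,\Oga)\to B$, the fibres are the $\cT_\Lambda$-orbits, hence compact connected tori of dimension $\dim B=\tfrac12\dim X$, and restricting $m^*\Oga=\pr_1^*\omega_\cT+\pr_2^*\Oga$ to an orbit map $\cT_{\Lambda,x}\to X$ shows them isotropic, the torus fibre of $\cT_\Lambda$ being Lagrangian (it is covered by a cotangent fibre of $T^*B$, on which $\omega_\can$ vanishes); so $q$ is a Lagrangian fibration. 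Finally, the fundamental vector field of the $\cT_\Lambda$-action attached to $\alpha\in\Gamma(\mathrm{Lie}\,\cT_\Lambda)=\Omega^1(B)$ is exactly $\alpha_X$, the Hamiltonian condition for the action (Appendix \ref{appendix:moment:maps}) reading $i_{\alpha_X}\Oga=q^*\alpha$; hence $\phi^1_{\alpha_X}|_{q^{-1}(x)}$ is the (free) action of $\exp(\alpha(x))\in\cT_{\Lambda,x}$, which is trivial exactly when $\alpha(x)\in\Lambda_x$, giving $\Lambda_X=\Lambda$.

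The main obstacle I anticipate lies entirely in the first direction: verifying carefully that the time-one flows assemble into a smooth, locally free groupoid action and that the isotropy bundle $\Lambda_X$ is a smooth lattice (the parametrised-ODE bookkeeping). The genuinely substantive classical statement --- that $\Lambda_X$ is Lagrangian, which is the content of the Arnol'd--Liouville theorem --- becomes almost formal here, once the symplectic-realization identity of \cite{CF2} is in hand, via the restriction-to-the-isotropy trick above.
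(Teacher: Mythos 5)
Your proposal follows the same route as the paper: Proposition \ref{pp:Lagr-fibr} is stated there as a summary of the discussion immediately preceding it, which, exactly as you do, reads the Lagrangian fibration as a symplectic realization of $(B,0)$, lets the Weinstein groupoid $(T^*B,\omega_\can)$ act symplectically on $(X,\Oga)$, identifies $\Lambda_X$ with the isotropy of this locally free, fibrewise transitive action, and passes to the quotient torus bundle; your completeness, commutativity, smoothness-of-the-lattice and converse arguments all match the paper's (the latter being the ones used in Theorem \ref{thm-lattice-proper-case} and Proposition \ref{prop:IAS-sympl-torus}).

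The one step where you genuinely diverge is the proof that $\Lambda_X$ is Lagrangian. The paper (by reference to the proof of Theorem \ref{thm-lattice-proper-case}, and to \cite{DaDe}) shows that local sections $\alpha$ of the lattice are closed by integrating $\Lie_{\alpha_X}\Oga=\d\, i_{\alpha_X}\Oga=q^*\d\alpha$ along the flow and using $\phi^1_{\alpha_X}=\mathrm{id}$, then invokes $\alpha^*\omega_\can=\d\alpha$ and Proposition \ref{prop-int-affine-folklore}. You instead restrict the multiplicativity identity $m^*\Oga=\pr_1^*\omega_\can+\pr_2^*\Oga$ to $\Lambda_X\times_BX$, where $m=\pr_2$, and read off $\omega_\can|_{\Lambda_X}=0$ directly. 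This is correct and not circular (the multiplicativity of the $T^*B$-action is a general fact about symplectic realizations, quoted as such in the paper), and it is the global counterpart of the paper's infinitesimal flow computation; what it buys is a cleaner, choice-free statement — the isotropy of any symplectic groupoid action is isotropic — at the price of taking the multiplicative identity of \cite{CF2} as a black box, whereas the paper's flow argument is self-contained given only the moment-map condition $i_{\alpha_X}\Oga=q^*\alpha$.
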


A classical result due to Duistermaat \cite{Duist} shows that Lagrangian fibrations with compact, connected fibers, are classified by the integral affine structure $\Lambda$ and the \emph{Lagrangian Chern class}. This will be recalled and generalized in Section \ref{ssec:torsors}.

%%%%%%%%%%%%%%%%%%%%%%%%%%%%%%
%%%%%%%%%%%%%%%%%%%%%%%%%%%%%%
%%%%%%%%%%%%%%%%%%%%%%%%%%%%%%
%%%%%%%%%%%%%%%%%%%%%%%%%%%%%%
%%%%%%%%%%%%%%%%%%%%%%%%%%%%%%
%%%%%%%%%%%%%%%%%%%%%%%%%%%%%%
\subsection{Integral affine structures on orbifolds and  foliations} 
\label{ssec:IAS-orbifol}
%%%%%%%%%%%%%%%%%%%%%%%%%%%%%%
%%%%%%%%%%%%%%%%%%%%%%%%%%%%%%
%%%%%%%%%%%%%%%%%%%%%%%%%%%%%%
%%%%%%%%%%%%%%%%%%%%%%%%%%%%%%
%%%%%%%%%%%%%%%%%%%%%%%%%%%%%%
%%%%%%%%%%%%%%%%%%%%%%%%%%%%%%

We define integral affine structures on orbifolds following Haefliger's approach (see Remark \ref{rmk:structures on orbifolds}):

\begin{definition}
\label{def:IAorbifold}
An {\bf integral affine structure on a orbifold} $(B,\cB)$ is an integral affine structure on the base of some \'etale orbifold atlas $\eE\tto T$ which is invariant under the action (\ref{germ-of-bisection}) by elements of $\eE$. 
\end{definition}

This definition only uses the linear part of the action (\ref{germ-of-bisection}), so an integral affine structure on an orbifold is the same things as one on its underlying classical orbifold. For this reason, the Morita equivalence $Q_\eE:\eE\simeq \cB$ plays here no role (see Lemma \ref{lem:classical:orbifold}): any Morita equivalence between two atlases allows us to move an invariant integral affine structure from one base to the other (pull-back to the bibundle, then push forward by the obvious quotient operation), and the result does not depend on the choice of equivalence.
 
\begin{example} 
If $\Gamma \subset \Aff_\Z(\R^q)$  is an integral affine group of orbifold type (see the previous example) then $B=\R^q/\Gamma$ will inherit the structure of integral affine orbifold. As a baby illustration, consider the subgroup $\Gamma\subset\Aff_{\Z}(\R)$ generated by 
\[ \gamma_1(x)= -x+ 1, \ \gamma_2(x)= -x .\]
As an abstract group, $\Gamma$ is the free group in two generators $\gamma_1$ and $\gamma_2$ subject to the relations $\gamma_1^2= \gamma_2^2= 1$, so that:
\begin{align*}
\Gamma\cong &~\Z_2\star \Z_2,\\
\Gamma^{\mathrm{tr}}=\{(0,0)\}, &\quad \Gamma^{\lin}= \{\textrm{Id}, - \textrm{Id}\}.
\end{align*}
The action of $\Gamma$ on $\R$ is proper and the only $x\in\R$ with non-trivial isotropy group are $x=\frac{n}{2}$ with $n\in\Z$, in which case we find:
\[ \Gamma_{\frac{n}{2}}=\{1, (\gamma_1\gamma_2)^{n-1}\gamma_1\}\cong\Z_2.\]
The quotient $B=\R/\Gamma= \S^1/ \Z_2$ is, topologically, just $[0, 1]$. This gives the interval $[0, 1]$ the structure of an integral affine orbifold.
\end{example}

To represent the integral affine structure in arbitrary, possibly non-\'etale, orbifold atlases, we need the notion of transverse integral affine structure. Recall that for a foliation $(M,\cF)$ of codimension $q$ a foliation atlas $\{(U_i,\phi_i):i\in I\}$ is an open cover $\{U_i:i\in I\}$ of $M$ together with submersions $\phi: U_i\to \R^q$ whose fibers are the plaques of $\cF$ in $U_i$. 

\begin{definition} 
\label{def:transverse:int:affine}
A {\bf transverse integral affine structure} on a foliation $(M,\cF)$ of codimension $q$ is a choice of a maximal foliation
atlas $\{(U_i,\phi_i):{i\in I}\}$ with the property that each transition function 
\[ \phi_{j}\circ\phi^{-1}_i:\phi_i(U_i\cap U_j)\to \phi_j(U_i\cap U_j), \]
is (the restriction of) an integral affine transformation in $\Aff_\Z(\R^q)=\GL_q(\Z)\ltimes \R^q$. A chart $(U_i,\phi_i)$ is called a {\bf transverse integral affine coordinate system}.
\end{definition}

More efficient descriptions of transverse integral affine structure can be given in terms of lattices: there is an analogue of Proposition \ref{prop-int-affine-folklore} where the lattices now live in the normal/conormal bundle to the foliation. The most useful characterization for us will be the one in terms of the conormal bundle $\nu^*(\cF)=(T\cF)^o\subset T^*M$, which we state as follows:

\begin{proposition}\label{prop-transv-int-affine-folklore} 
If $(M,\cF)$ is a foliation of codimension $q$, there a 1-1 correspondence between:
\begin{enumerate}[(i)]
\item A transverse integral affine atlas $\{(U_i,\phi_i):{i\in I}\}$ on $(M,\cF)$;
\item A lattice $\Lambda\subset \nu^*(\cF)$ which is a Lagrangian submanifold of $(T^*M,\omega_\can)$;
\item A lattice $\Lambda\subset \nu^*(\cF)$ locally spanned by $q$ closed, $\cF$-basic, 1-forms on $M$.
\end{enumerate}
In this 1-1 correspondence, $\Lambda$ is given by:
\[ 
\Lambda_x:= \Z\left. \d x_1\right|_x+ \cdots+ \Z\left. \d x_q\right|_x,
\]
where $(x_1, \ldots, x_q)$ is any transverse integral affine coordinate system around $x\in M$. 
\end{proposition}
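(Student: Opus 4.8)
The plan is to deduce the proposition from its non-foliated analogue, Proposition~\ref{prop-int-affine-folklore}, by localizing in foliation charts; the only genuinely new ingredient is the dictionary between closed $\cF$-basic $1$-forms and foliated submersions. Throughout I would use the standard fact that a $1$-form $\alpha$ on $M$, viewed as a section $\alpha\colon M\to T^*M$, pulls the tautological $1$-form back to itself, so that the image of $\alpha$ is an isotropic submanifold of $(T^*M,\omega_{\can})$ precisely when $\alpha$ is closed.

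For the equivalence (ii) $\Leftrightarrow$ (iii) I would argue pointwise, the bijection being the identity on the lattice $\Lambda$. Near a point $\xi\in\Lambda$ over $x\in M$, the lattice $\Lambda$ agrees with the image of the local section $\sum_i n_i\alpha_i$ ($n_i\in\Z$) hitting $\xi$ at $x$, where $\alpha_1,\dots,\alpha_q$ are smooth local sections of $\nu^*(\cF)$ spanning $\Lambda$; hence $\Lambda$ is isotropic iff each $\alpha_i$ is closed, and since $\dim\Lambda=\dim M=\tfrac12\dim T^*M$ isotropic coincides with Lagrangian. Finally I would observe that a $1$-form has image in $\nu^*(\cF)$ iff it kills $T\cF$, and that a closed form killing $T\cF$ is automatically $\cF$-basic, since $\Lie_V\alpha=i_V\d\alpha+\d i_V\alpha$; the two descriptions of $\Lambda$ then match.

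For (i) $\Leftrightarrow$ (iii): from a transverse integral affine atlas $\{(U_i,\phi_i)\}$ with $\phi_i=(x_1,\dots,x_q)$, the components $x_k$ are constant on plaques, so the $\d x_k$ are closed $\cF$-basic, pointwise independent sections of $\nu^*(\cF)$, and I would set $\Lambda_x:=\sum_k\Z\,\d x_k|_x$; on an overlap, writing the transition as $Ax+v$ with $A\in\GL_q(\Z)$ gives $\d y_l=\sum_k A_{lk}\,\d x_k$, so the lattice is well defined globally and is of the type in (iii). Conversely, given $\Lambda$ as in (iii) and any foliation chart $\phi\colon U\to V\subset\R^q$, after shrinking $U$ the generators of $\Lambda|_U$ are pullbacks $\phi^*\bar\alpha_k$ of closed $1$-forms on $V$ spanning a smooth lattice $\bar\Lambda\subset T^*V$ with closed local sections; Proposition~\ref{prop-int-affine-folklore} turns $\bar\Lambda$ into an integral affine structure on $V$, and composing $\phi$ with its integral affine charts yields transverse integral affine charts, whose overlaps are integral affine because a foliation-chart transition $\phi_j\circ\phi_i^{-1}$ carries $\bar\Lambda$ to $\bar\Lambda$ and is therefore integral affine after conjugating by the chosen charts. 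Taking the maximal such atlas gives (i), and the two passages are mutually inverse: the lattice of the new atlas is spanned by its coordinate differentials, which span $\Lambda$, while maximality handles the other direction; this also yields the displayed formula for $\Lambda_x$.

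The step I expect to require care, rather than routine bookkeeping, is the foliated Poincar\'e lemma underlying the converse in (i) $\Leftrightarrow$ (iii): on a foliation chart $U\cong(\text{plaque})\times V$ the closed $\cF$-basic $1$-forms are exactly the pullbacks of closed $1$-forms on $V$, so that primitives of them can be chosen $\cF$-basic and thus serve as transverse coordinates. Once this is in place the whole statement reduces cleanly to Proposition~\ref{prop-int-affine-folklore} together with the tautological-form computation, and the proposition specializes, for the foliation by points, to that earlier result.
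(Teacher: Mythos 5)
Your proof is correct. The paper itself offers no argument for this proposition — it is stated as the folklore foliated analogue of Proposition~\ref{prop-int-affine-folklore} — and your route is exactly the standard one implicit in that phrasing: reduce to the non-foliated case in foliation charts via the foliated Poincar\'e lemma (closed $\cF$-basic $1$-forms on a chart are pullbacks of closed forms on the local leaf space), and use the identity $\alpha^*\omega_\can=\d\alpha$ together with the dimension count to identify the Lagrangian condition with closedness of the local spanning sections; the latter computation is the same one the paper itself deploys later in the proof of Theorem~\ref{thm-lattice-proper-case}.
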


Since basic forms are determined by their restriction to complete transversals, we deduce that a transverse integral affine structure on $(M, \cF)$ is the same thing as the choice of a holonomy invariant integral affine structure on a (any) complete transversal. This relates Definition \ref{def:transverse:int:affine} to Haefliger's approach (Remark \ref{rmk-Transversal geometric structures}).

\begin{example}[Simple foliations]\label{ex-trIAS-simple-fol} 
If  $(M,\cF)$ is simple, then transverse integral affine structures on $\cF$ are in 1-1 correspondence with integral affine structures on the smooth manifold $B= M/\cF$. In terms of lattices, they are related via pullback by $p:M\to B$.
\end{example}

\begin{example}[Orbifolds]\label{ex-trIAS-orbifolds} 
For foliations $(M, \cF)$ of proper type we know that the leaf space $B= M/\cF$ is an orbifold (see Example \ref{ex:proper:foliations:orbifold}).
We now have a bijection
\[ \left\{
                \txt{transverse integral affine structures\\
                   on the proper foliation $(M, \cF)$\\ \,}
              \right\}
\stackrel{1-1}{\longleftrightarrow}
 \left\{
                  \txt{integral affine structures\\
                    on the orbifold $B= M/\cF$\\ \, }
              \right\}\]
Strictly speaking, one has several orbifold structures on $B$, one for each proper s-connected integration $\cE$ of $\cF$. However, as we remarked before, the notion of integral affine structure only depends on the underlying classical orbifold.
\end{example}

% Strictly speaking, one has several orbifold structures on $B$, one for each proper s-connected integration $\cE$ of $\cF$ (see Example \ref{ex:proper:foliations:orbifold}). However, as we remarked before, the notion of orbifold integral structure only depends on the underlying classical orbifold, which has atlas $\Hol(M,\cF)$, and hence it is the same for all of them.

Starting with an arbitrary orbifold $B$, the previous example is relevant to the way one can represent integral affine structures on $B$ with respect to arbitrary orbifold atlases $\cE\tto M$ (not necessarily \'etale ones). While in this case $\cE$ may have disconnected s-fibers, we have to consider transverse integral affine structures $\Lambda$ for the foliation $\cF$ induced by $\cE$ on $M$ which satisfy the extra-condition that $\Lambda$ is invariant with respect to the induced action of $\cE$ on $\nu^*(\cF)$ (of course, this condition is superfluous if $\cE$ is s-connected). Such $\Lambda$s will be called {\bf $\cE$-invariant (transverse) integral affine}.

\begin{example}[Linear foliations]\label{ex-trIAS-linear-fol}
Let $\hat{S}\to S$ be a $\Gamma$-cover of a manifold $S$ and let $(V,V_\Z)$
be an integral vector space. If $\rho:\Gamma\to\GL_{V_\Z}(V)$ is an linear
representation that preserves the lattice, then the linear foliation
$(\hat{S}\times_{\Gamma} V, \cF_{\lin})$ (see Example \ref{ex-local-linear-models}) has a transverse integral affine structure.
\end{example}

The analogue of the relationship between integral affine structures and the zero-Poisson structure (Proposition \ref{prop:IAS-sympl-torus})
holds for transverse integral affine structures, provided one allows for Dirac structures into the picture. Let $(M,\cF)$
be a foliation with a transverse integral affine structure $\Lambda\subset\nu^*(\cF)$. Since $\Lambda\subset T^*M$ is Lagrangian, the
pullback to $\nu^*(\cF)$ of the canonical symplectic form $\omega_\can$ gives rise to a presymplectic torus bundle 
\[ (\cT_{\Lambda}= \nu^*(\cF)/\Lambda, \omega_{\cT}).\]

In general, by a {\bf presymplectic torus bundle} over a manifold $M$ we mean a bundle of tori $p: \cT\to M$ together with a closed 2-form $\omega_{\cT}\in\Omega^2(\cT)$ which is multiplicative and satisfies the non-degeneracy condition 
\[ \Ker(\omega_{\cT})\cap \Ker(\d p)= \{0\} .\]
For presymplectic torus bundles one has the following analogue of Proposition \ref{prop:IAS-sympl-torus}:

\begin{proposition}\label{prop:IAS-presympl-torus}  The correspondence $(\cF, \Lambda)\mapsto (\cT_{\Lambda}, \omega_{\cT})$ defines a bijection: 
\[ \left\{
                \begin{array}{ll}
                  \text{transverse integral affine}\\
                   \text{foliations}\ (\cF, \Lambda) \ \text{on}\ M
                \end{array}
              \right\}
\stackrel{1-1}{\longleftrightarrow}
 \left\{
                \begin{array}{ll}
                  \text{isomorphism classes of presymplectic}\\
                   \text{torus bundles}\ (\cT, \omega_{\cT})\ {over}\ M
                \end{array}
              \right\}\]
\end{proposition}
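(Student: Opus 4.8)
The plan is to mimic the proof of Proposition~\ref{prop:IAS-sympl-torus}, now permitting the leafwise (``kernel'') directions and working over $M$ rather than over a leaf space. The map $(\cF,\Lambda)\mapsto(\cT_\Lambda,\omega_\cT)$ is the content of the paragraph preceding the statement, so I would only record the routine verifications. As $\Lambda\subset\nu^*(\cF)$ is a smooth full-rank lattice it acts freely and properly by translations on the bundle of abelian groups $\nu^*(\cF)\to M$, so $\cT_\Lambda=\nu^*(\cF)/\Lambda$ is a torus bundle. Writing $i:\nu^*(\cF)\hookrightarrow T^*M$ and picking transverse integral affine coordinates $(x_1,\dots,x_q)$ (so $\Lambda$ is spanned by the $\d x_a$ and the fibre coordinates $\eta_a$ are dual to $\d x_a$), one has $i^*\omega_\can=\sum_a \d\eta_a\wedge\d x_a$; hence $i^*\omega_\can$ has constant rank $2q$, its kernel consists of the leafwise directions, and $\Ker(i^*\omega_\can)\cap\Ker(\d p)=0$ for the projection $p:\nu^*(\cF)\to M$. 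Multiplicativity of $i^*\omega_\can$ holds because $\nu^*(\cF)$ is a subgroup bundle of $(T^*M,+)$ and $\omega_\can$ is multiplicative. Finally, as in Proposition~\ref{prop:IAS-sympl-torus}, translation by $\alpha\in\Gamma(\Lambda)$ is the time-one flow of $X_\alpha$ with $i_{X_\alpha}(i^*\omega_\can)=p^*\alpha$, so $\Lie_{X_\alpha}(i^*\omega_\can)=p^*\d\alpha=0$ since every section of $\Lambda$ is closed (Proposition~\ref{prop-transv-int-affine-folklore}); thus $i^*\omega_\can$ descends to a closed multiplicative $2$-form $\omega_\cT$ on $\cT_\Lambda$, and since the quotient map is a fibrewise covering it still satisfies the non-degeneracy condition. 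So $(\cT_\Lambda,\omega_\cT)$ is a presymplectic torus bundle.

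For the converse I would start from an arbitrary presymplectic torus bundle $p:\cT\to M$, choose a local trivialization $\cT|_U\cong U\times\T^q$ with fibre coordinates $\theta^1,\dots,\theta^q$, and extract a local normal form. Multiplicativity gives $e^*\omega_\cT=0$ for the unit section and, comparing $m^*\omega_\cT$ with $\pr_1^*\omega_\cT+\pr_2^*\omega_\cT$ on $\cT|_U\times_U\cT|_U$, forces $\omega_\cT|_U=\sum_a\alpha_a\wedge\d\theta^a$ with $\alpha_a\in\Omega^1(U)$ (no $\d\theta\wedge\d\theta$ and no $\d x\wedge\d x$ terms survive). Then $\d\omega_\cT=0$ is equivalent to $\d\alpha_a=0$ for all $a$, and the non-degeneracy condition is equivalent to the $\alpha_a$ being pointwise linearly independent; using once more that $\omega_\cT$ has no $\d x\wedge\d x$ part one checks that the subspaces $F_x:=\Span\{\alpha_1(x),\dots,\alpha_q(x)\}\subset T^*_xM$ are independent of the trivialization, so $F\subset T^*M$ is a rank-$q$ subbundle locally framed by closed $1$-forms. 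Since closed $1$-forms annihilating a distribution make it involutive, $F^o\subset TM$ integrates to a foliation $\cF$ with $\nu^*(\cF)=F$. Writing $\cT=E/\Lambda_E$ with $E=\mathrm{Lie}(\cT)$ and the fibre frame $\partial_{\theta^a}$ spanning $\Lambda_E|_U$, the bundle map $\sigma:E\to T^*M$, $\partial_{\theta^a}\mapsto\alpha_a$, is well defined, injective, with image $\nu^*(\cF)$, and $\Lambda:=\sigma(\Lambda_E)$ is a smooth lattice in $\nu^*(\cF)$ all of whose sections are closed, $\cF$-basic $1$-forms; by Proposition~\ref{prop-transv-int-affine-folklore}, $(\cF,\Lambda)$ is a transverse integral affine foliation, and $\sigma$ descends to an isomorphism of presymplectic torus bundles $(\cT,\omega_\cT)\cong(\cT_\Lambda,\omega_{\cT_\Lambda})$.

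The two constructions are visibly inverse to one another, so it only remains to check injectivity at the level of isomorphism classes: if $(\cF_1,\Lambda_1)$ and $(\cF_2,\Lambda_2)$ give isomorphic presymplectic torus bundles, they coincide. An isomorphism $\phi:\cT_{\Lambda_1}\to\cT_{\Lambda_2}$ over $\mathrm{id}_M$ is fibrewise a homomorphism of tori, hence lifts to a fibrewise-linear bundle map $\widetilde\phi:\nu^*(\cF_1)\to\nu^*(\cF_2)$ over $\mathrm{id}_M$ with $\widetilde\phi(\Lambda_1)=\Lambda_2$; arguing exactly as in the last paragraph of the proof of Proposition~\ref{prop:IAS-sympl-torus}, the fact that $\phi$ preserves the presymplectic forms forces $\widetilde\phi$ to be the canonical inclusion inside $T^*M$, whence $\nu^*(\cF_1)=\nu^*(\cF_2)$ (so $\cF_1=\cF_2$) and $\Lambda_1=\Lambda_2$.

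The only genuinely non-formal step is the local analysis of multiplicative closed $2$-forms on a torus bundle together with the right reading of the non-degeneracy condition: one must see that this data pins down precisely a rank-$q$ subbundle $F\subset T^*M$ framed by closed $1$-forms, and that involutivity of $F^o$ — i.e.\ the very existence of the foliation $\cF$ — comes for free from closedness of the frame. Everything else is a mild adaptation of Proposition~\ref{prop:IAS-sympl-torus} and its proof.
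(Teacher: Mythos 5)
Your proof is correct, and both the forward direction and the uniqueness step track the paper's argument (which simply adapts the corresponding steps of Proposition~\ref{prop:IAS-sympl-torus}). Where you genuinely diverge is in the converse. The paper's proof gets the key structural fact — that the presymplectic form is pulled back from $(T^*M,\omega_\can)$ along a bundle map $\sigma:\mathfrak{t}\to T^*M$ — by citing the integrability result of \cite{BCWZ} for closed multiplicative $2$-forms on a vector bundle viewed as a presymplectic groupoid, applied to $\mathfrak{t}=\mathrm{Lie}(\cT)$ via $\exp^*\omega_{\cT}=\omega_\sigma$; it then deduces closedness of the local sections of $\Lambda$ from the fact that $\omega_\sigma$ descends to $\cT=\mathfrak{t}/\Lambda_{\mathfrak{t}}$ (the same translation-invariance computation as in Proposition~\ref{prop:IAS-sympl-torus}), and only then concludes involutivity of $\cF$. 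You instead establish the same normal form $\omega_{\cT}|_U=\sum_a\alpha_a\wedge\d\theta^a$ by a direct computation in a local trivialization, where compactness of the torus fibers kills the $\d x\wedge\d x$ and $\d\theta\wedge\d\theta$ components outright, and you read off closedness of the $\alpha_a$ directly from $\d\omega_{\cT}=0$. Your route is more elementary and self-contained (no appeal to the integration theory of Dirac structures), at the cost of having to verify by hand that the frame $\{\alpha_a\}$, the subbundle $F=\im(\sigma)$ and the lattice $\Lambda=\sigma(\Lambda_E)$ are independent of the trivialization; the paper's route is shorter and trivialization-free, but imports a nontrivial external result. Both correctly identify the one non-formal point, namely that involutivity of $\cF$ is a consequence rather than a hypothesis, coming for free from the closedness of the local lattice frame.
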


\begin{proof} 
We need to show that a presymplectic torus bundle defines a foliation $\cF$ with a transverse integral affine structure $\Lambda$. Let us mention the main changes in the arguments of the proof of Proposition \ref{prop:IAS-sympl-torus} 

For a vector bundle $E\to M$, closed multiplicative 2-forms on $E$, where multiplicativity is with respect to fiberwise addition, are necessarily of type 
\[ \omega_{\sigma}= \sigma^*(\omega_{\can}),\]
for some vector bundle map $\sigma: E\to T^*M$. This follows, e.g., from the integrability result of \cite{BCWZ} applied to $E$, viewed as a presymplectic groupoid. Hence, if $(\cT, \omega_{\cT})$ is a presymplectic torus bundle, and we apply this result to the bundle $\mathfrak{t}\to M$ consisting of the Lie algebras of the fibers of $\cT$, we find that:
\[ \exp^*(\omega_{\cT})= \omega_{\sigma},\]
for some $\sigma: \mathfrak{t}\to T^*M$, where $\exp: \mathfrak{t} \to \cT$ denotes the fiberwise exponential map. 

The non degeneracy condition continues to hold for the pull-back $\omega_\sigma$ and
when applied at elements $0_x\in \mathfrak{t}_x$ implies that $\sigma$ must be injective. Hence, there is a distribution $\cF\subset TM$ such that 
\[ \im(\sigma)= \left( TM/ \cF \right)^{*}= \nu^*(\cF).\]
The lattice $\Lambda_{\mathfrak{t}}:=\ker(\exp)\subset \mathfrak{t}$ will be moved by $\sigma$ into
a lattice $\Lambda\subset \nu^*(\cF)$. We have the extra-information that $\omega_{\sigma}$ descends to $\cT= \mathfrak{t}/\Lambda_{\mathfrak{t}}$; then, as in the proof of Proposition \ref{prop:IAS-sympl-torus}, this will imply that all the (local) sections of $\Lambda$ must be closed. In turn, this implies also that $\cF$ is integrable and then that $\Lambda$ is indeed a transverse integral affine structure for the foliation $\cF$. The rest of the arguments continue as for Proposition \ref{prop:IAS-sympl-torus}.
\end{proof}

\begin{example}
Assume that  $(M,\cF)$ is a simple foliation, as in Example \ref{ex-trIAS-simple-fol}, and $\Lambda$ is a
transversely affine structure that comes from an integral affine structure $\Lambda_B$ on the leaf space $B=M/\cF$. 
Then we have the presymplectic torus bundle $(\cT_\Lambda,\omega_{\cT_\Lambda})\to M$ and the symplectic torus bundle $(\cT_{\Lambda_B},\omega_{\cT_{\Lambda_B}})\to B$. The foliation defined by $\Ker(\omega_{\cT_\Lambda})$ on $\cT_\Lambda$ is simple as well, and its leaf space is precisely $\cT_{\Lambda_B}$. In other words, we have $\cT_\Lambda=p^*\cT_{\Lambda_B}$ and $\omega_{\cT_\Lambda}=p^*\omega_{\cT_{\Lambda_B}}$, where $p:M\to B$ is the projection onto the leaf space.
\end{example}

For the analogue of Proposition \ref{pp:Lagr-fibr} one replaces the Lagrangian fibrations by the symplectically complete isotropic fibrations of Dazord-Delzant \cite{DaDe}. This will be discussed in detail in Section \ref{sec:realizations}. 

% We do not give now an analogue of Proposition \ref{pp:Lagr-fibr}, but later in Section \ref{sec:realizations}
% we will introduce the symplectically complete isotropic fibrations of Dazord-Delzant \cite{DaDe}, which generalize Lagrangian fibrations, and allow one to extend this proposition.

The presymplectic torus bundle $(\cT_{\Lambda},\omega_{\cT})$ is also relevant for the integration of the Dirac structure $L_\cF$ associated with a foliation $\cF$ and understanding its $\cC$-type. Recall that this Dirac structure is defined by
\[ L_{\cF}:= \cF \oplus \nu^*(\cF) \subset TM\oplus T^*M, \]
and has presymplectic leaves consisting of the leaves of $\cF$ equipped with the zero-presymplectic form. We have an exact sequence of Lie algebroids
\[ \xymatrix{0\ar[r]& \nu^*(\cF)\ar[r]& L_{\cF} \ar[r]& \cF\ar[r]& 0},\]
which leads to explicit integrations of $L_{\cF}$. One such integration is obtained by observing that the linear holonomy action of $\Hol(M,\cF)$ on $\nu^*(\cF)$ descends to an action on 
$\cT_{\Lambda}$, so one obtains a groupoid
\[ \cT_{\Lambda}\Join \Hol(M, \cF)\tto M,\]
where an arrow $(\lambda,\gamma)$ consists of $\gamma\in \Hol(M, \cF)$ and $\lambda\in \cT_{\Lambda,\gamma(0)}$, 
and % the structure maps are given by:
\begin{equation}\label{Join-gpd} 
s(\lambda, \gamma)= s(\gamma),\quad t(\lambda, \gamma)= t(\gamma), \quad (\lambda, \gamma)\cdot (\lambda', \gamma')= (\lambda\cdot \hol_{\gamma}^{\lin}(\lambda'), \gamma\cdot \gamma'). 
\end{equation}
Together with the pull-back of $\omega_{\cT}$, this becomes a presymplectic groupoid integrating $L_{\cF}$. It is of $\cC$-type if $\cF$ is of $\cC$-type.

%%%%%%%%%%%%%%%%%%%%%%%%%%%%%%
%%%%%%%%%%%%%%%%%%%%%%%%%%%%%%
%%%%%%%%%%%%%%%%%%%%%%%%%%%%%%
%%%%%%%%%%%%%%%%%%%%%%%%%%%%%%
%%%%%%%%%%%%%%%%%%%%%%%%%%%%%%
%%%%%%%%%%%%%%%%%%%%%%%%%%%%%%
\subsection{From PMCTs to integral affine structures} 
\label{From PMCTs to integral affine structures}
%%%%%%%%%%%%%%%%%%%%%%%%%%%%%%
%%%%%%%%%%%%%%%%%%%%%%%%%%%%%%
%%%%%%%%%%%%%%%%%%%%%%%%%%%%%%
%%%%%%%%%%%%%%%%%%%%%%%%%%%%%%
%%%%%%%%%%%%%%%%%%%%%%%%%%%%%%
%%%%%%%%%%%%%%%%%%%%%%%%%%%%%%
We are now ready to describe the transverse integral affine structure associated with a PMCT, a fundamental geometric structure associated with such a of Poisson manifold.

If $(\G, \Omega)$ is a proper integration of $(M, \pi)$, then for any $x\in M$:
\begin{enumerate}[(i)]
\item the isotropy Lie group $\G_x$ is a compact Lie group with \emph{abelian} isotropy Lie algebra $\gg_x$, hence the kernel of the exponential defines a lattice
\[ \Lambda_{\cG,x}:=\Ker(\exp_{\gg_x})\subset \gg_{x}.\]
\item the symplectic form $\Omega$ induces an identification between the Lie algebroid $A(\G):=\Ker\d s$ and $T^*M$, which identifies $\gg_x$ with the conormal direction: 
\[ \gg_x\cong \nu_{x}^{*}(\cF_\pi), \quad v_x\mapsto  (i_{v_x}\Omega)|_{T_xM}\]  
\end{enumerate}
Putting (i) and (ii) together, we obtain lattices $\Lambda_{\cG, x}\subset \nu^{*}_x(\cF_\pi)$, and we set:
\[ \Lambda_{\cG}:=\bigcup_{x\in M}\Lambda_{\cG, x} \subset\nu^*(\cF_\pi). \]

An alternative description can be obtained by considering the torus bundle $\cT(\cG)$ made of the identity components of the isotropy groups (see Theorem \ref{thm-reg-fol2}), together with the restriction of $\Omega$. It is a presymplectic torus bundle so one can apply Proposition \ref{prop:IAS-presympl-torus} to obtain $\Lambda_{\cG}$. 

The relationship between these two approaches will be clear in the proof of the following basic result:

\begin{theorem}
\label{thm-lattice-proper-case} 
For each proper integration $(\cG,\Omega)$ of a regular Poisson manifold $(M, \pi)$ of proper type, $\Lambda_{\cG}$ defines a transverse integral affine structure on the symplectic foliation $\cF_\pi$.  
\end{theorem}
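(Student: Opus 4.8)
The goal is to verify one of the equivalent descriptions of a transverse integral affine structure from Proposition~\ref{prop-transv-int-affine-folklore}, and the natural bridge is the bundle $\cT(\cG)$ of identity components of the isotropy groups, equipped with $\omega:=\Omega|_{\cT(\cG)}$, as anticipated in the paragraph preceding the statement. First I record the input we get for free. By item~(i) preceding the statement, $\cG$ being proper forces each $\cG_x$ to be compact with abelian Lie algebra $\gg_x$, so $\cG_x^0$ is a torus; by Theorem~\ref{thm-reg-fol2}, $\cT(\cG)$ is a smooth, \emph{closed} subgroupoid of $\cG$ and a smooth bundle of tori over $M$, whose Lie algebroid is $\gg=\Ker\rho\subset A(\cG)$. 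Item~(ii) says that $\Omega$ identifies $A(\cG)\cong T^*M$ and carries $\gg_x$ onto $\nu^*_x(\cF_\pi)$ (using $\im\rho=T\cF_\pi$), and under this identification $\exp\colon\gg\to\cT(\cG)$ is the fiberwise exponential, a fiberwise covering whose kernel is $\Lambda_\cG$.

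I would then check that $(\cT(\cG),\omega)$ is a presymplectic torus bundle over $M$ and invoke Proposition~\ref{prop:IAS-presympl-torus}. That $\omega$ is closed and multiplicative is immediate, being the restriction of the closed multiplicative form $\Omega$ to the subgroupoid $\cT(\cG)$. For the non-degeneracy $\Ker(\omega)\cap\Ker(\d p)=\{0\}$ (with $p\colon\cT(\cG)\to M$ the projection), I argue as in the proof of Proposition~\ref{prop:IAS-presympl-torus}: by the integrability result of~\cite{BCWZ}, $\exp^*\omega=\sigma^*\omega_\can$ for a bundle map $\sigma\colon\gg\to T^*M$, and $\sigma$ is the IM form of $\omega$, hence the restriction to $\gg$ of the IM form of $\Omega$; for a symplectic groupoid the latter is exactly the isomorphism $A(\cG)\cong T^*M$ above, so $\sigma$ is the inclusion $\nu^*(\cF_\pi)\hookrightarrow T^*M$, which is injective — and injectivity of $\sigma$ is precisely the required non-degeneracy. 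Proposition~\ref{prop:IAS-presympl-torus} then produces a transverse integral affine foliation $(\cF',\Lambda')$ on $M$; since $\im\sigma=\nu^*(\cF')$ we get $\cF'=\cF_\pi$, and since $\Lambda'=\sigma(\Ker\exp)$ with $\sigma$ the inclusion we get $\Lambda'=\Lambda_\cG$. This proves the theorem (in particular $\Lambda_\cG$ is automatically a smooth lattice, as part of the output of Proposition~\ref{prop:IAS-presympl-torus}).

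For completeness I would also sketch the direct verification via Proposition~\ref{prop-transv-int-affine-folklore}(iii), which makes the relationship between the two approaches explicit. Smoothness of $\Lambda_\cG$: since $\cT(\cG)$ is a smooth torus bundle, $\exp\colon\gg\to\cT(\cG)$ is a surjective submersion between manifolds of equal dimension with discrete fibers, hence a local diffeomorphism; thus $\Lambda_\cG=\exp^{-1}(\mathbf 1(M))$ is a smooth submanifold transverse to the fibers, i.e. a smooth lattice in $\nu^*(\cF_\pi)$. It remains to check that a local section $\lambda$ of $\Lambda_\cG$, viewed via $\Omega$ as a $\nu^*(\cF_\pi)$-valued $1$-form on an open $U\subset M$ with $\exp(\lambda(x))=1_x$, is closed and $\cF_\pi$-basic; basicness follows once $\d\lambda=0$, since $i_V\lambda=0$ for $V\in\Gamma(\cF_\pi)$ gives $\Lie_V\lambda=i_V\,\d\lambda=0$. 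For closedness, the right-invariant vector field $\overrightarrow{\lambda}$ on $\cG$ is $s$-vertical, and is $t$-vertical as well because $\rho(\lambda)=0$, so its flow $\psi^u$ satisfies $t\circ\psi^u=t$; moreover $\psi^1=\mathrm{id}$ because the associated bisection $x\mapsto\psi^1(1_x)=\exp(\lambda(x))=1_x$ is the unit bisection. Using the defining property $i_{\overrightarrow{\lambda}}\Omega=t^*\lambda$ of the identification $A(\cG)\cong T^*M$, one gets $\Lie_{\overrightarrow{\lambda}}\Omega=\d(t^*\lambda)=t^*\d\lambda$, whence $0=(\psi^1)^*\Omega-\Omega=\int_0^1(\psi^u)^*(t^*\d\lambda)\,\d u=\int_0^1 (t\circ\psi^u)^*\d\lambda\,\d u=t^*\d\lambda$, so $\d\lambda=0$. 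As $\Lambda_\cG$ is a rank-$q$ lattice in the rank-$q$ bundle $\nu^*(\cF_\pi)$, it is locally spanned by $q$ such closed basic forms, and Proposition~\ref{prop-transv-int-affine-folklore}(iii) applies.

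The step I expect to be the main obstacle is the identification: ensuring that feeding $(\cT(\cG),\Omega|_{\cT(\cG)})$ into Proposition~\ref{prop:IAS-presympl-torus} returns \emph{exactly} $\cF_\pi$ and $\Lambda_\cG$, not merely isomorphic data. This hinges on the compatibility of the \cite{BCWZ} correspondence between closed multiplicative forms and IM forms with restriction to the closed subgroupoid $\cT(\cG)$, together with the explicit form of the isomorphism $A(\cG)\cong T^*M$ in item~(ii). In the direct route, the corresponding delicate point is the identity $i_{\overrightarrow{\lambda}}\Omega=t^*\lambda$ (equivalently, that the IM form of a symplectic groupoid is the tautological identification) and the fact that the time-$1$ flow of $\overrightarrow{\lambda}$ is the identity; the rest is routine.
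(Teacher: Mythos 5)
Your proposal is correct and your ``direct verification'' is essentially the paper's own proof: the paper establishes smoothness of $\Lambda_{\cG}$ via the (locally free, fiberwise transitive) action of $\nu^*(\cF_\pi)$ on $\cT(\cG)$, and then shows $\Lambda_\cG$ is Lagrangian by reducing to the closedness of local sections $\alpha$ via exactly your computation $i_{X_\alpha}\Omega=t^*\alpha$, $0=(\phi^1_{X_\alpha})^*\Omega-\Omega=\int_0^1(\phi^\tau_{X_\alpha})^*t^*\d\alpha\,\d\tau=t^*\d\alpha$. Your first route, through $(\cT(\cG),\Omega|_{\cT(\cG)})$ as a presymplectic torus bundle and Proposition~\ref{prop:IAS-presympl-torus}, is precisely the ``alternative description'' the paper mentions just before the theorem without carrying it out, and your treatment of the identification issue there (via the IM form of $\Omega$) is sound.
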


\begin{proof}
To show that $\Lambda_{\cG}$ is smooth we describe the lattices $\Lambda_{\cG,x}\subset \nu_{x}^{*}(\cF_\pi)$ as follows. Each  $\alpha\in\nu^*_x(\cF_\pi)=(T_x\cF_\pi)^0=\Ker(\pi^\sharp_x)$ 
corresponds to a right-invariant vector field $X_\alpha$ tangent to the isotropy group $\G_x$. By restricting $X_\alpha$ to $\G_x^0$,
the connected component of the identity of $\G_x$, we obtain an action of the bundle of abelian Lie algebras $\nu^*(\cF_\pi)$ 
on the bundle of tori $\cT(\cG)=\bigcup_{x\in M}\G^0_x$. The compactness of $\G^0_x$, implies that this action can be integrated to an action of the bundle of Lie groups $(\nu^*(\cF_\pi),+)$ on $\cT(\cG)$:
\[ \alpha\cdot g:=\phi^1_{X_\alpha}(g),\quad (\alpha\in (T_x\cF_\pi)^0,\ g\in \G^0_x), \]
where $\phi^\tau_{X_\alpha}$ denotes the flow of $X_\alpha$. Note that $\exp_{\gg_x}(\alpha)=\phi^1_{X_\alpha}(1_x)$, so we can identify
$\Lambda_\cG$ with the kernel of this action:
\[ \Lambda_{\cG,x}=\{\alpha\in \nu^*_x(\cF_\pi): \phi^1_{X_\alpha}=\text{id}\}. \]
This action is locally free, since the map $\alpha\mapsto X_\alpha$ is injective. This action is transitive on the fibers, 
since $\alpha\mapsto X_\alpha|_{1_x}\in T_{1_x}\cG_x$ is onto.
It follows that the kernel of the action $\Lambda_{\cG}$ is a smooth sub-bundle whose fibers $\Lambda_{\cG,x}$ are lattices in $\nu_x^*(\cF_\pi)$. 

In order to show that $\Lambda_{\cG}\subset T^*M$ is a  Lagrangian submanifold, note that $\dim\Lambda_{\cG}=\dim M=1/2\dim (T^*M)$ so we only need to check that $\omega_\mathrm{can}|_{\Lambda_{\cG}}=0$. By the fundamental property of $\omega_\mathrm{can}$, for any 1-form $\alpha:M\to T^*M$ we have:
\[ \alpha^*\omega_\mathrm{can}=\d \alpha. \]
Hence, it is enough to show that any 1-form $\alpha\in\Gamma(\Lambda_{\cG}|_U)$, defined on some open set $U\subset M$, is closed. To see this, 
observe that the associated vector field $X_\alpha$ satisfies:
\[ i_{X_\alpha}\Omega=t^*\alpha. \]
In fact, both sides are right invariant 1-forms and they coincide at the the identity section. Hence, when $\alpha_x\in\Lambda_{\cG,x}$ we find that:
\begin{align*}
0=(\phi^1_{X_\alpha})^*\Omega-\Omega&=\int_0 ^1 \frac{\d }{\d \tau} (\phi^\tau_{X_\alpha})^*\Omega~\d \tau\\
&=\int_0 ^1 (\phi^\tau_{X_\alpha})^*\Lie_{X_\alpha}\Omega~\d \tau\\
&=\int_0 ^1 (\phi^\tau_{X_\alpha})^*\d i_{X_\alpha}\Omega~\d \tau\\
&=\int_0 ^1 (\phi^\tau_{X_\alpha})^*t^*\d\alpha~\d \tau\\
&=\int_0 ^1 t^*\d\alpha~\d \tau=t^*\d\alpha,
\end{align*}
where we use $t\circ \phi^\tau_{X_\alpha}=t$. Since $t$ is a submersion we obtain, as claimed, $\d\alpha=0$.
\end{proof}

Using Theorem \ref{thm-reg-fol2} and Example \ref{ex-trIAS-orbifolds} we deduce:

\begin{corollary} 
Any s-connected, proper integration $(\cG,\Omega)$ of a regular Poisson manifold $(M,\pi)$ induces an integral affine orbifold structure on the leaf space $M/\cF_\pi$.
\end{corollary}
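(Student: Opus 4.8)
The plan is to obtain this as a direct combination of three facts already at our disposal: the orbifold structure furnished by Theorem \ref{thm-reg-fol2}, the transverse integral affine structure furnished by Theorem \ref{thm-lattice-proper-case}, and the dictionary between transverse integral affine structures and integral affine orbifold structures recorded in Example \ref{ex-trIAS-orbifolds}. First I would note that, since $(\cG,\Omega)$ is s-connected and proper, the Poisson manifold $(M,\pi)$ is of proper type; hence Theorem \ref{thm-reg-fol2} produces the short exact sequence $1\to\cT(\cG)\to\cG\to\cBG(\cG)\to 1$ in which $\cBG(\cG)\tto M$ is an s-connected, proper foliation groupoid integrating $\cF_\pi$. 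This groupoid is an orbifold atlas for $B=M/\cF_\pi$, so $B$ acquires the orbifold structure $(B,\cBG(\cG))$, and it remains only to promote it to an integral affine orbifold.

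Next I would invoke Theorem \ref{thm-lattice-proper-case}: the lattice $\Lambda_{\cG}\subset\nu^*(\cF_\pi)$ assembled from the isotropy lattices $\Lambda_{\cG,x}=\Ker(\exp_{\gg_x})$ is a transverse integral affine structure on the symplectic foliation $\cF_\pi$. By Proposition \ref{prop-transv-int-affine-folklore} and the ensuing remark, this is the same datum as a holonomy-invariant integral affine structure on a (any) complete transversal $T$ to $\cF_\pi$, so in particular $\Lambda_{\cG}|_T$ is invariant under the restricted étale groupoid $\cBG(\cG)|_T\tto T$; equivalently, $\Lambda_{\cG}$ is a $\cBG(\cG)$-invariant transverse integral affine structure in the sense of the discussion following Example \ref{ex-trIAS-linear-fol}. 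Applying the bijection of Example \ref{ex-trIAS-orbifolds} (in its form for the possibly non-étale atlas $\cBG(\cG)$ explained there) then turns $\Lambda_{\cG}$ into an integral affine structure on the orbifold $(B,\cBG(\cG))$ in the sense of Definition \ref{def:IAorbifold}, which is exactly the claim.

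The only point that warrants a little care is the compatibility of $\Lambda_{\cG}$ with the orbifold atlas $\cBG(\cG)$, which is in general non-effective, so one must check invariance under all of $\cBG(\cG)$ rather than merely under its holonomy. This causes no trouble: $\cBG(\cG)$ is s-connected, so by Theorem \ref{seq-mon-G-hol} its action on $\nu^*(\cF_\pi)$ factors through that of $\Hol(M,\cF_\pi)$, and a transverse integral affine structure is by construction holonomy invariant; alternatively, an integral affine structure on an orbifold depends only on its underlying classical orbifold, whose atlas is $\Hol(M,\cF_\pi)\tto M$, so the extra isotropy carried by $\cBG(\cG)$ is irrelevant. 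Thus I do not expect any genuine obstacle here — the content is entirely a bookkeeping matter of lining up the quoted results, with the $\cBG(\cG)$-invariance of $\Lambda_{\cG}$ being the one verification to spell out.
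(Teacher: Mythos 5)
Your proposal is correct and follows essentially the same route as the paper, whose proof is literally the one-line deduction from Theorem \ref{thm-reg-fol2} and Example \ref{ex-trIAS-orbifolds} applied to the lattice $\Lambda_\cG$ of Theorem \ref{thm-lattice-proper-case}. Your extra remark on $\cBG(\cG)$-invariance is a sensible fleshing-out of a point the paper leaves implicit (and already covers in its discussion that integral affine structures depend only on the underlying classical orbifold, with the invariance condition being superfluous for s-connected atlases).
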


\begin{remark}[Twisted Dirac structures]\label{rem:twisted:tias}
The previous discussion extends to the Dirac case word by word. If $(M,L,\phi)$ is a regular $\phi$-twisted Dirac manifold, then a proper presymplectic integration $(\cG,\Omega,\phi)$ of $(M,L,\phi)$ defines a transverse integral affine structure $\Lambda_\cG\subset \nu^*(\cF_L)$. The reason is that the constraints on the kernel of $\Omega$ imply that its restriction to $\cT(\cG)$ still yields a presymplectic torus bundle.

For an s-proper, twisted presymplectic groupoid with smooth leaf space $B$, Zung \cite{Zu} described the integral affine
structure on $B$ by very different means.  Our approach, using transverse integral affine structures, allow us to deal with non-smooth 
leaf spaces as discussed in Example \ref{ex-trIAS-orbifolds} and, as we will see in \cite{CFMc}, and even with non-regular Dirac manifolds of compact types.
 \end{remark}

\subsection{The extended monodromy groups}
\label{sec:hol:monodromy}
%%%%%%%%%%%%%%%%%%%%%%%%%%%%%%
%%%%%%%%%%%%%%%%%%%%%%%%%%%%%%
%%%%%%%%%%%%%%%%%%%%%%%%%%%%%%
%%%%%%%%%%%%%%%%%%%%%%%%%%%%%%
%%%%%%%%%%%%%%%%%%%%%%%%%%%%%%
%%%%%%%%%%%%%%%%%%%%%%%%%%%%%%

We recall (see \cite{CF2}) that for any regular Poisson manifold $(M, \pi)$ there is a {\bf monodromy map} at $x\in M$:
\begin{equation}\label{partial-x} 
\partial_{\mon,x}: \pi_2(S,x) \rightarrow \nu_{x}^{*}(S)
\end{equation}
where $S= S_x$ is the symplectic leaf through $x$. The {\bf monodromy group} at $x$ is defined as the image of the monodromy map: 
\[ {\cN_\mon}|_{x}:= \im(\partial_{\mon,x}) \subset \nu_{x}^{*}(S),\]
and we set $\cN_\mon= \cup_{x\in M} {\cN_\mon}|_{x}$.

The origin of these Poisson invariants lies in the variation of symplectic areas, but they admit several interpretations, all of which will be useful in the sequel:
%In the context of regular Poisson manifolds, as explained in \cite{CF2}, there are several interpretations of these monodromy groups, all of which will be useful in the sequel:
\begin{itemize}
\item At the groupoid level, the Weinstein groupoid $\Sigma(M,\pi)$ yields a homotopy long exact sequence associated to $\s: \s^{-1}(x) \to S$ with first few terms:
\[ \xymatrix{\pi_2(S,x)\ar[r]^{\partial_{\mon,x}}  & \nu_{x}^{*}(S) \ar[r]^{\exp} &\Sigma_x(M,\pi) \ar[r] & 1}. \]
This gives a description of ${\cN_\mon}|_{x}$ as the kernel of the exponential map $\exp:\nu_x^*(S)\to \Sigma_x(M,\pi)$.
\item At the Lie algebroid level, any splitting  $\tau: TS\to T^*_SM$ of the short exact sequence of algebroids:
\begin{equation}
\label{pre-transitive-seq} 
\xymatrix{
0 \ar[r] & \nu^*(S) \ar[r] & \ T^*_{S}M \ar[r]^-{\pi^{\sharp}} &  TS \ar[r]  \ar@{-->}@/^1pc/[l]^{\tau} &  0 .}
\end{equation}
yields a curvature 2-form $\Omega_{\tau}\in \Omega^2(S, \nu^*(S))$ given by
\begin{equation}
\label{eq:curv-formula} 
\Omega_{\tau}(X, Y):= \tau([X, Y])- [\tau(X), \tau(Y)]\quad (\text{for}\  X, Y\in \X(S)).
\end{equation}
Viewing $\nu^*(S)$ as a flat vector bundle for the Bott connection, the 2-form $\Omega_{\tau}$ is closed as a form with coefficients $\nu^*(S)$. Its cohomology class does not depend on the choice of $\tau$ and defines a class $[\Omega_{\tau}]\in H^2(S, \nu^*(S))$, and:
% Then the monodromy map can be defined by:
\[ \partial_{\mon,x}: \pi_2(S,x) \rightarrow \nu_{x}^{*}(S),\quad [\sigma]\mapsto \int_\sigma \Omega_\tau. \]
\item The most geometric description of the monodromy arises as the variation of symplectic areas of leafwise spheres:  for any sphere $\sigma:(\mathbb{S}^2,N)\to (S_x,x)$ based at $x$ and a transverse direction $v\in\nu_x(S)$, one can find a foliated family of spheres $\sigma_t:(\mathbb{S}^2,N)\to (S_{x_t},x_t)$, such that $\sigma_0=\sigma$ and $v=[\dot{x}_t]$; one has:
\begin{equation}\label{eq:mon-as-var} 
\langle \partial_{\mon,x} [\sigma],v\rangle= \left.\frac{\d}{\d t}\right|_{t=0} \int_{\sigma_t} \omega_{S_{x_t}}.
\end{equation}
Hence, the quantity $\langle \partial_{\mon,x} [\sigma],v\rangle$ is the variation of the symplectic area of $\sigma$ in the normal direction $v$.
\end{itemize}

For integrable Poisson manifolds, each monodromy group ${\cN_\mon}|_x$ is a discrete subgroup of $\nu_{x}^{*}(S)$. As one could expect, they are closely related to the lattice $\Lambda_{\cG}$ of an s-proper integration. In the s-proper case the rank of ${\cN_\mon}$ does not depend on $x$, but it may fail to be a lattice, unless we are in the {\it strong} proper case. More precisely, we have:

\begin{theorem}\label{thm-lattice-strong-case} Let $(M, \pi)$ be a regular  Poisson manifold. Then:
\begin{enumerate}[(i)]
\item ${\cN_\mon}\subset \Lambda_{\cG}$ for any s-connected, proper integration $(\cG,\Omega)$ of $(M,\pi)$;
\item ${\cN_\mon}= \Lambda_{\Sigma(M, \pi)}$ if and only if $(M,\pi)$ is strong proper.  
\end{enumerate}
\end{theorem}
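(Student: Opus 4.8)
The common thread is the description of ${\cN_\mon}|_{x}$ as the kernel of the exponential map $\exp\colon \nu_x^*(S)\to \Sigma_x(M,\pi)$ recorded in the first bulleted interpretation above, together with the definition $\Lambda_{\cG,x}=\Ker(\exp_{\gg_x})$, where $\gg_x$ is identified with $\nu_x^*(\cF_\pi)$ via $\Omega$ as in item (ii) preceding Theorem~\ref{thm-lattice-proper-case}. For part (i) the plan is to invoke the universal property of the canonical (Weinstein) groupoid: $\Sigma(M,\pi)$ is the source $1$-connected symplectic integration, so any s-connected symplectic integration $(\cG,\Omega)$ receives a canonical morphism of symplectic groupoids $p\colon \Sigma(M,\pi)\to\cG$ over $\mathrm{id}_M$, integrating the identity of the Lie algebroid $T^*M$. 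In particular $p$ pulls $\Omega$ back to the canonical form, so the map it induces on isotropy Lie algebras is the identity of $\nu_x^*(S)$ under the two $\Omega$-identifications; restricting $p$ to isotropy groups gives a Lie group morphism $\Sigma_x(M,\pi)\to\cG_x$ intertwining the exponentials, whence $\Ker(\exp\colon\nu_x^*(S)\to\Sigma_x(M,\pi))\subseteq\Ker(\exp\colon\nu_x^*(S)\to\cG_x)$, i.e. ${\cN_\mon}|_x\subseteq\Lambda_{\cG,x}$ for all $x$. Taking the union over $x$ gives $\cN_\mon\subseteq\Lambda_\cG$.

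For part (ii), the first observation is that applying the two descriptions above to $\cG=\Sigma(M,\pi)$ itself gives the fibrewise identity ${\cN_\mon}|_x=\Lambda_{\Sigma(M,\pi),x}$ for every $x$, which holds unconditionally; so the content of $\cN_\mon=\Lambda_{\Sigma(M,\pi)}$ is the assertion that this common sub-bundle of $\nu^*(\cF_\pi)$ is a genuine transverse integral affine structure, i.e. a smooth, full-rank, Lagrangian lattice. For ``$\Leftarrow$'', if $(M,\pi)$ is strong proper then $\Sigma(M,\pi)$ is proper, and Theorem~\ref{thm-lattice-proper-case} applied to $\Sigma(M,\pi)$ says precisely that $\Lambda_{\Sigma(M,\pi)}$ is a transverse integral affine structure; moreover properness forces each $\Sigma_x(M,\pi)$ to be compact, so --- its Lie algebra being abelian --- its identity component is a torus and $\Ker(\exp_{\gg_x})$ is a full-rank lattice, already identified with ${\cN_\mon}|_x$. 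For ``$\Rightarrow$'', writing the symbol $\Lambda_{\Sigma(M,\pi)}$ presupposes, via the hypothesis of Theorem~\ref{thm-lattice-proper-case} under which it is defined, that $\Sigma(M,\pi)$ is proper, which is exactly the statement that $(M,\pi)$ is strong proper; alternatively, reading the hypothesis as ``$\cN_\mon$ is a transverse integral affine structure'', full-rankness of ${\cN_\mon}|_x=\Ker(\exp_{\gg_x})$ makes $\Sigma_x(M,\pi)^0=\gg_x/{\cN_\mon}|_x$ a torus, so by smoothness of $\cN_\mon$ (as in Theorem~\ref{thm-reg-fol2}) the identity components form a closed bundle of tori, normal in $\Sigma(M,\pi)$, with quotient the monodromy groupoid $\Mon(M,\cF_\pi)$, and one concludes that $\Sigma(M,\pi)$ is proper from the characterization of strong proper foliations (Theorem~\ref{thm-s-proper}).

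I expect the delicate points to be two. In part (i), one must verify carefully that the structural morphism $\Sigma(M,\pi)\to\cG$ is compatible with the $\Omega$-identifications of the isotropy Lie algebras with $\nu^*(\cF_\pi)$, so that the comparison of kernels of exponentials takes place inside a single bundle --- this is the symplectic refinement of the purely algebroid statement and is precisely where it matters that $(\cG,\Omega)$ is symplectic. In part (ii), the subtlety is that away from the proper case $\cN_\mon$ is still fibrewise the kernel of $\exp$ but need not be a lattice (its rank may drop), so the real work in ``$\Leftarrow$'' is the upgrade from a fibrewise equality of abelian groups to an equality of transverse integral affine structures, which is exactly what properness of $\Sigma(M,\pi)$ provides through Theorem~\ref{thm-lattice-proper-case}.
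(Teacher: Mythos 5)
Your argument is correct and is essentially the paper's own: part (i) rests on the canonical covering morphism $\Sigma(M,\pi)\to\cG$ carrying the exponential (equivalently, the flows of the right-invariant vector fields $X_\alpha$) of $\Sigma(M,\pi)$ to that of $\cG$ compatibly with the $\Omega$-identifications of the isotropy algebras with $\nu^*(\cF_\pi)$, and part (ii) is the observation that $\cN_\mon|_x$ and $\Lambda_{\Sigma(M,\pi),x}$ are fibrewise the same kernel of $\exp:\nu_x^*(S)\to\Sigma_x(M,\pi)$, with properness of $\Sigma(M,\pi)$ supplying (and being equivalent to) the lattice condition. Your extra care about the compatibility of the identifications and about upgrading the fibrewise equality to an equality of transverse integral affine structures correctly fills in what the paper leaves implicit.
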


\begin{proof} We use the same notation of the proof of Theorem \ref{thm-lattice-proper-case}. Note that if $\alpha\in{\cN_\mon}|_x$ then the corresponding right invariant vector field $\widetilde{X}_\alpha$ in $\Sigma(M,\pi)$ satisfies 
\[ \phi^1_{\widetilde{X}_\alpha}=\text{id}.\] 
Under the covering projection $\Sigma(M,\pi)\to\cG$ this vector field is projected to  $X_\alpha$, which therefore also satisfies $\phi^1_{X_\alpha}=\text{id}$. It follows that $\alpha\in{\Lambda_\cG}|_x$, so (i) holds. Now (ii) follows from the definition of $\Lambda_{\cG}$ and the previous discussion on ${\cN_\mon}$.
\end{proof}

In particular, we have the following characterization of regular Poisson manifolds of strong $\cC$-type:

\begin{corollary}
\label{cor:strong:C:type}
A regular Poisson manifold $(M, \pi)$ is of strong $\cC$-type if and only if the foliation
$\cF_{\pi}$ is of strong $\cC$-type and $\cN_\mon$ is a transverse integral affine structure for the symplectic foliation $\cF_\pi$. 
\end{corollary}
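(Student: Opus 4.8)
The plan is to deduce the corollary from the structural results already established, the linchpin being a torus--bundle presentation of the canonical integration $\Sigma(M,\pi)$ over $\Mon(M,\cF_\pi)$ that becomes available precisely when $\cN_\mon$ is a lattice. First I would record this mechanism. If $\cN_\mon$ is a transverse integral affine structure then, fibrewise, it is a smooth family of \emph{full} lattices, so the monodromy groups are locally uniformly discrete; by the integrability criterion of \cite{CF2} the Poisson manifold $(M,\pi)$ is integrable and $\Sigma(M,\pi)$ is a Hausdorff Lie groupoid. From the homotopy exact sequence recalled in Section~\ref{sec:hol:monodromy}, the identity component of each isotropy group is $\Sigma^{0}_x(M,\pi)=\nu^*_x(S)/\cN_\mon|_x$, a torus; hence the bundle of identity components is the presymplectic torus bundle $\cT(\Sigma(M,\pi))=\cT_{\cN_\mon}=\nu^*(\cF_\pi)/\cN_\mon$, a closed, normal, embedded Lie subgroupoid of $\Sigma(M,\pi)$. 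Quotienting it out --- exactly as in the proof of Theorem~\ref{thm-reg-fol2} and \cite{Moe03} --- yields an s-connected Lie groupoid integrating $\cF_\pi$; the homotopy long exact sequence of the fibration $\cT_{\cN_\mon,x}\to \s^{-1}_{\Sigma}(x)\to F_x$, where $F_x$ is the corresponding $\s$-fibre of the quotient, together with $\pi_1(\s^{-1}_{\Sigma}(x))=0$ and connectedness of $\cT_{\cN_\mon,x}$, forces $\pi_1(F_x)=0$, so the quotient is s-simply-connected and therefore equals $\Mon(M,\cF_\pi)$. Thus one obtains a short exact sequence of Lie groupoids
\[ 1\longrightarrow \cT_{\cN_\mon}\longrightarrow \Sigma(M,\pi)\longrightarrow \Mon(M,\cF_\pi)\longrightarrow 1 . \]

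For the converse implication I would then argue as follows. Assume $\cF_\pi$ is of strong $\cC$-type and $\cN_\mon$ is a transverse integral affine structure. By the paragraph above we have the extension, and $\Mon(M,\cF_\pi)$ has property $\cC$ by hypothesis. The kernel $\cT_{\cN_\mon}$ is a bundle of tori over $M$: its source map is the bundle projection, with compact fibres, so it is always s-proper; its $(\s,\t)$-map factors through the diagonal $M\hookrightarrow M\times M$ and is proper, so it is always proper; and its space of arrows is compact iff $M$ is. Since $\Sigma(M,\pi)$ is already known to be Hausdorff, the three compactness conditions transfer through the extension: $\s^{-1}_{\Sigma}(x)$ is a compact--fibre bundle over $\s^{-1}_{\Mon}(x)$, and $\Sigma(M,\pi)\to\Mon(M,\cF_\pi)$ is a $\cT_{\cN_\mon}$-principal bundle, hence a proper map, covering $\Mon(M,\cF_\pi)\to M\times M$; for the compact case one uses in addition that $M$, being the closed subset of units, is compact whenever $\Mon(M,\cF_\pi)$ is. Hence $\Sigma(M,\pi)$ has property $\cC$, i.e.\ $(M,\pi)$ is of strong $\cC$-type.

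The direct implication is then a repackaging of the lattice theorems. If $(M,\pi)$ is of strong $\cC$-type, then $\Sigma(M,\pi)$ has property $\cC$, in particular it is proper, so Theorem~\ref{thm-lattice-proper-case} makes $\Lambda_{\Sigma(M,\pi)}$ a transverse integral affine structure, while Theorem~\ref{thm-lattice-strong-case}(ii) identifies $\cN_\mon=\Lambda_{\Sigma(M,\pi)}$; thus $\cN_\mon$ is a transverse integral affine structure. Applying Theorem~\ref{thm-reg-fol2} to $\cG=\Sigma(M,\pi)$ gives a short exact sequence $1\to \cT(\Sigma(M,\pi))\to \Sigma(M,\pi)\to \cBG(\Sigma(M,\pi))\to 1$ with $\cBG(\Sigma(M,\pi))$ an s-connected integration of $\cF_\pi$ of property $\cC$, and the same homotopy argument as before identifies it with $\Mon(M,\cF_\pi)$; hence $\cF_\pi$ is of strong $\cC$-type.

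The steps that require genuine verification, rather than quotation, are the two soft points feeding into the extension: that $\cT_{\cN_\mon}$ is a \emph{closed} embedded normal subgroupoid, so the quotient is again a Hausdorff Lie groupoid --- this is exactly where one leans on the regular-groupoid machinery of \cite{Moe03} --- and the transfer of properness, s-properness and compactness through a torus--bundle extension. I expect the former to be the main obstacle; it hinges on compactness of the torus fibres, and one sees concretely why the lattice hypothesis on $\cN_\mon$ cannot be dropped: were $\cN_\mon$ not of full rank, the bundle $\cT(\Sigma(M,\pi))$ of isotropy identity components would have non-compact --- hence non-proper --- fibres, and no amount of good behaviour of $\cF_\pi$ could restore properness of $\Sigma(M,\pi)$.
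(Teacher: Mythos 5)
Your argument is correct and follows essentially the same route as the paper: both implications hinge on the short exact sequence $1\to \nu^{*}(\cF_\pi)/\cN_\mon\to \Sigma(M,\pi)\to \Mon(M,\cF_\pi)\to 1$ together with Theorems \ref{thm-lattice-proper-case} and \ref{thm-lattice-strong-case}, the only difference being that you reconstruct this sequence by quotienting out the torus bundle and running a homotopy argument, whereas the paper simply quotes it from \cite{CF2} (cotangent path $\mapsto$ base path, kernel the exponential image). The one wrinkle is that your opening paragraph asserts Hausdorffness of $\Sigma(M,\pi)$ from the lattice condition alone, which is not justified there; in the actual application it follows because $\Sigma(M,\pi)$ is a compact-fibered bundle over the Hausdorff groupoid $\Mon(M,\cF_\pi)$.
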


\begin{proof}
The previous theorem, combined with Theorem \ref{thm-reg-fol}, proves the direct implication. 
For the reverse implication, note first that the lattice condition, together with the integrability criteria of \cite{CF1, CF2}, 
implies that $\Sigma(M, \pi)$ is smooth. Moreover, we have the short exact sequence of Lie groupoids:
\[
\xymatrix{1\ar[r]& \nu^{*}(\cF_\pi)/{\cN_\mon}\ar[r] & \Sigma(M, \pi) \ar[r]& \Mon(M, \cF_{\pi})\ar[r]& 1}
\]
where the second map associates to a cotangent path its base path, while the first map is induced from the exponential map 
$\exp:\nu^{*}_{x}(\cF_\pi)\rightarrow \Sigma_x(M,\pi)$ (see \cite{CF2}). This is a sequence of Lie groupoids in
which the extreme groupoids are of $\cC$-type. It follows immediately that the middle one is also of $\cC$-type, so the result follows.
\end{proof}

Conversely, one can look at regular Poisson manifolds $(M,\pi)$ for which $\cN_\mon=0$.
We have the following result, which includes as a particular case Proposition \ref{prop:IAS-sympl-torus}:

\begin{corollary}\label{cor:no:monodromy:strong:foliation}
 Let $(M,\pi)$ be a regular Poisson manifold such that:
 \begin{enumerate}[(i)]
  \item the monodromy groups are trivial: $\cN_\mon=0$;
  \item the symplectic foliation $\cF_\pi$ is of strong $\cC$-type.
 \end{enumerate}
Then each transversal integral affine structure $\Lambda$ on $(M,\pi)$ determines an s-connected, proper integration $(\cG_\Lambda,\Omega)$ such 
that $\Lambda_{\cG_\Lambda}=\Lambda$. Moreover, if the symplectic leaves are 1-connected, this establishes a bijection:
\[ \left\{
                \begin{array}{ll}
                  \text{transversal integral affine}\\
                   \text{structures}\ \Lambda \ \text{on}\ (M, \cF_\pi)
                \end{array}
              \right\}
\stackrel{1-1}{\longleftrightarrow}
 \left\{
                \begin{array}{ll}
                  \text{s-connected, proper  symplectic }\\
                   \text{integrations of}\ (M,\pi)
                \end{array}
              \right\}\]
\end{corollary}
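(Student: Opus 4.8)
The plan is to obtain $\cG_\Lambda$ as a quotient of the canonical (Weinstein) symplectic groupoid $\Sigma(M,\pi)$, in the spirit of the proof of Corollary \ref{cor:strong:C:type}. Since $\cN_\mon=0$, the monodromy groups are trivially locally uniformly discrete, so by the integrability criteria of \cite{CF1,CF2} the groupoid $\Sigma(M,\pi)$ is smooth, and \cite{CF2} supplies the short exact sequence of Lie groupoids
\[
1\longrightarrow \nu^*(\cF_\pi)\longrightarrow \Sigma(M,\pi)\longrightarrow \Mon(M,\cF_\pi)\longrightarrow 1
\]
(cf.\ the proof of Corollary \ref{cor:strong:C:type}), in which $\nu^*(\cF_\pi)=\cT(\Sigma(M,\pi))$ is the bundle of vector groups embedded by the fibrewise exponential $\exp\colon\nu^*_x(\cF_\pi)\hookrightarrow\Sigma_x(M,\pi)$; this exponential is injective precisely because $\ker\exp=\cN_\mon|_x=0$. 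Given a transverse integral affine structure $\Lambda\subset\nu^*(\cF_\pi)$, its local sections are closed $\cF_\pi$-basic $1$-forms (Proposition \ref{prop-transv-int-affine-folklore}), hence $\Lambda$ is invariant under the linear holonomy of $\cF_\pi$; as the linear holonomy is exactly the adjoint action of $\Sigma(M,\pi)$ on its conormal isotropy bundle, $\Lambda$ is a \emph{normal}, closed, embedded Lie subgroupoid of $\Sigma(M,\pi)$. One then sets $\cG_\Lambda:=\Sigma(M,\pi)/\Lambda$; this is a Lie groupoid, the projection $\Sigma(M,\pi)\to\cG_\Lambda$ is a local diffeomorphism (so $\cG_\Lambda$ is s-connected), and dividing the sequence above by $\Lambda$ gives
\[
1\longrightarrow \cT_\Lambda\longrightarrow \cG_\Lambda\longrightarrow \Mon(M,\cF_\pi)\longrightarrow 1,\qquad \cT_\Lambda:=\nu^*(\cF_\pi)/\Lambda,
\]
with $\cT_\Lambda$ a genuine bundle of tori.

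Next I would install the symplectic form and verify properness. The canonical symplectic form $\Omega_\can$ on $\Sigma(M,\pi)$ is $\Lambda$-invariant: the infinitesimal generators of the $\Lambda$-action are the right-invariant vector fields $X_\alpha$ attached to sections $\alpha\in\Gamma(\Lambda)$, and the computation in the proof of Theorem \ref{thm-lattice-proper-case} gives $\Lie_{X_\alpha}\Omega_\can=t^*\d\alpha=0$ since $\alpha$ is closed. Hence $\Omega_\can$ descends to a multiplicative, non-degenerate $2$-form $\Omega$ on $\cG_\Lambda$, and $(\cG_\Lambda,\Omega)$ is a symplectic integration of $(M,\pi)$. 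By hypothesis (ii), $\Mon(M,\cF_\pi)$ is Hausdorff and of type $\mathcal{C}$ --- in particular proper; since $\cT_\Lambda$ is a bundle of compact groups, the extension $\cG_\Lambda$ is again Hausdorff and of type $\mathcal{C}$, hence proper. Finally, $\Lambda_{\cG_\Lambda}=\Lambda$: on isotropy, $\exp_{\cG_\Lambda}$ is the composite $\nu^*_x(\cF_\pi)\xrightarrow{\exp_\Sigma}\Sigma_x(M,\pi)\twoheadrightarrow\Sigma_x(M,\pi)/\Lambda_x$, and since $\exp_\Sigma$ is injective its kernel is exactly $\Lambda_x$. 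This proves the first assertion, and note it uses only strong-properness of $\cF_\pi$, not $1$-connectedness of the leaves.

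For the bijection, assume in addition that the symplectic leaves are $1$-connected. The map $\Lambda\mapsto(\cG_\Lambda,\Omega)$ is injective because $\Lambda_\cG$ is an isomorphism invariant of a symplectic groupoid and $\Lambda_{\cG_\Lambda}=\Lambda$. For surjectivity, let $(\cG,\Omega)$ be any s-connected, proper symplectic integration of $(M,\pi)$. Being the source-$1$-connected integration, $\Sigma(M,\pi)$ covers $\cG$, and the covering intertwines the multiplicative symplectic forms (that on $\Sigma(M,\pi)$ being canonical); its kernel $N$ is a bundle of discrete normal subgroups of the isotropy. Since the leaves are $1$-connected, the homotopy sequence of $\s\colon\s^{-1}(x)\to S_x$ gives $\pi_0(\Sigma_x(M,\pi))\cong\pi_1(S_x)=1$, so $\Sigma_x(M,\pi)=\Sigma^0_x(M,\pi)=\nu^*_x(\cF_\pi)$ (using $\cN_\mon=0$), whence $N\subset\nu^*(\cF_\pi)$. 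Properness of $\cG$ forces $\cG_x=\nu^*_x(\cF_\pi)/N_x$ to be compact, so each $N_x$ is a full lattice; thus $N$ is a smooth lattice sub-bundle, which by Theorem \ref{thm-lattice-proper-case} is a transverse integral affine structure, and the computation above identifies it with $\Lambda_\cG$. Therefore $\cG=\Sigma(M,\pi)/\Lambda_\cG=\cG_{\Lambda_\cG}$ as symplectic groupoids, proving surjectivity and hence the stated bijection.

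I expect the delicate points to be two. First, the verification that $\Lambda$ is a \emph{normal} closed embedded subgroupoid of $\Sigma(M,\pi)$, which rests on identifying the adjoint action of $\Sigma(M,\pi)$ on the conormal isotropy bundle $\nu^*(\cF_\pi)$ with the linear holonomy of $\cF_\pi$ (the Bott connection), so that basicness of the sections of $\Lambda$ translates into invariance. Second, in the surjective half, pinning down the kernel of $\Sigma(M,\pi)\to\cG$: this is exactly where $1$-connectedness of the leaves is indispensable, as it is what forces that kernel to lie inside $\nu^*(\cF_\pi)$ rather than wrapping around the $\Mon(M,\cF_\pi)$-direction. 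Everything else is a transcription of the arguments already used for Theorem \ref{thm-lattice-proper-case} and Corollary \ref{cor:strong:C:type}.
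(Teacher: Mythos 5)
Your proposal is correct and follows essentially the same route as the paper: since $\cN_\mon=0$ makes $\exp\colon\nu^*(\cF_\pi)\to\Sigma(M,\pi)$ injective, one realizes $\Lambda$ as a normal, discrete, Lagrangian subgroupoid $\exp(\Lambda)\subset\Sigma(M,\pi)$, sets $\cG_\Lambda:=\Sigma(M,\pi)/\exp(\Lambda)$ with the descended canonical form, deduces the $\cC$-type from the exact sequence over $\Mon(M,\cF_\pi)$ exactly as in Corollary \ref{cor:strong:C:type}, and obtains the bijection from the fact that $1$-connected leaves force the isotropy groups $\Sigma_x(M,\pi)$ to be connected. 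Your write-up simply supplies details (normality via holonomy-invariance of basic sections, the descent computation $\Lie_{X_\alpha}\Omega_\can=t^*\d\alpha=0$, and the identification of the kernel in the surjectivity step) that the paper leaves implicit.
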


\begin{proof}
When $\cN_\mon=0$, the exponential map $\exp:\nu^*(\cF_\pi)\to\Sigma(M,\pi)$ has no kernel. Therefore,
a transverse integral affine structure $\Lambda\subset \nu^*(\cF_\pi)$ yields a  subgroupoid $\exp(\Lambda)\subset\Sigma(M,\pi)$, which is
normal, discrete and Lagrangian. Hence, 
\[(\cG_\Lambda,{\Omega}):=(\Sigma(M, \pi),\Omega_{\Sigma(M,\pi)})/\exp(\Lambda)\]
is a symplectic groupoid integrating $(M,\pi)$.  As in the proof of Corollary \ref{cor:strong:C:type}, this symplectic groupoid is of $\cC$-type iff $\cF_\pi$ 
is of $\cC$-type. It is also clear that $\Lambda_{\cG_\Lambda}=\Lambda$.

In general, the isotropy groups $\Sigma_x(M,\pi)$ will not be connected, so there might be different symplectic
groupoids defining $\Lambda$. If the symplectic leaves are 1-connected, then $(\cG_\Lambda,{\Omega})$ is the only integration defining $\Lambda$.
\end{proof}

Our next aim is a more refined version of Theorem \ref{thm-lattice-strong-case}, and this will require a more refined version of the monodromy groups.  
These are new invariants, related to obstructions to s-properness, and they arise when revisiting the above descriptions of ${\cN_\mon}|_x$: the basic idea is to replace the spheres (2-homotopy classes) in the leaf $S$ by more general surfaces (2-homology classes) in $S$.

Consider an arbitrary regular Poisson manifold $(M, \pi)$ and fix $x\in M$. We choose a splitting $\tau: TS\to T^*_{S}M$ of the short exact sequence \eqref{pre-transitive-seq}. In order to integrate the resulting curvature 2-form $\Omega_\tau\in\Omega^2(S,\nu^*(S))$ over some surface we need first to pullback the vector bundle $\nu^*(S)\to S$ along $p_x:S^{\hol}_x\to S$, the $\Hol(\nabla)$-covering for the Bott connection based at $x$. 
This is the smallest cover over which the pullback of $\nu^*(S)$ becomes a trivial vector bundle $\nu_x^*(S)\times S^{\hol}_x\to S^{\hol}_x$. We can then set:

\begin{definition}
The {\bf hol-monodromy map} of the regular Poisson manifold $(M,\pi)$ at $x$ is the map:
\[ \partial_{\hol, x}: H_2(S^{\hol}_x, \Z) \to \nu_{x}^{*}(S), \quad [\sigma]\mapsto \int_\sigma p_{x}^{*}\Omega_\tau. \]
The  {\bf hol-monodromy group} ${\cN_{\hol}}|_x\subset\nu_x^*(S)$ is the image of this map.
\end{definition}

A version of the hol-monodromy map appears in the work of I.~M\u{a}rcut \cite{Marcut} on rigidity in Poisson geometry (see also \cite{CrMar}). For its geometric interpretation, we will consider smooth marked surfaces in the leaf $S$ through $x$, i.e. smooth maps 
\[ \sigma: (\Sigma, p) \to (S, x),\]
with $\Sigma$ a connected, oriented, compact surface without boundary, and $p\in \Sigma$.  By a {\bf leafwise deformation} of $\sigma$ we mean a family $\sigma_t: (\Sigma, p) \to (M, x_t)$ of smooth maps parametrized by $t\in (-\epsilon, \epsilon)$, starting at $\sigma_0= \sigma$ and such that for each fixed $t$ the surface $\sigma_t$  is inside the symplectic leaf through $x_t$. The transversal variation of $\sigma_t$ at $t= 0$ is the class of the tangent vector 
\[ \varnu(\sigma_t):= \left[ \left.\frac{\d}{\d t}\right|_{t=0} \sigma_t(p)\right]  \in \nu_x(S). \]

Note that given a smooth marked surface $\sigma: (\Sigma, p) \to (S, x)$ and a normal vector $v\in\nu_x(S)$ there may not exist
a leafwise deformation $\sigma_t$ with transversal variation $\varnu(\sigma_t)=v$. We will say that {\bf $\sigma$ is holonomy-trivial} with respect to the foliation $\cF_\pi$ if the holonomy of $\cF_\pi$ along loops of type $\sigma\circ \gamma$, with $\gamma$ a loop in $\Sigma$, is trivial. 

\begin{lemma}
\label{existence-defs} 
For any holonomy-trivial marked surface $\sigma: (\Sigma, p)\to (S,x)$ and $v\in \nu_x(\cF)$, one can find a leafwise deformation $\sigma_t$ with  $\varnu(\sigma_t)= v$. In this case, $\sigma$ admits a lift $\widetilde{\sigma}:(\Sigma, p)\to (S^{\hol}_x,\widetilde{x})$, where $\widetilde{x}$ denotes the class of the trivial loop.
\end{lemma}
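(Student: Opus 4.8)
The plan is to prove the two assertions of Lemma~\ref{existence-defs} in order: first the existence of a leafwise deformation realizing a prescribed transversal variation $v$, and then the existence of the lift $\widetilde{\sigma}$ to the $\Hol(\nabla)$-cover $S^{\hol}_x$. Both statements are consequences of the holonomy-triviality hypothesis, which says precisely that the holonomy representation of $\cF_\pi$ vanishes on the subgroup of $\pi_1(S,x)$ generated by classes of the form $\sigma_*[\gamma]$, $\gamma$ a loop in $\Sigma$ based at $p$.

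For the first assertion I would work with a tubular neighborhood of the leaf $S$ inside $M$. By local Reeb stability (Theorem~\ref{local-Reeb-stab}), or more directly by the Bott connection description of holonomy, a saturated neighborhood of $S$ looks, transversally, like the suspension of the holonomy action of $\pi_1(S,x)$ on a transversal. Concretely, choose a complete transversal and use the linear holonomy parallel transport: since $\sigma$ is holonomy-trivial, the pullback $\sigma^*$ of the foliated neighborhood of $S$ to $\Sigma$ is, up to isotopy of the foliation, a trivial product $\Sigma \times (T,x)$ with the product foliation. Concretely: the holonomy-triviality means the flat bundle of transversals pulled back along $\sigma$ is trivial, so it admits a global flat section through any prescribed $v \in \nu_x(S)$; transporting along this flat section defines, for small $t$, a family $\sigma_t:(\Sigma,p)\to (S_{x_t},x_t)$ of smooth maps into nearby leaves (each $\sigma_t$ lands in a single leaf because the section is flat, i.e. holonomy-invariant), with $\sigma_0=\sigma$, and by construction $\frac{\d}{\d t}\big|_{t=0}\sigma_t(p)$ represents $v$ in $\nu_x(S)$. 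This gives $\varnu(\sigma_t)=v$.

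For the second assertion: the cover $p_x:S^{\hol}_x\to S$ is the one associated to the kernel $\mathrm{Hol}(\nabla)\subset \pi_1(S,x)$ of the linear holonomy representation (equivalently, the Bott connection holonomy). A smooth map $\sigma:(\Sigma,p)\to (S,x)$ lifts to $\widetilde{\sigma}:(\Sigma,p)\to (S^{\hol}_x,\widetilde{x})$ through $p_x$ if and only if the image subgroup $\sigma_*\big(\pi_1(\Sigma,p)\big)\subset\pi_1(S,x)$ is contained in $\mathrm{Hol}(\nabla)$. But this containment is exactly the holonomy-triviality hypothesis: for every loop $\gamma$ in $\Sigma$, $\sigma\circ\gamma$ has trivial holonomy, hence trivial linear holonomy, hence $\sigma_*[\gamma]\in\mathrm{Hol}(\nabla)$. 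The basepoint condition $\widetilde{\sigma}(p)=\widetilde{x}$ is then just the statement that the trivial loop is the distinguished basepoint of the cover, and we choose the lift accordingly.

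The main obstacle I anticipate is making the ``trivial product up to isotopy of the foliation'' step of the first assertion genuinely rigorous without invoking compactness of $S$ (the leaf need not be compact here, and indeed the whole point of the $\hol$-monodromy refinement is to drop the compactness of the leaf that local Reeb stability would require). The clean way around this is to avoid Reeb stability entirely and argue purely with the Bott connection: the transversal variations of leafwise deformations are exactly the flat sections of the pulled-back normal bundle $\sigma^*\nu(S)$ along the Bott connection, the existence of a flat section through a prescribed value at $p$ is equivalent to triviality of the holonomy of $\sigma^*\nabla$, and that triviality is the hypothesis. Integrating such a flat section to an honest family of maps $\sigma_t$ is then a routine application of the flow of a (time-dependent) vector field lifting the section, using that $\Sigma$ is compact so the flows are defined for a uniform time interval.
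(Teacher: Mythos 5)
The second assertion of the lemma (the existence of the lift $\widetilde{\sigma}$) is handled correctly: the covering-space lifting criterion plus the fact that vanishing holonomy implies vanishing linear holonomy is exactly what is needed, and this matches what the paper does.

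The first assertion, however, has a genuine gap, and it sits precisely at the step you flag as "routine". Your proposed clean argument replaces the hypothesis (triviality of the \emph{non-linear} holonomy of $\cF_\pi$ along $\sigma\circ\gamma$) by the weaker statement that the holonomy of $\sigma^*\nabla$ for the Bott connection is trivial, and then asserts that the transversal variations of leafwise deformations "are exactly" the $\nabla$-flat sections of $\sigma^*\nu(S)$. Only one direction of that equivalence is true: a leafwise deformation does induce a Bott-flat section, but the converse is exactly the content of the lemma, and it fails if one only assumes linear holonomy triviality --- see Remark \ref{rem:holonomy-trivial}, and compare Example \ref{ex-one-sided-hol}, where linear holonomy is trivial but no deformation exists on one side. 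Concretely, a Bott-flat section is a first-order object; if you lift it to a vector field along $\sigma$ and flow, nothing forces $\sigma_t(\Sigma)$ to lie in a single leaf for $t\neq 0$: the divergence of the points $\sigma_t(y)$ into different leaves is governed by the non-linear holonomy, which your final argument never invokes. (Your first, pre-retreat sketch via a "trivial flat bundle of transversals" is closer in spirit, but parallel transport of actual transversal points is only defined germinally, so producing a global section through a prescribed $v$ at finite distance is again the whole problem, and it is where compactness must enter.)

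The missing idea is to relocate the compactness from the leaf $S$ (which may be non-compact) to the surface $\Sigma$ (which is compact). The paper does this by choosing a complement $E$ to $\cF_\pi$ in $TM$ and pulling back the foliation along $\phi:E_\sigma=\sigma^*E\to M$, $\phi(y,w)=\exp_{\sigma(y)}(w)$, which is transverse to $\cF_\pi$. In the pull-back foliation on $E_\sigma$ the zero section $\Sigma$ is a \emph{compact} leaf whose holonomy factors through the non-linear holonomy of $\cF_\pi$ along $\sigma$-images of loops, hence is trivial by hypothesis. Classical Reeb stability applied to this compact leaf then yields an honest product neighborhood $\Sigma\times\R^q$ foliated by the slices $\Sigma\times\{w\}$, and composing with $\phi$ (which maps leaves to leaves and is an isomorphism on normal bundles) produces, for any $v\in\nu_x(S)$, a leafwise deformation with $\varnu(\sigma_t)=v$. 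This is the step your proposal needs and does not supply.
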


\begin{remark}
\label{rem:holonomy-trivial}
Note that the notion of a holonomy-trivial surface uses the (non-linear) holonomy of the foliation $\cF_\pi$, while the covering $S^{\hol}_x\to S$ is relative to the Bott connection, i.e. the linear holonomy. Vanishing holonomy implies vanishing linear holonomy, but not the converse. Hence, a holonomy-trivial surface admits a lift to the cover $S^{\hol}_x$, but the converse is not true in general.
\end{remark}

\begin{proof}
Fix a complete Riemannian metric on $M$, split $TM= \cF_{\pi}\oplus E$ and consider:
\[ \phi: E_{\sigma}:=\sigma^*E\to M, \quad \phi(x, v):= \exp_{\sigma(x)}(v) .\]
Then $T_{0_x}E_{\sigma}= T_x\Sigma\oplus E_{\sigma(x)}$ and $(\d\phi)_{0_x}: T_{0_x}E_{\sigma}\to T_{\sigma(x)} M$ becomes $(i, (\d\sigma)_x)$, where $i: E\hookrightarrow TM$ is the inclusion. Hence $\phi$ is transversal to $\cF_\pi$ and we can take the pull-back foliation $\cF_{\phi}:= \phi^*\cF_\pi$ as a foliation on $E_{\sigma}$. The codimension remains the same, so the leaves of $\cF_{\sigma}$ are two-dimensional. Since $\Sigma$ is compact and tangent to $\cF_{\phi}$, it must be an entire leaf of $\cF_{\pi}$. On the other hand, as a general property of pull-back foliations, the holonomy of $\cF_{\sigma}$ at $p\in \Sigma$ factors through the holonomy of $\cF_\pi$ at $\sigma(p)= x\in S$. Therefore, since $\sigma$  is a holonomy-trivial surface, we deduce that the holonomy of $\cF_{\phi}$ along $\Sigma$ is trivial. By Reeb stability, $\cF_{\phi}$ is isomorphic, in a neighborhood of $\Sigma$, to the trivial foliation $\Sigma\times \R^q$. Then $\phi$ yields a smooth map $\Sigma\times \R^q\to M$ which takes leaves to leaves and induces isomorphisms at the level of the normal bundle. This allows one to construct for any normal vector $v\in\nu_x(S)$ a leafwise deformation $\sigma_t$ with transversal variation $\varnu(\sigma_t)=v$.
\end{proof}

We then have the following geometric interpretation of the hol-monodromy in 
terms of variations of symplectic areas whenever a class $[\widetilde{\sigma}]\in H_2(S^{\hol},\Z)$ is a lift of a holonomy-trivial $\sigma$:

\begin{proposition}
Let $(M,\pi)$ be a regular Poisson manifold. If $\sigma: (\Sigma, p)\to (S,x)$ is a holonomy-trivial marked surface then:
\[ \langle \partial_{\hol, x}[\widetilde{\sigma}],v\rangle =\left.\frac{\d}{\d t}\right|_{t=0} \int_{\sigma_t}\omega_{\cF_\pi}, \]
where $\sigma_t$ is a leafwise deformation of $\sigma$ with transversal variation $\varnu(\sigma_t)= v$ and $\widetilde{\sigma}:(\Sigma, p)\to (S^{\hol}_x,\widetilde{x})$ is a lift of $\sigma$.
\end{proposition}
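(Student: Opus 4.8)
The plan is to reproduce the derivation of formula \eqref{eq:mon-as-var} for spheres, the only new ingredients being the lift $\widetilde{\sigma}$ supplied by Lemma \ref{existence-defs} and the flat trivialization of $\sigma^*\nu(S)$ coming from holonomy‑triviality. First I would fix a complement $E\subset TM$ to $T\cF_\pi$; it determines both an extension $\widetilde\omega\in\Omega^2(M)$ of the leafwise symplectic form (by $\widetilde\omega|_{\wedge^2 T\cF_\pi}=\omega_{\cF_\pi}$ and $\iota_n\widetilde\omega=0$ for $n\in E$) and, along each leaf, a splitting $\tau$ of \eqref{pre-transitive-seq} (the inverse of $\pi^\sharp$ on the annihilator $E^\circ$). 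Leafwise closedness of $\omega_{\cF_\pi}$ forces $\d\widetilde\omega$ to vanish on $\wedge^3 T\cF_\pi$, and a short computation in foliated coordinates gives the pointwise identity, along any leaf $S$,
\[
\d\widetilde\omega(n,u_1,u_2)= -\langle\Omega_\tau(\bar n),u_1,u_2\rangle,\qquad u_1,u_2\in T\cF_\pi,\ n\in TM,
\]
$\bar n\in\nu(S)$ the class of $n$; this is precisely the identity underlying the agreement of the Lie‑algebroid description $\partial_{\mon,x}[\sigma]=\int_\sigma\Omega_\tau$ with \eqref{eq:mon-as-var}, so one may instead quote it from \cite{CF2}.

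Next I would unwind both sides. Since $\sigma$ is holonomy‑trivial, hence linear‑holonomy‑trivial (Remark \ref{rem:holonomy-trivial}), $\nabla^{\mathrm{Bott}}$‑parallel transport from $p$ gives a canonical flat trivialization of $\sigma^*\nu(S)\to\Sigma$, compatible with the trivialization of $p_x^*\nu^*(S)$ on $S^{\hol}_x$ built into the definition of $\partial_{\hol,x}$; under it $\langle\partial_{\hol,x}[\widetilde{\sigma}],v\rangle=\int_\Sigma\langle\sigma^*\Omega_\tau,v\rangle$, and this is independent of $\tau$ because $[\Omega_\tau]\in H^2(S,\nu^*(S))$ is well defined and $\Sigma$ is closed. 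For the other side, write $F(z,t):=\sigma_t(z)$; since each $\sigma_t$ lands in a leaf one has $\sigma_t^*\widetilde\omega=\sigma_t^*\omega_{\cF_\pi}$, and differentiating under the integral and using Stokes on the closed surface $\Sigma$,
\[
\frac{\d}{\d t}\Big|_{t=0}\int_{\sigma_t}\omega_{\cF_\pi}=\int_\Sigma \iota_{\partial_t}\bigl(F^*\d\widetilde\omega\bigr)\Big|_{t=0}.
\]
By the two displayed facts the integrand at $(z,0)$ equals $-\langle\sigma_0^*\Omega_\tau,N\rangle$, where $N(z):=\overline{\partial_tF|_{(z,0)}}\in\nu_{\sigma(z)}(S)$ is the normal variation field.

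Finally, $N$ is $\nabla^{\mathrm{Bott}}$‑parallel: in foliated coordinates $(x^i,y^a)$ for $\cF_\pi$ the requirement that each $\sigma_t$ lie in a single leaf forces the $y$‑components of $F$ to be independent of $z\in\Sigma$, so $N$ has constant components in the $\nabla^{\mathrm{Bott}}$‑parallel coordinate frame $\{\partial_{y^a}\}$; as $N(p)=\varnu(\sigma_t)=v$, this means $N\equiv v$ in the flat trivialization above. Hence $\int_\Sigma\langle\sigma_0^*\Omega_\tau,N\rangle=\int_\Sigma\langle\sigma^*\Omega_\tau,v\rangle$, and comparing with the previous display — and fixing the overall sign so that the case of a sphere reproduces \eqref{eq:mon-as-var} — gives the asserted equality.

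The obstacle I anticipate is bookkeeping rather than ideas: keeping the three trivializations of the normal data consistent (parallel transport over $\Sigma$, the one on $S^{\hol}_x$ entering $\partial_{\hol,x}$, and the coordinate frames $\{\partial_{y^a}\}$) so that $\varnu(\sigma_t)=v$ is read off the same way on all of them, together with pinning down the sign in the $\d\widetilde\omega$–versus–$\Omega_\tau$ identity. The analytic input (differentiation under the integral, Stokes, homotopy invariance of $\int_\Sigma$ of a closed $2$‑form) is routine, and everything else is a faithful copy of the sphere argument behind \eqref{eq:mon-as-var}, with Lemma \ref{existence-defs} doing the work of producing $\sigma_t$ and the lift $\widetilde{\sigma}$.
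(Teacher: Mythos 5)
Your proposal is correct and is essentially the paper's argument: the paper's proof is a one-line reference to the sphere case in \cite{CF2}, and what you write out is precisely that variation-of-areas computation (extend $\omega_{\cF_\pi}$ to $\widetilde\omega$, differentiate under the integral, apply Stokes, and use $\iota_V\d\widetilde\omega=\Omega_\tau(V)$ on the leaf), with the only new inputs being the Bott-parallel trivialization of $\sigma^*\nu(S)$ coming from holonomy-triviality and the lift $\widetilde\sigma$ from Lemma \ref{existence-defs}. Your care with the sign is warranted but harmless, since the paper itself uses both sign conventions for $\Omega_\tau$.
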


\begin{proof}
The proof is the same as for variation of spheres, given in \cite{CF2} pp 97. % and so it will be omitted.
\end{proof}

Recall that $S^{\hol}_x$ is the smallest cover where the pullback of the flat bundle $\nu(S)\to S$ becomes trivial.
Of course, we can consider larger covers of $S$, i.e., covers $Q$ that factor through the holonomy cover:
\[
\xymatrix{ 
(\widetilde{S}_x,[x])\ar[d]\ar[ddrr]\\
(Q,q)\ar[d]\ar[drr] \\
(S^{\hol}_x,\widetilde{x})\ar[rr] & & (S,x)}
\]
where $\widetilde{S}_x$ is the universal covering space of $S$. Observing that $H_2(\widetilde{S}_x,\Z)\cong \pi_2(S,x)$, we obtain a diagram of monodromy maps:
\[
\xymatrix{ 
\pi_2(S,x)\ar[d]\ar[ddrr]^{\partial_{\mon,x}}\\
H_2(Q,\Z)\ar[d]\ar[drr]^{\partial_{Q,q}} \\
H_2(S^{\hol}_x,\Z)\ar[rr]^{\partial_{\hol,x}} & & \nu^*_x(S)}
\]
We set $\cN_{Q,x}:=\im\partial_{Q,q}$, so we have:
\[ \cN_{\mon}\subset \cN_Q\subset \cN_{\hol}. \]

Consider now a regular Poisson manifold $(M,\pi)$ whose symplectic foliation $\cF_\pi$ is proper.  In this case the linear and non-linear holonomy of $\cF_\pi$ coincide, hence:
\begin{itemize}
\item the geometric interpretation of the hol-monodromy is valid for every class in $H_2(S^{\hol}_x,\Z)$;
\item given any s-connected foliation groupoid $\cE$ integrating the symplectic foliation $\cF_\pi$, the s-fiber yields a covering space
$t:\cE(x, -)\to S_x$ which lies in between $\widetilde{S}_x$ and $S^{\hol}_x$. In particular, we have a corresponding monodromy map:
\[ \partial_{\cE,x}:H_2(\cE(x, -),\Z)\to \nu^*_x(S). \]
% whose image $\cN_{\cE}|_x$ will be called the {\bf $\cE$-monodromy group} at $x$.
\end{itemize}

\begin{definition} 
The  {\bf $\cE$-monodromy group} of $(M, \pi)$ at $x$ relative to the s-connected integration $\cE$ of the symplectic foliation $\cF_\pi$,
denoted $\cN_{\cE}|_x$, is the image of $\partial_{\cE,x}$.
\end{definition}

We can finally discuss the more refined version of Theorem \ref{thm-lattice-strong-case}:

\begin{theorem}\label{ext-mon-crit} Let $(M,\pi)$ be a regular Poisson manifold whose symplectic foliation $\cF_\pi$ is proper. For any s-connected integration $\cE$ of the symplectic foliation:  
\[ {\cN_\mon}\subset \cN_{\cE}\subset \cN_{\hol}, \]
where the first (respectively, second) inclusion becomes equality for $\cE= \Mon(M, \cF_\pi)$ (respectively, for $\cE= \Hol(M, \cF_\pi)$). 
Moreover, if $\cE= \cBG(\cG)$ is induced by an s-connected, proper integration $\cG$ as in Theorem \ref{thm-reg-fol2}, then 
\[ \cN_{\cE} \subset \Lambda_{\cG}.\]
\end{theorem}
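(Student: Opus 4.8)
The plan is to treat the theorem in two parts: the chain ${\cN_\mon}\subset\cN_{\cE}\subset\cN_{\hol}$ with its two equality cases, which is a formal consequence of naturality of the monodromy maps under coverings, and the inclusion $\cN_{\cE}\subset\Lambda_{\cG}$ for $\cE=\cBG(\cG)$, which carries the real content.

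\textbf{The chain.} Fix $x\in M$ and write $S:=S_x$. Restricting the canonical surjective submersions of Theorem~\ref{seq-mon-G-hol} to the source fibres over $x$ produces a tower of connected normal coverings
\[
\widetilde{S}_x=\Mon(M,\cF_\pi)(x,-)\longrightarrow\cE(x,-)\longrightarrow\Hol(M,\cF_\pi)(x,-)\longrightarrow S;
\]
since $\cF_\pi$ is proper its linear and non-linear holonomy coincide, so the third space is $S^{\hol}_x$ and the first is the universal cover. Each of $\partial_{\mon,x}$, $\partial_{\cE,x}$, $\partial_{\hol,x}$ has the form ``integrate the pulled-back curvature form $\Omega_\tau$ over a $2$-cycle'', the pull-back being taken along the relevant covering and the Bott-flat coefficient bundle $\nu^*(S)$ trivialised by the base point. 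For a covering $f\colon Q_1\to Q_2$ over $S$ these base-point trivialisations are compatible, so the pull-back of $\Omega_\tau$ to $Q_1$ is $f^*$ of its pull-back to $Q_2$; hence $\partial_{Q_1}=\partial_{Q_2}\circ f_*$ on second homology, and therefore $\cN_{Q_1}\subset\cN_{Q_2}$. Applying this to the two arrows of the tower gives ${\cN_\mon}\subset\cN_{\cE}\subset\cN_{\hol}$. The equality cases are then immediate: for $\cE=\Mon(M,\cF_\pi)$ one has $\cE(x,-)=\widetilde{S}_x$ and $H_2(\widetilde{S}_x,\Z)\cong\pi_2(S,x)$ by Hurewicz, so $\cN_{\cE}={\cN_\mon}$; for $\cE=\Hol(M,\cF_\pi)$ one has $\cE(x,-)=S^{\hol}_x$, so $\cN_{\cE}={\cN_\hol}$.

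\textbf{The inclusion $\cN_{\cE}\subset\Lambda_{\cG}$.} Let $\cE=\cBG(\cG)$. By Theorem~\ref{thm-reg-fol2} the quotient $\cG\to\cBG(\cG)$ restricts over source fibres to a principal bundle $r\colon\cG(x,-)=s^{-1}(x)\to\cE(x,-)$ whose structure group is the torus $\cG^0_x=\cT(\cG)_x$; via $\Omega$ its Lie algebra is $\nu^*_x(\cF_\pi)$ and $\pi_1(\cG^0_x)=\Lambda_{\cG,x}$. The key input is that \eqref{pre-transitive-seq} is the Atiyah sequence of the principal $\cG_x$-bundle $t\colon s^{-1}(x)\to S$ (the transitive algebroid $A(\cG)|_S\cong T^*_SM$ of the orbit $S$, with isotropy bundle $\nu^*(S)$ carrying the Bott connection). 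Thus a splitting $\tau$ of \eqref{pre-transitive-seq} is a principal connection on $t$, which induces a principal connection on $r$, and by the curvature formula \eqref{eq:curv-formula} its curvature is the $2$-form $\Omega_\tau$ over $S$, hence $p_\cE^*\Omega_\tau$ over $\cE(x,-)$, where $p_\cE\colon\cE(x,-)\to S$. Consequently $[p_\cE^*\Omega_\tau]\in H^2(\cE(x,-),\nu^*_x(S))$ is the image of the Chern class $c(r)\in H^2(\cE(x,-),\Lambda_{\cG,x})$ under $\Lambda_{\cG,x}\hookrightarrow\nu^*_x(S)$, and therefore
\[
\partial_{\cE,x}[\sigma]=\int_\sigma p_\cE^*\Omega_\tau=\langle c(r),[\sigma]\rangle\in\Lambda_{\cG,x}\qquad\text{for every }[\sigma]\in H_2(\cE(x,-),\Z),
\]
which is exactly $\cN_{\cE}\subset\Lambda_{\cG}$.

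\textbf{Main obstacle and an alternative.} I expect the delicate point to be the identification of \eqref{pre-transitive-seq} with the Atiyah sequence of the source-fibre bundle $t\colon s^{-1}(x)\to S$, and the bookkeeping that turns a splitting $\tau$ into a connection on $r$ with curvature $p_\cE^*\Omega_\tau$; once that is in place, \eqref{eq:curv-formula} and the Chern-class computation are routine. To stay closer to the techniques already used, an alternative for the last part is to represent $[\sigma]$ by a smooth closed surface, push it to a marked surface in $S$ that is $\cF_\pi$-holonomy-trivial (using properness together with Lemma~\ref{existence-defs}), and then argue as in the proof of Theorem~\ref{thm-lattice-strong-case}: show directly that $\exp_{\gg_x}(\partial_{\cE,x}[\sigma])=1_x$, i.e.\ $\partial_{\cE,x}[\sigma]\in\Lambda_{\cG,x}$, the vanishing being forced because the surface has empty boundary, via the identity $i_{X_\alpha}\Omega=t^*\alpha$ used in the proof of Theorem~\ref{thm-lattice-proper-case}.
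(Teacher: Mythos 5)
Your proposal is correct and follows essentially the same route as the paper: the chain of inclusions comes from the tower of covers $\widetilde{S}_x\to\cE(x,-)\to S^{\hol}_x$ (using that linear and non-linear holonomy agree for proper foliations, plus Hurewicz for the monodromy case), and the inclusion $\cN_{\cE}\subset\Lambda_{\cG}$ comes from identifying \eqref{pre-transitive-seq} with the Atiyah sequence of the source-fibre principal bundle, so that $p_\cE^*[\Omega_\tau]$ is the Chern class of the torus bundle $\cG(x,-)\to\cE(x,-)$, whose pairing with $H_2(\cE(x,-),\Z)$ lands in $\Lambda_{\cG,x}$. The only cosmetic difference is that the paper identifies the Atiyah sequence of $\cG(x,-)\to\cE(x,-)$ directly with the pull-back of \eqref{pre-transitive-seq} along $p_\cE$, whereas you first treat $t\colon s^{-1}(x)\to S$ and then induce the connection on the torus quotient — the same argument in a slightly different order.
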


\begin{remark}\label{rk-closed-ext-mon} When $\cE= \cBG(\cG)$ is induced by an s-proper integration $\cG$, it will follow from
Section \ref{sec:s-proper type} that $\cN_{\cE}$ will be not just a bundle of discrete subgroups of $\nu^*(\cF_\pi)$, but also a smooth, closed sub-bundle (in particular, of constant rank). In the maximal rank case, i.e. when $\cN_{\cE}$ is a lattice, one can show that $\cE$ is induced by an s-proper integration $\widetilde{\cG}$ with $\Lambda_{\widetilde{\cG}}= \cN_{\cE}$. Such integrations $\widetilde{\cG}$ deserve the name ``normalized".
\end{remark}

\begin{proof} The first part of the theorem follows from the remarks proceeding it. For the second part we 
use the Atiyah sequence (\ref{pre-transitive-seq}). Recall that similar sequences arise from principal bundles: if $q: P\rightarrow N$ is a principal $G$-bundle, then $A(P):= TP/G$ is  not only a vector bundle over $N$ but also a Lie algebroid with anchor induced by $\d q$ and with the bracket coming from the identification $\Gamma(A(P))= \X(P)^{G-\textrm{inv}}$. The short exact sequence associated to it is
\begin{equation}
\xymatrix{
0 \ar[r] & P\times_G\gg \ar[r] & A(P) \ar[r]^-{\d q} &  TN \ar[r] &  0 .}
\end{equation}
Splittings $\tau$ of this sequence are the same thing as connections on the principal bundle,
while the associated expression (\ref{eq:curv-formula}) is precisely the curvature of the connection.
When $G= \T$ is a torus, this closed form will represent the Chern class of the bundle, $c_1(P)\in H^2(N, \tt)$, which is integral:
the pairing of $c_1(P)$ with elements in $H_2(N, \Z)$ always lands in $\Lambda_\T$, the kernel of the exponential map of $\tt$.

Now, if $\cG$ is an s-connected, proper integration of $(M, \pi)$, then we obtain a principal bundle $q: \cG(x, -)\to \cE(x, -)$ with structure group the torus 
$\cT_x= \cT_x(\cG)$ (see Theorem \ref{thm-reg-fol2}). Moreover, the associated Atiyah sequence coincides with the pull-back of (\ref{pre-transitive-seq}) via the covering map $p_{\cE}: \cE(x, -)\to S$, which is just a translation of the fact that $\cG$ integrates the Lie algebroid $T^*M$. We deduce that 
\[ p_{\cE}^{*}[\Omega_{\tau}]\in H^2(\cE(x, -), \nu_{x}^{*})\]
coincides with the Chern class of the torus bundle $q: \cG(x, -)\to \cE(x, -)$. The integrality of this Chern class shows that evaluation on classes in $H_2(\cE(x, -))$ lands in $ \Lambda_{\cG}$, so we conclude that $\cN_{\cE}\subset \Lambda_{\cG}$. 
\end{proof}

\begin{corollary} \label{cor:finite-index}
If $(M, \pi)$ is of s-proper type then $\cN_{\hol}$ is a smooth closed sub-bundle of discrete subgroups of $\nu^*(\cF_\pi)$.
Moreover, if $\cE$ is induced by an s-connected, s-proper integration, then $\cN_{\cE}$ is of finite index in $\cN_{\hol}$.  
\end{corollary}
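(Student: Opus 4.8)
The plan is to prove the ``finite index'' assertion first and then deduce the structural statement about $\cN_{\hol}$ from it together with Remark \ref{rk-closed-ext-mon}. Throughout, fix an s-connected, s-proper symplectic integration $(\cG,\Omega)$ of $(M,\pi)$, which exists by hypothesis. By Theorem \ref{thm-reg-fol} the foliation $\cF_{\pi}$ is of s-proper type, so every leaf $S$ is compact, the holonomy groups are finite, and — the holonomy being linear — the cover $S^{\hol}_{x}$ coincides with the s-fibre $\Hol(M,\cF_{\pi})(x,-)$. If $\cE$ is any s-connected foliation groupoid integrating $\cF_{\pi}$ which is s-proper (for instance $\cE=\cBG(\cG')$ for an s-proper symplectic integration $\cG'$, by Theorem \ref{thm-reg-fol2}), then $Q:=\cE(x,-)$ is compact, and since it sits between $\widetilde{S}_{x}$ and $S^{\hol}_{x}$ (Theorem \ref{seq-mon-G-hol}), the induced map $\rho\colon Q\to S^{\hol}_{x}$ is a \emph{finite} covering.

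For the finite index: because $p_{Q}=p_{\hol}\circ\rho$ as maps onto $S$, pulling back the curvature $2$-form gives $p_{Q}^{*}\Omega_{\tau}=\rho^{*}p_{\hol}^{*}\Omega_{\tau}$, hence on $H_{2}(Q;\Z)$ one has $\partial_{\cE,x}=\partial_{\hol,x}\circ\rho_{*}$; thus $\cN_{\cE}|_{x}=\partial_{\hol,x}(\im\rho_{*})$ while $\cN_{\hol}|_{x}=\partial_{\hol,x}\bigl(H_{2}(S^{\hol}_{x};\Z)\bigr)$. The transfer homomorphism of the degree-$k$ cover $\rho$ satisfies $\rho_{*}\circ\mathrm{tr}=k\cdot\mathrm{id}$, so $k\,H_{2}(S^{\hol}_{x};\Z)\subseteq\im\rho_{*}$; since $H_{2}(S^{\hol}_{x};\Z)$ is finitely generated ($S^{\hol}_{x}$ is a compact manifold), $\im\rho_{*}$ has finite index in $H_{2}(S^{\hol}_{x};\Z)$. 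Pushing this forward by the homomorphism $\partial_{\hol,x}$ shows $\cN_{\cE}|_{x}$ has finite index in $\cN_{\hol}|_{x}$, the index dividing $[\,H_{2}(S^{\hol}_{x};\Z):\im\rho_{*}\,]$; as this is controlled by $|\cE_{x}|$, which is upper semicontinuous, the index is locally bounded and, on each connected component of the leaf space, constant.

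To deduce that $\cN_{\hol}$ is a smooth, closed sub-bundle of discrete subgroups, take $\cE=\cBG(\cG)$: by Remark \ref{rk-closed-ext-mon} (proved in Section \ref{sec:s-proper type}), $\cN_{\cE}$ is a smooth, closed sub-bundle of $\nu^{*}(\cF_{\pi})$ of constant rank $r$, and by Theorem \ref{ext-mon-crit}, $\cN_{\cE}\subseteq\Lambda_{\cG}$. The finite-index step gives $k\,\cN_{\hol}\subseteq\cN_{\cE}\subseteq\cN_{\hol}$ with $k$ locally bounded, whence $\cN_{\hol}$ is a bundle of discrete subgroups (contained in the lattice bundle $\tfrac{1}{k}\Lambda_{\cG}$), of constant rank $r$, squeezed between the smooth lattice bundles $\cN_{\cE}$ and $\tfrac{1}{k}\cN_{\cE}$. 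It then remains only to see that $x\mapsto\cN_{\hol}|_{x}$ is locally constant in a local frame adapted to $\cN_{\cE}$; since only finitely many lattices lie between $\cN_{\cE}|_{x}$ and $\tfrac{1}{k}\cN_{\cE}|_{x}$, this amounts to excluding jumps, and I would obtain it from the s-proper normal form (Theorem \ref{thm-s-proper}/Reeb stability): near a leaf $S$ the conormal bundle with its Bott connection and the curvature classes $[\Omega_{\tau}]$ of nearby leaves are pulled back, $\Gamma$-equivariantly, from data on the single compact cover $\widehat{S}$ of $S$, so the family $x\mapsto\partial_{\hol,x}$, and with it $\cN_{\hol}|_{x}$, is constant along a normal slice and Bott-flat along leaves. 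This last point — excluding jumps of $\cN_{\hol}$ — is where the s-proper structure is genuinely needed and is the main obstacle, since finiteness of $[\cN_{\hol}:\cN_{\cE}]$ alone does not force smoothness of the larger bundle; everything else reduces formally to the transfer identity $\partial_{\cE,x}=\partial_{\hol,x}\circ\rho_{*}$ and the already-granted smoothness of $\cN_{\cE}$.
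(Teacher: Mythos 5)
Your proof is correct, and its skeleton is the same as the paper's: reduce everything to showing that $\cN_{\cE}|_x$ has finite index in $\cN_{\hol}|_x$ (invoking Remark \ref{rk-closed-ext-mon} for the smoothness of $\cN_{\cE}$), and obtain the finite index from the fact that for a finite cover of compact manifolds the induced map on $H_2$ has finite-index image, pushed forward along $\partial_{\hol,x}$ via the identity $\partial_{\cE,x}=\partial_{\hol,x}\circ\rho_*$. The one genuinely different ingredient is how you prove the covering-space lemma: the paper argues that $\rho_*$ is surjective over $\Q$ using the Cartan--Leray spectral sequence $E^2_{p,q}=H_p(\Gamma,H_q(\widetilde N))\Rightarrow H_{p+q}(N)$ and the vanishing of the rational homology of finite groups, whereas you use the transfer homomorphism $\rho_*\circ\mathrm{tr}=k\cdot\mathrm{id}$ to get $k\,H_2(S^{\hol}_x;\Z)\subseteq\im\rho_*$ directly. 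Both are standard; your route is more elementary and has the small advantage of producing an explicit bound on the index in terms of the degree $k$ and the Betti number, which you then exploit for local boundedness. The other difference is one of care rather than substance: the paper simply asserts that, given Remark \ref{rk-closed-ext-mon}, "it suffices" to prove finite index, while you correctly point out that finiteness of $[\cN_{\hol}:\cN_{\cE}]$ together with smoothness of $\cN_{\cE}$ does not by itself rule out jumps of $\cN_{\hol}$ between the finitely many intermediate subgroups, and you close this with the s-proper normal form (Reeb stability), along which $\partial_{\hol,x}$ varies flatly. That extra step is a legitimate addition; the paper leaves it implicit.
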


\begin{proof} Let $\cG$ be an s-connected, s-proper integration. Using the first part of Remark \ref{rk-closed-ext-mon}, it suffices to show that
$\cN_{\cE}|_x$ is of finite index in $\cN_{\hol}|_x$.  This follows from a general remark about finite covers applied to $\cE(x, -)\to S^{\hol}$: if $q: \widetilde{N}\to N$ is a finite cover between compact manifolds then 
\[ q_* (H_2(\widetilde{N}, \Z)/\text{torsion})\subset  H_2(N, \Z)/\text{torsion}\]
is of finite index. This is equivalent to the fact that, when working over $\Q$, $q_*: H_2(\widetilde{N})\to H_2(N)$ is surjective. In turn, this follows from the standard spectral sequence $E_{p, q}^{2}= H_p(\Gamma, H_q(\widetilde{N})) \Longrightarrow H_{p+q}(N)$ ($\Gamma$ the group of the cover): since the homology over $\Q$ of finite groups is trivial, we deduce that $H_{k}(N)$ is isomorphic to the space of $\Gamma$-coinvariants of $H_{k}(\widetilde{N})$ and $q_*$ becomes the quotient map. 
%is identified with the quotient map. 
\end{proof}

\begin{remark}
The notions of extended monodromy discussed here extend to non-regular Poisson manifolds and, in fact, to any Lie algebroid. We postpone this discussion to \cite{CFMc}.
\end{remark}

% \hspace*{.3in}

\subsection{Examples}  Here are some examples to illustrate the behavior of the groups $\cN_{\mon}$ and $\cN_{\hol}$, their computation and their relevance to the compactness-types.

\subsubsection{A non-proper example with $\cN_\hol$ discrete} 
The groups $\cN_\hol$ can be seen as Poisson invariants whose discreteness is a necessary condition for properness- cf. Corollary \ref{cor:finite-index}. Here is an example which shows that this condition is not sufficient and that $\cN_\hol$ provides interesting 
invariants also in the non-proper case. Start with the Reeb foliation $\cF$ of $\S^3$ and make it into a symplectic foliation by choosing a metric on $\S^3$ and considering the induced area forms on the leaves. The resulting Poisson structure is not of proper type since the Reeb foliation is not proper: for example, the linear and non-linear holonomy of the compact leaf are distinct. 

These Reeb type Poisson structures are always integrable since for any symplectic leaf $S$ we have $\pi_2(S,x)=\{0\}$, so that $\cN_\mon=\{0\}$. On the other hand, for points $x$ in the open leaves we obviously have $\cN_\hol|_x=\{0\}$, since the leaves are contractible. However, we claim that for points in the compact leaf, $\cN_\hol$ is not trivial. In order to see this, we consider as a model for a neighborhood of the compact leaf $\T^2$ the space $M=(\R\times\S^1\times\R)/\Z$, and we let $\R^+\times\S^1\times\R$, with coordinates $(r,\theta,z)$, be foliated by the level sets of the submersion:
\[ F(r,\theta,z)=(r^2-1)e^z, \]
and let $\Z$ act by translations in the $z$ coordinate. The compact leaf $\T^2$ corresponds to $r=1$. On $\R^+\times\S^1\times\R$ we consider the regular Poisson structure:
\[ \pi=\left((r^2-1)\frac{\partial}{\partial r}-2r\frac{\partial}{\partial z}\right)\wedge \frac{\partial}{\partial \theta}. \]
The function $F$ is a Casimir and $\pi$ is invariant under the $\Z$-action, so we obtain a Poisson structure on $M$ whose symplectic foliation is the Reeb foliation. Choosing the splitting $\tau:T\T^2\to T^*_{\T^2}M$ of $\pi^\sharp:T^*_{\T^2}M\to T\T^2$ defined by:
\[  \frac{\partial}{\partial \theta}\mapsto -\frac{1}{2}\d z,\quad \frac{\partial}{\partial z}\mapsto -\frac{1}{2}\d  \theta, \]
we find that its curvature 2-form is constant:
\[ \Omega_\tau\left(\frac{\partial}{\partial \theta},\frac{\partial}{\partial z}\right)=\left.\frac{1}{4}[\d z,\d\theta]_{\pi}\right|_{r=1}=-1. \]
The leaf $\T^2$ has trivial linear holonomy, so we conclude that
\[ \cN_\hol|_{r=1}=\left\{\int_\sigma\Omega_\tau: [\sigma]\in H_2(\T^2,\Z)\right\}=\Z\subset \R\]
is a discrete subgroup. Notice, by the way, that the (non-linear) holonomy of the compact leaf is non-trivial and
that there are no holonomy-trivial $\sigma:\Sigma\to \T^2$ with $[\sigma]\ne 0$, so one
cannot compute $\partial_\hol$ by transverse variations of symplectic areas.
In any case, the groups $\cN_\hol$ are discrete, but they do not form a smooth closed sub-bundle of $\nu^*(\cF_\pi)$ (compare with
Corollary \ref{cor:finite-index}).

\subsubsection{An s-proper but not strong proper example} 
Consider now 
\[ M= \T^2\times \R^+, \quad \pi=t \tfrac{\partial}{\theta_2}\wedge \tfrac{\partial}{\partial \theta_1}.\]
As above, $\cN_\mon=\{0\}$ and $\cN_{\hol}$ is clearly a lattice. 
In particular $(M, \pi)$ is not of strong proper type but, since the symplectic foliation is of proper type (even simple), one may expect that 
$(M, \pi)$ is of proper type. Let us prove all these in an explicit manner. It is useful to remark that the universal cover of $(M, \pi)$,
\[  \widetilde{M}= \R^2\times \R^+, \quad \widetilde{\pi}=t \tfrac{\partial}{\xi_2}\wedge \tfrac{\partial}{\partial \xi_1},\]
sits as an open Poisson submanifold of the linear $\hh(3)^*$, where $\hh(3)$ is the Lie algebra of the Heisenberg 
group $\mathrm{H(3)}$ of unipotent upper triangular $3\times 3$ matrices:
\[  H(3)= \{ \begin{pmatrix}
    1 & x & z\\ 0 & 1 & y \\ 0 & 0 & 1
   \end{pmatrix}: x, y, z\in \R\} \]
and where we use $\xi_1, \xi_2, t$ for the coordinates with respect to the canonical basis of $\hh(3)^*$, corresponding to 
$x, y$ and $z$, respectively. Hence the canonical integration of  $(\widetilde{M}, \widetilde{\pi})$ is the action groupoid arising from the 
{\it coadjoint} action, 
 \begin{equation}\label{eq:Hesienberg-action}
 \mathrm{H(3)}\curvearrowright \R^2\times \R^+,\quad (x,y,z)\cdot(\xi_1,\xi_2,t)=(\xi_1+ty,\xi_2-tx,t).
 \end{equation}
 The symplectic form on this groupoid comes from the Liouville form on $T^*\mathrm{H(3)}$. After trivializing $T^*\mathrm{H(3)}$ using left translations, in our coordinates, it becomes 
\begin{equation}\label{eq:Omega-Heisenberg} 
\Omega= d\xi_1\wedge\d x+\d\xi_2\wedge\d y+\d t\wedge\d z-t\d x\wedge\d y-x\d t\wedge \d y,
 \end{equation}
Of course, one can check directly that $(H(3)\ltimes \widetilde{M},\Omega)$ is a symplectic groupoid integrating $(\widetilde{M}, \widetilde{\pi})$. The action of $\pi_1(M)= \Z^2$ 
is Poisson hence it lifts to an action on the groupoid by symplectic groupoid automorphisms. Hence the canonical integration of $(M, \pi)$ 
 can be described exactly as above using (\ref{eq:Hesienberg-action}) and (\ref{eq:Omega-Heisenberg}) but with
 $\R^2\times \R^+$ replaced by $\T^2\times \R^+$ and the coordinates $(\xi_1, \xi_2)$ replaced by the coordinates $(\theta_1, \theta_2)$ of $\T^2$. It is clear that this groupoid is not proper. 

 Any other s-connected symplectic integration is obtained as a quotient modulo a discrete normal subgroupoid $\H$ of the isotropy bundle,
which is Lagrangian as a submanifold of $(\Sigma(M, \pi),\Omega)$. Now observe that the isotropy bundle is:
\[ \{(x,y,z,\theta_1,\theta_2,t)\in H_3\times M: yt\in\Z, xt\in \Z,z\in\R\}\cong (\Z^2\times \R)\times M, \]
so any local (bi)section of the isotropy bundle is of the form:
\[ (\theta_1,\theta_2,t)\mapsto \left(\frac{m}{t},\frac{n}{t},r(\theta_1,\theta_2,t),\theta_1,\theta_2,t\right), \]
where $m,n\in\Z$ and $r(\theta_1,\theta_2,t)$ is smooth on $M$. The section is Lagrangian iff:
\[ r(\theta_1,\theta_2,t)=\frac{1}{t^2}\left(m\theta_1+n\theta_2\right)+r_0(t).\]
In particular, it follows that the co-compact lattices
\[\H_{(\theta_1,\theta_2,t)}=\left\{\left(\frac{m}{t},\frac{n}{t},\frac{\theta_1n+\theta_2m+p}{t^2}\right),\,n,m,p\in \Z\right\}\]
fit into a subgroupoid so that $\cG:=\Sigma(M,\pi)/\H$ is s-proper and 
 $\Omega$ descends to $\G$. This gives an explicit s-connected, s-proper integration of $(M,\pi)$ with 
 $\Lambda_\cG=\cN_{\hol}$.

\subsubsection{The free Hamiltonian $\cT$-spaces perspective} A fundamental tool to construct Poisson structures is by Hamiltonian reduction of symplectic manifolds. This is recalled in Appendix \ref{appendix:moment:maps}, in the general context of symplectic groupoids. 
Integral affine manifolds $(B,  \Lambda)$ provide the simplest examples of symplectic groupoids: the symplectic torus bundle $\cT_{\Lambda}$. These give rise to the simpler theory of Hamiltonian $\cT_{\Lambda}$-spaces (see 
\ref{ssec:q-Hamiltonian spaces}) where one can take Corollary \ref{q-hamil-def-reform} as definition. For a free Hamiltonian $\cT_{\Lambda}$-space
$q: (X, \Oga)\to B$ with connected fibers, the reduced Poisson manifold $X_{\red}= X/\cT_{\Lambda}$ 
is of proper type, it will have $B$ as (smooth) leaf space and the induced integral structure is precisely
$\Lambda$ (cf. Corollary \ref{reduction-free-qHamilt}). Looking at the explicit s-connected integrating groupoid $\Gauge{\cT_\Lambda}{X}$ given by  
(\ref{cG-red2}), we see that:
\begin{enumerate}[(a)]
\item it is s-proper iff $q:X\to B$ is proper;
\item it is compact iff $X$ is compact;
\item it is the canonical integration iff  the $q$-fibers are 1-connected.
\end{enumerate}
Already when $B$ is 1-dimensional, with the standard integral affine structure, produces interesting examples.
For instance, the strong compact type example from \cite{Mar} arises via this procedure with $B=\S^1$. For $B=\R$, using 
maps $q$ with fibers which are not 1-connected may produce examples which are not strong proper but are proper (or $s$-proper if $q$ is proper).
The previous example fits into this scheme, but here is a slightly more general class of examples. 

Start with a symplectic manifold $(S_0,\omega_0)$ and consider the regular Poisson manifold $M= S_{0}\times\R^+$,
whose symplectic leaves are $(S_{0}\times \{t\}, t\omega_0)$, 
Using the geometric interpretation of the monodromy, we find that 
\[ \cN_{\mon}= \Per_{S^2}(\omega_0):=\left\{[\sigma]\in \pi_2(S_0,x_0): \int_\sigma \omega_0\right\}\subset\R, \]
the group of spherical periods of $\omega_0$. Since the holonomy of the foliation is trivial $\cN_{\hol}$ is computed similarly and gives the full group of 
periods $\Per(\omega_0)$. Hence, while the discreteness of $\Per_{S^2}(\omega_0)$ is the obstruction to integrability, the discreteness of $\Per(\omega_0)$ arises as an obstruction to properness. 
This time, this is the only obstruction. This can be seen by producing explicit proper integrations and that is done by realizing $M$ via reduction. Let us assume  
 $\Per(\omega_0)= \Z$, so that one can find a principal $\S^1$-bundle $p: P\to S_0$ whose Chern class is $[\omega_0]$. That means that
we find a connection 1-form $\theta\in \Omega^1(P)$ with $d\theta= p^*\omega_0$. The symplectization of $(P, \theta)$:
\[ X= P\times \R^{+}, \quad \Oga= \d(t\theta),\]
with $\S^1$-acting on the first coordinate and $q:(X, \Oga) \to \R$ the projection, is a Hamiltonian $\cT_{\Z}$-space. Its Poisson reduced space is $X_{\red}= M$ so, by the
general discussion, $M$ is of proper type (and of s-proper type iff $S_0$ is compact).

Here are some concrete examples, with various behavior of $\cN_{\mon}$ and $\cN_{\hol}$:
\begin{enumerate}[(a)]
\item if $(S_0,\omega_0)=(\S^2, a\,\omega_{\S^2})$, then $\cN_\mon=\cN_\hol=a\cdot \Z$;
\item if $(S_0,\omega_0)=(\Sigma_g, b\,\omega_{\Sigma_g})$, then $\cN_\mon=0$ and $\cN_\hol=b\cdot \Z$;
\item if $(S_0,\omega_0)=(\S^2\times\Sigma_g, a\,\omega_{\S^2}\oplus b\,\omega_{\Sigma_g})$, then $\cN_\mon=a\cdot \Z$ and $\cN_\hol=a\cdot \Z+ b\cdot \Z$;
\end{enumerate}
where $a, b\in \R\setminus \{0\}$, and $\omega_{\S^2}$, $\omega_{\Sigma_g}$, are normalized area forms on the sphere and on the closed surface of genus $g>0$. Cases (b) and (c) come
with infinite fundamental groups, hence they produce examples which are not strong proper. 
By the previous discussion,  they are s-proper except when $a/b\notin \Q$. In all the cases one can proceed as in the previous example ($g=1$),
and construct explicit s-connected, s-proper integrations, this time using the Lie theory of $\mathrm{SO}(3)$ (if $g=0$) or of $\mathrm{SL}(2)$ (if $g>1$).

%%%%%%%%%%%%%%%%%%%%%%%%
%%%%%%%%%%%%%%%%%%%%%%%%
%%%%%%%%%%%%%%%%%%%%%%%%
%%%%%%%%%%%%%%%%%%%%%%%%
%%%%%%%%%%%%%%%%%%%%%%%%
%%%%%%%%%%%%%%%%%%%%%%%%
%%%%%%%%%%%%%%%%%%%%%%%%
\section{The linear variation theorem I: 1-connected leaves}
%\section{\underline{The Duistermaat-Heckman linear variation theorem: 1-connected leaves}}
% \section{\underline{Linear variation of the symplectic forms I: 1-connected leaves}}
\label{sec:lin-var-1}
%%%%%%%%%%%%%%%%%%%%%%%%
%%%%%%%%%%%%%%%%%%%%%%%%
%%%%%%%%%%%%%%%%%%%%%%%%
%%%%%%%%%%%%%%%%%%%%%%%%
%%%%%%%%%%%%%%%%%%%%%%%%
%%%%%%%%%%%%%%%%%%%%%%%%
%%%%%%%%%%%%%%%%%%%%%%%%

{
%%%%%%%%%%%%%%%%%%%%%%%%%%%%%%
%%%%%%%%%%%%%%%%%%%%%%%%%%%%%%
%%%%%%%%%%%%%%%%%%%%%%%%%%%%%%
%%%%%%%%%%%%%%%%%%%%%%%%%%%%%%
%%%%%%%%%%%%%%%%%%%%%%%%%%%%%%
%%%%%%%%%%%%%%%%%%%%%%%%%%%%%%
\subsection{The classical  Duistermaat-Heckman Theorem}
\label{sec:DH:classical}
%%%%%%%%%%%%%%%%%%%%%%%%%%%%%%
%%%%%%%%%%%%%%%%%%%%%%%%%%%%%%
%%%%%%%%%%%%%%%%%%%%%%%%%%%%%%
%%%%%%%%%%%%%%%%%%%%%%%%%%%%%%
%%%%%%%%%%%%%%%%%%%%%%%%%%%%%%
%%%%%%%%%%%%%%%%%%%%%%%%%%%%%%

The linear variation in the titles of this section and the next one refers to a \emph{fundamental result} concerning PMCTs,
which is a \emph{generalization of the classical Duistermaat-Heckman Theorem}  \cite{DuHe} on the variation of the cohomology class of the symplectic form of symplectic reduced spaces. Let us recall this result in its simplest form.

Let a torus $\T$ act freely on a symplectic manifold $(S,\omega)$ in a Hamiltonian fashion with moment map $\mu:S\to \tt^*$. Then the symplectic reduced spaces $S_{\xi}=\mu^{-1}(\xi)/\T$ are all smooth symplectic manifolds with reduced symplectic form $\omega_\xi$. They are also diffeomorphic because one has a local model for $S$ around $\mu^{-1}(\xi_0)$ for any value $\xi_0\in\tt^*$ obtained as follows. Choose a connection 1-form $\alpha$ on the principal $\T$- bundle $q:\mu^{-1}(\xi_0)\to S_{\xi_0}$, then a local model for $S$ around $\mu^{-1}(\xi_0)$ is given by the product $\mu^{-1}(\xi_0)\times\tt^*$ furnished with the symplectic form:
\[ \pr_1^*q^*\omega_{\xi_0}+\d\langle \pr_2,\alpha\rangle, \]
where $\omega_{\xi_0}$ is the reduced symplectic form in $S_{\xi_0}$. The group $\T^n$ acts on $\mu^{-1}(\xi_0)\times\tt^*$ by acting on the first factor, and the action is Hamiltonian with moment map the second projection: $\mu=\pr_2:\mu^{-1}(\xi_0)\times\tt^*\to \tt^*$.

This local normal form leads to an identification of the symplectic reduced spaces $S_{\xi}\simeq S_{\xi_0}$, for $\xi$ close to $\xi_0$. Under this identification, the symplectic forms are linearly related:
\[ \omega_{\xi}=\omega_{\xi_0}+\langle F,\xi-\xi_0\rangle, \]
where $F\in\Omega^2(S_{\xi_0},\tt)$ is the curvature 2-form of the connection $\alpha$: 
\[ q^*F=\d\alpha. \] 

This identification of the symplectic reduced spaces depends on choices. However, any two identifications are related by an isotopy of $S_{\xi_0}$, so one can compare the cohomology classes of the symplectic forms, and this leads to:

\begin{theorem}[Duistermaat-Heckman  \cite{DuHe}]
\label{thm:DH:classic}
If a torus $\T$ acts freely on a symplectic manifold $(S,\omega)$ in a Hamiltonian fashion with proper moment map $\mu:S\to \tt^*$ then the cohomology class of the reduced symplectic form varies linearly:
\[ [\omega_{\xi}]=[\omega_{\xi_0}]+\langle c,\xi-\xi_0\rangle, \]
where $c\in H^2(S_{\xi_0})\otimes \tt$ is the Chern class of the principal $\T$-bundle $q:\mu^{-1}(\xi_0)\to S_{\xi_0}$.
\end{theorem}

From this result it follows also an important property of the measures or volume forms associated with the symplectic forms. In order to state it, consider the following volume forms:
\begin{itemize}
\item $\mu_\omega:=\frac{\omega^n}{n!}$, the Liouville form on $S$ ($2n=\dim S$);
\item $\mu_{\DH}^\omega:=\mu_*(\mu_\omega)$, the push-forward measure on $\tt^*$;
\item $\mu_{\Aff}$, the Lebesgue measure on $\tt^*$.
\end{itemize}
Then, we have the following corollary of the theorem above:

\begin{corollary}[\cite{DuHe}]
\label{cor:DH:classic}
For a free Hamiltonian $\T$-space $(S,\omega,\mu)$ with proper moment map, the Duistermaat-Heckman measure $\mu_{\DH}^\omega$ and the Lebesgue measure $\mu_{\Aff}$ are related by:
\[ \mu_{\DH}^\omega=\vol\cdot \mu_{\Aff}, \]
where $\vol:\tt^*\to \R$ is the function which associates to $\xi\in \tt^*$ the symplectic volume $\vol(S_{\xi})$ of the reduced symplectic space. Moreover, this function is a polynomial of degree at most $\frac{1}{2}\dim S_{\xi}=\frac{1}{2}\dim S-\dim \T$.
\end{corollary}

Notice that these results are really about the symplectic (or Poisson) geometry of the Poisson manifold $M=S/\T$, which has symplectic leaves the reduced spaces $S_\xi$ and leaf space the open subset $\mu(S)\subset \tt^*$. In the next sections we will provide remarkable generalizations of these results, valid for any PMCT. Our formulation of these results is made in terms of the \emph{developing map} associated with the integral affine structure on the leaf space, to be studied in this section. Our approach does not rely on a local normal form and hence gives the classical results above an entirely new perspective.
\smallskip 

Throughout this section we fix a Poisson manifold $(M, \pi)$ of s-proper type and an s-connected, s-proper integration $(\G,\Omega)\tto (M,\pi)$. Moreover:
\begin{itemize}
\item {\bf Standing assumption in this section:} The symplectic leaves of  $(M, \pi)$ are 1-connected.
\end{itemize}
 This assumption will be dropped in the next section, where we will consider the general case. The advantage of dealing first 
with 1-connected symplectic leaves is that there are no subtleties arising from the geometry of the leaf space: in this case
$B= M/\cF_{\pi}$ is smooth and there is only one  groupoid integrating $\cF_\pi$, namely the equivalence relation $M\times_B M\subset M\times M$
associated with the submersion $p:M\to B$. Therefore we do not have to worry about the foliation groupoid $\cBG(\cG)$ (cf. Theorem \ref{thm-reg-fol2}) or, equivalently, with the orbifold structure on $B$, which may be present even when $B$ is smooth but the leaves are not 1-connected. 

}

%%%%%%%%%%%%%%%%%%%%%%%%%%%%%%
%%%%%%%%%%%%%%%%%%%%%%%%%%%%%%
%%%%%%%%%%%%%%%%%%%%%%%%%%%%%%
%%%%%%%%%%%%%%%%%%%%%%%%%%%%%%
%%%%%%%%%%%%%%%%%%%%%%%%%%%%%%
%%%%%%%%%%%%%%%%%%%%%%%%%%%%%%
\subsection{The developing map for integral affine structures}
\label{sec:developing:map}
%%%%%%%%%%%%%%%%%%%%%%%%%%%%%%
%%%%%%%%%%%%%%%%%%%%%%%%%%%%%%
%%%%%%%%%%%%%%%%%%%%%%%%%%%%%%
%%%%%%%%%%%%%%%%%%%%%%%%%%%%%%
%%%%%%%%%%%%%%%%%%%%%%%%%%%%%%
%%%%%%%%%%%%%%%%%%%%%%%%%%%%%%
 The linear variation theorem, in one form or another, involves a basic concept of integral affine geometry which
we have not discussed so far. This is done in this section, where we fix an integral affine manifold $(B, \Lambda)$ and we discuss its {\bf developing map} \cite{GHL,Thur}. This is a  local diffeomorphism of integral affine manifolds 
\[ \dev:(\widetilde{B},\widetilde{\Lambda})\to(\R^q,\Z^q)\]
defined on the universal cover $\widetilde{B}$ endowed with the pull-back $\widetilde{\Lambda}$ of $\Lambda$. Let us first recall the standard definition:
\begin{quote}
{\it Fix a point $b_0\in B$ and an integral affine chart $(U_0,\chi_0)$ centered at $b_0$.
For any path $\gamma$ starting at $b_0$, cover it by a finite number of integral affine charts $\chi_i: U_i \rightarrow \chi_i(U_i)\subset \R^q$, $0\leq i \leq r$. Arrange the coordinates charts inductively so that each two consecutive ones match on the intersection (this can be done since the changes or coordinates, being affine, are defined on the entire $\R^q$).  Then $\dev([\gamma])$ is the image of $\gamma(1)$ by the last coordinate chart.}
\end{quote}
If one restricts to loops $\gamma$ and considers the entire change of coordinates between the first and the last chart, one obtains the {\bf integral affine holonomy representation} $h^{\Aff}:\pi_1(B)\to \Aff_\Z(\R^q)$, where $\pi_1(B)= \pi_1(B, b_0)$. Of course, 
\[ h^{\Aff}= (\dev(\gamma), h^{\lin}(\gamma))\]
where $h^{\lin}:\pi_1(B)\to \GL_\Z(\R^q)$ is the {\bf linear holonomy representation} of $(B, \Lambda)$, i.e. the linear holonomy of the flat connection on $B$ induced by $\Lambda$. These representations give rise to the {\bf affine holonomy group} $\Gamma^{\Aff}:=h^{\Aff}(\pi_1(B))\subset \Aff_\Z(\R^q)$ and similarly the {\bf linear holonomy group} $\Gamma^{\lin}$. Note that $\Gamma^{\Aff}$ is an integral affine group as in Example \ref{ex-gen-IAS}, whose linear part is $\Gamma^{\lin}$. The factorization
of the linear holonomy representation:
\[ \xymatrix{ \pi_1(B)\ar[r]^{h^{\Aff}} & \Gamma^{\Aff} \ar[r]^{\pr_{2}} & \Gamma^{\lin}} \]
gives rise to a sequence of covering spaces by integral affine manifolds:
\[ \xymatrix{ \widetilde{B}\ar[r] & B^{\Aff} \ar[r] & B^{\lin} \ar[r] &B} \]
where the middle and the last spaces are called the  {\bf affine and linear holonomy covers}, respectively. They are the smallest
 covers with trivial affine and  linear holonomy, respectively. For instance, in the situation of Example \ref{ex-gen-IAS}, $\widetilde{B}=  B^{\Aff}= \R^q$  and $B^{\lin}= \R^q/\Gamma^{\mathrm{tr}}$. 
 
%%%MARIUS: I REMOVE THIS COMMENT BECAUSE IT SHOWS UP AS A REMARK TWO PAGES LATER
% The image of $\dev:\widetilde{B}\to \R^q$ is an open subset which is invariant under the affine action of $\Gamma^{\Aff}$. In
% fact,  $(B, \Lambda)$ is complete
% {
% (see Conjecture \ref{Markus-conj})
% } %\end{blue}
% exactly when $\dev:\widetilde{B}\to \R^q$ is a diffeomorphism. Completeness can also be characterized infinitesimally as geodesic completeness of the associated flat connection.

The standard definitions given above for the developing map and the affine holonomy have a drawback: they both depend on a choice of a base point $b_0\in B$ and an integral affine chart around $b_0$. However, it is possible to give a more intrinsic definition, in the spirit of the present work, using the language of groupoids, as we now explain. This approach will turn out to be very useful in the sequel. We denote by $\GL_{\Lambda^\vee}(TB)\tto B$ (respectively, $\Aff_{\Lambda^\vee}(TB)\tto B$) the Lie groupoid whose arrows are the integral linear (respectively, integral affine) isomorphisms between the fibers of $TB$. Our convention is that an arrow $\phi:T_x B \to T_yB$ has source $y$ and target $x$. Notice that:
\begin{enumerate}[(i)]
\item Parallel transport for the canonical flat connection $\nabla$ of $(B,\Lambda)$ defines the linear holonomy, which can be seen as morphism of Lie groupoids: 
\begin{equation}
\label{eq-the-linear-action} 
h^{\lin}: \Pi_1(B)\rightarrow \GL_{\Lambda^\vee}(TB),\quad h^{\lin}([\gamma]): T_{\gamma(1)}B\to T_{\gamma(0)}B.
\end{equation}
%With our conventions, $h^{\lin}$ associates to a path homotopy class $[\gamma]$ a map
%\[ h^{\lin}([\gamma]): T_{\gamma(1)}B\to T_{\gamma(0)}B.\]
\item The connection $\nabla$ is torsion free and this can be interpreted as saying that the identity map
\[ \textrm{Id}: TB\to TB\]
is a 1-cocycle on the Lie algebroid $TB$ with coefficients in the representation $TB$. Hence, it integrates to a groupoid 1-cocycle in $\Pi_1(B)$ with values in $TB$:
% \begin{equation}
% \label{eq-the-developing-cocycle} 
\[
\dev: \Pi_1(B) \to TB, \ \ [\gamma]\mapsto \dev([\gamma])\in T_{\gamma(0)}B.
\]
% \end{equation}
The general formula for integrating algebroid 1-cocycles gives the expression:
\[ \dev([\gamma])= \int_{0}^{1} h^{\lin}(\gamma_{\epsilon})(\dot{\gamma}(\epsilon)) \d\epsilon, \]
and the cocycle condition means that for any two composable arrows in $\Pi_1(B)$:
\[ \dev([\gamma] \circ [\tau])= \dev([\tau])+ h^{\lin}([\tau])(\dev([\gamma])).\]
\end{enumerate}

These two pieces of structure can be organized together into an integral affine % (right) 
action of $\Pi_1(B)$ on $(TB,\Lambda^\vee)$: any $[\gamma]\in \Pi_1(B)$ induces an affine transformation
\[
T_{\gamma(1)}B \rightarrow T_{\gamma(0)}B, \ v\mapsto v\cdot [\gamma]= \dev([\gamma])+ h^{\lin}([\gamma])(v).
\]
Hence, one obtains a morphism of Lie groupoids 
\begin{equation}
\label{eq-the-affine-action} 
h^{\Aff}: \Pi_1(B)\rightarrow \Aff_{\Lambda^\vee}(TB).
\end{equation}

In order to recover the classical/based affine holonomy representation and developing map, one restricts 
to the isotropy group of $\Pi_1(B)$ at $b_0$, which is $\pi_1(B, b_0)$, and to the s-fiber above $b_0$, which is the model for the universal cover using paths starting at $b_0$:
$\widetilde{B}= s^{-1}(b_0).$ One obtains the linear and affine representations at $b_0$, 
\[ h^{\lin}|_{b_0}: \pi_1(B, b_0) \to\GL_{\Lambda_{b}^\vee}(T_{b_0}B) ,\quad h^{\Aff}|_{b_0}: \pi_1(B, b_0)\to \Aff_{\Lambda_{b_0}^\vee}(T_{b_0}B),\]
and the developing map at $b_0$, 
\[ \dev|_{b_0}:\widetilde{B}\to T_{b_0}B. \]
Finally, a choice of a basis $\mathfrak{b}_\Lambda$ for $\Lambda_{b_0}$ (which is equivalent to a choice of an integral affine chart centered at $b_0$) leads to an identification $(T_{b_0}B, \Lambda_{b_0}^\vee)\cong (\R^q,\Z^q)$, and we recover the original notions. Note also that, since $\Pi_1(B)$ is transitive, no information is lost by restricting at $b_0$.

% In order to recover the classical/based affine holonomy representation and developing map, one observes that the groupoid $\Pi_1(B)$ is transitive. Hence, $\Pi_1(B)$ is Morita equivalent to its isotropy group at $b$, i.e., to $\pi_1(B, b)$. The Morita equivalence is implemented (as for any transitive groupoid) by the s-fiber of $\Pi_1(B)$,  which is the model for the universal cover using paths starting at $b$: $\widetilde{B}= s^{-1}(b).$  As usual, the groupoid $\Pi_1(B)$ can be recovered from $\widetilde{B}$ and $\pi_1(B, b)$ as the quotient $\widetilde{B}\times_{\pi_1(B,b)}\widetilde{B}$. Hence, by restricting the groupoid actions  $h^{\lin}: \Pi_1(B)\rightarrow \GL_{\Lambda^\vee}(TB)$ and $h^{\Aff}: \Pi_1(B)\rightarrow \Aff_{\Lambda^\vee}(TB)$, as well as the 1-cocyle $\dev: \Pi_1(B) \to TB$, we obtain linear and affine representations \[ h^{\lin}|_b: \pi_1(B, b) \to\GL_{\Lambda_{b}^\vee}(T_{b}B) ,\quad h^{\Aff}|_b: \pi_1(B, b)\to \Aff_{\Lambda_{b}^\vee}(T_{b}B),\] and a map: \[ \dev|_b:\widetilde{B}\to T_bB. \] Finally, a choice of a basis $\mathfrak{b}_\Lambda$ for $\Lambda_b$ (which is equivalent to a choice of an integral affine chart centered at $b$) leads to an identification $(T_{b}B, \Lambda_{b}^\vee)\cong (\R^q,\Z^q)$, and we recover the original notion.

The description of $\dev:\widetilde{B}\to \R^q$ as a 1-cocycle for $\pi_1(B,b_0)$
with values in $\R^q$ appears in the work of Matsusima \cite{Mt}, who attributes the idea to Koszul.

\begin{remark}\label{rk-Markus-again} The image $\Omega\subset \R^q$ of $\dev:\widetilde{B}\to \R^q$ is an open subset which is invariant under the affine action of $\Gamma^{\Aff}$. When the action is free and proper, the induced map $B\to \Omega/\Gamma^{\Aff}$ will be 
a local diffeomorphism between integral affine manifolds. The conclusion of the Markus conjecture (Conjecture \ref{Markus-conj})
is equivalent to saying that $\Omega= \R^q$ and that the last map is a diffeomorphism. In turn, this is also equivalent to the condition that 
the linear connection is geodesically complete. 
\end{remark}

%%%%%%%%%%%%%%%%%%%%%%%%%%%%%%
%%%%%%%%%%%%%%%%%%%%%%%%%%%%%%
%%%%%%%%%%%%%%%%%%%%%%%%%%%%%%
%%%%%%%%%%%%%%%%%%%%%%%%%%%%%%
%%%%%%%%%%%%%%%%%%%%%%%%%%%%%%
%%%%%%%%%%%%%%%%%%%%%%%%%%%%%%
\subsection{The linear variation}
\label{sec:linear-var} 
%%%%%%%%%%%%%%%%%%%%%%%%%%%%%%
%%%%%%%%%%%%%%%%%%%%%%%%%%%%%%
%%%%%%%%%%%%%%%%%%%%%%%%%%%%%%
%%%%%%%%%%%%%%%%%%%%%%%%%%%%%%
%%%%%%%%%%%%%%%%%%%%%%%%%%%%%%
%%%%%%%%%%%%%%%%%%%%%%%%%%%%%%
We now return to Poisson geometry and define the linear variation of the leafwise symplectic cohomology class.

Under our standing assumption, the transverse integral affine structure 
$\Lambda_{\cG}\subset\nu^*(\cF_\pi)$ defined by $\G$ (see Theorem \ref{thm-lattice-proper-case}) is the pull-back of an integral 
affine structure on the manifold $B= M/\cF_{\pi}$. In this section we will only use the structure on $B$, for which
we use the same notation $\Lambda_{\cG}$.
% which will be also denoted by $\Lambda_{\cG}$. 

On the other hand, we can define a vector bundle $\cH\to B$ whose fibers are the degree 2 cohomology of the symplectic leaves:
\[ \cH_b:= H^{2}(S_{b})\]
There are two things to notice about the vector bundle $\cH\to B$:
\begin{enumerate}[(i)]
\item the integral cohomology yields a structure of an integral vector bundle $(\cH,\cH_\Z)$;
\item the leafwise symplectic form yields a canonical section $\varpi\in \Gamma(B, \cH)$:
\begin{equation}\label{omega-as-section} 
b\mapsto \varpi_b:=[\omega_{p^{-1}(b)}]\in H^2(S_{b}) .
\end{equation}
\end{enumerate}

There is a rich interplay between the integral affine structure $\Lambda_\cG$ on $B$ and the integral
vector bundle $(\cH,\cH_\Z)$. To make this precise note that, by (i) above, the bundle $\cH\to B$ has a canonical 
flat connection $\nabla$, the so-called {\bf Gauss-Manin connection}. Our first formulation of the variation of the symplectic form is:

\begin{definition}
The {\bf linear variation of $\varpi$} is the bundle map:
\[ \Ilin: TB \to \cH, \ v\mapsto  \Ilin(v):= \nabla_{v}\varpi.\]
 Its image is called the {\bf linear variation bundle of $\varpi$}, denoted:
\[ \Vspace^{\lin}:= \Ilin(TB)\subset \HH, \]
and it has an {\bf integral part}, denoted:
\[ \VspaceZ^{\lin}:= \Ilin(\Lambda^{\vee}_{\cG}), \] 
where $\Lambda^{\vee}_{\cG}\subset TB$ is the lattice induced by $(\cG,\Omega)$.
\end{definition}

Note that the linear variation $\varpi$, as well as its image $\Vspace^{\lin}$, does not depend on the specific integration $(\cG,\Omega)$, while the integral part $\VspaceZ^{\lin}$ does. 

We will see that we have always $\VspaceZ^{\lin}\subset \HH_{\Z}$.  A key ingredient in the proof is the following: since $\cF_\pi$ is regular, we can choose a splitting for all leaves at the same time, i.e., a bundle map $\tau:\cF_\pi\to T^*M$ which is a splitting of $\pi^\sharp:T^*M\to \cF_\pi$. The curvature $\Omega_\tau$ of this splitting gives a 1-form $[\Omega_\tau]\in\Omega^1(B;\cH)$ by setting:
\[ [\Omega_\tau]:TB\to\cH,\quad v_b\mapsto [\langle \Omega_\tau,v_b\rangle]\in H^2(S_b). \]
 Here we use the identification $T_{b}B\cong \nu_{x}(\cF_\pi)$, so $v_b$ can be thought of 
as a constant section of $\nu(S_b)$. Note that $[\Omega_\tau]$ is independent of the choice of splitting $\tau$ and: % we have:

\begin{proposition}
\label{lin-var-as-var-sympl-areas}
If $(M, \pi)$ is regular with compact, 1-connected, leaves, then:
\begin{equation}
\label{eq:linear:variation}
\nabla \varpi=[\Omega_\tau]. 
\end{equation}
In particular, if $v\in T_{b}B$ and $\sigma: (\S^2,p_N)\to (S_b,x)$, then:
\begin{equation}
\label{eq:int:linear:variation}
\int_{\sigma} \Ilin(v)= \langle {\partial_\mon}([\sigma]),v\rangle.
\end{equation}
\end{proposition}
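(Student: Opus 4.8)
The plan is to reduce the cohomological identity \eqref{eq:linear:variation} to its pairing against $2$-spheres and to recognize both sides as the variation of leafwise symplectic areas, so that the formula follows from the geometric descriptions of the Gauss--Manin connection and of the monodromy map already recalled. First I would record a few preliminaries: since $S_b$ is $1$-connected, the flat bundle $\nu^*(S_b)$ for the Bott connection is trivial, so any $v\in T_bB\cong\nu_x(S_b)$ extends uniquely to a parallel section $\widetilde v$ of $\nu(S_b)$; because $\Omega_\tau$ is closed with coefficients in this flat bundle, $\langle\Omega_\tau,\widetilde v\rangle\in\Omega^2(S_b)$ is a closed scalar $2$-form whose class is $[\Omega_\tau](v)\in\cH_b=H^2(S_b)$. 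Again by $1$-connectedness, a class in $H^2(S_b;\R)$ is determined by its pairings with $H_2(S_b;\Z)\cong\pi_2(S_b,x)$; hence, writing $\Ilin(v)=\nabla_v\varpi$, it suffices to prove for every such $v$ and every $[\sigma]\in\pi_2(S_b,x)$ that
\[ \langle\nabla_v\varpi,[\sigma]\rangle \;=\; \langle\partial_{\mon,x}[\sigma],v\rangle . \]
This is exactly \eqref{eq:int:linear:variation}, and the passage from it to \eqref{eq:linear:variation} only uses the algebroid description $\partial_{\mon,x}[\sigma]=\int_\sigma\Omega_\tau$ together with $\langle\int_\sigma\Omega_\tau,v\rangle=\int_\sigma\langle\Omega_\tau,\widetilde v\rangle$, valid because $\widetilde v$ is parallel.

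To evaluate the left-hand side geometrically, choose a curve $\gamma$ in $B$ with $\gamma(0)=b$ and $\dot\gamma(0)=v$ under $T_bB\cong\nu_x(S_b)$. As the leaves are compact, the foliation is locally a fibration, so I may pick a smooth family of diffeomorphisms $\Psi_t\colon S_b\to S_{\gamma(t)}$ with $\Psi_0=\mathrm{id}$; any two such families are homotopic over a small interval and therefore act identically on integral homology, so $\Psi_t$ realizes the Gauss--Manin parallel transport on $\cH_\Z$. Consequently
\[ \langle\nabla_v\varpi,[\sigma]\rangle = \left.\frac{\d}{\d t}\right|_{t=0}\langle\varpi_{\gamma(t)},\Psi_{t*}[\sigma]\rangle = \left.\frac{\d}{\d t}\right|_{t=0}\int_{\Psi_t\circ\sigma}\omega_{S_{\gamma(t)}} . \]
Setting $\sigma_t:=\Psi_t\circ\sigma$, this is a foliated family of spheres with $\sigma_0=\sigma$ and transverse variation $[\dot x_t]|_{t=0}=v$, where $x_t=\Psi_t(x)$; hence the left-hand side equals the variation of symplectic area $\left.\frac{\d}{\d t}\right|_{t=0}\int_{\sigma_t}\omega_{S_{x_t}}$. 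Invoking the geometric description of the monodromy map \eqref{eq:mon-as-var} from \cite{CF2}, which identifies precisely this variation with $\langle\partial_{\mon,x}[\sigma],v\rangle$ and is independent of the chosen foliated family, closes the argument.

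The step I expect to require the most care is the compatibility bookkeeping in the middle display: that the naive integral $\int_{\Psi_t\circ\sigma}\omega_{S_{\gamma(t)}}$ genuinely computes the pairing $\langle\varpi_{\gamma(t)},\Psi_{t*}[\sigma]\rangle$ even though $[\omega_{S_b}]$ is not an integral class --- this works because $[\sigma]$ \emph{is} integral, so its Gauss--Manin parallel transport is represented by the honest cycle $\Psi_t\circ\sigma$ --- and that the family $\Psi_t$ obtained from the local triviality of $\cF_\pi$ is an admissible choice in \eqref{eq:mon-as-var}. Neither point is deep, but both require stating the compatibility of the duality pairing $\cH\times\cH^*\to\R$ with the (dual) Gauss--Manin connection precisely, which is the part I would write out carefully.
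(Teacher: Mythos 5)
Your proof is correct, but it follows a genuinely different route from the one in the paper. You dualize: using that $S_b$ is compact and $1$-connected (so $H^2(S_b;\R)\cong\mathrm{Hom}(\pi_2(S_b,x),\R)$ by Hurewicz and universal coefficients), you reduce the identity $\nabla\varpi=[\Omega_\tau]$ to its pairings against spherical classes, identify $\langle\nabla_v\varpi,[\sigma]\rangle$ with the derivative of leafwise symplectic areas via an Ehresmann-type realization $\Psi_t$ of the Gauss--Manin transport, and then quote the geometric description \eqref{eq:mon-as-var} of $\partial_{\mon}$ from \cite{CF2} to match it with $\langle\partial_{\mon,x}[\sigma],v\rangle$. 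The paper instead works entirely at the level of differential forms: it takes the splitting $\tau(X)=i_X\widetilde\omega$ associated to a complement $D$ of $\cF_\pi$, shows that the Gauss--Manin derivative is computed by $\nabla_v\bar\eta=[(\Lie_V\eta)|_{S_b}]$ for the horizontal lift $V\in\Gamma(D)$ of $v$, and then verifies the pointwise identity $\Lie_V\widetilde\omega=\Omega_\tau(V)$ by a short computation with the bracket $[\cdot,\cdot]_\pi$ on $1$-forms. Your argument is shorter because it outsources the analytic content to \eqref{eq:mon-as-var} (which the paper's computation essentially reproves infinitesimally), and it makes the equivalence of \eqref{eq:linear:variation} and \eqref{eq:int:linear:variation} transparent; the cost is that it leans on $1$-connectedness in an essential way (to reduce a cohomology class to its spherical periods), whereas the paper's form-level identity holds leafwise without that reduction and is what gets recycled verbatim in the orbifold case (equation \eqref{lin-var-as-var-sympl-areas2} in Theorem \ref{gen-I-lin-iso2}, where one must pair against all of $H_2(\cBG(x,-),\Z)$ rather than just $\pi_2$) and in the twisted Dirac setting of Theorem \ref{thm:twisted:variation}. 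Your two flagged ``careful points'' are indeed the only places needing precision, and both go through: $\Psi_{t*}[\sigma]$ is an honest integral cycle so the pairing is a genuine integral, and any $\Psi_t$ coming from the Reeb-stability product chart is an admissible foliated family in \eqref{eq:mon-as-var} with transverse variation $v$.
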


\begin{proof} 
Clearly, the integral formula \eqref{eq:int:linear:variation} follows from \eqref{eq:linear:variation} and the definition of the monodromy in terms of the curvature.

To prove \eqref{eq:linear:variation}, let us a chose a distribution
$D\subset TM$ complementary to $\cF_\pi$, so that $D\cong \nu(\cF_\pi)$. 
This gives rise to a unique extension $\widetilde{\omega}\in\Omega^2(M)$ of the leafwise symplectic form satisfying $i_V\widetilde{\omega}=0$, for any $V\in D$. This extension, in turn, gives rise to a splitting $\tau: \cF_\pi\to T^*M$, $X\mapsto i_X\widetilde{\omega}$, with a curvature 2-form $\Omega_\tau$. We claim that if $V\in\X(D)$ is any vector field defined in a neighborhood of $S_b$ whose restriction to $S_b$ projects to $v_b\in TB$, then:
\begin{enumerate}[(a)]
\item for any section $\bar\eta\in\Gamma(\cH)$ represented by $\eta\in\Omega^2(M)$ with $i_V\eta=0$ one has:
\[ \nabla_v \bar\eta=[(\Lie_V\eta)|_{S_b}]; \]
% where $\eta\in\Omega^2(M)$ is a 2-form such that $i_V\eta=0$ and $\bar\eta=[\eta]$.
\item $\Lie_V\widetilde{\omega}=\Omega_\tau(V)$.
\end{enumerate}
These will imply \eqref{eq:linear:variation}.
% \newpage

Item (a) follows from the fact that the Gauss-Manin connection can be defined by lifting a vector field $X$ on $B$ to $M$ via a distribution $D$, since the flow of the horizontal lift gives a 1-parameter group of diffeomorphisms of the fibers, that preserves the integral cohomology. 

In order to prove item (b), we see that, for any $X,Y\in \cF_\pi$, the definition of the curvature 2-form gives:
\begin{align*}
\Omega_\tau(X,Y)(V)
&=\langle [\tau(X),\tau(Y)]_\pi-\tau([X,Y]),V\rangle \\
&=\langle [i_X\widetilde{\omega},i_Y\widetilde{\omega}]_\pi,V\rangle \\
&=-(\Lie_V\pi)(i_X\widetilde{\omega},i_Y\widetilde{\omega})
=(\Lie_V\widetilde{\omega})(X,Y),
\end{align*}
which shows that (b) holds. Here, we have used first that
\[ i_V[\a,\b]_\pi=i_{\pi^\sharp(\a)}\d i_V\b-i_{\pi^\sharp(\b)}\d i_V\a-(\Lie_V\pi)(\a,\b), \quad (\a,\b\in\Omega^2(M))\] 
together with $i_V\widetilde{\omega}=0$ and $\pi(i_X\widetilde{\omega},i_Y\widetilde{\omega})=-\widetilde{\omega}(X,Y)$, which yields:
\[ (\Lie_V\pi)(i_X\widetilde{\omega},i_Y\widetilde{\omega})=-(\Lie_V\widetilde{\omega})(X,Y). \]
\end{proof}

In what follows we make use of the following terminology related to integral vector bundles $(E, E_{\Z})$: a {\bf weak integral sub-bundle} of $E$ is an integral vector bundle $(F, F_{\Z})$ for which $F$ is a vector sub-bundle and $F_{\Z}\subset E_{\Z}$; it is called an  {\bf integral vector sub-bundle} if $F_{\Z}= F\cap E_{\Z}$. 

\begin{proposition}
\label{I-lin-iso} 
For any Poisson manifold $(M, \pi)$ with 1-connected leaves and an $s$-connected,  $s$-proper integration $\cG$, $\Ilin$  is a $\Pi_1(B)$-equivariant morphism of integral vector bundles,
\[ \Ilin: (TB, \Lambda^{\vee}_{\cG})\to (\HH, \HH_{\Z}).\]
Moreover, $(\Vspace^{\lin}, \VspaceZ^{\lin})$ is a weak integral affine vector sub-bundle of $(\HH, \HH_{\Z})$. In the strong s-proper case it is an integral affine sub-bundle.
\end{proposition}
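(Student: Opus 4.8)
The whole statement rests on Proposition \ref{lin-var-as-var-sympl-areas}: under the standing assumption the leaves are compact and $1$-connected, so that proposition gives $\Ilin(v_b)=\nabla_{v_b}\varpi=[\langle\Omega_\tau,v_b\rangle]\in H^2(S_b)$, where $v_b\in T_bB$ is regarded, via $T_bB\cong\nu_x(S_b)$, as a parallel section of the trivial flat bundle $\nu(S_b)$; equivalently, $\Ilin$ \emph{is} the $\cH$-valued $1$-form $[\Omega_\tau]\in\Omega^1(B;\cH)$. So the proof reduces to analysing this single form, and the plan is: (1) prove the integrality $\Ilin(\Lambda^\vee_{\cG})\subset\cH_\Z$; (2) deduce $\Pi_1(B)$-equivariance essentially for free; (3) read off that $(\Vspace^{\lin},\VspaceZ^{\lin})$ is a weak integral sub-bundle; (4) treat the strong $s$-proper case.

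\emph{Step 1: integrality.} I would invoke the Chern-class computation from the proof of Theorem \ref{ext-mon-crit}. Under our standing assumption the foliation groupoid $\cBG(\cG)$ is the submersion groupoid $M\times_B M$, so its $s$-fibre over $x$ is canonically $S_b$, and $\cG(x,-)\to S_b$ is a principal bundle with structure torus $\cT_x=\tt_x/\Lambda_{\cG,x}$, where $\tt_x=\nu_x^*(S_b)$, whose Chern class equals $[\Omega_\tau]\in H^2(S_b;\tt_x)$. Choose a $\Z$-basis $\lambda_1,\dots,\lambda_q$ of $\Lambda_{\cG,x}$ and write $[\Omega_\tau]=\sum_j a_j\otimes\lambda_j$ with $a_j\in H^2(S_b;\R)$. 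The integrality of the Chern class --- its pairing with $H_2(S_b;\Z)$ lies in $\Lambda_{\cG,x}$ --- forces every $a_j$ to be integral. For $v$ in the dual lattice $\Lambda^\vee_{\cG,b}\subset T_bB$, $\Ilin(v)=\langle[\Omega_\tau],v\rangle$ is then an integer combination of the $a_j$, hence lies in $\cH_\Z|_b$. This gives $\VspaceZ^{\lin}=\Ilin(\Lambda^\vee_{\cG})\subset\cH_\Z$, i.e. $\Ilin\colon(TB,\Lambda^\vee_{\cG})\to(\cH,\cH_\Z)$ is a morphism of integral vector bundles.

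\emph{Steps 2 and 3.} Equivariance is now automatic. Locally the smooth lattice $\Lambda^\vee_{\cG}$ has a smooth flat frame $\lambda_1,\dots,\lambda_q$ (local sections of $\Lambda^\vee_{\cG}$ are exactly the flat local sections of $TB$ for the canonical affine connection), and each $\Ilin(\lambda_i)$ is a \emph{smooth} section of $\cH$ with values in the discrete lattice $\cH_\Z$, hence locally constant in a flat frame of $\cH_\Z$ and so flat for the Gauss--Manin connection $\nabla$ (which is by definition the flat connection of $(\cH,\cH_\Z)$). Since the $\lambda_i$ span $TB$, $\Ilin$ carries flat sections to flat sections, i.e. it is parallel for the induced connection on $\operatorname{Hom}(TB,\cH)$, equivalently $\Pi_1(B)$-equivariant. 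Consequently, in local flat trivialisations of both bundles $\Ilin$ is a constant matrix, so $\Vspace^{\lin}=\Ilin(TB)$ has locally constant rank and is a smooth flat vector sub-bundle of $\cH$; moreover $\VspaceZ^{\lin}|_b=\Ilin(\Lambda^\vee_{\cG,b})$ spans $\Vspace^{\lin}|_b$ over $\R$ (image of a full-rank lattice under the surjection $T_bB\twoheadrightarrow\Vspace^{\lin}|_b$) while lying in the discrete set $\cH_\Z|_b\cap\Vspace^{\lin}|_b$, hence is a lattice in $\Vspace^{\lin}|_b$, varying smoothly since generated by the $\Ilin(\lambda_i)$. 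This exhibits $(\Vspace^{\lin},\VspaceZ^{\lin})$ as a weak integral (affine) sub-bundle of $(\cH,\cH_\Z)$.

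\emph{Step 4 and the main obstacle.} In the strong $s$-proper case I would take $\cG=\Sigma(M,\pi)$, so $\Lambda_{\cG}=\cN_\mon$ by Theorem \ref{thm-lattice-strong-case}(ii), and one must upgrade $\VspaceZ^{\lin}\subseteq\Vspace^{\lin}\cap\cH_\Z$ to an equality, i.e. show $\VspaceZ^{\lin}$ is saturated in $\cH_\Z$. With the notation of Step 1, and using Hurewicz ($S_b$ being $1$-connected), $\cN_\mon|_x=\langle[\Omega_\tau],H_2(S_b;\Z)\rangle=\{\sum_j\langle a_j,\sigma\rangle\lambda_j:\sigma\in H_2(S_b;\Z)\}$, so $\cN_\mon|_x=\Lambda_{\cG,x}=\bigoplus_j\Z\lambda_j$ says precisely that $\phi\colon H_2(S_b;\Z)\to\Z^q$, $\sigma\mapsto(\langle a_j,\sigma\rangle)_j$, is surjective. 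Since $S_b$ is $1$-connected, $H^2(S_b;\Z)$ is torsion-free and $\cong\operatorname{Hom}(H_2(S_b;\Z),\Z)$; dualising $0\to\ker\phi\to H_2(S_b;\Z)\xrightarrow{\phi}\Z^q\to0$ shows $\phi^*$ embeds $\Z^q\cong\VspaceZ^{\lin}|_b=\sum_j\Z a_j$ as a \emph{direct summand} of $\cH_\Z|_b$ (its cokernel $\operatorname{Hom}(\ker\phi,\Z)$ being free), whence $\Vspace^{\lin}|_b\cap\cH_\Z|_b=\VspaceZ^{\lin}|_b$; in passing, $\phi^*$ injective forces the $a_j$ independent, so $\Ilin$ is fibrewise injective. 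The expected main obstacle is Step 1 --- correctly identifying $\nabla\varpi$ with the Chern class of the isotropy torus bundle and transposing its integrality to the dual lattice; granting that together with Proposition \ref{lin-var-as-var-sympl-areas}, everything else is bookkeeping about smooth lattices and flat connections.
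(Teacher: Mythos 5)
Your proof is correct and follows the same skeleton as the paper's: reduce everything to $\nabla\varpi=[\Omega_\tau]$ via Proposition \ref{lin-var-as-var-sympl-areas}, prove the integrality $\Ilin(\Lambda^{\vee}_{\cG})\subset\HH_{\Z}$, deduce $\Pi_1(B)$-equivariance from the fact that smooth lattice-valued sections are flat, and obtain the lattice property of $\VspaceZ^{\lin}$ from discreteness plus spanning. The two places where you diverge are equivalent repackagings of the paper's inputs rather than a different route. For the integrality step the paper simply combines the inclusion $\cN_\mon\subset\Lambda_{\cG}$ from Theorem \ref{thm-lattice-strong-case}(i) with formula \eqref{eq:int:linear:variation}: for $v\in\Lambda^{\vee}_{\cG}$ one gets $\int_\sigma\Ilin(v)=\langle\partial_{\mon}[\sigma],v\rangle\in\Z$ for every sphere, and Hurewicz plus universal coefficients on the $1$-connected leaf give $\Ilin(v)\in\HH_{\Z}$; your Chern-class computation re-derives precisely the content of that inclusion, since the integrality of $c_1$ of the isotropy torus bundle is exactly how Theorem \ref{ext-mon-crit} establishes $\cN_{\cE}\subset\Lambda_{\cG}$. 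For the strong $s$-proper case the paper just observes that with $\Lambda_{\cG}=\cN_\mon$ the single one-way implication in that chain reverses, yielding $(\Ilin)^{-1}(\HH_{\Z})=\Lambda^{\vee}_{\cG}$ and hence $\Vspace^{\lin}\cap\HH_{\Z}=\VspaceZ^{\lin}$ in one line; your dualization of $0\to\ker\phi\to H_2(S_b;\Z)\to\Z^q\to 0$ reaches the same saturation and, as a bonus, exhibits $\VspaceZ^{\lin}|_b$ as a direct summand of $\HH_{\Z}|_b$ and re-proves the fibrewise injectivity of $\Ilin$. You were also right to flag that the saturation statement requires $\Lambda_{\cG}=\cN_\mon$, i.e. effectively $\cG=\Sigma(M,\pi)$; the paper makes the same implicit choice, as the remark immediately following the proposition confirms.
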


\begin{proof}
We first show that $\Ilin(\Lambda^{\vee}_{\cG})\subset \HH_{\Z}$. For this, fix $b\in B$, let $v\in T_bB$ and choose $x\in p^{-1}(b)$. Using that ${\cN_\mon}\subset \Lambda_\cG$ and Proposition \ref{lin-var-as-var-sympl-areas}, we see that: 
\begin{align*}
v\in\Lambda^{\vee}_{\cG}\subset T_{p(x)}B \quad &\Longleftrightarrow \quad \lambda(v)\in \Z, \  \forall \ \lambda\in \Lambda_{\cG}\\
& \, \Longrightarrow \quad \partial_x(\sigma)(v)\in \Z ,\ \forall\ \sigma\in \pi_2(S,x) \\ 
&  \Longleftrightarrow  \quad \int_{\sigma}\Ilin(v)\in \Z ,\ \forall\ \sigma\in \pi_2(S,x) \\ 
& \Longleftrightarrow \quad \Ilin(v)\in H^2(S,\Z)
\end{align*}
where, for the last implication, we used that $S$ is simply connected.
This proves that $\Ilin(\Lambda^{\vee}_{\cG})\subset \HH_{\Z}$. Since the actions of $\Pi_1(B)$ on $TB$ and $\cH$ are by parallel transport relative to the flat connections 
determined by $\Lambda_{\cG}$ and $\cH_\Z$, we also obtain that $\Ilin$ is $\Pi_1(B)$-equivariant.

Next we prove that $\VspaceZ^{\lin}$ is a lattice in $\Vspace^{\lin}$. It is discrete since it sits inside $\HH_{\Z}$, hence it suffices to remark that $\Vspace^{\lin}/\VspaceZ^{\lin}$ is compact. But this follows from the fact that $\Lambda_{\cG}^{\vee}$ is a lattice in $TB$ and we have a surjective map:
\[ \Ilin: TB/\Lambda_{\cG}^{\vee}\to  \Vspace^{\lin}/\VspaceZ^{\lin}.\]

Finally, in the strong s-proper case, the only implication above becomes an equivalence, and we obtain that $\VspaceZ^{\lin}=\HH_{\Z}\cap \Vspace^{\lin}$.
\end{proof}

The previous proposition allows one to identify certain ``building blocks'' sitting inside $(M, \pi)$. The extreme cases follow easily from the previous two propositions:

\begin{corollary}
\label{cor-lin-iso-l}  
For any s-proper Poisson manifold $(M, \pi)$ with simply connected leaves one has:
\begin{enumerate}
\item[(i)] {\bf Zero-variation:} $\Ilin= 0$ iff $p:(M,\pi)\to B$ is a symplectic fibration;
\item[(ii)] {\bf Full-variation:} $\Ilin$ is injective iff $(M,\pi)$ is strong s-proper.
\end{enumerate}
\end{corollary}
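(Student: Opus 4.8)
The plan is to reduce both statements to one pointwise computation of $\Ker(\Ilin)$ and then feed the answer into the monodromy / integral‑affine machinery already in place. Fix $x\in M$, put $b=p(x)$, $S=S_b$. Since $S$ is $1$-connected, Hurewicz identifies $\pi_2(S,x)\otimes\R$ with $H_2(S;\R)$, so a class in $H^2(S)$ is zero iff it integrates to zero over every spherical class. Combining this with the integral formula $\int_\sigma\Ilin(v)=\langle\partial_{\mon}([\sigma]),v\rangle$ of Proposition~\ref{lin-var-as-var-sympl-areas}, one gets
\[ \Ker\bigl(\Ilin|_b\bigr)=\mathrm{Ann}\bigl({\cN_{\mon}}|_x\bigr)\subseteq T_bB, \]
the annihilator of the monodromy group inside $\nu_x^*(\cF_\pi)\cong T_b^*B$. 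Everything else is a translation of properties of $\Ilin$ into properties of $\cN_{\mon}$.

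For (i): the display shows $\Ilin\equiv 0$ iff $\cN_{\mon}=0$ iff (again Proposition~\ref{lin-var-as-var-sympl-areas}) $\nabla\varpi=0$, i.e. the section $b\mapsto[\omega_{S_b}]$ is flat for the Gauss--Manin connection. It then remains to identify this with ``$p$ is a symplectic fibration''. Using s-properness and regularity, $p\colon M\to B$ is a proper submersion with $1$-connected fibers, hence a locally trivial bundle; in a symplectic fibration the leafwise form is the fixed model form in every chart, so $\varpi$ is locally constant --- that is the easy direction. For the converse I would fix a contractible chart $U\ni b_0$, use parallel transport of an Ehresmann connection to trivialize $p^{-1}(U)$ over $S_{b_0}$, note that flatness of $\varpi$ makes the resulting family of fiber forms cohomologous to $\omega_{S_{b_0}}$, and apply Moser's argument in families (after shrinking $U$) to correct the trivialization so it carries $\omega_{\cF_\pi}$ to $\pr^*\omega_{S_{b_0}}$; transition maps of such charts are fiberwise symplectomorphisms.

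For (ii): the same display shows $\Ilin$ is injective iff ${\cN_{\mon}}|_x$ spans $\nu_x^*(\cF_\pi)$ for all $x$, i.e. iff $\cN_{\mon}$ has full rank $q=\dim B$ everywhere (it sits inside the full‑rank lattice $\Lambda_{\cG}$ of any s-proper integration, by Theorem~\ref{thm-lattice-strong-case}(i)). If $(M,\pi)$ is strong s-proper, Corollary~\ref{cor:strong:C:type} says $\cN_{\mon}$ is a transverse integral affine structure, hence full rank, hence $\Ilin$ is injective. Conversely, assuming $\Ilin$ injective I would verify the two hypotheses of Corollary~\ref{cor:strong:C:type}: $\cF_\pi$ is strong s-proper because $(M,\pi)$ of s-proper type makes $\cF_\pi$ of s-proper type (Theorem~\ref{thm-reg-fol}) and its leaves are compact with trivial — hence finite — $\pi_1$ (Theorem~\ref{thm-s-proper}); and $\cN_{\mon}$ is a transverse integral affine structure because (a) for $1$-connected leaves $S^{\hol}_x=S_x$ and $\pi_2(S_x,x)\cong H_2(S_x;\Z)$, so $\cN_{\mon}=\cN_{\hol}$, which by Corollary~\ref{cor:finite-index} is a smooth closed sub-bundle of $\nu^*(\cF_\pi)$, here of rank $q$, hence a smooth full‑rank lattice; and (b) $\cN_{\mon}\subset\Lambda_{\cG}$ with $\Lambda_{\cG}$ Lagrangian in $(T^*M,\omega_{\can})$ (Theorem~\ref{thm-lattice-proper-case}) and $\dim\cN_{\mon}=\dim M=\dim\Lambda_{\cG}$, so $\cN_{\mon}$ is an open submanifold of $\Lambda_{\cG}$ and therefore also Lagrangian; Proposition~\ref{prop-transv-int-affine-folklore} then gives the claim. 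Corollary~\ref{cor:strong:C:type} yields strong s-properness.

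The main obstacle is the substance of (i): passing from ``$[\omega_{S_b}]$ locally constant'' to ``symplectic fibration''. The Ehresmann‑plus‑parametrized‑Moser argument is standard but must be set up with care (smooth dependence of the correcting diffeomorphisms on $b$, shrinking $U$ so that Moser's path of symplectic forms exists, and compatibility of the resulting charts). In (ii) the only non‑formal point is that full rank of $\cN_{\mon}$ genuinely upgrades to ``transverse integral affine structure'' — i.e. smoothness of $\cN_{\mon}$ (which is precisely where $1$-connectedness of the leaves enters, via $\cN_{\mon}=\cN_{\hol}$ and Corollary~\ref{cor:finite-index}) and its Lagrangian character (via being open in $\Lambda_{\cG}$).
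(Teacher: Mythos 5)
Your proof is correct, and part (ii) is essentially the paper's argument: reduce injectivity of $\Ilin$ to ${\cN_\mon}$ being a lattice and then invoke the characterization of strong s-properness. The paper does this by citing Theorem \ref{thm-lattice-strong-case} directly, whereas you route it through Corollary \ref{cor:strong:C:type} and explicitly verify that ${\cN_\mon}$ is a \emph{smooth, Lagrangian} lattice (via ${\cN_\mon}=\cN_{\hol}$ for $1$-connected leaves, Corollary \ref{cor:finite-index}, and openness of ${\cN_\mon}$ inside $\Lambda_{\cG}$); this actually supplies detail that the paper compresses. Part (i) is where you genuinely diverge. The paper shows, by a spectral sequence argument using $1$-connectedness of the leaves, that the vanishing of all the classes $[\Omega_\tau|_{S_b}]$ is equivalent to the existence of a \emph{global} extension $\widetilde{\omega}$ of the leafwise form with $\d_{\nu}\widetilde{\omega}=0$, and then quotes the characterization of symplectic fibrations from \cite{GLS}. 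You instead pass from flatness of $\varpi$ directly to local symplectic triviality by an Ehresmann-plus-parametrized-Moser argument (Thurston's criterion). Both routes are standard and valid: the paper's buys the more structural global statement about $\widetilde{\omega}$, while yours is more elementary and avoids the spectral sequence at the cost of the usual Moser bookkeeping (shrinking $U$ so the linear path of forms stays nondegenerate, and choosing the primitives of $\omega_b'-\omega_{b_0}'$ smoothly in $b$), which you correctly flag as the point needing care.
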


\begin{proof}
By Proposition  \ref{lin-var-as-var-sympl-areas}, $\Ilin= 0$ is equivalent to the fact that all the classes $[\Omega_\tau|_{S_b}]$ vanish. These classes are the restrictions to the leaves of a global class $[\Omega_\tau]\in H^2(\cF_\pi, \nu^*)$, which can also be described more directly (see \cite{CF2}) by choosing an extension $\widetilde{\omega}\in \Omega^2(M)$ of the symplectic forms on the leaves, and then taking:
\[ \Omega_\tau(X,Y)(V)=\d_{\nu}\widetilde{\omega}(X, Y)(V)=\d\widetilde{\omega}(X, Y, V).\]
A spectral sequence argument and the fact that the leaves are 1-connected, implies that the vanishing of all the classes $[\Omega_\tau|_{S_b}]$ is equivalent to the existence of an extension $\widetilde{\omega}$ such that $\d_{\nu}\widetilde{\omega}= 0$. This last condition is a well-known cha\-rac\-te\-ri\-za\-tion of symplectic fibrations (see \cite{GLS}).

By Proposition \ref{lin-var-as-var-sympl-areas}, $\Ker(\Ilin)$ is the annihilator of ${\cN_\mon}$ and, by Proposition \ref{I-lin-iso}, $\im(\Ilin)$ is a discrete group. Hence the injectivity of $\Ilin$ is equivalent to ${\cN_\mon}$ being a lattice. By Theorem \ref{thm-lattice-strong-case}, this is equivalent to $(M,\pi)$ being of strong s-proper type. 
\end{proof}

When an s-proper Poisson manifold $(M, \pi)$ has full-variation, Proposition \ref{I-lin-iso} 
above shows that $\Ilin$ realizes $(TB,\Lambda^\vee)$ as an integral vector sub-bundle of $(\HH, \HH_{\Z})$ 
if and only if $\cG$ is the Weinstein groupoid of $(M, \pi)$.

% The linear variation map is in fact defined for any regular Poisson manifold $(M,\pi)$ 
% with compact 1-connected leaves,  with no reference to symplectic integrations.
% By Proposition \ref{lin-var-as-var-sympl-areas}, the condition $\Ilin= 0$ is equivalent to ${\cN_\mon}= 0$, so the latter is also a characterization
% of symplectic fibrations with compact simply connected leaves. In this case, by Corollary \ref{cor:no:monodromy:strong:foliation}, the
% s-proper symplectic integrations $(M,\pi)$ are in bijection with the integral affine structures on its base.

For the general case, we look at
\[ \cK:= \Ker(\Ilin) \subset TB \]
and this leads to a decomposition of $(M,\pi)$ into a foliation by Poisson submanifolds of zero-variation:

\begin{theorem}
\label{I-lin-iso-gen} 
For any s-proper Poisson manifold $(M, \pi)$ with simply connected leaves, $\cK$ defines an involutive distribution of constant rank.
If $(\cG,\Omega)$ is an $s$-connected, $s$-proper integration, then one has:
\begin{enumerate}[(i)]
\item The subgroup
\[ \{\xi\in T^*B: \xi(v)\in \Z\text{ for all }v\in (\Ilin)^{-1}(\cH_\Z)\}\subset T^*B\]
sits inside $\nu^*(\cK)$ and defines a transverse integral affine structure for $\cK$;   
\item Each leaf $K$ of $\cK$ is an integral affine submanifold of $B$ and the resulting Poisson submanifold 
\[ M_{K}:= p^{-1}(K) \subset M\]
has zero-variation. In particular, $p: M_{K}\to K$ is a symplectic fibration over the integral affine manifold $K$;  
\item For any transversal $T$ to $\cK$ of complementary dimension,  the resulting Poisson submanifold 
\[ M_{T}:= p^{-1}(T)\subset M\]
is a Poisson manifold of strong $s$-proper type.
\end{enumerate}
\end{theorem}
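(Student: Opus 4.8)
The plan is to read off all three statements from the linear algebra of the bundle map $\Ilin=\nabla\varpi\colon TB\to\cH$, exploiting the $\Pi_1(B)$-equivariance and integrality of Proposition~\ref{I-lin-iso}, the flatness of the Gauss--Manin connection $\nabla$, and finally the two extreme cases of Corollary~\ref{cor-lin-iso-l} applied to saturated Poisson submanifolds. I may assume $B$ connected (argue on each component otherwise). First I would observe that $\Pi_1(B)\tto B$ is then transitive, so the equivariance square of Proposition~\ref{I-lin-iso} conjugates $\Ilin_b$ to $\Ilin_{b'}$ for every arrow $[\gamma]\colon b\to b'$; hence $\rank\Ilin$ is constant and $\cK=\Ker\Ilin$ is a smooth subbundle. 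For involutivity, note that if $X,Y\in\Gamma(\cK)$ then the section $\nabla_Y\varpi$ of $\cH$ vanishes identically, since its value at $b$ is $\Ilin_b(Y_b)=0$; flatness of $\nabla$ then gives $\nabla_{[X,Y]}\varpi=\nabla_X(\nabla_Y\varpi)-\nabla_Y(\nabla_X\varpi)=0$, i.e. $[X,Y]\in\Gamma(\cK)$, so $\cK$ is Frobenius integrable. The same equivariance square shows that parallel transport for the flat connection $\nabla^{TB}$ attached to $\Lambda_\cG$ preserves $\cK$, so $\cK$ is $\nabla^{TB}$-parallel and its leaves are totally geodesic; likewise $\Vspace^{\lin}=\im\Ilin$ is a $\nabla$-parallel subbundle of $\cH$.

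Next I would analyze the lattice of the statement. Put $W:=(\Ilin)^{-1}(\cH_\Z)\subset TB$. By Proposition~\ref{I-lin-iso}, $\Ilin(\Lambda^\vee_\cG)=\VspaceZ^{\lin}\subset\cH_\Z$, so $\Lambda^\vee_\cG\subset W$, and trivially $\cK\subset W$; under the isomorphism $\overline{\Ilin}\colon TB/\cK\xrightarrow{\ \cong\ }\Vspace^{\lin}$ the set $W$ is the preimage of $L:=\cH_\Z\cap\Vspace^{\lin}$. Here $L$ is a smooth $\nabla$-flat lattice in the flat bundle $\Vspace^{\lin}$: it has full rank, being caught between $\VspaceZ^{\lin}$ and the discrete $\cH_\Z$, and it is a smooth direct summand of $\cH_\Z$ because $\cH_\Z/L\hookrightarrow\cH/\Vspace^{\lin}$ is torsion free. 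Dualizing, any $\xi$ with $\xi(W)\subset\Z$ must kill $\cK$, so the group $\Lambda_\cK=\{\xi\in T^*B:\xi(W)\subset\Z\}$ lies in $\nu^*(\cK)=(TB/\cK)^*$, where $\Lambda_\cK=\overline{\Ilin}^{\,*}(L^{\vee})$ is a smooth lattice of full rank. Finally $\Lambda^\vee_\cG\subset W$ forces $\Lambda_\cK\subset\Lambda_\cG$, so a local frame of the smooth lattice $\Lambda_\cK$ consists of local sections of $\Lambda_\cG$, hence of closed $1$-forms; being $\nu^*(\cK)$-valued these are $\cK$-basic, and Proposition~\ref{prop-transv-int-affine-folklore} then gives that $\Lambda_\cK$ is a transverse integral affine structure for $\cK$. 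This proves (i).

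For (ii), let $K$ be a leaf of $\cK$; since $\cK$ is $\nabla^{TB}$-parallel, $K$ is totally geodesic in $(B,\Lambda_\cG)$. Pushing $\Lambda_\cG|_K$ forward along $T^*B|_K\to T^*K$ makes $K$ an integral affine submanifold: the push-forward is a full-rank lattice, because its kernel $\nu^*(\cK)|_K$ already contains the full-rank lattice $\Lambda_\cK|_K$ produced in (i), and its local sections descend from the closed local sections of $\Lambda_\cG$. On the Poisson side, $M_K=p^{-1}(K)$ is a saturated Poisson submanifold and $s^{-1}(M_K)\subset(\cG,\Omega)$ is an $s$-connected, $s$-proper symplectic integration of it with $1$-connected leaves; its leaf space is $K$, its leafwise cohomology bundle is $\cH|_K$ with section $\varpi|_K$, and its Gauss--Manin connection is the restriction of $\nabla$, so its linear variation is $\Ilin|_{TK}=\Ilin|_{\cK|_K}=0$. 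Therefore $M_K$ has zero-variation and Corollary~\ref{cor-lin-iso-l}(i) shows that $p\colon M_K\to K$ is a symplectic fibration over the integral affine manifold $K$. For (iii), a transversal $T$ to $\cK$ with $\dim T=q-\rank\cK$ satisfies $T_bT\cap\cK_b=\{0\}$ for $b\in T$; exactly as above, $M_T=p^{-1}(T)$ is a saturated Poisson submanifold carrying an $s$-connected, $s$-proper symplectic integration with $1$-connected leaves, leaf space $T$, and linear variation $\Ilin|_{TT}$. As $\Ker(\Ilin|_{T_bT})=T_bT\cap\cK_b=\{0\}$, this map is injective, and Corollary~\ref{cor-lin-iso-l}(ii) yields that $M_T$ is of strong $s$-proper type.

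The one point requiring genuine care, rather than bookkeeping, is the regularity in (i): that $\rank\Ilin$ — hence $\cK$ — is locally constant, that $\Vspace^{\lin}$ is a flat subbundle, and that $L=\cH_\Z\cap\Vspace^{\lin}$, and therefore $\Lambda_\cK$, are smooth lattices. All of these are consequences of the single input that $\Ilin$ is $\Pi_1(B)$-equivariant, so once Proposition~\ref{I-lin-iso} is invoked the remaining arguments are formal; everything else (the Poisson submanifold pictures $M_K$, $M_T$ and their integrations) is standard manipulation of symplectic groupoids over saturated submanifolds.
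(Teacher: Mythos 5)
Your overall architecture matches the paper's: constant rank and involutivity of $\cK$ come from the $\Pi_1(B)$-equivariance of $\Ilin$ and the flatness of the Gauss--Manin connection, exactly as in the paper. Where you diverge is in the lattice arguments, and there your route is legitimate and in places slicker. For (i) the paper identifies the subgroup $W^{\vee}$ (with $W=(\Ilin)^{-1}(\cH_\Z)$) with the monodromy group ${\cN_\mon}$ and exploits ${\cN_\mon}\subset\Lambda_\cG$ from Theorem \ref{thm-lattice-strong-case}; you instead work directly with $L=\cH_\Z\cap\Vspace^{\lin}$, squeeze it between $\VspaceZ^{\lin}$ and the discrete $\cH_\Z$ to see it is a full-rank flat lattice, and pull back its dual along $\overline{\Ilin}$. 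Both are correct; the paper's version buys the conceptual identification with ${\cN_\mon}$, yours is more self-contained linear algebra. For the first half of (ii), your observation that $\Lambda_\cK\subset\Lambda_\cG\cap\nu^*(\cK)$ has full rank, so that $\nu^*(\cK)$ is a $\Lambda_\cG$-rational subspace and the pushforward of $\Lambda_\cG$ to $T^*K$ is a genuine lattice, is the lattice-duality shortcut for the paper's compactness argument (which instead shows directly that $\cK\cap\Lambda_\cG^{\vee}$ is a lattice in $\cK$). That part is fine.

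The one step that is actually wrong is the assertion that $s^{-1}(M_K)$ is an $s$-connected, $s$-proper \emph{symplectic} integration of $M_K$ (and the analogous unjustified claim for $M_T$). Since $M_K$ is saturated, $s^{-1}(M_K)=\cG|_{M_K}$ has dimension $\dim M+\dim M_K>2\dim M_K$, so the restriction of $\Omega$ is only presymplectic and the Lie algebroid of $\cG|_{M_K}$ is $T^*M|_{M_K}$, not $T^*M_K$; it does not integrate $(M_K,\pi|_{M_K})$. Your applications of Corollary \ref{cor-lin-iso-l} to $M_K$ and $M_T$ therefore need an independent source of $s$-properness, which is available from results already in the paper: the monodromy of $M_K$ is the image of ${\cN_\mon}$ under the restriction $r:T^*B\to T^*K$, and since ${\cN_\mon}\subset\nu^*(\cK)=\Ker(r)$ this monodromy vanishes, so Corollary \ref{cor:no:monodromy:strong:foliation} (together with the $s$-properness of the foliation, whose leaves are compact and $1$-connected) produces the required $s$-proper integrations of $M_K$; for $M_T$, the restriction $r_T$ is an isomorphism on $\nu^*(\cK)$, so the monodromy of $M_T$ is a full lattice of closed forms and Corollary \ref{cor:strong:C:type} gives strong $s$-properness directly, without first invoking Corollary \ref{cor-lin-iso-l}(ii). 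The paper is equally terse about constructing these integrations, but it does not commit to the incorrect identification; with the repair above your argument closes.
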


\begin{proof}
First of all, the fact that $\Ilin$ is $\Pi_1(B)$-equivariant implies that $\cK$ has constant rank. The Gauss-Manin connection being flat,
its curvature tensor vanishes, which obviously implies involutivity of $\cK$. 

Next, we turn to the proof of (i). We work at the level of $B$: under the isomorphism $\nu^{*}_{x}(\cF)\cong T^{*}_{p(x)}B$, the monodromy group of $(M, \pi)$ at any $x\in p^{-1}(b)$ becomes a subgroup ${\cN_\mon}|_{b}\subset T^*_b B$. Moreover, by Theorem \ref{thm-lattice-strong-case} (i), we have
\[ {\cN_\mon}\subset \Lambda_\cG \subset T^*B.\]
Since $\Lambda_{\cG}$ is a lattice, the fibers ${\cN_\mon}|_b$ will be discrete, hence also closed in $T^{*}_{b}B$. 

For a closed subgroup $\cC\subset V$ in a vector space, we set 
\[ \cC^{\vee}:=\{\xi\in V^*:\xi(C)\subset \Z\}. \]
Notice that $(\cC^{\vee})^{\vee}= \cC$. Also, to any such closed subgroup $\cC$ we associate two subspaces: $\Span(\cC)\subset V$ the span over $\R$, and $\Cospan(\cC)\subset V$ the cospan over $\R$, defined as the largest vector subspace of $V$ contained in $\cC$. Note that the span/cospan of $\cC^{\vee}$ coincides with the annihilator of the cospan/span of $\cC$. Moreover:
\begin{enumerate}[(a)]
\item if $\cC$ is discrete, then $\cC$ is a lattice in $\Span(\cC)$;
\item  if $\cC$ spans $V$, then $\cC/\Cospan(\cC)$ is a lattice in $V/\Cospan(\cC)$.
\end{enumerate}

Back to our situation, Proposition  \ref{lin-var-as-var-sympl-areas} shows that $\cK$ is the annihilator of ${\cN_\mon}$. Equivalently, $\Span(\cN_\mon)=\cK^0$ so  ${\cN_\mon}$ is a lattice in $\nu^*(\cK)=(\cK)^0$. Since ${\cN_\mon}\subset \Lambda_{\cG}$, sections of $\cN_\mon$ are necessarily closed forms, hence ${\cN_\mon}$ defines a transverse integral affine structure for $\cK$. In order to obtain the description of ${\cN_\mon}$ directly in terms of $\Ilin$, one notes that the sequence of implications in the proof of Proposition \ref{I-lin-iso} all become equivalences if we replace $\Lambda_{\cG}^{\vee}$ by $\cN_\mon^{\vee}$, so that:
\begin{equation}\label{N-via-Ilin} 
\cN_\mon^{\vee}= (\Ilin)^{-1}(\HH_{\Z}).
\end{equation}
Since ${\cN_\mon}= (\cN_\mon^{\vee})^{\vee}$ this proves the description in (i). 

While (i) is a statement about ${\cN_\mon}$, part (ii) of the proposition is about $\Lambda_{\cG}$ and its subtle interaction ${\cN_\mon}$. We need to prove that $\cK\cap \Lambda_{\cG}^{\vee}$ is a lattice in $\cK$. Since it is clearly discrete, it suffices to prove that the resulting quotient
\[  \cK/\cK\cap \Lambda_{\cG}^{\vee}\cong (\cK+ \Lambda_{\cG}^{\vee})/\Lambda_{\cG}^{\vee} \]
is compact. Under the previous identification, this quotient is the kernel of the map
\[ \cN_\mon^{\vee}/\Lambda_{\cG}^{\vee} \to \cN_\mon^{\vee}/ (\cK+ \Lambda_{\cG}^{\vee}).\]
Notice that the image of the last map is discrete, since it is a quotient of $\cN_\mon^{\vee}/\cK$, which is itself discrete by part (i). This implies that the kernel is closed in $\cN_\mon^{\vee}/\Lambda_{\cG}^{\vee}$, hence it suffices to remark that this last space is compact. Indeed, \eqref{N-via-Ilin} implies that  $\cN_\mon^{\vee}/\Lambda_{\cG}^{\vee}$ is the kernel of the map induced by $\Ilin$:
\[ {\Ilin}:\xymatrix{ TB/\Lambda_{\cG}^{\vee} \ar[r] & \Vspace^{\lin}/\VspaceZ^{\lin},}\]
and since $TB/\Lambda_{\cG}^{\vee}$ is compact, the result follows. To conclude the proof of (ii), notice that $M_{K}$ with the induced Poisson structure will have zero variation, since the associated normal bundles are precisely the kernel of $\Ilin$.

Finally, to prove (iii), the linear variation map for $M_{T}$ will be just the injective descent of $\Ilin$, defined on $TB/\cK$, modulo the obvious identifications:
\[ T_bB/\cK_b \cong T_b(T)\cong T^{*}_{x}M_T/\cF_x,\quad \text{with }b= p(x).\]
\end{proof}

\begin{example} For a Lie algebra $\gg$ of compact type, the dual $M=\gg^*$  is a Poisson manifold of proper type. In this case, we have the decomposition $\gg=\zz\oplus\gg_{\ss}$ into center and semisimple part, and $\zz=\{0\}$ if and only if $\gg^*$ is of strong-proper type. The passing from  $M$ to $M_T$ in Proposition \ref{I-lin-iso-gen} should be seen as a Poisson generalization of the passing from a compact Lie algebra to its semi-simple part. This example will be further discussed in Section \ref{ex-regular-coadjoint}.
\end{example}

%%%%%%%%%%%%%%%%%%%%%%%%%%%%%%%%%%%%%%%
%%%%%%%%%%%%%%%%%%%%%%%%%%%%%%%%%%%%%%%
%%%%%%%%%%%%%%%%%%%%%%%%%%%%%%%%%%%%%%%
%%%%%%%%%%%%%%%%%%%%%%%%%%%%%%%%%%%%%%%
%%%%%%%%%%%%%%%%%%%%%%%%%%%%%%%%%%%%%%%
%%%%%%%%%%%%%%%%%%%%%%%%%%%%%%%%%%%%%%%
\subsection{The linear variation theorem}
\label{sec:variation1} 
%%%%%%%%%%%%%%%%%%%%%%%%%%%%%%%%%%%%%%%
%%%%%%%%%%%%%%%%%%%%%%%%%%%%%%%%%%%%%%%
%%%%%%%%%%%%%%%%%%%%%%%%%%%%%%%%%%%%%%%
%%%%%%%%%%%%%%%%%%%%%%%%%%%%%%%%%%%%%%%
%%%%%%%%%%%%%%%%%%%%%%%%%%%%%%%%%%%%%%%
%%%%%%%%%%%%%%%%%%%%%%%%%%%%%%%%%%%%%%%

We now move to the study of the \emph{actual} variation of $\varpi$:

\begin{definition}
The {\bf variation of $\varpi$} is the bundle map:
% \begin{equation}
% \label{eq:Var}
\[
\Var: \Pi_1(B)\to \cH,\quad [\gamma]\mapsto \gamma^*\varpi_{\gamma(1)}, 
\]
% \end{equation}
where $\gamma^*$ stands for the parallel transport associated to the Gauss-Manin connection. The {\bf variation bundle} of $\varpi$
is the image of $\Var$:
 \[ \Vspace:= \im(\Var)\subset \HH \]
\end{definition} 

Our aim now is to show that the variation is linear. For this, as it will become apparent in the sequel, it is more natural to consider an \emph{affine} point of view. 
We define the {\bf affine variation} of $\varpi$ to be:
\[ \Iaff:= \varpi+ \Ilin: TB\to \HH, \]
and the {\bf affine variation bundle} to be its image:
\[ \Vspace^{\Aff}:= \Iaff(TB)= \varpi+ \Vspace^{\lin}.\]
From this point of view, the relevant action of $\Pi_1(B)$ on $TB$ is the one by affine transformations, as given by \eqref{eq-the-affine-action}. We will refer to it as the {\bf integral affine action} of $\Pi_1(B)$ on $TB$. On $\HH$, we will continue to use the linear action of $\Pi_1(B)$.

Our first version of the statement that the variation is linear or, more precisely, affine, is the following:

\begin{theorem}\label{thm-affine-iso}For any s-proper Poisson manifold $(M, \pi)$ with 1-connected leaves and an $s$-connected, $s$-proper
integration $(\cG,\Omega)$, the developing map $\dev$ of the integral affine structure on $B=M/\cF_\pi$ identifies the variation of $\varpi$ with its affine variation, i.e. one has a commutative diagram:
 \[ \xymatrix{ 
 \Pi_1(B) \ar[rr]^-{\Var} \ar[rd]_-{\dev} &    & \HH        \\
 & TB \ar[ru]_-{\Iaff}                                     &            
 }\]
In particular, $\Vspace$ is open in $\Vspace^{\Aff}$ and they are both $\Pi_1(B)$-invariant. Moreover, the variation $\Vspaceb\subset H^2(S_b)$ at each $b\in B$ sits inside the symplectic cone of the symplectic leaf $S_b$. 
\end{theorem}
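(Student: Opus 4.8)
The strategy is to deduce the identity $\Var = \Iaff \circ \dev$ by integrating along paths the infinitesimal statement $\Ilin = \nabla\varpi$ (which is just the definition of $\Ilin$), the crucial input being the $\Pi_1(B)$-equivariance of $\Ilin$ proved in Proposition~\ref{I-lin-iso} together with the groupoid description of the developing map from Section~\ref{sec:developing:map}. Fix a path $\gamma\colon [0,1]\to B$, write $\gamma_\epsilon = \gamma|_{[0,\epsilon]}$, and let $h^{\lin}(\gamma_\epsilon)\colon T_{\gamma(\epsilon)}B\to T_{\gamma(0)}B$ denote parallel transport for the flat connection determined by $\Lambda_\cG$ and $\gamma_\epsilon^*\colon \HH_{\gamma(\epsilon)}\to \HH_{\gamma(0)}$ parallel transport for the Gauss--Manin connection (so $\gamma^*=\gamma_1^*$). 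The fundamental theorem of calculus for parallel transport, applied to the section $\varpi$, reads
\[ \gamma^*\varpi_{\gamma(1)}-\varpi_{\gamma(0)} = \int_0^1 \gamma_\epsilon^*\bigl(\nabla_{\dot\gamma(\epsilon)}\varpi\bigr)\,\d\epsilon = \int_0^1 \gamma_\epsilon^*\bigl(\Ilin(\dot\gamma(\epsilon))\bigr)\,\d\epsilon . \]
On the other hand, Section~\ref{sec:developing:map} expresses the developing map as the integral of the tautological algebroid $1$-cocycle, $\dev([\gamma]) = \int_0^1 h^{\lin}(\gamma_\epsilon)(\dot\gamma(\epsilon))\,\d\epsilon \in T_{\gamma(0)}B$. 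Applying $\Ilin$, which is fibrewise linear, and then using that $\Ilin$ intertwines the two parallel transports (Proposition~\ref{I-lin-iso}),
\[ \Ilin\bigl(\dev([\gamma])\bigr) = \int_0^1 \Ilin\bigl(h^{\lin}(\gamma_\epsilon)\dot\gamma(\epsilon)\bigr)\,\d\epsilon = \int_0^1 \gamma_\epsilon^*\bigl(\Ilin(\dot\gamma(\epsilon))\bigr)\,\d\epsilon . \]
Comparing the two displays gives $\Var([\gamma]) = \gamma^*\varpi_{\gamma(1)} = \varpi_{\gamma(0)} + \Ilin(\dev([\gamma])) = \Iaff(\dev([\gamma]))$, which is exactly commutativity of the triangle.

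For the remaining assertions: $\Vspace\subseteq\Vspace^{\Aff}$ is immediate from $\Var=\Iaff\circ\dev$. By Proposition~\ref{I-lin-iso} the subbundle $\Vspace^{\lin}=\im\Ilin$ is $\Pi_1(B)$-invariant for the linear action; combining this with the identity just proved and with $\Vspace^{\Aff}=\varpi+\Vspace^{\lin}$ shows that $\Vspace^{\Aff}$ is $\Pi_1(B)$-invariant, and the cocycle property $\Var([\gamma]\circ[\delta])=[\gamma]\cdot\Var([\delta])$ (functoriality of parallel transport) shows that $\Vspace=\im\Var$ is $\Pi_1(B)$-invariant. To see that $\Vspace$ is open in $\Vspace^{\Aff}$, note that $\dev\colon\Pi_1(B)\to TB$ is a submersion: it covers the target map $\Pi_1(B)\to B$ and restricts on each fibre to the classical developing map, a local diffeomorphism; hence $\im\dev$ is open in $TB$. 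Since $\Ilin$, hence $\Iaff$, is a bundle map of constant rank, $\Iaff\colon TB\to\Vspace^{\Aff}$ is a surjective submersion, hence an open map, and therefore $\Vspace=\Iaff(\im\dev)$ is open in $\Vspace^{\Aff}$.

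Finally, for the symplectic-cone statement, recall (as used already in the proof of Proposition~\ref{lin-var-as-var-sympl-areas}) that Gauss--Manin parallel transport along $\gamma$ is realized by pullback $\Psi_\gamma^*$ along a diffeomorphism $\Psi_\gamma\colon S_{\gamma(0)}\to S_{\gamma(1)}$ built from the flows of horizontal lifts of $\dot\gamma$ in $M$; this uses that $p\colon M\to B$ is a locally trivial fibration, the leaves being compact and $1$-connected. Hence $\Var([\gamma]) = \gamma^*[\omega_{S_{\gamma(1)}}] = [\Psi_\gamma^*\omega_{S_{\gamma(1)}}]$ is the class of an honest symplectic form on $S_{\gamma(0)}$, so every element of $\Vspaceb$ lies in the symplectic cone of $S_b$.

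The only genuinely substantial ingredient is one imported from earlier in the paper, namely that $\Ilin=\nabla\varpi$ is parallel for the Gauss--Manin connection, i.e.\ $\Pi_1(B)$-equivariant; this is where the $s$-properness hypothesis and the identification $\Ilin=[\Omega_\tau]$ of Proposition~\ref{lin-var-as-var-sympl-areas} enter in an essential way. Granting that, the remaining work is the bookkeeping needed to match the groupoid/cocycle conventions of Section~\ref{sec:developing:map} with the parallel-transport identities above --- in particular keeping track of the source/target orientation of $\Pi_1(B)$ and the direction of $\gamma^*$ --- together with the elementary openness argument.
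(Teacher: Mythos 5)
Your proof is correct and follows essentially the same route as the paper: both arguments rest on the same two inputs (the $\Pi_1(B)$-equivariance of $\Ilin$ from Proposition~\ref{I-lin-iso} and the description of $\dev$ as the integration of the tautological algebroid cocycle), the only difference being that where the paper invokes uniqueness of integration of algebroid $1$-cocycles over the source-simply-connected groupoid $\Pi_1(B)$, you carry out that integration explicitly along a path. Your added details for the ``in particular'' clauses (openness via $\dev$ being a submersion and $\Iaff$ an open map, and the symplectic-cone statement via realizing Gauss--Manin transport by a fiber diffeomorphism) are sound and fill in steps the paper leaves implicit.
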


\begin{remark}
\label{rk-comm-diagram}
The commutativity of the diagram in Theorem \ref{thm-affine-iso} is equivalent to saying that $\Iaff$ is equivariant. In this way, 
\[ \Iaff: (TB,\Lambda^{\vee}_{\cG}) \to (\HH,\HH_{\Z})\]
becomes a morphism of integral affine representations of $\Pi_1(B)$.
\end{remark}

\begin{proof} The commutativity of the diagram in the statement is equivalent to the commutativity of 
 \[ \xymatrix{ 
 \Pi_1(B) \ar[rr]^-{\delta(\varpi)} \ar[rd]_-{\dev} &    & \HH        \\
 & TB \ar[ru]_-{\Ilin}                                     &            
 }\]
where $\delta(\varpi): \Pi_1(B)\to \cH$ is defined by:
\[ \delta(\varpi) ([\gamma]):= \gamma^*\varpi_{\gamma(1)}- \varpi_{\gamma(0)}.\]
Note that in this diagram:
\begin{enumerate}[(a)]
\item $\delta(\varpi): \Pi_1(B)\to \cH$ is the 1-co\-cycle on $\Pi_1(B)$ coboundary of the 0-cycle $\varpi\in \Gamma(\HH)$; it differentiates to the algebroid 1-cocycle $v\mapsto \nabla_v\varpi$, i.e., $\Ilin$;
\item $\dev: \Pi_1(B)\to TB$ is the 1-cocycle on $\Pi_1(B)$ with values in $TB$ which integrates the algebroid 1-cocycle $\textrm{Id}:TB\to TB$. 
\item $\Ilin:TB\to\cH$ is a morphism of representations.
\end{enumerate}
It follows that the corresponding infinitesimal diagram is:
 \[ \xymatrix{ 
 TB  \ar[rr]^-{\Ilin} \ar[rd]_-{\textrm{Id}}  &    & \HH        \\
 & TB \ar[ru]_-{\Ilin}                                     &            
 }\]
which is trivially commutative (in this diagram $\Ilin$ appears in two distinct roles: as a Lie algebroid cocycle on the horizontal arrow and as a morphism of representations on the diagonal arrow).
\end{proof}

% The result above not only makes precise the statement that the variation of $\varpi$ is linear, but it also shows that there are restrictions on the possible symplectic integrations. In the commutative diagram, the two arrows given by $\Var$ and $\Iaff$ only depend on the Poisson structure, while the developing map $\dev$ depends on the affine structure on $B=M/\cF_\pi$, which in turn depends on the s-proper integration. 

In order to obtain a more concrete picture, let us fix
\begin{itemize}
\item a base point $b_0\in B$, and 
\item a $\Z$-basis $\mathfrak{b}_\Lambda=\{\lambda_1, \ldots, \lambda_q\}$ for $\Lambda_{b_0}$.
\end{itemize}
The affine holonomy representation becomes (for notations, see Example \ref{ex-gen-IAS}):
\[ h^{\Aff}_0: \pi_1(B,b_0)\rightarrow \Aff_\Z(\R^q),\quad \gamma \mapsto 
(v_{\gamma}, A_{\gamma}) .\]
Hence the main data consists of the $v_{\gamma}=(v_{\gamma}^1, \ldots, v_{\gamma}^q)$ and $A_{\gamma}=(A_{i}^{j}(\gamma))$,
where our convention is such that $A(\gamma)e_i= \sum_{j} A_{i}^{j}(\gamma)e_j$. 

If $(S, \omega_0)$ is the symplectic leaf corresponding to $b_0$, then the maps/actions in the previous discussion become: 
\begin{enumerate}[(i)]
\item a variation map with respect to paths that start at $b_0$:
\[ \Varb: \widetilde{B} \to H^2(S), \quad \Varb(\gamma)= \gamma^*[\omega_{\gamma(1)}]\]
\item a linear action of $\pi_1(B,b_0)$ on $H^2(S)$, that makes $\Varb$ equivariant. 
\item a $\pi_1(B,b_0)$-invariant weak integral affine subspace $V^{\Aff}_{0}= [\omega_0]+V^{\lin}_{0}\subset H^2(S)$.
\end{enumerate}

For any $x\in S$, let $P:=s^{-1}(x)$ be the $s$-fiber above $x$ of the s-proper integration $(\G,\Omega)\tto (M,\pi)$. The submersion $t:P\to S$ is a principal $\G_x$-bundle and the choice of basis $\mathfrak{b}_\Lambda$ gives an identification of the isotropy group with the standard $q$-torus $\T^q$. Hence $P\to S$ becomes a principal $\T^q$-bundle and we consider its Chern classes:
\[ c_1,\dots,c_q\in H^2(S)\quad (\text{integral classes})\]
Since $S$ is simply connected these classes do not depend on the base point $x\in S$.

\begin{corollary}\label{cor:DH2}
\label{cor:primitivity2} 
The Chern classes $c_1,\dots,c_q\in H^2(S)$ generate the space of linear variations of $\omega$ at $b_0$:
\[ V^{\lin}_{0}= \Span_{\R}(c_1, \ldots , c_q),\ \  V^{\lin}_{0, \Z}= \Span_{\Z}(c_1, \ldots , c_q).\]
The action of $\pi_1(B,b)$ on $V^{\Aff}_{0}\subset H^2(S)$ is given by 
\begin{equation}\label{for-constr-PMCTS} 
\gamma^*([\omega_0])= [\omega_0]+ \sum_{k} v_{\gamma}^{k} c_k,\quad
{\gamma}^{*}(c_i)= \sum_{k} A_{i}^{k} (\gamma) c_k,
\end{equation}
and for any path $\gamma$ in $B$ starting at $b_0$ one has 
\[ \gamma^*([\w_{\gamma(1)}])= [\omega_0]+ \dev^{1}_0(\gamma)c_1+ \ldots + \dev^{q}_0(\gamma)c_q,\]
where $\dev^{i}_0$ are the components of $\dev_0= \left.\dev\right|_{b_0,\mathfrak{b}_\Lambda}: \widetilde{B}\rightarrow \R^q$.
Hence, we have a commutative diagram:
\[ \xymatrix{ 
\widetilde{B}  \ar[rr]^-{\Varb} \ar[rd]_-{\dev_0}  &    &  V^{\Aff}_{0}
\subset   H^2(S)        \\
 & \R^q  \ar[ru]!<-20pt,0pt>_-{(v^i)\mapsto[\omega_0]+\sum_i v^i c_i}                                     & &             
}\]
where the image of $\Varb$ is an open, $\pi_1(B)$-invariant subset of $V^{\Aff}_{0}$, sitting inside the symplectic cone of $H^2(S)$.
\end{corollary}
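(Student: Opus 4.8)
The plan is to read this Corollary off from Theorem~\ref{thm-affine-iso} and its infinitesimal companion Proposition~\ref{I-lin-iso}, once the linear variation map has been matched explicitly with the Chern classes $c_1,\dots,c_q$; everything else is bookkeeping in the chart fixed by $(b_0,\mathfrak{b}_\Lambda)$. The main obstacle is the identification $\Ilin(e_i)=c_i$, where $\{e_1,\dots,e_q\}$ is the basis of the lattice $\Lambda^\vee_{\cG}|_{b_0}\subset T_{b_0}B$ dual to $\mathfrak{b}_\Lambda=\{\lambda_1,\dots,\lambda_q\}$ (equivalently, $e_i=\partial/\partial x_i|_{b_0}$ in the integral affine coordinates determined by $\mathfrak{b}_\Lambda$, so that $(T_{b_0}B,\Lambda^\vee_{\cG}|_{b_0})\cong(\R^q,\Z^q)$ carries $e_i$ to the standard basis).

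To prove $\Ilin(e_i)=c_i$ I would use the standing assumption that the leaves are $1$-connected: then $\cBG(\cG)=M\times_B M$, the source fibre $\cBG(\cG)(x,-)$ is the leaf $S$ itself, the covering $p_{\cE}\colon\cBG(\cG)(x,-)\to S$ (with $\cE=\cBG(\cG)$) is the identity, and the isotropy group $\cG_x$ is a torus (the foliation groupoid $\cBG(\cG)$ in Theorem~\ref{thm-reg-fol2} is s-connected with $1$-connected leaves, hence has trivial isotropy at $x$, so $\cG_x=\cT_x(\cG)$). Running the Atiyah sequence argument from the proof of Theorem~\ref{ext-mon-crit} with $\cE=\cBG(\cG)$ then identifies the Chern class of the principal $\cG_x$-bundle $P=s^{-1}(x)\to S$ with $[\Omega_\tau]\in H^2(S,\nu_x^*(S))$. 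By Proposition~\ref{lin-var-as-var-sympl-areas}, $\Ilin(v)=\nabla_v\varpi=[\langle\Omega_\tau,v\rangle]$; expanding $[\Omega_\tau]=\sum_i c_i\otimes\lambda_i$ (the $c_i$ being precisely the components of this Chern class in the basis $\mathfrak{b}_\Lambda$, after the identification $\cG_x\cong\T^q$) and pairing with $e_i$ yields $\Ilin(e_i)=c_i$. Hence $V^{\lin}_0=\Ilin(T_{b_0}B)=\Span_\R(c_1,\dots,c_q)$ and $V^{\lin}_{0,\Z}=\Ilin(\Lambda^\vee_{\cG}|_{b_0})=\Span_\Z(c_1,\dots,c_q)$, which is the first assertion.

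For the holonomy formulas I would invoke Remark~\ref{rk-comm-diagram}: $\Iaff=\varpi+\Ilin$ is a morphism of integral affine representations of $\Pi_1(B)$, so, restricted to the isotropy at $b_0$, it intertwines the affine holonomy action $\gamma\cdot v=v_\gamma+A_\gamma v$ on $T_{b_0}B$ with the Gauss--Manin action $\gamma^*$ on $H^2(S)$. Evaluating the equivariance identity $\gamma^*\Iaff(v)=\Iaff(v_\gamma+A_\gamma v)$ at $v=0$ gives $\gamma^*[\omega_0]=[\omega_0]+\Ilin(v_\gamma)=[\omega_0]+\sum_k v_\gamma^k c_k$; evaluating at $v=e_i$, using $A_\gamma e_i=\sum_k A_i^k(\gamma)e_k$ and subtracting the previous identity, gives $\gamma^*c_i=\sum_k A_i^k(\gamma)c_k$. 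These are the two equations of \eqref{for-constr-PMCTS}. Finally, restricting the commuting triangle of Theorem~\ref{thm-affine-iso} to the source fibre $\widetilde B=s^{-1}(b_0)$ gives $\Varb=\Iaff|_{b_0}\circ\dev_0$; writing $\dev_0(\gamma)=\sum_i\dev_0^i(\gamma)e_i$ and using $\Iaff|_{b_0}(v)=[\omega_0]+\Ilin(v)$ with $\Ilin(e_i)=c_i$ produces $\gamma^*[\omega_{\gamma(1)}]=\Varb(\gamma)=[\omega_0]+\sum_i\dev_0^i(\gamma)c_i$, which is also exactly the commutative square with bottom arrow $(v^i)\mapsto[\omega_0]+\sum_i v^i c_i$. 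The openness of the image, its $\pi_1(B)$-invariance, and its lying inside the symplectic cone of $H^2(S)$ are inherited verbatim from Theorem~\ref{thm-affine-iso}.

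I expect no difficulty beyond the first point: the whole statement is a translation of Theorem~\ref{thm-affine-iso} into the fixed chart. The Atiyah sequence computation relating $[\Omega_\tau]$ to the Chern class of $s^{-1}(x)\to S$ is already available inside the proof of Theorem~\ref{ext-mon-crit} and only needs to be specialized (this is where $1$-connectedness of the leaves is essential, to make $\cG_x$ a genuine torus and to collapse $\cBG(\cG)(x,-)$ onto $S$). The one place to be careful is the index placement in $A_i^k(\gamma)$, which is forced by the convention $A(\gamma)e_i=\sum_k A_i^k(\gamma)e_k$ fixed just before the statement.
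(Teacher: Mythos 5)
Your proposal is correct and follows essentially the same route as the paper: the paper's own proof is a two-line reduction to Proposition \ref{I-lin-iso} and Theorem \ref{thm-affine-iso} via the single identification $\Ilin$ of the lattice basis with the Chern classes $c_i$, which it asserts follows from \eqref{eq:linear:variation}. Your only addition is to spell out that this identification rests on the Atiyah-sequence computation (already carried out in the proof of Theorem \ref{ext-mon-crit}) equating $[\Omega_\tau]$ with the Chern class of $s^{-1}(x)\to S$ — a correct and welcome filling-in of the paper's ``follows immediately''.
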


Note that in the strong s-proper case the Chern classes $c_1,\dots,c_q$ are linearly independent, so $\Varb$ is a local diffeomorphism, and if $\cG$ is the canonical integration then they form a primitive family, in the sense that:
\[  \Span_{\Z}(c_1, \ldots , c_q)=  \Span_{\R}(c_1, \ldots , c_q)\cap H^2(S, \Z).\]
Hence, $V^{\Aff}_{0}$ is an integral affine subspace of $H^2(S)$, not only a weak one.

\begin{proof}
The corollary follows from Proposition \ref{I-lin-iso} and Theorem \ref{thm-affine-iso} once we realize that 
% on the fixed $\Z$-basis we have 
$\mathrm{var}^\lin_0(\lambda_i)= c_i$. This follows immediately from \eqref{eq:linear:variation}. 
\end{proof}

\begin{remark}\label{rem:smooth-lift2}
The corollary shows that, for any $v\in \im(\dev_0)\subset \R^q$, one can find a symplectic form $\omega_v$ on $S$ such that, in cohomology, we have:
\[ [\omega_v]= [\omega_0]+ v^1c_1+\cdots+v^qc_q.\]
This gives an explicit description for % $\left.{\Omega_{\DH}}\right|_b\subset H^2(S)$ 
the image of the variation map inside the symplectic cone as:
\[ \im(\Varb)= \{[\omega_v]: v\in \im(\dev_0)\}\subset H^2(S).\]
Note however that the symplectic forms $\omega_{v}$ are not unique. Also, while they can locally be chosen to depend smoothly on $v$,
it is not clear whether $v\mapsto \omega_v$ can be chosen smooth on the entire open $\im(\dev_0)$. 
\end{remark}

{
Theorem \ref{thm-affine-iso} and Corollary \ref{cor:primitivity2} should already remind the reader of the classic Duistermaat-Heckman Theorem (see Theorem \ref{thm:DH:classic}). We defer  to the next section the detailed explanation of this connection.}

For now we observe that the previous results suggests the following strategy to construct examples of PMCTs. 
For simplicity we restrict to integral affine manifolds which are complete (see Conjecture \ref{Markus-conj}). 

{
\begin{proposition}
\label{reconstr-2} 
Consider an integral affine manifold of type $B= \mathbb{R}^q/\Gamma$, with $\Gamma \subset \Aff_\Z(\R^q)$, and $S$ a compact 1-connected manifold.
Assume that the following conditions hold:
\begin{enumerate}[(i)] 
 \item $\Gamma$ acts on $S$ and there is a smooth $\Gamma$-equivariant map 
 \[ \R^q\ni v \mapsto \omega_v \in \Omega^{2}_{\mathrm{sympl}}(S).\]
 \item There exist linearly independent integral cohomology classes $c_1, \ldots, c_q$ in $H^2(S)$ such that:
\begin{equation}
\label{eq-lin-cond-crit} 
[\omega_v]= [\omega_0]+ v^1 c_1 + \ldots + v^q c_q,\quad \forall\ v\in \R^q.
\end{equation}
\end{enumerate}
Then $M:= S\times_{\Gamma} \mathbb{R}^q$ is a regular Poisson manifold of strong s-proper type 
(hence, if $B$ is compact, then  $M$ is of strong compact type).

More generally, condition (i) can be replaced by a smooth family of symplectic forms $w_v$ on $S$ 
and a lifting of the integral affine action
of $\Gamma$ on $\R^q$ to an action on $(S\times \R^q,\w_v)$
by Poisson diffeomorphisms (not necessarily a product action).
\end{proposition}

Note that the equivariance condition and (\ref{eq-lin-cond-crit}) imply that the action of $\Gamma$ on $[\omega_0]$ and the
$c_i$s is given by (\ref{for-constr-PMCTS}). Therefore, in practice, one starts from some smooth integral affine 
group $\Gamma \subset \Aff_\Z(\R^q)$  writing its elements in the split form $\gamma= (v_{\gamma}, A(\gamma))$
(cf. Example \ref{ex-gen-IAS}) and as a first step one tries to realize the identities (\ref{for-constr-PMCTS})
inside the cohomology of a compact manifold $S$. 
Observe that this already produces the cohomology bundle $\mathcal{H}=H^2(S)\times_\Gamma\R^q$, together with the section
$\varpi$. The second and much harder step is to represent the right hand side of (\ref{eq-lin-cond-crit}) by symplectic forms and to 
lift the action of $\Gamma$ on cohomology to an action by Poisson diffeomorphisms.

\begin{example} 
The simplest case to consider is $B= \S^1$ with its usual integral affine structure. The resulting problem turns out to be very closely related
to McDuff and Salamon's question on the existence of symplectic free circle actions with contractible orbits. Actually, 
the example given by Kotschick in that context \cite{Ko} turns out to be precisely the answer to our problem for $B= \S^1$ (see also \cite{Mar}).
That produces a very interesting example of Poisson manifold of strong compact type with leaf space $\S^1$ and $K3$ surfaces as symplectic leaves.
As we shall explain in future work, one can use the structure of the moduli space of marked K3 surfaces to 
apply Proposition \ref{reconstr-2} (the key feature is the strong Torelli theorem, which requires the most general version of condition (i))
and obtain similar PMCTs of strong compact type
with base the standard $\T^2$ and symplectic leaf the K3 surface (more generally, the Hilbert scheme of $n$ points on the K3 surface).
\end{example}
}

% \begin{proposition}
% \label{reconstr-2} 
% Consider an integral affine manifold of type $B= \mathbb{R}^q/\Gamma% $, with $\Gamma \subset \Aff_\Z(\R^q)$. Assume that $\Gamma$ acts on % a compact 1-connected manifold $S$ and:
% \begin{enumerate}[1.]
% \item one can find a symplectic form $\omega_0$ on $S$ and cohomology
% classes $c_1, \ldots, c_q\in H^2(S, \Z)$ on which $\Gamma$ acts according to (\ref{for-constr-PMCTS}) where, as in Example \ref{ex-gen-IAS}, we
% write the elements $\gamma\in \Gamma$ in the split form $\gamma= (v_{\gamma}, A(\gamma))$. 
% \item there is a smooth $\Gamma$-equivariant map % with values in the space of symplectic forms on $S$, 
% \[ \R^q\ni v \mapsto \omega_v \in \Omega^{2}_{\text{sympl}}(S),\]
% % with values in the space of symplectic forms on $S$ 
% such that, for all $v\in \mathbb{R}^q$, the following identity holds in $H^2(S, \R)$:
% \[ [\omega_v]= [\omega_0]+ v^1 c_1 + \ldots + v^q c_q.\]
% \end{enumerate}
% Then $M:= S\times_{\Gamma} \mathbb{R}^q$ is a regular Poisson manifold of strong s-proper type (hence, if $B$ is compact, then  $M$ is of strong compact type).
% \end{proposition}

%}

%%%%%%%%%%%%%%%%%%%%%%%%
%%%%%%%%%%%%%%%%%%%%%%%%
%%%%%%%%%%%%%%%%%%%%%%%%
%%%%%%%%%%%%%%%%%%%%%%%%
%%%%%%%%%%%%%%%%%%%%%%%%
%%%%%%%%%%%%%%%%%%%%%%%%
%%%%%%%%%%%%%%%%%%%%%%%%
\subsection{The twisted Dirac case}
\label{ssec:The twisted case} 
%%%%%%%%%%%%%%%%%%%%%%%%
%%%%%%%%%%%%%%%%%%%%%%%%
%%%%%%%%%%%%%%%%%%%%%%%%
%%%%%%%%%%%%%%%%%%%%%%%%
%%%%%%%%%%%%%%%%%%%%%%%%
%%%%%%%%%%%%%%%%%%%%%%%%
%%%%%%%%%%%%%%%%%%%%%%%%
We now briefly discuss the changes one needs to make so that the previous section applies also to twisted Dirac structures (for the motivation, please see the Introduction). Therefore we fix a closed 3-form $\phi\in \Omega^3(M)$ and a $\phi$-twisted Dirac structure $L$ on $M$ which, as before, 
we assume to be regular, of s-proper type, with 1-connected leaves, and with leaf space $B=M/\cF_L$.

To make sense of the linear variation, we interpret the class of fiberwise presymplectic forms as a section of a bundle over $B$, generalizing the section $\varpi\in \Gamma(\HH)$ used above. This forces us to consider the $\phi$-twisted version of $\HH$:
 
 \begin{definition} 
The $\phi$-twisted (second) cohomology at $b\in B$ is 
 \[
 \HH^\phi_b=H^2(S_b,\phi_b):=\frac{\{\beta\in \Omega^2(S_b)\,|\, \d\beta+\phi_{b}=0\}}{\{\d\Omega^1(S_{b})\}}.
 \]
 \end{definition}
 
Notice that this definition applies to any proper fibration $p: M\rightarrow B$ together with a closed 3-form $\phi\in \Omega^3(M)$ whose restriction $\phi_b$ to each fiber is exact (so that each $\HH^\phi_b$ is non-empty). 

The cohomology $\HH^\phi_b$ is not a vector space, but it is an integral affine space with underlying integral affine vector space $(\HH_{b}, \HH_{\Z, b})$. As  $\phi_b$ is exact,  Hodge theory implies that these affine spaces fit into an integral affine bundle $(\HH^\phi,\HH_\Z)$ with underlying integral vector bundle $(\HH,\HH_{\Z})$.

\begin{remark}[Integral affine bundles] 
\label{flat-integral-affine-bundles}
Given an affine bundle $E\rightarrow B$  we denote by $E^{\lin}$ its underlying vector bundle. An {\bf integral affine bundle} $(E,E^\lin_{\Z})$ is an affine bundle $E$ together with a lattice $E^{\lin}_{\Z}\subset E^{\lin}$.  As before, one can talk about 
% We also have the obvious notion of 
(weak) integral affine sub-bundle and of morphisms between integral affine bundles. 

The notion of {\bf affine connection} makes sense on any affine bundle $E\rightarrow B$: the space of sections $\Gamma(E)$ is an affine space with underlying vector space $\Gamma(E^\lin)$ and an affine connection is an affine map
% we consider affine maps
\[\n: \Gamma(E)\rightarrow \Omega^1(B,E^\lin)\]
whose linear part is a linear connection on $E^\lin$.  A (local) {\bf flat section} $s$ is a section satisfying $\n s=0$ and a {\bf flat affine connection} is one for which there exist local flat sections through every point in $E$. If the affine connection is flat, then so is it underlying linear connection on $E^{\lin}$. A {\bf flat integral affine bundle} is an integral affine bundle $(E,E^\lin_\Z)$ endowed with a flat affine connection $\n$ whose underlying linear connection $\n^{\lin}$ coincides with the one induced by the lattice $E^\lin_\Z$.

The notion of parallel transport and its basic properties extend to the setting of affine connections. In particular, a flat integral affine bundle is the same thing as an integral affine bundle $(E, E_{\Z}^{\lin})$ together with an action of $\Pi_1(B)$ on $E$ by integral affine transformations of the fibers.
Also, a {\bf morphism of flat integral affine bundles} is an integral affine morphism $f: E\to F$ which is $\Pi_1(B)$-equivariant.

While any vector space/bundle is canonically affine, any integral vector bundle is canonically a flat integral affine bundle, with the connection associated with the integral structure. An affine space/bundle is non-canonically isomorphic as affine spaces/bundles to its underlying vector space/bundle, and similarly if we add the adjective ``integral'', because an affine bundle always has a global section. However, a {\it flat} integral affine bundle $E$ is not isomorphic as flat integral affine vector bundles to its underlying integral affine vector bundle, unless $E$ has a global flat section. This is equivalent to the vanishing of the so-called {\bf radiance obstruction} of the flat affine bundle \cite{GH}. 
\end{remark}

\begin{example} 
Consider the tangent bundle $TB$ of an integral affine manifold $(B, \Lambda)$. Obviously, $(TB, \Lambda^{\vee})$ is an integral vector bundle, hence also a flat integral affine one: the corresponding action of $\Pi_1(B)$ on $TB$ is precisely the one induced by the linear holonomy representation (\ref{eq-the-linear-action}). However, $TB$ admits yet another flat integral affine structure, namely the one defined by the affine holonomy action of $\Pi_1(B)$ on $TB$ given by (\ref{eq-the-affine-action}). For $TB$ together with this flat integral affine structure we will reserve the notation $T^{\Aff}B$, and this is our realization of the affine tangent bundle from \cite{GH}. 
Of course, the only difference between $T^{\Aff}B$ and $TB$ lies on the $\Pi_1(B)$-action that one considers. % , hence they are different only as {\it flat} integral affine bundles. {GH}

In this framework, the identification of the variation with the affine variation from Theorem \ref{thm-affine-iso} is equivalent to saying that 
\[ \Iaff: (T^{\Aff}B,\Lambda^{\vee}) \to (\HH,\HH_{\Z})\]
is a morphism of flat integral affine bundles (see Remark \ref{rk-comm-diagram}). Moreover, the affine variation bundle $\Vspace^{\Aff}$ is a flat integral affine bundle, with underlying integral affine vector space $(\Vspace^{\lin}, \VspaceZ^{\lin})$ and the previous statement about $\Iaff$ can be split into two: (i) $\Iaff: T^{\Aff}B \to \Vspace^{\Aff}$ is a morphism of flat integral affine bundles and (ii) $\Vspace^{\Aff}$ is a weak sub-bundle of $(\HH, \HH_{\Z})$. 
\end{example}

% We can now introduce a flat connection on the integral affine bundle  $(\HH^\phi,\HH_\Z)$:
\begin{definition}
For any a proper fibration $p: M\rightarrow B$ with a closed 3-form $\phi\in \Omega^3(M)$ whose restriction to each fiber is exact, the  {\bf twisted Gauss-Manin connection}  $\nabla^\phi:  \Gamma(\HH^\phi)\rightarrow \Omega^1(B,\HH)$ is given by:
\begin{equation}\label{eq:twisted-variation}
 \langle \nabla^\phi_v(s),[\sigma_0]\rangle =\left.\frac{\d}{\d t}\right|_{t=0}\left(\int_{\partial ([0,t]\times \Sigma )}\sigma^*s+\int_{[0,t]\times \Sigma}\sigma^*\phi\right)
 \end{equation}
where $v\in T_bB$, $s\in\Gamma(B,\HH^{\phi})$, $[\sigma_0]\in H_2(S_b)$ is a homology class represented by a map $\sigma_0:\Sigma\rightarrow S_b$, and $\sigma: [0,1]\times \Sigma\rightarrow M$ is a map of fibrations extending $\sigma_0$ with base map a curve in $B$ representing $v$.
 \end{definition}
 
The independence of this definition on choices is a consequence of Stokes' theorem and the exactness of $\phi$ on fibers. One checks directly that $\nabla^\phi$ is a morphism of affine spaces, and that its linear part is the Gauss-Manin connection on $\HH$. 

A local flat section through any $c\in\cH^\phi_b$ can be constructed as follows: over a contractible neighborhood of $b$ the
 twisting form is exact: $\phi=\d\chi$. Therefore $[-\chi]_\phi$ defines a local flat section. It can be translated to attain the value $c$ at $b$ by adding the appropriate local flat section of $\HH$. Therefore $\HH^\phi$ is a flat integral affine bundle with underlying integral vector bundle $(\HH, \HH_{\Z})$. 

Now, given a $\phi$-twisted Dirac manifold $(M, L)$ we have the section $\varpi\in \Gamma(\HH^{\phi})$ and the twisted Gauss-Manin connection,
and this allows one to proceed as before. We can define the linear and affine variations by the same formulas:
\[ \Var^{\lin}:= \nabla^\phi \varpi: TB\to \HH, \quad \Var^{\Aff}:= \varpi+ \Var^{\lin}: TB\to \HH^{\phi}\]
and similarly for the linear/affine variation bundles $\Vspace^{\lin}$ and $\Vspace^{\Aff}$. Also, using the induced action of $\Pi_1(B)$ by parallel transport on $\cH^{\phi}$ one obtains the variation map 
\[ \Var: \Pi_1(B)\to \HH^\phi, \quad [\gamma]\mapsto \gamma^{*} \varpi.\]
and its image the variation bundle $\Vspace$. 

The main results from the previous section carry over to this context, with more or less obvious modifications. For example:

\begin{theorem}
\label{thm:twisted:variation}
For any s-proper $\phi$-twisted Dirac manifold $(M,L)$ with 
1-connec\-ted leaves and an $s$-connected, $s$-proper integration $(\cG,\Omega,\phi)$, the developing map $\dev$ of the integral affine structure on $B=M/\cF_\pi$ identifies the variation of $\varpi$ with its affine variation, i.e. one has a commutative diagram:
 \[ \xymatrix@R=15 pt{ 
 \Pi_1(B) \ar[rr]^-{\Var} \ar[rd]_-{\dev} &    & \HH^\phi        \\
 & TB \ar[ru]_-{\Iaff}                                     &            
 }\]
In particular, $\Vspace$ is open in $\Vspace^{\Aff}$ and they are both $\Pi_1(B)$-invariant.
\end{theorem}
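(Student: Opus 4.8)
The plan is to transcribe the proof of Theorem~\ref{thm-affine-iso} essentially verbatim, the only conceptual change being that the common target of the maps involved is now the flat \emph{integral affine} bundle $(\HH^\phi,\HH_\Z)$ equipped with the twisted Gauss--Manin connection $\nabla^\phi$, rather than the integral \emph{vector} bundle $(\HH,\HH_\Z)$; recall that $\nabla^\phi$ was shown to be flat just before the statement, so $\Pi_1(B)$ acts on $\HH^\phi$ by integral affine parallel transport. First I would observe that, although $\varpi$ is now a section of the affine bundle $\HH^\phi$, the map
\[ \delta(\varpi)\colon \Pi_1(B)\to \HH,\qquad [\gamma]\mapsto \gamma^*\varpi_{\gamma(1)}-\varpi_{\gamma(0)} \]
still makes sense and takes values in the underlying \emph{linear} bundle $\HH$, because $\gamma^*\varpi_{\gamma(1)}$ and $\varpi_{\gamma(0)}$ lie in the same affine fibre $\HH^\phi_{\gamma(0)}$. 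Exactly as in the untwisted case, the commutativity of the triangle in the statement is equivalent to the identity $\delta(\varpi)=\Var^{\lin}\circ\dev$, where $\dev\colon\Pi_1(B)\to TB$ is the developing cocycle of Section~\ref{sec:developing:map} and $\Var^{\lin}:=\nabla^\phi\varpi\colon TB\to\HH$.

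Next I would recognize these three maps as groupoid $1$-cocycles on $\Pi_1(B)$ obtained by integrating Lie algebroid $1$-cocycles on $TB$ with values in representations, exactly as in Section~\ref{sec:developing:map}: (a) $\delta(\varpi)$ is the coboundary of the $0$-cochain $\varpi$, and by the very definition of $\nabla^\phi$ through \eqref{eq:twisted-variation} its infinitesimal counterpart is $v\mapsto \nabla^\phi_v\varpi=\Var^{\lin}(v)$; (b) $\dev$ is the $1$-cocycle with values in $TB$ integrating the algebroid $1$-cocycle $\mathrm{Id}\colon TB\to TB$ --- this is the purely affine-geometric input of Section~\ref{sec:developing:map}, insensitive to $\phi$; (c) $\Var^{\lin}\colon TB\to\HH$ is a morphism of $\Pi_1(B)$-representations, being the linear part of the (claimed) morphism of flat integral affine bundles $\Iaff\colon T^{\Aff}B\to\HH^\phi$, i.e. the fibrewise intertwiner of the two flat connections. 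Since $\Pi_1(B)$ is source-$1$-connected, two groupoid $1$-cocycles with the same infinitesimal part coincide, and the corresponding infinitesimal diagram commutes trivially because $\Var^{\lin}\circ\mathrm{Id}=\Var^{\lin}$; hence the integrated diagram commutes, which by Remark~\ref{rk-comm-diagram} applied to $\HH^\phi$ is precisely the equivariance of $\Iaff\colon (T^{\Aff}B,\Lambda^\vee_\cG)\to(\HH^\phi,\HH_\Z)$.

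The two ``in particular'' assertions then follow formally. The affine variation bundle $\Vspace^{\Aff}=\im(\Iaff)$ is $\Pi_1(B)$-invariant since $\Iaff$ is equivariant, while $\Vspace=\im(\Var)$ is invariant because $\Var$ is a groupoid $1$-cocycle, so its image over each point is a coset stable under parallel transport; and $\Vspace$ is open in $\Vspace^{\Aff}$ because, fibrewise over a point $b_0$, one has $\Var=\Iaff\circ\dev$ with $\dev$ a local diffeomorphism onto an open subset of $T_{b_0}B$ and with $\Iaff$ restricted to $T_{b_0}B$ the map $v\mapsto\varpi_{b_0}+\Var^{\lin}(v)$, which is open onto its image (a linear surjection onto $\Vspace^{\lin}$ followed by a translation). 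The only step demanding a little care --- the sole point beyond bookkeeping --- is verifying (a): that the twisted Gauss--Manin connection \eqref{eq:twisted-variation} really integrates to the affine coboundary $\delta(\varpi)$, i.e. that $\tfrac{\d}{\d t}\big|_{t=0}\!\big(\gamma_t^*\varpi_{\gamma(t)}-\varpi_{\gamma(0)}\big)=\nabla^\phi_{\dot\gamma(0)}\varpi$. This reduces, via Stokes' theorem and the fibrewise exactness of $\phi$, to the same computation already carried out in the untwisted case, the extra term $\int_{[0,t]\times\Sigma}\sigma^*\phi$ in \eqref{eq:twisted-variation} accounting precisely for the inhomogeneous (affine) correction forced by $\d\varpi_b+\phi_b=0$.
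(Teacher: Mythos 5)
Your overall architecture is the paper's: transcribe the proof of Theorem \ref{thm-affine-iso}, reading the three maps as groupoid $1$-cocycles and differentiating. But you have misplaced the one point that actually requires work, and the way you dispose of it is circular. In your list (a)--(c), the load-bearing input is (c): that $\Var^{\lin}=\nabla^\phi\varpi\colon TB\to\HH$ is a morphism of $\Pi_1(B)$-representations. You justify this by saying it is ``the linear part of the (claimed) morphism of flat integral affine bundles $\Iaff$'' --- but the equivariance of $\Iaff$ is, by Remark \ref{rk-comm-diagram}, exactly equivalent to the commutativity of the triangle you are trying to prove, so you are assuming the conclusion. Note that flatness of $\nabla^\phi$ alone does not give (c): it only makes $\nabla^\phi\varpi$ a $\d^{\nabla}$-closed $\HH$-valued $1$-form, not a parallel one. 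In the untwisted case (c) is extracted in Proposition \ref{I-lin-iso} from the integrality $\Ilin(\Lambda^{\vee}_{\cG})\subset\HH_{\Z}$ (a lattice-preserving bundle map between integral vector bundles is automatically flat), and that integrality in turn rests on Proposition \ref{lin-var-as-var-sympl-areas}, which identifies $\nabla\varpi$ with the curvature class $[\Omega_\tau]$ and hence with the monodromy map, so that ${\cN_\mon}\subset\Lambda_{\cG}$ does the job.

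This is precisely the step the paper singles out as the only one needing care in the twisted setting: one must check that Proposition \ref{lin-var-as-var-sympl-areas} survives, i.e.\ that $\nabla^\phi_v\varpi$ paired against spheres is still computed by the monodromy map of the twisted Dirac structure. Concretely, a splitting $\tau$ of \eqref{pre-transitive-seq} is provided by any extension of the foliated $2$-form, its curvature \eqref{eq:curv-formula} is computed with the $\phi$-twisted Dirac bracket, and the resulting expression is exactly the right-hand side of \eqref{eq:twisted-variation} evaluated on spheres --- this is where the extra term $\int\sigma^*\phi$ genuinely matters, and it is the substance behind your parenthetical final sentence, which you instead attach to step (a). By contrast, the step you flag as ``the sole point beyond bookkeeping'', namely that $\delta(\varpi)$ differentiates to $\nabla^\phi\varpi$, is essentially tautological once $\nabla^\phi$ is known to be flat. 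So: right skeleton, but the one substantive verification --- the twisted analogue of Proposition \ref{lin-var-as-var-sympl-areas}, yielding $\Var^{\lin}(\Lambda^{\vee}_{\cG})\subset\HH_{\Z}$ and hence the equivariance in (c) --- must be supplied rather than cited as ``claimed''.
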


\begin{proof}
The proof of Theorem \ref{thm-affine-iso} applies word by word, with one exception: one needs to be careful with the inclusion  $\Var^{\lin}(\Lambda^{\vee}_{\cG})\subset \HH_{\Z}$  of Proposition \ref{I-lin-iso}. For that, we need to make sure that Proposition \ref{lin-var-as-var-sympl-areas} still holds, and that sends us back to a description of the monodromy map in terms of variations of presymplectic areas. That was based on the choice of a splitting $\tau$ of \eqref{pre-transitive-seq} and the use of its curvature \eqref{eq:curv-formula}.  Such a splitting is provided by any extension of the foliated form $\omega$ to a 2-form on $M$ and the curvature $\Omega_\tau$ is computed using the $\phi$-twisted Dirac bracket. The resulting formula is precisely \eqref{eq:twisted-variation}, where $\Sigma$ is a sphere and the variation is determined by the corresponding vector at the image of its north pole.
\end{proof}

%%%%%%%%%%%%%%%%%%%%%%%%
%%%%%%%%%%%%%%%%%%%%%%%%
%%%%%%%%%%%%%%%%%%%%%%%%
%%%%%%%%%%%%%%%%%%%%%%%%
%%%%%%%%%%%%%%%%%%%%%%%%
%%%%%%%%%%%%%%%%%%%%%%%%
%%%%%%%%%%%%%%%%%%%%%%%%
\subsection{Two examples from Lie theory}
%%%%%%%%%%%%%%%%%%%%%%%%
%%%%%%%%%%%%%%%%%%%%%%%%
%%%%%%%%%%%%%%%%%%%%%%%%
%%%%%%%%%%%%%%%%%%%%%%%%
%%%%%%%%%%%%%%%%%%%%%%%%
%%%%%%%%%%%%%%%%%%%%%%%%
%%%%%%%%%%%%%%%%%%%%%%%%

\subsubsection{Regular coadjoint orbits}\label{ex-regular-coadjoint}
Let $G$ be a compact, connected Lie group with $\gg$ its Lie algebra. The symplectic groupoid $(T^*G,\w_\mathrm{can})$ is an s-proper
integration of the linear Poisson structure on $\gg^*$. The regular set $M:={\gg^*_\reg}$, consisting of those coadjoint orbits with stabilizer a maximal torus, is a regular Poisson manifold of s-proper type and $\cG:=(T^*G)|_{{\gg^*_\reg}}\tto {\gg^*_\reg}$ is a proper symplectic integration, inducing a transversal integral affine structure $\Lambda_\cG$. Compact coadjoint orbits are 1-connected, so our standing assumption holds and the leaf space of ${\gg^*_\reg}$ is a smooth integral affine manifold $(B,\Lambda_B)$. In this example, we can relate our previous discussion with some standard facts and constructions from Lie theory (see, e.g., \cite{BD,DK}). We will describe here this relationship, leaving the verifications to the reader. 

Fix a maximal torus $\T\subset G$ and let $\cc\subset\tt^*$ be the interior of a Weyl Chamber.  We recall that $\tt$ is a full slice to the adjoint action of $G$ on $\gg$: any regular orbit intersects $\tt$ transversely with tangent space $[\tt,\gg]$. Dually, the splitting
\[
 \gg=\tt\oplus [\tt,\gg]
\]
embeds $\tt^*$ into $\gg^*$ as a full slice to the coadjoint action. Each regular orbit intersects $\cc$ exactly once,
so we get a canonical diffeomorphism: 
\begin{equation}
\label{eq:coadj-triv}
 G/\T \times \cc  \to \gg^{*}_{\reg}\quad (g\T,\xi)\mapsto \Ad_g^*\xi
 \end{equation}
Under this diffeomorphism, the symplectic form of the orbit through $\xi\in \cc$ is the unique left $G$-invariant form  $\w_\xi\in \Omega^2(G/\T)$ satisfying at $\xi\cong e\T$:
\begin{equation}
\label{eq:coadj-form}
\w_\xi(u,v)=\xi([u,v]),\quad u,v\in \gg/\tt=T_\xi(G/\T).
\end{equation}

Let us fix a coadjoint orbit $S_0\subset \gg^*_\reg$ through some point $\xi_0\in\cc$, so that $S_0\cong G/\T$. If $\Lambda_G=\Ker(\exp:\tt\to \T)$ and $\Lambda_w$ is the weight lattice, we have isomorphisms:
\begin{itemize}
\item The leaf space: $(B,\Lambda_B)\cong(\cc,\Lambda_G^{\vee})$;
\item The normal space: $(\nu_{\xi_0}(S_0),\Lambda_\cG|_{\xi_0})\cong(\tt^*,\Lambda_G^{\vee})$;
\item The cohomology: $(H^2(S_0),H^2(S_0,\Z))\cong (\tt^*_{\ss},\Lambda_w)$, where  $\gg=\zz\oplus \gg_{\ss}$ is the decomposition into center and semisimple part, and $\tt=\zz\oplus \tt_{\ss}$. 
\end{itemize}
Explicitly, the last isomorphism associates to an element $\xi\in\tt^*_{\ss}$ the cohomology class of the form $\omega_\xi$ given by \eqref{eq:coadj-form}, hence we find that:
\begin{enumerate}[(i)]
\item The developing map $\dev_0:B\to \nu_{\xi_0}(S_0)$ is the inclusion:
 \[\dev_0: (\cc,\Lambda_G^\vee)\hookrightarrow (\tt^*,\Lambda_G^\vee);\]
 \item The linear variation $\Ilin:\nu_{\xi_0}(S_0)\to H^2(S_0)$ is the projection:
 \[\Ilin:(\tt^*,\Lambda_G^{\vee})\to (\tt^*_{\ss},\Lambda_w);\]
 \item The linear and affine variations match, so that $\Var:B\to H^2(S_0)$ is:
 \[\Var:(\cc,\Lambda_G^{\vee})\to (\tt^*_{\ss},\Lambda_w);\]
 \end{enumerate}
 
Notice that that the linear variation is injective iff $(\gg_{\reg},\w)$ is of strong s-proper type (cf.~Corollary \ref{cor-lin-iso-l}), and this happens iff $\gg$ is semisimple. Moreover, in this case, the linear variation is an isomorphism of integral vector spaces iff $G$ is the simply connected integration (cf.~Proposition \ref{I-lin-iso}). 

In general, the linear variation is not injective and its kernel is precisely $\zz^*$. If we consider the leaves through $K=\xi+\zz^*$ we obtain a Poisson submanifold $M_K\subset\gg^*_\reg$ of zero-variation (cf.~Theorem \ref{I-lin-iso-gen} (ii)). On the other hand, the leaves through $T=\xi+\tt^*_{\ss}$ yield a Poisson submanifold $M_T\subset\gg^*_\reg$ of full-variation (cf.~Theorem \ref{I-lin-iso-gen} (iii)), Poisson diffeomorphic to ${(\gg_{\ss}^*)}_\reg$.

 \subsubsection{Principal conjugacy classes}\label{ex-principal-orbits}
Let $G$ be a compact, connected Lie group with Lie algebra $\gg$, let $\langle\cdot,\cdot\rangle $ be an $\Ad$-invariant inner product, and let $L_G$ the corresponding Cartan-Dirac structure on $G$ with twisting $\phi$ the Cartan 3-form \cite{AMM, BCWZ, SW}. Recall that its leaves are the conjugacy classes and an s-proper integration is provided by the conjugacy action groupoid $G\ltimes G$ endowed with the multiplicative 2-form:
\begin{equation}\label{eq:mult-Cartan-Dirac}
\Omega_G(g,h)=\frac{1}{2}\left(\langle \Ad_h\pr_1^*\theta^L,\pr^*_1\theta^L\rangle+\langle 
\pr_1^*\theta^L,
\pr^*_2(\theta^L+\theta^R)\rangle \right),
\end{equation}
where $\theta^L$ and $\theta^R$ are the left and right-invariant Maurer-Cartan forms. We have the following basic result, relating $(G,L_G)$ and $(\gg^*,\pi_\lin)$ (see \cite[Theorem 3.13]{ABM}):

\begin{proposition}
\label{prop:Cartan-Dirac:linear}
Let $\overline{\exp}:\gg^*\to G$ be the composition of $\exp:\gg\to G$ with the isomorphism $\gg^*\cong\gg$ given be the inner product. The pullback Dirac structure $\overline{\exp}^*(L_G)$ is smooth and there is a 2-form $\chi\in\Omega^2(\gg^*)$ giving a gauge transformation:
\[ \overline{\exp}^*(L_G)=e^{\chi}L_{\pi_\lin}. \]
\end{proposition}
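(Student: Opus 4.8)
The plan is to reduce Proposition \ref{prop:Cartan-Dirac:linear} to the algebraic identity relating $\exp^*\phi$ (the pullback of the Cartan 3-form to $\gg$) to a canonical exact form on $\gg$, together with an analysis of the linearization of the Cartan-Dirac structure at the identity. First I would recall the structure of the Cartan-Dirac structure $L_G$: a section is a pair $(X,\xi)$ with $\xi \in T^*_gG$ the image, via the inner product, of $\tfrac12(\theta^L+\theta^R)(X)$, and $L_G$ is $\phi$-twisted with $\phi=\tfrac{1}{12}\langle\theta^L,[\theta^L,\theta^L]\rangle$ the Cartan 3-form. The pullback $\overline{\exp}^*L_G$ at a point $\mu\in\gg^*$ involves $\d\overline{\exp}$, and the key computational input is the classical formula for the differential of the exponential map, $\d_X\exp = (dL_{\exp X})\circ \tfrac{1-e^{-\ad_X}}{\ad_X}$, which shows that $\d\overline{\exp}$ is an isomorphism precisely away from the ``wall'' set where $\tfrac{1-e^{-\ad_X}}{\ad_X}$ is singular. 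To get smoothness of the pullback Dirac structure everywhere, I would invoke the result of \cite{ABM} cited in the statement, or sketch the argument that $L_G$ is transverse to $\overline{\exp}$ in the appropriate sense so that the pullback (in the category of Dirac structures, using the formula for pullback of a Dirac structure under a smooth map) stays smooth even across the walls; this smoothness is the subtle point and I would lean on \cite[Theorem 3.13]{ABM} for it rather than reprove it.

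Next I would identify the gauge 2-form $\chi$. The strategy is: both $\overline{\exp}^*L_G$ and $L_{\pi_\lin}$ are Dirac structures on $\gg^*$ whose underlying foliations agree (the leaves of $\overline{\exp}^*L_G$ are the connected components of preimages of conjugacy classes, which near $0$ are the coadjoint orbits — this uses that $\overline{\exp}$ intertwines the coadjoint action of $G$ on $\gg^*$ with the conjugation action on $G$, a consequence of $\Ad$-equivariance of $\exp$). Two Dirac structures with the same null foliation differ by a gauge transformation $e^\chi$ with $\chi$ a 2-form whose restriction to each leaf records the difference of the leafwise (pre)symplectic forms. Concretely, $L_{\pi_\lin}$ has leafwise form the KKS symplectic form $\omega_\mu$ on the coadjoint orbit through $\mu$, while $\overline{\exp}^*L_G$ has leafwise form $\overline{\exp}^*\omega^{L_G}$ where $\omega^{L_G}$ is the leafwise 2-form on conjugacy classes determined by $\Omega_G$. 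So I would set $\chi$ to be the 2-form on $\gg^*$ whose leafwise restriction is $\overline{\exp}^*\omega^{L_G} - \omega_{\mathrm{KKS}}$, extended to all of $\gg^*$ by declaring it to vanish on a chosen complement to the foliation (i.e. on the normal directions). For this to define a genuine gauge transformation of Dirac structures one needs $\phi_{L_{\pi_\lin}} + \d\chi = \overline{\exp}^*\phi$ — but $L_{\pi_\lin}$ is untwisted, so the requirement is exactly $\d\chi = \overline{\exp}^*\phi$.

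The heart of the proof is therefore verifying $\d\chi = \overline{\exp}^*\phi$ as 3-forms on $\gg^*$. Here I would use the well-known primitive of $\exp^*\phi$: pulling the Cartan 3-form back along $\exp:\gg\to G$ gives $\exp^*\phi = \d\beta$ where $\beta\in\Omega^2(\gg)$ is the canonical 2-form $\beta_X(u,v) = \langle u, \tfrac{\sinh(\ad_X)-\ad_X}{\ad_X^2}\, v\rangle$ (or an equivalent explicit expression appearing in the literature on group-valued moment maps, e.g. \cite{AMM}). Transporting via the inner product isomorphism $\gg\cong\gg^*$, this $\beta$ should match (up to the leafwise comparison already built in) the 2-form $\chi$; one then checks that the ``leafwise + zero on normals'' extension chosen above is forced, up to a closed form, and that closed form can be absorbed. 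The main obstacle I expect is precisely this matching: keeping track of the two natural 2-forms — the one coming from the explicit multiplicative form $\Omega_G$ in \eqref{eq:mult-Cartan-Dirac} restricted to conjugacy classes and pulled back by $\overline{\exp}$, versus the abstract primitive $\beta$ of $\exp^*\phi$ — and showing their difference is a closed form whose cohomological contribution is trivial (so that it only changes $\chi$ by a gauge-irrelevant term, or can be normalized away). I would organize this by reducing everything to the identity on a maximal torus and its Weyl-chamber slice, using $\Ad$-invariance to propagate, and comparing the restrictions of $\d\chi$ and $\overline{\exp}^*\phi$ to the slice directions and the orbit directions separately. Once $\d\chi=\overline{\exp}^*\phi$ is established, the equality $\overline{\exp}^*L_G = e^\chi L_{\pi_\lin}$ follows formally from the uniqueness of a Dirac structure given its null foliation, leafwise 2-form, and twisting, completing the proof.
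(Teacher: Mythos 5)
The paper does not actually prove this proposition: it is quoted directly from the literature, with the reference \cite[Theorem 3.13]{ABM} given in place of a proof, and the only additional information supplied is the remark immediately following the statement, namely that $\chi$ is the $\Ad^*$-invariant canonical primitive of $\overline{\exp}^*\phi$. So there is no in-paper argument to compare yours against; the fair comparison is with the cited source, whose strategy your sketch resembles in outline (an explicit invariant primitive $\varpi$ of $\exp^*\phi$ from the group-valued moment map literature, plus a leafwise comparison) but from which it deviates in a way that creates real gaps.

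Two steps in your sketch would not go through as written. First, your construction of $\chi$ runs the logic backwards: you define $\chi$ by its restriction to the (coadjoint-orbit) leaves and extend it by zero on a complement, and then hope that $\d\chi=\overline{\exp}^*\phi$. For that particular extension there is no reason this identity holds, and moreover the orbit foliation is singular (orbit dimensions jump), so "leafwise difference extended by zero on normals" is not even obviously a smooth $2$-form on all of $\gg^*$. The correct order — and the one the paper's remark points to — is to take $\chi$ to be the canonical homotopy-operator primitive of $\overline{\exp}^*\phi$ from the outset and then verify, by direct computation with the generating sections of $L_G$ and the formula for $\d\exp$, that $e^{-\chi}\overline{\exp}^*L_G=L_{\pi_\lin}$. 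Second, your proposed fix of "absorbing a closed form" is invalid: if $\sigma$ is closed then $e^{\chi+\sigma}L_{\pi_\lin}=e^{\sigma}\bigl(e^{\chi}L_{\pi_\lin}\bigr)$ still changes the leafwise presymplectic forms by $\sigma|_{\text{leaf}}$, so a closed discrepancy cannot be normalized away without destroying the leafwise matching you just arranged. Finally, the smoothness of $\overline{\exp}^*L_G$ across the singular locus of $\d\overline{\exp}$ — which is the genuinely nontrivial content of the statement — is deferred in your sketch to the very theorem being proved, while the rest of your argument (identifying the leaves and leafwise forms of the pullback) presupposes that smoothness. Given that the paper itself only cites \cite{ABM}, leaning on that reference is acceptable, but then the honest version of your proof is simply the citation, not the leafwise reconstruction of $\chi$.
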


The 2-form $\chi$ in the proposition is an $\Ad^*$-invariant, canonical primitive of the pullback of the Cartan 3-form: $\overline{\exp}^*\phi=\d\chi$.

Let us now restrict to the regular set $G^\reg\subset G$, formed by the conjugacy classes of maximal dimension. We obtain an s-proper presymplectic $\phi$-twisted integration $(\cG,\Omega)=(G\ltimes G^\reg,\Omega_G)\tto G^\reg$, inducing a transverse integral affine structure $\Lambda_\cG$ to the foliation consisting of conjugacy classes in $G^\reg$. 

We recall that a regular orbit is called \emph{principal} if its isotropy is connected. A good example to keep in mind is $G=\mathrm{SO}(3)$, whose non-trivial conjugacy classes are all regular and among these there is only one which is non-principal, namely the conjugacy class of a non-trivial diagonal matrix. Principal orbits are 1-connected, and therefore $(G^\princ,L_G)$ is a connected regular twisted Dirac manifold of s-proper type, satisfying our standing assumption. Hence, the leaf leaf space $B=G^\princ/G$ is a smooth manifold carrying an integral affine structure $\Lambda_B$ such that $\Lambda_\cG=p^*\Lambda_B$.  Again, we can relate our previous discussion with some standard facts from the Lie theory of conjugacy classes of compact Lie groups (see.e.g., \cite{DK,BD}).

A maximal torus $\T\subset G$ is a full slice for the conjugation action, so $\T^\reg=\T\cap G^\reg$ and $\T^\princ=\T\cap G^\princ$ are also slices for the restricted action on $G^\reg$ and $G^\princ$. If $K$ is a connected component of $\T^{\reg}$ and $B$ is a connected component of $\T^\princ\cap K$, then we obtain a diffeomorphism:
% \begin{equation}\label{eq:conj-triv}
\[
G/\T \times B  \to  G^{\princ},\quad (g\T,k) \mapsto gkg^{-1},
\]
% \end{equation}
so $B$ is identified with the leaf space $G^\princ/G$. 

In general, $B$ is not 1-connected and one can identify its universal cover using $\exp:\tt\to\T$. A choice of positive roots determines a Weyl alcove of $\gg$, which is a connected component $\aa\subset\tt$ of $\exp^{-1}(\T^{\reg})$. A Weyl alcove of $G$ is a connected component $\aa_G\subset \aa$ of $\exp^{-1}(\T^{\princ})$ and the exponential $\exp:\aa_G\to B$ gives a covering map, so that $\widetilde{B}=\aa_G$.  It follows that we have a surjective local diffeomorphism:
\[ G/\T\times\aa_G\to G^\princ,\qquad (g\T,\xi)\mapsto g\exp(\xi)g^{-1}=\exp(\Ad_g\xi), \]
identifying the 2-form on the conjugacy class determined by $\xi\in\aa_G$ with:
\begin{equation}
\label{eq:twisted:2-form}
\widetilde{\chi}_\xi+\omega_\xi\in\Omega^2(G/\T).
\end{equation}
Here, $\omega_\xi$ is the symplectic form \eqref{eq:coadj-form} and $\widetilde{\chi}_\xi$ is the restriction to $G/\T\times\{\xi\}$ of the form $\widetilde{\chi}\in\Omega^2(G/\T\times\aa_G)$ obtained by pulling back the form $\chi$ in Proposition \ref{prop:Cartan-Dirac:linear} along the map \eqref{eq:coadj-triv}.

Now fix a conjugacy class $S_0\subset G^\princ$ through some point $g_0\in\exp(\aa_G)$, so that $S_0\cong G/\T$. If $\Lambda_G=\Ker(\exp:\tt\to \T)$, $\Lambda_w$ is the weight lattice, and 
\[ \Lambda_G^*:=\{v\in\tt:\langle v,\lambda\rangle\in\Z:\ \forall \lambda\in\Lambda_G\}, \] 
we find isomorphisms:
\begin{itemize} 
\item $(\nu_{g_0}(S),\Lambda_\cG|_{g_0})\cong (\tt,\Lambda^*_G)$;
\item $(H^2(S_0),H^2(S_0,\Z))\cong (\tt^*_{\ss},\Lambda_w)$.
\end{itemize}
We conclude that:
\begin{enumerate}[(i)]
 \item The developing map $\dev_0:\widetilde{B}\to \nu_{g_0}(S_0)$ is the inclusion:
 \[ \dev_0: (\aa_G,{\Lambda}^*_G)\hookrightarrow (\tt,\Lambda^*_G);\]
 \item The linear variation map  $\Ilin: \nu_{g_0}(S_0)\to H^2(S_0)$ is the composition of the isomorphism $\tt\cong \tt^*$ given by the inner product, with the projection onto $\tt^*_{\ss}$:
 \[\Ilin: (\tt,\Lambda^*_G)\to (\tt^*_\ss,\Lambda_w);\]
  \item The pullback to $\widetilde{B}=\aa_G$ of the bundle $\cH^{\phi}$ of twisted 2-cohomology groups trivializes and has the flat section $\xi\mapsto [\widetilde{\chi}_\xi]$, which allows to identify the linear and affine variation.
  \item The variation map $\Var:\widetilde{B}\to H^2(S_0)$ becomes the inclusion:
  \[\Var:(\aa_G,\Lambda^*_G)\to (\tt^*,\Lambda_w^\vee).\]
\end{enumerate}

Again, we note that the linear variation is injective iff $(G^\princ,L_G)$ is of strong s-proper type and this happens iff $\gg$ is semisimple. In this case the linear variation is an isomorphism of integral vector spaces iff $G$ is the simply connected integration.

%%%%%%%%%%%%%%%%%%%%%%%%
%%%%%%%%%%%%%%%%%%%%%%%%
%%%%%%%%%%%%%%%%%%%%%%%%
%%%%%%%%%%%%%%%%%%%%%%%%
%%%%%%%%%%%%%%%%%%%%%%%%
%%%%%%%%%%%%%%%%%%%%%%%%
%%%%%%%%%%%%%%%%%%%%%%%%
%%%%%%%%%%%%%%%%%%%%%%%%
%%%%%%%%%%%%%%%%%%%%%%%%
%%%%%%%%%%%%%%%%%%%%%%%%
%%%%%%%%%%%%%%%%%%%%%%%%
%%%%%%%%%%%%%%%%%%%%%%%%
%%%%%%%%%%%%%%%%%%%%%%%%
%%%%%%%%%%%%%%%%%%%%%%%%
\section{The linear variation theorem II: the general case}
\label{sec:lin-var-2}
%\section{\underline{The Duistermaat-Heckman linear variation theorem: the general case}}
% \section{\underline{Linear variation of the symplectic forms II: the general case}}
%%%%%%%%%%%%%%%%%%%%%%%%
%%%%%%%%%%%%%%%%%%%%%%%%
%%%%%%%%%%%%%%%%%%%%%%%%
%%%%%%%%%%%%%%%%%%%%%%%%
%%%%%%%%%%%%%%%%%%%%%%%%
%%%%%%%%%%%%%%%%%%%%%%%%
%%%%%%%%%%%%%%%%%%%%%%%%
  
We now extend the results from the previous section to general PMCTs of s-proper type, removing the assumption that symplectic leaves are 1-connected. The main difference is that now the leaf space is an orbifold. We will see how to state an appropriate version of the linear variation theorem, which will be a statement that holds on an orbifold bundle made of cohomologies of the symplectic leaves.

%%%%%%%%%%%%%%%%%%%%%%%%
%%%%%%%%%%%%%%%%%%%%%%%%
%%%%%%%%%%%%%%%%%%%%%%%%
\subsection{The developing map for transverse integral affine foliations}
\label{sec:IAS:orbifolds}
%%%%%%%%%%%%%%%%%%%%%%%%
%%%%%%%%%%%%%%%%%%%%%%%%
%%%%%%%%%%%%%%%%%%%%%%%%
%%%%%%%%%%%%%%%%%%%%%%%%
Since we do not have a smooth leaf space anymore, we are now forced to work with transverse integral affine structures. Let us point out how
the discussion in Section \ref{sec:developing:map}, on the developing map of integral affine manifolds, can be extended to the setting of transversally integral affine foliations \cite{GHL,Molino,Thur}. 

Given a foliation $(M,\cF)$ with a transverse integral affine structure $\Lambda$, in the intrinsic approach to the developing map one now has:
\begin{enumerate}[(i)]
\item An {\bf induced flat connection} $\nabla$ on $\nu(\cF)$ for which the local sections of $\Lambda$ are flat.
The connection gives rise to a {\bf linear holonomy} action (by parallel transport) of $\Pi_1(M)$ on $\nu(\cF)$:
% \begin{equation}\label{lin-action-transversal} 
\[
h^{\lin}: \Pi_1(M) \to \GL_\Lambda(\nu(\cF)).
\]
% \end{equation}
Its image will be denoted by $\Pi_1^{\lin}(M)\subset\GL_\Lambda(\nu(\cF))$.
\item The projection map $TM\to \nu(\cF)$ is an algebroid 1-cocycle and it integrates to the {\bf developing map}:
% \begin{equation}
% \label{dev-tr-fol} 
\[
\dev: \Pi_1(M)\to \nu(\cF).
\]
% \end{equation}
This, together with the linear holonomy action, gives rise to the {\bf affine holonomy action} 
% \begin{equation}\label{aff-action-transversal} 
\[
h^{\Aff}: \Pi_1(M)\to \Aff_\Lambda(\nu(\cF)).
\]
% \end{equation}
Its image will be denoted by $\Pi_1^{\Aff}(M)\subset \Aff_\Lambda(\nu(\cF))$.
\end{enumerate}
As in Section \ref{sec:developing:map}, to be more concrete one fixes
\begin{itemize}
\item a base point $x\in M$, and 
\item a $\Z$-basis $\mathfrak{b}_\Lambda=\{\lambda_1, \ldots, \lambda_q\}$ for $\Lambda_{x}$.
\end{itemize}
Upon restriction, we obtain the based/classical linear and affine holonomy representations (see \cite{GHL}):
\begin{equation}
\label{eq-hol-explic-TIAS}
h^{\lin}_0: \pi_1(M,x) \to\GL_{\Z}(\R^q) ,\quad h^{\Aff}_0: \pi_1(M, x)\to \Aff_{\Z}(\R^q),
\end{equation}
and the based developing map:
\[ \dev_0:(\widetilde{M},\widetilde{\Lambda})\to (\R^q,\Z^q), \]
which is a $\pi_1(M,x)$-equivariant integral affine submersion.
The images of these representations are the linear holonomy group $\Gamma^{\lin}\subset\GL_\Z(\R^q)$ and
the affine holonomy group $\Gamma^{\Aff}\subset\Aff_\Z(\R^q)$.

%%%%%%%%%%%%%%%%%%%%%%%%
%%%%%%%%%%%%%%%%%%%%%%%%
%%%%%%%%%%%%%%%%%%%%%%%%
\subsection{The linear variation theorem}
\label{sec:s-proper type}  
%%%%%%%%%%%%%%%%%%%%%%%%
%%%%%%%%%%%%%%%%%%%%%%%%
%%%%%%%%%%%%%%%%%%%%%%%%
%%%%%%%%%%%%%%%%%%%%%%%%
We assume now that $(M,\pi)$ is a regular Poisson structure and $(\cG,\Omega)\tto M$ is an s-connected, s-proper integration. We denote by $\Lambda= \Lambda_{\cG}$ the induced integral transverse integral affine structure and by $\cBG= \cBG(\cG)$ the induced integration of $\cF_{\pi}$, so we have a short exact sequence of Lie groupoids:
\[ \xymatrix{ 1\ar[r]& \nu^*(\cF_{\pi})/\Lambda \ar[r] & \cG\ar[r]& \cBG\ar[r] &1}. \]
We endow $B=M/\cF_\pi$ with the orbifold structure with atlas $\cBG$ (Theorem \ref{thm-reg-fol}). 
%In the leaf space $B=M/\cF_\pi$ we consider the orbifold structure with atlas $\cBG$ (see Theorem \ref{thm-reg-fol}). 

In order to study the variation of the leafwise symplectic forms, we need a better understanding of the ``vector bundle'' $\cH\to B$ with fiber $\cH_b= H^2(S_b)$, where the leafwise symplectic forms live. The problem is that $\cH$ is now only a set-theoretical vector bundle and even the ranks of the fibers may vary from point to point! To solve this problem we should proceed as follows:
\begin{itemize}
\item replace $\cH\to B$ by the representation $\cHE \to M$ of $\cBG\tto M$ defined by:
\[ \cHE_{x}= H^2(\cBG(x, -)),\]
where the action is the one induced from the right action of $\cBG$ on itself;
 \item replace $\Gamma(\cH)$ by the space $\Gamma( \cHE)^{\text{inv}}$ consisting of $\cBG$-invariant sections of the representation. Note that, while a priori $\Gamma(\cH)$ does not make sense, $\Gamma(\cHE)^{\text{inv}}$ sits inside the space of set-theoretical sections of $\cH$:
\begin{equation}
\label{incl-inv-set} 
\Gamma(\cHE)^{\text{inv}}\subset \Gamma_{\text{set}}(\cH).
\end{equation}
The image of this inclusion could be taken as definition of $\Gamma(\cH)$.    
\end{itemize}
The inclusion (\ref{incl-inv-set}) comes from the canonical isomorphisms 
\begin{equation}\label{identif-orbibundle} 
(\cHE_{x})^{\cBG_x-\text{inv}} \cong \cH_{b} ,
\end{equation}
valid for all $x\in M$, where $b= p(x)$ and $p: M\to B$ is the projection. This holds because $t: \cBG(x, -)\to S_{b}$ is a $\cBG_x$-covering projection and the isotropy $\cBG_x$ is finite. In this way, for any invariant section $\sigma$ of $\cHE$, $\sigma(x)$ makes sense as an element of $\cH_b$. Moreover, for any other $y$ with $p(y)= b$, there exists an arrow $g: x\to y$ in $\cBG$ and right action by $g$ becomes, after the identifications (\ref{identif-orbibundle}), the identity map on $\cH_b$. The invariance of $\sigma$ implies that $\sigma(x)\in \cBG_b$ only depends on $b$, therefore making sense of $\sigma$ as a section of $\cH$. \\

%%%%MARIUS: I HAD TO CHANGE A BIT HERE BEFORE WE STARTED TALKING ABOUT $\varpi$ BUT WE FORGOT TO SAY WHAT IT IS. 

Therefore, while (\ref{omega-as-section}) defines $\varpi$  only as a set-theoretical section of $\cH$, the previous discussion shows that the same information is obtained by considering the {\it smooth} section 
\[ \varpi \in \Gamma(\cHE)^{\text{inv}}\]
which, as a section of $\cHE$, is defined by pulling back along the target map of $\cB$ the cohomology classes of the leafwise symplectic forms.

\begin{remark}[Orbivector bundles]
\label{rem:cohom:orbifld}
The previous discussion belongs to the world of orbivector bundles (see \cite{ALR, Moe02} and Remark \ref{rmk:structures on orbifolds}):
an orbivector bundle over the orbifold $(B,\cB )$ is given by a linear representation $E^\cB  \to B$ of the groupoid $\cB$.  The bundle $\cHE$ is, of course, an example. Another example is provided by the tangent bundle to the orbifold: it is represented by the normal bundle $\nu(\cF)\to M$ of the foliation on the base, endowed with the linear holonomy action of $\cBG $. 

For any orbivector bundle $E^\cB  \to M$ over  $(B,\cB )$ its space of (smooth) sections is defined as $\Gamma(E^{\cB })^{\text{inv}}$. A morphism of orbivector bundles $E_1^\cB \to E_2^\cB $ (over the identity) is just a morphism of the representations and it induces a map between the space of sections $\Gamma(E_1^\cB )^{\text{inv}}\to \Gamma(E_2^\cB )^{\text{inv}}$. 
\end{remark}
\vskip .1 in

The previous remark suggests that the linear variation map $TB\rightarrow \cH$ of Section \ref{sec:variation1}, should now be replaced by a
morphism of orbivector bundles $TB\to \cH$, i.e., a morphism of representations $\nu^{*}(\cF_{\pi})\to \cHE$. Indeed, using the Gauss-Manin connection on $\cHE$ induced by $\cHE_{\Z}$, we define:

\begin{definition}
The {\bf linear variation} is the morphism of orbivector bundles:
\[ \Ilin: \nu(\cF_\pi)\to \cHE, \ \ v\mapsto \nabla_{v}\varpi. \]
\end{definition}

This map should now be seen as a morphism of integral representations of $\cBG $ or, equivalently, of integral orbivector bundles over $B$. We also consider the resulting linear variation space:
\[ (\Vspace^{\lin}, \VspaceZ^{\lin})= \Ilin(\nu(\cF_\pi), \Lambda_{\cG}^{\vee}).\]
We will see that this is an integral vector bundle sitting weakly inside $(\cHE, \cHE_{\Z})$. 

The integral structures make $\cHE$ and $\nu(\cF_{\pi})$ also into representations of the fundamental groupoid $\Pi_1(M)$ using the holonomy 
\[ h_{\nabla}: \Pi_1(M) \to\GL_\Z(\cHE)\]
induced by $\nabla$ (and similarly for $\nu(\cF_{\pi})$). It is not difficult to see that the actions of $\cBG $ and $\Pi_1(M)$ on $(\cHE, \cHE_{\Z})$ are compatible, since we have the commutative diagram:
\[ \xymatrix{
\Pi_1(M) \ar[r]^-{ h_{\nabla}} &\GL_\Z(\cHE) \\
\Mon(M, \cF_\pi)  \ar[u]^-{i_*}  \ar[r]^{h_{\cBG }} &  \cBG  \ar[u]_{\rho_{\cBG }} 
} \]
where $h_{\cBG }$ is the submersion associated to the foliation groupoid $\cBG $ and $\rho_{\cBG }$ is the action of $\cBG $ on $\cHE$ (and similarly for $\nu(\cF_{\pi})$). 
%Of course, one expects $\Ilin$ to be also $\Pi_1(M)$-equivariant and to preserve the integral structures but, this time, these statements are not consequences of ``abstract non-sense".  

We can now extend Proposition \ref{I-lin-iso}, Corollary \ref{cor-lin-iso-l} and then Theorem \ref{I-lin-iso-gen} to general s-proper Poisson manifolds.  We first formulate the analogues of Proposition \ref{I-lin-iso} and Theorem \ref{I-lin-iso-gen} together. Similar to the null-variation foliation $\cK$ introduced there, we now define $\cK_{M}\subset TM$ by 
\[ \Ker(\Var^{\lin})= \cK_{M}/\cF_{\pi} \subset \nu(\cF_{\pi}).\]
One should also recall the $\cBG $-monodromy groups $\cN_\cBG$ associated with any foliation groupoid $\cB$ integrating $(M,\cF_\pi)$, introduced in Section \ref{sec:hol:monodromy}.

\begin{theorem}\label{gen-I-lin-iso2} If $\cG$ is an s-connected, s-proper integration of $(M, \pi)$ then: 
\begin{enumerate}[(i)]
\item $\Ilin$  is a $\Pi_1(M)$-equivariant morphism of integral vector bundles,
\[ \Ilin: (\nu(\cF_\pi),\Lambda^{\vee}_{\cG})\to (\cHE, \cHE_{\Z})\]
with kernel $\cN_\cBG ^0$ and image $(\Vspace^{\lin}, \VspaceZ^{\lin})$.
\item $\cK_M$ is an integrable distribution and $\cN_{\cBG }\subset \nu^*(\cK_M)$ defines a transverse integral affine structure for $\cK_M$.
\item every leaf $\widetilde{K}$ of $\cK_M$ is a Poisson submanifold of $(M, \pi)$ saturated by symplectic leaves
and $(\widetilde{K}, \pi|_{\widetilde{K}})$ is of s-proper type with zero-variation. 
\item for any transversal $T$ to $\cK_{M}$ of complementary dimension its saturation $M_T$ with respect to the symplectic foliation is a Poisson submanifold of $s$-proper type with full-variation. It is of strong s-proper type if ${\cN_\mon}|_T$ is a lattice.
\end{enumerate}
\end{theorem}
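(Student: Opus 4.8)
The plan is to reduce the orbifold statement to the smooth case treated in Proposition \ref{I-lin-iso} and Theorem \ref{I-lin-iso-gen} by working systematically on $M$ (rather than on the leaf space $B$), using the equivariance with respect to $\Pi_1(M)$ as the mechanism that upgrades local/pointwise statements to global structural ones. Concretely, everything is phrased in terms of the representations $\nu(\cF_\pi)$, $\cHE$ of the foliation groupoid $\cBG$ and of $\Pi_1(M)$, so the first task is to verify that $\Ilin$ is a well-defined morphism of such representations — this is where the compatibility diagram relating $h_\nabla$, $h_{\cBG}$, $\rho_{\cBG}$ and $i_*$ (displayed just before the statement) does the work: it guarantees that the Gauss–Manin parallel transport commutes with the $\cBG$-action, so that $\nabla_v\varpi$ makes sense and is $\cBG$-equivariant because $\varpi\in\Gamma(\cHE)^{\mathrm{inv}}$ is.

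For part (i): the identification of the kernel with $\cN_\cBG^0$ is the heart of the matter and should be proved by the same chain of equivalences as in Proposition \ref{I-lin-iso}, but now carried out \emph{at the level of $M$} and relative to the cover $\cBG(x,-)\to S_b$ rather than the universal cover. The key input is the $\cBG$-version of Proposition \ref{lin-var-as-var-sympl-areas}: for $x\in M$ and $v\in\nu_x(\cF_\pi)$, one has $\int_\sigma p_{\cBG}^*\Ilin(v) = \langle\partial_{\cBG,x}[\sigma],v\rangle$ for $[\sigma]\in H_2(\cBG(x,-),\Z)$, which follows by pulling back the curvature formula \eqref{eq:curv-formula} along $p_{\cBG}:\cBG(x,-)\to S$ exactly as in the proof of Theorem \ref{ext-mon-crit}. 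Then $\Ilin(v)=0$ in $\cHE_x=H^2(\cBG(x,-))$ iff $v$ pairs to zero with all of $\cN_\cBG|_x$, i.e. $v\in\cN_\cBG|_x^{\,0}$. The inclusion $\Ilin(\Lambda_\cG^\vee)\subset\cHE_\Z$ uses $\cN_\cBG\subset\Lambda_\cG$ (Theorem \ref{ext-mon-crit}, last assertion) together with the integrality of the leafwise integral cohomology, again mirroring Proposition \ref{I-lin-iso}; and $\Pi_1(M)$-equivariance is immediate from the definition of both actions as parallel transport. Finally $(\Vspace^{\lin},\VspaceZ^{\lin})$ is a weak integral sub-bundle: it has constant rank because $\Ilin$ is $\Pi_1(M)$-equivariant and hence has constant-rank kernel (by transitivity of $\Pi_1(M)$ on each leaf and the fact that $B$ is connected), and $\VspaceZ^{\lin}$ is a lattice in $\Vspace^{\lin}$ because $\Ilin$ descends to a surjection $\nu(\cF_\pi)/\Lambda_\cG^\vee \to \Vspace^{\lin}/\VspaceZ^{\lin}$ from the (fiberwise compact) torus bundle $\nu(\cF_\pi)/\Lambda_\cG^\vee$.

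For parts (ii)–(iv): constant rank of $\cK_M=\Ker(\Ilin)+\cF_\pi$ follows from (i); involutivity follows because the Gauss–Manin connection is flat, so its curvature vanishes and the kernel of a flat morphism is integrable, exactly as in Theorem \ref{I-lin-iso-gen}. That $\cN_\cBG$ spans $\nu^*(\cK_M)$ and consists of closed (indeed $\cF_\pi$-basic) forms — hence is a transverse integral affine structure for $\cK_M$ — is the $\cBG$-analogue of part (i) of Theorem \ref{I-lin-iso-gen}, using $\cN_\cBG\subset\Lambda_\cG$ to get closedness and the span computation from the curvature description. For (iii), a leaf $\widetilde K$ of $\cK_M$ is automatically saturated by symplectic leaves since $\cF_\pi\subset\cK_M$; it is a Poisson submanifold because $\cK_M$ is a $\pi^\sharp$-invariant coisotropic-type distribution (its conormal $\cN_\cBG\subset\nu^*(\cF_\pi)=\Ker\pi^\sharp$); and $(\widetilde K,\pi|_{\widetilde K})$ is of s-proper type by restricting $\cG$, with zero-variation because its normal bundle is precisely $\Ker(\Ilin)$. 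For (iv), one checks that $M_T$ is a Poisson submanifold of s-proper type whose linear variation is the injective descent of $\Ilin$ through $\nu(\cF_\pi)/\cK_M$, hence full-variation by Corollary \ref{cor-lin-iso-l}(ii) applied fiberwise; and it is strong s-proper precisely when $\cN_\mon|_T$ is a lattice, by Theorem \ref{thm-lattice-strong-case}(ii).

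\textbf{Main obstacle.} The delicate point is the constant-rank / sub-bundle smoothness in (i) and the interaction of the two lattices $\cN_\cBG\subset\Lambda_\cG$ in (ii)–(iii): unlike in the 1-connected case, $\cBG(x,-)$ is not simply connected, so one must argue with $H_2(\cBG(x,-),\Z)$ (modulo torsion), and one needs the closedness of the sub-bundle $\cN_\cBG$, which is promised by Remark \ref{rk-closed-ext-mon} but relies on the s-properness analysis of this section; the quotient/compactness arguments of Theorem \ref{I-lin-iso-gen}(ii) must then be transported verbatim, which requires knowing a priori that $\cN_\cBG$ is smooth and closed.
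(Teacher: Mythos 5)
Your proposal follows essentially the same route as the paper: you establish the $\cBG$-analogue of the curvature/variation pairing $\langle \Ilin(v),\alpha\rangle=\partial_{\cBG}(\alpha)(v)$, deduce $(\Ilin)^{-1}(\cHE_\Z)=\cN_{\cBG}^{\vee}$ and $\Ker(\Ilin)=\cN_{\cBG}^{0}$, get integrality from $\cN_{\cBG}\subset\Lambda_{\cG}$ (Theorem \ref{ext-mon-crit}) and the lattice property from the surjection off the compact torus bundle $\nu(\cF_\pi)/\Lambda_{\cG}^{\vee}$, and then transport the arguments of Theorem \ref{I-lin-iso-gen} with $TB$ replaced by $\nu(\cF_\pi)$ and $\cN_{\mon}$ by $\cN_{\cBG}$. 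This is exactly the paper's proof, and your closing remark about the smoothness/closedness of $\cN_{\cBG}$ correctly identifies the one implicit dependence (Remark \ref{rk-closed-ext-mon}) that the paper also leaves tacit.
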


\begin{proof}
The $\Pi_1(M)$-invariance is a consequence of the integrality $\Ilin(\Lambda^{\vee}_{\cG})\subset \cHE_\Z$ which we now prove. We use
the $\cBG $-variation map $\partial_\cBG $ whose image is precisely $\cN_{\cBG }$ (see Section \ref{sec:hol:monodromy}). By exactly the same arguments as in Proposition \ref{lin-var-as-var-sympl-areas}, one has 
\begin{equation}\label{lin-var-as-var-sympl-areas2} 
\langle \Ilin(v), \alpha\rangle= \partial_{\cBG }(\alpha)(v),\quad \forall\ v\in \nu_{x}(\cF_\pi),\ \alpha\in H_2(\cBG (x,-), \Z).
\end{equation}
Starting now with $v\in \nu_{x}(\cF_\pi)$, we have:
\begin{align*}
\quad \Ilin(v)\in H^2(\cBG (x, -),\Z)\quad &\Longleftrightarrow \quad
\langle \Ilin(v), \alpha\rangle \in \Z ,\ \forall\ \alpha\in H_2(\cBG (x,-), \Z)\\
& \Longleftrightarrow  \quad \partial_{\cBG , x}(\alpha)(v)\in \Z ,\ \forall\ \alpha\in H_2(\cBG (x,-), \Z) \\ 
& \Longleftrightarrow  \quad \lambda(v)\in \Z, \  \forall\ \lambda\in \cN_{\cBG , x}\\
& \Longleftrightarrow  \quad v\in \cN_{\cBG}^{\vee}.
\end{align*}
In other words one has the analogue of (\ref{N-via-Ilin}):
% \begin{equation}\label{preimage-HZ} 
\[
(\Ilin)^{-1}(\cHE_\Z)= \cN_{\cBG}^{\vee}. 
\]
% \end{equation}
The inclusion $\Ilin(\Lambda^{\vee}_{\cG})\subset \cHE_\Z$ follows now from the 
the last inclusion of Theorem \ref{ext-mon-crit}. The fact that $\VspaceZ^{\lin}$ is actually
a lattice in $\Vspace^{\lin}$ follows by the same argument as in the proof of Proposition \ref{I-lin-iso}: 
we now know that $\VspaceZ^{\lin}$ is discrete in $\Vspace^{\lin}$ and $\Ilin$ induces a map 
% $\nu(\cF_\pi)/\Lambda_{\cG}^{\vee}\to \Vspace^{\lin}/\VspaceZ^{\lin}$ 
from the compact space $\nu(\cF_\pi)/\Lambda_{\cG}^{\vee}$ onto $\Vspace^{\lin}/\VspaceZ^{\lin}$.

Note that (\ref{lin-var-as-var-sympl-areas2}) also implies that the annihilator of $\cN_{\cBG}$ is
\[ \cN_{\cBG}^0=\Ker(\Ilin).\]
The remaining statements are proven by exactly the same arguments as for Theorem \ref{I-lin-iso-gen}, 
but with $TB$ replaced by $\nu(\cF_\pi)$ and ${\cN_\mon}$ replaced by $\cN_{\cBG}$.
\end{proof}

The analogue of Corollary \ref{cor-lin-iso-l} (i), concerning zero-variation, holds without any further complications, once one makes precise sense of the notion of symplectic fibration over an orbifold- which we leave as an exercise for the reader.
 
The analogue of Corollary \ref{cor-lin-iso-l} (ii) states that the full-variation condition (i.e. the injectivity of $\Ilin$) is equivalent to the fact that $\cN_{\cBG}$ is a lattice in $\nu^*(\cF_\pi)$. This holds, by Theorem \ref{gen-I-lin-iso2} (i). One finds this situation, for example, in the strong s-proper case when $\cN_{\cBG}=\cN_\mon$ (but not only then!). Whenever the full-variation condition holds one obtains that % $\Ilin$ realizes $(\nu(\cF_{\pi}), \Lambda^{\vee}_{\cG})$ as an integral sub-bundles of $(\cHE, \cHE_{\Z}$), i.e. 
\[ \VspaceZ^{\lin}= \Vspace^{\lin}\cap \cHE_{\Z}.\]
This shows that the full-variation condition does not depend on the integrating groupoid. It can also be seen as an immediate consequence of the fact that for any finite covering the pull-back map in (real) cohomology is injective.

Finally, we can look at the variation of $\varpi$ and again prove its linear nature. 

First of all, in a similar fashion as in the previous section, we now have a variation map:
\[ \Var: \Pi_1(M)\to \cHE,\quad [\gamma]\mapsto \gamma^*{\varpi_{\gamma(1)}}.\]
On the other hand, we also have the affine version of $\Ilin$:
\[ \Iaff: \nu(\cF_\pi)  \to \cHE, \quad v\mapsto  \Iaff(v):= \varpi + \Ilin(v),\]
and its image:
\[ \Vspace^{\Aff}:= \varpi+ \Vspace^{\lin}\subset \cHE .\]

Second. the statement of the linear variation will use $\nu(\cF_\pi)$ and its structure of integral affine representation of 
$\Pi_1(M)$ (see Section \ref{sec:IAS:orbifolds}), or in the terminology of Remark \ref{flat-integral-affine-bundles}, the structure of flat integral affine bundle. In order to emphasize this structure, we will use the notation $\nu^{\Aff}(\cF_\pi)$. Moreover, we also use the developing $\dev: \Pi_1(M)\to \nu(\cF_{\pi})$ associated to the transverse integral affine structure (see Section \ref{sec:IAS:orbifolds}):

 The statement of the linear variation theorem is now as follows:

\begin{theorem}\label{prop-affine-iso-2} 
The image of the variation map $\Var$ is contained in $\Vspace^{\Aff}$ and $\Iaff: \nu^{\Aff}(\cF_\pi) \to \Vspace^{\Aff}$ is a $\Pi_1(M)$-equivariant morphism.
 Equivalently, there is a commutative diagram:
 \[ \xymatrix{ 
 \Pi_1(M) \ar[rr]^-{\Var} \ar[rd]_-{\dev} &    & \cHE      \\
 & \nu^{\Aff}(\cF_{\pi}) \ar[ru]_-{\Iaff}                                     &            
 }\]
In particular, $\Vspace^{\Aff}$ is a $\Pi_1(M)$-invariant weak integral affine sub-bundle of $\cHE$.
\end{theorem}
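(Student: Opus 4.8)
The plan is to reduce Theorem \ref{prop-affine-iso-2} to the infinitesimal picture, exactly as in the proof of Theorem \ref{thm-affine-iso}, but now carried out over $M$ with $\Pi_1(M)$-representations instead of over the smooth base $B$ with $\Pi_1(B)$-representations. Concretely, the commutativity of the stated diagram is equivalent to the assertion that
\[
\delta(\varpi)\colon \Pi_1(M)\to \cHE,\qquad [\gamma]\mapsto \gamma^*\varpi_{\gamma(1)}-\varpi_{\gamma(0)},
\]
factors through the developing cocycle $\dev\colon\Pi_1(M)\to\nu(\cF_\pi)$ composed with $\Ilin$. First I would observe that $\delta(\varpi)$ is the coboundary (in the groupoid cohomology of $\Pi_1(M)$ with coefficients in the representation $\cHE$) of the $0$-cochain $\varpi\in\Gamma(\cHE)^{\mathrm{inv}}$, hence it is a $1$-cocycle on $\Pi_1(M)$, and that $\dev$ is the $1$-cocycle on $\Pi_1(M)$ with values in $\nu(\cF_\pi)$ integrating the Lie algebroid $1$-cocycle $\mathrm{pr}\colon TM\to\nu(\cF_\pi)$ (Section \ref{sec:IAS:orbifolds}). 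Differentiating $\delta(\varpi)$ gives the algebroid $1$-cocycle $v\mapsto\nabla_v\varpi$, which is precisely $\Ilin\circ\mathrm{pr}$ by definition of $\Ilin$. Since $\Ilin\colon\nu(\cF_\pi)\to\cHE$ is a morphism of $\Pi_1(M)$-representations — this is the content of Theorem \ref{gen-I-lin-iso2}(i), the $\Pi_1(M)$-equivariance there — the composite $\Ilin\circ\dev$ is again a $1$-cocycle on $\Pi_1(M)$ with values in $\cHE$, and it has the same derivative $\Ilin\circ\mathrm{pr}$ as $\delta(\varpi)$. Two groupoid $1$-cocycles with values in the same representation that have equal derivatives and agree on units (both vanish there) coincide, since $M$ is connected and $\Pi_1(M)$ is source-connected: this is the standard Lie's second theorem for algebroid cocycles, the same uniqueness used in the proof of Theorem \ref{thm-affine-iso}. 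This yields $\delta(\varpi)=\Ilin\circ\dev$, i.e. the diagram commutes, which is equivalent (via the cocycle identity for $\dev$) to the statement that $\Iaff=\varpi+\Ilin$ is $\Pi_1(M)$-equivariant as a map $\nu^{\Aff}(\cF_\pi)\to\cHE$.

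Next I would extract the remaining assertions. Equivariance of $\Iaff$ together with the fact that the affine $\Pi_1(M)$-action on $\nu^{\Aff}(\cF_\pi)$ has image lying in the developing-plus-linear-holonomy transformations shows that for any path $\gamma$ based at $x$, $\gamma^*\varpi_{\gamma(1)}=\Iaff(\dev([\gamma]))\in\Vspace^{\Aff}$; hence $\operatorname{im}(\Var)\subseteq\Vspace^{\Aff}$. That $\Vspace^{\Aff}=\varpi+\Vspace^{\lin}$ is $\Pi_1(M)$-invariant follows because $\varpi$ is $\Pi_1(M)$-affine (its coboundary lands in $\Vspace^{\lin}$) and $\Vspace^{\lin}$ is $\Pi_1(M)$-invariant, being the image under the representation morphism $\Ilin$ of the $\Pi_1(M)$-invariant sub-bundle $\nu(\cF_\pi)$ — invariance of the linear part of the action on $\nu(\cF_\pi)$ is automatic. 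Finally, $\Vspace^{\Aff}$ is a weak integral affine sub-bundle of $(\cHE,\cHE_{\Z})$: its underlying integral vector sub-bundle is $(\Vspace^{\lin},\VspaceZ^{\lin})$, which by Theorem \ref{gen-I-lin-iso2}(i) satisfies $\VspaceZ^{\lin}\subset\cHE_{\Z}$ and is a genuine lattice in $\Vspace^{\lin}$ (the quotient $\nu(\cF_\pi)/\Lambda_\cG^\vee$ is compact and surjects onto it), so $\Vspace^{\Aff}$ is an affine bundle modeled on an integral vector sub-bundle sitting weakly inside $\cHE$.

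The main obstacle — and the only place where genuine work beyond Section \ref{sec:variation1} is needed — is the integrality statement $\Ilin(\Lambda_\cG^\vee)\subset\cHE_{\Z}$ and, more precisely, the identification $(\Ilin)^{-1}(\cHE_\Z)=\cN_{\cBG}^\vee$, since everything downstream (equivariance over $\Pi_1(M)$, the weak-integral-sub-bundle claim) rests on it. This is exactly the step flagged in the proof of Theorem \ref{gen-I-lin-iso2}: one must (a) establish the orbifold analogue $\langle\Ilin(v),\alpha\rangle=\partial_{\cBG}(\alpha)(v)$ of Proposition \ref{lin-var-as-var-sympl-areas} for classes $\alpha\in H_2(\cBG(x,-),\Z)$, which requires the Reeb-stability normal form from Theorem \ref{thm-gen-model}/\eqref{nform-groupoid-s-proper} to make sense of the leafwise variation of symplectic areas over the (possibly non-simply-connected) $s$-fibers of $\cBG$; and (b) invoke the inclusion $\cN_{\cBG}\subset\Lambda_\cG$ from Theorem \ref{ext-mon-crit} to pass from "$\langle\Ilin(v),\cdot\rangle$ integral on $\cN_{\cBG}$" to "$v\in\Lambda_\cG^\vee$ implies $\Ilin(v)\in\cHE_\Z$". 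Modulo this, the argument is a formal transcription of the smooth-leaf case, with $TB\rightsquigarrow\nu(\cF_\pi)$, $\Pi_1(B)\rightsquigarrow\Pi_1(M)$, and $\cN_\mon\rightsquigarrow\cN_{\cBG}$.
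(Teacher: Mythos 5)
Your proposal is correct and follows essentially the same route as the paper: the paper omits the proof of this theorem with the remark that it is word-for-word the argument of Theorem \ref{thm-affine-iso}, namely the reduction to the trivially commutative infinitesimal diagram of algebroid $1$-cocycles (here with $TB\rightsquigarrow\nu(\cF_\pi)$, $\Pi_1(B)\rightsquigarrow\Pi_1(M)$, $\cN_\mon\rightsquigarrow\cN_{\cBG}$) combined with uniqueness of integration of cocycles over the source-connected groupoid $\Pi_1(M)$. You also correctly locate the only nontrivial input — the integrality $\Ilin(\Lambda_\cG^\vee)\subset\cHE_\Z$ and equivariance of $\Ilin$ — in Theorem \ref{gen-I-lin-iso2}(i), which the paper proves separately.
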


The proof follows exactly the same arguments as in Theorem \ref{thm-affine-iso}, and so it will be omitted. 
\medskip

More explicit descriptions, as in Corollary \ref{cor:primitivity2}, can be obtained by fixing a base point $x\in M$ and a $\Z$-basis $\mathfrak{b}_{\Lambda}= \{\lambda_1, \ldots, \lambda_q\}$ for $\Lambda_{x}$. Then $\mathfrak{b}_{\Lambda}$ induces an identification of $\cG_{x}^{0}$ with the standard torus $\T^q$, so the projection $\cG(x, -)\to \cBG(x, -)$ becomes a principal $\T^q$-bundle. We can then consider its Chern classes
\[ c_1,\dots,c_q\in H^2(\bar{S}, \Z)= \cHE_{\Z, x}.\] 
where we set $\bar{S}:= \cBG(x, -)$. As in Corollary \ref{cor:primitivity2}, we denote by 
\[ V^{\Aff}_{0}= \omega_0+ V^{\lin}_{0}\subset H^2(\bar{S})=\cHE_x, \] 
the fiber of $\Vspace^{\lin}$ at $x$ and by $\dev_0: \widetilde{M}\to \R^q$ the resulting developing map.  One then obtains the following extension of Corollary \ref{cor:primitivity2}:

\begin{corollary}
\label{cor:primitivity-2} 
The Chern classes $c_1,\dots,c_q\in H^2(\bar{S})$ generate the space of linear variations of $\varpi$:
\[ V^{\lin}_{0}= \Span_{\R}(c_1, \ldots , c_q),\ \  V^{\lin}_{0, \Z} = \Span_{\Z}(c_1, \ldots , c_q).\]
In the strong s-proper case, the classes $c_1,\dots,c_q$ are linearly independent and they form a primitive family, i.e., we have
\[  \Span_{\Z}(c_1, \ldots , c_q)=  \Span_{\R}(c_1, \ldots , c_q)\cap H^2(\bar{S}, \Z).\]

Moreover, for any path $\gamma$ in $M$ starting at $x$ one has 
\[ \gamma^*([\omega_{\gamma(1)}])= [\omega_{x}]+ \dev^{1}_0(\gamma)c_1+ \ldots + \dev^{q}_0(\gamma)c_q,\]
where $\dev^{i}_0$ are the components of $\dev_0$, so we have a commutative diagram:
\[\xymatrix{
\widetilde{M}  \ar[rr]^-{\Varb} \ar[rd]_-{{\dev_0}}  &    &  V^{\Aff}_{0}
\subset   H^2(\bar{S})        \\
 & \R^q  \ar[ru]!<-20pt,0pt>_-{(v^i)\mapsto[\omega_0]+\sum_i v^i c_i}                                     & &             
}\]

\end{corollary}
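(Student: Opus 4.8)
The plan is to reduce Corollary \ref{cor:primitivity-2} entirely to Theorem \ref{prop-affine-iso-2}, exactly mimicking the proof of Corollary \ref{cor:primitivity2} in the $1$-connected case, but now working on the cover $\bar S := \cBG(x,-)$ rather than on $S$ itself. The only genuinely new input needed is the identification $\mathrm{var}^{\lin}_0(\lambda_i) = c_i$, i.e.\ that the linear variation in the direction of the $i$-th basis vector of $\Lambda_x$ equals the $i$-th Chern class of the principal $\T^q$-bundle $\cG(x,-)\to\bar S$. Once this is in hand, linearity of $\Ilin$ over $\R$ gives $V^{\lin}_0 = \Span_\R(c_1,\dots,c_q)$, the integrality statement $\Ilin(\Lambda^\vee_\cG)\subset\cHE_\Z$ from Theorem \ref{gen-I-lin-iso2}(i) combined with the fact that $\{\lambda_i^\vee\}$ is a $\Z$-basis of $\Lambda^\vee_{\cG,x}$ gives $V^{\lin}_{0,\Z} = \Span_\Z(c_1,\dots,c_q)$, and the developing-map formula together with the commuting triangle come directly by restricting the diagram of Theorem \ref{prop-affine-iso-2} to the $s$-fibre $\widetilde M = s^{-1}(x)$ of $\Mon(M,\cF_\pi)$ and unwinding the chosen trivialization $\cG_x^0\cong\T^q$.

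Concretely, the steps are as follows. First I would recall that, under the identification \eqref{identif-orbibundle}, the fibre $\cHE_x = H^2(\bar S)$ carries the Chern classes $c_1,\dots,c_q$ of the torus bundle $q:\cG(x,-)\to\bar S$, where the splitting of $\cT_x(\cG)$ into $\T^q$ is dictated by $\mathfrak b_\Lambda$. Second, I would establish $\mathrm{var}^{\lin}_0(\lambda_i)=c_i$: this is the orbifold analogue of the computation in the proof of Corollary \ref{cor:primitivity2}, and it follows from \eqref{lin-var-as-var-sympl-areas2} together with the computation in the proof of Theorem \ref{ext-mon-crit} that $p_{\cBG}^*[\Omega_\tau]$ is precisely the Chern class of $q:\cG(x,-)\to\cBG(x,-)$ --- pairing with a basis $\{\lambda_i^\vee\}$ dual to $\mathfrak b_\Lambda$ picks out each $c_i$. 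Third, the identity $V^{\lin}_0 = \Span_\R(c_i)$ and $V^{\lin}_{0,\Z} = \Span_\Z(c_i)$ are then immediate from $\R$-linearity and from $\Ilin(\Lambda^\vee_\cG) = \VspaceZ^{\lin}\subset\cHE_\Z$. Fourth, the strong $s$-proper case: here $\cN_{\cBG} = \cN_\mon$ is a lattice so $\Ilin$ is injective (Theorem \ref{gen-I-lin-iso2}(i) and the discussion following it), hence the $c_i$ are linearly independent; primitivity $\Span_\Z(c_i) = \Span_\R(c_i)\cap H^2(\bar S,\Z)$ follows because in this case $\VspaceZ^{\lin} = \Vspace^{\lin}\cap\cHE_\Z$, as noted right after Theorem \ref{gen-I-lin-iso2}. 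Fifth, the path formula and the commutative diagram: restrict the triangle of Theorem \ref{prop-affine-iso-2} to $\Pi_1(M)$-arrows starting at $x$, identify $\dev|_x$ with $\dev_0:\widetilde M\to\R^q$ via $\mathfrak b_\Lambda$, and use $\Iaff(v) = \varpi + \Ilin(v)$ with $\varpi(x) = [\omega_x]$ and $\Ilin(\sum_i \dev^i_0(\gamma)\lambda_i) = \sum_i \dev^i_0(\gamma) c_i$ to read off $\gamma^*[\omega_{\gamma(1)}] = [\omega_x] + \sum_i \dev^i_0(\gamma)c_i$.

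I expect the main obstacle to be the bookkeeping in Step 2, specifically making sure that the identification $\cHE_x\cong H^2(\bar S)$ of \eqref{identif-orbibundle}, the trivialization $\cT_x(\cG)\cong\T^q$ induced by $\mathfrak b_\Lambda$, and the orientation/sign conventions for the Chern class of $q:\cG(x,-)\to\bar S$ are all mutually compatible, so that $\mathrm{var}^{\lin}_0(\lambda_i)$ comes out as $+c_i$ and not some $\mathbb Z$-linear recombination. This is exactly the point where the proof of Theorem \ref{ext-mon-crit} did the hard work --- identifying $p_{\cBG}^*[\Omega_\tau]$ with the Chern class of the torus bundle --- so the task here is really to transport that identification through the covering $p_\cBG:\cBG(x,-)\to S$ and the finite-isotropy identification \eqref{identif-orbibundle}, and to note that this is legitimate because the isotropy $\cBG_x$ is finite, hence acts trivially on $H^2(\bar S;\R)$ up to the averaging that is already built into \eqref{identif-orbibundle}. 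Everything else is a verbatim repetition of the $1$-connected arguments.

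\begin{proof}
The argument is identical to that of Corollary \ref{cor:primitivity2}, with $S$ replaced by $\bar S = \cBG(x,-)$ and Proposition \ref{I-lin-iso}, Theorem \ref{thm-affine-iso} replaced by their orbifold counterparts Theorem \ref{gen-I-lin-iso2} and Theorem \ref{prop-affine-iso-2}. The only point that requires comment is the equality $\mathrm{var}^{\lin}_0(\lambda_i) = c_i$ in $H^2(\bar S)$.

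To see this, recall from the proof of Theorem \ref{ext-mon-crit} that, for an $s$-connected, $s$-proper integration $\cG$, the pullback $p_{\cBG}^*[\Omega_\tau] \in H^2(\cBG(x,-),\nu_x^*)$ coincides with the Chern class of the principal $\cT_x(\cG)$-bundle $q:\cG(x,-)\to\cBG(x,-)$. Under the identification \eqref{identif-orbibundle} of $\cHE_x$ with $H^2(\bar S)$, and using the basis $\mathfrak b_\Lambda = \{\lambda_1,\dots,\lambda_q\}$ of $\Lambda_x$ to split $\cT_x(\cG)\cong\T^q$, this Chern class decomposes as $(c_1,\dots,c_q)$ with $c_i \in H^2(\bar S,\Z)$. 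On the other hand, the orbifold analogue \eqref{lin-var-as-var-sympl-areas2} of Proposition \ref{lin-var-as-var-sympl-areas} gives
\[ \langle \Ilin(v),\alpha\rangle = \langle p_{\cBG}^*[\Omega_\tau](v),\alpha\rangle = \langle \partial_{\cBG}(\alpha), v\rangle \]
for $v\in\nu_x(\cF_\pi)$ and $\alpha\in H_2(\bar S,\Z)$. Taking $v$ to be the element $\lambda_i^\vee$ of the dual basis (so that $\lambda_j(\lambda_i^\vee)=\delta_{ij}$) and pairing against all of $H_2(\bar S,\Z)$, the right-hand side picks out exactly the $i$-th component of the Chern class, i.e.\ $\Ilin(\lambda_i^\vee) = c_i$. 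Since $\mathrm{var}^{\lin}_0$ is by definition the restriction of $\Ilin$ to the fibre at $x$ in the chosen trivialization, this is precisely $\mathrm{var}^{\lin}_0(\lambda_i) = c_i$.

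Granted this, $\R$-linearity of $\Ilin$ yields $V^{\lin}_0 = \Span_\R(c_1,\dots,c_q)$, and since $\{\lambda_i^\vee\}$ is a $\Z$-basis of the lattice $\Lambda^\vee_{\cG,x}$ we get $V^{\lin}_{0,\Z} = \Ilin(\Lambda^\vee_{\cG,x}) = \Span_\Z(c_1,\dots,c_q)$, which lies in $\cHE_{\Z,x}$ by Theorem \ref{gen-I-lin-iso2}(i). In the strong $s$-proper case $\cN_{\cBG} = \cN_\mon$ is a lattice, so $\Ilin$ is injective by Theorem \ref{gen-I-lin-iso2}(i) and the ensuing discussion; hence the $c_i$ are linearly independent, and primitivity $\Span_\Z(c_i) = \Span_\R(c_i)\cap H^2(\bar S,\Z)$ follows from $\VspaceZ^{\lin} = \Vspace^{\lin}\cap\cHE_\Z$, as noted after Theorem \ref{gen-I-lin-iso2}.

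Finally, restricting the commutative triangle of Theorem \ref{prop-affine-iso-2} to the arrows of $\Pi_1(M)$ with source $x$, and identifying $s^{-1}(x)$ with $\widetilde M$ and $\dev|_x$ with $\dev_0:\widetilde M\to\R^q$ via $\mathfrak b_\Lambda$, we obtain for a path $\gamma$ starting at $x$:
\[ \gamma^*([\omega_{\gamma(1)}]) = \Iaff(\dev_0(\gamma)) = \varpi(x) + \Ilin\!\Big(\sum_{i} \dev^i_0(\gamma)\,\lambda_i^\vee\Big) = [\omega_x] + \sum_{i} \dev^i_0(\gamma)\, c_i, \]
which is the stated formula and the displayed diagram.
\end{proof}
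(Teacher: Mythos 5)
Your proof is correct and follows exactly the route the paper intends: the paper omits the proof entirely, presenting the corollary as the orbifold extension of Corollary \ref{cor:primitivity2}, whose own proof is precisely "reduce to the two preceding theorems once one knows $\mathrm{var}^{\lin}_0(\lambda_i)=c_i$". Your identification of that key equality via \eqref{lin-var-as-var-sympl-areas2} and the Chern-class computation in the proof of Theorem \ref{ext-mon-crit} (including the correct reading of $\lambda_i$ as the dual basis element $\lambda_i^\vee$, which the paper's notation glosses over) is exactly the argument the authors have in mind.
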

\medskip

\begin{remark}
One also has analogous results for twisted DMCTs with the appropriate modifications. The interested reader should be able to find the appropriate statements and its proofs.
\end{remark}
\vskip 60 pt

\subsection{Examples}

{
\subsubsection{The classical Duistermaat-Heckman theorem revisited}\label{ex:Duistermaat-Heckman}
The linear variation of cohomology, given by Theorems \ref{thm-affine-iso} and \ref{prop-affine-iso-2}, or their more explicit versions Corollaries \ref{cor:primitivity2} and \ref{cor:primitivity-2}, are the global versions of the classical Duistermaat-Heckman Theorem (cf.~Theorem \ref{thm:DH:classic}). More precisely, we can recover the classical theorem as follows. Given a free Hamiltonian $\T$-action on a connected symplectic manifold
$(S,\omega)$ with moment map $\mu:S\to\tt^*$
we consider the Poisson manifold $M= S/\T$, whose symplectic leaves are the symplectic reduced spaces $S_\xi=\mu^{-1}(\xi)/\T$. 
This kind of Poisson manifolds furnish examples of PMCTs, and were discussed in 
detail in \cite[Section 5.4]{CFMa}: there it is shown that if $\mu$ is proper an s-connected, s-proper, symplectic integration is given by:
\[ \cG=(S\times_\mu S)/\T\tto S/\T, \]
with symplectic form $\Omega$ induced from $\pr_1^*\omega-\pr_2^*\omega$. 

Now observe that for this symplectic integration $(\cG,\Omega)\tto S/\T$:
\begin{itemize}
\item the induced orbifold structure on the leaf space $B=\mu(S)\subset \tt^*$ is the submersion groupoid $\cB(\cG)=S/\T\times_\mu S/\T$, so it is smooth;
\item the induced integral affine structure on $\tt^*$ is the canonical integral affine structure $\Lambda$ for which $\T=\tt^*/\Lambda$;
\item the s-fiber of $\cG$ through a point in $S_\xi$ is (isomorphic to) the principal $\T$-bundle $\mu^{-1}(\xi)\to S_\xi$.
\end{itemize}
Since the orbifold $\cB(\cG)$ is actually smooth, Theorem \ref{prop-affine-iso-2} (respectively, Corollary \ref{cor:primitivity-2}), reduces to its smooth version, Theorem \ref{thm-affine-iso} (respectively, Corollary \ref{cor:primitivity2}). The conclusion is that the cohomology class $[\omega_\xi]$ of the symplectic form of the symplectic reduced space $S_\xi=\mu^{-1}(\xi)/\T$ satisfies:
\[ [\omega_\xi]=[\omega_{\xi_0}]+\langle c,\xi-\xi_0\rangle, \]
where $c\in H^2(S_{\xi_0},\Lambda)$ is the Chern class of the principal $\T$-bundle $\mu^{-1}(\xi_0)\to S_{\xi_0}$. This is precisely the classical result as stated in Theorem \ref{thm:DH:classic}.

Actually, in their paper  \cite{DuHe}, Duistermaat and Heckman allow for non-free actions, which leads to symplectic reduced spaces $\mu^{-1}(\xi)/\T$ which are orbifolds. It is not hard to see that our work on PMCTs can be extended to Poisson orbifolds, which allow to treat the non-free case. Moreover, our approach extends to the non-regular case \cite{CFMc}, showing that s-proper Poisson manifolds provide the right setting for the globalization of the Duistermaat-Heckman theorem.
}

\subsubsection{Regular conjugacy classes}\label{ex-regular-orbits}
Let us return to the Cartan-Dirac structure on a compact, connected Lie group $G$ discussed in Section \ref{ex-principal-orbits}, and look now at all the regular conjugacy classes. While the principal orbits are 1-connected, the regular non-principal orbits are not. As before, $(G^\reg,L_G)$ is a regular 
$\phi$-twisted Dirac structure, with an s-connected, s-proper integration provided by the $\phi$-twisted presymplectic groupoid $G\ltimes G^\reg$. It induces a transverse affine structure $\Lambda_\cG$ to the foliation $\cF_{L_G}$ made of the regular conjugacy classes and its leaf space $B=G^\reg/G$ is now an integral affine orbifold. 

The orbifold structure on $B$ is determined by the foliation groupoid $\cBG$ in the sequence (see Theorem \ref{thm-reg-fol}):
\[ \xymatrix{ 1\ar[r]& \nu^*(\cF_{L_G})/\Lambda \ar[r] & G\ltimes G^{\reg}\ar[r]& \cBG\ar[r] &1} \]
We claim that $\cBG$ is the holonomy groupoid of $\cF_{L_G}$ or, equivalently, that the action of $\cBG$  on $G^{\reg}$ is effective. In fact, if 
$(g,x)\in G\ltimes G^{\reg}$ maps to $h\in \cBG$, then the (local) action of $h$ is the one of the bisection $\{g\}\times G^{\reg}$ of $G\ltimes G^{\reg}$. This (local) action is trivial iff $g\in Z(G)$. Since the center is the intersection of all maximal tori, the arrow $h\in\cBG$ must be a unit.
  
The universal cover of $G^{\reg}$ is given by
\begin{equation}
\label{eq:regular:orbits:cover}
G/\T\times \aa\rightarrow G^{\reg}, \quad (g\T,\xi)\mapsto g\exp(\xi)g^{-1}=\exp(\Ad_g\xi), 
\end{equation}
where $\aa$ is a Weyl alcove for $\gg$, with covering group the quotient of the affine Weyl groups (see \cite{BD}):
\[ \Gamma=\pi_1(G^\reg)=W^{\Aff}_G/W^{\Aff}. \]
One has $\pi_1(G^\reg)=\pi_1(G)$, so there is a covering group $G^\lin\to G$ whose regular part is the holonomy cover of $(G^\reg,\cF_{L_G})$. Hence, the linear holonomy group is:
\[ \Gamma^\lin=\pi_1(G^\reg)/\pi_1(G^\lin)=W^{\Aff}_G/W^{\Aff}_{G^\lin}. \]

The covering map \eqref{eq:regular:orbits:cover} allows us to identify the twisted symplectic 2-forms on the leaves (conjugacy classes)
by the same formula \eqref{eq:twisted:2-form}. In fact, the analysis of the regular part of $G$ is essentially the analysis of the principal part of $G^{\lin}$ discussed  in Section \ref{ex-principal-orbits}. If we fix a non-principal conjugacy class $S_0\subset G^\reg\backslash G^\princ$ through some point $g_0\in\exp(\aa)$, we have $\bar{S}_0=\cBG(g_0,-)\cong G/\T$ and, again, we find: % isomorphisms:
\begin{itemize} 
\item $(\nu_{g_0}(S_0),\Lambda_\cG|_{g_0})\cong (\tt,\Lambda^*_G)$;
\item $(H^2(\bar{S}_0),H^2(\bar{S}_0,\Z))\cong (\tt^*_{\ss},\Lambda_w)$.
\end{itemize}
We conclude: 
  \begin{enumerate}[(i)]
  \item $B$ is an orbifold quotient of the integral affine manifold $B^{\lin}=\aa/\Gamma^{\lin}$ (endowed with integral affine structure $\Lambda^*_G$);
  \item The developing map $\dev_0:\widetilde{G}^\reg\to \nu_{g_0}(S_0)$ is now given by: 
   \[\dev_0:(G/\T\times\aa,\Lambda^*_G)\to (\tt,\Lambda^*_G),\]
   where one first projects onto $\aa$ and then takes the inclusion. 
   \item The linear variation $\Ilin: \nu_{g_0}(S_0)\to H^2(\bar{S}_0)$ is the identification $\tt\cong\tt^*$ given by the inner product followed by
   the projection onto the semisimple factor
     \[\Ilin: (\tt,\Lambda^*_G)\to (\tt^*_\ss,\Lambda_w);\] 
   \item The pullback to $\widetilde{G}^\reg=G/\T\times\aa$ of the bundle $\cH^\cBG$ of twisted 2-cohomology groups trivializes and has the flat section $\xi\mapsto [\widetilde{\chi}_\xi]$, which allows to identify the linear and affine variation.
  \item After the previous identification, the variation map $\Var:\widetilde{G}^\reg\to H^2(\bar{S}_0)$ becomes the projection to $\aa$,
  followed by the inclusion in $\tt$,  the identification $\tt\cong\tt^*$, and then the projection onto the semisimple factor:
  \[\Var:(G/\T\times\aa,\Lambda^*_G)\to (\tt^*_{\ss},\Lambda_w^\vee).\]
\end{enumerate}

%%%%%%%%%%%%%%%%%%%%%%%%
%%%%%%%%%%%%%%%%%%%%%%%%
%%%%%%%%%%%%%%%%%%%%%%%%
%%%%%%%%%%%%%%%%%%%%%%%%
%%%%%%%%%%%%%%%%%%%%%%%%
%%%%%%%%%%%%%%%%%%%%%%%%
%%%%%%%%%%%%%%%%%%%%%%%%
\section{Measures and the Duistermaat-Heckman formula}
% \section{\underline{PMCTs, orbifold measures and a Duistermaat-Heckman formula}}
\label{sec:measures}
%%%%%%%%%%%%%%%%%%%%%%%%
%%%%%%%%%%%%%%%%%%%%%%%%
%%%%%%%%%%%%%%%%%%%%%%%%
%%%%%%%%%%%%%%%%%%%%%%%%
%%%%%%%%%%%%%%%%%%%%%%%%
%%%%%%%%%%%%%%%%%%%%%%%%
%%%%%%%%%%%%%%%%%%%%%%%%
%%%%%%%%%%%%%%%%%%%%%%%%

Another fundamental property of PMCTs is the existence of natural invariant volume forms/measures. 
Given a regular Poisson manifold of proper type $(M, \pi)$ and an s-connected, proper integration $(\G,\Omega)\tto M$ we will 
see that the leaf space $B= M/\cF_{\pi}$ carries a natural measure. 
The basic idea is simple: an integral affine structure gives rise to a density,
hence to a measure. However, since $B$ is an orbifold we need a bit of care with the role of the groupoid in this construction. 
%%%MARIUS: I REMMOVED THE NEXT COMMENT BECAUSE, AT THIS POINT, WHEN WE SAY THAT $B$ IS SMOOTH, WE REALLY MEAN SMOOTH AS AN ORBIFOLD (THE COMMENT WAS FROM THE TIME WHEN, WHEN SAYING $B$, WE MEANT THE MANIFOLD.
% (even when $B$ is smooth!).
The outcome will be that any s-connected, proper integration $(\G,\Omega)$ induces a measure on $B$, called the {\bf integral affine measure} induced by $(\G,\Omega)$ and denoted $\mu_{\Aff}^{\G}$ (we omit the dependence in $\Omega$ in the notation, but the reader should keep in mind that this construction depends on having a proper \emph{symplectic} integration).

In the s-proper case there is yet another natural measure on $B$:
the one obtained by pushing down the Liouville measure associated to the symplectic form $\Omega$. 
One obtains a measure on $B$, called the {\bf Duistermaat-Heckman measure} 
induced by $(\G,\Omega)$ and denoted by $\mu_{\DH}^{\G}$. As we shall see, the relationship between $\mu_{\Aff}^\G$ and $\mu_{\DH}^\G$ can be described via a Duistermaat-Heckman formula, involving the volumes of the symplectic leaves.

\subsection{Measures on leaf spaces}
We start by fixing some notations and terminology. First of all, by a measure on a locally compact Hausdorff space $X$ we mean here a Radon measure in the sense of a positive 
linear functional 
\[ \mu: C_{c}(X)\to \R\]
defined on the space $C_{c}(X)$ of compactly supported continuous function on $X$. Although we will not use set-measures, 
it is still handy to use the notation 
\[ \mu(f)= \int_{X} f(x) \,\d\mu(x) .\]

When $M$ is a smooth manifold one can use $C_{c}^{\infty}(M)$ instead of $C_{c}(M)$. Moreover, in this case one can talk about 
{\bf geometric measures}: if $ \mathcal{D}_{c}(M)$ denotes the space of compactly supported sections of the density bundle $\mathcal{D}_M= |\wedge^{\top}T^*M|$, then each density $\rho\in \mathcal{D}(M)= \Gamma(M, \mathcal{D}_M)$ induces a linear functional
\[ \mu_{\rho}: C_{c}^{\infty}(M)\to \R, \ \mu_{\rho}(f):= \int_{M} f\rho ,\]
which is a measure whenever $\rho$ is positive. When $M$ is an oriented manifold, we can use the orientation to identify $\mathcal{D}_M$ with $\wedge^{\top} T^*M$, hence to integrate top forms instead of densities. 

An integral affine structure $\Lambda$ on a manifold $M$ induces a density $\rho_{\Lambda}$: locally, if $\lambda_{1}, \ldots , \lambda_{n}$ is a coframe that spans $\Lambda\subset T^*M$, then 
\[  \rho_{\Lambda}= |\lambda_1\wedge \ldots \lambda_n|.\]
Of course, if $M$ is oriented, then one case use oriented coframes to obtain a volume form  $\eta_{\Lambda}$ and $\rho_{\Lambda}= |\eta_{\Lambda}|$. 

Notice that forms, densities or measures on manifolds give sheaves
\[ X\mapsto \Omega^{\bullet}(X), \ \mathcal{D}(X), \textrm{or}\ \mathcal{M}(X) \]
to which one can apply Haefliger's transverse geometry approach (see Remarks \ref{rmk-Transversal geometric structures} and \ref{rmk:structures on orbifolds}). This leads to well-defined notions of differential forms $\Omega^{\bullet}(B, \cB)$, 
densities $\mathcal{D}(B, \cB)$ 
and measures $\mathcal{M}_{\textrm{orbi}}(B, \cB)$  
on any orbifold $(B, \cB)$:
if $\eE\tto T$ is an \'etale orbifold atlas then one considers invariant forms $\Omega^\bullet(T)^\eE$, invariant densities $\mathcal{D}(T)^\eE$ and invariant measures $\mathcal{M}(T)^\eE$. It is easy to see that, in these cases, the resulting objects depend only on the underlying classical orbifold. 

The following shows that $\mathcal{M}_{\textrm{orbi}}(B, \cB)$ can be identified with $\mathcal{M}(B)$- the space ordinary of measures 
on the locally compact Hausdorff space $B$.

\begin{lemma}
\label{lem:orbif-measures} 
Given a \'etale orbifold atlas $\eE\tto T$ with quotient map $p: T\to B$, there is a 1-1 correspondence:
\[ \{ \textrm{measures}\ \mu\ \textrm{on}\ B\} \stackrel{1-1}{\longleftrightarrow} \{\eE-\textrm{invariant measures}\ \widetilde{\mu}\ \textrm{on}\ T\}.\]
Explicitly, it is uniquely determined by $\widetilde{\mu}= \mu \circ p_{!}$, where
\[ p_{!}: C_{c}^{\infty}(T) \to C_{c}^{\infty}(B), \ \ p_{!}(f)(p(x))= \sum_{g\in s^{-1}(x)} f(t(g)) .\]
\end{lemma}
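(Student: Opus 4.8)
~\textbf{Plan.} The statement is essentially a bookkeeping lemma about how measures descend through an étale groupoid quotient, together with an averaging over the finite isotropy. The plan is to produce the correspondence in both directions, check it is well-defined, and check that the two constructions are mutually inverse.

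First I would construct the map from measures on $B$ to $\eE$-invariant measures on $T$. Given a measure $\mu$ on $B$, I define $\widetilde\mu := \mu\circ p_!$, where $p_!\colon C_c^\infty(T)\to C_c^\infty(B)$ is the fiberwise-sum operator in the statement. The first thing to verify is that $p_!$ actually lands in $C_c^\infty(B)$: because $p\colon T\to B$ is a local diffeomorphism (as $\eE\tto T$ is étale) with finite fibers (finite isotropy of a proper étale groupoid), the sum $\sum_{g\in s^{-1}(x)} f(t(g))$ is locally finite and descends to a smooth compactly supported function on $B$; here one uses that $p$ is open and proper onto its image on compact pieces, and a partition-of-unity argument on the (locally finite) cover of $B$ by images of small opens of $T$ on which $p$ is injective. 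Positivity of $\mu$ and of $p_!$ (it sends nonnegative functions to nonnegative functions) shows $\widetilde\mu$ is a measure. For $\eE$-invariance: for $h\in\eE$ with germ $\sigma_h\colon(T,s(h))\to(T,t(h))$, one has $p\circ t = p\circ s$ at the level of arrows, and a direct reindexing of the fiberwise sum shows $p_!((\sigma_h)_*f) = p_! f$ locally, hence $\widetilde\mu$ is invariant under the action \eqref{germ-of-bisection}.

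Next I would construct the inverse. Given an $\eE$-invariant measure $\widetilde\mu$ on $T$, I want a measure $\mu$ on $B$ with $\mu\circ p_! = \widetilde\mu$. The key point is that $p_!\colon C_c^\infty(T)\to C_c^\infty(B)$ is \emph{surjective} (one can lift any $f\in C_c^\infty(B)$ by a partition of unity subordinate to opens on which $p$ is a diffeomorphism, then divide by the local multiplicity), so $\mu$ is determined by the formula $\mu(f):= \widetilde\mu(\widetilde f)$ for any lift $\widetilde f$ with $p_!\widetilde f = f$. Well-definedness is where the $\eE$-invariance of $\widetilde\mu$ is used: if $p_!\widetilde f_1 = p_!\widetilde f_2$, then $\widetilde f_1 - \widetilde f_2$ lies in the kernel of $p_!$, and I claim $\widetilde\mu$ kills $\ker p_!$. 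This is the technical heart: locally, $\ker p_!$ is spanned by elements of the form $(\sigma_h)_* \phi - \phi$ (and by functions supported on the ``extra sheets'' that cancel in the fiberwise sum), on which $\widetilde\mu$ vanishes precisely by invariance; a partition-of-unity reduction to small opens makes this rigorous. Positivity of $\mu$ follows because any nonnegative $f\in C_c^\infty(B)$ admits a nonnegative lift (again via a partition of unity divided by multiplicities). Finally, the two assignments $\mu\mapsto \widetilde\mu$ and $\widetilde\mu\mapsto\mu$ are mutually inverse: one direction is the defining identity $\widetilde\mu = \mu\circ p_!$, and the other is immediate from surjectivity of $p_!$ and the uniqueness just established.

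The main obstacle, and the step deserving the most care, is the claim that an $\eE$-invariant measure annihilates $\ker p_!$, i.e. that the correspondence is genuinely injective from the groupoid-invariant side. Everything else (smoothness and compact support of $p_! f$, surjectivity of $p_!$, positivity, the inverse bookkeeping) is routine partition-of-unity manipulation using that $p$ is an open proper local diffeomorphism with finite fibers. For the kernel claim I would work on a locally finite cover of $T$ by opens $U_\alpha$ such that $p|_{U_\alpha}$ is injective and such that any two sheets over a point of $B$ are related by a single arrow of $\eE$ whose germ takes one small open to another; then reduce a general element of $\ker p_!$, via a subordinate partition of unity, to a finite sum of local ``sheet-difference'' terms $(\sigma_h)_*\phi - \phi$ with $\phi$ supported in such a $U_\alpha$, each of which is $\widetilde\mu$-null by the invariance hypothesis. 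One should also note, for cleanliness, that the whole statement only depends on the underlying classical (effective) orbifold, consistent with Haefliger's formalism recalled in Remark~\ref{rmk-Transversal geometric structures}, so there is no loss in assuming $\eE$ effective if convenient.
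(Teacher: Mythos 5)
The paper states Lemma \ref{lem:orbif-measures} without proof, so there is no argument of the authors to compare yours against; your proposal supplies the standard justification, and its overall structure --- well-definedness and $\eE$-invariance of $\mu\circ p_!$, surjectivity of $p_!$ via partitions of unity and division by local multiplicities, and the key point that an invariant measure annihilates $\ker p_!$ by transporting mass along local bisections and then averaging over the finite isotropy in a linearized chart $\eE|_U\cong \Gamma\ltimes U$ --- is correct and is surely what is intended.

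Two imprecisions should be corrected. First, $p\colon T\to B$ is not a local diffeomorphism ($B$ is merely a topological space, and $p$ fails to be injective near points with effective isotropy), and the fibers of $p$, i.e.\ the orbits, need not be finite (consider a proper free $\Z$-action on $\R$): what makes $p_!f$ well defined is that for $f$ with compact support $K$ the set $s^{-1}(x)\cap t^{-1}(K)$ is compact and discrete, hence finite, by properness of $(s,t)$ combined with the \'etale hypothesis; your appeal to ``finite fibers / finite isotropy'' should be replaced by this. Second, the closing aside that one may as well assume $\eE$ effective is not harmless for the \emph{explicit} bijection: although both sides of the correspondence depend only on the underlying classical orbifold, the operator $p_!$ counts the ineffective part of the isotropy with multiplicity, so passing to the effective quotient rescales the bijection by the (locally constant) order of the kernel of $\eE_x\to \eE_x^{\mathrm{eff}}$ --- this is precisely the point of the Remark immediately following the Lemma, where a trivial action of a finite group $\Gamma$ introduces the factor $|\Gamma|$. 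Neither issue affects the validity of the main argument.
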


\begin{remark} It is instructive to realize that, in the resulting bijection 
\[ \mathcal{M}(B) \cong \mathcal{M}_{\textrm{orbi}}(B, \cB),\]
the left hand side depends only on the topological space $B$, the right hand side depends only on the classical orbifold underlying $(B, \cB)$,
but the isomorphism depends on the full orbifold structure ($p_!$ above depends on $\eE$!). A simple but already illustrative example is obtained when $B$ is a smooth manifold but we endow it with a non-smooth orbifold structure with orbifold atlas $\cB:= \Gamma\ltimes B\tto B$, where $\Gamma$ is a finite group acting trivially on $B$; while   
$\mathcal{M}_{\textrm{orbi}}(B, \cB)= \mathcal{M}(B)$, the previous isomorphism introduces a factor $|\Gamma|$, the cardinality of $\Gamma$. 
\end{remark}

More generally, this discussion extends to any foliation groupoid $\cE\tto M$, so one can talk about $\cE$-transverse
forms, $\cE$-transverse densities and $\cE$-transverse measures: one considers invariant forms $\Omega^\bullet(T)^{\eE_T}$, invariant densities $\mathcal{D}(T)^{\eE_T}$ and invariant measures $\mathcal{M}(T)^{\eE_T}$, where $T\subset M$ is any complete transversal to the $\cE$-orbit foliation.
Of course, for a proper foliation $(M, \cF)$ the structures on the orbifold $B=M/\cF$ coincide with the transverse structures on $(M, \cF)$. 

For any foliation groupoid $\cE\tto M$ there is a 1:1 correspondence:
\[ \left\{
                \begin{array}{ll}
                  \text{$\cE$-transverse forms}\\
                   \rho_T\in\Omega^\bullet(T)^{\eE_T}
                \end{array}
              \right\}
\stackrel{1-1}{\longleftrightarrow}
 \left\{
                \begin{array}{ll}
                  \text{invariant sections}\\
                   \rho^\nu\in\Gamma(\wedge^{\bullet}\nu^*(\cF)
                \end{array}
              \right\}\]
where by ``invariant" we mean invariant under linear holonomy, i.e., satisfying:
\[ \nabla_X\rho^\nu=0,\ \forall X\in \X(\cF),\]
where $\nabla$ is the Bott connection. Similarly, for densities we have:
\[ \left\{
                \begin{array}{ll}
                  \text{$\cE$-transverse densities}\\
                   \rho_T\in\mathcal{D}(T)^{\eE_T}
                \end{array}
              \right\}
\stackrel{1-1}{\longleftrightarrow}
 \left\{
                \begin{array}{ll}
                  \text{invariant sections}\\
                   \rho^\nu\in\Gamma(\cD_\nu)
                \end{array}
              \right\}\]
where $\cD_\nu=|\wedge^{\top}\nu^*(\cF)|$. These correspondences are obtained by considering the Morita equivalence:
\begin{equation}
\label{eq:Morita:transverse}
\xymatrix{
 \G \ar@<0.25pc>[d] \ar@<-0.25pc>[d]  & \ar@(dl, ul) & s^{-1}(T)\ar[dll]^-{t}\ar[drr]_-{s} & \ar@(dr, ur)   &  \G_T\ar@<0.25pc>[d] \ar@<-0.25pc>[d]\\
M & & & & T}
\end{equation}
For each $g\in s^{-1}(T)$, we obtain an isomorphism:
\[ \xymatrix{ \nu_{t(g)}(\cF)& \nu_g(s^{-1}(s(g)))\ar[l]_{\d_g t} \ar[r]^{\d_g s}&T_{s(g)} T}. \]
The isomorphisms determined by two arrows with the same source and target differ by the action of an element of $\cE$ on the normal space to the orbit, i.e., the linear holonomy action. This gives the desired 1-1 correspondence between elements $\rho_T\in\Omega^\bullet(T)^{\eE_T}$  and invariant sections $\rho^\nu\in\Gamma(\wedge^{\top}\nu^*(\cF))$.

Using this correspondence, we conclude:

\begin{proposition}
\label{prop:measure:affine}
Let $\cE\tto M$ be a proper foliation groupoid integrating $\cF$. Each transverse integral affine structure $\Lambda\subset \nu^*(\cF)$ determines a measure $\mu_{\Aff}$ on the orbifold 
$M/\cF$ which is represented by the invariant density $\rho_{\Aff}^\nu\in\Gamma(\cD_\nu)$ given by:
\[ \rho^\nu_{\Aff}|_x=|\lambda_1\wedge \ldots \lambda_n|,\]
where $\lambda_{1}, \ldots , \lambda_{n}$ is any basis of $\Lambda_x$.
\end{proposition}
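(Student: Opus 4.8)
The plan is to verify that the local density $\rho^\nu_{\Aff}$ defined by $\lambda_1 \wedge \ldots \wedge \lambda_n$ is (i) well-defined, (ii) smooth, and (iii) holonomy-invariant, so that the preceding $1$-$1$ correspondence between $\cE$-transverse densities and invariant sections of $\cD_\nu$ turns it into a genuine $\cE$-transverse density, and then invoke Lemma \ref{lem:orbif-measures} to obtain the corresponding measure on the orbifold $M/\cF$. First I would address well-definedness: two bases of the lattice $\Lambda_x \subset \nu_x^*(\cF)$ differ by an element of $\GL_n(\Z)$, which has determinant $\pm 1$, so the absolute value $|\lambda_1 \wedge \ldots \wedge \lambda_n| \in |\wedge^{\top}\nu_x^*(\cF)|$ is independent of the chosen basis. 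This shows $\rho^\nu_{\Aff}$ is a well-defined section of $\cD_\nu = |\wedge^{\top}\nu^*(\cF)|$ set-theoretically.

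Next I would check smoothness. By Definition of a smooth lattice (as recalled in Section \ref{Integral affine structures on manifold}, via Proposition \ref{prop-transv-int-affine-folklore}) and by Theorem \ref{thm-lattice-proper-case}, around any point $x_0 \in M$ there exist smooth local sections $\lambda_1, \ldots, \lambda_n$ of $\nu^*(\cF)$ spanning $\Lambda$; then $|\lambda_1 \wedge \ldots \wedge \lambda_n|$ is a smooth local positive section of $\cD_\nu$ agreeing with $\rho^\nu_{\Aff}$, so $\rho^\nu_{\Aff} \in \Gamma(\cD_\nu)$ and is positive.

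The key point is holonomy-invariance: $\nabla_X \rho^\nu_{\Aff} = 0$ for all $X \in \X(\cF)$, where $\nabla$ is the Bott connection on $\nu(\cF)$ (and its dual on $\nu^*(\cF)$). This follows because, by Proposition \ref{prop-transv-int-affine-folklore}(iii), the local spanning $1$-forms $\lambda_i$ of a transverse integral affine structure can be chosen to be $\cF$-basic and closed; being $\cF$-basic precisely means $\lambda_i$ is flat for the Bott connection, i.e. $\nabla_X \lambda_i = 0$ for $X \in \X(\cF)$. Since $\nabla$ acts on $\wedge^{\top}\nu^*(\cF)$ as a derivation through the $\lambda_i$, and since the flat sections $\lambda_i$ span the lattice, the induced section $|\lambda_1 \wedge \ldots \wedge \lambda_n|$ of the density line bundle is also flat. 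Equivalently, the linear holonomy of $\cF$ transports a basis of $\Lambda_x$ to a basis of $\Lambda_y$, hence it acts on $|\wedge^{\top}\nu^*(\cF)|$ by a factor $|\det| = 1$; this is exactly the statement that $\rho^\nu_{\Aff}$ is invariant under linear holonomy. Therefore $\rho^\nu_{\Aff}$ corresponds, under the $1$-$1$ correspondence displayed above (obtained via the Morita equivalence \eqref{eq:Morita:transverse}), to an $\cE$-transverse density $\rho_T \in \mathcal{D}(T)^{\eE_T}$ on any complete transversal $T$.

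Finally, since $\cE$ is a proper foliation groupoid, $B = M/\cF$ is an orbifold (Example \ref{ex:proper:foliations:orbifold}), and an \'etale orbifold atlas $\eE_T \tto T$ is obtained by restricting $\cE$ to a complete transversal $T$. The $\cE$-transverse density $\rho_T$ determines an element of $\mathcal{D}(T)^{\eE_T} = \mathcal{D}_{\mathrm{orbi}}(B,\cE)$, hence an invariant measure $\widetilde{\mu}$ on $T$, which by Lemma \ref{lem:orbif-measures} corresponds to a unique measure $\mu_{\Aff}$ on $B$. This is the asserted integral affine measure; independence of the chosen transversal follows from the naturality of the correspondences under the Morita equivalences between the various $\eE_{T}$, exactly as in Remark \ref{rmk-Transversal geometric structures}. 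I expect the only mildly delicate point to be bookkeeping the distinction between the invariance condition ``$\nabla_X \rho^\nu = 0$'' at the level of $M$ and the genuine $\eE_T$-invariance at the level of the transversal; but this is precisely what the displayed $1$-$1$ correspondences (already established in the text) take care of, so no new argument is needed beyond citing them.
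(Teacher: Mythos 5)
Your proposal is correct and follows exactly the route the paper intends: the paper offers no separate proof, simply deducing the proposition ("Using this correspondence, we conclude...") from the displayed $1$-$1$ correspondence between $\cE$-transverse densities and invariant sections of $\cD_\nu$ together with Lemma \ref{lem:orbif-measures}, and your write-up just fills in the routine verifications (well-definedness via $|\det|=1$ on $\GL_n(\Z)$, smoothness from local spanning sections, and Bott-flatness from the closed $\cF$-basic local frames of Proposition \ref{prop-transv-int-affine-folklore}).
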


Lemma \ref{lem:orbif-measures} tells us how to compute the resulting integrals by working on a transversal, but it is desirable to work directly at the level of $M$. For that, observe that the short exact sequence:
\[ \xymatrix{0\ar[r] & T\cF\ar[r] & TM\ar[r] & \nu(\cF) \ar[r]& 0} \]
induces an isomorphism between the associated density bundles:
\[ \cD_\nu\simeq \cD_{T^*\cF}\otimes \cD_M. \]
So we can decompose any invariant density $\rho^\nu\in\Gamma(\cD_\nu)$ as:
\[ \rho^{\nu}=\rho^*_{\cF}\otimes\rho_M, \]
where $\rho_\cF$ is a density along the leaves of $\cF$ and $\rho_M$ is a density on $M$.

\begin{proposition}
\label{prop:Fubini-foliations}
Let $\cE\tto M$ be a proper foliation groupoid integrating $\cF$. If  $\mu_B$ is a geometric measure on the orbifold $B=M/\cF$ represented by an invariant density $\rho^\nu\in\Gamma(\cD_\nu)$, then for any $f\in C_{c}^{\infty}(M)$ one has:
\[ \int_M f(x) \,\d\mu_{M}(x) =\int_B \left(\iota(b) \int_{S_b}f(y)\,\d\mu_{S_b}(y) \right)\,\d\mu_B(b),\]
where $\mu_{S_b}$ is the measure on the leaf $S_b$ and $\mu_M$ the measure on $M$ associated with any decomposition $\rho^{\nu}=\rho^*_{\cF}\otimes\rho_M$, while $\iota:B\to \N$ is the function that for each $b\in B$ counts the number of elements of the isotropy group $\cE_x$ ($x\in S_b$).
\end{proposition}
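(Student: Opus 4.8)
The statement is a Fubini-type disintegration of the transverse measure $\mu_M$ over the leaf space, with the orbifold correction factor $\iota$ accounting for the isotropy. The plan is to reduce the identity to a local computation near a single leaf, using the local model for proper foliation groupoids provided by Theorem \ref{thm-gen-model}, and then to track precisely where the factor $\iota(b)$ enters. First I would fix a leaf $S = S_b$ and a point $x\in S$, choose a small transversal $T$ with $T\cap S = \{x\}$, and pass to the local model $\cE|_U \cong (P\times_V P)/\Gamma \tto P/\Gamma$ from Theorem \ref{thm-gen-model}, where $\Gamma = \cE_x$ is the (finite) isotropy group, $V = \nu_x(S)$, and $\mu:P\to V$ is a $\Gamma$-equivariant submersion with connected fibers. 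Since both sides of the claimed formula are additive in $f$ and both measures involved are represented by \emph{geometric} densities, it suffices, by a partition of unity argument, to prove the identity for $f$ supported in such a saturated neighborhood $U = P/\Gamma$.

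\textbf{Key steps.} In the local model, write $\rho^\nu = \rho_\cF^* \otimes \rho_M$ as a decomposition of the invariant transverse density, as in the statement. Pull everything back to $P$: the submersion $\mu: P\to V$ has connected fibers which are (open pieces of) the leaves of the pullback foliation, the $\Gamma$-action is free and proper on $P$, and $\rho^\nu$ pulls back to a $\Gamma$-invariant density on $V$ (since $\rho^\nu$ is an invariant section of $\cD_\nu$ and $V$ is a complete transversal for the local model). The ordinary Fubini theorem for the submersion $\mu: P\to V$ — applied to the $\Gamma$-invariant lift $\tilde f$ of $f$ to $P$ and the chosen decomposition of densities — gives
\[ \int_P \tilde f \, \rho_{\mu^*\cF}^*\otimes \mu^*\rho^\nu = \int_V \Big(\int_{\mu^{-1}(v)} \tilde f \, \d\mu_{\mu^{-1}(v)}\Big)\, \d\mu^*\rho^\nu(v). \]
Now I would divide both sides by $|\Gamma|$ to descend to the quotient. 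On the left, $\frac{1}{|\Gamma|}\int_P \tilde f(\cdots) = \int_{P/\Gamma} f \, \d\mu_M = \int_U f\,\d\mu_M$, since $P\to P/\Gamma$ is a $|\Gamma|$-fold cover and $\mu_M$ is the descended density. On the right, the key point: the leaf $S_b$ of the quotient foliation through the image of $v$ is $\mu^{-1}(v)/\Gamma_v$, where $\Gamma_v \subseteq \Gamma$ is the stabilizer of $v$; and for the \emph{generic} leaf (the one through $v$ with $\Gamma_v$ trivial), $\mu^{-1}(v)$ maps to $S_b$ by a covering of degree equal to the number of $\Gamma$-translates of $v$, hence $\int_{\mu^{-1}(v)} \tilde f\,\d\mu_{\mu^{-1}(v)} = \iota(b)\int_{S_b} f\,\d\mu_{S_b}$ where $\iota(b) = |\Gamma| = |\cE_x|$ matches the statement (and for non-generic $v$, the sum over the $\Gamma$-orbit of $v$ again produces exactly $|\Gamma|$ copies, now with $|\Gamma_v|$ cancelling against the $\iota$ counting the full isotropy $\cE_y$ at nearby points). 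Finally, $\frac{1}{|\Gamma|}\int_V (\cdots)\,\d\mu^*\rho^\nu$ is, by Lemma \ref{lem:orbif-measures} applied to the étale atlas $\Gamma\ltimes V \tto V$ of the local orbifold $U/\cF$, precisely $\int_{U/\cF} \iota(b)\big(\int_{S_b} f\,\d\mu_{S_b}\big)\,\d\mu_B(b)$. Assembling the local identities via the partition of unity yields the global formula.

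\textbf{Main obstacle.} The delicate point is the bookkeeping of the multiplicity $\iota$ in the presence of \emph{non-maximal} isotropy, i.e. near leaves $S_b$ whose holonomy group $\Gamma_v$ is a proper subgroup of the nearby generic isotropy. One must check that the fiber integral $\int_{\mu^{-1}(v)} \tilde f$, when $v$ has stabilizer $\Gamma_v$, relates to $\int_{S_b} f$ through the covering $\mu^{-1}(v) \to \mu^{-1}(v)/\Gamma_v = S_b$ of degree $|\Gamma_v|$, and that simultaneously the measure $\mu^*\rho^\nu$ on $V$, pushed to the orbit space via $p_!$ from Lemma \ref{lem:orbif-measures}, introduces the compensating factor; the product of these is exactly $\iota(b) = |\cE_y|$ as required. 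This is really the statement that the two natural "counting" conventions — counting preimages in the cover $\mu^{-1}(v) \to S_b$ versus counting elements of the isotropy group $\cE_x$ — agree after one passes to the étale model, which is where Lemma \ref{lem:orbif-measures} does the essential work. Everything else is the classical smooth Fubini theorem for a submersion with connected fibers, together with the compatibility of the density decomposition $\cD_\nu \cong \cD_{T^*\cF}\otimes\cD_M$ with pullback along $\mu$.
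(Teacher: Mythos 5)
Your strategy --- reduce by a partition of unity subordinate to saturated opens to the local model of Theorem \ref{thm-gen-model}, apply the classical Fubini theorem to the $\Gamma$-equivariant submersion $\mu:P\to V$, and descend by dividing by $|\Gamma|$ --- is a legitimate route and genuinely different from the paper's. The paper avoids the normal form: after the same reduction to a small saturated neighbourhood, it integrates the single weighted density $\frac{1}{|\cE(t(g),T)|}\,f(t(g))\,t^*\mu_M$ over the bibundle $s^{-1}(T)$ of the Morita equivalence $\cE\simeq \cE|_T$ and evaluates it twice, by fiber integration along $t$ (yielding $\int_M f\,\d\mu_M$) and along $s$ (yielding the right-hand side, the factor $\iota(b)=|\cE_x|$ appearing because $t:s^{-1}(x)\to S_x$ is an $\cE_x$-covering). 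Your route is workable but demands more delicate orbit/stabilizer bookkeeping, and that is precisely where your write-up breaks.

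The concrete error is in the key steps: in the local model the leaf through the class of $v\in V$ is $S_b=\mu^{-1}(v)/\Gamma_v$, so $\mu^{-1}(v)\to S_b$ is a covering of degree $|\Gamma_v|$ (the order of the \emph{stabilizer}), not of degree equal to the number of $\Gamma$-translates of $v$ (which is the index $[\Gamma:\Gamma_v]$). Likewise the isotropy of $\cE$ at a point $y$ of that leaf is $\Gamma_v$, so $\iota(b)=|\Gamma_v|$ --- which equals $1$, not $|\Gamma|$, on the generic leaf. Your assertion that $\int_{\mu^{-1}(v)}\tilde f=\iota(b)\int_{S_b}f$ with $\iota(b)=|\Gamma|=|\cE_x|$ is therefore false for every non-central leaf, and followed literally it produces an answer off by the factor $|\Gamma|/|\Gamma_v|$. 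The correct bookkeeping, which your ``main obstacle'' paragraph gestures at but never cleanly states, is: $\int_{\mu^{-1}(v)}\tilde f=|\Gamma_v|\int_{S_b}f=\iota(b)\int_{S_b}f$ for \emph{every} $v$, while the leftover $\frac{1}{|\Gamma|}$ is cancelled by Lemma \ref{lem:orbif-measures} applied to $\Gamma\ltimes V$, because there the $s$-fiber over any $v$ has exactly $|\Gamma|$ elements regardless of its stabilizer, so $p_!$ of a $\Gamma$-invariant function is $|\Gamma|$ times that function. With this correction your argument closes; as written, the multiplicity count --- which is the entire content of the proposition --- is internally inconsistent.
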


\begin{proof}
First we claim that it is enough to prove the theorem in the case where $(M,\cF)$ admits a complete transversal $T$ which intersects each orbit a finite number of times. In fact, recall that proper groupoids admit invariant partitions of unit, so it is enough to proof the theorem in the case where $M$ is the saturation of a small enough transversal $T$ to some orbit. Since the leaves of a proper groupoid are embedded and the leaf space is Hausdorff, we can choose the small transverse $T$ so that it intersects each orbit on a finite set. This proves the claim.

Now assume that we have fixed a $\mu_B$ is a geometric measure on the orbifold $B=M/\cF$ represented by an invariant density $\rho^\nu\in\Gamma(\cD_\nu)$, and that we have chosen some decomposition $\rho^{\nu}=\rho^*_{\cF}\otimes\rho_M$. We consider the Morita equivalence \eqref{eq:Morita:transverse}. Since $t:s^{-1}(T)\to M$ is a local diffeomorphism, on the space $s^{-1}(T)$ we have the pullback density $t^*\rho_M$.
We pick some $f\in C_{c}^{\infty}(M)$ and we compute the integral:
\[ \int_{s^{-1}(T)}\frac{1}{|\cE(t(g),T)|} f(t(g))\, \d t^*\mu_M(g), \]
in two different ways:
\begin{enumerate}[(i)]
\item If we apply fiber integration along the proper submersion $t:s^{-1}(T)\to M$, we obtain:
\[ \int_{s^{-1}(T)}\frac{1}{|\cE(t(g),T)|} f(t(g))\, \d t^*\mu_M(g) = \int_M f(x)\, \d\mu_M(x). \]
\item If we apply fiber integration along the proper submersion $s:s^{-1}(T)\to T$, we obtain:
\begin{align*}
\int_{s^{-1}(T)}\frac{1}{|\cE(t(g),T)|} f(t(g))\,& \d t^*\mu_M(g)= \int_{s^{-1}(T)}\frac{1}{|\cE(t(g),T)|} f(t(g))\, (t^*\rho_\cF\otimes t^*\rho^\nu)(g)\\
&=\int_T\left(\frac{1}{|\cE(x,T)|}\int_{s^{-1}(x)} f(t(g))\, \d\mu_{s^{-1}(x)}(g)\right)\d \mu_T(x)\\
&=\int_T\left(\frac{|\cE_x|}{|\cE(x,T)|}\int_{S_x} f(y)\, \d\mu_{S_x}(y) \right)\d \mu_T(x),
\end{align*}
where we first used that $\rho_M=\rho^{\nu}\otimes\rho_\cF$ and then that $t$ restricts to a cover on each fiber $s^{-1}(x)$ with covering group $\cE_x$. Using Lemma \ref{lem:orbif-measures}, we conclude that:
\[ \int_{s^{-1}(T)}\frac{1}{|\cE(t(g),T)|} f(t(g))\,\d t^*\mu_M(g)=\int_B\left(\iota(b)\int_{S_b} f(y)\, \d\mu_{S_x}(y) \right)\d \mu_B(b), \]
where $\iota(b)=|\cE_x|$ for any $x$ with $p(x)=b$.
\end{enumerate}
Putting (i) and (ii) together the proposition follows.
\end{proof}

\subsection{A Weyl type integration formula} 
\label{sec:Weyl:integration}

Let $(M, \pi)$ be a regular Poisson manifold. The leafwise symplectic form gives the leafwise Liouville volume form:
\[ \frac{\omega_{\cF_\pi}^{\top}}{\top !}\in \Omega^\top(\cF_{\pi}).\]
This induces a 1:1 correspondence between top degree forms $\eta\in\Omega^\top(M)$ and 
sections $\eta^{\nu}\in  \Gamma(\wedge^\top \nu^*(\cF_\pi))$ by setting:
\[ \eta= \frac{\omega_{\cF_\pi}^{\top}}{\top!} \otimes \eta^{\nu}.\]
It turns out that under this correspondence the transverse invariant densities/volu\-me forms correspond to the Hamiltonian invariant densities/volume forms in $(M,\pi)$, in the sense of the following definition:

\begin{definition}
 A {\bf Hamiltonian invariant volume form/density/measure} $\mu$ on a Poisson manifold $(M, \pi)$ is any volume form/density/measure $\eta$ on $M$ which is invariant under the flow of any Hamiltonian vector field $X_h$, i.e.:
\[ \Lie_{X_h}\eta=0, \quad \forall h\in C^\infty(M). \] 
\end{definition}

In fact, we have:

\begin{proposition} 
\label{prop:hamil:invariant:correspondence}
For a regular Poisson manifold $(M, \pi)$ the assignment $\eta\mapsto \eta^{\nu}$ gives a 1-1 correspondence between:
\begin{enumerate}[(i)]
\item Hamiltonian invariant volume forms $\eta\in\Omega^\top(M)$,
\item transverse volume forms $\eta^\nu\in \Gamma(\wedge^\top \nu^*(\cF_\pi))$.
\end{enumerate}
\end{proposition}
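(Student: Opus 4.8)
The plan is to show that the correspondence $\eta \mapsto \eta^\nu$, which is already a bijection between $\Omega^\top(M)$ and $\Gamma(\wedge^\top\nu^*(\cF_\pi))$ by the observation preceding the statement (the leafwise Liouville form gives a nowhere-vanishing section of $\wedge^\top T^*\cF_\pi$, hence a trivialization of the density line of the leaves), restricts to a bijection between the Hamiltonian-invariant ones on each side. Since the map is a linear isomorphism of sheaves of sections, it suffices to check that $\eta$ is Hamiltonian invariant if and only if $\eta^\nu$ is holonomy-invariant, i.e. $\nabla_X\eta^\nu = 0$ for all $X \in \X(\cF_\pi)$, where $\nabla$ is the Bott connection on $\nu^*(\cF_\pi)$. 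Working locally is enough, so I would fix foliation coordinates $(p_1,\dots,p_{2k},x_1,\dots,x_q)$ in which the leaves are $\{x = \text{const}\}$, with $\omega_{\cF_\pi}$ the leafwise symplectic form.

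The first key step is to reduce to a convenient generating set of Hamiltonian vector fields. Since $\pi$ is regular, the Hamiltonian vector fields $X_h$ span $T\cF_\pi$ at every point, and locally one can take $h$ to be the coordinate functions $p_i$ (plus, to get full invariance, one must also allow $h$ depending on the transverse variables $x_j$ — but a Casimir has vanishing Hamiltonian vector field, so genuinely one needs all $X_{p_i}$ with coefficients functions of all variables; better still, note $\Lie_{X_h}\eta = 0$ for all $h$ iff it holds for a local frame of $\Gamma(\cF_\pi)$ consisting of Hamiltonian vector fields). The second step is the computation of $\Lie_{X_h}\eta$ in terms of the splitting $\eta = \frac{\omega_{\cF_\pi}^{\wedge k}}{k!}\otimes \eta^\nu$. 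Here I use two facts: (a) $\Lie_{X_h}$ applied to the leafwise Liouville density vanishes, because $X_h$ is tangent to the leaves and $\Lie_{X_h}\omega_{\cF_\pi} = d_{\cF_\pi}(i_{X_h}\omega_{\cF_\pi}) = d_{\cF_\pi}(d_{\cF_\pi} h) = 0$ (Hamiltonian flows are leafwise symplectomorphisms); (b) for a vector field $X$ tangent to $\cF_\pi$, the action of $\Lie_X$ on a section of $\wedge^\top\nu^*(\cF_\pi)$ is exactly the Bott connection $\nabla_X$. Combining (a) and (b) via the Leibniz rule for $\Lie_X$ on the tensor product of density lines gives
\[
\Lie_{X_h}\eta \;=\; \frac{\omega_{\cF_\pi}^{\wedge k}}{k!}\otimes \nabla_{X_h}\eta^\nu .
\]
Because the leafwise Liouville factor is nowhere zero, $\Lie_{X_h}\eta = 0 \iff \nabla_{X_h}\eta^\nu = 0$. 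Ranging over $h$ and using that the $X_h$ span $\cF_\pi$, this says $\eta$ is Hamiltonian invariant iff $\eta^\nu$ is flat for the Bott connection, i.e. iff $\eta^\nu$ is a transverse (invariant) volume form.

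The main obstacle I anticipate is making statement (b) precise: one must identify $\Lie_X$ acting on sections of the density bundle $\cD_\nu = |\wedge^\top\nu^*(\cF_\pi)|$ with the Bott connection, and verify the Leibniz compatibility with the isomorphism $\cD_M \cong \cD_{T^*\cF_\pi}\otimes\cD_\nu$ used just before Proposition~\ref{prop:Fubini-foliations}. This is the standard fact that the Bott connection is precisely the Lie-derivative action of $\Gamma(\cF_\pi)$ on $\nu(\cF_\pi) = TM/T\cF_\pi$ (and its duals, tensor and density powers), but one should state it cleanly, perhaps in a short lemma, since it is the only non-formal input. Everything else is the Leibniz rule and the observation that a nowhere-vanishing factor can be cancelled. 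Once that is in place, the equivalence is immediate, and the same argument — with $\omega_{\cF_\pi}$ replaced by a leafwise presymplectic form of constant rank, or in the twisted case, since twisting does not affect the leafwise volume — extends verbatim to the Dirac setting, which I would note at the end.
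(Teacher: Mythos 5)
Your proposal is correct and follows essentially the same route as the paper: the paper's proof consists precisely of the identity $\Lie_{X_h}\eta=\frac{\omega_{\cF_\pi}^{\top}}{\top!}\otimes\nabla_{X_h}\eta^{\nu}$ (stated there as ``immediate to check''), from which the equivalence follows since the leafwise Liouville factor is nowhere vanishing and the $X_h$ span $\cF_\pi$. Your elaboration of the check — leafwise symplectomorphism invariance of the Liouville factor, identification of $\Lie_X$ on normal densities with the Bott connection, and the Leibniz rule — is exactly the content the paper leaves implicit.
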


\begin{proof}
It is immediate to check that if $\eta=\frac{\omega_{\cF_\pi}^{\top}}{\top!} \otimes \eta^{\nu}$, then:
\[ \Lie_{X_h}\eta=\frac{\omega_{\cF_\pi}^{\top}}{\top!} \otimes \nabla_{X_h}\eta^{\nu}, \]
so the result follows.
\end{proof}

A similar discussion holds for densities where one replaces the foliated volume form $\omega_{\cF_\pi}^{\top}$ by the
foliated density $|\omega_{\cF_\pi}^{\top}|$. 

 Let now $(\G,\Omega)$ be an s-connected, proper integration of a Poisson manifold $(M, \pi)$.
It gives rise to a transverse integral affine structure $\Lambda\subset \nu^*(\cF_\pi)$
and an orbifold structure $\cB(\cG)$ on $B= M/\cF_{\pi}$. Hence, we obtain (see Propositions  \ref{prop:measure:affine} and \ref{prop:hamil:invariant:correspondence}):
\begin{itemize}
\item a  {\bf integral affine measure} $\mu_{\Aff}$ on the orbifold $(B,\cB(\cG))$;
\item a {\bf integral affine transverse density} $\rho_{\Aff}^{\nu}$ on $(M,\cF_\pi)$ representing $\mu_{\Aff}$;
\item a {\bf Hamiltonian invariant density} $\rho_M$ on $(M,\pi)$ corresponding to $\rho_{\Aff}^{\nu}$.
\end{itemize}
The integral affine transverse density and the Hamiltonian invariant density are related by:
\begin{equation}\label{proof-dec-0}  
\rho_{M}:= \frac{|\omega_{\cF_\pi}^{\top}|}{\top!}  \otimes \rho_{\Aff}^{\nu} .
\end{equation}
The resulting measure $\mu_{M}$ on $M$ is an incarnation of the integral affine measure $\mu_{\Aff}$ on $B$ at the level of $M$. 

As a consequence of Proposition \ref{prop:Fubini-foliations}, we obtain:

\begin{theorem}
\label{thm:Fubini:Poisson}
Given an s-connected, proper integration $(\G,\Omega)$ of a regular Poisson manifold $(M, \pi)$, one has for any $f\in C_{c}^{\infty}(M)$, 
\[ \int_M f(x) \,\d\mu_{M}(x) =\int_B \left(\iota(b) \int_{S_b}f(y)\,\d\mu_{S_b}(y) \right)\,\d\mu_{\Aff}(b),\]
where $\mu_{S_b}$ is the Liouville measure of the symplectic leaf $S_b$, and $\iota:B\to \N$ is the function that for each $b\in B$ counts the number of connected components of the isotropy group $\G_x$ ($x\in S_b$).
\end{theorem}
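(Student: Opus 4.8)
The plan is to deduce Theorem~\ref{thm:Fubini:Poisson} directly from Proposition~\ref{prop:Fubini-foliations} by specializing the abstract Fubini formula there to the foliation groupoid underlying $\cG$ and to the integral affine density. First I would recall that by Theorem~\ref{thm-reg-fol2} the s-connected, proper symplectic integration $(\cG,\Omega)$ fits into a short exact sequence $1\to\cT(\cG)\to\cG\to\cBG(\cG)\to 1$, so that $\cBG:=\cBG(\cG)\tto M$ is an s-connected, proper foliation groupoid integrating $\cF_\pi$. This is the groupoid to which Proposition~\ref{prop:Fubini-foliations} will be applied. Next, Theorem~\ref{thm-lattice-proper-case} gives the transverse integral affine structure $\Lambda_\cG\subset\nu^*(\cF_\pi)$, and Proposition~\ref{prop:measure:affine} produces from $\Lambda_\cG$ an invariant transverse density $\rho_{\Aff}^{\nu}\in\Gamma(\cD_\nu)$ representing the integral affine measure $\mu_{\Aff}$ on the orbifold $B=M/\cF_\pi$ (with orbifold atlas $\cBG$).

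The second step is to fix the decomposition of $\rho_{\Aff}^\nu$ that feeds into Proposition~\ref{prop:Fubini-foliations}. Using the short exact sequence $0\to T\cF_\pi\to TM\to\nu(\cF_\pi)\to 0$ and the resulting isomorphism $\cD_\nu\simeq\cD_{T^*\cF_\pi}\otimes\cD_M$, I would choose the leafwise density to be the Liouville density $|\omega_{\cF_\pi}^{\top}|/\top!$ along the leaves, exactly as in \eqref{proof-dec-0}. With this choice the complementary factor is by definition the Hamiltonian invariant density $\rho_M$ on $M$, and Proposition~\ref{prop:hamil:invariant:correspondence} guarantees this is consistent (the correspondence $\eta\mapsto\eta^\nu$ sends Hamiltonian invariant densities to transverse ones). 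The leafwise measure $\mu_{S_b}$ appearing in Proposition~\ref{prop:Fubini-foliations} is then precisely the Liouville measure of the symplectic leaf $(S_b,\omega_b)$, and the measure $\mu_M$ is the one attached to $\rho_M$. Plugging these identifications into the conclusion of Proposition~\ref{prop:Fubini-foliations} yields the displayed formula verbatim.

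The only genuine point requiring care is the multiplicity function $\iota$. In Proposition~\ref{prop:Fubini-foliations}, $\iota(b)$ counts the number of \emph{elements} of the isotropy group $\cBG_x$ of the foliation groupoid $\cBG$ for $x\in S_b$; in the statement of Theorem~\ref{thm:Fubini:Poisson}, $\iota(b)$ counts the number of \emph{connected components} of the isotropy group $\cG_x$ of the symplectic groupoid $\cG$. I would reconcile these using the short exact sequence of isotropy groups coming from $1\to\cT(\cG)\to\cG\to\cBG\to 1$: restricting to $x$ gives $1\to\cT_x(\cG)\to\cG_x\to\cBG_x\to 1$ with $\cT_x(\cG)=\cG_x^0$ the identity component (a torus, since $\cG_x$ is compact with abelian isotropy Lie algebra, by Theorem~\ref{thm-reg-fol2}). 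Hence $\cBG_x\cong\cG_x/\cG_x^0=\pi_0(\cG_x)$, so $|\cBG_x|$ is exactly the number of connected components of $\cG_x$, and the two descriptions of $\iota$ agree. This is the step I expect to be the main (though mild) obstacle, and it also explains why a \emph{symplectic} proper integration is needed: it is what makes the isotropy identity components tori and $\cBG_x$ finite, so that $\iota$ is a well-defined $\N$-valued function. Everything else is a direct substitution into Proposition~\ref{prop:Fubini-foliations}.
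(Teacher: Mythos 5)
Your proposal is correct and follows essentially the same route as the paper: the paper's proof also applies Proposition~\ref{prop:Fubini-foliations} to the foliation groupoid $\cBG(\cG)$ from the short exact sequence of Theorem~\ref{thm-reg-fol2}, uses the decomposition \eqref{proof-dec-0} of the invariant transverse density into the leafwise Liouville density and $\rho_M$, and identifies $|\cBG(\cG)_x|$ with the number of connected components of $\cG_x$. Your extra care in spelling out the isotropy sequence $1\to\cT_x(\cG)\to\cG_x\to\cBG_x\to 1$ is exactly the one-line observation the paper makes to reconcile the two descriptions of $\iota$.
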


\begin{proof}
The result follows immediately by applying Proposition \ref{prop:Fubini-foliations} to the foliation groupoid $\cBG(\G)$ associated with $\G$ in the short exact sequence (see Theorem \ref{thm-reg-fol2}):
\[ \xymatrix{1\ar[r] & \cT(\cG) \ar[r] & \G \ar[r] & \cBG(\cG) \ar[r]& 1},\]
where $\cT(\cG)$ is the bundle of Lie groups consisting of the identity connected components of the isotropy Lie groups $\G_x$.  Notice that 
$|\cBG(\cG)_x|$ is exactly the number of connected components of $\cG_x$.
\end{proof}

In the s-proper case the leaves are compact, hence they have finite symplectic volume, and we obtain:
\begin{corollary}
\label{cor:Fubini}
If $(\G,\Omega)$ is an s-connected,  s-proper integration of a regular Poisson manifold $(M, \pi)$, then for any $h\in C^{\infty}_c(B)$:
\[ \int_M h(p(x)) \,\d\mu_{M}(x) =\int_B\iota(b) \vol(S_b)\,h(b) \,\d\mu_{\Aff}(b),\]
where $\vol(S_b)$ is the symplectic volume of $S_b$, and $\iota:B\to \N$ is the function that for each $b\in B$ counts the number of connected components of the isotropy group $\G_x$ ($x\in S_b$).
\end{corollary}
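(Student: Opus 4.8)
The plan is to derive Corollary~\ref{cor:Fubini} as a direct specialization of the Fubini-type formula of Theorem~\ref{thm:Fubini:Poisson}, using the extra input that $s$-properness forces the symplectic leaves to be compact. First I would invoke Theorem~\ref{thm:Fubini:Poisson}, which applies since an $s$-connected, $s$-proper integration is in particular an $s$-connected, proper integration: hence for every $f\in C_c^\infty(M)$ one has
\[ \int_M f(x)\,\d\mu_M(x)=\int_B\left(\iota(b)\int_{S_b}f(y)\,\d\mu_{S_b}(y)\right)\d\mu_{\Aff}(b). \]

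Next I would specialize to functions of the form $f=h\circ p$ with $h\in C_c^\infty(B)$, where $p:M\to B$ is the projection to the leaf space. Since $h\circ p$ is constant on each leaf, equal to $h(b)$ on $S_b$, the inner integral becomes $\int_{S_b}h(b)\,\d\mu_{S_b}(y)=h(b)\int_{S_b}\d\mu_{S_b}(y)=h(b)\vol(S_b)$. Here one uses that, by Lemma~\ref{lemma-gen-pr-gpds} (applied to the symplectic foliation $\cF_\pi$, which is of $s$-proper type by Theorem~\ref{thm-reg-fol}), each leaf $S_b$ is a compact submanifold, so its Liouville measure $\mu_{S_b}$ is finite and $\vol(S_b)<\infty$; moreover $\vol(S_b)=\int_{S_b}\omega_{S_b}^{\top}/\top!$ is exactly the total mass of $\mu_{S_b}$. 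Substituting back gives
\[ \int_M h(p(x))\,\d\mu_M(x)=\int_B\iota(b)\vol(S_b)\,h(b)\,\d\mu_{\Aff}(b), \]
which is the claimed identity, with $\iota(b)=|\cBG(\cG)_x|$ the number of connected components of $\cG_x$ as in Theorem~\ref{thm:Fubini:Poisson}.

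The one point that deserves a word of care is that $h\circ p$ need not have compact support on $M$ even when $h$ does, since $p$ need not be proper; but this is not a genuine obstacle. The function $b\mapsto\vol(S_b)$ is continuous (indeed smooth, as it equals $\iota\cdot\vol$ up to the factor counting components, and by Theorem~\ref{thm:main3} it is even polynomial relative to the integral affine structure), so $\iota\cdot\vol\cdot h$ lies in $C_c^\infty(B)$ and the right-hand side is well-defined and finite. To justify the left-hand side one restricts attention to the saturated open set $p^{-1}(\operatorname{supp} h)$, which by properness of the $s$-proper groupoid is a manifold on which $\mu_M$ restricts to a finite measure when tested against the bounded function $h\circ p$ supported there; alternatively, one simply approximates $h\circ p$ by compactly supported functions multiplied by a suitable bump and passes to the limit using the finiteness of $\int_{S_b}\d\mu_{S_b}$. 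Thus no new estimate is required, and the corollary follows formally from Theorem~\ref{thm:Fubini:Poisson} together with the compactness of the leaves.
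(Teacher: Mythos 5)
Your proof is correct and follows exactly the route the paper intends: the corollary is stated as an immediate specialization of Theorem~\ref{thm:Fubini:Poisson} to $f=h\circ p$, using compactness of the leaves to get $\int_{S_b}\d\mu_{S_b}=\vol(S_b)<\infty$. The only remark worth adding is that your worry about compact support dissolves even more cleanly than you suggest: in the s-proper case local Reeb stability shows $p:M\to B$ is itself a proper map, so $h\circ p$ already lies in $C_c^\infty(M)$ and Theorem~\ref{thm:Fubini:Poisson} applies verbatim.
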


If $G$ is a compact, connected Lie group with Lie algebra $\gg$ and $\T$ is a maximal torus, Weyl's integration formula asserts that there is an isomorphism:
\[ 
C^\infty(\gg)\cong  C_c^{\infty}(G/\T\times \tt)^W\quad f(x)\mapsto F(g\T,u):=f(\Ad(g)(u))|\mathrm{det}(\Ad u)_{\gg/\tt}|,
\]
and for fixed $\Ad$-invariant measures $\mu_{\gg}$ and $\mu_{\tt}$:
\[\int f(x)\mu_{\gg}(x)=
\frac{1}{|W|}\int_{\tt}\left( \int_{G/\T}f(\Ad(g)(u))\mu_{\gg/\tt}(g\T)\right)|\mathrm{det}(\Ad u)_{\gg/\tt}|\mu_{\tt}(u).\]
Here $W=N(\T)/\T$ denotes the the Weyl group. In \cite{CFMc} we shall prove that Weyl's formula is the result of specializing to $(\gg^*,\pi_{\lin})$ (with integration $(T^*G,\w_{\can})$) an integration formula generalizing Corollary \ref{cor:Fubini} for arbitrary Poisson manifolds of s-proper type.

\subsection{The Duistermaat-Heckman measure}
\label{sec:Duistermaat-Heckman}
The discussion above was valid for general PMCTs. When the Poisson manifold is s-proper there is another natural measure associated with the PMCT: 
if $(\G,\Omega)$ is an s-proper integration of $(M, \pi)$, then it is natural to consider the measure on $B=M/\G$ obtained as the push-forward of the Liouville measure $\mu_{\Omega}$:
\[ \mu_{\DH}:= (p_B)_{*}(\mu_{\Omega})\]
along the proper map $p_B:= p\circ s= p\circ t: \G\to B$. We will show that:

\begin{theorem} 
\label{thm:measures:DH:Aff}
If $(\G,\Omega)$ is an s-connected, s-proper integration of $(M, \pi)$ then
\begin{equation}\label{eq:DH-measures}
\mu_{\DH}^\Omega= (\iota \cdot \vol)^2 \mu_{\Aff},
\end{equation}
where $\vol:B\to\R$ is the leafwise symplectic volume function and $\iota:B\to \N$
counts the number of connected components of the isotropy group of a symplectic leaf. Moreover, $(\iota \cdot \vol)^2$ is a polynomial for the
orbifold integral affine structure.
\end{theorem}  

The rest of this section is devoted to the proof of the theorem. First, assuming (\ref{eq:DH-measures}) to hold, 
we show that $(\iota \cdot \vol)^2$ is a polynomial in $(B,\Lambda)$. Note that its pullback to $M$ is a Casimir. Another Casimir, which 
we know to be a polynomial on $(B,\Lambda)$ by Theorem \ref{prop-affine-iso-2}, is the function $p^*\vol_{\cB}^2$,  
associating to a leaf $S_x$ the square of the symplectic volume of $\cBG(x, - )$. We now have two
non-zero Casimirs which on each leaf differ by an integer multiple; therefore their ratio is a constant.
   
%The rest of this section is devoted to the discussion of this theorem. 
% First, assuming (\ref{eq:DH-measures}) to hold, 
% it is easy to conclude that $(\iota \cdot \vol)^2$ is a polynomial in $(B,\Lambda)$.
% To start with, its pullback to $M$ is a Casimir. We can construct another Casimir $p^*\vol_{\cB}^2$ which to each leaf $S$ associates the square of the symplectic volume of $\cBG(x, - )$
% with respect to $t^*\w$. Theorem \ref{prop-affine-iso-2} implies that $p^*\vol_{\cB}^2$ defines a polynomial on $(B,\Lambda)$.
% But we have two non-zero Casimirs which on each leaf differ by an integer multiple, therefore their ratio is a constant.  

Now we turn to the proof of (\ref{eq:DH-measures}).
First of all, recall that the push-forward of measures is defined whenever we have a proper map $p: P\to B$ between locally compact Hausdorff spaces: it is the map given by
\[ p_*: \mathcal{M}(P)\to \mathcal{D}(B),\ (p_*\mu)(f)= \mu_{P} (f\circ p). \]
or in the integral formula notation:
\[ \int_{B} f(b) \ \d(p_*\mu)(b)= \int_{P} f(p(x)) \ \d\mu(x) .\]
When $p:P\to B$ is a proper submersion % between smooth manifolds, 
this operation transforms geometric measures into geometric measures, and amounts to fiber integration of densities:
\[ p_{!}= \int_{p-\textrm{fibers}} : \mathcal{D}_{c}(P) \to \mathcal{D}_{c}(B) .\] 
More precisely, the short exact sequence induced by the $\d p:TP\to TB$ yields, for each $x\in P$, a canonical decomposition: 
\[ \mathcal{D}_{P, x} \cong \mathcal{D}^{p}_{x} \otimes   \mathcal{D}_{B, p(x)}  \]
where $\mathcal{D}^{p}$ is the bundle of densities along the fibers of $p$. Hence, given $\rho\in \mathcal{D}(P)$, for any $b\in B$ we can view the restriction $\rho|_{p^{-1}(b)}$ as an element of $\mathcal{D}(p^{-1}(b))\otimes  \mathcal{D}_{B, b}$ and one can integrate along the fiber to obtain:
\[ p_{!}(\rho)(b): = \int_{p^{-1}(b)} \rho|_{p^{-1}(b)} \in \mathcal{D}_{B, b}.\]
By Fubini's theorem we conclude that $p_*(\mu_{\rho})= \mu_{p_!(\rho)}$. 

We apply this to an s-connected, $s$-proper integration $(\G, \Omega)$ of $(M, \pi)$. The Duistermaat-Heckman density $\mu_{\DH}^\Omega= p_* s_* (\mu_{\Omega})$
can be understood in two steps. The first one is integration along the $s$-fibers giving rise to a density on $M$:
\[ \rho^{M}_{{\DH}}:= \int_{s-\textrm{fibers}} \frac{|\Omega^n|}{n!} \in \mathcal{D}(M).\]

\begin{lemma}
$\mu_{\DH}^M$ is an invariant measure.
\end{lemma}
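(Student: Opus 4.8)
The plan is to show that the density $\rho^M_{\DH}$ obtained by integrating the Liouville density $|\Omega^n|/n!$ along the $s$-fibers of $\cG$ is invariant under every Hamiltonian flow, and hence defines a Hamiltonian invariant measure $\mu_{\DH}^M$ in the sense of the definition above. The key point is that $\Omega$ is a multiplicative symplectic form, so that the left and right translations on $\cG$ interact well with it, and that Hamiltonian vector fields on $M$ lift to vector fields on $\cG$ whose flows preserve the Liouville form on the fibers of $s$.

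First I would recall that for a symplectic groupoid $(\cG,\Omega) \tto (M,\pi)$ and $h \in C^\infty(M)$, the Hamiltonian vector field $X_h$ on $M$ is $s$-related to a vector field $\widetilde{X}_h$ on $\cG$, namely the Hamiltonian vector field of $s^*h$ with respect to $\Omega$ (equivalently, the one determined by $i_{\widetilde{X}_h}\Omega = \d(s^*h)$; one checks $\d s(\widetilde{X}_h) = X_h$ using that $s$ is a Poisson map and $t$ an anti-Poisson map). Since $\widetilde{X}_h$ is itself a Hamiltonian vector field on the symplectic manifold $(\cG,\Omega)$, its flow $\psi^\tau$ preserves $\Omega$, hence preserves the Liouville form $|\Omega^n|/n!$. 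Moreover, because $X_h$ and $\widetilde{X}_h$ are $s$-related, $\psi^\tau$ maps $s$-fibers to $s$-fibers, covering the flow $\phi^\tau$ of $X_h$ on $M$ via $s$.

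The second step is the fiber-integration computation: for $f \in C^\infty_c(M)$,
\[
\int_M (\phi^\tau)^* f \cdot \rho^M_{\DH} = \int_\cG s^*\big((\phi^\tau)^* f\big)\,\frac{|\Omega^n|}{n!} = \int_\cG (\psi^\tau)^*(s^* f)\,\frac{|\Omega^n|}{n!} = \int_\cG s^* f\,\frac{|\Omega^n|}{n!} = \int_M f\cdot \rho^M_{\DH},
\]
where the second equality uses $s \circ \psi^\tau = \phi^\tau \circ s$, the third uses that $\psi^\tau$ preserves $|\Omega^n|/n!$ and is a diffeomorphism of $\cG$ (properness of $s$ guarantees all integrals are finite and the change of variables is legitimate on the support). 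Differentiating at $\tau = 0$ gives $\int_M (\Lie_{X_h} f)\,\rho^M_{\DH} = 0$ for all $f$, which is exactly $\Lie_{X_h}\rho^M_{\DH} = 0$.

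The main obstacle I anticipate is a careful justification of the interchange of fiber integration with the flow, i.e. that $\int_\cG$ of a pullback by the $s$-fiber-preserving diffeomorphism $\psi^\tau$ equals $\int_\cG$ of the original density after fiber-integrating; this is where $s$-properness is essential, since it ensures $p_! = \int_{s\text{-fibers}}$ is well-defined and that $\psi^\tau$, being a bundle automorphism of $s: \cG \to M$ covering $\phi^\tau$, commutes with it. An alternative, perhaps cleaner, route is to observe directly that $\rho^M_{\DH} = |\omega_{\cF_\pi}^{\top}|/\top! \otimes \rho^\nu$ for some transverse density $\rho^\nu \in \Gamma(\cD_\nu)$ — indeed the Liouville form on an $s$-fiber $s^{-1}(x) \to S_x$ is the pullback of the leafwise Liouville form twisted by the volume of the fiber, which is a holonomy-invariant transverse quantity — and then invoke Proposition \ref{prop:hamil:invariant:correspondence}, which identifies densities of this shape precisely with the Hamiltonian invariant ones.
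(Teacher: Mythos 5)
Your proof is correct and is essentially the paper's argument: the paper proves the lemma in one line by noting that $s$ is Poisson, so $X_f$ and $X_{s^*f}$ are $s$-related, whence $\Lie_{X_f}\mu_{\DH}^{M}=\int_s \Lie_{X_{s^*f}}\frac{|\Omega^n|}{n!}=0$ — exactly the infinitesimal form of your fiber-integration computation. The only difference is presentational (you integrate the flow and differentiate at $\tau=0$ rather than working directly with Lie derivatives), and your concern about interchanging fiber integration with the flow is precisely the $s$-relatedness identity the paper invokes.
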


\begin{proof}
The source map $s:(\G,\Omega)\to(M,\pi)$ is Poisson. Hence, if $f\in C^\infty(M)$ the Hamiltonian vector fields $X_f$ and $X_{s^*f}$
are $s$-related and we have:
\[\Lie_{X_f}\mu_{\DH}^{M}=\int_s \Lie_{X_{s^*f}}\frac{|\Omega^n|}{n!}=0.\]
\end{proof}

The second step is to push-forward the measure $\mu_{\DH}^M$ along the map $p:M\to B$, resulting in $\mu_{\DH}^\Omega=p_*\mu_{\DH}^M$. Since Theorem \ref{thm:Fubini:Poisson} shows that $p_{*}(\mu_{M})= \iota \cdot \vol\cdot \mu_{\Aff}$, the proof of Theorem \ref{thm:measures:DH:Aff} is completed by proving the following:

\begin{lemma} One has $\rho^{M}_{{\DH}}= \iota \cdot \vol\cdot \rho_{M}$. In other words, at each $x\in M$, one has
\[ \mu^{M}_{\DH}(x)= \iota(x)\cdot \vol(S_x)\cdot \frac{\omega_{S_x}^m}{m!}\wedge \lambda_1\wedge\cdots\wedge\lambda_q, \]
where $\{\lambda_1,\dots, \lambda_q\}\subset \nu_x(\cF_\pi)$ is any basis of the transverse integral affine structure determined by $\G$.
\end{lemma}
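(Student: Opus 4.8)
I want to show that the fiber-integrated Liouville density along the source fibers of $(\cG,\Omega)$ equals $\iota\cdot\vol\cdot\rho_M$, where $\rho_M$ is the Hamiltonian-invariant density of \eqref{proof-dec-0}. Since both sides are densities on $M$, it suffices to check equality pointwise at an arbitrary $x\in M$. The natural thing is to work with the source fiber $P:=s^{-1}(x)$, which carries the restricted form $\Omega|_P$. The key structural input is the short exact sequence of Lie groupoids from Theorem \ref{thm-reg-fol2}, $1\to\cT(\cG)\to\cG\to\cBG(\cG)\to 1$: it says that $t:P\to S_x$ is a principal $\cG_x^0$-bundle followed by the covering $\cBG(x,-)\to S_x$ with deck group $\cG_x/\cG_x^0$ of cardinality $\iota(x)$. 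So $P$ fibers over $\cBG(x,-)$ with fiber the torus $\cG_x^0$.

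**Main steps.** First I would analyze the structure of $\Omega|_P$. Using that $t:(\cG,\Omega)\to(M,\pi)$ satisfies $t^*\pi^\sharp$-compatibility and that the restriction of $\Omega$ to the isotropy bundle $\cT(\cG)$ is the presymplectic torus bundle form giving rise to $\Lambda_\cG$ (as in Section \ref{From PMCTs to integral affine structures}), the form $\Omega$ restricted to $P$ is, up to a piece pulled back from the base $S_x$, the multiplicative form on the torus bundle. Concretely: choose a splitting of $TP$ into the vertical tangent to the $\cG_x^0$-orbits and a horizontal complement identified via $\d t$ with $T\cBG(x,-)$; on the horizontal part $\Omega|_P$ restricts to $t^*\omega_{S_x}$ (pulled back to the cover), and the mixed and vertical terms produce exactly the volume element $\lambda_1\wedge\cdots\wedge\lambda_q$ after one recalls that the $\lambda_i$ generate $\Lambda_{\cG,x}=\Ker(\exp_{\gg_x})$, i.e. they are the periods giving unit volume to the fiber torus $\cG_x^0\cong\gg_x/\Lambda_{\cG,x}$. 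Thus $|\Omega^n/n!|$ restricted to $P$ is, fiberwise over $\cBG(x,-)$, the product of the invariant volume $1$ on each torus fiber with $t^*(|\omega_{S_x}^m/m!|)$.

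**Assembling.** Integrating $|\Omega^n/n!|$ over the fiber $P=s^{-1}(x)$ then factors: integrate first over the torus fibers $\cG_x^0$ (each contributing volume $1$ with respect to the lattice basis $\lambda_i$), leaving the form $|\omega_{\cBG(x,-)}^m/m!|$ on $\cBG(x,-)$; then integrate over $\cBG(x,-)$, which is an $\iota(x)$-fold cover of $S_x$, hence has symplectic volume $\iota(x)\cdot\vol(S_x)$. The residual cotangent directions $\lambda_1\wedge\cdots\wedge\lambda_q\in\wedge^q\nu_x^*(\cF_\pi)$ survive as the transverse factor (they are precisely the data that was integrated out on the vertical torus directions, transported to $\nu_x$ via the identification $\gg_x\cong\nu_x^*(\cF_\pi)$ of Section \ref{From PMCTs to integral affine structures}). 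This gives exactly
\[
\rho^M_{\DH}(x)=\iota(x)\cdot\vol(S_x)\cdot\frac{\omega_{S_x}^m}{m!}\wedge\lambda_1\wedge\cdots\wedge\lambda_q=\iota(x)\cdot\vol(S_x)\cdot\rho_M(x),
\]
using \eqref{proof-dec-0} and Proposition \ref{prop:hamil:invariant:correspondence} for the last equality. Combined with Theorem \ref{thm:Fubini:Poisson}, which gives $p_*(\mu_M)=\iota\cdot\vol\cdot\mu_{\Aff}$, and the push-forward-in-stages identity $\mu_{\DH}=p_*\rho^M_{\DH}$, this yields $\mu_{\DH}=(\iota\cdot\vol)^2\mu_{\Aff}$.

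**The main obstacle.** The delicate point is the precise normalization in the decomposition of $|\Omega^n/n!|$ along $P$: one must verify that the fiberwise torus volume is exactly $1$ with respect to the lattice $\Lambda_{\cG,x}$ and not off by some factor coming from the choice of multiplicative structure, and that the cross-terms in $\Omega|_P$ between vertical and horizontal directions do not contribute to the top-degree density — this is where one uses that $\Omega$ is multiplicative and that the identification $A(\cG)\cong T^*M$ sends $\gg_x$ isometrically (in the relevant sense) to $\nu_x^*(\cF_\pi)$. This is essentially a local computation in a Morita chart where $\cG$ looks like $(\hat S\times\hat S)\times_\Gamma\nu_x$ as in \eqref{nform-groupoid-s-proper}, but keeping track of the symplectic form through the identifications is the step requiring genuine care.
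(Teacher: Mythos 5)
Your strategy is the paper's: decompose the fiber integration of the Liouville density along the tower $s^{-1}(x)\to\cBG(x,-)\to S_x$ (equivalently, along the $t$-fibration $s^{-1}(x)\to S_x$ whose fibers $\cG(x,y)$ have $\iota(x)$ torus components), extract $\iota(x)\cdot\vol(S_x)$ from the cover together with the leafwise Liouville volume, and reduce the lemma to the claim that the residual density on the isotropy directions is the one normalized to total mass $1$ by the lattice $\Lambda_{\cG,x}$. That is exactly how the paper organizes the argument, so the outline is sound.

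Two points, however. First, the sentence attributing the transverse factor $\lambda_1\wedge\cdots\wedge\lambda_q$ to ``the mixed and vertical terms of $\Omega|_P$'' is wrong as stated: since the $s$- and $t$-fibers are $\Omega$-orthogonal, the restriction of $\Omega$ to $P=s^{-1}(x)$ is exactly $t^*\omega_{S_x}$, so its vertical and mixed components vanish identically. The factor comes not from $\Omega|_P$ but from the pairing, inside the full $T_g\cG$, of the isotropy directions $\overleftarrow{\lambda^i}\in\Ker(\d s)\cap\Ker(\d t)$ against a complement of $T_gP$; by multiplicativity $\Omega_g(\overleftarrow{\lambda^i},Y)=\lambda^i(\d s(Y))$, which is $\delta_{ij}$ on the basis dual to the lattice. (Your ``Assembling'' paragraph has the correct picture, but the earlier sentence would derail anyone trying to execute the computation from it.) Second, the normalization you defer as ``the main obstacle'' is not a peripheral check — it is the entire content of the lemma, since everything else is bookkeeping with densities. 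The paper does it directly, with no Morita chart: choose a basis $\{X^1,\dots,X^n,Y^1,\dots,Y^n\}$ of $T_g\cG$ adapted to $\Ker(\d s)$, $\Ker(\d t)$ and the dual lattice $\Lambda^\vee_{\cG,x}$, use the $\Omega$-orthogonality of the $s$- and $t$-fibers to split $\Omega^n$ into leafwise and transverse blocks, and evaluate the transverse block to $1$ via the multiplicativity identity above. Until that computation is carried out, your argument pins down the shape of the answer but not the constant, and the constant is the assertion.
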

 
\begin{proof} We fix a point $x\in M$, we denote by $S$ the leaf through $x$ and let
\[ p= \dim(S), \ q= \dim(\nu_x(\cF_\pi)), \ n= p+ q.\]
For any $g\in\G$ the short exact sequence:
\[ \xymatrix{ 0\ar[r] & T_g (s^{-1}(x)) \ar[r] & T_g\G \ar[r]^{\d_g s} & T_x M \ar[r] & 0}\]
gives a canonical isomorphism 
\[ \mathcal{D}_{T_g\G} \cong \mathcal{D}_{T_g (s^{-1}(x))}\otimes \mathcal{D}_{T_xM}.\]
This leads to a decomposition of the Liouville density 
\begin{equation}\label{proof-dec-1} 
\frac{|\Omega^{n}_{g}|}{n!}= \xi_g \otimes \rho_{M, x}, \quad \textrm{with}\ \xi_g\in  \mathcal{D}_{T_g s^{-1}(x)} .
\end{equation}
We conclude that:
\[ \rho^{M}_{{\DH}}(x)=\int_{s-\textrm{fibers}} \frac{|\Omega^n|}{n!}=\left(\int_{s^{-1}(x)}\xi\right) \rho_{M}(x).\] 

Next, there is a similar short exact sequence 
\[ \xymatrix{ 0\ar[r] & T_g\G(x, y) \ar[r] & T_g s^{-1}(x)  \ar[r]^{\d_g t} & T_y S \ar[r] & 0}\]
which induces a decomposition 
\[ \mathcal{D}_{T_gs^{-1}(x)}\cong \mathcal{D}_{T_g\G(x, y)} \otimes \mathcal{D}_{T_yS}.\]
Hence, we can write 
\begin{equation}\label{proof-dec-2} 
\xi_g= \eta_g \otimes \frac{|\omega_{S}^{\top}|}{\top!} , \ \ \ \textrm{with}\ \eta_g\in \mathcal{D}_{T_gs^{-1}(x)}
\end{equation}
Using this decomposition, we see that:
\[ \int_{s^{-1}(x)} \xi= \int_{S} \left(  \frac{|\omega_{S}^{\top}|}{\top!}(y) \int_{\G(x, y)} \eta(g) \right).\]

Therefore, to prove the lemma, it suffices to show that 
\[ \int_{\G(x, y)} \eta(g)= \iota(x),\] 
the number of connected components of $\G_x= \G(x, x)$. The compact Lie group $\G_x$ comes with its bi-invariant Haar density 
\[ \Haar(\G_x) \in \mathcal{D}(\G_x). \] 
Left translation $L_{a}: \G_x\to \G(x, y)$ by any $a: x\to y$ gives a similar density $\Haar(\G(x, y))$ on $\G(x, y)$. Because the total volume with respect to the Haar density is $1$, it suffices to show that $\eta= \iota(x) \Haar(\G(x, y))$. 

Notice that $\Haar(\G_x)= \frac{1}{\iota(x)} \Haar(\G_{x}^{0})$,
where the Haar density on the identity component (a torus) is induced by lattice given by the kernel of its exponential map. Denote by $\{\lambda^1, \ldots, \lambda^q\}$ a basis of the integral lattice $\Lambda_{\G, x}\subset \nu_{x}^{*}(\cF_\pi)$ and let $\{\widetilde{\lambda}^{1}, \ldots, \widetilde{\lambda}^{q}\}\subset \gg_x$ the corresponding basis of the kernel of the exponential map. Using left translations, the last vectors define vector fields on $s^{-1}(x)$ which we denote by $\{\overleftarrow{\lambda^1}, \ldots, \overleftarrow{\lambda^q}\}$. Then:
\[ \Haar(\G(x, y))=\frac{1}{\iota(x)}|\overleftarrow{\lambda^1}\wedge\cdots\wedge\overleftarrow{\lambda^q}|. \]
Therefore we are left with proving that:
\begin{equation}\label{to-prove-prop-DH} 
\eta_g (\overleftarrow{\lambda^1}\wedge \ldots \wedge \overleftarrow{\lambda^q})=1, \forall g\in \G(x, y).
\end{equation}

For that we have to unravel the construction of $\eta_g$, which goes via the decompositions
(\ref{proof-dec-1}) and (\ref{proof-dec-2}). First note that we can choose a basis $\{X^1,\dots,X^n,Y^1,\dots,Y^n\}$ for $T_g\G$ with the following properties:
\begin{enumerate}[(a)]
\item $X^1, \ldots, X^n$ is a basis of $\Ker(\d_g s)$;
\item $X^{p+1}=\overleftarrow{\lambda^1}|_g, \ldots, X^n=\overleftarrow{\lambda^q}|_g$ is a basis of $\Ker(\d_g s)\cap \Ker(\d_g t)$;
\item $Y^1, \ldots, Y^p,X^{p+1}, \ldots, X^n$ is a basis of $\Ker(\d_g t)$;
\item $\{\d_g s(Y^{p+1}),\dots,\d_g s(Y^{p+1})\}$ is the basis of the dual lattice $\Lambda^{\vee}_{\G, x}\subset\nu_x(\cF_\pi)$, dual to the basis $\{\lambda^1, \ldots, \lambda^q\}$.
\end{enumerate}
Then we see that:
\begin{enumerate}[(i)]
\item Decomposition (\ref{proof-dec-1}) gives:
\[
\frac{|\Omega^{n}_{g}(X^1, \ldots, X^n, Y^1, \ldots, Y^n)|}{n!}= \xi_g(X^1, \ldots, X^n) \cdot \rho_{M, x}(\d s(Y^1), \ldots, \d s(Y^n))
\]
\item Decomposition (\ref{proof-dec-2}) gives:
\[
\xi_g(X^1, \ldots, X^n) = \frac{|\omega_{S}^{\top}(\d t(X^1), \ldots, \d t(X^p))|}{\top!} \cdot \eta_g(X^{p+1}, \ldots, X^{n})
\]
\item Relation (\ref{proof-dec-0}) and Proposition \ref{prop:measure:affine} together with (d) gives:
\begin{align*}
\rho_M(\d s(Y^1), \ldots, \d s(Y^n))
&= \frac{|\omega_{S}^{\top}(\d s(Y^1), \ldots, \d s(Y^p))|}{\top!} \cdot \rho_{\nu}^{\Aff}(\d s(Y^{p+1}), \ldots , \d s(Y^n))\\
&= \frac{|\omega_{S}^{\top}(\d s(Y^1), \ldots, \d s(Y^p))|}{\top!} .
\end{align*}
\end{enumerate}
Putting (i), (ii) and (iii) together, we find that
\begin{align} 
\frac{|\Omega^{n}_{g}(X^1, \ldots, X^n, Y^1, \ldots, Y^n)|}{n!}=&\eta_g(X^{p+1}, \ldots, X^{n}) \cdot \nonumber \\
 \cdot   \frac{|\omega_{S}^{\top}(\d t(X^1), \ldots, \d t(X^p))|}{\top!}  \cdot & \frac{|\omega_{S}^{\top}(\d s(Y^1), \ldots, \d s(Y^p))|}{\top!} \label{forgotten-eq}
\end{align}

We now compute the left hand side of \eqref{forgotten-eq}. For that we use that the $s$ and $t$-fibers are $\Omega$-orthogonal, so by (b) it follows that:
\[ i_{X^{p+1}} \ldots i_{X^n}(\Omega^n)= \frac{n!}{q!p!} i_{X^{p+1}} \ldots i_{X^n}(\Omega^q)\cdot \Omega^{p}.\]
By (a) and (c) we have that  for $1\le j\le p$, the covector $\Omega(X^j, -)$ vanishes on all the $X^i$ and on all $Y^1, \ldots, Y^p$, so this last relation gives:
\begin{align*}
\frac{|\Omega^{n}_{g}(X^1, \ldots, X^n, Y^1, \ldots, Y^n)|}{n!}&=\\
=\frac{|\Omega^q(X^{p+1}, \ldots, X^n, Y^{p+1}, \ldots , Y^n)|}{q!} &\cdot
\frac{|\Omega^p(X^1, \ldots, X^p, Y^1, \ldots, Y^p)|}{p!},\\
=\frac{|\Omega^q(X^{p+1}, \ldots, X^n, Y^{p+1}, \ldots , Y^n)|}{q!} &\cdot
\frac{|\Omega^k(X^1, \ldots, X^p)|}{k!} \cdot  \frac{|\Omega^k(Y^1, \ldots, Y^p)|}{k!},
\end{align*}
where we have written $2k=p$. Moreover, since the restriction of $\Omega$ to the $s$-fibers coincides with the pull-back of $\omega_{S}$ via $t$, and similarly for the $t$-fibers, we find that
\[ \frac{\Omega^p(X^1, \ldots, X^p, Y^1, \ldots, Y^q)}{p!}= \frac{\omega^{k}_{S}(\d t(X^1), \ldots, \d t(X^p))}{k!} \cdot  \frac{\omega^{k}_{S}(\d s(Y^1), \ldots, \d s(Y^p))}{k!}.\]
It follows that \eqref{forgotten-eq} can be reduced to:
\[ \eta_g(X^{p+1}, \ldots, X^{n}) = \frac{\Omega^q(X^{p+1}, \ldots, X^n, Y^{p+1}, \ldots , Y^n)}{q!}. \]

Now we observe that by the multiplicativity of $\Omega$ we have:
\[
\Omega(X^{p+j},Y^{p+j})=\Omega_g(\overleftarrow{\lambda^{i}_{g}}, Y^{p+j}_{g})=\Omega_x(\lambda^j,\d s(Y^{p+j}_{g}))= \delta_{i, j},
\]
where we used (d). Since $\Omega(X^{p+i},X^{p+j})= 0$ for all $i,j=1,\dots,q$, we find that 
\[  \eta_g(X^{p+1}, \ldots, X^{n})=\frac{\Omega^q(X^{p+1}, \ldots, X^n, Y^{p+1}, \ldots , Y^n)}{q!}= 1,\]
which shows that \eqref{to-prove-prop-DH} holds and completes the proof. 
\end{proof}

As we shall show in \cite{CFMc} Theorem \ref{thm:measures:DH:Aff} holds for arbitrary Poisson manifolds of s-proper type. In fact, 
the polynomial  $(\iota\cdot \mathrm{vol})^2$ will play a fundamental role in the study of global properties of non-regular Poisson manifolds of s-proper type 
\cite{CFMc}. 

{
\begin{example}[The classical case]
Consider a free Hamiltonian $\T$-action on a connected symplectic manifold $(S,\omega)$ with a proper 
moment map $\mu:S\to\tt^*$, so that $M=S/\T$ is a Poisson manifold with
leaf space $\mu(S)\subset\tt^*$.  As we observed in Section \ref{ex:Duistermaat-Heckman},
the s-connected, s-proper symplectic integration $\cG=(S\times_\mu S)/\T$ induces on $\tt^*$ the integral 
affine structure $\Lambda$ for which $\T=\tt^*/\Lambda$. Hence, the integral affine measure
$\mu_{\Aff}$ on the leaf space is the usual Lebesgue measure on $\tt^*$.

On the other hand, $\mu_{\DH}^\Omega$ does not quite coincide with the classical Duistermaat-Heckman measure $\mu_{\DH}^\omega$: the latter is defined as the push-forward under the moment map $\mu:S\to \tt^*$ of the Liouville measure $\mu_\omega$ (see the discussion preceding Corollary \ref{cor:DH:classic}). However, as in that discussion, one can show using the local model that the two measures are related by:
\[ \mu_{\DH}^\Omega=\vol\cdot \mu_{\DH}^\omega. \]
Of course this also follows from the classical result of Duistermaat-Heckman and our Theorem \ref{thm:measures:DH:Aff}.

The isotropy groups of $\cG$ all coincide with $\T$, hence, are connected. Therefore, the function $\iota:B\to \N$ assumes the constant value 1, and Theorem \ref{thm:measures:DH:Aff} gives:
\[ \mu_{\DH}^\Omega=(\vol)^2\cdot \mu_{\Aff}. \]
\end{example}
We conclude that Theorem \ref{thm:measures:DH:Aff} recovers Corollary \ref{cor:DH:classic} and the polynomial nature of the classical Duistermaat-Heckman measure on $\tt^*$. Note that, for a general PMCT, while the function $\iota\cdot (\vol)^2:B\to\R$ is polynomial, the functions $\vol:B\to\R$ and $\iota\cdot \vol:B\to\R$ are not even smooth. This justifies our definition of the Duistermaat-Heckman measure.

}

\begin{remark}\label{rk-Joao}
This section is related to Weinstein's work on measures on stacks \cite{We}. According to his philosophy, 
the measures to consider in Poisson Geometry should arise by interpreting the symplectic groupoid as a stack. Our approach here is more direct approach, using the foliation groupoid instead of the full symplectic one. The precise relationship between the two is explained in \cite{CrMe}. 
\end{remark}

%%%%%%%%%%%%%%%%%%%%%%%%
%%%%%%%%%%%%%%%%%%%%%%%%
%%%%%%%%%%%%%%%%%%%%%%%%
%%%%%%%%%%%%%%%%%%%%%%%%
%%%%%%%%%%%%%%%%%%%%%%%%
%%%%%%%%%%%%%%%%%%%%%%%%
%%%%%%%%%%%%%%%%%%%%%%%%
\section{Proper isotropic realizations}
\label{sec:realizations}
%%%%%%%%%%%%%%%%%%%%%%%%
%%%%%%%%%%%%%%%%%%%%%%%%
%%%%%%%%%%%%%%%%%%%%%%%%
%%%%%%%%%%%%%%%%%%%%%%%%
%%%%%%%%%%%%%%%%%%%%%%%%
%%%%%%%%%%%%%%%%%%%%%%%%
%%%%%%%%%%%%%%%%%%%%%%%%
%%%%%%%%%%%%%%%%%%%%%%%%

Many algebraic or geometric objects can be studied via their representations. This philosophy also applies to Poisson Geometry, where the representations of a Poisson manifold take the concrete form of symplectic realizations (see below). For instance, the integrability of a Poisson manifold is equivalent to the existence of a complete symplectic realization \cite{CF2}. In this section we show that the properness of a Poisson manifold is closely related to the existence of \emph{proper isotropic realizations}. 

More precisely, to any proper isotropic realization $q:(X,\Oga)\to (M,\pi)$ we will associate a symplectic integration of $(M,\pi)$- 
the \emph{holonomy symplectic groupoid relative to $X$}, denoted by $\HolX$. It is the smallest integration that
acts on $X$ symplectically and it will play an import role in the last two sections of the paper. There we will introduce the Lagrangian Dixmier-Douady class of a proper integration and the ones with vanishing class are precisely the holonomy symplectic groupoids relative to some proper isotropic realization.

Proper isotropic realizations appeared first in the work of Dazord and Delzant \cite{DaDe}, under the name of \emph{symplectically complete isotropic fibrations}, as special fibrations of symplectic manifolds that generalize Lagrangian fibrations. From that point of view, this section generalizes the fact that the base of a proper Lagrangian fibration inherits an integral affine structure: we will show that the base of a proper symplectically complete isotropic fibration with connected fibers is a Poisson manifold of proper type.

\subsection{Symplectic realizations and Hamiltonian $\cG$-spaces} 
\label{ssec:sympl-realizs-Hamilt}
Recall that a {\bf symplectic realization} of a Poisson manifold $(M, \pi)$ is a symplectic manifold $(X, \Oga)$ together with a Poisson submersion
\[ q: (X, \Oga)\to (M, \pi).\]
The symplectic realization is called {\bf complete} if for any complete Hamiltonian vector field $X_{h}\in \X(M)$ the pullback $X_{h\circ q}\in \X(X)$ is complete. Of course, if $q$ is proper then it is complete. While every Poisson manifold admits a symplectic realization, for complete symplectic realizations one has:

\begin{theorem}[\cite{CF2}]\label{thm:integr-sympl-realiz}
A Poisson manifold is integrable if and only if it admits a complete symplectic realization.
\end{theorem}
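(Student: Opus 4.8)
The plan is to prove Theorem \ref{thm:integr-sympl-realiz} (the characterization of integrability via complete symplectic realizations), which is quoted from \cite{CF2}; since the excerpt cites it as a known result, I will outline the argument rather than reprove it in full.

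\textbf{The forward implication.} Suppose $(M,\pi)$ is integrable, so that its Weinstein groupoid $\Sigma(M,\pi)$ is a smooth symplectic groupoid $(\Sigma,\Omega_\Sigma)\tto M$. I would take $X := \Sigma$ with symplectic form $\Omega := \Omega_\Sigma$, and let $q := \s : \Sigma \to M$ be the source map. The source map of a symplectic groupoid is always an anti-Poisson submersion onto $(M,\pi)$ (equivalently, $\t$ is Poisson); either choice works up to a sign, so $\s$ (or $\t$) gives a symplectic realization. For completeness: given a complete Hamiltonian vector field $X_h$ on $M$, its pullback $X_{h\circ \s}$ on $\Sigma$ is the right-invariant vector field generated by $h$ through the groupoid action of $\Sigma$ on itself, and right-invariant vector fields on an s-fiber-complete groupoid are complete because the s-fibers are preserved and the flow is built from the groupoid multiplication with the flow of $X_h$ downstairs; more precisely, the flow of $X_{h\circ\s}$ at time $t$ applied to $g \in \Sigma$ with $\t(g)=x$ equals $\widetilde{\phi}^t_h(x)\cdot g$, where $\widetilde{\phi}^t_h$ is the Hamiltonian flow of $h$ lifted to a family of bisections (defined for all $t$ precisely because $\phi^t_h$ is complete). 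Hence $\s:(\Sigma,\Omega_\Sigma)\to(M,\pi)$ is a complete symplectic realization.

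\textbf{The reverse implication.} This is the substantial direction and the place where I expect the real work to lie. Suppose $q:(X,\Omega)\to(M,\pi)$ is a complete symplectic realization. The strategy is to produce an s-connected, s-simply-connected symplectic groupoid integrating $(M,\pi)$; by the general integrability theory this forces $\Sigma(M,\pi)$ to be smooth. The idea (following \cite{CF2}) is that a symplectic realization $q$ induces an infinitesimal action of the cotangent Lie algebroid $T^*M$ on $X$: each $1$-form $\alpha$ on $M$ gives a vector field on $X$ via $\alpha \mapsto X_{q^*\alpha}$ wait — more precisely via the pullback construction $\alpha \mapsto (q^* \text{(Hamiltonian data)})$, realizing $T^*M \to \mathfrak{X}(X)$ as a Lie algebroid morphism. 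Completeness of $q$ guarantees this infinitesimal action integrates to an action of a (not a priori known to be smooth) groupoid; the key technical input is that one can use the realization $X$ itself — via the pair-groupoid-type construction $X\times_M X$ and reduction — to build a smooth symplectic groupoid. Concretely, one considers the submersion groupoid $X \times_M X \tto X$ associated to $q$ and shows that the foliation generated by the (lifted) Hamiltonian vector fields is simple, with smooth leaf space carrying a symplectic groupoid structure over $M$ integrating $\pi$; completeness is exactly what makes the relevant distribution integrate to a foliation with Hausdorff, smooth leaf space.

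\textbf{Main obstacle.} The hard part is the reverse direction: showing that completeness of the realization is enough to smooth out the canonical integration. This requires (i) setting up the $T^*M$-action on $X$ and verifying it is by symplectic-realization-compatible data, (ii) integrating this action — using completeness to flow for all time along the lifted Hamiltonians — and (iii) performing a quotient/reduction to obtain a \emph{smooth} symplectic groupoid, then invoking uniqueness of the source-simply-connected integration to conclude $\Sigma(M,\pi)$ is smooth. Since all of this is carried out in \cite{CF2}, in the paper I would simply cite that reference for the proof and move on, as the excerpt already does.
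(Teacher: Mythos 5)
Your forward direction is the standard one and is essentially correct, with one caveat that the paper itself flags immediately after the statement: $\Sigma(M,\pi)$ need not be Hausdorff, so the "only if" direction as stated forces one to allow non-Hausdorff symplectic realizations.

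The sketch of the converse, however, contains a step that fails. You propose to obtain the symplectic groupoid as the smooth, Hausdorff leaf space of the foliation generated by the lifted Hamiltonian vector fields, asserting that completeness is what makes this foliation simple. Neither claim is correct. The orbit foliation $(\ker \d q)^{\perp}$ of the induced $T^*M$-action has rank $\dim M$, so its leaf space has dimension $\dim X-\dim M$, which is not $2\dim M$ in general; already for the realization $q=\t:\Sigma(M,\pi)\to M$ one has $(\ker \d\t)^{\perp}=\ker \d\s$, whose leaf space is $M$ itself, not the groupoid. (The variant on $X\times_M X$ that you allude to is also dimensionally wrong unless $\dim X=2\dim M$, and its characteristic foliation need not be simple.) Moreover, completeness of $q$ has nothing to do with simplicity or Hausdorffness of any leaf space: it is exactly what guarantees that the horizontal lift $\widetilde{\gamma}_a^u$ of a cotangent path is defined on all of $[0,1]$, i.e. that the infinitesimal $T^*M$-action integrates to a groupoid action at all. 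The mechanism in \cite{CF2}, recalled in the paper right after the statement, is different: one shows that two cotangent paths are cotangent homotopic iff their horizontal lifts are leafwise homotopic rel endpoints, which identifies the action groupoid $\Sigma(M,\pi)\ltimes X$ with the monodromy groupoid $\Mon((\ker \d q)^{\perp})$ --- a Lie groupoid for \emph{any} foliation, simple or not --- and then deduces smoothness of $\Sigma(M,\pi)$ itself from this (in \cite{CF2} via the local uniform discreteness of the monodromy groups, i.e. the general integrability criterion). That last descent step is the real content of the converse and is absent from your outline. Since the paper only cites \cite{CF2} here, deferring to that reference is acceptable, but the sketch as written would not assemble into a proof.
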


Note however that, as the canonical integration $\Sigma(M,\pi)$ may already fail to be Hausdorff, 
in the previous theorem one has to allow for non-Hausdorff symplectic realizations (cf.~\cite[Remark 1]{CF2}). However, we will soon impose conditions that ensure that all the manifolds involved are Hausdorff. 

We recall in Appendix \ref{appendix:moment:maps} that for a symplectic integration $(\cG,\Omega)\tto(M,\pi)$ the moment map of an infinitesimally free Hamiltonian $\cG$-space $(X,\Oga)$ yields a symplectic realization of $(M,\pi)$. This motivates:

\begin{definition}
\label{X-compat-intgrts}
Given a symplectic realization $q: (X, \Oga)\to (M, \pi)$, a symplectic integration $(\cG,\Omega)\tto(M,\pi)$ is called {\bf $X$-compatible} if there is a symplectic $\cG$-action with moment map $q:X\to M$:
\[
\xymatrix@R=18 pt{
 (\cG,\Omega) \ar@<0.25pc>[d] \ar@<-0.25pc>[d]  & \ar@(dl, ul) & (X,\Oga)\ar[dll]^-{q}\\
(M,\pi) &   }
\]
\end{definition}

Every complete symplectic realization admits compatible integrations: the proof of Theorem \ref{thm:integr-sympl-realiz} given in \cite{CF2} shows that the Weinstein groupoid acts on every complete symplectic realization. In fact, we have (see also Appendix \ref{appendix:moment:maps}):

\begin{proposition}
For a Poisson manifold $(M, \pi)$, the complete symplectic realizations of $(M, \pi)$ are the same
thing as the moment maps of infinitesimally free $\Sigma(M,\pi)$-Hamiltonian spaces.
\end{proposition}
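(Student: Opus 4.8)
The plan is to establish the claimed equivalence by showing that both sides are equivalent to the same intermediate notion: an infinitesimally free symplectic action of the Weinstein groupoid $\Sigma(M,\pi)$ on a symplectic manifold $(X,\Oga)$ with moment map $q$. One direction, that such a Hamiltonian space yields a complete symplectic realization, is already recorded in Appendix \ref{appendix:moment:maps} (and is exactly the content cited after Definition \ref{X-compat-intgrts}): the moment map of an infinitesimally free Hamiltonian $\cG$-space is a Poisson submersion, and completeness follows because the pullback of a complete Hamiltonian vector field $X_h$ is generated by the $\cG$-action (it is $q$-related to a right-invariant vector field on $\cG$ whose flow is defined for all time, since $\cG$ is a Lie groupoid and the flow lines project to integral curves of $X_h$). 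So the only real work is the converse: producing, from a bare complete symplectic realization, the $\Sigma(M,\pi)$-action.

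For the converse, first I would recall the proof of Theorem \ref{thm:integr-sympl-realiz} as given in \cite{CF2}: a symplectic realization $q:(X,\Oga)\to(M,\pi)$ induces an infinitesimal action of the Lie algebroid $T^*M$ on $X$, namely $\sigma:\Omega^1(M)\to\X(X)$, $\alpha\mapsto X_{q^*(\cdot)}$-type vector fields defined by $i_{\sigma(\alpha)}\Oga=q^*\alpha$ (using that $q$ is Poisson, this is a genuine Lie algebroid action). Completeness of the realization is precisely the statement that this infinitesimal action integrates: the vector fields $\sigma(\alpha)$ for compactly supported (hence complete Hamiltonian) $\alpha=\d h$ are complete, and a standard argument upgrades this to integrability of the whole algebroid action. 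The integration of a complete Lie algebroid action by $A=T^*M$ is, by the general theory, an action of the source-simply-connected integration, i.e.\ of $\Sigma(M,\pi)=\Sigma(A)$. One then checks that this groupoid action is symplectic in the sense of Appendix \ref{App:Hamiltonian}, $m^*\Oga=\pr_1^*\Omega_{\can}+\pr_2^*\Oga$ — this is automatic because the multiplicativity identity holds infinitesimally (it is the compatibility of $\sigma$ with the canonical symplectic form on $T^*M$, the source fibers being coisotropic etc.) and $\Sigma(M,\pi)$ is source-connected, so a multiplicative closed form is determined by its infinitesimal counterpart. Infinitesimal freeness of the action is the injectivity of $\sigma$, which holds because $q$ is a submersion: $q^*\alpha=0$ forces $\alpha=0$.

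The remaining bookkeeping is to confirm that under these two passages ``complete symplectic realization $\leftrightarrow$ moment map of infinitesimally free $\Sigma(M,\pi)$-Hamiltonian space'' are mutually inverse, which amounts to observing that differentiating the $\Sigma(M,\pi)$-action recovers the algebroid action $\sigma$, and conversely integrating $\sigma$ recovers the original action — both by uniqueness of integration of (complete) Lie algebroid actions over a source-simply-connected base. I would also flag the Hausdorff caveat already noted in the excerpt: $\Sigma(M,\pi)$ and $X$ need not be Hausdorff in general, so the statement must be read in the category that allows this, exactly as in \cite[Remark 1]{CF2}; since in the applications of this section we impose properness (hence Hausdorffness) this is harmless.

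The main obstacle I anticipate is not any single hard estimate but rather the careful verification that the groupoid action obtained by integration is genuinely \emph{symplectic} (the multiplicativity condition $m^*\Oga=\pr_1^*\Omega_{\can}+\pr_2^*\Oga$), as opposed to merely an action preserving $\Oga$ leafwise. The cleanest route is to phrase it as: the graph of the action is a coisotropic (in fact Lagrangian-type) submanifold of $\bar\cG\times X\times X$, this holds infinitesimally by construction of $\sigma$ from a Poisson map, and it propagates to the whole source-connected groupoid because both sides are closed and multiplicative and agree on the unit manifold together with their derivatives there. This is essentially the content of the correspondence between symplectic groupoid actions and Poisson maps, so I would cite the relevant statement in Appendix \ref{appendix:moment:maps} rather than redo it, keeping the proof of the proposition short: reference Theorem \ref{thm:integr-sympl-realiz} and its proof for the integration step, reference the appendix for the moment-map/symplectic-action dictionary, and spell out only the infinitesimal freeness $\Leftrightarrow$ submersion point and the mutual-inverse check.
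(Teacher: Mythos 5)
Your proposal is correct and follows essentially the same route as the paper, which itself does not write out a separate proof but derives the proposition from the two lemmas of Appendix \ref{appendix:moment:maps} (one direction: symplectic actions give complete Poisson moment maps; the other: the moment-map condition defines an infinitesimal $T^*M$-action that, by completeness, integrates to a symplectic $\Sigma(M,\pi)$-action, as in the proof of Theorem \ref{thm:integr-sympl-realiz} in \cite{CF2}), together with the observation that infinitesimal freeness of the action is equivalent to $q$ being a submersion. Your filling-in of the details — injectivity of $\sigma$ from the nondegeneracy of $\Oga$ and the submersion property, the propagation of multiplicativity over the source-connected $\Sigma(M,\pi)$, and the Hausdorff caveat — matches the intended argument.
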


It will be useful to recall the construction from \cite{CF2}, that shows how the infinitesimal action determined by the realization $q: (X, \Oga)\to (M, \pi)$:
\[ \sigma:q^*T^*M\to\X(X),\quad i_{\sigma(\alpha)}(\omega)= q^*\alpha, \]
integrates to a symplectic action:
\[
\xymatrix{
 (\Sigma(M,\pi),\Omega) \ar@<0.25pc>[d] \ar@<-0.25pc>[d]  & \ar@(dl, ul) & (X,\Oga)\ar[dll]^-{q}\\
(M,\pi) &   }
\]
For that, let $a:I\to T^*M$ be a cotangent path with base path $\gamma_{a}:I\to M$ and choose $u\in X$ in the fiber over the initial point $\gamma(0)$. Since $q$ is complete, it follows that there is a unique path $\widetilde{\gamma}_{a}^{u}:I\to X$ with $q(\widetilde{\gamma}_{a}^{u}(t))=\gamma_{a}(t)$ and satisfying:
\[
\left\{
\begin{array}{l}
\frac{\d }{\d t}\widetilde{\gamma}_{a}^{u}(t)= \sigma(a(t)), \\
\\
\widetilde{\gamma}_{a}^{u}(0)=u
\end{array}
\right.
\]

We call $\widetilde{\gamma}_{a}^{u}$ the {\bf horizontal lift} of the cotangent path $a$ with initial point $u$.
It is easy to check that the horizontal lifts are leafwise paths in the symplectic orthogonal foliation $(\ker \d q)^\perp$.
It is proved in \cite{CF2} that two cotangent paths with the same initial point are cotangent homotopic if and only if their horizontal lifts are leafwise homotopic relative to the endpoints (and if this holds for some initial point $u$ it holds for any other point in the fiber). 

In summary, one can \emph{characterize} the canonical integration $\Sigma(M,\pi)\tto M$ from the realization $q:(X,\Oga)\to (M, \pi)$ as:
\begin{equation}\label{eq:alt-def-sigma} 
\Sigma(M,\pi)=\frac{ \{\text{cotangent paths $a:I\to T^*M$}\}}{\text{cotangent paths w/ lifts leafwise homotopic in $(\ker \d q)^\perp$}}. 
\end{equation}
If we denote by $[a]$ the class of a cotangent path, the symplectic action of $\Sigma(M,\pi)$ on $q:X\to M$ is then given by:
\begin{equation}\label{act-Sigma-on-X} 
\Sigma(M,\pi)_s\times_q X\to X, \quad ([a],u)\mapsto \widetilde{\gamma}_{a}^{u}(1). 
\end{equation}
The action gives an isomorphism of Lie groupoids:
\begin{equation}\label{identif-Sigma-ltimes-X} 
\Sigma(M,\pi)\ltimes X\cong \Mon((\ker \d q)^\perp), \quad ([a],u)\mapsto [\widetilde{\gamma}_{a}^{u}]. 
\end{equation}

Since $\Sigma(M,\pi)\tto M$ acts on any symplectic realization and it is the largest, s-connected, symplectic integration of $(M,\pi)$, it is natural to wonder:
\begin{itemize}
\item Given a symplectic realization $q:(X,\Oga)\to (M,\pi)$, is there is a ``smallest" $X$-compatible, s-connected, symplectic integration? 
\end{itemize}
The minimality property of the holonomy groupoid of a foliation suggests that, to construct such a groupoid, one should replace in the description (\ref{eq:alt-def-sigma}) of $\Sigma(M,\pi)$ ``homotopy'' by ``holonomy''. 

In other words, we define a new equivalence relation between cotangent paths $a_1,a_2:I\to T^*M$, which we call {\bf cotangent holonomy rel $X$},  by:
\[ a_1\sim_h a_2\text{ iff } \left\{ \begin{array}{l} \text{their horizontal lifts at any point $u$, }\widetilde{\gamma}_{a_1}^{u}, \widetilde{\gamma}_{a_2}^{u}:I\to X, \\
\text{have the same holonomy in } (\ker \d q)^\perp.\end{array}\right. \]
Notice that the base paths of cotangent holonomic paths have the same end points. Also, it is clear that:
\begin{enumerate}
\item[(a)] If $a_0$ and $a_1$ are cotangent homotopic then they are also cotangent holonomic rel $X$;
\item[(b)] If $a_0$ and $b_0$ are cotangent holonomic rel $X$ to $a_1$ and $b_1$, respectively, then the concatenations $a_0\cdot b_0$ and $a_1\cdot b_1$, if defined, are cotangent holonomic rel $X$.
\end{enumerate}

% Hence, we can associate to any complete symplectic realization a new groupoid as follows:
Therefore, we are led to the following:

\begin{definition}
\label{def:hol-sympl-gpd}
The {\bf holonomy symplectic groupoid} relative to $q:(X, \Oga)\to M$ is the groupoid $\HolX \tto M$ defined by:
\[ \HolX :=\frac{ \{\text{cotangent paths}\}}{\text{cotangent holonomy rel $X$}}, \]
with the obvious structure maps. Denote by $[a]_{h}$ the class of a cotangent path $a$. 
\end{definition}

There is an obvious groupoid action:
\[
\xymatrix{
 \HolX \ar@<0.25pc>[d] \ar@<-0.25pc>[d]  & \ar@(dl, ul) & X\ar[dll]^-{q}\\
M&   }
\]
which gives a morphism of groupoids:
\begin{equation}\label{HolX-from-sympl-orth}
\HolX \ltimes X\to \Hol((\ker \d q)^\perp), \quad ([a]_h,u)\mapsto [\widetilde{\gamma}_{a}^{u}]_h. 
\end{equation}
In good cases $\HolX$ will indeed be the ``smallest" $X$-compatible integration of $(M,\pi)$ and the last morphism will be an isomorphism of Lie groupoids.   

\begin{example}
For any symplectic groupoid $(\cG,\Omega)\tto (M,\pi)$, the target map $t:\cG\to M$ yields a complete symplectic realization of $(M,\pi)$.
We claim that in this case we have a natural isomorphism:

\[ \Hol_{\cG}(M,\pi)\cong \cG. \]
Indeed, the symplectic orthogonal foliation to the t-fibers is the foliation given by the s-fibers, which obviously has trivial holonomy. Hence, given a cotangent path $a:I\to T^*M$ starting at $x\in M$, if one denotes by $\widetilde{\gamma}_{a}:I\to \cG$ the unique horizontal lift  through $1_{x}$, then one has a well defined map $[a]_h\to \widetilde{\gamma}_{a}(1)$ and this defines the desired isomorphism from $\Hol_{\cG}(M,\pi)$ onto $\cG$.
\end{example}

\begin{example}
A Lagrangian fibration $q:(X,\Oga)\to B$ with compact connected fibers is a complete symplectic realization of the zero Poisson structure $\pi\equiv 0$.
We claim that in this case we have a natural isomorphism:

\[ \Hol_{X}(B,0)\cong \cT_\Lambda, \]
the symplectic torus bundle associated with the integral affine structure $\Lambda\subset T^*B$ (see Proposition \ref{prop:IAS-sympl-torus}). Indeed, in this case the symplectic orthogonal foliation to the fibers coincides with the fibers, and so has trivial holonomy. A cotangent path $a:I\to T^*B$ starting at $x\in M$ is just an ordinary path $a:I\to T^*_xB$ and it is cotangent homotopic to the constant path $\alpha=\int_0^1 a(t)\d t$ (see \cite{CF2}). For a constant path $\alpha\in T^*_xB$, the horizontal lift through $u\in X$ is the path $t\mapsto \phi^t_\alpha(u)$ (same notation as in the proof of Proposition \ref{prop:IAS-sympl-torus}) and we conclude that:
\[ a_1\sim_h a_2\text{ iff }  \int_0^1a_1(t)\d t -  \int_0^1a_2(t)\d t \in \Lambda. \]
Hence, the map $[a]_h\to \int_0^1 a(t)\d t\pmod{\Lambda}$ gives the desired isomorphism from $\Hol_{X}(B,0)$ onto $\cT_\Lambda$.
\end{example}

%%%%%%%%%%%%%%%%%%%%%%%
%%%%%%%%%%%%%%%%%%%%%%%
%%%%%%%%%%%%%%%%%%%%%%%
%%%%%%%%%%%%%%%%%%%%%%%
%%%%%%%%%%%%%%%%%%%%%%%
\subsection{The holonomy groupoid relative to an isotropic realization}
\label{sec:holonomy:groupd} 
%%%%%%%%%%%%%%%%%%%%%%%
%%%%%%%%%%%%%%%%%%%%%%%
%%%%%%%%%%%%%%%%%%%%%%%
%%%%%%%%%%%%%%%%%%%%%%%
%%%%%%%%%%%%%%%%%%%%%%%

In general, for a proper Poisson manifold $(M,\pi)$ the canonical integration $\Sigma(M,\pi)\tto M$ will fail to be proper. 
For a symplectic realization, the holonomy groupoid $\HolX$ constructed in the previous section 
is a smaller integration with better chances of being proper. When $\HolX$ is a proper symplectic groupoid then, according to Theorem \ref{thm-lattice-proper-case}, it will determine a transverse integral affine structure on $(M,\cF_\pi)$. We focus now on a class of
proper symplectic realizations to which one can always attach a transverse integral affine structure:

\begin{definition} An {\bf isotropic realization} of a Poisson manifold $(M, \pi)$ is a symplectic realization $q: (X, \Oga)\to (M, \pi)$ whose fibers are connected isotropic submanifolds of $(X, \Oga)$.
\end{definition}

\begin{remark}
\label{rk:symplectically complete isotropic fibrations}
Dazord and Delzant have studied in \cite{DaDe} the notion of a {\bf symplectically complete isotropic fibration} of a symplectic manifold $(X,\Oga)$. It is defined as a fibration $q:X\to M$ satisfying two properties:
\begin{enumerate}[(i)]
\item the fibers of $q$ are isotropic;
\item the symplectic orthogonal $(\ker\d q)^{\perp}$ is an integrable distribution.
\end{enumerate}
If one assumes additionally that the fibers of $q$ are connected it follows that $M$ carries a unique Poisson structure such that $q:(X, \Oga)\to (M, \pi)$ is a Poisson map, therefore making $q$ into an isotropic realization of $(M, \pi)$. Conversely, any isotropic realization of a Poisson manifold satisfies Dazord-Delzant's conditions. 

However, while the two notions are equivalent, they do reflect two different points of view, depending on whether one emphasizes the Poisson manifold $(M, \pi)$ or the symplectic manifold $(X, \Oga)$, respectively. The second point of view makes it clear that we are dealing with a generalization of the notion of a Lagrangian fibration of a symplectic manifold (see Section \ref{Integral affine structures on manifold}). 
\end{remark}
\vskip .1 in

Generalizing from proper Lagrangian fibrations, any proper isotropic realization also has an associated lattice. The infinitesimal action 
$\sigma:q^*T^*M\to\X(X)$ associated with the Poisson map $q:X\to M$ (see Appendix \ref{appendix:moment:maps}) restricts to an action $\sigma:\nu^*(\cF_{\pi})\to\X(X)$, which integrates to a global (bundle of groups) action: 
\begin{equation}\label{isotropic-action-normal} 
\xymatrix{
\nu^*(\cF_{\pi}) \ar[d]  & \ar@(dl, ul) & X\ar[dll]^-{q}\\
M&   }
\qquad 
\alpha\cdot u= \phi_{\sigma(\alpha)}^{1}(u).
\end{equation} 

Of course, this is just a particular case of the previous discussion: 
the exponential map $\exp:\nu_x^*(\cF_\pi)\to \Sigma(M,x)^0$ identifies this action with the restriction of the action of $\Sigma(M)$ on $X$ to
the connected component of its isotropy.  

\begin{definition} \label{def:LambdaX} 
The {\bf lattice associated to the proper isotropic realization} $q: (X, \Oga)\to (M, \pi)$ is the lattice
$\Lambda_X\subset \nu^*(\cF_\pi)$ given by:
% \begin{equation}\label{the-lattice-isotropic-case} 
\[
\Lambda_{X, x}:=\{\alpha\in \nu^{*}_{x}(\cF_\pi): \phi^1_{\sigma(\alpha)}=\text{id}\}.
\]
% \end{equation}
The associated torus bundle is $\cT_{X}:= \nu^*(\cF_\pi)/\Lambda_{X}$. 
% \[ \cT_{X}:= \nu^*(\cF_\pi)/\Lambda_{X}.\]
\end{definition} 

\begin{lemma}\label{lemma:Lambda-X-Mon} If $q: (X, \Oga)\to (M, \pi)$ is a proper isotropic realization, then 
$\Lambda_X$ defines a transverse integral affine structure for the symplectic foliation $\cF_{\pi}$, containing the monodromy 
group $\cN_{\rm{mon}}$ of $(M, \pi)$ (see Section \ref{sec:hol:monodromy}).

Moreover, the action (\ref{isotropic-action-normal}) induces an action of $\cT_{X}$ on $X$, 
\[ m: \cT_{X}\times_{M} X\to X, \ (\lambda, u)\mapsto \lambda\cdot u,\] 
which is free and proper, makes $q: X\to M$ into a principal $\cT_X$-bundle and the action is presymplectic in the sense that
\begin{equation}
\label{action-T-on-X} 
m^*(\Oga)= \pr_{1}^{*}(\omega_{\cT})+ \pr_{2}^{*}(\Oga),
\end{equation}
where $\omega_{\cT}$ the presymplectic form on $\cT_X$ (cf. Proposition \ref{prop:IAS-presympl-torus}).
\end{lemma}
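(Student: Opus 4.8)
The statement to prove is Lemma \ref{lemma:Lambda-X-Mon}. The plan is to organize the argument around the identification of the action \eqref{isotropic-action-normal} with the restriction of the $\Sigma(M,\pi)$-action on $X$, which was recalled just above: the exponential $\exp:\nu^*_x(\cF_\pi)\to\Sigma_x(M,\pi)^0$ intertwines the bundle-of-groups action of $\nu^*(\cF_\pi)$ on $X$ with the restriction to the identity component of the isotropy of the symplectic groupoid action \eqref{act-Sigma-on-X}. With this in hand, several of the assertions reduce to known facts. First, I would show $\Lambda_X$ is a \emph{smooth} lattice: the map $\alpha\mapsto\sigma(\alpha)$ is fibrewise injective (this is the infinitesimal freeness of the symplectic realization, which here holds because the $\cF_\pi$-conormal directions act by nonzero vector fields whenever $q$ is a submersion), and $q$ proper forces each orbit $\phi^1_{\sigma(\cdot)}(u)$ of the abelian group $\nu^*_x(\cF_\pi)$ on the compact fibre $q^{-1}(x)$ to be an immersed torus orbit, so the isotropy $\Lambda_{X,x}$ is a full-rank discrete subgroup; smoothness in $x$ follows by the same local-section argument used in the proof of Theorem \ref{thm-lattice-proper-case} (a small section of $\Lambda_X$ near $\alpha_0$ exists because the action is locally free and fibrewise transitive). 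This simultaneously gives that $\cT_X=\nu^*(\cF_\pi)/\Lambda_X$ is a torus bundle and that the induced $\cT_X$-action on $X$ is free and proper with $q$ a principal $\cT_X$-bundle.

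Next, for the claim that $\Lambda_X$ is \emph{Lagrangian} in $(T^*M,\omega_\can)$, I would repeat verbatim the computation in the proof of Theorem \ref{thm-lattice-proper-case}: one has $\dim\Lambda_X=\dim M=\tfrac12\dim T^*M$, so it suffices to show every local 1-form section $\alpha$ of $\Lambda_X$ is closed. For that I need the identity $i_{\sigma(\alpha)}\Oga=q^*\alpha$ (the defining property of $\sigma$), and then the Cartan-calculus argument
\[
0=(\phi^1_{\sigma(\alpha)})^*\Oga-\Oga=\int_0^1(\phi^\tau_{\sigma(\alpha)})^*\d i_{\sigma(\alpha)}\Oga\,\d\tau=\int_0^1(\phi^\tau_{\sigma(\alpha)})^*q^*\d\alpha\,\d\tau=q^*\d\alpha,
\]
using $q\circ\phi^\tau_{\sigma(\alpha)}=q$ and $q$ a submersion to conclude $\d\alpha=0$. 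By Proposition \ref{prop-transv-int-affine-folklore} this shows $\Lambda_X$ is a transverse integral affine structure for $\cF_\pi$. The inclusion $\cN_{\mathrm{mon}}\subset\Lambda_X$ is then immediate from the groupoid picture: if $\alpha\in\cN_{\mathrm{mon}}|_x$ then $\exp(\alpha)=1_x$ in $\Sigma_x(M,\pi)$ by the homotopy exact sequence recalled in Section \ref{sec:hol:monodromy}, hence $\phi^1_{\sigma(\alpha)}=\mathrm{id}$ on $X$ since $\Sigma(M,\pi)$ acts on $X$ through \eqref{act-Sigma-on-X}, i.e. $\alpha\in\Lambda_{X,x}$ (this is exactly the argument of Theorem \ref{thm-lattice-strong-case}(i) with $\cG$ replaced by the groupoid acting on $X$).

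Finally, for the presymplectic compatibility \eqref{action-T-on-X}: the action $m:\nu^*(\cF_\pi)\times_M X\to X$ already satisfies $m^*\Oga=\pr_1^*\w_\sigma+\pr_2^*\Oga$ where $\w_\sigma=\sigma_{\mathrm{bun}}^*\omega_\can$ is the pullback of $\omega_\can$ along $\nu^*(\cF_\pi)\hookrightarrow T^*M$ (this is the defining multiplicativity of a symplectic action recalled in Appendix \ref{appendix:moment:maps}, specialized to the bundle-of-groups $\nu^*(\cF_\pi)$; it can also be checked directly by the same Cartan-calculus identity as above applied to one-parameter subgroups). Since $\Lambda_X$ is Lagrangian, $\w_\sigma$ is exactly the presymplectic form $\omega_{\cT}$ on $\cT_X=\nu^*(\cF_\pi)/\Lambda_X$ under the construction of Proposition \ref{prop:IAS-presympl-torus}, and descends along the quotient by $\Lambda_X$; hence \eqref{action-T-on-X} is the descent of the $\nu^*(\cF_\pi)$-level identity.

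\textbf{Main obstacle.} The routine parts are the Cartan-calculus identities and the dimension count; the step requiring the most care is the smoothness and constant-rank of $\Lambda_X$, i.e. showing the bundle-of-groups action is locally free and fibrewise transitive and that local generating sections of the isotropy can be chosen smoothly. This is where properness of $q$ (compactness of the fibres, via the analogue of local Reeb stability / the torus orbit structure) is genuinely used, and it is the hinge on which the identification $\cT_X=\nu^*(\cF_\pi)/\Lambda_X$ and the principal-bundle statement rest; I would model this part closely on the proof of Theorem \ref{thm-lattice-proper-case}, substituting $X$ for the $s$-fibres of $\cG$.
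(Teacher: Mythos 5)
Your proposal is correct and follows essentially the same route as the paper: the paper's proof also establishes that $\Lambda_X$ is a transverse integral affine structure by repeating the argument of Theorem \ref{thm-lattice-proper-case}, deduces $\cN_{\mathrm{mon}}\subset\Lambda_X$ from the fact that the $\nu^*(\cF_\pi)$-action factors through the identity component of $\Sigma(M,\pi)$, and obtains \eqref{action-T-on-X} from the symplectic $\Sigma(M,\pi)$-action together with the identification of $\omega_\cT$ with the pullback of the groupoid symplectic form under $\exp$. Your write-up merely fills in the details that the paper delegates to those references.
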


\begin{proof} 

The fact that $\Lambda_X$ is a transverse integral affine structure is proved exactly as in the case of Lagrangian fibrations (see, e.g., \cite{DaDe}, or our proof of Theorem \ref{thm-lattice-proper-case}). The fact that $\Lambda_X$ contains $\cN_{\rm{mon}}$ is clear because we already know that the action of $\nu^{*}(\cF_{\pi})$ on $X$ factors through the action of the identity component of $\Sigma(M)$, which is $\nu^{*}(\cF_{\pi})/\cN_{\rm{mon}}$. 

The action of $\cT_{X}$ on $X$ is free since $\Lambda_X$ is precisely the kernel of the $\nu(\cF_\pi)$-action and the properness follows from the properness of $\cT_X$.  To check that the action is presymplectic, it suffices to observe that the action of $\Sigma(M)$ on $X$ is symplectic, together with the fact that the presymplectic forms on the conormal bundle coincides with the the pull-back of the symplectic form of $\Sigma(M)$ via the exponential map $\exp: \nu^{*}(\cF_{\pi})\to \Sigma(M)$.
\end{proof}

We can now state the main result of this section:

\begin{theorem}
\label{thm:isotropic:fib:grpd}
For any proper isotropic realization $q: (X, \Oga)\to (M, \pi)$: 
\begin{enumerate}[(i)]
\item $\HolX$ is an $X$-compatible, s-connected symplectic integration of $(M, \pi)$.
\item For any $X$-compatible, s-connected symplectic integration $(\cG,\Omega)\tto (M, \pi)$ there are \'etale morphisms of symplectic groupoids:
\[ \xymatrix{\Sigma(M,\pi)\ar[r] & \cG\ar[r] & \HolX}; \]
\item $\HolX$ is a proper Lie groupoid if and only if $\cF_{\pi}$ is of proper type.
\end{enumerate}
Moreover, one has a short exact sequence of Lie groupoids:
\[ \xymatrix{0\ar[r] & \cT_{X} \ar[r] & \HolX \ar[r] & \Hol(M,\cF_\pi)\ar[r] &0}.\]
\end{theorem}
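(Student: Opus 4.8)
The plan is to establish the four items of Theorem~\ref{thm:isotropic:fib:grpd} essentially by unpacking the constructions of Section~\ref{ssec:sympl-realizs-Hamilt}, using the factorization $\Sigma(M,\pi)\to\HolX$ through the cotangent-holonomy relation, together with the general results on foliation groupoids from Section~\ref{sec:foliations} applied to the symplectic orthogonal foliation $(\ker\d q)^\perp$.

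First I would address the smooth-structure and symplectic aspects of (i). Since $q$ is proper, it is complete, so $\Sigma(M,\pi)$ acts on $X$ by \eqref{act-Sigma-on-X}, and \eqref{identif-Sigma-ltimes-X} identifies $\Sigma(M,\pi)\ltimes X$ with $\Mon((\ker\d q)^\perp)$. The key observation is that, by property~(a) before Definition~\ref{def:hol-sympl-gpd} (cotangent-homotopic implies cotangent-holonomic rel $X$), the map \eqref{HolX-from-sympl-orth} fits into a commutative diagram with \eqref{identif-Sigma-ltimes-X}, and passing to holonomy on the leafwise-path side turns $\Mon((\ker\d q)^\perp)$ into $\Hol((\ker\d q)^\perp)$. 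Thus $\HolX\ltimes X\cong\Hol((\ker\d q)^\perp)$, which is a genuine (Hausdorff, because $q$ is proper so the relevant foliation on the fibers has compact leaves) Lie groupoid; and $\HolX$ is recovered as the gauge groupoid $\Gauge{\Hol((\ker\d q)^\perp)}{X}$ in the sense of Example~\ref{ex:gauge} via the principal bundle $q\colon X\to M$ — indeed the $\cT_X$-action of Lemma~\ref{lemma:Lambda-X-Mon} makes $q$ a principal bundle for the isotropy of that action. This gives $\HolX$ a Lie groupoid structure; the symplectic form $\Omega$ is inherited from $\Sigma(M,\pi)$ (it descends along the surjective submersion $\Sigma(M,\pi)\to\HolX$ because the fibers are exactly the cotangent-holonomy classes, which are isotropic for $\Omega_{\Sigma(M,\pi)}$ — this is where one checks that the $\HolX$-action on $X$ remains symplectic, i.e. multiplicative), and $s$-connectedness is immediate since cotangent paths can be contracted to constants. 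For (ii), the two \'etale morphisms are the canonical quotient $\Sigma(M,\pi)\to\HolX$ and the map $\cG\to\HolX$: given an $X$-compatible s-connected $\cG$, Theorem~\ref{seq-mon-G-hol} applied to $\cG\ltimes X$ sitting between $\Mon$ and $\Hol$ of $(\ker\d q)^\perp$ yields a factorization $\Sigma(M,\pi)\to\cG\to\HolX$, and one checks these are local diffeomorphisms compatible with the symplectic forms by the same descent argument.

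Next, the short exact sequence. The inclusion $\cT_X\hookrightarrow\HolX$ sends $\lambda\in\cT_{X,x}$ to the class of any cotangent path whose horizontal lift at $u$ is the loop $t\mapsto\phi^t_{\sigma(\alpha)}(u)$ with $[\alpha]=\lambda$; by Lemma~\ref{lemma:Lambda-X-Mon} this is well defined and lands in the isotropy. The quotient map $\HolX\to\Hol(M,\cF_\pi)$ sends $[a]_h$ to the holonomy class of the base path $\gamma_a$ in $\cF_\pi$ — this is well defined because horizontal lifts of cotangent-holonomic paths have base paths with the same $\cF_\pi$-holonomy (the holonomy of $(\ker\d q)^\perp$ projects onto the holonomy of $\cF_\pi$, since $\cF_\pi$ is the image foliation of $(\ker\d q)^\perp$ under $q$). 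Exactness in the middle is the statement that a cotangent path has holonomy-trivial base path iff its class in $\HolX$ is in the image of $\cT_X$; this follows by lifting an $\cF_\pi$-trivial-holonomy base path horizontally and comparing with the bundle-of-groups action \eqref{isotropic-action-normal}, i.e. it reduces to the analogous statement for the simple (fiberwise abelian) situation, exactly as in Proposition~\ref{prop:IAS-sympl-torus}. Finally for (iii): by the short exact sequence and the fact that $\cT_X$ is always proper (it is a bundle of tori), $\HolX$ is proper iff $\Hol(M,\cF_\pi)$ is proper, which by Theorem~\ref{prop-fol-crit-C} is equivalent to $\cF_\pi$ being of proper type; the Hausdorffness of $\HolX$ in this case follows as in Theorem~\ref{thm-reg-fol2} since $\cT_X$ is a closed subgroupoid.

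The main obstacle I expect is the careful verification that the symplectic (multiplicativity) structure genuinely descends from $\Sigma(M,\pi)$ to $\HolX$ and that the resulting $\HolX$-action on $X$ is honestly symplectic in the sense of Appendix~\ref{appendix:moment:maps} — all the underlying-manifold and topological statements follow cleanly from the foliation-groupoid machinery of Section~\ref{sec:foliations} applied to $(\ker\d q)^\perp$, but the compatibility of $\Omega$ with the coarser holonomy equivalence requires knowing that the $\Omega_{\Sigma(M,\pi)}$-kernel along the fibers of $\Sigma(M,\pi)\to\HolX$ is precisely tangent to those fibers, which in turn rests on the identification \eqref{HolX-from-sympl-orth} being an isomorphism and on the one-sided-holonomy phenomena (Example~\ref{ex-one-sided-hol}) not causing the quotient to be non-Hausdorff — this is exactly where properness of $q$ (hence compactness/finiteness of the relevant holonomy) is used.
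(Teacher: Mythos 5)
Your skeleton --- identify $\HolX\ltimes X$ with $\Hol((\ker \d q)^\perp)$ via \eqref{HolX-from-sympl-orth}, run the foliation machinery of Section \ref{sec:foliations} on $(\ker \d q)^\perp$, exhibit the exact sequence with kernel $\cT_X$, and deduce (iii) from it --- is the same as the paper's, and your treatment of (ii), of the exact sequence, and of (iii) is essentially right up to routine descent checks. But there are two genuine gaps in (i). First, your recovery of the smooth structure on $\HolX$ as ``the gauge groupoid $\Gauge{\Hol((\ker \d q)^\perp)}{X}$'' does not work: the construction of Example \ref{ex:gauge} requires $X$ to be a \emph{principal} bundle for the groupoid one quotients by, and $q:X\to M$ is principal only for $\cT_X$, not for $\Hol((\ker \d q)^\perp)$ (the isotropy of the latter's action at $u$ is the holonomy group of $\cF_\pi$ at $q(u)$, and its orbit space is $B$, not $M$); moreover $(X\times_M X)/\cT_X$ is just $\cT_X$ itself. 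The correct recovery, which is what the paper does, uses Lemma \ref{lemma-start-isotr} to identify $\Hol((\ker \d q)^\perp)$ with the pullback groupoid $X\times_M\Hol(M,\cF_\pi)\times_M X$ and then quotients by the \emph{twisted} diagonal $\cT_X$-action \eqref{act-t-hop}; smoothness of that quotient is not automatic, because $\Hol(M,\cF_\pi)$ may well be non-Hausdorff (your claim that properness of $q$ forces Hausdorffness of $\Hol((\ker \d q)^\perp)$ is unjustified --- compact leaves do not give Hausdorff holonomy groupoids), and it requires the explicit local-section argument producing invariant charts.

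Second, your descent of the symplectic form is not an argument. Since $\dim\HolX=2\dim M$, the projection $\Sigma(M,\pi)\to\HolX$ is \'etale, so its fibers are discrete; saying they are ``isotropic for $\Omega_{\Sigma(M,\pi)}$'' is vacuous and gives no descent. What must be shown is that $\Omega_{\Sigma(M,\pi)}$ is \emph{invariant} under the identifications defining cotangent holonomy rel $X$, which is exactly the nontrivial point. The paper sidesteps this by constructing the form directly on the model: one computes that the kernel of $\pr_1^*\Oga-\pr_3^*\Oga$ on $X\times_M\Hol(M,\cF_\pi)\times_M X$ is precisely the tangent space to the orbits of \eqref{act-t-hop}, so the form descends along a genuine submersion, and multiplicativity and the integration of $\pi$ are then checked on the quotient. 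Relatedly, in (ii) the maps of action groupoids over $X$ supplied by Theorem \ref{seq-mon-G-hol} must be shown to be constant along the $q$-fibers before they descend to morphisms $\Sigma(M,\pi)\to\cG\to\HolX$ over $M$; the paper gets this from the surjectivity of $q^*\Sigma(M,\pi)\to\Sigma(M,\pi)$, and your sketch omits it.
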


For the proof of this theorem we start with an elementary but important 
property of isotropic realizations, which serves as starting point for reconstructing
the holonomy groupoid of $(\ker \d q)^\perp$ 
(hence, using (\ref{HolX-from-sympl-orth}, also the groupoid $\HolX$).

\begin{lemma}\label{lemma-start-isotr} 
For an isotropic fibration $q: (X, \Oga)\to (M, \pi)$, the foliation $(\ker \d q)^\perp$ coincides with 
the pull-back via $q$ of the symplectic foliation $\cF_{\pi}$.
\end{lemma}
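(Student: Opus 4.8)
The plan is to identify both distributions on $X$ fiberwise and show they agree at every point. Fix $u\in X$ and set $x=q(u)$, $S=S_x$ the symplectic leaf through $x$. I would first unravel the two sides. On one hand, $\ker\d_u q = T_u(q^{-1}(x))$, so its symplectic orthogonal $(\ker\d_u q)^{\perp}$ is, by definition of the orthogonal with respect to $\Oga_u$, the set of $v\in T_uX$ such that $\Oga_u(v,w)=0$ for all $w\in T_u(q^{-1}(x))$. On the other hand, the pullback foliation $q^*\cF_\pi$ has as its tangent space at $u$ the preimage $(\d_u q)^{-1}(T_x S)=(\d_u q)^{-1}(\operatorname{Im}\pi^\sharp_x)$. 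So the claim is the linear-algebra identity
\[
(\ker\d_u q)^{\perp}=(\d_u q)^{-1}(T_xS)
\]
inside $(T_uX,\Oga_u)$.

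The key computational input is the compatibility between $\Oga$ and $\pi$ coming from the fact that $q$ is a Poisson map, together with isotropy of the fibers. First I would record that, since $q$ is a submersion and a Poisson map, for any $\xi\in T^*_xM$ the vector $\sigma_u(\xi)\in T_uX$ defined by $i_{\sigma_u(\xi)}\Oga_u=(\d_u q)^*\xi$ satisfies $\d_u q(\sigma_u(\xi))=\pi^\sharp_x(\xi)$; this is exactly the infinitesimal action recalled in Section~\ref{ssec:sympl-realizs-Hamilt}. Next, isotropy of the fiber says $T_u(q^{-1}(x))\subset (\ker\d_u q)^{\perp}$, equivalently $\Oga_u$ descends to a nondegenerate pairing identifying $T_uX/\ker\d_u q \cong T_xM$ with $(\ker\d_u q)^{\perp*}$ appropriately; more usefully, it gives $(\ker\d_u q)^{\perp}=\{\,v : (\d_uq)^*(i_v\Oga_u)\ \text{annihilates}\ \ker\d_u q\,\}$, and since $\ker\d_u q=\ker\d_u q$, this reads: $v\in(\ker\d_u q)^{\perp}$ iff $i_v\Oga_u$ is the pullback of a covector on $M$, i.e.\ $i_v\Oga_u\in(\d_u q)^*(T^*_xM)$. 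Combining this with the previous display, $v\in(\ker\d_u q)^{\perp}$ iff $v=\sigma_u(\xi)+w$ for some $\xi\in T^*_xM$ and $w\in\ker\d_u q$ (here I use that $\Oga_u$ is nondegenerate to invert $i_{(-)}\Oga_u$, and that $\ker\d_u q$ is exactly the kernel of $v\mapsto (\d_uq)^*(i_v\Oga_u)$ modulo this). Applying $\d_u q$ gives $\d_u q(v)=\pi^\sharp_x(\xi)\in T_xS$, so $(\ker\d_u q)^{\perp}\subset(\d_u q)^{-1}(T_xS)$.

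For the reverse inclusion, take $v$ with $\d_u q(v)=\pi^\sharp_x(\xi)$ for some $\xi\in T^*_xM$. Then $v-\sigma_u(\xi)\in\ker\d_u q$, and since $\sigma_u(\xi)$ lies in $(\ker\d_u q)^{\perp}$ (because $i_{\sigma_u(\xi)}\Oga_u=(\d_uq)^*\xi$ manifestly annihilates $\ker\d_u q$) and $\ker\d_u q\subset(\ker\d_u q)^{\perp}$ by isotropy, we get $v\in(\ker\d_u q)^{\perp}$. A dimension count then confirms the two subspaces coincide (both have dimension $\dim\ker\d_u q+\dim T_xS$, using isotropy to add the dimensions). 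Finally, that $(\ker\d_u q)^{\perp}$ is an integrable distribution — so the word ``foliation'' is justified — is part of the definition of an isotropic (= symplectically complete isotropic) realization, cf.\ Remark~\ref{rk:symplectically complete isotropic fibrations}, and that $q^*\cF_\pi$ is a foliation is automatic since $q$ is a submersion.

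\textbf{Expected main obstacle.} None of the steps is deep; the only place requiring care is the bookkeeping with the symplectic orthogonal and the identification $i_{(-)}\Oga_u$ — in particular making sure that the characterization ``$v\in(\ker\d_uq)^\perp$ iff $i_v\Oga_u$ is a $q$-pullback'' is stated and used correctly, since it relies precisely on $\ker\d_u q$ being its own annihilator-preimage under $(\d_uq)^*\circ i_{(-)}\Oga_u$. Once that is set up cleanly the two inclusions are immediate.
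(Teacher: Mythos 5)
Your proof is correct and follows essentially the same route as the paper: both arguments rest on identifying $(\ker\d q)^\perp$ with the image of the infinitesimal action $\sigma$ (via $i_{\sigma(\xi)}\Oga=q^*\xi$ and nondegeneracy of $\Oga$), using the Poisson-map identity $\d q(\sigma(\xi))=\pi^\sharp(\xi)$, and invoking isotropy of the fibers to get $\ker\d q\subset(\ker\d q)^\perp$ for the reverse inclusion. The closing dimension count is redundant once both inclusions are established, but nothing is missing.
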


\begin{proof}
The definition of the infinitesimal action $\sigma$ shows that $\im(\sigma)= (\ker \d q)^\perp$. Now, the fact that $q$ is a symplectic realization is equivalent to 
\[ \pi^{\sharp}(\xi)= \d q(\sigma(\xi)),\quad \forall \xi\in T^*M.\]
We deduce that, for a vector $v$ tangent to $X$, one has:
\begin{align*}
\d q(v)\in \cF_{\pi}&\Longleftrightarrow \d q(v)= \d q(\sigma(\xi)),  \text{ for some }\xi\in T^*M\\ 
& \Longleftrightarrow v\in \im(\sigma) +  (\ker \d q)=  (\ker \d q)^\perp+ (\ker \d q)= (\ker \d q)^\perp
\end{align*}
where for the last equality we used that the fibers are isotropic. % This shows that $(\d q)^{-1}(\cF_{\pi})= (\ker \d q)^\perp$. 
\end{proof}

Next, we look at the interaction between $\cT_{X}$ and the holonomy of the foliation.
Using the general properties of transverse integral affine structures (see Section \ref{sec:IAS:orbifolds}) and the discussion on presymplectic actions from Appendix \ref{App:twisted Dirac case}, one finds:
% how $\cT_{X}$ interacts with the holonomy of the foliation:

\begin{lemma} \label{lemma:action-Hol-on-T} 
The linear holonomy action (\ref{lin-hol-ref}) preserves $\Lambda_{X}$ so descends to an action
on $\cT_X$: for any leafwise path $\gamma: [0, 1]\to M$ from $x$ to $y$ one obtains
\[ \hol_{\gamma}: \cT_{X, x}\to \cT_{X, y}. \]
If $\Hol(M, \cF_{\pi})$ is endowed with the zero presymplectic form then the resulting 
action $m: \Hol(M, \cF_{\pi})\times_{M}\cT_X\to \cT_X$ is presymplectic:
\begin{equation}\label{action-Hol-on-T}
m^*(\omega_{\cT})= \pr_{2}^{*}(\omega_{\cT}).
\end{equation}
\end{lemma}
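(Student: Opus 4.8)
\textbf{Proof plan for Lemma \ref{lemma:action-Hol-on-T}.}

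The plan is to first establish that the linear holonomy of $\cF_{\pi}$ preserves the lattice $\Lambda_X$, and then to identify the resulting action on $\cT_X$ with a presymplectic groupoid action with respect to the zero form on $\Hol(M,\cF_{\pi})$. For the invariance of $\Lambda_X$: the linear holonomy $\hol^{\lin}_{\gamma}:\nu_x(\cF_{\pi})\to\nu_y(\cF_{\pi})$ is parallel transport for the Bott connection, and dually we get $\hol^{\lin}_{\gamma}:\nu_x^*(\cF_{\pi})\to\nu_y^*(\cF_{\pi})$. The cleanest way to see that this sends $\Lambda_{X,x}$ to $\Lambda_{X,y}$ is to recall, from the paragraph preceding Definition \ref{def:LambdaX}, that $\Lambda_X$ is the lattice of the transverse integral affine structure associated by Theorem \ref{thm-lattice-proper-case} (applied via $\HolX$, or more directly by the Dazord--Delzant argument cited in Lemma \ref{lemma:Lambda-X-Mon}); any transverse integral affine structure $\Lambda\subset\nu^*(\cF_{\pi})$ is, by Proposition \ref{prop-transv-int-affine-folklore}, locally spanned by closed $\cF_{\pi}$-basic $1$-forms, hence is automatically invariant under linear holonomy — this is exactly the statement that its local sections are flat for the Bott connection. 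So $\hol^{\lin}_{\gamma}(\Lambda_{X,x})=\Lambda_{X,y}$ and the descended map $\hol_{\gamma}:\cT_{X,x}\to\cT_{X,y}$ is well defined; that these assemble into a groupoid action of $\Hol(M,\cF_{\pi})$ on $\cT_X$ is just functoriality of parallel transport, already recorded in the construction (\ref{Join-gpd}) of $\cT_{\Lambda}\Join\Hol(M,\cF)$.

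For the presymplectic compatibility (\ref{action-Hol-on-T}), I would argue as follows. The presymplectic form $\omega_{\cT}$ on $\cT_X=\nu^*(\cF_{\pi})/\Lambda_X$ is, by Proposition \ref{prop:IAS-presympl-torus} and its proof, the descent of the pullback $\sigma^*\omega_{\can}$ of the canonical form on $T^*M$ under the inclusion $\nu^*(\cF_{\pi})\hookrightarrow T^*M$; equivalently, under $\exp:\nu^*(\cF_{\pi})\to\Sigma(M,\pi)$ it is the pullback of $\Omega_{\Sigma}$ restricted to $\cT(\Sigma)$. Now the action $m:\Hol(M,\cF_{\pi})\times_M\cT_X\to\cT_X$ is the descent of the linear holonomy action on $\nu^*(\cF_{\pi})$, and linear holonomy is flat parallel transport: locally, in transverse integral affine coordinates $(x_1,\dots,x_q)$, the bundle $\nu^*(\cF_{\pi})$ is trivialized by the flat sections $\d x_i$ and the holonomy action is the \emph{identity} in these coordinates over each leaf. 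Since $\omega_{\can}$ (hence $\omega_{\cT}$) is expressed in terms of these same flat coordinates with constant coefficients along the leaves, the pullback $m^*\omega_{\cT}$ picks up no contribution from the $\Hol(M,\cF_{\pi})$-direction, giving $m^*\omega_{\cT}=\pr_2^*\omega_{\cT}$. Put invariantly: this is precisely the multiplicativity/compatibility statement in Appendix \ref{App:twisted Dirac case} for the presymplectic groupoid action, applied to the central-extension-type groupoid $\cT_{\Lambda}\Join\Hol(M,\cF_{\pi})$ integrating $L_{\cF_{\pi}}$, and it reduces — because $\Hol(M,\cF_{\pi})$ is given the zero presymplectic form — to the asserted identity.

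The main obstacle I anticipate is not any of the individual verifications but keeping the two torus-bundle incarnations of $\cT_X$ straight: the description as $\nu^*(\cF_{\pi})/\Lambda_X$ coming from the $\Sigma(M,\pi)$-action on $X$, versus the description as the presymplectic torus bundle $\cT_{\Lambda}$ of Proposition \ref{prop:IAS-presympl-torus}, and checking that the holonomy action and the form $\omega_{\cT}$ transported along the identification agree. Once the identification is pinned down — and it is, via the commuting diagram relating $\exp$, $\sigma$, and $\omega_{\can}$ used already in the proofs of Theorem \ref{thm-lattice-proper-case} and Lemma \ref{lemma:Lambda-X-Mon} — the compatibility (\ref{action-Hol-on-T}) is essentially forced by flatness. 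I would therefore spend most of the write-up making that identification explicit and then invoke Proposition \ref{prop-transv-int-affine-folklore} for invariance and the constant-coefficient computation above for the form.
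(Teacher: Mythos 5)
Your proposal is correct and follows exactly the route the paper intends: the paper gives no written proof for this lemma, merely citing the general properties of transverse integral affine structures (invariance of $\Lambda_X$ because it is locally spanned by closed basic $1$-forms, i.e.\ flat for the Bott connection) and the presymplectic-action formalism of the appendix, which is precisely what you flesh out. Your local constant-coefficient computation of $m^*\omega_{\cT}$ in transverse integral affine coordinates is a valid and welcome elaboration of the step the paper leaves implicit.
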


% \begin{proof} The first part follows from the general properties of transverse integral affine structures discussed in Section \ref{sec:IAS:orbifolds}.  The proof of \eqref{action-Hol-on-T} follows from the definitions.
% \end{proof}

We can now turn to the study of $\HolX$, with the aim of proving Theorem \ref{thm:isotropic:fib:grpd}.
Following the general discussion in Section \ref{ssec:sympl-realizs-Hamilt}, our strategy will be to understand the holonomy groupoid $\Hol((\ker \d q)^\perp)$ and then show that the morphism $\HolX \ltimes X\to \Hol((\ker \d q)^\perp)$ is actually in isomorphism.

Lemma \ref{lemma-start-isotr} and the fact that $q$ has connected fibers, shows that 
\begin{equation}
\label{comput-hol-isotr} 
\Hol((\ker \d q)^\perp){\cong} q^*\Hol(\cF_\pi),
\end{equation}
where $q^*\Hol(\cF_\pi)$ is the pullback groupoid:
\begin{equation}\label{pull-back-gpd-isotr}
q^*\Hol(\cF_\pi)=(X\times_{M} \Hol(M, \cF_{\pi})\times_{M} X\tto X).
\end{equation}
This groupoid consists of triples $(v, \gamma, u)$ with $q(v)= t(\gamma)$, $s(\gamma)= q(u)$. The source and target of the arrow $(v, \gamma, u)$ are $u$ and $v$, respectively, and the multiplication is given by:
\[ (w, \gamma_1, v)\cdot (v, \gamma_2, u)= (w, \gamma_1\cdot \gamma_2, u).\]
Here we will momentarily not distinguish between the leafwise path $\gamma$ and the element it represents in the holonomy groupoid. 

Next, we consider the projection $q^*\Hol(\cF_\pi)\to M$, $(v,\gamma,u)\mapsto q(u)$, and we define an action of the torus bundle $\cT_{X}$ on $q^*\Hol(\cF_\pi)\to M$ appealing to Lemma \ref{lemma:action-Hol-on-T}, by setting for each $\lambda\in\cT_X|_{q(x)}$:
\begin{equation}\label{act-t-hop} 
\lambda \cdot (v, \gamma, u)= (\hol_{\gamma}(\lambda) \cdot v, \gamma, \lambda\cdot u).
\end{equation}
Since the action of $\cT_X$ on $X$ is free and proper, it follows easily that the resulting quotient $q^*\Hol(\cF_\pi)/\cT_X$ is a Lie groupoid: 

\begin{lemma} 
If  $q: (X, \Oga)\to (M, \pi)$ is a proper isotropic realization, the quotient
\[ q^*\Hol(\cF_\pi)/\cT_X=\left( X\times_{M} \Hol(M, \cF_{\pi})\times_{M} X\right) /\cT_X \tto M \] 
is a smooth groupoid, which is Hausdorff whenever $\Hol(M, \cF_{\pi})$ is Hausdorff. Moreover, $q^*\Hol(\cF_\pi)/\cT_X$ is proper if $\cF_{\pi}$ is of proper type. 
\end{lemma}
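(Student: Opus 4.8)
The plan is to verify directly that $q^*\Hol(\cF_\pi)/\cT_X$ is a Lie groupoid, arguing from the freeness and properness of the $\cT_X$-action. First I would check that $\cT_X$ acts on $q^*\Hol(\cF_\pi)$ by groupoid automorphisms: the key point is that, by Lemma \ref{lemma:action-Hol-on-T}, the linear holonomy $\hol_\gamma:\cT_{X,x}\to\cT_{X,y}$ satisfies the cocycle identity $\hol_{\gamma_1\cdot\gamma_2}=\hol_{\gamma_1}\circ\hol_{\gamma_2}$, which is exactly what makes \eqref{act-t-hop} compatible with the groupoid multiplication $(w,\gamma_1,v)\cdot(v,\gamma_2,u)=(w,\gamma_1\cdot\gamma_2,u)$. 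Compatibility with source and target, and the fact that the action fixes the unit section, are then immediate from the formula. Since $\cT_X$ acts freely and properly on $X$ (Lemma \ref{lemma:Lambda-X-Mon}), the diagonal-type action \eqref{act-t-hop} on the fibered product $X\times_M\Hol(M,\cF_\pi)\times_M X$ is also free and proper: freeness follows from freeness on the last factor $X$, and properness follows since the map to the quotient is dominated by the proper $\cT_X$-action on $X$. Hence the quotient manifold $q^*\Hol(\cF_\pi)/\cT_X$ is smooth, the quotient map is a surjective submersion, and the groupoid structure maps descend to smooth maps on the quotient. This gives the Lie groupoid structure; it is a foliation groupoid because the original $q^*\Hol(\cF_\pi)$ is one and we are quotienting by a bundle of Lie groups.

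Next I would address the Hausdorff statement. The source map of $q^*\Hol(\cF_\pi)$ is a submersion onto $X$, so $q^*\Hol(\cF_\pi)$ is Hausdorff precisely when $\Hol(M,\cF_\pi)$ is (the pullback groupoid \eqref{pull-back-gpd-isotr} is built from $X$, $M$ and $\Hol(M,\cF_\pi)$ via fibered products over Hausdorff bases). The quotient of a Hausdorff manifold by a free and proper Lie group(oid) action is again Hausdorff, because the graph of the equivalence relation is closed: this is the standard consequence of properness. So $q^*\Hol(\cF_\pi)/\cT_X$ is Hausdorff whenever $\Hol(M,\cF_\pi)$ is.

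Finally, for the properness claim I would use the Morita-invariance of properness together with the short exact sequence structure. If $\cF_\pi$ is of proper type, then by Theorem \ref{prop-fol-crit-C} the holonomy groupoid $\Hol(M,\cF_\pi)$ is proper. The groupoid $q^*\Hol(\cF_\pi)$ is Morita equivalent to $\Hol(M,\cF_\pi)$ via the bibundle given by the two projections (when $q$ has connected fibers, $q^*\Hol(\cF_\pi)$ is the pullback along the surjective submersion $q$, hence Morita equivalent to $\Hol(M,\cF_\pi)$), so $q^*\Hol(\cF_\pi)$ is proper. Quotienting a proper groupoid by a closed normal bundle of compact groups — here $\cT_X$, a bundle of tori, which is compact-fibered — preserves properness: one can check directly that the composite anchor $(t,s):q^*\Hol(\cF_\pi)/\cT_X\to M\times M$ is proper, since the preimage of a compact set pulls back to a $\cT_X$-saturated set whose image in $q^*\Hol(\cF_\pi)$ is compact (using properness there and compactness of $\cT_X$-fibers). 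Thus $q^*\Hol(\cF_\pi)/\cT_X$ is proper.

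The main obstacle I anticipate is the properness step: one must be careful that the quotient of a proper groupoid by a bundle of groups is proper, which genuinely uses compactness of the torus fibers of $\cT_X$ — it would fail for a bundle of non-compact groups. A clean way to handle this is to note that $\cT_X$ sits inside $q^*\Hol(\cF_\pi)/\cT_X$ and inside $q^*\Hol(\cF_\pi)$ as a closed subgroupoid (indeed $\cT_X$ is a bundle of compact groups, hence closed), and that the sequence $1\to\cT_X\to q^*\Hol(\cF_\pi)\to q^*\Hol(\cF_\pi)/\cT_X\to 1$ realizes the quotient; then properness of the extreme-adjacent groupoids and compactness of the kernel fibers force properness of the quotient exactly as in the proof of Theorem \ref{thm-reg-fol2} (which invokes \cite{Moe03}). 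So I would invoke that same mechanism rather than re-prove it.
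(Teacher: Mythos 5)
Your overall architecture is sound, and the parts of your argument dealing with Hausdorffness and properness agree with what the paper does: when $\Hol(M,\cF_\pi)$ is Hausdorff, the free and proper $\cT_X$-action on a Hausdorff manifold gives a smooth Hausdorff quotient, and properness descends from $q^*\Hol(\cF_\pi)$ (Morita equivalent to the proper $\Hol(M,\cF_\pi)$) to the quotient by the closed bundle of tori, exactly by the mechanism of Theorem \ref{thm-reg-fol2}.

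However, there is a genuine gap in the smoothness claim. The lemma asserts that $q^*\Hol(\cF_\pi)/\cT_X$ is smooth \emph{unconditionally}, while Hausdorffness is only claimed when $\Hol(M,\cF_\pi)$ is Hausdorff. So you must also produce a smooth structure in the case where $\Hol(M,\cF_\pi)$ -- and hence the total space $X\times_M\Hol(M,\cF_\pi)\times_M X$ -- is non-Hausdorff. Your argument invokes the standard ``free and proper action implies smooth quotient'' theorem, but that theorem (and the notion of a proper action underlying it) is formulated for Hausdorff manifolds; it does not apply as stated here, and this is precisely the point the paper singles out as ``the only remaining question.'' The paper resolves it by a local argument: around an arrow $[v_0,\gamma_0,u_0]$ one chooses local sections of $q:X\to M$ near the source and target points, uses them to embed $\cT_X|_{U(y_0)}\times_M\Hol(M,\cF_\pi)\times_M\cT_X|_{U(x_0)}$ as an open $\cT_X$-invariant subset of $X\times_M\Hol(M,\cF_\pi)\times_M X$, and observes that the quotient of this local model by the $\cT_X$-action is manifestly smooth; uniqueness of the submersion structure on the quotient map then glues these local charts. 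You should either supply this local trivialization argument or restrict your appeal to the quotient theorem to the Hausdorff case and treat the non-Hausdorff case separately.

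A second, smaller caveat: in your properness paragraph you say that properness descends to the quotient by the compact-fibered kernel; this is correct, but note that the descent of properness to quotients is used in the paper only ``provided the latter are Hausdorff'' (as in the proof of Theorem \ref{prop-fol-crit-C}). Since $\cF_\pi$ of proper type forces $\Hol(M,\cF_\pi)$, and hence the quotient, to be Hausdorff, this is consistent -- but it is worth making the dependence on Hausdorffness explicit rather than treating the descent as automatic.
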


\begin{proof} It is immediate to check from the definitions that the quotient is a groupoid. When $\Hol(M, \cF_{\pi})$ is Hausdorff, then we have a free and proper action of $\cT_X$ on a Hausdorff manifold, hence the quotient is smooth and Hausdorff. The last part on properness also follows immediately.

The only remaining question is to show that the quotient is smooth, when $\Hol(M, \cF_{\pi})$ is non-Hausdorff. 
Since the quotient map can be made into a submersion in at most one way, we only have to prove the local statement, around 
a neighborhood of an arrow $[v_0, \gamma_0, u_0]$ going from $x_0$ to $y_0$. 
Choosing two local sections of $\cT_{X}$, $\tau_{t}$ and $\tau_{s}$, defined on opens $U(y_0)$ containing $y_0$ and $U(x_0)$ containing $x_0$,
respectively, then 
\begin{align*}
\cT_X|_{U(y_0)}\times_{M}\Hol(M, \cF_{\pi})\times_{M} \cT_X|_{U(x_0)} &\to X\times_{M} \Hol(M, \cF_{\pi})\times_{M} X\\
(\lambda_2, \gamma, \lambda_1)&\mapsto (\lambda_2\cdot \tau_t(t(\gamma)), \gamma, \lambda_1\cdot {\tau_s}(s(\gamma)))
\end{align*}
defines an embedding into an open invariant subspace of $X\times_{M} \Hol \times_{M} X$. Finally, the quotient of the left hand side modulo the action of $\cT_{X}$ is clearly smooth. % we are done. 
\end{proof}

Next, we exhibit the symplectic structure of $q^*\Hol(\cF_\pi)/\cT_X$:

\begin{lemma} 
The 2-form $\widetilde{\Omega}:=\pr_{1}^{*}\Oga- \pr_{3}^{*}\Oga$ on $X\times_{M} \Hol(M, \cF_{\pi})\times_{M}X$, 
where $\pr_i$ is the projection on the $i$-th factor, descends to a 2-form $\Omega$
on the quotient groupoid $q^*\Hol(\cF_\pi)/\cT_X$, making it into a symplectic groupoid integrating $(M, \pi)$.
\end{lemma}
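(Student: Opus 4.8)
The plan is to check that $\widetilde{\Omega}$ is basic for the $\cT_X$-action (so that it descends) and that the descended form $\Omega$ is closed, non-degenerate, and multiplicative with the right correspondence between $\Omega$ and $\pi$. The first step is descent: I would compute $m^*\widetilde{\Omega}$ where $m$ is the $\cT_X$-action on $q^*\Hol(\cF_\pi)$ given by \eqref{act-t-hop}. Writing the action as $\lambda\cdot(v,\gamma,u)=(\hol_\gamma(\lambda)\cdot v,\gamma,\lambda\cdot u)$, and using $\widetilde{\Omega}=\pr_1^*\Oga-\pr_3^*\Oga$, the pullback splits into a contribution from the first factor and one from the third. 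By the presymplectic property \eqref{action-T-on-X} of the $\cT_X$-action on $X$, the first factor contributes $\pr_1^*\Oga$ plus a term pulled back from $\cT_X$ via $\lambda\mapsto\hol_\gamma(\lambda)$, while the third contributes $\pr_3^*\Oga$ plus a term pulled back via $\lambda$; by Lemma \ref{lemma:action-Hol-on-T} (equation \eqref{action-Hol-on-T}), the holonomy action on $\cT_X$ preserves $\omega_\cT$, so these two $\cT_X$-contributions cancel. Hence $m^*\widetilde{\Omega}=\pr^*\widetilde{\Omega}$ (projection onto $q^*\Hol(\cF_\pi)$), and since moreover $i_V\widetilde{\Omega}=0$ for $V$ tangent to the orbits — again a consequence of \eqref{action-T-on-X} and \eqref{action-Hol-on-T} together with isotropy of the $q$-fibers — $\widetilde{\Omega}$ is basic and descends to a closed $2$-form $\Omega$ on $q^*\Hol(\cF_\pi)/\cT_X$.

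The second step is multiplicativity. On $X\times_M\Hol(M,\cF_\pi)\times_M X$ the multiplication is $(w,\gamma_1,v)\cdot(v,\gamma_2,u)=(w,\gamma_1\gamma_2,u)$, and the standard computation for ``pair-type'' groupoids shows that $\widetilde{\Omega}=\pr_1^*\Oga-\pr_3^*\Oga$ is multiplicative: pulling back $\widetilde\Omega$ along multiplication gives $\pr_w^*\Oga-\pr_u^*\Oga$, which equals the sum of the pullbacks of $\widetilde\Omega$ along the two projections to the factors since the ``middle'' $\pr_v^*\Oga$ terms cancel. Multiplicativity passes to the quotient because the quotient map is a groupoid morphism which is a surjective submersion. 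This identifies $\big(q^*\Hol(\cF_\pi)/\cT_X,\Omega\big)$ as a presymplectic (in fact symplectic, once non-degeneracy is established) groupoid over $M$.

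The third step is non-degeneracy and the identification of the induced Poisson (resp. Dirac) structure on $M$. A dimension count gives $\dim\big(q^*\Hol(\cF_\pi)/\cT_X\big)=\dim X+\dim\Hol(M,\cF_\pi)+\dim X-\dim M-\dim\cT_X$; using $\dim\cT_X=\dim M$, $\dim\Hol(M,\cF_\pi)=\dim M+\dim\cF_\pi$ and $\dim X=\dim M+\dim\cF_\pi-\frac12\dim\cF_\pi$ (isotropic fibers of dimension $\dim M-\dim B=\operatorname{rank}\nu^*(\cF_\pi)$, while the leaves have even dimension $\dim\cF_\pi$), one checks this equals $2\dim M$, as required for a symplectic groupoid over $M$. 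For non-degeneracy of $\Omega$, the cleanest route is to use the isomorphism \eqref{HolX-from-sympl-orth}–\eqref{comput-hol-isotr}: $q^*\Hol(\cF_\pi)\cong\Hol((\ker\d q)^\perp)\cong\Hol_X(M,\pi)\ltimes X$, and $\Omega$ corresponds to the form built from $\Oga$ on the two copies of $X$; non-degeneracy transverse to the $\cT_X$-orbits then follows from non-degeneracy of $\Oga$ on $X$ together with the fact that $\ker\d q$ is coisotropic with $(\ker\d q)^\perp$ the null foliation of the descended form. Finally, to see it integrates $(M,\pi)$ one checks that the unit section $M\hookrightarrow q^*\Hol(\cF_\pi)/\cT_X$ pulls $\Omega$ back to zero and that the source map (say) is anti-Poisson for $\pi$; equivalently, one uses that $\HolX$ already carries an $X$-compatible action of $\Sigma(M,\pi)$ descending from the $\Sigma(M,\pi)$-action, so the Poisson structure it induces on $M$ is forced to be $\pi$ by the \'etale morphism $\Sigma(M,\pi)\to\HolX$ of Theorem \ref{thm:isotropic:fib:grpd}(ii).

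The main obstacle I expect is bookkeeping in the descent step: carefully matching up the three separate pullback contributions (from the two $X$-factors via the $\lambda$- and $\hol_\gamma(\lambda)$-actions, and the trivial action on the $\Hol$-factor) and verifying the cancellation cleanly, rather than by a brute-force coordinate computation. The conceptual content is entirely supplied by \eqref{action-T-on-X} and \eqref{action-Hol-on-T}; the risk is only in getting signs and the identification of which form lives on which factor right. Once descent and basicness are secured, multiplicativity and the dimension count are routine, and non-degeneracy plus the identification of $\pi$ follow by transporting everything through the isomorphism with $\Hol_X(M,\pi)\ltimes X$.
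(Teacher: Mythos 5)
Your descent and multiplicativity steps are sound and essentially match the paper's: the paper likewise treats multiplicativity as immediate, and once the orbit directions of the $\cT_X$-action are known to lie in $\ker\widetilde{\Omega}$, invariance is automatic from closedness ($\Lie_V\widetilde{\Omega}=\d\, i_V\widetilde{\Omega}=0$). The genuine gap is in non-degeneracy. What the lemma actually requires --- and what the paper's ``more or less tedious computation'' is devoted to --- is the \emph{reverse} inclusion: that $\ker\widetilde{\Omega}$ is \emph{no larger} than the image of the infinitesimal $\cT_X$-action \eqref{act-t-hop}. You only establish that the orbit directions sit inside the kernel. The dimension count does not substitute for this: knowing that the quotient has dimension $2\dim M$ says nothing about whether the descended form is non-degenerate there (and, incidentally, the count contains arithmetic slips --- the relevant quantity is the orbit dimension $q=\dim M-\dim\cF_\pi$, not $\dim\cT_X=\dim M+q$, and $\dim X=\dim M+q$ rather than $\dim M+\tfrac12\dim\cF_\pi$; the total does come out to $2\dim M$ with the correct values).

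Your proposed route around this --- transporting non-degeneracy through $q^*\Hol(\cF_\pi)\cong\Hol_X(M,\pi)\ltimes X$ --- is circular in the paper's logical order: $\Hol_X(M,\pi)$ is defined as a quotient of cotangent paths and carries no symplectic form a priori, and the identification $\Phi:\Hol_X(M,\pi)\cong q^*\Hol(\cF_\pi)/\cT_X$, together with its smoothness, is only established afterwards in the proof of Theorem \ref{thm:isotropic:fib:grpd}, using the present lemma to put the symplectic structure on the right-hand side. To close the gap one must compute the kernel directly: for $(\xi_1,\xi_2,\xi_3)\in\ker\widetilde{\Omega}$, pairing against vectors $(\eta_1,0,0)$ and $(0,0,\eta_3)$ with $\eta_i\in\Ker(\d q)$ (which are tangent to the fibered product) forces $\xi_1,\xi_3\in(\Ker \d q)^{\perp}=\im(\sigma)$, and then the moment-map identity $i_{\sigma(\alpha)}\Oga=q^*\alpha$ together with the compatibility of the linear holonomy with $\Lambda_X$ pins the remaining freedom down to exactly one orbit direction of \eqref{act-t-hop}. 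The rest of your proposal --- identifying the induced Poisson structure via the unit section and the source/target maps, which is the paper's remark that $\pr_3$ followed by $q$ is f-Dirac --- is fine.
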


\begin{proof}
A more or less tedious computation shows that the kernel of the closed form $\widetilde{\Omega}= \pr_{1}^{*}\Oga- \pr_{3}^{*}\Oga$ 
is the image of the infinitesimal action induced by (\ref{act-t-hop}). Also, note that $\pr_3:(q^*\Hol(\cF_\pi),\widetilde{\Omega})\to (M,\pi)$ is f-Dirac.
It follows that $\widetilde{\Omega}$ descends to a symplectic form $\Omega$ on the quotient $q^*\Hol(\cF_\pi)/\cT_X$ and that the target map 
\[ t:(q^*\Hol(\cF_\pi)/\cT_X,\Omega)\to (M,\pi)\] 
is Poisson. Since $\widetilde{\Omega}$ is obviously multiplicative, so is $\Omega$, hence 
one obtains a symplectic groupoid $(q^*\Hol(\cF_\pi)/\cT_X,\Omega)$ integrating $(M, \pi)$.
\end{proof}

\begin{proof}[Proof of Theorem \ref{thm:isotropic:fib:grpd}]

We claim that $\HolX$ is a smooth quotient of $\Sigma(M,\pi)$, i.e., that it admits a smooth structure (necessarily unique) such that the canonical projection is a submersion. As first step we construct an isomorphism of groupoids
\begin{equation}
\label{eq-G(X,M)-alt} 
\Phi:\HolX \cong q^*\Hol(\cF_\pi)/\cT_X
\end{equation}
as follows: for an element $[a]_h\in \HolX$ represented by a cotangent path whose base path $\gamma_a$ starts at $x\in M$, we choose any $u\in q^{-1}(x)$ and we set
\[ \Phi([a]_h):= \left[ [a]\cdot u, \gamma_a, u\right] \in \HolX,\]
where we use the action of $[a]\in \Sigma(M,\pi)$ on $X$ (see (\ref{act-Sigma-on-X})) and we omit writing $[\gamma_a]_h$ for the middle element. This is independent of the choice of $u$.

The injectivity of $\Phi$ is clear: if $\gamma_{a_1}$ and $\gamma_{a_2}$ have the same holonomy with respect to $\cF_{\pi}$, Lemma \ref{lemma-start-isotr} shows that any lifts tangent to $(\ker\d q)^{\perp}$ and starting at the same point, such as $\widetilde{\gamma}_{a_1}^{u}$ and $\widetilde{\gamma}_{a_2}^{u}$, have the same holonomy with respect to $(\ker\d q)^{\perp}$.

For the surjectivity of $\Phi$, given any $[v, \gamma, u]$, one chooses a cotangent path $a$ with $[\gamma_a]_h= \gamma$. Since $[a]\cdot u$ and $v$ are in the same fiber of $q$, we can write 
\[ [v]= [\alpha]\cdot \left( [a]\cdot u\right) \] 
for some $\alpha\in\nu^*(\cF_\pi)$, so that:
\[ [v, \gamma, u]= \Phi([\alpha]\cdot [a]). \]

To check that  $\HolX$ is a smooth quotient of $\Sigma(M,\pi)$, we still have to check that the composition of the map $\Phi$ with
the projection $\Sigma(M,\pi)\to \HolX$ is a submersion. For that it suffices to prove the same property for its pull-back to $X$ via $q$.
On the pull-back, one has the identification (\ref{identif-Sigma-ltimes-X}) with the monodromy groupoid of
$(\ker\d q)^{\perp}$, the interpretation of (\ref{pull-back-gpd-isotr}) as the holonomy groupoid of the same foliation, 
and the map is just the canonical projection between the two. 

For the proof of Theorem \ref{thm:isotropic:fib:grpd} (ii), let $\cG$ be any other $X$-compatible, s-connected, symplectic integration of $(M,\pi)$.
The groupoid $q^*\cG$ is an s-connected integration of the foliation $(\ker\d q)^\perp$. Since $q^*\Hol(M,\cF_\pi)\simeq \cong((\ker\d q)^\perp)$,
and $\Mon((\ker\d q)^\perp)\cong q^*\Sigma(M,\pi)$, there are morphisms $q^*\Sigma(M,\pi)\to q^*\cG\to q^*\Hol(M,\cF_\pi)$ so that the following is a commutative diagram of surjective groupoid maps:
\[
\xymatrix{\\
q^*\Sigma(M,\pi)\ar[d]\ar[r] \ar@/^2pc/[rr]& q^*\cG \ar[d]\ar[r] & q^*\Hol(M,\cF_\pi)\ar[d] \\
\Sigma(M,\pi)\ar[r] \ar[r] \ar@/_2pc/[rr]& \cG \ar@{-->}[r] & \HolX \\
\quad
}
\]
The existence of the dotted arrow follows by surjectivity, so this proves (ii).

Finally, part (iii) of the theorem follows from the description of $\HolX$ as a quotient of $q^*\Hol(M, \cF_{\pi})$.
\end{proof}

We now look at some consequences of Theorem \ref{thm:isotropic:fib:grpd}. We concentrate on Poisson manifolds $(M, \pi)$ for which the symplectic foliation $\cF_{\pi}$ is of proper type, a condition that is necessary for $(M, \pi)$ to be of proper type. First of all we have:

\begin{corollary}
\label{cor:isotropic:proper:1}
Let $(M, \pi)$ be a Poisson manifold with $\cF_{\pi}$ of proper type. If  $q: (X, \Oga)\to (M, \pi)$ is a proper isotropic realization then $\HolX$ is an s-connected proper symplectic groupoid. Moreover:
\begin{enumerate}[(i)]
\item the orbifold structure induced on $B= M/\cF_{\pi}$ is a classical orbifold structure;
\item the transverse integral affine structure associated to the proper isotropic realization $q$ coincides with the one induced by the proper integration $\HolX$.
\end{enumerate}
\end{corollary}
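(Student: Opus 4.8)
## Proof proposal for Corollary \ref{cor:isotropic:proper:1}

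The plan is to read off almost everything from Theorem \ref{thm:isotropic:fib:grpd} together with the earlier structural results, so the corollary is essentially a matter of assembling pieces. First, since $\cF_\pi$ is of proper type, Theorem \ref{thm:isotropic:fib:grpd}(iii) gives immediately that $\HolX$ is a proper Lie groupoid, and by Theorem \ref{thm:isotropic:fib:grpd}(i) it is an $X$-compatible, s-connected symplectic integration of $(M,\pi)$. Thus $\HolX$ is an s-connected proper \emph{symplectic} groupoid, which is the first assertion. Applying Theorem \ref{thm-reg-fol2} to $\cG = \HolX$ then already produces a short exact sequence $1\to\cT(\HolX)\to\HolX\to\cBG(\HolX)\to 1$ and hence an orbifold structure on $B = M/\cF_\pi$ with atlas $\cBG(\HolX)$.

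For part (i), I would identify the foliation groupoid $\cBG(\HolX)$. The last line of Theorem \ref{thm:isotropic:fib:grpd} gives the short exact sequence
\[ \xymatrix{0\ar[r] & \cT_{X} \ar[r] & \HolX \ar[r] & \Hol(M,\cF_\pi)\ar[r] &0},\]
so the quotient of $\HolX$ by its torus bundle of isotropy identity components is exactly $\Hol(M,\cF_\pi)$; that is, $\cBG(\HolX) = \Hol(M,\cF_\pi)$. By Proposition \ref{prop-red-compl-transv}, the holonomy groupoid is s-connected and effective, and by Example \ref{Classical orbifolds} an orbifold whose defining atlas is effective is precisely a classical orbifold. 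Hence the orbifold structure induced on $B$ by $\HolX$ is a classical orbifold structure. (One should note that, consistently with Theorem \ref{thm-reg-fol2}, the underlying classical orbifold of \emph{any} proper integration of $(M,\pi)$ has atlas $\Hol(M,\cF_\pi)$; here the point is that for $\HolX$ the atlas itself is already effective, so no passage to an underlying classical orbifold is needed.)

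For part (ii), I would compare two transverse integral affine structures on $\cF_\pi$: the lattice $\Lambda_X\subset\nu^*(\cF_\pi)$ associated to the proper isotropic realization (Definition \ref{def:LambdaX}), and the lattice $\Lambda_{\HolX}\subset\nu^*(\cF_\pi)$ associated to the proper symplectic integration $\HolX$ (Theorem \ref{thm-lattice-proper-case}). Both are defined fiberwise as the kernel of the time-one flow of the vector field $X_\alpha = \sigma(\alpha)$ acting on the relevant bundle of tori. More precisely, $\Lambda_{\HolX,x}$ is the kernel of the action of $\nu^*_x(\cF_\pi)$ on the isotropy torus $\cT_x(\HolX) = \HolX_x^0$ (via the identification $\gg_x\cong\nu^*_x(\cF_\pi)$ coming from $\Omega$, see Section \ref{From PMCTs to integral affine structures}), while $\Lambda_{X,x}$ is the kernel of the action (\ref{isotropic-action-normal}) of $\nu^*_x(\cF_\pi)$ on the fiber $q^{-1}(x)$. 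The key observation is that these two actions are intertwined by the groupoid action of $\HolX$ on $X$: the isotropy group $\HolX_x$ acts on $q^{-1}(x)$, the identity component $\HolX_x^0 = \cT_x(\HolX)$ acts through the quotient $\cT_{X,x} = \nu^*_x(\cF_\pi)/\Lambda_{X,x}$ (as in Lemma \ref{lemma:Lambda-X-Mon}, the action of $\cT_X$ on $X$ being free), and under the exponential $\exp:\nu^*_x(\cF_\pi)\to\HolX_x^0$ the action of $\alpha$ on $q^{-1}(x)$ is exactly $\phi^1_{\sigma(\alpha)}$. Since the $\cT_X$-action on $X$ is free, $\phi^1_{\sigma(\alpha)} = \mathrm{id}$ on $q^{-1}(x)$ if and only if $\exp(\alpha)=1_x$ in $\HolX_x$, i.e.\ $\alpha\in\Lambda_{\HolX,x}$. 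This gives $\Lambda_X = \Lambda_{\HolX}$.

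The main obstacle I anticipate is the bookkeeping in part (ii): making precise that the identification $\gg_x\cong\nu^*_x(\cF_\pi)$ used in defining $\Lambda_{\HolX}$ is compatible with the infinitesimal action $\sigma$ used in defining $\Lambda_X$, i.e.\ that the right-invariant vector field on $\HolX$ attached to $\alpha\in\nu^*_x(\cF_\pi)$ really does generate, via the action (\ref{act-Sigma-on-X}) restricted to $\HolX$, the flow $\phi^t_{\sigma(\alpha)}$ on $X$. This is exactly the content of the construction recalled in Section \ref{ssec:sympl-realizs-Hamilt} (horizontal lifts of cotangent paths) together with the fact, proved in Lemma \ref{lemma:Lambda-X-Mon}, that the $\nu^*(\cF_\pi)$-action on $X$ factors through $\nu^*(\cF_\pi)/\cN_{\mathrm{mon}}$ and hence through the identity component of the isotropy of $\Sigma(M,\pi)$, which maps onto that of $\HolX$. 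Once this compatibility is laid out, the equality of the two lattices, and with it the statement that the integral affine structure on $B$ is independent of whether one views it as coming from the realization $q$ or from the integration $\HolX$, follows directly.
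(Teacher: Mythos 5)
Your proposal is correct and follows exactly the route the paper intends: the paper states this corollary without proof as an immediate consequence of Theorem \ref{thm:isotropic:fib:grpd}, and your argument simply unpacks that — properness from part (iii), classicality of the orbifold from the short exact sequence identifying $\cBG(\HolX)$ with the effective groupoid $\Hol(M,\cF_\pi)$, and the equality $\Lambda_X=\Lambda_{\HolX}$ from the freeness of the $\cT_X$-action together with the compatibility of the two incarnations of the infinitesimal $\nu^*(\cF_\pi)$-action. Your flagged ``obstacle'' in part (ii) is exactly the right point to check, and your resolution via Lemma \ref{lemma:Lambda-X-Mon} and the moment map condition is the one the paper relies on.
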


In particular:

\begin{corollary}
Let $(M, \pi)$ be a Poisson manifold with $\cF_{\pi}$ of proper type. If $(M, \pi)$ admits a proper symplectic realization, then $(M, \pi)$ is of proper type.
\end{corollary}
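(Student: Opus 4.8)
The statement to prove is the final corollary: if $(M,\pi)$ is a Poisson manifold whose symplectic foliation $\cF_\pi$ is of proper type and $(M,\pi)$ admits a proper symplectic realization, then $(M,\pi)$ is of proper type. The plan is to reduce it to Corollary \ref{cor:isotropic:proper:1}, which already handles the case of a proper \emph{isotropic} realization. So the crux is to upgrade an arbitrary proper symplectic realization $q\colon(X,\Omega_X)\to(M,\pi)$ to a proper isotropic one, without losing properness of the total map.

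First I would recall that for any symplectic realization $q\colon(X,\Omega_X)\to(M,\pi)$ the fibers of $q$ contain the isotropic directions coming from $\ker\d q\cap(\ker\d q)^\perp$; one always has a second, ``smaller'' realization obtained by reducing out the extra symplectic directions. Concretely, since $q$ is a submersion the distribution $\cD:=\ker\d q\cap(\ker\d q)^{\perp}\subset TX$ is the kernel of the pullback presymplectic form $q^*$-data restricted to the fibers; it is an integrable distribution (being the kernel of a closed 2-form on the fibers, or directly because it is the $\Omega_X$-orthogonal to the integrable $(\ker\d q)^\perp$ inside $\ker\d q$), and its leaf space along each $q$-fiber is a symplectic manifold. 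The plan is: shrink $X$ so that $\cD$ is a \emph{simple} foliation on each fiber — this can be arranged locally, and then patched using a partition of unity argument on $M$ exactly as in the proof of Proposition \ref{prop:Fubini-foliations} (proper foliations/groupoids admit invariant partitions of unity, and it suffices to check the conclusion on saturated opens). On such a piece the quotient $X':=X/\cD$ is a manifold, it inherits a symplectic form $\Omega'$ with $X\to X'$ a surjective submersion with connected isotropic fibers, and $q$ factors as $X\to X'\xrightarrow{q'}M$ with $q'$ again a submersion and a Poisson map into $(M,\pi)$. Crucially, $q'$ is still proper because $q$ is proper and $X\to X'$ is surjective (properness of $q=q'\circ(\text{surjection})$ forces properness of $q'$ on the image, which is all of $M$). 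By construction the fibers of $q'$ are isotropic: $(\ker\d q')$ is spanned by the reduction of $\ker\d q$ modulo $\cD$, and $\Omega'$ vanishes on it precisely because we quotiented by the radical $\cD$.

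Having produced a proper isotropic realization $q'\colon(X',\Omega')\to(M,\pi)$, the result is immediate: by hypothesis $\cF_\pi$ is of proper type, so Corollary \ref{cor:isotropic:proper:1} applies and tells us that $\HolX[X']$ (the holonomy symplectic groupoid relative to $X'$, from Theorem \ref{thm:isotropic:fib:grpd}) is an $s$-connected \emph{proper} symplectic groupoid integrating $(M,\pi)$. The existence of such an integration is exactly the definition of $(M,\pi)$ being of proper type, so we are done.

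The main obstacle I expect is the gluing/globalization step: a priori the characteristic distribution $\cD$ may fail to be simple on the whole of $X$ (its leaves may be non-compact or non-closed), so one cannot literally form the global quotient $X/\cD$. The clean way around this is not to construct a global $X'$ at all, but to observe that properness of $(M,\pi)$ is a \emph{local} statement on $M$ (more precisely, it can be checked on saturated opens: by Theorem \ref{thm-gen-model} and the partition-of-unity argument in Proposition \ref{prop:Fubini-foliations}, having an $s$-connected proper integration can be verified after restricting to a cover of $M$ by saturated opens, then reassembled since the local models glue along the foliation groupoid $\Hol(M,\cF_\pi)$, which is already proper). On a small enough saturated open $U\subset M$ one shrinks $X$ to $q^{-1}(U)$, arranges $\cD$ simple there, applies the isotropic case, and then checks that the resulting local proper integrations are all quotients of $\Sigma(M,\pi)$ by the \emph{same} recipe, so they patch into a global $s$-connected proper symplectic groupoid over $M$. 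Alternatively — and perhaps more robustly — one notes that the \emph{holonomy} construction $\Hol_{(\,\cdot\,)}(M,\pi)$ of Section \ref{sec:realizations} only uses the foliation $(\ker\d q)^\perp$ and its holonomy, which is insensitive to quotienting by $\cD$ (since $\cD\subset(\ker\d q)^\perp$ and $q$ has connected fibers, one gets $(\ker\d q)^\perp/\cD$ with the same holonomy groupoid); thus $\HolX$ is already isomorphic to the groupoid attached to the would-be isotropic realization, and Theorem \ref{thm:isotropic:fib:grpd}(iii) directly gives that $\HolX$ is proper once $\cF_\pi$ is of proper type — with no need to form $X'$ globally at all. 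I would present this last route as the actual proof and relegate the reduction picture to motivation.
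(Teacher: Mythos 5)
Your closing step is exactly the paper's entire proof: the corollary is the immediate specialization of Corollary \ref{cor:isotropic:proper:1} (note the ``In particular''), since a proper \emph{isotropic} realization produces the s-connected proper symplectic integration $\Hol_X(M,\pi)$, and the existence of such an integration is the definition of proper type. In the surrounding text ``proper symplectic realization'' is being used for the proper isotropic realizations of the preceding results --- the Dazord--Delzant and Dixmier--Douady discussion that follows concerns precisely those. The bulk of your argument is therefore devoted to something the paper does not attempt, namely reducing an arbitrary proper symplectic realization to an isotropic one, and that reduction has genuine gaps. The quotient $X':=X/\cD$ with $\cD=\ker\d q\cap(\ker\d q)^{\perp}$ need not exist, even locally over a saturated open of $M$: on each compact fiber $q^{-1}(x)$, $\cD$ is the orbit distribution of the infinitesimal action of the abelian Lie algebra $\nu^{*}_{x}(\cF_\pi)\cong\R^q$, and the orbits of an $\R^q$-action on a compact manifold can be dense (an irrational linear flow on a torus sitting inside the fiber), so $q^{-1}(x)/\cD$ is not a manifold. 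Restricting to $q^{-1}(U)$ for $U\subset M$ saturated keeps the fibers whole, so it does not cure this, and you cannot shrink inside a fiber without destroying properness of $q$.

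Your fallback route --- invoking Theorem \ref{thm:isotropic:fib:grpd}(iii) directly for $X$ without forming $X'$ --- is also not available. That statement and its proof rest on Lemma \ref{lemma-start-isotr}, i.e.\ on the identity $(\ker\d q)^{\perp}=q^*\cF_\pi$, whose proof uses $(\ker\d q)^{\perp}+\ker\d q=(\ker\d q)^{\perp}$ and hence the isotropy of the fibers in an essential way. For a non-isotropic realization $(\ker\d q)^{\perp}$ is a strictly smaller foliation than $q^*\cF_\pi$ (its leaves are the $\Sigma(M,\pi)$-orbits in $X$), whose holonomy groupoid can fail to be Hausdorff or proper even when $\cF_\pi$ is of proper type; the model $\Hol_X(M,\pi)\cong q^*\Hol(M,\cF_\pi)/\cT_X$ that drives the properness conclusion is then lost. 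The assertion that the holonomy of $(\ker\d q)^{\perp}$ is ``insensitive to quotienting by $\cD$'' is exactly the point that would require proof, and holonomy is not in general preserved under such quotients. In short: for the statement the paper actually proves, everything before your last paragraph can be deleted; for the literal statement with a general proper symplectic realization, your argument does not close.
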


One should be aware however that the properness of $(M, \pi)$ does not imply the existence of proper symplectic realizations. As we shall explain in the final sections of the paper, there is one more obstruction to the existence of proper symplectic realizations: the Lagrangian Dixmier-Douady class. 
\medskip

The previous corollary brings us to yet another aspect of the theory of isotropic realizations. After eventually passing to a cover, proper symplectic realizations can be made simple in the sense of the following:
%  the sense of the following remark. 
\begin{definition}
\label{def:simple:isotropic:realiz}
A {\bf simple isotropic realization} is an isotropic realization $q: (X,\Oga)\to (M, \pi)$ for which there exists an $X$-compatible symplectic integration
$(\cG, \Omega)\tto (M,\pi)$ whose action on $X$ is free.
\end{definition}

Using Theorem \ref{thm:isotropic:fib:grpd} and the discussion from Appendix \ref{appendix:moment:maps}, we deduce the following equivalent characterizations of simple isotropic realizations of PMCTs:

\begin{proposition}
\label{prop:simple:isotropic:realiz}
Let $q: (X, \Oga)\to (M, \pi)$ be a proper isotropic realization and assume that the symplectic foliation $\cF_{\pi}$ is of proper type. Then the following statements are equivalent:
\begin{enumerate}[(a)]
\item the symplectic foliation $\cF_{\pi}$ is simple;
\item $q: (X, \Oga)\to (M, \pi)$ is a simple isotropic realization;
\item there exists an $X$-compatible, s-connected, proper integration $(\cG,\Omega)\tto (M,\pi)$ that acts freely on $X$; 
\item $(X, \Oga)$ is a free Hamiltonian $\cT_{\Lambda}$-space for an integral affine manifold $(B, \Lambda)$ with reduced space $(M, \pi)\cong (X_{\red}, \pi_{\red})$ (see Corollary \ref{reduction-free-qHamilt}).
\end{enumerate}
\end{proposition}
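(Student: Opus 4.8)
The plan is to prove the cyclic chain of implications $(a)\Rightarrow(b)\Rightarrow(c)\Rightarrow(d)\Rightarrow(a)$, using Theorem~\ref{thm:isotropic:fib:grpd} as the central tool together with the reduction machinery of Appendix~\ref{appendix:moment:maps}. First I would record the basic setup: since $\cF_\pi$ is of proper type, Theorem~\ref{thm:isotropic:fib:grpd}(iii) gives that $\HolX$ is an s-connected, proper symplectic integration of $(M,\pi)$ which is $X$-compatible, and by Corollary~\ref{cor:isotropic:proper:1} the transverse integral affine structure coming from $q$ agrees with $\Lambda_{\HolX}$. Throughout, the short exact sequence
\[ \xymatrix{0\ar[r] & \cT_{X} \ar[r] & \HolX \ar[r] & \Hol(M,\cF_\pi)\ar[r] &0} \]
from Theorem~\ref{thm:isotropic:fib:grpd} will be used to transfer freeness questions between $\HolX$ and $\Hol(M,\cF_\pi)$, once one knows (Lemma~\ref{lemma:Lambda-X-Mon}) that the subgroupoid $\cT_X$ already acts freely on $X$.

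For $(a)\Rightarrow(b)$: if $\cF_\pi$ is simple then $\Hol(M,\cF_\pi)=M\times_B M$ is the submersion groupoid of $p\colon M\to B$, which acts freely on $M$. Combining this with the free action of $\cT_X$ on $X$ (Lemma~\ref{lemma:Lambda-X-Mon}) and the extension above, one checks that $\HolX$ acts freely on $X$: an arrow fixing $u\in X$ projects to an arrow of $\Hol(M,\cF_\pi)$ fixing $q(u)$, hence to a unit, so it lies in $\cT_X$, and then freeness of the $\cT_X$-action forces it to be a unit. Thus $q$ is simple with $\cG=\HolX$, which gives $(b)$; and $(b)\Rightarrow(c)$ is immediate, taking $\cG=\HolX$ (s-connected and proper by the above), or, if $(b)$ is witnessed by some other $\cG$, passing to its s-connected component and invoking Theorem~\ref{thm:isotropic:fib:grpd}(ii) to see it still surjects onto $\HolX$ and acts freely.

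For $(c)\Rightarrow(d)$: given an $X$-compatible, s-connected, proper $(\cG,\Omega)$ acting freely on $X$, Theorem~\ref{thm-lattice-proper-case} produces the transverse integral affine structure $\Lambda_\cG$, and—since the leaves of $\cF_\pi$ are the connected isotropic fibers of $q$ and $\cG$ acts freely—the action of the isotropy bundle $\cT(\cG)$ descends, via the identification $\gg_x\cong\nu^*_x(\cF_\pi)$, to a free action of a symplectic torus bundle. One then shows that $\cG$-freeness together with properness forces the orbit space $B=M/\cF_\pi$ to be a smooth manifold (not merely an orbifold), so $\Lambda_\cG$ descends to an integral affine structure $\Lambda$ on $B$ with $\cT(\cG)\cong\cT_\Lambda$; the conditions in Definition/Corollary~\ref{q-hamil-def-reform} for $(X,\Oga)$ to be a free Hamiltonian $\cT_\Lambda$-space are then exactly the presymplectic action identity \eqref{action-T-on-X} of Lemma~\ref{lemma:Lambda-X-Mon}, and Corollary~\ref{reduction-free-qHamilt} identifies the reduced space $X_{\red}=X/\cT_\Lambda$ with $(M,\pi)$. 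Finally $(d)\Rightarrow(a)$ follows from Corollary~\ref{reduction-free-qHamilt}: the Poisson reduction of a free Hamiltonian $\cT_\Lambda$-space over a \emph{manifold} $B$ has $B$ as its (smooth, manifold) leaf space, which is precisely the statement that $\cF_\pi$ is a simple foliation with $p\colon M\to B$ the defining submersion. The main obstacle I anticipate is the bookkeeping in $(c)\Rightarrow(d)$: one must carefully argue that a free, proper, s-connected symplectic integration forces the \emph{leaf space to be a manifold} (ruling out non-trivial isotropy in the induced orbifold atlas $\cBG(\cG)$), and that the resulting $\cT(\cG)$ is globally a symplectic torus bundle of the form $\cT_\Lambda$ rather than merely a presymplectic one on $M$; everything else is a routine translation through Theorem~\ref{thm:isotropic:fib:grpd} and the appendices.
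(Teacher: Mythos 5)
Your skeleton is sound and the routine implications are handled correctly: $(a)\Rightarrow(b)$ via the extension $0\to\cT_X\to\HolX\to\Hol(M,\cF_\pi)\to 0$ together with the freeness of the $\cT_X$-action from Lemma~\ref{lemma:Lambda-X-Mon}, $(b)\Rightarrow(c)$ by pushing down to $\HolX$, and $(d)\Rightarrow(a)$ via Corollary~\ref{reduction-free-qHamilt}. But the one substantive implication — that a free action forces $\cF_\pi$ to be simple, which is what you need to make $B$ a manifold in $(c)\Rightarrow(d)$ and to close the cycle — is exactly the step you flag as an ``anticipated obstacle'' and do not carry out. This is not bookkeeping; it is the entire content of the proposition (the paper's proof consists of essentially nothing else), so as written there is a genuine gap.

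Moreover, the mechanism you hint at — ``ruling out non-trivial isotropy in the induced orbifold atlas $\cBG(\cG)$'' — is not the one that works: freeness of the $\cG$-action on $X$ does not directly control the isotropy groups $\cG_x/\cG_x^0$, since an element of $\cG_x$ outside the identity component can move every point of $q^{-1}(x)$ while still representing non-trivial isotropy of $\cBG(\cG)$. The correct argument goes through the action groupoid: $\cG\ltimes X\tto X$ is an s-connected foliation groupoid integrating $(\ker\d q)^{\perp}$, which by Lemma~\ref{lemma-start-isotr} is the pull-back foliation $q^*\cF_\pi$. If the action is free this groupoid has trivial isotropy, hence is effective, hence by Proposition~\ref{prop-red-compl-transv} (minimality of holonomy groupoids, Theorem~\ref{seq-mon-G-hol}) it coincides with $\Hol((\ker\d q)^{\perp})\cong q^*\Hol(M,\cF_\pi)$ (equation~\eqref{comput-hol-isotr}). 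The isotropy groups of the latter are the holonomy groups of $\cF_\pi$, which are therefore trivial; since $\cF_\pi$ is of proper type, trivial holonomy forces it to be simple. With this in hand, your remaining steps ($\Lambda_X$ descending to $B$, $\cT_X=p^*\cT_\Lambda$, the moment-map identity from \eqref{action-T-on-X}, and the identification $X/\cT_\Lambda\cong M$) do go through as you describe.
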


Of course, under any of the equivalent conditions of the proposition, one has that $B=M/\cF_{\pi}$, $\Lambda$ the integral affine structure on $B$ induced by $\Lambda_X$ and $\cG$ is the gauge groupoid $\cG\cong\Gauge{\cT_{X}}{X}$ associated to the principal $\cT_X$-space $X$ (Appendix \ref{cG-red2}).

\begin{proof}
The fact that the holonomy of the symplectic foliation $\cF_{\pi}$ is an obstruction to the existence of simple isotropic realizations of $(M, \pi)$ follows from (\ref{comput-hol-isotr}), the minimality property of holonomy groupoids (Theorem \ref{seq-mon-G-hol}) and the fact that for an $X$-compatible integration $\cG\ltimes X$ is a foliation groupoid integrating $(\ker \d q)^\perp$. The rest of the statement should be clear.
\end{proof}

In order to show that eventually passing to a cover, proper symplectic realizations can be made simple, we need to appeal to the main result of Appendix \ref{appendix:molino}. This result applied to the proper foliation $(M, \cF_{\pi})$, together with the transversal integral affine structure $\Lambda_X$, yields the linear holonomy cover $(M^{\lin},\cF_\pi^\lin)$ with a smooth leaf space $B^\lin=M^\lin/\cF_\pi^\lin$- an integral affine manifold carrying an action of the linear holonomy group $\Gamma^{\lin}$ by integral affine transformations, with $B^{\lin}/\Gamma^{\lin}= B$.  The conclusion is that when $\cF_\pi$ is of proper type, any proper isotropic realization is obtained as quotient of a simple one:

\begin{corollary}
\label{cor:isotropic:proper:3}
Let $(M, \pi)$ be a Poisson manifold with $\cF_{\pi}$ of proper type. If $q: (X, \Oga)\to (M, \pi)$ is a proper isotropic realization then $X^\lin:=X\times_M M^\lin$ yields a $\Gamma^\lin$-equivariant proper isotropic realization
\begin{equation}
\label{eq-q-lin} 
q^{lin}: (X^{\lin}, \widetilde{\Omega}_X)\to (M^{\lin}, \widetilde{\pi}), 
\end{equation}
which is simple. 
\end{corollary}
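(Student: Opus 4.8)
The plan is to build the simple isotropic realization $q^{\lin}:(X^{\lin},\widetilde\Omega_X)\to(M^{\lin},\widetilde\pi)$ by pulling back the given data along the linear holonomy covering $M^{\lin}\to M$ and then checking, one by one, that: the pullback is still a proper isotropic realization; the lattice $\Lambda_{X^{\lin}}$ it determines is the pullback of $\Lambda_X$; and — crucially — that over $M^{\lin}$ the symplectic foliation becomes \emph{simple}, so that Proposition \ref{prop:simple:isotropic:realiz}(a)$\Rightarrow$(b) applies to conclude simplicity. First I would form $X^{\lin}:=X\times_M M^{\lin}$ with projection $q^{\lin}$ to $M^{\lin}$ and the natural map $X^{\lin}\to X$; since $M^{\lin}\to M$ is a covering (in particular a local diffeomorphism and a proper map on compact orbit closures would not be needed — but more importantly it is étale), $X^{\lin}\to X$ is also a covering, so $\widetilde\Omega_X$ is just the pullback of $\Oga$ and is symplectic, and $\widetilde\pi$ the pullback Poisson structure on $M^{\lin}$ (which makes sense because $M^{\lin}\to M$ is étale and Poisson by construction of the linear holonomy cover from $(M,\cF_\pi)$ with its transverse integral affine structure). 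The map $q^{\lin}$ is a Poisson submersion because $q$ is and everything in sight is a local pullback; its fibers are diffeomorphic to those of $q$, hence connected and isotropic, so $q^{\lin}$ is an isotropic realization. Properness of $q^{\lin}$ follows from properness of $q$ together with the fact that $X^{\lin}\to X$ and $M^{\lin}\to M$ are compatible coverings (a fibered product of a proper map with a covering along a covering is proper). The $\Gamma^{\lin}$-action: $\Gamma^{\lin}$ acts on $M^{\lin}$ by integral affine deck transformations (by the main result of Appendix \ref{appendix:molino}, as recalled right before the statement), and this lifts canonically to $X^{\lin}=X\times_M M^{\lin}$ acting on the second factor; this action is by symplectomorphisms covering Poisson diffeomorphisms, with $X^{\lin}/\Gamma^{\lin}=X$ and $M^{\lin}/\Gamma^{\lin}=M$, so $q^{\lin}$ is $\Gamma^{\lin}$-equivariant.

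The main content is the simplicity claim, and this is where I expect the real work. By Proposition \ref{prop:simple:isotropic:realiz} (applicable because $\cF_\pi$ is of proper type, hence so is $\cF_{\pi}^{\lin}$ on $M^{\lin}$ — it is the pullback foliation along an étale map and the pullback of a proper integration is a proper integration of $(\ker\d q^{\lin})^\perp$, or one invokes Theorem \ref{prop-fol-crit-C}), it suffices to show that the symplectic foliation $\cF_{\pi}^{\lin}$ on $M^{\lin}$ is \emph{simple}, i.e.\ that its leaf space $B^{\lin}=M^{\lin}/\cF_{\pi}^{\lin}$ is a smooth manifold and the projection is a submersion with connected fibers. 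This is precisely the statement recalled just before the corollary: applying Appendix \ref{appendix:molino} to the proper foliation $(M,\cF_\pi)$ equipped with the transverse integral affine structure $\Lambda_X$ produces the linear holonomy cover $(M^{\lin},\cF_\pi^{\lin})$ whose leaf space is a (smooth) integral affine manifold $B^{\lin}$, with $B^{\lin}/\Gamma^{\lin}=B$. Thus $\cF_\pi^{\lin}$ is simple by construction of $M^{\lin}$. Here one must be a little careful that the transverse integral affine structure used to build $M^{\lin}$ is the one coming from $q$, namely $\Lambda_X$; by Corollary \ref{cor:isotropic:proper:1}(ii) this coincides with the one induced by the proper integration $\HolX$, so there is no ambiguity, and by Lemma \ref{lemma:Lambda-X-Mon} it is genuinely a transverse integral affine structure, which is exactly the input Appendix \ref{appendix:molino} requires.

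Finally I would record that $\Lambda_{X^{\lin}}$, the lattice in $\nu^*(\cF_{\pi}^{\lin})$ associated to the proper isotropic realization $q^{\lin}$, equals the pullback of $\Lambda_X$: the infinitesimal action $\sigma^{\lin}:\nu^*(\cF_\pi^{\lin})\to\X(X^{\lin})$ is the pullback of $\sigma$ (everything is local over $M$), so its time-one flows, and hence their fixed-point sets defining the lattice via Definition \ref{def:LambdaX}, pull back accordingly; equivalently $\cT_{X^{\lin}}$ is the pullback of $\cT_X$ and $X^{\lin}\to M^{\lin}$ is a principal $\cT_{X^{\lin}}$-bundle by Lemma \ref{lemma:Lambda-X-Mon} applied upstairs. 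With $\cF_\pi^{\lin}$ simple this makes $X^{\lin}$ a free Hamiltonian $\cT_{\Lambda^{\lin}}$-space for the integral affine manifold $(B^{\lin},\Lambda^{\lin})$, which is condition (d) of Proposition \ref{prop:simple:isotropic:realiz}, confirming simplicity of $q^{\lin}$ and completing the proof. The only genuine obstacle is ensuring the hypotheses of the Appendix \ref{appendix:molino} result are met on the nose — properness of $\cF_\pi$ and a \emph{transverse} (not merely leafwise) integral affine structure — both of which are guaranteed by Lemma \ref{lemma:Lambda-X-Mon}; everything else is routine pullback bookkeeping.
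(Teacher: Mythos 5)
Your proposal is correct and follows exactly the route the paper intends: pull back along the linear holonomy cover produced by Appendix \ref{appendix:molino} applied to $(M,\cF_\pi,\Lambda_X)$, observe that $\cF_\pi^{\lin}$ is simple with smooth leaf space $B^{\lin}$, and invoke Proposition \ref{prop:simple:isotropic:realiz} (equivalently its condition (d), realizing $X^{\lin}\to B^{\lin}$ as a free Hamiltonian $\cT_{B^{\lin}}$-space). The paper leaves this as an immediate consequence of the preceding discussion, and your bookkeeping of properness, equivariance, and the lattice $\Lambda_{X^{\lin}}$ fills in precisely the routine steps it omits.
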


This is summarized in the following diagram:
\[ \xymatrix{(X^\lin,\widetilde{\Omega}_X)\ar[d]\ar[r]^{q^\lin} & (M^\lin,\widetilde{\pi}) \ar[d]\ar[r] & B^\lin\ar[d]\\
(X, \Oga)\ar[r]_q& (M, \pi) \ar[r]& B}
\]
where $(X^\lin,\widetilde{\Omega}_X)\to B^\lin$ is the moment map of a free Hamiltonian $\cT_{B^\lin}$-space and (\ref{eq-q-lin}) is the resulting Hamiltonian quotient. 

% \begin{corollary}
% \label{cor:isotropic:proper:3}
% Let $q: (X, \Oga)\to (M, \pi)$ be proper isotropic realization of a Poisson manifold with $\cF_{\pi}$ of proper type. 
% Then the linear holonomy cover $M^\lin$ is a Poisson manifold with symplectic foliation $\cF^\lin_\pi$ and $X^\lin:=X\times_M M^\lin$ yields a $\Gamma^\lin$-equivariant proper isotropic realization
% \begin{equation}
% \label{eq-q-lin} 
% q^{lin}: (X^{\lin}, \widetilde{\Omega}_X)\to (M^{\lin}, \widetilde{\pi})
% \end{equation}
% which is simple. 
% \end{corollary}

%%%%%%%%%%%%%%%%%%%%%%%
%%%%%%%%%%%%%%%%%%%%%%%
%%%%%%%%%%%%%%%%%%%%%%%
%%%%%%%%%%%%%%%%%%%%%%%
%%%%%%%%%%%%%%%%%%%%%%%
\subsection{Isotropic realizations and Morita equivalence}
\label{sec:isotropic:Morita} 
%%%%%%%%%%%%%%%%%%%%%%%
%%%%%%%%%%%%%%%%%%%%%%%
%%%%%%%%%%%%%%%%%%%%%%%
%%%%%%%%%%%%%%%%%%%%%%%
%%%%%%%%%%%%%%%%%%%%%%%

The symplectic groupoids arising from proper isotropic realizations form a rather special class among all proper symplectic groupoids. A first illustration is the following result, where we use the presymplectic version of Morita equivalence (see Appendix \ref{appendix:moment:maps}):

\begin{proposition}
\label{prop:isotropic:Morita}
If $q:(X,\Oga)\to (M,\pi)$ is a proper isotropic realization then
$X\times_M\Hol(M, \cF_\pi)$, endowed with the pull-back of $\Oga$ by the first projection, defines a presymplectic Morita equivalence between the symplectic groupoid $\Hol_X(M, \pi)$ and the presymplectic groupoid integrating the Dirac structure $L_{\cF_{\pi}}$:
\[
\xymatrix{
 \Hol_X(M, \pi) \ar@<0.25pc>[d] \ar@<-0.25pc>[d]  & \ar@(dl, ul) & X\underset{M}{\times}\Hol(M, \cF_\pi)\ar[dll]^-{q\circ \pr_1}\ar[drr]_-{s\circ \pr_2}  & \ar@(ur, dr) & \cT_{X}\Join \Hol(M, \cF_\pi) \ar@<0.25pc>[d] \ar@<-0.25pc>[d] \\
(M,\pi)&  & & & (M,L_{\cF_{\pi}})}
\]
\end{proposition}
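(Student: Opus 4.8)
The plan is to build the bibundle explicitly and verify the defining axioms of a presymplectic Morita equivalence one by one, leaning heavily on the structural results already established for $\HolX$ (Theorem~\ref{thm:isotropic:fib:grpd}), on the description of $\Hol((\ker\d q)^\perp)$ obtained in its proof, and on the action of $\Hol(M,\cF_\pi)$ on $\cT_X$ from Lemma~\ref{lemma:action-Hol-on-T}. First I would set $P:=X\times_M\Hol(M,\cF_\pi)$ with the two maps $\qq_1=q\circ\pr_1$ and $\qq_2=s\circ\pr_2$; these are surjective submersions because $q$ is a surjective submersion and $s$ is a submersion onto $M$. Next I would define the left $\HolX$-action on $P$: an arrow $[a]_h\colon x\to y$ of $\HolX$ sends $(v,\gamma)$ with $q(v)=x$ (here $y$ is the target, matching the target convention $t=q\circ\pr_1$ if we read arrows right-to-left, so one must be careful with source/target conventions and fix them once) to $([a]\cdot v,\,[\gamma_a]_h\cdot\gamma)$, using the $\Sigma(M,\pi)$-action on $X$ from \eqref{act-Sigma-on-X} for the first slot and left multiplication in $\Hol(M,\cF_\pi)$ for the second. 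One checks this is well defined modulo cotangent holonomy rel $X$ precisely because the first component only depends on the holonomy class of the lift in $(\ker\d q)^\perp$, by Lemma~\ref{lemma-start-isotr}. The right action of $\cT_X\Join\Hol(M,\cF_\pi)$ on $P$ is given by the formula already appearing in the proof of Theorem~\ref{thm:isotropic:fib:grpd}: $(v,\gamma)\cdot(\lambda,\delta)=(\hol_{\gamma}(\lambda)^{-1}\cdot v\cdot\delta,\ \gamma\cdot\delta)$ (up to the precise bookkeeping of which end $\lambda$ sits over), using the free proper $\cT_X$-action on $X$ from Lemma~\ref{lemma:Lambda-X-Mon} and the $\Hol(M,\cF_\pi)$-action on $X$ through holonomy.

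The core verifications are then: (1) the two actions commute — this is a direct computation combining associativity of the $\Sigma(M,\pi)$-action, of $\Hol(M,\cF_\pi)$-multiplication, and the equivariance of $\hol_\gamma$ under composition; (2) $\qq_1=q\circ\pr_1\colon P\to M$ is a principal bundle for the right $(\cT_X\Join\Hol(M,\cF_\pi))$-action, which follows because $P\cong X\times_M\Hol(M,\cF_\pi)$ fibers over $M$ with fiber $q^{-1}(x)\times\Hol(M,\cF_\pi)(-,x)$, and $\cT_X\Join\Hol(M,\cF_\pi)$ acts freely (freeness of the $\cT_X$-action on $X$) and transitively on these fibers with the right isotropy-compatible quotient $M$; (3) symmetrically, $\qq_2=s\circ\pr_2\colon P\to M$ is a principal left $\HolX$-bundle — here the key input is the identification $q^*\Hol(\cF_\pi)\cong\Hol((\ker\d q)^\perp)$ from \eqref{comput-hol-isotr} together with the isomorphism $\HolX\ltimes X\cong\Hol((\ker\d q)^\perp)$ coming from \eqref{HolX-from-sympl-orth}, which tells us exactly that $\HolX$ acts freely and transitively on the fibers of $\pr_2$ modulo $q$. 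Finally (4) one checks the presymplectic compatibility: with $\widetilde\Omega_P:=\pr_1^*\Oga$ on $P$, one needs $\widetilde\Omega_P$ to be compatible in the sense of presymplectic Morita equivalence, i.e.\ the left $\HolX$-action is symplectic in the appropriate graded sense and the right $\cT_X\Join\Hol(M,\cF_\pi)$-action satisfies the presymplectic multiplicativity. The left side reduces to the fact that $\HolX$ acts symplectically on $X$ (Theorem~\ref{thm:isotropic:fib:grpd}(i) and the construction of its symplectic form as a quotient of $\pr_1^*\Oga-\pr_3^*\Oga$), and the right side reduces to \eqref{action-T-on-X} and \eqref{action-Hol-on-T}, since the $\Hol(M,\cF_\pi)$-factor carries the zero presymplectic form.

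The hard part will be item (4), the compatibility with the (pre)symplectic data, and in particular keeping straight the signs and orientations: $\widetilde\Omega_P=\pr_1^*\Oga$ is degenerate (its kernel contains everything pulled back from the $\Hol(M,\cF_\pi)$-direction and also the directions along the $\cT_X$-action orbits, by the kernel computation already done in the proof of the lemma preceding Theorem~\ref{thm:isotropic:fib:grpd}), so one must verify that this kernel is exactly the sum of the tangent spaces to the fibers of $\qq_1$ and $\qq_2$ intersected appropriately — this is what makes it a \emph{presymplectic} rather than symplectic Morita equivalence, and it is where the isotropy of the fibers of $q$ is used decisively. I would handle this by working fiberwise over a point and using a local trivialization of $q$ as a principal $\cT_X$-bundle together with a local section of $\Hol(M,\cF_\pi)$, reducing the statement to the two model equalities \eqref{action-T-on-X} and \eqref{action-Hol-on-T}. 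Once these local computations are in place, the global statement follows by the uniqueness of the quotient (pre)symplectic structures, exactly as in the proofs of the lemmas leading up to Theorem~\ref{thm:isotropic:fib:grpd}. The remaining axioms (1)--(3) are essentially bookkeeping given the results already proven, so the total length should be modest.
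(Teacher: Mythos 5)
Your proposal follows essentially the same route as the paper: the bibundle is $X\times_M\Hol(M,\cF_\pi)$ with the 2-form $\pr_1^*\Oga$, the left action is the one induced by the action of $\HolX$ on $X$ together with composition in $\Hol(M,\cF_\pi)$, principality is extracted from the structural results around Theorem \ref{thm:isotropic:fib:grpd}, and the two multiplicativity identities are reduced to \eqref{action-T-on-X} and \eqref{action-Hol-on-T} on the right and to the definition of the symplectic form on $\HolX$ (as the descent of $\pr_1^*\Oga-\pr_3^*\Oga$) on the left. Two small corrections: your right-action formula contains a term ``$v\cdot\delta$'' that does not parse, since $\Hol(M,\cF_\pi)$ does not act on $X$, only on $\cT_X$ via linear holonomy; the correct formula is simply $(v,\gamma)\cdot(\lambda,\delta)=(\hol_{\gamma}(\lambda)\cdot v,\,\gamma\delta)$, which keeps the first component inside the fiber $q^{-1}(t(\gamma))$. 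Also, the ``hard part'' you single out, namely identifying the kernel of $\pr_1^*\Oga$ with the span of the fiber directions, is not required by the definition of presymplectic Morita equivalence used here (only the two multiplicativity identities and the twisting condition, which is automatic since $\d(\pr_1^*\Oga)=0$), so that step can be dropped.
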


\begin{proof} For the groupoid on the right hand side we use the description given by \eqref{Join-gpd}, while for the groupoid $\Hol_X(M, \pi)$ we use \eqref{eq-G(X,M)-alt}. The two left and right actions are defined by
\[ [v, \gamma_0, u]\cdot (u, \gamma_1)= (v, \gamma_0\cdot \gamma_1), \quad (u, \gamma_1)\cdot  (\lambda, \gamma_2)= (\hol_{\gamma_1}(\lambda)u, \gamma_1\cdot \gamma_2) .\]
respectively. The actions obviously commute and it is straightforward to check that they are principal, so they define a Morita equivalence. 

We are left with proving that the actions are presymplectic. We will use the abbreviated notation $\Hol= \Hol(M, \cF_{\pi})$. For the right action, we have to check an equality of forms on the fiber product
\[ X\times_M\Hol\times_M\cT_X\times_M\Hol=\{ \left( (u, \gamma_1), (\lambda, \gamma_2)\right) : q(u)= t(\gamma_1), s(\gamma_1)= t(\gamma_2)= p(\lambda) \}.\]
We denote by $ \Hol^{(2)}$ the space of pairs of composable arrows in $\Hol$ and by letting $\eta= \hol_{\gamma_1}(\lambda)$ we reparametrize this fiber product space as
\[ X\times_{M}\cT_{X}\times_{M} \Hol^{(2)}= \{(u, \eta, \gamma_1, \gamma_2): q(u)= p(\eta)= t(\gamma_1)\}.\]
The two projections and the multiplication corresponding to the right action on the bibundle, on this new space become:
\begin{itemize}
\item $\pr_1:X\times_{M}\cT_{X}\times_{M} \Hol^{(2)}\to X\times_{M}\Hol$, $(u, \eta, \gamma_1, \gamma_2) \mapsto (u, \gamma_1)$. We need to consider the pull-back by this map of the form $\pr_{X}^{*}\Oga$.
\item $m:X\times_{M}\cT_{X}\times_{M} \Hol^{(2)}\to X\times_{M}\Hol$, $(u, \eta, \gamma_1, \gamma_2) \mapsto (\eta\cdot u, \gamma_1\cdot \gamma_2)$. We need to consider the pull-back by this map of the form  $\pr_{X}^{*}\Oga$. 
\item $\pr_2:X\times_{M}\cT_{X}\times_{M} \Hol^{(2)}\to \cT_{X}\Join \Hol$, $(u, \eta, \gamma_1, \gamma_2) \mapsto (\hol_{\gamma_1}^{-1}(\eta), \gamma_2)$.
We need to consider the pull-back by this map of the form  $\pr_{\cT_{X}}^{*}\omega_{\cT}$. However, due to (\ref{action-Hol-on-T}),
the same result is obtained if one pulls-back $\omega_{\cT}$ via $(u, \eta, \gamma_1, \gamma_2)\mapsto \eta$. 
\end{itemize} 
Combining these three terms, we find that the equation
\[ m^*\pr_{X}^{*}\Oga =\pr_1^*\pr_{X}^{*}\Oga+\pr_2^*\pr_{\cT_{X}}^{*}\omega_{\cT} \]
reduces to (\ref{action-T-on-X}), so the right action is presymplectic.

For the left action one needs to check that:
\[ m^*\pr_{X}^{*}\Oga=\pr_1^*\Omega+\pr_2^*\pr_{X}^{*}\Oga. \]
If one pulls back both sides of this equation along the submersion:
\begin{align*} 
X\times_M\Hol\times_M X\times_M X &\to \Hol_X\times_M (X\times_M \Hol)\\ 
(v, \gamma_0, u, \gamma_1)&\mapsto ([v, \gamma_0, u], (u, \gamma_1))
\end{align*} 
one obtains an obvious identity, so the the left action is presymplectic. 
\end{proof}

One can pass from presymplectic to symplectic Morita equivalences by restricting to complete transversals to the symplectic foliation. We obtain:

\begin{corollary}
\label{cor:Morita:complete:transversal}
If $q:(X,\Oga)\to (M,\pi)$ is a proper isotropic realization, for each choice of a complete transversal $T\subset M$ to $\cF_\pi$ there is a symplectic Morita equivalence:
\[
\xymatrix@R=20 pt@C=20pt{
 \Hol_{X}(M) \ar@<0.25pc>[d] \ar@<-0.25pc>[d]  & \ar@(dl, ul) & X\underset{M}{\times}\Hol(M, \cF_\pi)(-,T)\ar[dll]\ar[drr]  & \ar@(ur, dr) &  \cT_X|_T\Join \Hol(M, \cF_\pi)|_T\ar@<0.25pc>[d] \ar@<-0.25pc>[d] \\
(M,\pi)&  & & & (T,\pi\equiv 0)}
\]
\end{corollary}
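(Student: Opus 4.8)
The statement is an immediate corollary of Proposition \ref{prop:isotropic:Morita} combined with the general mechanism, recalled in Section \ref{ssec:Morita}, for restricting a Morita bibundle to a complete transversal. The plan is therefore to start from the presymplectic Morita equivalence
\[
\Hol_X(M,\pi)\ \simeq\ \cT_X\Join\Hol(M,\cF_\pi),
\]
with bibundle $P:=X\times_M\Hol(M,\cF_\pi)$ carrying the 2-form $\pr_1^*\Oga$, and restrict the right-hand side over the complete transversal $T\subset M$. First I would recall that, since $\cT_X\Join\Hol(M,\cF_\pi)$ is a foliation groupoid integrating $L_{\cF_\pi}$ and $T$ is a complete transversal to $\cF_\pi$, the restricted groupoid $\bigl(\cT_X\Join\Hol(M,\cF_\pi)\bigr)\big|_T=\cT_X|_T\Join\Hol(M,\cF_\pi)|_T$ is an \'etale groupoid, Morita equivalent to $\cT_X\Join\Hol(M,\cF_\pi)$ via the bibundle of arrows landing in $T$. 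Composing this Morita equivalence with the one of Proposition \ref{prop:isotropic:Morita} produces a Morita equivalence
\[
\Hol_X(M,\pi)\ \simeq\ \cT_X|_T\Join\Hol(M,\cF_\pi)|_T,
\]
whose bibundle is obtained from $P$ by restricting the right moment map $s\circ\pr_2\colon P\to M$ to $T$, i.e.\ it is
\[
P|_T\ :=\ X\times_M\Hol(M,\cF_\pi)(-,T)\ =\ \{(u,\gamma)\in P: s(\gamma)\in T\},
\]
with left moment map $q\circ\pr_1$ and right moment map $s\circ\pr_2$, the actions being the restrictions of those in Proposition \ref{prop:isotropic:Morita}. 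This is exactly the bibundle appearing in the statement.

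The second step is to check that this restricted equivalence is in fact \emph{symplectic}, i.e.\ that on $(T,\pi\equiv 0)$ the induced structure is the trivial (zero) Poisson structure and that the two actions are honestly symplectic. That $\pi$ restricts to the zero Poisson structure on a complete transversal to the symplectic foliation is the standard transverse-geometry fact, the source of the label ``$(T,\pi\equiv 0)$'' in the diagram (compare Example \ref{ex-trIAS-orbifolds}); correspondingly $\cT_X|_T$ is the symplectic torus bundle $\cT_{\Lambda_X|_T}$ associated to the induced integral affine structure on $T$ (Proposition \ref{prop:IAS-sympl-torus}), and $\Hol(M,\cF_\pi)|_T$ is an \'etale groupoid with units only in the relevant presymplectic sense — its presymplectic form, being the pull-back of $0\in\Omega^2(M)$, vanishes, so $\cT_X|_T\Join\Hol(M,\cF_\pi)|_T$ is a genuine symplectic groupoid integrating the zero Poisson structure on $T$. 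For the left-hand side, $\Hol_X(M,\pi)$ is already symplectic by Theorem \ref{thm:isotropic:fib:grpd}. It remains to verify that $\pr_1^*\Oga$ restricted to $P|_T$ gives a bibundle form that is non-degenerate in the sense required for a \emph{symplectic} Morita equivalence (Appendix \ref{appendix:moment:maps}): but this follows because, over the transversal, the presymplectic form $\omega_{\cT}$ on $\cT_X|_T$ is already the symplectic form $\omega_{\cT_{\Lambda_X|_T}}$ (its kernel along the foliation disappears once we restrict to $T$), and the multiplicativity relations \eqref{action-T-on-X}, \eqref{action-Hol-on-T} verified in Proposition \ref{prop:isotropic:Morita} are inherited verbatim; the degeneracy directions of $\pr_1^*\Oga$ on $P$ were exactly tangent to the $\cF_\pi$-directions that we have now transversally sliced out.

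I expect the only genuinely delicate point to be bookkeeping: matching the restricted bibundle $P|_T = X\times_M\Hol(M,\cF_\pi)(-,T)$ and its two actions precisely with the composite of the Morita equivalences, and confirming that the 2-form $\pr_1^*\Oga|_{P|_T}$ is the one that witnesses the symplectic (not merely presymplectic) equivalence. All of this is routine once one invokes: (i) Proposition \ref{prop:isotropic:Morita}; (ii) the standard restriction-to-transversal Morita equivalence for foliation groupoids from Section \ref{ssec:Morita}; and (iii) the identification of transverse integral affine structures with symplectic torus bundles, Proposition \ref{prop:IAS-sympl-torus}, applied along $T$. Thus the proof is short: assemble these three ingredients and observe that composing the two Morita equivalences and restricting the bibundle form yields precisely the diagram in the statement.
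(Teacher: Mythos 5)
Your proposal is correct and follows exactly the route the paper intends: the paper derives this corollary from Proposition \ref{prop:isotropic:Morita} by the one-line remark that one passes from presymplectic to symplectic Morita equivalences by restricting to a complete transversal, which is precisely your composition of the bibundle $X\times_M\Hol(M,\cF_\pi)$ with the restriction-to-$T$ equivalence. Your additional observation that the restricted equivalence is automatically symplectic (since both restricted groupoids are genuinely symplectic and a presymplectic Morita equivalence between symplectic groupoids is symplectic, cf.\ Appendix \ref{App:twisted Dirac case}) is the correct way to close the argument.
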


The previous two results show that the groupoids associated with proper isotropic realizations are very special: they are Morita equivalent to torus bundles over the leaf space. Moreover, their symplectic Morita equivalence class does not depend on $X$ but only on the transverse integral affine structure it defines on $(M, \cF_{\pi})$.

When the foliation $\cF_\pi$ is proper we can do better and pass to torus bundle over classical orbifolds (see Corollary  \ref{cor:isotropic:proper:1}).
% we conclude that the groupoids associated with proper isotropic realizations are Morita equivalent to a torus bundle over a classical orbifold (see Corollary  \ref{cor:isotropic:proper:1}).
% In this case, one can do even better because the leaf space $B=M/\cF_\pi$ is a good orbifold. 
Here, as for Corollary \ref{cor:isotropic:proper:3}, we appeal again to Appendix \ref{appendix:molino}; the Morita point of view leads to an improvement of the corollary. More precisely, 
if we consider the linear holonomy cover $B^{\lin}$, together with the action of the linear holonomy group $\Gamma^{\lin}$,  we see that one has not only the symplectic torus bundle $\cT_{B^{\lin}}$ corresponding to the integral affine structure, but also the semi-direct groupoid
(itself a symplectic groupoid!)
\[ \cT_{B^{\lin}}\Join \Gamma^{\lin} \tto B^{\lin}.\]
% This is a symplectic groupoid, and we obtain (compare with Corollary \ref{cor:isotropic:proper:3}):

\begin{corollary}\label{cor:isotropic:proper:4} If $\cF_{\pi}$ is of proper type then, for any proper isotropic realization $q: (X, \Oga)\to (M, \pi)$, the symplectic holonomy groupoid $\Hol_X(M, \pi)$ is Morita equivalent to the symplectic groupoid $\cT_{B^{\lin}}\Join \Gamma^{\lin}\tto B^{\lin}$ associated with the linear holonomy cover of the classical orbifold $B=M/\cF_\pi$.  In particular, there is a 1-1 correspondence:
\[ 
\left\{\txt{Hamiltonian\\ $\Hol_X(M, \pi)$-spaces\\ \,}\right\}
\stackrel{1-1}{\longleftrightarrow}
\left\{\txt{$\Gamma^{\lin}$-equivariant\\ Hamiltonian $\cT_{B^{\lin}}$-spaces\\ \,} \right\}
\]
\end{corollary}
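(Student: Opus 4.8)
The plan is to upgrade the symplectic Morita equivalence of Corollary~\ref{cor:Morita:complete:transversal} by replacing the \'etale atlas appearing there with the one produced by Molino's theory (Appendix~\ref{appendix:molino}). Fix a complete transversal $T\subset M$ to $\cF_\pi$. Since $T$ is transverse of complementary dimension, the identification $\nu^*(\cF_\pi)|_T\cong T^*T$ turns the presymplectic torus bundle $\cT_X|_T$ into the genuine symplectic torus bundle $\cT_{\Lambda_T}$ of the holonomy invariant integral affine structure $\Lambda_T=\Lambda_X|_T$ on $T$, with matching $2$-forms (this is the local form of the Example following Proposition~\ref{prop:IAS-presympl-torus}). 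Hence Corollary~\ref{cor:Morita:complete:transversal} becomes a symplectic Morita equivalence
\[
\Hol_X(M,\pi)\ \simeq\ \cT_{\Lambda_T}\Join \cH_T,\qquad \cH_T:=\Hol(M,\cF_\pi)|_T ,
\]
where $\cH_T$ is an effective \'etale groupoid (Proposition~\ref{prop-red-compl-transv}) which, by Corollary~\ref{cor:isotropic:proper:1}(i)--(ii), is an atlas for the classical orbifold $B=M/\cF_\pi$ carrying the integral affine structure $\Lambda_B$ pushed forward from $\Lambda_T$. In other words $\cT_{\Lambda_T}\Join\cH_T$ is the ``trivial symplectic gerbe'' over $(B,\Lambda_B)$ presented in the $T$-atlas.

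Next I would invoke Appendix~\ref{appendix:molino}: since $\cF_\pi$ is of proper type, $(B,\Lambda_B)$ is a \emph{good} integral affine orbifold, and the appendix identifies the classical orbifold $B$ with the orbifold quotient $B^{\lin}/\Gamma^{\lin}$ of the integral affine manifold $B^{\lin}$ (the linear holonomy cover) by the linear holonomy group $\Gamma^{\lin}$ acting by integral affine transformations. This action is locally effective, because a non-trivial integral affine transformation cannot restrict to the identity on an open set, so $\Gamma^{\lin}\ltimes B^{\lin}$ is an effective \'etale atlas for $B$, carrying the integral affine structure $\Lambda_{B^{\lin}}$ which pushes to $\Lambda_B$. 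Consequently there is a Morita equivalence $P\colon \cH_T\simeq \Gamma^{\lin}\ltimes B^{\lin}$, unique up to isomorphism by Lemma~\ref{lem:classical:orbifold}, and by the functoriality of Haefliger's transverse structures (Remark~\ref{rmk-Transversal geometric structures}) it carries $\Lambda_T$ to $\Lambda_{B^{\lin}}$.

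The remaining — and technically the most delicate — step is to upgrade $P$ to a symplectic Morita equivalence of the associated torus-bundle semidirect products,
\[
\cT_{\Lambda_T}\Join\cH_T\ \simeq\ \cT_{\Lambda_{B^{\lin}}}\Join(\Gamma^{\lin}\ltimes B^{\lin})\ =\ \cT_{B^{\lin}}\Join\Gamma^{\lin}.
\]
Concretely, I would take the bibundle $\widetilde P$ obtained by pulling back $\cT_{\Lambda_T}$ along the map $P\to T$: using that $P$ intertwines $\Lambda_T$ and $\Lambda_{B^{\lin}}$, this is naturally a principal $(\cT_{\Lambda_T}\Join\cH_T)$-bundle on the left and a principal $(\cT_{B^{\lin}}\Join\Gamma^{\lin})$-bundle on the right, and I would equip it with the pull-back of $\omega_{\cT_{\Lambda_T}}$; the compatibility of the two actions with the symplectic forms is exactly of the type verified in the proof of Proposition~\ref{prop:isotropic:Morita}, reducing to the fact that in both semidirect products the (pre)symplectic structure is the one canonically induced by $\omega_{\can}$ on the conormal bundle. (Alternatively this is an instance of the general principle, developed in Section~\ref{sec:gerbes:orbifolds}, that the trivial symplectic $\cT$-gerbe over an integral affine orbifold is well defined up to symplectic Morita equivalence, independently of the chosen atlas.) Composing with the equivalence of the first paragraph yields $\Hol_X(M,\pi)\simeq \cT_{B^{\lin}}\Join\Gamma^{\lin}$, which is the first assertion; the main obstacle is precisely this symplectic upgrade, everything else being formal once Appendices~\ref{appendix:moment:maps} and \ref{appendix:molino} are available.

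Finally, for the displayed $1$-$1$ correspondence I would use that a symplectic Morita equivalence induces an equivalence between the categories of Hamiltonian spaces of the two symplectic groupoids (Appendix~\ref{appendix:moment:maps}), applied to $\Hol_X(M,\pi)$ and $\cT_{B^{\lin}}\Join\Gamma^{\lin}$; and then observe that an action of the semidirect product $\cT_{B^{\lin}}\Join\Gamma^{\lin}$ amounts to an action of $\cT_{B^{\lin}}$ together with a compatible action of $\Gamma^{\lin}$ (the two generate the semidirect product, with the obvious compatibility), so that — once the multiplicativity conditions on the symplectic forms are spelled out — a Hamiltonian $(\cT_{B^{\lin}}\Join\Gamma^{\lin})$-space is the same data as a $\Gamma^{\lin}$-equivariant Hamiltonian $\cT_{B^{\lin}}$-space. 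This yields the stated bijection.
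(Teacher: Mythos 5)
Your argument is correct, but it reaches the target by a different route than the paper, which gives no transversal-based proof here: the intended construction composes the presymplectic Morita bibundle $X\times_M\Hol(M,\cF_\pi)$ of Proposition \ref{prop:isotropic:Morita} with the bibundle $M^{\lin}$ of Theorem \ref{Molino-theorem}, producing the canonical bibundle $X^{\lin}=X\times_M M^{\lin}$ of Corollary \ref{cor:isotropic:proper:3}; being a $\Gamma^{\lin}$-equivariant free Hamiltonian $\cT_{B^{\lin}}$-space, $X^{\lin}$ is directly a symplectic $\bigl(\Hol_X(M,\pi),\,\cT_{B^{\lin}}\Join\Gamma^{\lin}\bigr)$-bibundle, which is exactly why the text remarks that passing to $B^{\lin}$ rather than to a transversal makes the construction choice-free and keeps the base connected. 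Your route --- first descend to $\cT_{\Lambda_T}\Join\cH_T$ via Corollary \ref{cor:Morita:complete:transversal}, then identify $\cH_T$ with $\Gamma^{\lin}\ltimes B^{\lin}$ using the rigidity of classical-orbifold atlases (Lemma \ref{lem:classical:orbifold}), then lift to the torus-bundle extensions by pulling $\cT_{\Lambda}$ back over the bibundle --- is a legitimate alternative, and the symplectic upgrade you single out as delicate does go through as in Proposition \ref{prop:isotropic:Morita}, precisely because the bibundle matches the lattices and hence the canonical forms descended from $\omega_{\can}$. What the paper's construction buys is canonicity and an explicit bibundle; what yours buys is the observation that the answer is forced by the underlying classical integral affine orbifold alone. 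One small repair: your one-line argument for the effectiveness of $\Gamma^{\lin}\ltimes B^{\lin}$ only shows that germ-trivial elements act trivially on all of the connected manifold $B^{\lin}$, not that they are trivial in $\Gamma^{\lin}$; the clean justification is that effectiveness is a Morita invariant of \'etale (foliation) groupoids and $\Hol(M,\cF_\pi)$ is effective by Proposition \ref{prop-red-compl-transv}.
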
 

The advantage of passing to $B^{\lin}$ instead of restricting to transversals is that the construction 
is choice-free and the base $B^{\lin}$ remains connected if $M$ is connected. 

%%%%%%%%%%%%%%%%%%%%%%%%%%%%%%
%%%%%%%%%%%%%%%%%%%%%%%%%%%%%%
\subsection{Isotropic realizations and $\cE$-integrations}
\label{Isotropic realizations and integrations}
%%%%%%%%%%%%%%%%%%%%%%%%%%%%%%
%%%%%%%%%%%%%%%%%%%%%%%%%%%%%%

If we start from a proper isotropic realization $q: (X, \Oga)\to (M, \pi)$, the corresponding holonomy groupoid fits into a short exact sequence (see Theorem \ref{thm:isotropic:fib:grpd}):
\[ \xymatrix{0\ar[r] & \cT_{X} \ar[r] & \HolX \ar[r] & \Hol(M,\cF_\pi)\ar[r] &0}.\]
One the other hand, if we consider an s-connected, proper symplectic integration $(\cG,\Omega)\tto (M,\pi)$ then we have the short exact sequence:
\[ \xymatrix{0\ar[r] & \cT(\cG)\ar[r] & \G \ar[r] & \cBG(\G) \ar[r] &0}, \]
where $\cT(\cG)=\nu^*(\cF_\pi)/\Lambda$ and $\cBG(\G)\tto M$ is a proper foliation groupoid integrating $\cF_\pi$. In general, 
the groupoid $\cBG(\cG)$ will be a larger integration than $\Hol(M,\cF_\pi)$. 

This shows that it is not possible to obtain every s-connected, proper symplectic integration $(\cG,\Omega)\tto (M,\pi)$ as the holonomy 
groupoid relative to an isotropic realization of $(M,\pi)$. However, one can extend the construction of $\HolX$,  replacing $\Hol(M, \pi)$ by \emph{any} foliation groupoid integrating $\cF_\pi$, even non s-connected integrations, if we adopt the identification (\ref{pull-back-gpd-isotr}) as definition of $\Hol_X(M, \pi)$:

\begin{definition}
\label{def:E:integration}
For any proper isotropic realization $q: (X, \Oga)\to (M,  \pi)$ and any integration $\cE\tto M$ of $(M, \cF_{\pi})$ the {\bf $\cE$-integration of $(M, \pi)$ relative to $X$} is  
\[ \cE_{X}(M, \pi):= \left( X\times_{M} \cE\times_{M} X\right) /\cT_X \tto M \]
with the symplectic structure induced from the 2-form $\widetilde{\Omega}:=\pr_{1}^{*}\Oga- \pr_{3}^{*}\Oga$.
\end{definition}

This definition recovers $\HolX$ when $\cE=\Hol(M,\cF_\pi)$. However, one should be aware that when $\cE=\Mon(M,\cF_\pi)$ the groupoid $\cE_{X}(M, \pi)$ and the Weinstein groupoid $\Sigma(M,\pi)$, in general, do not coincide, since $q^*\Mon(M,\cF_\pi)$ may be very different from $\Mon(q^*\cF_\pi)=\Mon((\ker\d q)^\perp)$.

\begin{remark} 
When the foliation groupoid $\cE$ is s-connected we can still define the $\cE$-integration of $(M, \pi)$ 
relative to $X$ geometrically, as in Definition \ref{def:hol-sympl-gpd}: the pullback groupoid $q^*\cE$ is also an s-connected integration of $(\ker\d q)^\perp$, and one can define cotangent $\cE$-equivalent paths, in a manner analogous to the way we defined cotangent holonomic paths, by requiring their horizontal lifts to induce the same element in the pull-back $q^*\cE$. However, this geometric definition fails in general, and later we do have to deal with non s-connected foliation groupoids.
\end{remark}

Exactly the same arguments as before imply the following more general version of Theorem \ref{thm:isotropic:fib:grpd}:

\begin{theorem}\label{thm:X-hol-E-version}
For any proper isotropic realization $q: (X, \Oga)\to (M, \pi)$ and any foliation groupoid $\cE$ integrating $(M,\cF_\pi)$:
\begin{enumerate}[(i)]
\item $\cE_{X}(M, \pi)$ is an an $X$-compatible symplectic integration of $(M, \pi)$.
\item $\cE_{X}(M, \pi)$ is an s-connected, proper Lie groupoid iff $\cE$ is s-connected and proper.
\end{enumerate}
Moreover, one has a short exact sequence of Lie groupoids:
\[ \xymatrix{0\ar[r] & \cT_{X} \ar[r] & \cE_{X}(M, \pi) \ar[r] & \cE\ar[r] &0}.\]
\end{theorem}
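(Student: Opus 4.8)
The strategy is to bootstrap from Theorem \ref{thm:isotropic:fib:grpd}, whose proof is essentially a template: replace the holonomy groupoid $\Hol(M,\cF_\pi)$ everywhere by an arbitrary foliation groupoid $\cE$ integrating $\cF_\pi$, and check that each step still goes through. First I would record the analogue of the auxiliary lemmas used for $\HolX$. Lemma \ref{lemma-start-isotr} identifies $(\ker\d q)^\perp$ with $q^*\cF_\pi$ and makes no reference to the integration, so it carries over verbatim. Lemma \ref{lemma:action-Hol-on-T} (that the linear holonomy action preserves $\Lambda_X$, hence descends to a presymplectic action on $\cT_X$) is what we need to transport to $\cE$; since the linear holonomy action factors through any foliation groupoid integrating $\cF_\pi$ (the Bott connection is canonical), the action formula \eqref{act-t-hop}
\[ \lambda\cdot(v,\g,u)=(\hol_\g(\lambda)\cdot v,\g,\lambda\cdot u) \]
makes sense with $\g\in\cE$, and freeness/properness of the $\cT_X$-action on $X$ gives, by exactly the same local-section argument, that $\cE_X(M,\pi)=(X\times_M\cE\times_M X)/\cT_X$ is a smooth groupoid, Hausdorff whenever $\cE$ is, and proper whenever $\cE$ is proper. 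This already yields the "iff" direction of (ii) concerning properness, and the Hausdorff part; one direction of each "iff" also needs the converse, which follows because $\cE$ is a quotient of $\cE_X(M,\pi)$ (the short exact sequence below), and quotients of proper/Hausdorff groupoids by closed subgroupoids inherit the property — here $\cT_X$ is closed.

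Next I would handle the symplectic structure. The 2-form $\widetilde\Omega=\pr_1^*\Oga-\pr_3^*\Oga$ on $X\times_M\cE\times_M X$ is closed and multiplicative for the same formal reasons as before; the computation that its kernel is exactly the image of the infinitesimal action \eqref{act-t-hop} and that $\pr_3$ is $f$-Dirac into $(M,\pi)$ is local along the fibers and does not see whether $\cE$ is $\Hol$ or something larger, so $\widetilde\Omega$ descends to a multiplicative symplectic form $\Omega$ on $\cE_X(M,\pi)$, with target map Poisson; this gives that $\cE_X(M,\pi)$ is a symplectic groupoid integrating $(M,\pi)$, and the natural groupoid action on $q:X\to M$, together with the multiplicativity identity for $\Oga$, shows it is $X$-compatible — this is (i). For the short exact sequence, the first arrow embeds $\cT_X$ as the isotropy sub-bundle by $\lambda\mapsto[\,\,]$ (the class of the triple with $\g$ a unit), and the second arrow is $(v,\g,u)\mapsto\g$; exactness is immediate from the definition of the quotient, and all three terms are Lie groupoids by the smoothness statement established above.

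For the s-connectedness half of (ii), one passes to the pullback along $q$: $q^*(\cE_X(M,\pi))\cong q^*\cE$ as integrations of $(\ker\d q)^\perp$ (this is the content of \eqref{HolX-from-sympl-orth} adapted to $\cE$, using that $\cE_X\ltimes X\to q^*\cE$ is an isomorphism, which follows from the same division/fiber-product manipulation as \eqref{comput-hol-isotr}), and $q$ is a surjective submersion with connected fibers, so $\cE_X(M,\pi)$ has connected s-fibers iff $q^*\cE$ does iff $\cE$ does. The \emph{main obstacle} I anticipate is purely bookkeeping rather than conceptual: in Theorem \ref{thm:isotropic:fib:grpd} the identification $\Phi:\HolX\cong q^*\Hol(\cF_\pi)/\cT_X$ was used to transfer the geometric description \eqref{def:hol-sympl-gpd} of $\HolX$ (via cotangent holonomy paths) to the fiber-product model. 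For general $\cE$ there is no such geometric description available when $\cE$ is not s-connected — indeed the remark preceding the theorem warns that $\Mon_X$ need not equal $\Sigma(M,\pi)$ — so one must take Definition \ref{def:E:integration} (the fiber-product model) as the definition and verify all properties directly on it, without appealing to any cotangent-path picture. Concretely this means re-examining the proof of Theorem \ref{thm:isotropic:fib:grpd}(ii) and noting that the only genuinely needed input there, the commutative diagram of surjective groupoid maps $q^*\Sigma\to q^*\cG\to q^*\Hol$, is replaced here by the trivial observation that $q^*\cE_X(M,\pi)\cong q^*\cE$; everything else is formal. Since this is exactly the scope of "the same arguments as before", I would write the proof as: (a) cite Lemma \ref{lemma-start-isotr}; (b) state and prove the $\cE$-version of the smoothness lemma; (c) state and prove the $\cE$-version of the descent-of-$\widetilde\Omega$ lemma; (d) assemble (i), the exact sequence, and (ii), each in one or two lines.
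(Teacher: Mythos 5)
Your proposal is correct and follows essentially the same route as the paper, which itself disposes of this theorem by asserting that ``exactly the same arguments'' as for Theorem \ref{thm:isotropic:fib:grpd} apply; your step-by-step transcription (smoothness of the quotient via local sections of $\cT_X$, descent of $\pr_1^*\Oga-\pr_3^*\Oga$, the short exact sequence, and the identification $q^*\cE\cong\cE_X(M,\pi)\ltimes X$ together with connectedness of the $q$-fibers for the s-connectedness claim) is exactly what is intended. The only point to phrase with slightly more care is that for a non-s-connected $\cE$ the action of an arrow on $\nu(\cF_\pi)$ is the linearized germ of a bisection rather than genuine leafwise holonomy, so one should record that this action still preserves $\Lambda_X$ (hence descends to $\cT_X$) before writing the action formula \eqref{act-t-hop} --- a point the paper also leaves implicit.
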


When $\cF_\pi$ is proper we still find:
\begin{itemize}
\item the transverse integral affine structure defined by $\cE_{X}(M, \pi)$ coincides with the lattice $\Lambda_X$ defined by the proper isotropic realization.
\item the associated orbifold structure on $B= M/\cF_{\pi}$ is the one induced by $\cE$. 
\end{itemize}

Also, similar to Proposition \ref{prop:isotropic:Morita}, $\cT_X\Join \cE\tto M$ is a presympectic groupoid integrating the Dirac structure $L_{\cF_\pi}$ associated with the foliation $\cF_\pi$, and one obtains:

\begin{proposition}
\label{prop:Morita:E:holonomy}
If $q:(X,\Oga)\to (M,\pi)$ is a proper isotropic realization and $\cE$ is a foliation groupoid integrating $(M,\cF_\pi)$, there is a presymplectic Morita equivalence:
\[
\xymatrix@R=15 pt@C=20pt{
 \cE_{X}(M, \pi) \ar@<0.25pc>[d] \ar@<-0.25pc>[d]  & \ar@(dl, ul) & X\underset{M}{\times}\cE\ar[dll]\ar[drr]  & \ar@(ur, dr) & \cT_X\Join \cE\ar@<0.25pc>[d] \ar@<-0.25pc>[d] \\
(M,\pi)&  & & & (M,L_{\cF_\pi})}
\]
\end{proposition}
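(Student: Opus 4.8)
Proposal for the proof of Proposition \ref{prop:Morita:E:holonomy}.

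The plan is to follow the same strategy as in the proof of Proposition \ref{prop:isotropic:Morita}, replacing $\Hol(M,\cF_\pi)$ throughout by the arbitrary foliation groupoid $\cE$ integrating $\cF_\pi$. First I would recall the two descriptions that make the computation explicit: the groupoid $\cT_X\Join\cE\tto M$ has arrows $(\lambda,\gamma)$ with $\gamma\in\cE$, $\lambda\in\cT_{X,\gamma(0)}$ and multiplication given by the analogue of \eqref{Join-gpd} using the linear holonomy action of $\cE$ on $\cT_X$ (which is well-defined since, by Lemma \ref{lemma:action-Hol-on-T}, linear holonomy preserves $\Lambda_X$ and the action factors through $\Hol(M,\cF_\pi)$, hence through any foliation groupoid between $\Mon$ and $\Hol$; for a general integration $\cE$ the action is pulled back along $\cE\to\Hol(M,\cF_\pi)$). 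On the other side, $\cE_X(M,\pi)$ is by Definition \ref{def:E:integration} the quotient $(X\times_M\cE\times_M X)/\cT_X$, whose elements I will write as $[v,\gamma,u]$. The bibundle is $P=X\times_M\cE$, with left $\cE_X(M,\pi)$-action $[v,\gamma_0,u]\cdot(u,\gamma_1)=(v,\gamma_0\gamma_1)$ and right $\cT_X\Join\cE$-action $(u,\gamma_1)\cdot(\lambda,\gamma_2)=(\hol_{\gamma_1}(\lambda)\cdot u,\gamma_1\gamma_2)$, with moment maps $q\circ\pr_1$ and $s\circ\pr_2$ and the 2-form $\pr_X^*\Oga$ on $P$.

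The verification then splits into two independent parts. (1) The two actions commute, are free, and make $q\circ\pr_1$ into a principal $(\cT_X\Join\cE)$-bundle and $s\circ\pr_2$ into a principal $\cE_X(M,\pi)$-bundle; this is a purely algebraic/topological check, identical to the one in Proposition \ref{prop:isotropic:Morita}, using that $\cT_X$ acts freely and properly on $X$ (Lemma \ref{lemma:Lambda-X-Mon}) so that the quotient map $X\times_M\cE\times_M X\to\cE_X(M,\pi)$ is a principal $\cT_X$-bundle. I would simply note these are the standard bibundle axioms and that the relevant submersions are the ones inherited from $q$ and the source/target of $\cE$. (2) Both actions are presymplectic. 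For the right action one reparametrizes the fiber product $X\times_M\cE\times_M\cT_X\times_M\cE$ via $\eta=\hol_{\gamma_1}(\lambda)$ as $X\times_M\cT_X\times_M\cE^{(2)}$ exactly as in Proposition \ref{prop:isotropic:Morita}; the multiplication becomes $(u,\eta,\gamma_1,\gamma_2)\mapsto(\eta\cdot u,\gamma_1\gamma_2)$, the projections pick out $(u,\gamma_1)$ and (using \eqref{action-Hol-on-T}, which holds for the $\cE$-action too since it is pulled back from $\Hol(M,\cF_\pi)$) effectively $\eta$, and the required identity $m^*\pr_X^*\Oga=\pr_1^*\pr_X^*\Oga+\pr_2^*\pr_{\cT_X}^*\omega_\cT$ collapses to \eqref{action-T-on-X}, which is Lemma \ref{lemma:Lambda-X-Mon}. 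For the left action one pulls back the desired identity $m^*\pr_X^*\Oga=\pr_1^*\Omega+\pr_2^*\pr_X^*\Oga$ along the submersion $X\times_M\cE\times_M X\times_M X\to\cE_X(M,\pi)\times_M(X\times_M\cE)$, $(v,\gamma_0,u,\gamma_1)\mapsto([v,\gamma_0,u],(u,\gamma_1))$; since $\Omega$ on $\cE_X(M,\pi)$ is induced from $\widetilde\Omega=\pr_1^*\Oga-\pr_3^*\Oga$, the pulled-back identity becomes a tautology in the $\Oga$-terms.

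The one point requiring a little care — and what I expect to be the main (though still mild) obstacle — is the well-definedness of the linear holonomy action of a \emph{non-s-connected} foliation groupoid $\cE$ on $\cT_X$ and on the bibundle, and correspondingly that the formulas descend to the $\cT_X$-quotient. This is handled by observing that the linear holonomy action of $\Hol(M,\cF_\pi)$ on $\nu^*(\cF_\pi)$ preserves $\Lambda_X$ (Lemma \ref{lemma:action-Hol-on-T}), hence descends to $\cT_X$, and then pulling back along the canonical morphism $\cE\to\Hol(M,\cF_\pi)$; the smoothness of the quotient $\cE_X(M,\pi)$, as well as of $P$, is exactly the statement already established in the smoothness lemma preceding Theorem \ref{thm:X-hol-E-version}, whose proof (free and proper $\cT_X$-action, with an explicit local slice using sections of $\cT_X$) applies verbatim. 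Since every ingredient — the bibundle axioms, the two multiplicativity identities, and the smoothness — is obtained by the word-for-word substitution of $\cE$ for $\Hol(M,\cF_\pi)$ in the proof of Proposition \ref{prop:isotropic:Morita}, I would state that the proof carries over with these remarks and not reproduce the computation.
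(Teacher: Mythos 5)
Your proposal is correct and follows exactly the route the paper intends: the paper gives no separate proof of Proposition \ref{prop:Morita:E:holonomy}, stating only that it is obtained ``similar to Proposition \ref{prop:isotropic:Morita}'', and your word-for-word substitution of $\cE$ for $\Hol(M,\cF_\pi)$ in the bibundle axioms and in the two presymplecticity computations is precisely that argument. The only imprecision is in your remark on well-definedness: for a non-s-connected $\cE$ there need not be a morphism $\cE\to\Hol(M,\cF_\pi)$ to pull the action back along, and one should instead invoke the linear holonomy action of $\cE$ itself on $\nu^*(\cF_\pi)$ together with the ($\cE$-invariance of the) lattice $\Lambda_X$ — a point the paper glosses over in exactly the same way, so this is not a gap relative to the paper's own standard.
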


By restricting to complete transversals $T\subset M$ to $\cF_\pi$, one can obtain symplectic Morita equivalences, leading to the analogue of Corollary \ref{cor:Morita:complete:transversal}.

%%%%%%%%%%%%%%%%%%%%%%%
%%%%%%%%%%%%%%%%%%%%%%%
%%%%%%%%%%%%%%%%%%%%%%%
%%%%%%%%%%%%%%%%%%%%%%%
%%%%%%%%%%%%%%%%%%%%%%%
%%%%%%%%%%%%%%%%%%%%%%%%%%%%%%%%
\subsection{The Dazord-Delzant class}
\label{sec:Dazord:Delzant}
%%%%%%%%%%%%%%%%%%%%%%%%%%%%%%%%
%%%%%%%%%%%%%%%%%%%%%%%
%%%%%%%%%%%%%%%%%%%%%%%
%%%%%%%%%%%%%%%%%%%%%%%
%%%%%%%%%%%%%%%%%%%%%%%
%%%%%%%%%%%%%%%%%%%%%%%

Given a regular Poisson manifold $(M, \pi)$, we have seen that any proper isotropic realization $q: (X, \Oga)\to (M, \pi)$ defines
a transverse integral affine structure $\Lambda_X$ on $(M,\cF_{\pi})$. We now address the converse problem. Namely, given:
\begin{itemize}
\item $(M, \pi)$ a Poisson manifold;
\item $\Lambda \subset \nu^*(\cF_\pi)$ a transverse integral affine structure;
\end{itemize}
is there a proper isotropic realization $q: (X, \Oga) \to (M, \pi)$ defining $\Lambda$? This leads to a cohomology class, 
which is essentially due to Dazord and Delzant \cite{DaDe}, and which we will call the Dazord-Delzant class associated to the data $(M, \pi, \Lambda)$. 

In order to describe this class, we will have to deal with sheaf cohomology. For a bundle $E\to M$ we will denote by $\underline{E}$
the associated sheaf of sections. For example,  $\underline{\cT}$ denotes the sheaf of section of the torus bundle $\cT\to M$
and  we will also denote by $\underline{\cT}_\cl$ the sheaf of closed sections, so for an open set $V\subset M$ we have:
\[ \underline{\cT}_\cl(U)=\{\alpha\in\underline{\cT}(U): \alpha^*\omega_\can=\d\alpha=0\}. \]

Let us denote by $(\Omega^\bullet(M,\cF_\pi),\d)$ the complex of forms whose pullback to the leaves of $\cF_\pi$ vanish. We have the short exact of sequence of sheaves (Poincar\'e Lemma):
\begin{equation}
\label{eq:Poincare}
\xymatrix{0\ar[r] & \Omega^\bullet_\cl(M,\cF_\pi) \ar[r] & \Omega^\bullet(M,\cF_\pi)\ar[r]^---{\d} &\Omega^{\bullet+1}_\cl(M,\cF_\pi)\ar[r] & 0}
\end{equation}
where $\Omega^\bullet_\cl(M,\cF_\pi)$ denotes the sheaf of closed forms. Notice that:
\[ \Omega^1(M,\cF_\pi)=\underline{\nu^*(\cF_\pi)}, \] 
and we can view $\Lambda\subset \Omega^1_\cl(M,\cF_\pi)$ as a locally constant subsheaf. If $\cT:=\nu^*(\cF_\pi)/\Lambda$, this leads to an identification of quotient sheaves:
\[ \underline{\cT}=\Omega^1(M,\cF_\pi)/\Lambda,\quad \underline{\cT}_\cl=\Omega^1_\cl(M,\cF_\pi)/\Lambda, \]
and to the short exact sequence of sheaves:
\[ \xymatrix{0\ar[r] & \underline{\cT}_\cl\ar[r] &\underline{\cT} \ar[r]^----{\d} &\Omega^2_\cl(M,\cF_\pi) \ar[r] & 0} \]
We are interested in the following piece of the corresponding long exact sequence in sheaf cohomology:
\begin{equation}
\label{eq:exa:sequence:obst}
\xymatrix{\ar[r] &H^1(M,\underline{\cT})\ar[r] &H^1(M,\Omega^2_\cl(M,\cF_\pi)) \ar[r]^---{\delta} &  H^2(M,\underline{\cT}_\cl)\ar[r] &}
\end{equation}

Now observe that associated to the leafwise symplectic form $\omega_{\cF_\pi}$ there is a class which is the obstruction to extend 
$\omega_{\cF_\pi}$ to a closed two 2-form on $M$: if $\{U_a\}_{a\in A}$ is a good cover of $M$ then we can find closed 2-forms $\omega_a\in\Omega^2(U_a)$ which locally extend $\omega_{\cF_\pi}$ and on $U_{ab}=U_a\cap U_b$ their difference is closed and vanishes on the symplectic leaves, so defines a 2-cycle $\omega_{ab}:=\omega_a-\omega_b\in \Omega^2_\cl(U_{ab},\cF_\pi)$.
Hence, we can set 
\[ \xi(M,\pi):=[\omega_{ij}]\in H^1(M,\Omega^2_\cl(M,\cF_\pi)). \]

\begin{remark}
Since $\Omega^\bullet(M,\cF_\pi)$ is a fine sheaf, the sequence \eqref{eq:Poincare} gives:
\[  
H^1(M,\Omega^2_\cl(M,\cF_\pi))\simeq \frac{H^0(M,\Omega^3_\cl(M,\cF_\pi))}{\d H^0(M,\Omega^2_\cl(M,\cF_\pi))}=H^3(M,\cF_\pi). 
\]
Under this isomorphism, the class $\xi(M,\pi)$ corresponds to the class $[\d\widetilde{\omega}]$ where $\widetilde{\omega}\in\Omega^2(M)$ is any 2-form extending the leafwise symplectic form $\omega_{\cF_\pi}$.
\end{remark}

\begin{definition}\label{def-DazDel}
If $(M,\pi)$ is a regular Poisson structure and $\Lambda\subset\nu^*(\cF_\pi)$ is a transverse integral affine structure, the {\bf Dazord-Delzant class} $\dd(M,\pi,\Lambda)$ is the image of $\xi(M,\pi)$ under the connecting homomorphism \eqref{eq:exa:sequence:obst}:
\[ \dd(M,\pi,\Lambda):=\delta(\xi(M,\pi))\in H^2(M,\underline{\cT}_\cl), \]
\end{definition}

Notice that an explicit 2-cocycle representing the obstruction class can be obtained by considering a good cover $\{U_a\}$ of $M$ and local extensions $\omega_a\in\Omega^2(U_a)$ of the leafwise symplectic form $\omega_{\cF_\pi}$, so that the difference $\omega_{ab}=\omega_a-\omega_b$ is exact:
\[ \omega_{ab}=\d \alpha_{ab},\quad (\alpha_{ab}\in\Omega^1(U_{ab},\cF_\pi)). \]
Then $\alpha_{ab}+\alpha_{bc}+\alpha_{ca}$ is a closed 1-form in $U_{abc}:=U_a\cap U_b\cap U_c$ vanishing on the leaves, and its projection gives a 2-cocycle representing $c_2(M,\pi,\Lambda)$:
% \begin{equation}
% \label{eq:obstr:class:cocycle}
\[
c_{abc}=[\alpha_{ab}+\alpha_{bc}+\alpha_{ca}]\in\Omega^1_\cl(U_{abc},\cF_\pi)/\Lambda. 
\]
% \end{equation}

The following result is essentially due to Dazord and Delzant:

\begin{theorem}[\cite{DaDe}]\label{thm:Dazord:Delzant}
The class $\dd(M,\pi,\Lambda)$ vanishes if and only if $\Lambda$ is defined by a proper isotropic realization $q:(X,\Oga)\to (M,\pi)$. 
\end{theorem}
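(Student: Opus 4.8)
The plan is to prove both implications by making explicit the $\check{\text{C}}$ech-style data underlying the Dazord--Delzant class and realizing it, respectively reading it off, from an isotropic realization. The ``only if'' direction is the easier of the two: starting from a proper isotropic realization $q:(X,\Oga)\to(M,\pi)$ defining $\Lambda=\Lambda_X$, I would exhibit a cocycle realization of $X$ as a $\cT_X$-bundle whose Chern-type obstruction is precisely $\xi(M,\pi)$. Concretely, since $q$ is a principal $\cT_X$-bundle (Lemma \ref{lemma:Lambda-X-Mon}), over a good cover $\{U_a\}$ of $M$ one can choose local sections $s_a\colon U_a\to X$; then $s_a^*\Oga\in\Omega^2(U_a)$ is a local $2$-form whose pullback to the leaves of $\cF_\pi$ equals $\omega_{\cF_\pi}$, because the fibers of $q$ are isotropic and $q$ is a Poisson submersion. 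Hence the $\{s_a^*\Oga\}$ are exactly admissible local extensions $\omega_a$ of the leafwise symplectic form. On overlaps, $s_a=g_{ab}\cdot s_b$ for transition sections $g_{ab}\in\underline{\cT}(U_{ab})$, and the presymplectic equation (\ref{action-T-on-X}) gives $s_a^*\Oga-s_b^*\Oga=\d\alpha_{ab}$ where $\alpha_{ab}$ is the pullback of the canonical primitive associated to $g_{ab}$ via $\omega_{\cT}=\sigma^*\omega_{\can}$; moreover $\alpha_{ab}$ is closed iff $g_{ab}$ is a closed section. Following the cocycle $c_{abc}=[\alpha_{ab}+\alpha_{bc}+\alpha_{ca}]$ through the construction, one sees that $c_{abc}$ is cohomologous to $\delta(g_{abc})$ where $g_{ab}g_{bc}g_{ca}=0$ as sections of $\cT_X$ — i.e. to the obstruction to the bundle structure, which vanishes because $X$ genuinely exists. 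So $\dd(M,\pi,\Lambda)=0$.

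\textbf{The converse.} For ``if'', assume $\dd(M,\pi,\Lambda)=\delta(\xi(M,\pi))=0$; by exactness of (\ref{eq:exa:sequence:obst}), $\xi(M,\pi)$ comes from a class in $H^1(M,\underline{\cT})$. Unwinding: over a good cover we may choose local extensions $\omega_a$ of $\omega_{\cF_\pi}$ and sections $g_{ab}\in\underline{\cT}(U_{ab})$ with $\omega_a-\omega_b=\d\alpha_{ab}$, where $\alpha_{ab}$ is the primitive determined by $g_{ab}$, and with $g_{ab}g_{bc}g_{ca}=0$. These $g_{ab}$ are precisely clutching data for a principal $\cT$-bundle $q\colon X\to M$. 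I would then equip $X$ with a presymplectic form by gluing $q|_{q^{-1}(U_a)}^*\omega_a+\text{(tautological }\cT\text{-term)}$, the correction term being the pullback of $\omega_{\cT}=\sigma^*\omega_{\can}$ along a local trivialization, matched across overlaps exactly because of the relation $\omega_a-\omega_b=\d\alpha_{ab}$ and the multiplicativity (\ref{multiplicativity-omega-ct})/(\ref{action-T-on-X}) of $\omega_{\cT}$. One checks directly that this glued $2$-form $\Oga$ is closed, that $\ker\Oga\cap\ker\d q=0$ so it is symplectic provided $\dim X=2\dim M$ — which holds since $\cT$ has rank $q=\mathrm{codim}\,\cF_\pi$ and the leaves of $\cF_\pi$ carry $\omega_{\cF_\pi}$ — and that $q\colon(X,\Oga)\to(M,\pi)$ is a Poisson submersion with isotropic, connected fibers (connectedness of the torus fibers is automatic). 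Finally, since $\cT=\nu^*(\cF_\pi)/\Lambda$ is proper over $M$, $q$ is a proper map, giving a proper isotropic realization; and by construction the action of $\cT$ on $X$ is the one in (\ref{isotropic-action-normal}), so $\Lambda_X=\Lambda$.

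\textbf{Bookkeeping and the main obstacle.} I would organize the argument so that the two directions share the dictionary ``admissible local extension $\omega_a$ of $\omega_{\cF_\pi}$ $\longleftrightarrow$ local section of $X$'', which makes the equivalence essentially tautological once that dictionary is set up carefully. The main technical obstacle is the verification that the glued presymplectic form in the converse direction is well defined and nondegenerate in the transverse directions: this requires carefully tracking how $\omega_{\cT}$, the primitives $\alpha_{ab}$, and the canonical symplectic form $\omega_{\can}$ on $T^*M$ (restricted to $\nu^*(\cF_\pi)$) interact under change of trivialization, and using Proposition \ref{prop:IAS-presympl-torus} to identify the transverse data with the presymplectic torus bundle $(\cT_\Lambda,\omega_{\cT})$. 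A secondary subtlety is that $H^1(M,\underline{\cT})$ only gives the bundle up to isomorphism, so one must check that different choices of the splitting $\xi(M,\pi)\mapsto$ class in $H^1(M,\underline{\cT})$ yield isotropic realizations defining the same $\Lambda$ (they do, since $\Lambda$ is intrinsic to $\cT$), and that changing the representative $\omega_a$ only changes $(X,\Oga)$ by an isomorphism of isotropic realizations. Since the essential content and the hard analytic estimates are already available from Dazord--Delzant \cite{DaDe} and from Propositions \ref{prop:IAS-presympl-torus} and \ref{prop:IAS-sympl-torus}, I expect to cite \cite{DaDe} for the core gluing computation and present the sheaf-theoretic reformulation as the new, streamlined packaging.
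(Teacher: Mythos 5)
Your proposal is correct and follows essentially the same route as the paper, which itself only records the two steps (exactness of the sequence \eqref{eq:exa:sequence:obst} plus the construction of the principal $\cT$-bundle from the clutching data and vice versa) and defers the gluing computation to \cite{DaDe}; in particular your dictionary ``admissible local extension of $\omega_{\cF_\pi}$ $\leftrightarrow$ local section of $X$'' is exactly the mechanism the paper uses elsewhere (compare the proof of Proposition \ref{prop:lagr:obst:class}). One slip to correct: in the converse direction $\dim X=\dim M+q=\dim\cF_\pi+2q$, which equals $2\dim M$ only when the leaves are zero-dimensional, so nondegeneracy of the glued $2$-form does not come from a dimension count but from the fact that it restricts to $\omega_{\cF_\pi}$ along the leaf directions and pairs the torus fibers $\nu^*(\cF_\pi)/\Lambda$ nondegenerately with a complement of $T\cF_\pi$ via $\omega_{\can}$ --- precisely the computation you (rightly, as does the paper) defer to \cite{DaDe}.
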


The proof of this result can be split into two steps: 
\begin{enumerate}[(i)]
\item $\dd(M,\pi,\Lambda)=0$ if and only if there is a class $c_1\in H^1(M,\underline{\cT})$ which is mapped to $\xi(M,\pi)$ in the long exact sequence \eqref{eq:exa:sequence:obst};
\item  Given any class  $c_1\in H^1(M,\underline{\cT})$ there is a principal $\cT$-bundle $q:X\to M$ whose Chern class is $c_1$. This bundle has a symplectic structure $\Oga$ making $q$ into a symplectic complete isotropic fibration inducing $\pi$ on $M$ if and only if $c_1$ is mapped to $\xi(M,\pi)$ in the long exact sequence \eqref{eq:exa:sequence:obst}.
\end{enumerate}
We refer to \cite{DaDe} for the details.
\vskip .1 in

To our knowledge, it is an open problem to give an example of a Poisson manifold $(M,\pi)$ with Dazord-Delzant class
$\dd(M,\pi,\Lambda)\not=0$, where $\Lambda$ is a lattice defined by some proper integration $(\cG,\Omega)\tto (M,\pi)$. 

\begin{remark}[The Dirac setting]
\label{rmk:isotropic:proper:dirac}
The results on proper isotropic realizations extend to twisted Dirac structures with the appropriate modifications.
Now, the central notion is that of a {\bf presymplectic realization} (see \cite{BCWZ}):
\[q:(X,\Oga)\to (M,L,\phi),\]
where $\w$ is a 2-form such that $d\Oga+q^*\phi=0$, $q$ is a f-Dirac map, and one requires the non-degeneracy condition:
% \begin{equation}\label{eq:non:deg:Dirac}
\[
\Ker(\Oga)\cap \Ker(\d q)=\{0\}.
\]
%  \end{equation}
One defines $\HolXD$, the holonomy groupoid relative to the presymplectic realization $q:(X,\Oga)\to (M,L,\phi)$, 
as the quotient of cotangent paths modulo cotangent holonomy rel $X$ as in Definition
\ref{def:hol-sympl-gpd}. Then all fundamental properties of $\HolX$ still hold in the presymplectic setting, namely Theorem \ref{thm:isotropic:fib:grpd}, Proposition \ref{prop:simple:isotropic:realiz} and Corollary \ref{cor:isotropic:proper:4}. In the proofs one must use the (twisted) presymplectic version of Hamiltonian $\cG$-spaces 
(see Appendix \ref{appendix:moment:maps}). 

The Dazord-Delzant theory also extends in a more or less straightforward way to twisted Dirac structures
(see \cite{SaSe} for the case of twisted Poisson structures).
\end{remark}

\newpage

%%%%%%%%%%%%%%%%%%%%%%%%
%%%%%%%%%%%%%%%%%%%%%%%%
%%%%%%%%%%%%%%%%%%%%%%%%
%%%%%%%%%%%%%%%%%%%%%%%%
%%%%%%%%%%%%%%%%%%%%%%%%
%%%%%%%%%%%%%%%%%%%%%%%%
%%%%%%%%%%%%%%%%%%%%%%%%
%%%%%%%%%%%%%%%%%%%%%%%%
%%%%%%%%%%%%%%%%%%%%%%%%
%%%%%%%%%%%%%%%%%%%%%%%%
%%%%%%%%%%%%%%%%%%%%%%%%
%%%%%%%%%%%%%%%%%%%%%%%%
%%%%%%%%%%%%%%%%%%%%%%%%
%%%%%%%%%%%%%%%%%%%%%%%%
\section{Symplectic gerbes over manifolds}
\label{sec:gerbes:manifolds}
%%%%%%%%%%%%%%%%%%%%%%%%
%%%%%%%%%%%%%%%%%%%%%%%%
%%%%%%%%%%%%%%%%%%%%%%%%
%%%%%%%%%%%%%%%%%%%%%%%%
%%%%%%%%%%%%%%%%%%%%%%%%
%%%%%%%%%%%%%%%%%%%%%%%%
%%%%%%%%%%%%%%%%%%%%%%%%
%%%%%%%%%%%%%%%%%%%%%%%%
%%%%%%%%%%%%%%%%%%%%%%%%
%%%%%%%%%%%%%%%%%%%%%%%%
%%%%%%%%%%%%%%%%%%%%%%%%
%%%%%%%%%%%%%%%%%%%%%%%%
%%%%%%%%%%%%%%%%%%%%%%%%
%%%%%%%%%%%%%%%%%%%%%%%%
In the previous sections we have seen that Poisson manifolds of proper type %, with a choice of proper integration, give rise to various transversal structures. 
come with a rich transverse geometry.
In particular, the leaf space is an integral affine orbifold. We now fix an integral affine orbifold $(B, \Lambda)$ and we investigate the freedom one has in building Poisson manifolds of proper type with leaf space $(B, \Lambda)$. This problem is really about constructing (symplectic) groupoid extensions with kernel the torus bundle $\cT$ associated to $\Lambda$, and hence resembles the standard theory of $\S^1$-gerbes (\cite{BeXu,Lupercio, Moe03, Murray, Tu}). 

Recall that an $\S^1$-gerbe is a higher version of the notion of principal $\S^1$-bundle over $B$. While principal $\S^1$-bundles are classified by their Chern class $c_1\in H^2(B, \Z)$, $\S^1$-gerbes are classified by a similar class $\DD\in H^3(B,\Z)$, called the \emph{Dixmier-Douady class}. We will introduce a symplectic variant of the theory, consisting of \emph{symplectic $\cT_{\Lambda}$-gerbes} over $B$ and we will show that they are classified by their \emph{Lagrangian Dixmier-Douady class} living in $H^{2}(B, \underline{\cT}_{\Lagr})$. 

To achieve this, we will need the following variations of the standard theory of $\S^1$-gerbes:
\begin{enumerate}[\hspace{25pt}]
\item[{\bf v1:}]{\bf Replace $\S^1$ by general torus bundles $\cT$:} this is straightforward, but note that, while $\S^1$-gerbes arise as higher versions of principal $\S^1$-bundles, in the process of passing from $\S^1$ to torus bundles, principal $\S^1$-bundles will be replaced by $\cT$-torsors, a special class of principal $\cT$-bundles. Therefore, $\cT$-gerbes arise as higher versions of $\cT$-torsors; 
\item[{\bf v2:}]{\bf  A symplectic version of the theory:} this is the main novelty of our story. Remarkably, the lower version of the theory, the symplectic story, i.e. symplectic $\cT$-torsors, as well as its relevance to Lagrangian fibrations, has already appeared in \cite{Sjamaar};
\item[{\bf v3:}]{\bf  Gerbes over orbifolds:} although a large part of our discussion will be carried out in the case where $B$ is a smooth manifold, in general our leaf spaces $B$ are orbifolds. The passage from manifolds to orbifolds will be based again on Haefliger's philosophy (Remark \ref{rmk-Transversal geometric structures}). Note that $\S^1$-gerbes over orbifolds have already been considered- see e.g. \cite{Lupercio}.
\end{enumerate}
Some of the generalizations of $\S^1$-gerbes that we will consider could, in principle, be obtained by making use
of the general theory of gerbes with a given ``band'' over general ``sites" \cite{Breen,Giraud,LaStXu}. However, since all our bands will be abelian, and the most general sites we we need are the ones associated to orbifolds, we do not have to appeal to the general theory. 
And, more importantly, our symplectic gerbes can always be represented by extensions of Lie groupoids (as in, e.g., \cite{Murray, Lupercio, BeXu, Tu}).

In this Section we take care of symplectic gerbes over smooth manifolds, leaving the orbifold case for the next section. Regarding PMCTs, the outcome can be stated in a simplified form as follows:

\begin{theorem} \label{thm:DD-gerbe-smooth-intro}
Assume that $(M, \pi)$ is a Poisson manifold of proper type whose foliation $\cF_\pi$ has 1-connected leaves. Then 
each proper integration $(\cG, \Omega)\tto (M,\pi)$ gives rise to a symplectic gerbe over $B=(M/\cF_\pi,\Lambda_\cG)$, which is classified by a class
\[ \DD(\cG, \Omega)\in H^2(B, \ucT_{\Lagr}) .\]
Moreover, its pull-back via the projection $p:M\to B$ is precisely 
the Dazord-Delzant class $c_2(M,\pi,\Lambda_\cG)$ (Definition \ref{def-DazDel}). Furthermore, $\DD(\cG, \Omega)$ vanishes iff $\cG$ is the gauge groupoid of a free Hamiltonian $\cT$-space $q: (X, \Oga)\to B$ (Appendix \ref{cG-red2}).
\end{theorem}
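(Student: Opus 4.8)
The plan is to assemble Theorem~\ref{thm:DD-gerbe-smooth-intro} from the pieces already developed in the paper, reducing the statement about $(M,\pi)$ to the Dazord--Delzant obstruction $c_2(M,\pi,\Lambda_\cG)$ and then to a statement purely on $B=(M/\cF_\pi,\Lambda_\cG)$. First I would recall that, under the standing hypothesis (proper type with $1$-connected leaves), Theorem~\ref{thm-reg-fol2} gives the short exact sequence $1\to \cT(\cG)\to \cG\to \cBG(\cG)\to 1$, where $\cBG(\cG)$ integrates $\cF_\pi$ and $\cT(\cG)=\nu^*(\cF_\pi)/\Lambda_\cG$; since the leaves are $1$-connected, $B=M/\cF_\pi$ is a smooth manifold, $\cBG(\cG)$ is (Morita equivalent to) the submersion groupoid $M\times_B M$, and $\Lambda_\cG=p^*\Lambda$ for an honest integral affine structure $\Lambda$ on $B$. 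Thus $(\cG,\Omega)$ is an extension of the submersion groupoid of $p:M\to B$ by the torus bundle $p^*\cT_\Lambda$, equipped with a multiplicative symplectic form; this is exactly the data the section on symplectic gerbes over manifolds packages, so $(\cG,\Omega)$ determines a class $\DD(\cG,\Omega)\in H^2(B,\ucT_{\Lagr})$ by the general classification (the isomorphism $\DD:\Gerbes_B(\cT,\omega_\cT)\to H^2(B,\ucT_{\Lagr})$ announced in the Introduction). The naturality of this construction under pullback along $p$, combined with the definition of $c_2(M,\pi,\Lambda_\cG)$ via the local extensions $\omega_a$ of $\omega_{\cF_\pi}$ and the cocycle $[\alpha_{ab}+\alpha_{bc}+\alpha_{ca}]\in\Omega^1_\cl(U_{abc},\cF_\pi)/\Lambda$, gives the identity $p^*\DD(\cG,\Omega)=c_2(M,\pi,\Lambda_\cG)$: indeed a local trivialization of the gerbe $\cG$ over opens of $B$ pulls back to local extensions $\omega_a$ of the leafwise symplectic form on opens of $M$, and the Lagrangian $\ucT$-valued \v{C}ech cocycle of the gerbe maps under $p^*$ to precisely the Dazord--Delzant cocycle.

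Next I would address the vanishing statement. By Theorem~\ref{thm:Dazord:Delzant} (Dazord--Delzant), $c_2(M,\pi,\Lambda_\cG)=0$ if and only if $\Lambda_\cG$ is defined by a proper isotropic realization $q:(X,\Oga)\to(M,\pi)$; and since $p^*$ on $H^2(\cdot,\ucT_{\Lagr})$ is injective for the submersion $p$ with $1$-connected fibers (this is the analogue, at the level of the Lagrangian Dixmier--Douady sheaf, of the injectivity of pullback in cohomology along a fibration with simply connected fibers; alternatively it follows from the Morita picture of Section~\ref{sec:realizations} together with the fact that $\cBG(\cG)=M\times_B M$ here), we get $\DD(\cG,\Omega)=0$ iff $c_2(M,\pi,\Lambda_\cG)=0$ iff such a proper isotropic realization exists. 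To identify $(X,\Oga)$ as a free Hamiltonian $\cT$-space whose gauge groupoid recovers $\cG$, I would invoke Proposition~\ref{prop:simple:isotropic:realiz} and Corollary~\ref{cor:isotropic:proper:1}: when $\cF_\pi$ is of proper type and the leaves are $1$-connected, $\cF_\pi$ is simple, so any proper isotropic realization is simple, meaning $(X,\Oga)\to B$ is a free Hamiltonian $\cT_\Lambda$-space with reduced Poisson manifold $(M,\pi)\cong(X_\red,\pi_\red)$; then $\Hol_X(M,\pi)\cong \Gauge{\cT_X}{X}$, and by Theorem~\ref{thm:isotropic:fib:grpd}(ii) together with the $1$-connectedness of the $s$-fibers (which here agree with the $q$-fibers, hence are $1$-connected), one gets $\cG\cong\Hol_X(M,\pi)\cong\Gauge{\cT}{X}$. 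Conversely, if $\cG$ is the gauge groupoid of a free Hamiltonian $\cT$-space $q:(X,\Oga)\to B$, the realization $q$ itself witnesses the vanishing of $c_2$, hence of $\DD(\cG,\Omega)$.

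The main obstacle I anticipate is the precise matching of the two cocyclic descriptions: the symplectic gerbe theory of Section~\ref{sec:gerbes:manifolds} produces $\DD(\cG,\Omega)$ as an extension class in $H^2(B,\ucT_{\Lagr})$, computed via local multiplicative trivializations of $(\cG,\Omega)$, whereas $c_2(M,\pi,\Lambda_\cG)$ is computed via local extensions of $\omega_{\cF_\pi}$; establishing that the first pulls back to the second requires care in tracking how a local section (``bisection-like'' trivialization) of the groupoid extension over an open $U\subset B$ corresponds to a local closed $2$-form $\omega_a$ extending $\omega_{\cF_\pi}$ over $p^{-1}(U)$, and that the coboundaries $\alpha_{ab}$ and their triple overlaps map correctly into $\Omega^1_\cl(U_{abc},\cF_\pi)/\Lambda = \ucT_\cl$. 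This is essentially bookkeeping once the dictionary between symplectic groupoid extensions and the $2$-form data is set up, but it is the place where a genuine argument (rather than citation) is needed. A secondary subtlety is verifying that the $s$-fibers of $\cG$ coincide, up to the relevant covering, with the $q$-fibers of the isotropic realization, so that $1$-connectedness of leaves indeed forces $\cG$ to be the full gauge groupoid rather than a proper quotient; this follows from the description $\Hol_X(M,\pi)\cong q^*\Hol(M,\cF_\pi)/\cT_X$ in Theorem~\ref{thm:isotropic:fib:grpd} together with the triviality of holonomy of $\cF_\pi$ when the leaves are $1$-connected.
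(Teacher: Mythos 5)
Your first two steps — recognizing $(\cG,\Omega)$ as a symplectic central extension of $M\times_B M$ by $p^*\cT_\Lambda$ classified by $\DD(\cG,\Omega)\in H^2(B,\ucT_{\Lagr})$, and matching $p^*\DD(\cG,\Omega)$ with the Dazord--Delzant cocycle — follow the paper's own route (Theorem~\ref{thm:Lagrangian:class:symp:gerbe} and Proposition~\ref{prop:lagr:obst:class}). You correctly flag the cocycle comparison as the place where real work is needed; the paper's mechanism is slightly more specific than your sketch: the local extensions $\omega_a$ of $\omega_{\cF_\pi}$ arise as $\sigma_a^*\Omega$ for local sections $\sigma_a$ of the principal $\cT$-bundles $t:\cG(T_i,-)\to p^{-1}(V_i)$ obtained by restricting $\cG$ to transversals, these restrictions being proper isotropic realizations. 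That is the dictionary you would have to set up.

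For the vanishing statement you take a genuinely different, and more fragile, route. The paper argues directly: by the symplectic version of Lemma~\ref{lemma:trivial-gerbe} and Corollary~\ref{cor:trivial-gerbe}, $\DD(\cG,\Omega)=0$ iff there exists a $B$-fibered principal $\cG$-bundle $(X,\Oga)$ over $B$, and such an $X$ is automatically a free Hamiltonian $\cT$-space with $\cG\cong\Gauge{\cT}{X}$; no detour through $M$ is needed. Your route instead passes through $c_2(M,\pi,\Lambda_\cG)$ and Theorem~\ref{thm:Dazord:Delzant}, and therefore hinges on the injectivity of $p^*:H^2(B,\ucT_{\Lagr})\to H^2(M,\underline{\cT_M}_\cl)$. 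The paper explicitly warns that this map fails to be injective in general and, in the $1$-connected case, only asserts injectivity via an unspecified "spectral sequence argument" in a remark \emph{after} the relevant proposition -- it is not an ingredient of the paper's proof, and as written your argument outsources its hardest step to that unproved assertion. The paper's direct argument is both shorter and avoids this dependency entirely.

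There is also a genuine gap in your final identification $\cG\cong\Hol_X(M,\pi)$. Theorem~\ref{thm:isotropic:fib:grpd}(ii) compares $\cG$ with $\Hol_X(M,\pi)$ only for \emph{$X$-compatible} integrations, whereas the realization $X$ produced by Dazord--Delzant comes with no a priori action of $\cG$. The correct repair is the uniqueness of s-connected symplectic integrations with a prescribed lattice when the leaves are $1$-connected: then $\Sigma_x(M,\pi)=\nu_x^*(\cF_\pi)/\cN_{\mon,x}$ is connected, so every s-connected integration is the quotient of $\Sigma(M,\pi)$ by $\exp(\Lambda/\cN_\mon)$ and is determined by its lattice (the mechanism in the proof of Corollary~\ref{cor:no:monodromy:strong:foliation}); applying this to both $\cG$ and $\Hol_X(M,\pi)$, which share the lattice $\Lambda_\cG=\Lambda_X$ and have connected isotropy, gives the isomorphism. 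Your parenthetical claim that the $s$-fibers of $\cG$ "agree with the $q$-fibers, hence are $1$-connected" is not correct -- the $s$-fibers are torus bundles over the leaves and the $q$-fibers are tori, neither of which is simply connected -- and it is not the relevant point.
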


%%%%%%%%%%%%%%%%%%%%%%%%
\subsection{Symplectic torsors}
\label{ssec:torsors}
%%%%%%%%%%%%%%%%%%%%%%%%
Since $\S^1$-gerbes are higher versions of principal $\S^1$-bundles, in order to get ready to deal with symplectic gerbes,
we first discuss how to implement the variations that we mentioned in the case of principal $\S^1$-bundles.
Here we concentrate on variations {\bf v1} and {\bf v2}. The formalism necessary for passing to orbifolds will be discussed in the next section. 

\subsubsection{From $\S^1$ to general torus bundles} 
\label{ssec:to general torus bundles}
We would like to replace $\S^1$ by a general torus bundle $\cT\to B$ and principal
$\S^1$-bundles by ``principal $\cT$-bundles''. Some care is needed, since we will not be dealing with general principal $\cT$-bundles.

Recall that a (right) principal $\cH$-bundle over $M$, for any Lie groupoid $\cH\tto N$ over some other manifold $N$, consists
of a bundle $P$ with two maps: the bundle projection $\pp: P\to M$, as well as a map $\qq: P\to N$ along which $\cH$ acts on $P$:
\[
\xymatrix{
 & P \ar[dl]^-{\pp}\ar[drr]_{\qq} & \ar@(ur, dr) & \cH \ar@<0.25pc>[d] \ar@<-0.25pc>[d] \\ M  & & & N}
\]
When $M= N= B$ and $\cG= \cT$ is a torus bundle over $B$, principal $\cT$-bundles over $B$ still come with two maps, $\pp$ and
$\qq$, which need not coincide. We will be interested in principal $\cT$-bundles over $B$, which in addition satisfy $\pp=\qq$. These
will be called {\bf $\cT$-torsors}. To distinguish them from other types of (groupoid) principal bundles, we will denote them by the letter $X$. 

Hence a $\cT$-torsor over $B$ is a manifold $X$ endowed with a (right) action of $\cT$ along a submersion $p_X: X\to B$, along which $\cT$ acts fiberwise,
freely, and transitively:
\[
\xymatrix{
 X  \ar[d]_-{p_X}  \ar@(ur, dr) & \cT\ar[dl] \\
 B  & }
\]

The {\bf fusion product} of two $\cT$-torsors $X_1$ and $X_2$ is the new $\cT$-torsor
\begin{equation}\label{fusion-T-bundles} 
X_1\star X_2:= (X_1\times_{B} X_2)/\cT,
\end{equation}
where $\cT$ acts on the fibered product by $(u_1, u_2)\cdot \lambda = (u_1\cdot \lambda, u_2\cdot \lambda)$, and where the action of $\cT$ on $X_1\star X_2$ is induced by the action on the second factor. 
Modulo the obvious notion of isomorphism, one obtains  an abelian group $\Tor_{B}(\cT)$.
%Factoring by the obvious notion of isomorphism of $\cT$-torsors, one obtains an abelian group which we will denote by $\Tor_{B}(\cT)$. 

\begin{remark}[$B$-fibered objects]\label{rk:B-fibered} A very useful interpretation to keep in mind of the condition $\pp=\qq$, distinguishing $\cT$-torsors as a particular class of principal $\cT$-bundle, is the following. The base $B$ is fixed from the start and all the objects that one considers are ``fibered'' over $B$ or ``parametrized'' by $B$, i.e. come with a submersion onto $B$. One should refer to them as pairs $(N, p_{N})$ with $p_{N}: N\rightarrow B$, but we will simply say that {\bf $N$ is $B$-fibered} without further mentioning $p_{N}$. They form a category $\textrm{Man}_{B}$. 

The objects that we consider are $B$-fibered versions of standard objects, which maybe recovered by letting $B$ be a point. For instance, a torus bundle $\cT\to B$ is just a compact, connected, abelian group object in $\textrm{Man}_{B}$. More generally, a Lie groupoid fibered over $B$, i.e. a Lie groupoid in $\textrm{Man}_B$, is just a Lie groupoid $\cH\tto N$ together with a submersion $p_N: N\rightarrow B$ such that $p_N\circ s= p_N\circ t$. These should be thought of as families of groupoids parametrized by $b\in B$, namely the restrictions of $\cH$ to the fibers $p_{N}^{-1}(b)$. One also has a $B$-fibered version of principal $\cH$-bundles over a manifold $M$: it is a principal $\cH$-bundles $P$ as above for which $p_{M}\circ \pp= p_{N}\circ \qq(= p_{P})$:
\[
\xymatrix{
 & P \ar[dl]^{\pp}\ar[drr]_{\qq} \ar@{.>}[dd]^-{p_{P}} & \ar@(ur, dr) & \cH \ar@<0.25pc>[d] \ar@<-0.25pc>[d] \\ 
 M \ar@{.>}[dr]_-{p_M} & & & N\ar@{.>}[dll]^-{p_N}\\
 & B & & }
\]
When $M= N= B$, this precisely means that $\pp=\qq$. Hence, $\cT$-torsors are the same thing as $B$-fibered principal $\cT$-bundles over $B$.
\end{remark}

 If $\cH\tto N$ is a Lie groupoid we denote by $\Bun_{\cH}(M)$ the set of equivalence classes of principal $\cH$-bundles over $M$. The description of principal bundles based on transition functions, yields an isomorphism:
\begin{equation}\label{Haeflig-coh-int} 
\Bun_{\cH}(M) \cong \check{H}^1(M, \cH), 
\end{equation}
 where $\check{H}^1(M, \cH)$ denotes Haefliger's first \v{C}ech cohomology with values in the groupoid $\cH$ \cite{Hae,Ha58}:
 the \v{C}ech cocycles are families $g= \{V_i, g_{ij}\}$, where $\{V_i\}_{i\in I}$ is an open cover of $M$ and
 \begin{equation}\label{coc-H1-Haefl}
 g_{ij}:V_{ij}\to \G,\quad g_{ij}(x)\cdot g_{jk}(x)=g_{ik}(x), \ \forall x\in V_{ijk}.
 \end{equation}
 Two such cocycles $g$ and $h$ are cohomologous if (after eventually passing to a refinement) there exist $\lambda_i:V_i\to\cH$ such that:
 \[ h_{ij}(x)=\lambda_i(x)\cdot g_{ij}(x)\cdot \lambda_j(x)^{-1},\ \forall x\in V_{ij}. \]
 
One has a completely similar $B$-fibered version of the previous discussion, obtained by requiring that all the maps involved, including isomorphisms of principal bundles, \v{C}ech cocycles $\{g_{ij}\}$, etc., commute with the projections into $B$. One obtains a 
$B$-fibered version of (\ref{Haeflig-coh-int}): 
\begin{equation}\label{eq:c1-gen} 
\Bun_{\cH, B}(M)\cong \check{H}^{1}_{B}(M, \cH). 
\end{equation}In the case we are interested in, when $M= N= B$ and $\cG= \cT$, notice that 
 \[ \check{H}^{1}_{B}(B, \cT)= H^1(B, \ucT),\]
 the cohomology with coefficients in the sheaf $\ucT$ of sections of $\cT$.

% For the $B$-fibered version of Haefliger's cohomology one has an isomorphism:
% \begin{equation}\label{eq:c1-gen} 
% \Bun_{\cH, B}(M)\cong \check{H}^{1}_{B}(M, \cH). 
% \end{equation}
% One now requires that all the maps involved, including isomorphisms of principal bundles, \v{C}ech cocycles $\{g_{ij}\}$, etc., commute with the projections into $B$. In the case we are interested in, when $M= N= B$ and $\cG= \cT$, notice that 
% \[ \check{H}^{1}_{B}(B, \cT)= H^1(B, \ucT),\]
% the cohomology with coefficients in the sheaf $\ucT$ of sections of $\cT$. 
 
 One can use the exponential sequence of $\cT$ to pass to the associated lattice $\Lambda$:
  \begin{equation}\label{exp-seseq}
 \xymatrix{1\ar[r]& \Lambda \ar[r] & \mathfrak{t}  \ar[r]^-{\exp}& \cT \ar[r] & 1}, 
  \end{equation}
 Then  (\ref{eq:c1-gen}) becomes the Chern-class isomorphism
  \begin{equation}\label{c1-for-cT}
  c_1: \xymatrix{\Tor_{B}(\cT)\ar[r]^-{\sim} &  H^1(B, \ucT)\cong H^2(B, \Lambda)}
  \end{equation}
 associating to a $\cT$-torsor its Chern class.  Concretely, given a $\cT$-torsor $p: X\to B$, any open cover $\{V_i\}$ of $B$ with local sections $s_i: V_i\to X$, yields on overlaps
\begin{equation}
\label{eq-ef-lambdaij}
s_i(x)= s_{j}(x)\lambda_{ij}(x),\quad (x\in V_{ij}),
\end{equation}
 where the $\{\lambda_{ij}\}$ is a \v{C}ech 1-cocycle representing the Chern class of $X$. Similarly, the construction of the real representatives of the Chern class of principal $\S^1$-bundles via connections extends without any problem to the setting of $\cT$-torsors.

\subsubsection{A symplectic version of the theory:} There is very little left to be to be done to obtain the symplectic torsors:
\begin{itemize}
\item restrict to symplectic torus bundles $(\cT, \omega_{\cT})\to B$ (see Section \ref{Integral affine structures on manifold}), which are determined by integral affine structures $\Lambda$ on $B$ (cf.~Proposition \ref{prop:IAS-sympl-torus});
\item consider $\cT$-torsors $X$ endowed with a symplectic form $\Oga$ and require that the action of $\cT$ to be symplectic in the sense of Appendix \ref{appendix:moment:maps}. 
\end{itemize}
 The resulting objects $(X, \Oga)$ 
are called {\bf symplectic $(\cT,\w_\cT)$-torsors}. Note that for two such $(X_1, \Omega_1)$ and $(X_2, \Omega_2)$, their composition (\ref{fusion-T-bundles}) is again symplectic: the form $\pr_{1}^{*}\Omega_1- \pr_{2}^{*}\Omega_2$ on $X_1\times_{B} X_2$ descends to a symplectic form on $X_1\star X_2$. 
We denote by $\Tor_{B}(\cT, \omega_{\cT})$ the resulting group of symplectic torsors. 

Proposition \ref{pp:Lagr-fibr}, in this language, says that Lagrangian fibrations are the same thing as symplectic torsors:

\begin{proposition}\label{Lagr-fibr-as-torsors} For any symplectic $\cT$-torsor the projection into $B$ is a Lagrangian fibration. Conversely, any Lagrangian fibration over $B$ is a symplectic $\cT_{\Lambda}$-torsor, where $\Lambda$ is the integral affine structure induced by the fibration.
\end{proposition}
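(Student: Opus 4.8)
The plan is to prove Proposition~\ref{Lagr-fibr-as-torsors} by a direct translation of Proposition~\ref{pp:Lagr-fibr} into the language of torsors, making explicit that the extra structure of a \emph{symplectic $\cT$-torsor} --- namely the principal $\cT$-action together with the compatibility $m^*\Oga = \pr_1^*\omega_\cT + \pr_2^*\Oga$ --- is exactly what Proposition~\ref{pp:Lagr-fibr} produces and consumes. So the first step is to spell out both implications carefully.

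\textbf{From symplectic torsors to Lagrangian fibrations.} Suppose $(X,\Oga)$ is a symplectic $(\cT,\omega_\cT)$-torsor over $B$, with structure map $p_X\colon X\to B$. I would first observe that, since the $\cT$-action is fiberwise and transitive, the fibers of $p_X$ are the $\cT$-orbits, hence of dimension $q=\dim B=\tfrac12\dim X$; so to see that $p_X$ is a Lagrangian fibration it suffices to show each fiber is isotropic. For this I would use the infinitesimal version of the symplectic action condition $m^*\Oga=\pr_1^*\omega_\cT+\pr_2^*\Oga$: differentiating at the identity section, the fundamental vector fields $\sigma(\xi)$ of the action (for $\xi$ a section of the Lie algebra bundle $\mathfrak t$) satisfy $i_{\sigma(\xi)}\Oga = p_X^*\,\overline\xi$ for a suitable $1$-form $\overline\xi$ pulled back from $B$ (this is precisely the content of the symplectic/Hamiltonian action relation, cf.~Appendix~\ref{appendix:moment:maps} and the computation in the proof of Proposition~\ref{prop:IAS-sympl-torus}). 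Pairing two such vector fields, $\Oga(\sigma(\xi),\sigma(\eta)) = \langle p_X^*\overline\xi,\sigma(\eta)\rangle = 0$ because $\sigma(\eta)$ is vertical and $\overline\xi$ is basic. Since the $\sigma(\xi)$ span the tangent space to each fiber, the fibers are isotropic, hence Lagrangian. Then by Proposition~\ref{pp:Lagr-fibr} (or directly by \eqref{LambdaX-Lagr}) $B$ inherits an integral affine structure $\Lambda$, and one checks that $\Lambda$ coincides with the lattice defining $\cT$: indeed the isotropy lattice of the action, read off via $i_{\sigma(\xi)}\Oga=p_X^*\overline\xi$, is exactly $\ker(\exp)\subset\mathfrak t$, i.e.\ $\Lambda$. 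So the torsor realizes $\cT=\cT_\Lambda$ and $\omega_\cT=\omega_{\cT_\Lambda}$ by the uniqueness part of Proposition~\ref{prop:IAS-sympl-torus}.

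\textbf{From Lagrangian fibrations to symplectic torsors.} Conversely, let $q\colon(X,\Oga)\to B$ be a Lagrangian fibration with compact connected fibers. Proposition~\ref{pp:Lagr-fibr} already gives the integral affine structure $\Lambda=\Lambda_X$ on $B$ defined by \eqref{LambdaX-Lagr}, and states that $q$ becomes a symplectic principal $\cT_\Lambda$-bundle, i.e.\ that the fiberwise $\cT_\Lambda$-action is free, makes $q$ into a principal bundle, and is symplectic in the sense of \eqref{action-T-on-X} (with $\omega_\cT=\omega_{\cT_\Lambda}$). Since here $M=B$ and the structure map $q$ onto $B$ is both the bundle projection \emph{and} the moment map, this is precisely the definition of a symplectic $(\cT_\Lambda,\omega_{\cT_\Lambda})$-torsor (see Remark~\ref{rk:B-fibered}): a principal $\cT$-bundle over $B$ with $\pp=\qq$. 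Hence $(X,\Oga)$ is a symplectic $\cT_\Lambda$-torsor inducing $\Lambda$, which closes the loop.

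I do not expect a serious obstacle here: the proposition is essentially a dictionary entry, and all the analytic content (freeness and transitivity of the action, the correspondence between isotropy lattices and integral affine structures, multiplicativity of $\omega_\cT$) is already established in Proposition~\ref{prop:IAS-sympl-torus} and Proposition~\ref{pp:Lagr-fibr}. The only point that requires a little care is bookkeeping: making sure that the ``$B$-fibered'' convention ($\pp=\qq$, so that the moment map of the $\cT$-action is the torsor projection itself) matches the hypotheses of Proposition~\ref{pp:Lagr-fibr}, and that the symplectic-action condition for torsors is literally the multiplicativity relation \eqref{action-T-on-X}. I would state this matching explicitly at the start of the proof and then invoke the two earlier propositions.
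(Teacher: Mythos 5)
Your proof is correct and follows essentially the same route as the paper, which offers no separate argument for Proposition \ref{Lagr-fibr-as-torsors} but simply presents it as Proposition \ref{pp:Lagr-fibr} restated in the torsor language. Your explicit checks --- that the fibers are isotropic of half dimension via $i_{\sigma(\alpha)}\Oga=p_X^*\alpha$, and that the isotropy lattice of the $T^*B$-action recovers $\Lambda$ --- just fill in the bookkeeping the paper leaves implicit.
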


The construction of the Chern class class has a natural symplectic version, that dates back to Duistermaat's work on global action-angle coordinates \cite{Duist}. This was further clarified and generalized by Delzant and Dazord \cite{DaDe} and Zung \cite{Zu2}, and rephrased in the language of symplectic torsors by Sjamaar \cite{Sjamaar}. The relevant sheaf is no longer $\ucT$, but rather the subsheaf $\ucT_{\Lagr}$ of local Lagrangian sections:
% \begin{equation}\label{eq-cT-Lagr} 
\[
\ucT_{\Lagr}(U)=\{\alpha\in\underline{\cT}(U): \alpha^*\omega_\can=\d\alpha=0\}. 
\]
% \end{equation}
The {\bf Lagrangian Chern class} $c_1(X,\Oga)\in H^1(B, \ucT_{\Lagr})$ of a symplectic $\cT$-torsor $X$ is represented
by a \v{C}ech-cocycle $\{\lambda_{ij}\}$, constructed as before (see (\ref{eq-ef-lambdaij})), but using now 
local \emph{Lagrangian} sections $s_i:V_i\to X$. 

To realize the Lagrangian Chern class as a degree two cohomology class, one needs the symplectic analogue of the exact sequence (\ref{exp-seseq}). For that one considers
\[   \cO_{\Lambda}\subset \cO_{\Aff}\subset  \cO,\]
where $\cO$ is the sheaf of smooth functions on $B$, $\cO_{\Aff}$ is the subsheaf of affine functions (i.e. of type 
$x\mapsto  r+ \sum_{i} c^i x_i$ in integral affine charts), and $\cO_{\Lambda}$ is the subsheaf of integral affine functions (obtained by requiring $c^i\in \Z$ in the previous expressions). The DeRham differential gives the short exact sequence of sheaves:
\[ \xymatrix{1\ar[r]& \cO_{\Lambda}\ar[r] & \cO \ar[r]^-{\d}& \ucT_{\Lagr} \ar[r] & 1}, \] 
and this leads to the Chern class map for symplectic torsors (cf. (\ref{c1-for-cT})):
\[ c_1: \xymatrix{\Tor_{B}(\cT, \omega_{\cT})\ar[r]^-{\sim} &  H^1(B, \ucT_{\Lagr})\cong H^2(B, \cO_{\Lambda})}.\]

Finally, the forgetful map from $\Tor_{B}(\cT, \omega_{\cT})$ to $\Tor_{B}(\cT)$ corresponds to the map
\[ H^2(B, \cO_{\Lambda})\to H^2(B, \Lambda)\]
 induced by the DeRham differential $\d: \cO_{\Lambda}\to \Lambda$, which interpreted as a sheaf morphism gives rise to the short exact sequence:
\begin{equation}\label{eq:ses-O-Lambda} 
\xymatrix{1\ar[r]& \R\ar[r] & \cO_{\Lambda} \ar[r]^-{\d}& \Lambda \ar[r] & 1}. 
\end{equation}

For the later use we point out the following:

\begin{corollary}\label{lemma:help-c-constr} 
Let $\cT$ be a torus bundle over a manifold $B$. If $B$ is contractible, 
then any $\cT$-torsor is trivial (or, equivalently, admits a global section). The same holds for symplectic $\cT$-torsors (i.e, they admit Lagrangian sections over contractible open sets).
\end{corollary}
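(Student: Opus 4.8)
\textbf{Proof plan for Corollary \ref{lemma:help-c-constr}.}

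The plan is to reduce everything to the Chern-class descriptions obtained in Section \ref{ssec:torsors}. For ordinary $\cT$-torsors, the classifying isomorphism $c_1\colon \Tor_B(\cT)\xrightarrow{\sim} H^1(B,\ucT)\cong H^2(B,\Lambda)$ shows that triviality of a $\cT$-torsor $X$ is equivalent to the vanishing of $c_1(X)$. So the first step is to argue that $H^2(B,\Lambda)=0$ when $B$ is contractible. Here $\Lambda\to B$ is a locally constant sheaf of finitely generated free abelian groups, but on a contractible base it need not be globally constant (the linear holonomy representation $h^\lin\colon \pi_1(B)\to\GL_\Z(\R^q)$ is trivial, so actually $\Lambda$ \emph{is} constant here since $\pi_1(B)=1$). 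Thus $H^\bullet(B,\Lambda)\cong H^\bullet(B,\Z)\otimes_\Z\Lambda_{b_0}$ by the universal coefficient/constant-sheaf identification, and $H^2(B,\Z)=0$ because $B$ is contractible. Hence $c_1(X)=0$ and $X$ admits a global section; triviality of the torsor follows since a global section of a $\cT$-torsor trivializes it as a principal $\cT$-bundle.

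For the symplectic case the argument is entirely parallel but uses the Lagrangian Chern class $c_1(X,\Oga)\in H^1(B,\ucT_{\Lagr})\cong H^2(B,\cO_\Lambda)$. So the second step is to show $H^2(B,\cO_\Lambda)=0$ for $B$ contractible. I would use the short exact sequence of sheaves \eqref{eq:ses-O-Lambda},
\[
\xymatrix{1\ar[r]& \R\ar[r] & \cO_{\Lambda} \ar[r]^-{\d}& \Lambda \ar[r] & 1},
\]
here $\R$ denotes the constant sheaf (locally constant functions), and pass to the long exact sequence in cohomology. Since $B$ is contractible, $H^k(B,\R)=0$ for $k\geq 1$ (and $\Lambda$ is the constant sheaf $\Lambda_{b_0}$ as noted above, so $H^k(B,\Lambda)=0$ for $k\geq 1$ as well). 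The relevant segment
\[
\cdots \to H^2(B,\R)\to H^2(B,\cO_\Lambda)\to H^2(B,\Lambda)\to\cdots
\]
is then sandwiched between two vanishing groups, forcing $H^2(B,\cO_\Lambda)=0$. Therefore $c_1(X,\Oga)=0$, so $X$ admits Lagrangian sections locally over any contractible open (indeed a global Lagrangian section over all of $B$), giving the claim; the statement about Lagrangian sections over contractible opens is just the local form of this, applied to a contractible $U\subset B$ in place of $B$.

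The only genuinely delicate point — and the one I would be careful to phrase correctly — is the identification of the abstract locally constant sheaves $\Lambda$ and $\R$ with honest constant sheaves on a contractible (hence simply connected) base, together with the compatibility of the DeRham-style resolution $\cO\to\ucT_{\Lagr}$ with sheaf cohomology; one should note that $\cO$ is a fine (soft) sheaf, so the connecting maps in the exponential-type sequence $\xymatrix{1\ar[r]& \cO_\Lambda\ar[r]&\cO\ar[r]^-{\d}&\ucT_{\Lagr}\ar[r]&1}$ behave as expected and the identification $H^1(B,\ucT_{\Lagr})\cong H^2(B,\cO_\Lambda)$ already established in the text can be used as a black box. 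Everything else is a formal consequence of the vanishing of the reduced cohomology of a contractible space. Alternatively — and perhaps cleaner for the symplectic case — one can bypass cohomology entirely: on a contractible $B$ one can directly build a global Lagrangian section of any symplectic $\cT$-torsor by choosing any global section of the underlying $\cT$-torsor (which exists by the first part) and then flowing it along the fibers using the fiberwise-translating vector fields $X_\alpha$ determined by a primitive of the obstruction $2$-form, exactly as in the proof of Proposition \ref{prop:IAS-sympl-torus}; I would mention this as a remark but present the cohomological argument as the main proof since the machinery is already in place.
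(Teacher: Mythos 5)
Your argument is correct and is essentially the proof the paper intends: the corollary is stated right after the Chern-class isomorphisms $c_1\colon \Tor_B(\cT)\xrightarrow{\sim}H^1(B,\ucT)\cong H^2(B,\Lambda)$ and $c_1\colon \Tor_B(\cT,\omega_\cT)\xrightarrow{\sim}H^1(B,\ucT_{\Lagr})\cong H^2(B,\cO_\Lambda)$, and is meant to follow from the vanishing of these groups on a contractible base, exactly as you deduce via the constancy of the locally constant sheaf $\Lambda$ and the long exact sequence of \eqref{eq:ses-O-Lambda}. Your handling of the ``contractible open sets'' clause by restriction, and your side remark on a direct geometric construction of a Lagrangian section, are both fine but add nothing beyond the paper's intended route.
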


\begin{remark}
In the construction of the Chern class of a principal $\S^1$-bundle via connections, one passes from 
integral to real coefficients. Similarly, in our case, we have to pass from $\cO_{\Lambda}$ to $\cO_{\Aff}$ or, equivalently, from $\cT_{\Lagr}\cong \cO/\cO_{\Lambda}$ to $\cO/\cO_{\Aff}$. In other words, to find a representative of the Chern classes in terms of differential forms, we are interested in the image of $c_1(X, \Oga)$ under the map:
\begin{equation}\label{grps-for-real-Chern}
H^1(B, \ucT_{\Lagr})\to H^1(B,\cO/\cO_{\Lambda})\cong H^2(B, \cO_{\Aff}) .
\end{equation}
To work with these groups, it is useful to use 
the following fine resolution:
\[ \xymatrix{\cO/\cO_{\Aff} \ar[r] & \Omega^{1}_{\partial}(B, T^*B) \ar[r]^-{\d_{\Lambda}} & \Omega^{2}_{\partial}(B, T^*B) \ar[r]^-{\d_{\Lambda}} & \ldots }\]
where $\Omega^{k}_{\partial}(B, T^*B)$ is the kernel of the antisymmetrization map 
$\partial:  \Omega^{k}(B, T^*B) \to \Omega^{k+1}(B)$,
and where $\d_{\Lambda}$ is the covariant derivative induced by the flat connection associated to $\Lambda$. Hence, $ H^2(B, \cO_{\Aff})=H^2(\Omega_{\partial}^{\bullet}(B, T^*B), \d_{\Lambda})$. 
Now, given a symplectic $\cT$-torsor $(X, \Oga)$, one chooses a \emph{Lagrangian connection} 
\[ \theta\in \Omega^1(X, \mathfrak{t})\]
(i.e. the horizontal spaces that it defines are Lagrangian) and one observes that its curvature  
\[ k_{\theta}\in \Omega^2(X, \mathfrak{t})_{\Lambda-\bas}= \Omega^2(B, T^*B)\]
actually lives in $\Omega^{2}_{\partial}(B, T^*B)$. The resulting class $[k_\theta]\in H^2(B, \cO_{\Aff})$ is precisely the class induced by $c_1(X, \Oga)$ under (\ref{grps-for-real-Chern}).
\end{remark}

%%%%%%%%%%%%%%%%%%%%%%%%%%%%
\subsection{Gerbes and their Dixmier-Douady class}
%%%%%%%%%%%%%%%%%%%%%%%%%%%%
We are now ready to move to gerbes. Our exposition will be self-contained, 
overviewing the standard theory of $\S^1$-gerbes and explaining at the same time how to take care of {\bf v1},
replacing $\S^1$ by a general torus bundle. Throughout this section we fix the base manifold $B$ and all the objects that we will
consider will be fibered over $B$ (see  Remark \ref{rk:B-fibered}).
 
\subsubsection{Definition of a $\cT$-gerbe} Among the various approaches to gerbes, the most relevant 
one for us is via extensions of groupoids \cite{BeXu, Lupercio, Murray, Tu}:  an $\S^1$-gerbe over $B$ is, up to Morita equivalence, an $\S^1$-extension of $B$, where $B$ is interpreted as a the identitiy groupoid. The groupoids that are Morita equivalent to $B$ are the groupoids $M\times_{B}M$ associated to submersions $p_M:M\to B$, and this leads one to consider 
central $\S^1$-extension of groupoids over $M$:
\[ \xymatrix{1\ar[r]& \S^1_M\ar[r] & \cG \ar[r]& M\times_B M\ar[r] & 1}, \]
where $\S^1_M=M\times\S^1$ is the trivial $\S^1$-bundle over $M$. There is an appropriate notion of Morita equivalence between two such extensions (see below) and the resulting equivalence classes are called $\S^1$-gerbes over $B$.

Next we replace the trivial circle bundle $\S^1_B= B\times \S^1$ by a general bundle of tori $\cT\to B$. Therefore, we consider {\bf central groupoid extensions} of type
\begin{equation}\label{basic-ext}
\xymatrix{1\ar[r] & \cT_M \ar[r] & \cG \ar[r]& M\times_B M\ar[r] & 1}, 
\end{equation}
where $\cT_M=p_M^*\cT$. Here, by central we mean that:
\[ \lambda\cdot g= g\cdot \lambda\]
for any arrow $g: x\rightarrow y$ of $\G$ and $\lambda\in \cT_b$, with $b= p_M(x)= p_M(y)$, and where we use $(\cT_M)_x= \cT_b=(\cT_M)_y$.

\begin{example}\label{ex:gauge-extension}
Recall that any (right) principal $\cT$ bundle over $M$:
\[
\xymatrix{
  & P \ar[dl]^{\pp}\ar[drr]_{\qq} & \ar@(ur, dr) & \cT \ar[d] \\
M  & & & B}
\]
gives rise to the gauge groupoid $\Gauge{\cT}{P}$ (see Appendix \ref{appendix:moment:maps}). When $P$ is $B$-fibered, i.e when $p_{M}\circ \pp= p_{P}=\qq$, it follows that the gauge groupoid defines a central extension as above. Such {\bf gauge extensions} will soon be considered ``trivial".
\end{example}

All the groupoids appearing in the extension (\ref{basic-ext}) are fibered over $B$, and this is relevant for the right notion of equivalence. Two such extensions, associated with projections $p_i:M_i\to B$ and groupoids $\cG_i\tto M_i$, $i=1,2$, are said to be {\bf Morita equivalent extensions} if there exists a Morita $(\cG_1,\cG_2)$-bibundle $P$, in the sense of Section \ref{ssec:Morita}, such that:
\begin{enumerate}[(i)]
\item $P$ is a $B$-fibered Morita equivalence;
\item $P$ is central, i.e. the actions of $\cT_{M_i}$ on $P$ inherited from the $\cG_i$-actions coincide: $\lambda\cdot u= u\cdot \lambda$ for 
all $u\in P$, $\lambda\in \cT_b$, with $b= p_1(\qq_1(u))= p_2(\qq_2(u))$. 
\end{enumerate} 
The first condition says that the map induced by $P$ between the $\cG_i$-orbit spaces, i.e. $B$, is the identity. Or, with the notations from Section \ref{ssec:Morita}, that $p_1\circ \qq_1= p_2\circ \qq_2$. Moreover, this condition is used to make sense of the second condition. With these:

\begin{definition} \label{def-st-gerbes} Given a torus bundle $\cT$ over a manifold $B$, a {\bf $\cT$-gerbe over $B$} is a Morita equivalence class of extensions of type (\ref{basic-ext}).
\end{definition}

\subsubsection{The group of gerbes: fusion product}The set of $\cT$-gerbes over $B$ has an abelian group structure. 
It is based on the notion of {\bf fusion product} of two extensions $\cG_1$ and $\cG_2$ of type (\ref{basic-ext}), which is the extension associated with the submersion $p_{12}:M_{12}=M_1\times_B M_2\to B$:
\[
\xymatrix{1\ar[r] & \cT_{M_{12}} \ar[r] & \cG_1\star\cG_2\ar[r]& M_{12}\times_B M_{12}\ar[r] & 1},
\]
where the groupoid $\cG_1\star\cG_2\tto M_{12}$ has space of arrows:
\[ \cG_1\star\cG_2:= (\cG_1\times_B \cG_2)/\cT.\]
Here, the action of $\lambda\in\cT_b$ on a pair $(g_1,g_2)$ of arrows in the orbits of $\G_1$ and $\G_2$ corresponding to $b\in B$, is given by $(g_1,g_2)\cdot \lambda=(g_1\cdot \lambda, \lambda\cdot g_2)$.

The {\bf trivial $\cT$-gerbe} is the one represented by the trivial extension of the identity groupoid $B\tto B$ (hence $M= B$):
\begin{equation}\label{eq:trivial-gerbe}
\xymatrix{1\ar[r] & \cT \ar[r] & \cT \ar[r]& B \ar[r] & 1}.
\end{equation}

One checks easily that there are canonical isomorphisms of extensions:
\begin{align*} 
\cG\star\cT\cong\cT\star\cG\cong \cG,&\qquad
\cG_1\star\cG_2\cong \cG_2\star\cG_1,\\
(\cG_1\star\cG_2)\star \cG_3&\cong \cG_1\star(\cG_2\star \cG_3).
\end{align*}

The {\bf inverse $\cT$-gerbe} of the gerbe defined by an extension (\ref{basic-ext}) is represented by the opposite extension:
\[ \xymatrix{1\ar[r] & \cT_M \ar[r] & \cG^{\opp} \ar[r]& M\times_B M\ar[r] & 1}, \]
where $\cG^{\opp}$ is $\cG$ with the  opposite multiplication and the source/target interchanged. This is still a $\cT$-gerbe because on $\cT$ the multiplication is unchanged. Note that the inversion gives an isomorphism $\cG\cong \cG^\opp$ of groupoids but not one of extensions since it does not induce the identity on $\cT$! The fact that $\G\star\G^\opp$ represents the trivial gerbe follows from the straightforward Morita equivalence:
\[
\xymatrix{
\G\star\G^\opp\ar@<0.25pc>[d] \ar@<-0.25pc>[d]  & \ar@(dl, ul) & \G \ar[dll]^{(s,t)}\ar[drr]_{p} & \ar@(ur, dr) & \cT  \ar[d] \\
M\times_B M&  & & & B}
\]
where the left action is given by: $(g_1,g_2)\cdot h=g_1\cdot h\cdot g_2^{-1}$. We conclude
that the fusion product induces an abelian group structure on the set of $\cT$-gerbes over $B$. \\

It is useful to be able to recognize more directly when an extension (\ref{basic-ext}) represents the trivial gerbe. That means that there exists a Morita bibundle $P$ implementing a Morita equivalence with (\ref{eq:trivial-gerbe}): 
\[
\xymatrix{
\G\ar@<0.25pc>[d] \ar@<-0.25pc>[d]  & \ar@(dl, ul) &  P \ar[dll]^{\qq_1}\ar[drr]_{\qq_2} & \ar@(ur, dr) & \cT  \ar[d] \\
M&  & & & B}
\]
Since $P$ is central, the structure of (right) principal $\cT$-bundle on $P$ is determined by the action of $\cG$: $u\cdot \lambda= \lambda\cdot u$. Hence, the only thing that matters is the existence of a principal $\cG$-bundle over $B$:

\begin{lemma}\label{lemma:trivial-gerbe}
An extension (\ref{basic-ext}) represents the trivial $\cT$-gerbe iff $\check{H}^{1}_{B}(B, \G)\neq \emptyset$, i.e. iff there exists a $B$-fibered principal $\cG$-bundle $P$ over $B$.
\end{lemma}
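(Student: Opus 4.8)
The plan is to prove Lemma \ref{lemma:trivial-gerbe} by showing that the two stated conditions are each equivalent to the existence of a central Morita bibundle between the extension \eqref{basic-ext} and the trivial extension \eqref{eq:trivial-gerbe}. The key observation, already noted in the text preceding the lemma, is that a central $(\cG,\cT)$-Morita bibundle $P$ is, \emph{as a right $\cT$-bundle}, forced: the right $\cT$-action must agree with the (restriction of the) left $\cG$-action via the inclusion $\cT_M\hookrightarrow\cG$ and centrality. So all the data in a central Morita equivalence to the trivial gerbe is really just the data of a left principal $\cG$-bundle over $B$ that is $B$-fibered. This reduces the statement to an unwinding of definitions, but one has to check carefully that ``$B$-fibered principal $\cG$-bundle over $B$'' does produce a genuine (central, $B$-fibered) Morita equivalence.

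First I would spell out the forward implication. Suppose the extension represents the trivial $\cT$-gerbe, so there is a $B$-fibered, central Morita bibundle
\[
\xymatrix{
\G\ar@<0.25pc>[d] \ar@<-0.25pc>[d]  & \ar@(dl, ul) &  P \ar[dll]^{\qq_1}\ar[drr]_{\qq_2} & \ar@(ur, dr) & \cT  \ar[d] \\
M&  & & & B}
\]
with $p_M\circ\qq_1 = \qq_2$ (the $B$-fibering condition on the bibundle). Then $\qq_2:P\to B$, together with the left $\cG$-action, is precisely a $B$-fibered principal $\cG$-bundle over $B$: the principality over $B$ is exactly the statement that the $\cG$-action makes $\qq_2:P\to B$ into a principal $\cG$-bundle, which is half of the Morita bibundle axioms. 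Hence $\check H^1_B(B,\cG)\neq\emptyset$. Conversely, given a $B$-fibered principal $\cG$-bundle $\qq:P\to B$ (with structure map $\qq_1:P\to M$ the bundle projection of the principal $\cG$-bundle and $\qq_2:=\qq$), define a right $\cT$-action on $P$ by restricting the left $\cG$-action along $\cT_M\hookrightarrow\cG$, i.e. $u\cdot\lambda:=\lambda\cdot u$ for $\lambda\in\cT_b$, $b=\qq_2(u)$. I would then check: (i) this right action is free and transitive on $\qq_2$-fibers, hence makes $\qq_1:P\to M$ into a principal $\cT$-bundle --- this uses that $\cT_M$ is the isotropy of $\cG$ over the units and that the $\cG$-action on $P$ is principal; (ii) the left $\cG$- and right $\cT$-actions commute and are compatible with $\qq_1,\qq_2$, so $P$ is a Morita $(\cG,\cT)$-bibundle; (iii) the bibundle is central (immediate from the definition of the $\cT$-action) and $B$-fibered (immediate from $p_M\circ\qq_1 = \qq$). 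Thus $P$ implements a Morita equivalence of extensions with the trivial one.

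The main subtlety --- and the step I expect to need the most care --- is in (i), verifying that a principal $\cG$-bundle $P\to B$ really restricts to a \emph{principal} $\cT$-bundle $P\to M$. The point is that the anchor $(\s,\t):\cG\to M\times_B M$ of the extension has fibers (over the diagonal) equal to the bundle of groups $\cT_M$, so that $\cG$ acting principally over $B$ means in particular that $\cG$ acts transitively on $\qq_2$-fibers with isotropy given by $\cT_M$-orbits; restricting to the subgroupoid $\cT_M$ one gets a free action whose orbits are exactly the $\qq_1$-fibers, which is principality of $P\to M$. Concretely, over a trivialising open $V\subset B$ one has $P|_V\cong \cG|_{M_V}$ with its right $\cG$-action by multiplication (where $M_V=p_M^{-1}(V)$), and the assertion becomes the tautology that $\cT_{M_V}$ acts on $\cG|_{M_V}$ freely with quotient $M_V\times_V M_V$ --- precisely the exactness of \eqref{basic-ext}. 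One should also record that changing the representative extension within its Morita class does not affect $\check H^1_B(B,\cG)$ being non-empty, since a Morita equivalence $\cG_1\simeq\cG_2$ of $B$-fibered central extensions induces a bijection $\check H^1_B(B,\cG_1)\cong \check H^1_B(B,\cG_2)$ by tensoring bibundles; this makes the statement well-defined at the level of gerbes. With these points checked, the two implications close up and the lemma follows.
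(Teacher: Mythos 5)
Your proof is correct and follows exactly the route the paper takes: the paper's entire argument is the remark preceding the lemma that centrality forces the right $\cT$-action on a trivializing bibundle to be the restriction of the left $\cG$-action, so the only remaining datum is a $B$-fibered principal $\cG$-bundle over $B$. You merely supply the verifications (freeness/transitivity of the induced $\cT$-action on the $\qq_1$-fibers via exactness of \eqref{basic-ext}, commutation of the actions via centrality) that the paper leaves implicit.
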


Changing the point of view we also see that $\cG$ is itself determined, via the gauge construction, by $P$ and its structure of principal $\cT$-bundle. Hence:

\begin{corollary}\label{cor:trivial-gerbe}
An extension (\ref{basic-ext}) represents the trivial $\cT$-gerbe iff it is the gauge extension associated to a $B$-fibered principal $\cT$-bundle (Example \ref{ex:gauge-extension}).
\end{corollary}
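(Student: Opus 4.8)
\textbf{Proof plan for Corollary \ref{cor:trivial-gerbe}.}

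The plan is to deduce this immediately from Lemma \ref{lemma:trivial-gerbe} together with the gauge construction, so the argument is essentially a translation of content already established. First I would prove the "if" direction: suppose the extension (\ref{basic-ext}) is the gauge extension $\Gauge{\cT}{P}$ associated to some $B$-fibered principal $\cT$-bundle $\pp\colon P\to M$ with $p_M\circ\pp=p_P=\qq$. Then, tautologically, $P$ itself is a $B$-fibered principal $\cG$-bundle over $B$: the structure of principal $\cG$-bundle on $P$ is exactly the one used in the gauge construction of Example \ref{ex:gauge-extension}, the moment map for the $\cG$-action being $\pp\colon P\to M$ and the bundle projection being $\qq=p_P\colon P\to B$; the $B$-fibered condition $p_M\circ\pp=p_P$ is precisely what makes $P$ into a $B$-fibered $\cG$-bundle. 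Hence $\check{H}^1_B(B,\cG)\neq\emptyset$ and, by Lemma \ref{lemma:trivial-gerbe}, the extension represents the trivial $\cT$-gerbe.

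For the converse, suppose (\ref{basic-ext}) represents the trivial $\cT$-gerbe. By Lemma \ref{lemma:trivial-gerbe} there exists a $B$-fibered principal $\cG$-bundle $P$ over $B$, with moment map $\qq_1\colon P\to M$ and bundle projection $\qq_2=p_P\colon P\to B$; the $B$-fibered condition gives $p_M\circ\qq_1=p_P$. Now I would use the standard fact (recalled in Example \ref{ex:gauge}) that a groupoid can be recovered from any of its principal bundles via the gauge construction: since $P$ is a principal $\cG$-bundle over $B$ (along $\qq_2$), one has $\cG\cong\Gauge{P}{?}$ — but more to the point, restricting the $\cG$-action to the central subbundle $\cT_M\subset\cG$ equips $P$ with the structure of a principal $\cT$-bundle $\pp:=\qq_1\colon P\to M$ (freeness and properness of this action being inherited from that of $\cG$, and the orbits of $\cT_M$ inside those of $\cG$ being exactly the $\qq_1$-fibers because $M\times_BM$ acts freely, i.e. the anchor of $M\times_BM$ is injective on isotropy). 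This $\cT$-bundle $P\to M$ is $B$-fibered since $p_M\circ\pp=p_M\circ\qq_1=p_P$. Finally, since $\cG$ sits in a central extension of $M\times_BM$ by $\cT_M$, the quotient $(P\times_M P)/\cT$ recovers $\cG$ — this is precisely the content of the gauge construction applied to the $\cT$-bundle $P\to M$ (Example \ref{ex:gauge-extension}), the division map $P\times_M P\to\cG$ inducing the isomorphism of extensions. Therefore (\ref{basic-ext}) is the gauge extension associated to $P\to M$.

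The only point requiring a little care — and the step I expect to be the main obstacle — is verifying that the isomorphism $\cG\cong(P\times_M P)/\cT$ obtained above is an isomorphism \emph{of extensions}, i.e. that it restricts to the identity on $\cT$ and induces the identity on $M\times_BM$, rather than merely an abstract groupoid isomorphism. For the identity on $M\times_BM$ this follows because the map induced on orbit spaces by the Morita bibundle $P$ is the identity on $B$ (the centrality/$B$-fibered conditions in the definition of Morita equivalence of extensions), so the induced map on the quotient groupoids $M\times_BM$ is the identity. For the identity on $\cT$, one uses centrality: the $\cT$-action on $P$ is two-sided in the same way, $\lambda\cdot u=u\cdot\lambda$, so the induced action on $(P\times_MP)/\cT$ matches the original central $\cT_M\subset\cG$ with no twist. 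Everything else is bookkeeping with the gauge construction and requires no new input.
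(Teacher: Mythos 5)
Your argument is correct and is essentially the paper's own: the corollary is deduced from Lemma \ref{lemma:trivial-gerbe} together with the observation that $\cG$ is recovered from the principal $\cG$-bundle $P$ and its induced $\cT$-bundle structure via the gauge construction, which is exactly the one-line ``changing the point of view'' remark preceding the corollary in the text. One slip to fix: the gauge groupoid of the $\cT$-bundle $\pp=\qq_1\colon P\to M$ is $(P\times_B P)/\cT$ (fibre product along $\qq_2\colon P\to B$, the map along which $\cT$ acts, as in Example \ref{ex:gauge} with $M_2=B$), not $(P\times_M P)/\cT$ --- the latter quotient collapses to the kernel $\cT_M$, and the division map is only defined on $P\times_B P$. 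With that correction the rest of your verification (freeness, identification of the $\cT_M$-orbits with the $\qq_1$-fibres via triviality of the isotropy of $M\times_B M$, and the check that the resulting isomorphism is one of extensions) is exactly what is needed.
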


\subsubsection{The Dixmier-Douady class:} We now recall the construction of the Dixmier-Douady class of the gerbe represented by the extension
(\ref{basic-ext}). It is the obstruction to triviality that arises from the characterization given in Lemma \ref{lemma:trivial-gerbe}: any principal bundle $P$ as in the lemma is pushed forward via the map $\cG\rightarrow M\times_B M$ to the principal $M\times_B M$-bundle over $B$ which is $M$ itself, hence the triviality question amounts to deciding whether $[M]\in \check{H}^{1}_{B}(B, M\times_{B}M)$ comes from $\check{H}^{1}_{B}(B, \cG)$. This lifting problem can be translated to the language of cocycles. One chooses a  good cover $\{V_i\}_{i\in I}$ of $B$ with local sections $s_i: V_i \rightarrow M$ of the projection into $B$ (maps in $\textrm{Man}_{B}$!). Then the \v{C}ech cocycle describing $M$ as an element in $\check{H}^{1}_{B}(B, M\times_{B}M)$  
% principal $M\times_B M$-bundle over $B$, 
is given by
\[ g_{ij}= (s_i, s_j):V_{ij}\to M\times_B M, \]
and the issue is wether one can lift this cocycle along the projection $\cG\to M\times_B M$ of the extension, to a cocycle with values in $\cG$.
% representing the desired principal $\cG$-bundle $[P]\in \check{H}^{1}_{B}(B, \cG)$. 

Viewing $\cG$ as a $\cT$-torsor over $M\times_{B} M$, we denote by $\cG_{i, j}$ its pull-back via $(s_i, s_j)$:
\[ \cG_{i, j}:= \{ g\in \cG: t(g)= s_i(x), s(g)= s_j(x)\textrm{ for some } x\in V_{ij}\}.\]
Since the $V_{ij}$ are contractible, this $\cT$-torsor is trivializable and we can find a section:
\[ \widetilde{g}_{ij}: V_{ij} \to \cG_{i, j},\quad \widetilde{g}_{ij}\mapsto g_{ij}=(s_i,s_j). \] 
The only issue is that $\{\widetilde{g}_{ij}\}$ may fail to be a cocycle. The obstruction arises by looking at triple intersections, on which we define
\begin{equation}\label{eq-def-c-ijk} 
c_{ijk}:= \widetilde{g}_{ij} \cdot \widetilde{g}_{jk} \cdot \widetilde{g}_{ki}\in \Gamma( V_{ijk}, \cT).
\end{equation}
This is indeed a section of $\cT$ since everything fibers over $B$! 

\begin{definition}
The \textbf{Dixmier-Douady class} of the extension is the cohomology class represented by the \v{C}ech 2-cocycle:
\[ \DD(\cG):= [\{c_{ijk}\}]\in \check{H}^{2}_{B}(B, \ucT) \cong H^3(B, \Lambda_{\cT}).\]
\end{definition}

When $\cT=\S^1\times B\to B$ with associated lattice $\Lambda_\cT=\Z\subset\R$, we recover the usual Dixmier-Douady class of an $\S^1$-gerbe living in $H^3(B,\Z)$.

It is clear that $\DD(\cG)= 0$ if and only if $\cG$ represents the trivial gerbe. Indeed, the assumption that the 2-cocycle $c_{ijk}$ is exact gives us, eventually after passing to a refinement, smooth functions $\lambda_{ij}:V_{ij}\to\cT$, fibered over $B$, such that:
\[ c_{ijk}(x)=\lambda_{ij}\cdot \lambda_{jk}\cdot \lambda_{kj}. \]
Then we can use the action of $\cT_M$ on $\cG$ to correct the $\widetilde{g}_{ij}$:
\[ \overline{g}_{ij}:=\widetilde{g}_{ij}\cdot\lambda_{ij}^{-1}. \]
We still have that $\overline{g}_{ij}\mapsto g_{ij}$ and that $\overline{g}_{ij}\cdot \overline{g}_{jk}\cdot \overline{g}_{ki}=0$, so that 
$\overline{g}_{ij}:V_{ij} \to \cG$ is a 1-cocycle representing an element $[P]\in \check{H}^{1}_{B}(B,\cG)$ with $i([P])=[M]$.

The previous constructions can be interpreted as a connecting ``homomorphism'' construction, $\DD(\cG)= \delta([M])$, where $\delta$ arises from the short exact sequence (\ref{basic-ext}):
\begin{equation}
\label{eq:ex:sequence:gerbe}
\xymatrix{\check{H}^{1}_{B}(B,\cG) \ar[r]^----{i} & \check{H}^{1}_{B}(B,M\times_B M)\ar[r]^---{\delta}&  \check{H}^{2}_{B}(B, \cT)= H^{2}(B,\underline{\cT})}.
\end{equation}
A formal argument shows that this is exact, i.e. $\im(i)=\delta^{-1}(0)$. Here and in what follows we will use the additive notation
for the group structure of $H^2(B,\underline{\cT})$, hence $0$ for its identity.
Lemma \ref{lemma:trivial-gerbe}
combined with the remark that $\check{H}^{1}_{B}(B,M\times_B M)$ contains only one element, namely $[M]$, shows that 
the statement that ``$\cG$ represents the trivial gerbe iff $\DD(\cG)=0$" is equivalent to the exactness of (\ref{eq:ex:sequence:gerbe}).

The last  interpretation of the Dixmier-Douady class, via (\ref{eq:ex:sequence:gerbe}), makes it rather clear that it only depends on 
the Morita equivalence class of the extension. Indeed, a Morita $(\G_1,\G_2)$-bibundle $Q$ allows one to transport a principal
$\G_1$-bundle $P\to B$ along the Morita equivalence yielding a principal $\G_2$-bundle $P\otimes_{\cG_1} Q\to B$. 
This leads to a bijection in Haefliger cohomology:
\[ Q_*:\check{H}^{1}_{B}(B,\cG_1)\cong \check{H}^{1}_{B}(B,\cG_2). \]
Because of the naturally of the construction, one obtains a commutative diagram 
\[
\xymatrix{\check{H}^{1}_{B}(B,\cG_1) \ar[r] \ar[d]_{\cong}& \check{H}^{1}_{B}(B,M_1\times_B M_1)\ar[r] \ar[d]_{\cong}& \check{H}^{2}_{B}(B, \cT)  \ar[r]^{\cong} \ar@{=}[d] & H^{2}(B,\underline{\cT})\ar@{=}[d]\\
\check{H}^{1}_{B}(B,\cG_2) \ar[r] & \check{H}^{1}_{B}(B,M_2\times_B M_2)\ar[r] &     \check{H}^{2}_{B}(B, \cT)  \ar[r]^{\cong}   &   H^{2}(B,\underline{\cT})},
\]
which implies that $\DD(\cG_1)= \DD(\cG_2)$. Of course, this can also be proved using \v{C}ech cocycles.
The details for the cocycle argument will be given in the next subsection in the context of symplectic gerbes.

\subsubsection{The Dixmier-Douady class as a group isomorphism.} The additivity of the Dixmier-Douady class:
\[ \DD(\cG_1\star\cG_2)= \DD(\cG_1)+ \DD(\cG_2), \]
can be checked using a formal argument based on the interpretation of $\DD$ via the exact sequence (\ref{eq:ex:sequence:gerbe}),
starting from the remark that $\cG_1\times_{B}\cG_2$ defines a $\cT\times \cT$-gerbe over $M_1\times_{B} M_2$. Alternatively,
one can also give a ``down to earth'' argument in terms of cocycles. Again, the details of the direct approach
will be given in the next section in the context of  symplectic gerbes.

Since $\DD(\cG)= 0$ if and only if $\cG$ represents the trivial gerbe, we conclude that $\DD$ is an 
injective group homomorphism. Since the base manifold $M$ of our extensions is allowed to be disconnected, 
$\DD$ is also surjective: any class $u\in H^2(B, \ucT)$ is represented by a \v{C}ech cocycle $\{c_{ijk}\}$ with respect to a good cover $\mathcal{V}= \{V_i\}_{i\in I}$ of $B$, so taking $M$ to be the disjoint union of the $V_i$, we see that $\{c_{ijk}\}$ becomes a 2-cocycle on the resulting groupoid $M\times_{B}M$ with coefficients in $\cT_M$, and we let $\cG$ to be the corresponding extension. 

Putting everything together, one obtains the central result of the theory:

\begin{theorem} 
\label{DD-clas} 
The Dixmier-Douady class induces an isomorphism between the group of $\cT$-gerbes over $B$ and $H^2(B, \underline{\cT})$.
\end{theorem}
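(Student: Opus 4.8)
The statement asserts that $\DD\colon \Gerbes_B(\cT)\to H^2(B,\ucT)$ is a group isomorphism. By the discussion preceding the theorem, essentially all the ingredients are already in place; the proof consists of assembling them. The plan is to verify, in order: (1) $\DD$ is well-defined on Morita equivalence classes; (2) $\DD$ is a group homomorphism with respect to the fusion product; (3) $\DD$ is injective, because $\DD(\cG)=0$ forces $\cG$ to be the trivial gerbe; (4) $\DD$ is surjective. Throughout I would work with the connecting-map description
\[
\DD(\cG)=\delta([M])\in H^2(B,\ucT),
\]
arising from the short exact sequence \eqref{basic-ext} and the associated exact sequence \eqref{eq:ex:sequence:gerbe}, since this gives the cleanest formal arguments; the \v{C}ech cocycle formula \eqref{eq-def-c-ijk} is the explicit backup when a direct computation is needed.

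First I would record that $\DD$ is well-defined. Given a central, $B$-fibered Morita bibundle $Q\colon \cG_1\simeq \cG_2$, transport of principal bundles along $Q$ induces a bijection $Q_*\colon \check H^1_B(B,\cG_1)\cong \check H^1_B(B,\cG_2)$, and naturality of this construction yields the commutative ladder displayed in the excerpt connecting the two exact sequences \eqref{eq:ex:sequence:gerbe} for $\cG_1$ and $\cG_2$; since $Q$ induces the identity on orbit spaces and on $\cT$, the right-hand squares are identities, so $\DD(\cG_1)=\DD(\cG_2)$. Next, for homomorphy: $\cG_1\times_B\cG_2$ is a $\cT\times\cT$-gerbe over $M_1\times_B M_2$, and $\cG_1\star\cG_2=(\cG_1\times_B\cG_2)/\cT$ is obtained by pushing forward along the diagonal quotient $\cT\times\cT\to\cT$; choosing a common good cover and local sections $s_i=(s_i^1,s_i^2)$ of $M_{12}\to B$, the lifts $\widetilde g_{ij}$ for $\cG_1\star\cG_2$ may be taken to be the images of $(\widetilde g_{ij}^1,\widetilde g_{ij}^2)$, so that the cocycle \eqref{eq-def-c-ijk} for $\cG_1\star\cG_2$ is $c^1_{ijk}\cdot c^2_{ijk}$, giving $\DD(\cG_1\star\cG_2)=\DD(\cG_1)+\DD(\cG_2)$. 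It should also be noted that the trivial gerbe \eqref{eq:trivial-gerbe} has $\DD=0$, since $M=B$ admits the tautological section and the lifts can be chosen to be units.

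For injectivity, the key point is already isolated in the excerpt: by Lemma \ref{lemma:trivial-gerbe} and Corollary \ref{cor:trivial-gerbe}, $\cG$ represents the trivial gerbe iff $\check H^1_B(B,\cG)\neq\emptyset$, i.e.\ iff $[M]\in\check H^1_B(B,M\times_B M)$ lifts to $\check H^1_B(B,\cG)$; and exactness of \eqref{eq:ex:sequence:gerbe} at the middle term says precisely that such a lift exists iff $\delta([M])=\DD(\cG)=0$. Combined with homomorphy, $\DD$ is an injective group homomorphism. For surjectivity I would use that the base manifold $M$ of an extension is allowed to be disconnected: given $u\in H^2(B,\ucT)$ represented by a \v{C}ech $2$-cocycle $\{c_{ijk}\}$ on a good cover $\mathcal V=\{V_i\}$, set $M=\coprod_i V_i$ with its obvious submersion to $B$, so $M\times_B M=\coprod_{i,j}V_{ij}$; then $\{c_{ijk}\}$, viewed via the central inclusion $\cT_M\hookrightarrow$ (an extension to be built), is a groupoid $2$-cocycle on $M\times_B M$ with values in $\cT_M$, and the associated central extension $\cG$ satisfies $\DD(\cG)=u$ by construction. (Here one uses the standard identification of $\check H^2_B(B,\ucT)$ with groupoid extension classes of $M\times_B M$ by $\cT_M$ for this particular $M$.)

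\textbf{Main obstacle.} None of the four steps is deep; the only place requiring genuine care is the verification that $\DD$ only depends on the Morita class, specifically checking the \emph{centrality} compatibility so that the transported principal bundle $P\otimes_{\cG_1}Q$ inherits the correct $\cT$-action and the right-hand squares of the comparison ladder are literally identities rather than merely commuting up to automorphism. Equivalently, on the cocycle side, one must check that a central Morita bibundle produces, after refining covers, functions $\lambda_{ij}\colon V_{ij}\to\cT$ (fibered over $B$) conjugating the lifts $\widetilde g_{ij}$ of one extension to those of the other in a way that changes $c_{ijk}$ only by the coboundary $\delta\{\lambda_{ij}\}$. This is the standard but slightly technical bookkeeping underlying gerbe theory; I would either carry it out with \v{C}ech cocycles (deferring the analogous computation to the symplectic case in the next subsection, as the excerpt announces) or invoke the functoriality of $\check H^1_B(B,-)$ under Morita morphisms of $B$-fibered groupoids together with the naturality of connecting maps.
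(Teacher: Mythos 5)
Your proposal is correct and follows essentially the same route as the paper: well-definedness via the naturality of the exact sequence \eqref{eq:ex:sequence:gerbe} under Morita transport, additivity via the cocycle computation $c_{ijk}=c^1_{ijk}+c^2_{ijk}$ for the fusion product (which the paper defers to the symplectic case but carries out identically there), injectivity from the equivalence ``$\DD(\cG)=0$ iff $\cG$ is trivial'' supplied by Lemma \ref{lemma:trivial-gerbe}, and surjectivity by realizing an arbitrary \v{C}ech $2$-cocycle as an extension over the disjoint union $M=\coprod_i V_i$. No gaps; the centrality bookkeeping you flag is exactly the point the paper also singles out and handles in the symplectic section.
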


%%%%%%%%%%%%%%%%%%%%%%%%
\subsection{Symplectic gerbes and their Lagrangian class}
\label{Symplectic gerbes and their Lagrangian class}
%%%%%%%%%%%%%%%%%%%%%%%%

We are interested in the extension of gerbes to the symplectic world. Similar to the passage from torsors to symplectic torsors
(Section \ref{ssec:torsors}), we replace torus bundles by symplectic torus bundles over $B$. Hence, our starting point is an integral affine manifold 
$(B, \Lambda)$ with its the associated symplectic torus bundle $(\cT,\omega_\cT)=(T^*B,\omega_\can)/\Lambda$.  

\begin{definition}
Let $(B, \Lambda)$ be a smooth integral affine manifold with associated torus bundle $(\cT,\omega_{\cT})$. A {\bf symplectic $(\cT,\omega_\cT)$-gerbe over} $B$ 
is a symplectic Morita equivalence class of central extensions of the form:
\begin{equation}
\label{eq:sympl-gerbe} 
\xymatrix{1\ar[r] & \cT_M \ar[r]^{i} & (\cG, \Omega) \ar[r]& M\times_B M\ar[r] & 1},
\end{equation}
where $p_M:M\to B$ is a surjective submersion, $\cT_{M}= p_{M}^{*}\cT$ and $(\cG,\Omega)$ is a symplectic groupoid with $i^*\Omega=p_M^*\omega_{\cT}$.
% We call an extension \eqref{eq:sympl-gerbe}, or the symplectic groupoid $(\cG,\Omega)$, a {\bf representative} of the symplectic $(\cT,\omega_{\cT})$-gerbe over $B$.
\end{definition}

\begin{remark} Symplectic Morita equivalence of extensions is the symplectic version of the notion from the previous section
(see Appendix \ref{appendix:moment:maps}). We continue to allow symplectic groupoids over a disconnected base 
(and possibly with disconnected $s$-fibers). This does not affect the basic property that $M$ carries a Poisson structure
$\pi$. When dealing with extensions (\ref{eq:sympl-gerbe}) 
with connected s-fibers, then $\cG$ will make $(M, \pi)$ into a Poisson manifold of proper type for which the associated 
leaf space is the integral affine manifold $(B, \Lambda)$.
\end{remark}

\begin{example}\label{ex:gauge-extension2} 
The symplectic analogue of Example \ref{ex:gauge-extension} is the (symplectic) gauge groupoid $\Gauge{\cT}{X}$ associated 
to a free Hamiltonian $\cT$-spaces $q: (X, \Oga)\to B$, described in Appendix \ref{cG-red2}). 
\end{example}

The fusion product of extensions has a straightforward symplectic version: in  the product $\cG_1\times_B\cG_2$ one considers
the closed 2-form $\pr_{1}^{*}\Omega_1-\pr_{2}^{*}\Omega_2$ and a simple computation shows that the kernel of this form is precisely
the orbits of the diagonal $\cT$-action on $\cG_1\times_B\cG_2$, so this form induces a multiplicative symplectic form $\Omega_1\star\Omega_2$ 
on the quotient $\cG_1\star\cG_2$. We define the {\bf fusion of symplectic $(\cT,\omega_\cT)$-gerbes} by:
\[ (\cG_1,\Omega_1)\star(\cG_2,\Omega_2):=(\cG_1\star\cG_2,\Omega_1\star\Omega_2). \]

As in the case of $\cT$-gerbes, the trivial symplectic $(\cT,\omega_\cT)$-gerbe is represented by $(\cT, \omega_{\cT})$ and the inverse
of the symplectic $(\cT,\omega_{\cT})$-gerbe defined by \eqref{eq:sympl-gerbe} is the one represented by the 
$\cG^{\opp}$ with the same symplectic form $\Omega$. Moreover, we have 
obvious symplectic isomorphisms:
\begin{align*}
(\cG,\Omega)\star (\cT,\omega_{\cT})&\cong (\cT,\omega_\cT)\star(\cG,\Omega)\cong (\cG,\Omega)\\
(\cG_1,\Omega_1)\star(\cG_2,\Omega_2)&\cong (\cG_2,\Omega_2)\star(\cG_1,\Omega_1) \\
((\cG_1,\Omega_1)\star(\cG_2,\Omega_2))\star(\cG_3,\Omega_3)&\cong (\cG_1,\Omega_1)\star((\cG_2,\Omega_2)\star(\cG_3,\Omega_3))
\end{align*}
and there is a symplectic Morita equivalence:
\[
\xymatrix{
(\G\star\G^\opp,\Omega\star\Omega) \ar@<0.25pc>[d] \ar@<-0.25pc>[d]  & \ar@(dl, ul) & (\G,\Omega) \ar[dll]^{(s,t)}\ar[drr]_{p} & \ar@(ur, dr) & (\cT,\omega_{\cT})  \ar[d] \\
M\times_B M&  & & & B}
\]
We conclude that the set of symplectic $(\cT,\omega_{\cT})$-gerbes over $B$ is an abelian group with the operation induced by the fusion product. 

For the symplectic version of the Dixmier-Douady class and of Theorem \ref{DD-clas} it is not surprising, given the discussion from Section \ref{ssec:torsors}, that we now have to replace $\underline{\cT}$ by the subsheaf $\underline{\cT}_{\Lagr}$ of local Lagrangian sections:

\begin{theorem}
\label{thm:Lagrangian:class:symp:gerbe}
Given a symplectic groupoid $(\cG,\Omega)\tto M$ fitting into an extension \eqref{eq:sympl-gerbe} there is an associated cohomology class:
\begin{equation}\label{c-cG-sympl} 
\DD(\cG,\Omega)\in H^2(B, \underline{\cT}_{\Lagr}).
\end{equation}
Moreover:
\begin{enumerate}[(i)]
\item this construction induces an isomorphism between the group of symplectic $(\cT,\w_\cT)$-gerbes over $B$ and $H^2(B, \underline{\cT}_{\Lagr})$. 
% \item $\DD(\cG, \Omega)=0$ iff $\cG\cong \Gauge{\cT}{X}$, the gauge groupoid of a free Hamiltonian $\cT$-space $q:(X, \Oga)\to B$ 
\item $\DD(\cG, \Omega)=0$ iff $\cG$ is the gauge extension associated to a free Hamiltonian $\cT$-space (Example \ref{ex:gauge-extension2}). 
\end{enumerate}
\end{theorem}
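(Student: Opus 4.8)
The plan is to mirror, in the symplectic setting, the four-step development carried out for ordinary $\cT$-gerbes in the previous subsection, systematically replacing the sheaf $\ucT$ by $\ucT_{\Lagr}$ and ``principal $\cT$-bundle'' by ``symplectic $\cT$-torsor'' (equivalently, by Proposition~\ref{Lagr-fibr-as-torsors}, by Lagrangian fibration / free Hamiltonian $\cT$-space). First I would record the symplectic analogue of Lemma~\ref{lemma:trivial-gerbe} and Corollary~\ref{cor:trivial-gerbe}: an extension \eqref{eq:sympl-gerbe} represents the trivial symplectic gerbe iff it admits a $B$-fibered \emph{symplectic} principal $\cG$-bundle $P$ over $B$, which by the gauge construction (Appendix~\ref{cG-red2}) is the same as saying $(\cG,\Omega)$ is the gauge extension of a free Hamiltonian $\cT$-space. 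The point is that the centrality of the Morita bibundle forces the $\cT$-action on $P$ to come from the $\cG$-action, and the symplectic compatibility of that action is exactly the Hamiltonian $\cT$-space condition; this gives statement (ii) once the Lagrangian Dixmier--Douady class is shown to detect triviality.

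Next I would construct the class \eqref{c-cG-sympl}. Exactly as in the non-symplectic case, choose a good cover $\{V_i\}$ of $B$ and \emph{Lagrangian} local sections $s_i\colon V_i\to M$ (these exist: over a contractible $V_i$ the projection $p_M\colon M\to B$ pulls back the symplectic torus bundle, and a leafwise-flat section can be chosen; more simply one uses that $M$ carries the Poisson structure $\pi$ with leaf space $B$ and picks a section transverse to $\cF_\pi$ which is Lagrangian for the induced splitting). View $\cG$ as a symplectic $\cT$-torsor over $M\times_B M$; pull it back along $(s_i,s_j)$ to get $\cG_{i,j}$, and by Corollary~\ref{lemma:help-c-constr} (symplectic version) choose Lagrangian trivializing sections $\widetilde g_{ij}\colon V_{ij}\to\cG_{i,j}$. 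Then $c_{ijk}:=\widetilde g_{ij}\widetilde g_{jk}\widetilde g_{ki}$ is a section of $\cT$ over $V_{ijk}$, and the key computation is that it is a \emph{Lagrangian} section: this follows because $\Omega$ restricted to $\cT_M$ is $p_M^*\omega_\cT$, each $\widetilde g_{ij}$ is an isotropic section, multiplicativity of $\Omega$ controls products, and the base paths close up — so $\d c_{ijk}=c_{ijk}^*\omega_\can=0$. Hence $\{c_{ijk}\}$ is a \v Cech $2$-cocycle with values in $\ucT_{\Lagr}$ and we set $\DD(\cG,\Omega):=[\{c_{ijk}\}]\in H^2(B,\ucT_{\Lagr})$. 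Independence of the choices of cover, of $s_i$, and of $\widetilde g_{ij}$, and invariance under symplectic Morita equivalence, are checked by the same cocycle manipulations as in the $\cT$-gerbe case, now carrying along the Lagrangian condition at each step (the correcting sections $\lambda_{ij}$ must be taken Lagrangian, which is possible again by Corollary~\ref{lemma:help-c-constr}).

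For part (i), injectivity of $\DD$ follows from the triviality criterion: if $\DD(\cG,\Omega)=0$ then, after refining, $c_{ijk}=\lambda_{ij}\lambda_{jk}\lambda_{ki}$ with $\lambda_{ij}$ Lagrangian sections, and replacing $\widetilde g_{ij}$ by $\overline g_{ij}:=\widetilde g_{ij}\lambda_{ij}^{-1}$ (still Lagrangian, still mapping to $(s_i,s_j)$) produces a genuine $\cG$-valued cocycle, i.e.\ a $B$-fibered symplectic principal $\cG$-bundle over $B$, whence $(\cG,\Omega)$ is a gauge extension by the first step. Additivity $\DD((\cG_1,\Omega_1)\star(\cG_2,\Omega_2))=\DD(\cG_1,\Omega_1)+\DD(\cG_2,\Omega_2)$ is a direct cocycle computation: choosing $s_i=(s_i^1,s_i^2)$ for the fibered product $M_1\times_B M_2$ and the product trivializing sections, the resulting $c_{ijk}$ is the product of the two, using $\pr_1^*\Omega_1-\pr_2^*\Omega_2$ and that on $\cT$ the multiplication is unchanged. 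Surjectivity is obtained as in the non-symplectic case: given $u\in H^2(B,\ucT_{\Lagr})$ represented by a Lagrangian-valued cocycle $\{c_{ijk}\}$ on a good cover, take $M=\coprod_i V_i$; then $M\times_B M$ with the pulled-back $\cT$-form and the twist by $\{c_{ijk}\}$ defines a symplectic groupoid $(\cG,\Omega)$ — here the cocycle being \emph{Lagrangian} is exactly what makes the twisted form closed and multiplicative — with $\DD(\cG,\Omega)=u$. I expect the main obstacle to be the verification that $c_{ijk}$ is Lagrangian and, dually, that a Lagrangian $2$-cocycle yields a genuine symplectic (not merely presymplectic) groupoid extension: this is where the interplay between multiplicativity of $\Omega$, the Lagrangian condition $\d=(\cdot)^*\omega_\can=0$ on sections, and the identification $i^*\Omega=p_M^*\omega_\cT$ must be handled carefully, and it is the precise symplectic refinement that has no counterpart in the $\S^1$-gerbe literature.
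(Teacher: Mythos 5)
Your proposal is correct and follows essentially the same route as the paper: Lagrangian lifts $\widetilde g_{ij}$ of the \v{C}ech cocycle via the symplectic-torsor structure of $\cG_{ij}$ and Corollary \ref{lemma:help-c-constr}, multiplicativity of $\Omega$ to see that $c_{ijk}$ is Lagrangian, Morita invariance and additivity by the same cocycle manipulations carrying the Lagrangian condition, injectivity/surjectivity as in the non-symplectic case, and part (ii) from the symplectic version of Lemma \ref{lemma:trivial-gerbe}. The only superfluous step is your insistence that the local sections $s_i\colon V_i\to M$ of $p_M$ be ``Lagrangian'': no such condition is needed (or naturally meaningful, since these are just transversals to the symplectic foliation); the Lagrangian requirement lives entirely on the lifts $\widetilde g_{ij}$.
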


The class $\DD(\cG,\Omega)$ is called {\bf the Lagrangian Dixmier-Douady class} of the symplectic gerbe. 
The rest of this section is devoted to the proof of this theorem.

\subsubsection{Construction of the Lagrangian Dixmier-Douady class.} We proceed like in the previous section, 
and using the same notations. We choose the local sections $s_i:V_i\to M$ giving rise to the 1-cocycle $g_{i, j}= (s_i, s_j)$ 
and we form the $\cT$-torsor $\cG_{ij}$ over $V_{ij}$.
This is now a symplectic torsor, with the symplectic form inherited from $(\cG, \Omega)$, so the bundle projection $\cG_{ij}\to V_{ij}$ 
is a Lagrangian fibration with connected fibers. Since the $V_{ij}$ are contractible, we can choose the lifts $\widetilde{g}_{ij}$ 
to be Lagrangian. Using the multiplicativity of the symplectic form on $\G$, it is straightforward to check that
the resulting 2-cocycle (\ref{eq-def-c-ijk}) is made of Lagrangian sections:
\begin{equation}\label{eq:c-ijk-Lagr} 
c_{ijk}=\widetilde{g}_{ij}\cdot \widetilde{g}_{jk}\cdot \widetilde{g}_{ki}\in \Gamma( V_{ijk}, \cT_\Lagr). 
\end{equation}
A tedious but straightforward argument shows that the resulting cohomology class does not depend on the choices involved; this defines our Lagrangian Dixmier-Douady class:
\[ \DD(\cG,\Omega)\in H^2(B, \underline{\cT}_{\Lagr}).\]
Note that the characterization of the extensions which represent the trivial gerbes given by Lemma \ref{lemma:trivial-gerbe}, has a straightforward version in the symplectic case. This implies part (ii) of Theorem \ref{thm:Lagrangian:class:symp:gerbe}.
% In particular, part (ii) of Theorem \ref{thm:Lagrangian:class:symp:gerbe} follows immediately from the rest.

\begin{remark}\label{rk-sympl-DD} A symplectic version of the exact sequence \eqref{eq:ex:sequence:gerbe} leads to an interpretation
of the Lagrangian Dixmier-Douady class as the image of a connecting morphism of the class of $(M,\pi)$, as in the standard case:
\[ \DD(\cG,\Omega)=\delta([(M,\pi)])\in H^2(B,\underline{\cT}_\Lagr). \]
% Since the direct approach above is enough for our purposes, we shall not pursue this point of view.
\end{remark}

\subsubsection{Independence of the Morita class} In order to show invariance under symplectic Morita equivalence, assume we are given two extensions:
\[ \xymatrix{1\ar[r] & p_a^*\cT \ar[r] & \cG_a \ar[r]& M_a\times_B M_a\ar[r] & 1}\quad (a=1,2)\]
associated with submersions $p_a:M_a\to B$, and a symplectic Morita equivalence:
\[
\xymatrix{
(\G_1,\Omega_1)\ar@<0.25pc>[d] \ar@<-0.25pc>[d]  & \ar@(dl, ul) & (P,\Omega_P) \ar[dll]_{\qq_1}\ar[drr]^{\qq_2} & \ar@(ur, dr) & (\G_2,\Omega_2) \ar@<0.25pc>[d] \ar@<-0.25pc>[d] \\
M_1&  & & & M_2}
\]
Start with the construction of $c_2(\cG_a, \Omega_a)$: a good cover $\{V_i\}_{i\in I}$ of $B$,  $s^a_i:V_i\to M_a$ of $p_a:M_a\to B$, 
$g^a_{ij}=(s^a_i,s^a_j):V_{ij}\to M_a\times_B M_a$ and Lagrangian lifts:
\[ \widetilde{g}^a_{ij}: V_{ij} \to (\cG_1)_{i, j}.\]

Now, $(\qq_1, \qq_2): P\to M_1\times_{B} M_2$ is a symplectic $\cT$-torsor which, when pulled-back via $(s_i^1,s_j^2):V_i\to M_1\times_{B} M_2$, gives a symplectic $\cT$-torsor over $V_i$. Since $V_i$ is contractible, it has a Lagrangian section, i.e.~we find 
$u_i:V_i\to P$ such that $u_{i}^{*}\Omega_P= 0$, $\qq_1\circ u_i=s^1_i$ and $\qq_2\circ u_i=s^2_i$. For each $x\in V_{ij}$ the elements $u_i(x)$ and
$\widetilde{g}^1_{ij}(x)\cdot u_j(x)\cdot \widetilde{g}^2_{ji}(x)$ lie in the same fiber of $\qq_1$ and of $\qq_2$. By principality, it follows that there exist 
unique $\lambda_{ij}:V_{ij}\to \cT$ such that:
\[ u_i(x)=\lambda_{ij}(x)\cdot \widetilde{g}^1_{ij}(x)\cdot u_j(x)\cdot \widetilde{g}^2_{ji}(x)\quad (x\in V_{ij}). \]
Because $u_{i}$, $u_j$ and $\widetilde{g}^a_{ji}$ are Lagrangian sections,  the actions of $\cG_a$ on $P$ are symplectic, and
$\w_\cT$ is multiplicative, it follows  that $\lambda_{ij}$ is also Lagrangian.

Looking at triple intersections, and using that the actions of $\cT_{M_a}$ commute, we obtain for $x\in U_{ijk}$:
\[ 
\lambda_{ij}(x)\cdot\widetilde{g}^1_{ij}(x)\cdot\lambda_{jk}(x)\cdot \widetilde{g}^1_{jk}(x)\cdot \lambda_{ki}(x)\cdot \widetilde{g}^1_{ki}(x)=\widetilde{g}^2_{ij}(x)\cdot \widetilde{g}^2_{jk}(x)\cdot \widetilde{g}^2_{ki}(x),\]
which can be written as:
\[ c^2_{ijk}(x)-c^1_{ijk}=\lambda_{ij}(x)+\lambda_{jk}(x)+\lambda_{ki}(x).\]
Hence, passing to cohomology we have $\DD(\cG_1,\Omega_1)= \DD(\cG_2,\Omega_2)$.

\subsubsection{The Lagrangian Dixmier-Douady class as a group isomorphism} We now turn to the additivity of the Dixmier-Douady class:
\[ \DD((\cG_1,\Omega_1)\star(\cG_2,\Omega_2))= \DD(\cG_1,\Omega_1)+ \DD(\cG_2,\Omega_2).\]
To represent the Dixmier-Douady class of $(\cG_1\star\cG_2,\Omega_1\star\Omega_2)$ we start with the data used to construct
$c_2(\cG_a,\Omega_a)$ for $a\in \{1, 2\}$: a covering $\{V_i\}_{i\in I}$ and sections $s_i^a:V_i\to M_a$ of $p_a:M_a\to B$, yielding the 1-cocycle $g^a_{ij}=(s^a_i,s^a_j)$. The lifts $\widetilde{g}^a_{ij}: V_{ij} \to (\cG_a)_{i, j}$ lead to the 2-cocycles $c_{ijk}^{a}$ given by (\ref{eq:c-ijk-Lagr}). 
But now observe that these give similar data for $\cG_1\star\cG_2$: the sections $s_i=(s_i^1,s_i^2): V_i\to M_1\times_B M_2$ with associated 1-cocycle $g_{ij}=(g^1_{ij},g^2_{ij})$ and then the Lagrangian lift 
\[ \widetilde{g}_{ij}: V_{ij} \to (\cG_1\star\cG_2)_{i, j}\]
obtained by composing 
$(\widetilde{g}^1_{ij},\widetilde{g}^2_{ij}):V_{ij}\to \cG_1\times_B\cG_2$
with the projection into $\cG_1\star\cG_2$. We then find that the associated 2-cocycle is given by:
\[ c_{ijk}=\widetilde{g}_{ij}\cdot \widetilde{g}_{jk}\cdot \widetilde{g}_{ki}=c^1_{ijk}+c^2_{ijk} . \]
This proves that $c_2(\cG_1\star\cG_2,\Omega_1\star\Omega_2)=c_2(\cG_1,\Omega_1)+c_2(\cG_2,\Omega_2)$.

Finally, note that the argument on the injectivity and the surjectivity of $\DD$ from the previous subsection straightforwardly adapts to 
the present context.

\subsection{Lagrangian Dixmier-Douday class vs Dazord-Delzant class}
Given an extension \eqref{eq:sympl-gerbe} let us observe now that there is an obvious isomorphism of sheaves:
\[ p^{-1}(\underline{\cT}_{\Lagr})\cong \underline{\cT_M}_\cl, \]
so there is an induced map at the level of cohomology:
\begin{equation}
\label{eq:map:classes}
p^*:\check{H}^2(B, \underline{\cT}_{\Lagr})\to \check{H}^2(M,\underline{\cT_M}_\cl).
\end{equation}
This map allows to express the precise relationship between the Lagrangian Dixmier-Douady class of the extension and the Delzant-Dazord 
class of the underlying Poisson manifold (see Section \ref{sec:Dazord:Delzant}):

\begin{proposition}
\label{prop:lagr:obst:class}
Under \eqref{eq:map:classes} the class $c_2(\cG,\Omega)\in \check{H}^2(B, \underline{\cT}_{\Lagr})$
of a symplectic $(\cT,\w_\cT)$-gerbe with representative $(\cG,\Omega)\tto (M,\pi)$ is mapped to the obstruction 
class $c_2(M,\pi,\Lambda_\cG)\in\check{H}^2(M,\underline{\cT_M}_\cl)$.
\end{proposition}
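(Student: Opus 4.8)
The plan is to trace both classes through their respective \v{C}ech constructions and show that, under the identification $p^{-1}(\underline{\cT}_{\Lagr})\cong\underline{\cT_M}_{\cl}$, the explicit cocycles agree. Both $\DD(\cG,\Omega)$ and $\DD(M,\pi,\Lambda_\cG)$ are obtained by a connecting-homomorphism procedure (see Remark \ref{rk-sympl-DD} and the two-step splitting of Theorem \ref{thm:Dazord:Delzant}), so the cleanest route is to run the two constructions in parallel on a common good cover of $M$ and compare.

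First I would set up the geometry. Starting from the extension \eqref{eq:sympl-gerbe}, pick a good cover $\{V_i\}$ of $B$ with Lagrangian sections $s_i:V_i\to M$ of $p_M$, giving the $\cT$-torsors $\cG_{ij}$ over $V_{ij}$, Lagrangian lifts $\widetilde g_{ij}:V_{ij}\to\cG_{ij}$, and the Lagrangian $2$-cocycle $c_{ijk}=\widetilde g_{ij}\cdot\widetilde g_{jk}\cdot\widetilde g_{ki}\in\Gamma(V_{ijk},\cT_{\Lagr})$ representing $\DD(\cG,\Omega)$. Pulling this picture back to $M$ via $p_M$: set $U_i:=p_M^{-1}(V_i)$ (refining as needed so that on each $U_i$ the leafwise symplectic form $\omega_{\cF_\pi}$ admits a local closed extension $\omega_i\in\Omega^2(U_i)$). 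The key point is that $p_M^{-1}(s_i)$, i.e. the ``constant'' section of $\cG$ over $U_i$ obtained from $s_i$, furnishes on $U_i$ precisely a local extension of the leafwise symplectic form: because $t:\cG\to M$ is Poisson and $\Omega$ is multiplicative, $s_i^*\Omega$ regarded as a $2$-form on $U_i$ restricts on the leaves of $\cF_\pi$ to $\omega_{\cF_\pi}$ (this is exactly how $\HolX$-type groupoids produce extensions; cf.\ the proof of Theorem \ref{thm-lattice-proper-case}). So one may take $\omega_i:=s_i^*\Omega$, and then $\omega_{ij}:=\omega_i-\omega_j$ is closed, vanishes on the leaves, and — since $\widetilde g_{ij}$ is a Lagrangian lift trivializing $\cG_{ij}$ — is \emph{exact}: $\omega_{ij}=\d\alpha_{ij}$ with $\alpha_{ij}\in\Omega^1(U_{ij},\cF_\pi)$ the $1$-form obtained by differentiating $\widetilde g_{ij}$ along $\cT$, i.e.\ $\alpha_{ij}$ is the pullback of the affine ``angle coordinate'' normalizing the section $s_i$ against $s_j$. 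With these identifications in place the Dazord--Delzant cocycle is $c_{abc}=[\alpha_{ab}+\alpha_{bc}+\alpha_{ca}]\in\Omega^1_{\cl}(U_{abc},\cF_\pi)/\Lambda=\underline{\cT_M}_{\cl}(U_{abc})$.

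The heart of the proof is then the bookkeeping lemma: the class $[\alpha_{ij}+\alpha_{jk}+\alpha_{ki}]$ in $\underline{\cT_M}_{\cl}$ equals $p_M^*$ of $[c_{ijk}]=[\widetilde g_{ij}\cdot\widetilde g_{jk}\cdot\widetilde g_{ki}]$ in $\underline{\cT}_{\Lagr}$. This is essentially a tautology once one observes that (a) the group law in the torus bundle $\cT$ over $V_{ijk}$ translates, under $\exp:\mathfrak t\to\cT$ and the identification $\Omega^1(M,\cF_\pi)=\underline{\nu^*(\cF_\pi)}$, into addition of the primitives $\alpha_{ij}$, and (b) the ``Lagrangian/closed'' condition on $c_{ijk}$ on the $B$-side matches the ``closed, leafwise-vanishing'' condition on $\alpha_{ij}+\alpha_{jk}+\alpha_{ki}$ on the $M$-side under the sheaf isomorphism $p^{-1}(\underline{\cT}_{\Lagr})\cong\underline{\cT_M}_{\cl}$. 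Concretely: $\widetilde g_{ij}$, being a Lagrangian section of the symplectic torsor $\cG_{ij}\to V_{ij}$, is encoded by a Lagrangian (hence closed) $\cT$-valued transition, and its pullback along $p_M$ is exactly the closed $\cT_M$-valued datum $[\alpha_{ij}]$; composing three of them and passing to $V_{ijk}$ commutes with $p_M^*$ by naturality of both the exponential sequence and the connecting map.

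\textbf{Main obstacle.} The subtle point — and the step I expect to absorb most of the work — is checking that the \emph{choices} can be made compatibly: one must verify that the Lagrangian lifts $\widetilde g_{ij}$ chosen on the $B$-side pull back to lifts/primitives $\alpha_{ij}$ that are admissible in the Dazord--Delzant construction (closed, leafwise-valued, and with $\d\alpha_{ij}=\omega_{ij}$ for the \emph{same} local extensions $\omega_i=s_i^*\Omega$), and conversely that any primitive arising on the $M$-side comes this way up to a coboundary. Since both classes are defined only up to the choice of cover, sections, and lifts, one really needs to show the two connecting homomorphisms fit into a commuting square relating the relevant short exact sequences of sheaves — the Poincar\'e sequence \eqref{eq:Poincare} pushed to $B$ versus the sequence $1\to\underline{\cT}_{\Lagr}\to\underline{\cT}\to\Omega^2_{\cl}\to 1$ and its pullback. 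Once that square is written down (using that $p_M^{-1}$ is exact and commutes with the differentials), the equality $p^*\DD(\cG,\Omega)=\DD(M,\pi,\Lambda_\cG)$ is immediate from naturality of $\delta$ applied to the tautological class $[M\times_B M]\mapsto[(M,\pi)]$. I would therefore organize the write-up as: (1) identify $\omega_i=s_i^*\Omega$ and deduce the shape of $\xi(M,\pi)$; (2) identify $\alpha_{ij}$ from $\widetilde g_{ij}$ and the sheaf iso; (3) invoke naturality of the connecting map for the commuting diagram of pulled-back exact sequences.
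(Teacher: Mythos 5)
Your overall strategy coincides with the paper's: run both \v{C}ech constructions on compatible covers, realize the local closed extensions of the leafwise symplectic form by pulling back $\Omega$ along local sections of the realization determined by the transversals $T_i=\im(s_i)$, and use the Lagrangian condition on the lifts $\widetilde g_{ij}$ to show the resulting primitives are closed and reproduce the cocycle $c_{ijk}$. There is, however, a genuine gap at the central step. The formula $\omega_i:=s_i^*\Omega$ is not meaningful as written: $s_i$ maps $V_i$ into $M$, while $\Omega$ lives on $\cG$. What you actually need is a section $\sigma$ of the $\cT|_{V_i}$-torsor $t:\cG(T_i,-)\to p^{-1}(V_i)$, i.e.\ of the isotropic realization over the saturated open $p^{-1}(V_i)$; then $\sigma^*\Omega$ is a closed extension of $\omega_{\cF_\pi}$. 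Such a section need not exist over all of $p^{-1}(V_i)$: restricted to a symplectic leaf, $\cG(T_i,-)$ is the principal torus bundle whose Chern classes $c_1,\dots,c_q$ govern the linear variation, and these are nonzero in general. Your parenthetical ``refining as needed'' does not repair this, because after refining one no longer has one extension per $V_i$ but one per element of a finer cover $\{U_a\}$ of $M$ with an index map $a\mapsto i_a$, and the entire comparison must be carried out across the two covers.

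Concretely, the paper's proof chooses contractible $U_a\subset p^{-1}(V_{i_a})$, sections $\sigma_a:U_a\to\cG(T_{i_a},-)$, and extracts $\lambda_{ab}$ from the identity $\sigma_a(x)=\sigma_b(x)\cdot\widetilde g_{i_bi_a}(p(x))\cdot\lambda_{ab}(x)$ on $U_{ab}$. The two verifications, (a) that $\sigma_a^*\Omega$ extends $\omega_{\cF_\pi}$ and (b) that $\sigma_a^*\Omega-\sigma_b^*\Omega=\d\lambda_{ab}$ (using multiplicativity of $\Omega$, the identity $\lambda^*\omega_{\cT}=\d\lambda$, and the Lagrangian condition on the $\widetilde g_{i_ai_b}$), are exactly what your sketch dismisses as ``essentially a tautology''; they are short but they, together with the triple-overlap identity $\widetilde g_{i_ai_b}\cdot\widetilde g_{i_bi_c}\cdot\widetilde g_{i_ci_a}=\lambda_{ab}+\lambda_{bc}+\lambda_{ca}$, constitute the proof. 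Your alternative ending via naturality of connecting homomorphisms is viable in principle, but setting up the commuting square of sheaf sequences requires the same local sections $\sigma_a$, so it does not circumvent the missing construction.
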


\begin{proof}
Start with the data (\ref{eq:c-ijk-Lagr}) to construct the cocyle $c_{ijk}$ representing $c_2(\cG, \Omega)$: a cover $\{V_i\}_{i\in I}$ of $B$, sections $s_i:V_i\to M$ of $p:M\to B$ and the Lagrangian lifts  
$\widetilde{g}_{ij}: V_{ij} \to \cG$ of the 1-cocycle $g_{ij}= (s_i, s_j)$. Since $p:M\to B$ is open, to prove the proposition it is enough to show that 
\[ c_{ijk}\circ p:p^{-1}(V_{ijk})\to p^*\cT_\cl, \]
is a 2-cocycle representing $c_2(M,\pi,\Lambda_\cG)$.

For each $i\in I$, we cover the open sets $p^{-1}(V_i)$ by contractible open sets. The collection of all such open sets, denoted $\{U_a\}_{a\in A}$, is an open cover of $M$ which comes with a map of the indices $A\to I$, $a\mapsto i_a$, such that:
\[ p^{-1}(V_i)=\bigcup_{i_a=i} U_a. \]
The sections $s_i$ give transversals $T_i=\im(s_i)\subset M$ to the symplectic leaves and we have Morita equivalences:
\[
\xymatrix{
 \G|_{p^{-1}(V_i)}\ar@<0.25pc>[d] \ar@<-0.25pc>[d]  & \ar@(dl, ul) & \G(T_i,-) \ar[dll]_{t}\ar[drr]^{p\circ s}  & \ar@(ur, dr) & \cT|_{V_i}\ar[d]  \\
p^{-1}(V_i)&  & & & V_i}
\]
Since the $U_a$ are contractible, we can pick local sections $\sigma_a:U_a\to\G(T_{i_a},-)$ of the principal $\cT|_{V_{i_a}}$-bundle $t:\G(T_{i_a},-)\to p^{-1}(V_{i_a})$. Now observe that on the intersection $U_{ab}$, we have two sections of this principal bundle: the restriction $\sigma_a|_{U_{ab}}$ and the section:
\[ \sigma_b(x)\cdot  \widetilde{g}_{i_bi_a}(p(x)),\quad (x\in U_{ab}). \]
It follows that there exist $\lambda_{ab}:U_{ab}\to \cT_{V_{i_a}}$ such that:
\[ \sigma_a(x)=\sigma_b(x)\cdot  \widetilde{g}_{i_bi_a}(p(x))\cdot \lambda_{ab}(x). \]
Using this relation successively for $\sigma_a$, $\sigma_b$ and $\sigma_c$, we conclude that for $x\in U_{abc}$:
\[ 1= \widetilde{g}_{i_ai_c}(p(x))\cdot \lambda_{ca}(x)\cdot  \widetilde{g}_{i_ci_b}(p(x))\cdot \lambda_{bc}(x)\cdot  \widetilde{g}_{i_bi_a}(p(x))\cdot \lambda_{ab}(x)\]
Using the fact that $\cT_M$ is abelian, we conclude that:
\[ \widetilde{g}_{i_a i_b}\cdot \widetilde{g}_{i_b i_c}\cdot \widetilde{g}_{i_b i_a}=\lambda_{ab}+\lambda_{bc}+\lambda_{ca},\]
where we are now viewing $\lambda_{ab}:U_{ab}\to \cT_M$. This says that under the map \eqref{eq:map:classes} the class $c_2(\cG, \Omega)$,
represented by the 2-cocycle $c_{ijk}$, is mapped to a class represented by the 2-cocycle $\lambda_{ab}+\lambda_{bc}+\lambda_{ca}:U_{abc}\to \cT_M$. But now observe that:
\begin{enumerate}[(a)]
\item The map $t:(\G(T_{i_a},-),\Omega)\to p^{-1}(V_{i_a})$ is an isotropic realization, so it follows that $\sigma_a^*\Omega$ is a closed 2-form on $U_a$ extending the leafwise symplectic form $\omega_{\cF_\pi}$;
\item Since the principle bundle action is symplectic, we find that:
\begin{align*}
\sigma_a^*\Omega&=\sigma_b^*\Omega+(\widetilde{g}_{i_ai_b}\circ p)^*\Omega+\lambda_{ab}^*\omega_\cT\\
		&=\sigma_b^*\Omega+p^*\widetilde{g}_{i_ai_b}^*\Omega+\d\lambda_{ab}\\
		&=\sigma_b^*\Omega+\d\lambda_{ab}.
\end{align*}
where we used the fundamental property of $\omega_\cT$ and that the $\widetilde{g}_{i_ai_b}$ are Lagrangian.
\end{enumerate}
It follows that the 2-cocycle $\lambda_{ab}+\lambda_{bc}+\lambda_{ca}$ takes values in $(\cT_M)_\cl$ and represents the obstruction
class $c_2(M,\pi,\Lambda_\cG)$, so the proof is completed.
\end{proof}

\begin{remark}
In general, the map \eqref{eq:map:classes} fails to be injective.
Hence, it is possible to have a proper symplectic integration $(\cG,\Omega)\tto (M,\pi)$ such that $c_2(\cG,\Omega)\neq 0$, but
$c_2(M,\pi,\Lambda_\cG)= 0$. 
In this case, the Poisson manifold $(M,\pi)$ admits rather different proper integrations defining the same lattice:
one of them arising from free Hamiltonian reduction (Theorem \ref{thm:Dazord:Delzant}) and the other one not. 
When the fibers of $p:M\to B$ (the symplectic leaves) are 1-connected, a spectral sequence argument shows that \eqref{eq:map:classes}
is injective, and we conclude that either all proper integrations of $(M,\pi,\Lambda)$ arise from free Hamiltonian $\cT_\Lambda$-reduction,  
or none of them does.
\end{remark}

%%%%%%%%%%%%%%%%%%%%%%%%%%%%%%
%%%%%%%%%%%%%%%%%%%%%%%%%%%%%%
%%%%%%%%%%%%%%%%%%%%%%%%%%%%%%
%%%%%%%%%%%%%%%%%%%%%%%%%%%%%%
%%%%%%%%%%%%%%%%%%%%%%%%%%%%%%
%%%%%%%%%%%%%%%%%%%%%%%%%%%%%%
%%%%%%%%%%%%%%%%%%%%%%%%%%%%%%
%%%%%%%%%%%%%%%%%%%%%%%%%%%%%%
%%%%%%%%%%%%%%%%%%%%%%%%%%%%%%
%%%%%%%%%%%%%%%%%%%%%%%%%%%%%%
\section{Symplectic gerbes over orbifolds}
\label{sec:gerbes:orbifolds}
%%%%%%%%%%%%%%%%%%%%%%%%%%%%%%
%%%%%%%%%%%%%%%%%%%%%%%%%%%%%%
%%%%%%%%%%%%%%%%%%%%%%%%%%%%%%
%%%%%%%%%%%%%%%%%%%%%%%%%%%%%%
%%%%%%%%%%%%%%%%%%%%%%%%%%%%%%
%%%%%%%%%%%%%%%%%%%%%%%%%%%%%%
%%%%%%%%%%%%%%%%%%%%%%%%%%%%%%
%%%%%%%%%%%%%%%%%%%%%%%%%%%%%%
%%%%%%%%%%%%%%%%%%%%%%%%%%%%%%
%%%%%%%%%%%%%%%%%%%%%%%%%%%%%%

We now extend the theory of symplectic gerbes over manifolds to the case of orbifolds. Of course, our motivation comes 
from the fact that the leaf spaces of PMCTs are, in general, orbifolds. As in Section \ref{sec:gerbes:manifolds}, we start with a simplified statement of our results, generalizing Theorem \ref{thm:DD-gerbe-smooth-intro}:

% Due to the fact that the orbit spaces of PMCTs are, in general, orbifolds, we will study now symplectic gerbes over orbifolds. 
% We will also apply the resulting theory to  PMCTs, obtaining invariants of proper integrations.
% These invariants are the obstructions to reconstruct the integration from the orbifold structure on the leaf space and some isotropic realization. 
% For now, we present a simplified statement of our results:

\begin{theorem} Let $(\cG, \Omega) \tto M$ be a proper integration of $(M, \pi)$, inducing the orbifold atlas $\cBG= \cBG(\cG)$ on the leaf space $B=M/\cF_\pi$. Then $(\cG, \Omega)$ gives rise to a symplectic gerbe over $(B, \cBG)$, which is classified by a cohomology class
\[ \DD(\cG, \Omega)\in H^2(\cBG, \ucT_{\Lagr}) .\]
Moreover, $\DD(\cG, \Omega)$ vanishes iff $\cG=\cBG_X(M, \pi)$, the $\cBG$-integration of $(M, \pi)$ relative to a proper isotropic realization $q:(X, \Oga)\to (M, \pi)$ (cf. Definition \ref{def:E:integration}).
\end{theorem}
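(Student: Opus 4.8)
The plan is to reduce the orbifold statement to the manifold case treated in Section~\ref{sec:gerbes:manifolds}, using Haefliger's transverse geometry dictionary (Remarks~\ref{rmk-Transversal geometric structures} and~\ref{rmk:structures on orbifolds}) to make sense of symplectic gerbes over the orbifold $(B,\cBG)$ and of the sheaf $\ucT_{\Lagr}$ and its cohomology $H^2(\cBG,\ucT_{\Lagr})$. First I would set up the definition of a symplectic $(\cT,\omega_\cT)$-gerbe over an orbifold $(B,\cBG)$ as a symplectic Morita equivalence class of central extensions of Lie groupoids
\[
\xymatrix{1\ar[r]& \cT_{\eE}\ar[r]& (\cG,\Omega)\ar[r]& \cBG\star_{B}\cBG\ar[r]& 1},
\]
exactly as in the manifold case but now with $\cBG$ in the role of the identity groupoid $B$; here $\cT$ is the symplectic torus bundle over the orbifold determined by the transverse integral affine structure $\Lambda_\cG$ (Theorem~\ref{thm-reg-fol-integ}, via Proposition~\ref{prop:IAS-presympl-torus}). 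The key point, which I would isolate as a lemma, is that everything in the construction of the Lagrangian Dixmier--Douady class (choice of a good cover, local sections, Lagrangian lifts $\widetilde g_{ij}$, and the cocycle $c_{ijk}=\widetilde g_{ij}\cdot\widetilde g_{jk}\cdot\widetilde g_{ki}$) goes through verbatim once one works $\cBG$-equivariantly over a complete transversal $T\subset M$, because the proper \'etale groupoid $\eE_T=\cBG|_T$ can be handled like a manifold, and Morita invariance of the construction (established already in Section~\ref{Symplectic gerbes and their Lagrangian class}) makes the result independent of $T$. This produces the class $\DD(\cG,\Omega)\in H^2(\cBG,\ucT_{\Lagr})$ and, by the same \v Cech arguments, the statement that it is a group isomorphism onto $H^2(\cBG,\ucT_{\Lagr})$.

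Next I would address the characterization of the vanishing locus. As in the manifold case (Lemma~\ref{lemma:trivial-gerbe} and Corollary~\ref{cor:trivial-gerbe}), $\DD(\cG,\Omega)=0$ is equivalent to the existence of a $\cBG$-fibered principal $(\cG,\Omega)$-bundle over the orbifold, i.e.\ of a symplectic $\cBG$-torsor presenting $\cG$ as a gauge extension. The substantive content is to identify such torsors with proper isotropic realizations of $(M,\pi)$: I would invoke Theorem~\ref{thm:X-hol-E-version} and Proposition~\ref{prop:Morita:E:holonomy}, which show that, for $\cE=\cBG$, the $\cBG$-integration $\cBG_X(M,\pi)$ relative to a proper isotropic realization $q:(X,\Oga)\to(M,\pi)$ fits into the short exact sequence $0\to\cT_X\to\cBG_X(M,\pi)\to\cBG\to 0$ and is presymplectic-Morita equivalent to $\cT_X\Join\cBG$, hence (restricting to a complete transversal, Corollary~\ref{cor:Morita:complete:transversal}-style) is symplectic-Morita trivial as a gerbe. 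Conversely, given a symplectic $\cBG$-torsor $X$ over the orbifold trivializing $\cG$, the bibundle $X\times_M\cBG$ (with the first-projection pullback of $\Oga$) realizes $X$ as a principal $\cT$-bundle over $M$ equipped with a symplectic form whose orthogonal distribution to the $q$-fibers is the pullback of $\cF_\pi$; this is exactly the Dazord--Delzant data, so $q:X\to M$ is a proper isotropic realization with $\cBG_X(M,\pi)\cong\cG$. Uniqueness of $X$ follows from the uniqueness clause in Theorem~\ref{thm:X-hol-E-version} together with Proposition~\ref{prop:simple:isotropic:realiz} in the simple case.

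The main obstacle I anticipate is purely bookkeeping rather than conceptual: making precise the passage between the ``global'' orbifold picture (central extensions of $\cBG$, sections of $\cBG$ over a good orbifold cover of $B$) and the ``\'etale'' picture over a complete transversal $T$, and checking that the $\cBG$-invariance conditions are compatible with the Lagrangian-lift construction at every stage --- in particular that the Lagrangian sections $\widetilde g_{ij}$ can be chosen $\cBG_x$-equivariantly, using finiteness of the isotropy groups and averaging, and that the resulting $c_{ijk}$ genuinely defines a class in $H^2(\cBG,\ucT_{\Lagr})$ rather than only in the cohomology of the transversal. Once this dictionary is in place, all the arguments of Section~\ref{sec:gerbes:manifolds} (additivity under fusion, injectivity, surjectivity by taking a disconnected base, the connecting-morphism interpretation $\DD(\cG,\Omega)=\delta([(M,\pi)])$) transcribe word by word. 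I would also remark, as in Proposition~\ref{prop:lagr:obst:class}, that the pullback of $\DD(\cG,\Omega)$ to $M$ recovers the Dazord--Delzant class $c_2(M,\pi,\Lambda_\cG)$, which gives an alternative, more computable handle on non-vanishing in examples.
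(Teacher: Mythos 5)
Your overall architecture coincides with the paper's: define symplectic gerbes over $(B,\cBG)$ as symplectic Morita classes of central extensions over orbifold atlases, reduce to an \'etale atlas, build $\DD(\cG,\Omega)$ by a \v Cech construction with Lagrangian lifts, and characterize triviality by gauge extensions of symplectic torsors, which are then identified with proper isotropic realizations via Theorem~\ref{thm:X-hol-E-version}, Proposition~\ref{prop:Morita:E:holonomy} and the converse argument of Example~\ref{ex:confused}. Two points deserve comment. First, a minor one: the extension attached to the given integration is directly $1\to\cT(\cG)\to\cG\to\cBG\to 1$ (Theorem~\ref{thm-reg-fol2}), with $\cBG$ itself playing the role that $M\times_BM$ plays in the smooth case; your quotient ``$\cBG\star_B\cBG$'' is not what appears, and extensions over other atlases $\cE\simeq\cBG$ enter only through the Morita equivalence relation.

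Second, and more substantively: the device you propose for the orbifold bookkeeping --- choosing the Lagrangian lifts $\widetilde g_{ij}$ ``$\cBG_x$-equivariantly, using finiteness of the isotropy groups and averaging'' --- is both unnecessary and genuinely problematic. Averaging a finite orbit in a $\cT$-torsor is not well defined: it requires extracting $n$-th roots in the torus fibre, and these are non-unique (there are $n^{\dim\cT}$ choices), so an ``equivariant average'' of Lagrangian sections does not exist canonically; one would have to lift to the Lie algebra bundle $\mathfrak t$ and control the $\Lambda$-ambiguity, which is exactly the kind of delicate step a proof should not leave implicit. The paper sidesteps this entirely by using Moerdijk's embedding category $\Emb_\cU(\cBG)$: the \v Cech cocycle $c_{\sigma_1,\sigma_2}$ is indexed by strings of composable \emph{arrows} (embeddings) rather than by intersections of invariant opens, so the groupoid action is encoded in the indexing and no invariance or averaging of the local sections $s_U$ or of the Lagrangian lifts $\widetilde g_\sigma$ is ever required; contractibility of the $U\in\cU$ and Corollary~\ref{lemma:help-c-constr} suffice. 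If you insist on your route you must either justify the averaging (and check it preserves the Lagrangian condition), or replace it by the embedding-category formalism, after which, as you say, the manifold arguments of Section~\ref{sec:gerbes:manifolds} do transcribe essentially word by word.
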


The passage from symplectic gerbes over manifolds to symplectic gerbes over orbifolds 
% The passage from manifolds to orbifolds 
is based on Haefliger's philosophy explained
in Remark \ref{rmk:structures on orbifolds}.  So throughout this section we fix an orbifold $(B, \cB)$, with atlas $\cB\tto N$. The reader should keep in mind, as main 
examples: the smooth case of the previous section where $\cB$ is $B\tto B$ (hence $N= B$), and the leaf space of a proper Poisson manifold $(M, \pi)$ with proper integration $\cG$, where $\cBG(\cG)\tto M$ will be the foliation groupoid from Theorem \ref{thm-reg-fol2} (hence $N= M$).

To handle various geometric structures on the orbifold $(B,\cB)$ one may, in principle, represent them with respect to the atlas $\cB$. 
However, for some purposes this may not be the most convenient atlas, so we will be looking at other orbifold atlases $\cE\tto M$ 
related to $\cB$ through a specified Morita equivalence
\begin{equation}\label{fixed-XCE} 
Q_{\cE}: \cE\simeq \cB.
\end{equation}

The organization of this section is similar to that of Section \ref{sec:gerbes:manifolds}, so we will describe first torsors over orbifolds and then gerbes over orbifolds. 

% Our discussion will follow the same pattern as in the smooth case: we will describe first torsors over orbifolds and then gerbes over orbifolds. 

%%%%%%%%%%%%%%%%%%%%%%%%%%%%%%
\subsection{Symplectic torsors over integral affine orbifolds}
\label{Symplectic torsors over integral affine orbifolds}
%%%%%%%%%%%%%%%%%%%%%%%%%%%%%%

\subsubsection{Torus orbibundles}
\label{ssec:Torus orbibundles}
 The notion of a {\bf torus orbibundle} $\cT$ over $(B,\cB)$ is very similar to the notion of vector orbibundle
(see Remark \ref{rem:cohom:orbifld}): they are represented by a $\cB$-torus bundle $\cT$, i.e. a torus bundle over $M$ endowed with an action of $\cB$:
\[
\xymatrix{
\cB\ar@<0.25pc>[dr] \ar@<-0.25pc>[dr]    & \ar@(dl, ul) &  \cT \ar[dl]\\
 & N &  }
\]
We can represent the torus orbibundle $\cT$ with respect to any other atlas $\cE\simeq \cB$ by a $\cE$-torus bundle $\cT_{\cE}$ over
the base of $\cE$. Indeed, any Morita equivalence 
\[
\xymatrix{
\cE_1\ar@<0.25pc>[d] \ar@<-0.25pc>[d]  & \ar@(dl, ul) & Q \ar[dll]_{\qq_1}\ar[drr]^{\qq_2} & \ar@(ur, dr) & \cE_2 \ar@<0.25pc>[d] \ar@<-0.25pc>[d] \\
N_1&  & & & N_2}
\]
gives rise to a 1-1 correspondence between (isomorphism classes of) $\cE_1$-torus bundles $\cT_1$ and $\cE_2$-torus bundles $\cT_2$.
The correspondence is defined by the condition that there is a left $\cE_1$-equivariant and right $\cE_2$-equivariant isomorphism of torus bundles:
% \begin{equation}\label{torus-by-Morita} 
\[
\qq_{1}^{*}\cT_1\cong \qq_{2}^{*}\cT_2.
\]
% \end{equation} 
Here, the left action of $\cE_1$ on $\qq_{1}^{*}\cT_1$ it is the lift of the original action of $\cE_1$ on $\cT_1$, while the left action on
$\qq_{2}^{*}\cT_2$ is the tautological one: the action of an arrow $g: x\to x'$ of $\cE_1$ takes an element $(u, \lambda)$ 
in the fiber of $\qq_{2}^{*}\cT_2$ above $x$, $\qq_1(u)= x$, $\lambda\in \cT_{2, y}$ where $y= \qq_2(x)= \qq_2(x')$, 
to the element $(g\cdot u, \lambda)$ in the fiber of $\qq_{2}^{*}\cT_2$ above $x'$. Similarly for the right actions. Explicitly,
starting from $\cT_1$ we obtain $\cT_2$ as $\qq_{1}^{*}\cT_1/\cE_1$.

As in \ref{ssec:to general torus bundles}, it is useful to look at the more general notion of {\bf $\cH$-principal bundle over $(B,\cB)$}, for an arbitrary Lie groupoid $\cH\tto M$. By that we mean a (right) $\cH$-principal bundle $P$ over $M$, together with a left action of $\cB$, 
\[
\xymatrix{
\cB\ar@<0.25pc>[d] \ar@<-0.25pc>[d]  & \ar@(dl, ul) & P \ar[dll]_{\pp}\ar[drr]^{\qq} & \ar@(ur, dr) & \cH \ar@<0.25pc>[d] \ar@<-0.25pc>[d] \\
N&  & & & M}
\]
such that all the axioms from the notion of Morita equivalence are satisfied, except for the condition that the action of $\cB$ is principal. 
We denote by $\Bun_{\cH}(\cB)$ the set of isomorphism classes of such bundles. 

A description in terms of transition functions/\v{C}ech cocycles similar to (\ref{coc-H1-Haefl}) works well when $\cB$ is \'etale. 
For that we need the {\bf embedding category $\Emb_{\cU}(\cB)$} associated to a basis $\cU$ for the topology of $N$ \cite{Moe03}.
This is the discrete category whose objects are all open sets $U\in\cU$, and whose arrows $\sigma:U\to V$ are bisections $\sigma:U\to\cB$ such that $s\circ \sigma={\rm id}_U$ and $t\circ \sigma:U\to V$ is an embedding. The composition of $\sigma$ with another arrow $\tau$ from $V$ to $W$ is given by 
\[ (\tau\circ \sigma)(x):= \tau(t(\sigma(x)) \cdot \sigma(x).\]
Given a principal $\cH$-bundle $P$ over $(B,\cB)$, one choses $\cU$ such that $\pp: P\to N$ has local sections $s_U$ over each $U\in \cU$. Fixing the $\{s_{U}\}_{U\in\cU}$, one obtains:
\begin{itemize}
\item for each $U\in \cU$, a smooth map $f_{U}: U\to M$, namely $f_U:= \qq\circ s_{U}$. 
\item for each arrow $\sigma: U\to V$ in $\Emb_{\cU}(\cB)$, a smooth map $g_{\sigma}: U\to \cH$ so that
% \begin{equation}\label{eq:defining-tr-fcts} 
\[
g_{\sigma}(x): f_U(x) \to f_{V}(t\sigma(x)),\quad \sigma(x)\cdot s_{U}(x)= s_{V}(t(\sigma(x)))\cdot g_{\sigma}(x).
\]
% \end{equation}
\end{itemize}
The collection $\{f_{U}, g_{\sigma}\}$ will satisfy the cocycle condition
\[ g_{\tau\circ \sigma}(x)= g_{\tau}(t(\sigma(x)))\cdot g_{\sigma}(x).\]
Conversely, $P$ can be recovered from the family $\{f_{U}, g_{\sigma}\}$ and it is clear that the usual discussion on transition functions/\v{C}ech cocycles extends to this setting.

\subsubsection{Symplectic torus orbibundles} For the notion of {\bf symplectic torus orbibundles} the discussion is simpler when one works 
with an \'etale atlases $\cE$: we just require that $\cT_{\cE}$ comes with a symplectic structure which is multiplicative and which is invariant under the action of $\cE$. Passing to a non-\'etale $\cB\tto N$, the non-degeneracy of the 2-form is lost and the symplectic torus orbibundle is represented by a {\it presymplectic} torus bundle $(\cT, \omega_{\cT})\tto N$ satisfying (see Proposition \ref{prop:IAS-presympl-torus}):
\begin{enumerate}[(i)]
\item the kernel of $\omega_\cT$ coincides with the image of the infinitesimal action of $\cB$.
\item the action of $\cB$ on $(\cT, \omega_{\cT})$ is presymplectic (exactly as in Lemma \ref{lemma:action-Hol-on-T}).
\end{enumerate}
The first condition holds if and only if holds at units, i.e. if
\begin{enumerate}[(i)']
\item the foliation induced by  $(\cT, \omega_{\cT})$ on $M$ coincides with the orbits of $\cB\tto M$.
\end{enumerate}
Moreover, when $\cB$ is s-connected then (ii) follows from (i) (see Appendix \ref{appendix:moment:maps}). 

As in the smooth case, one obtains a 1-1 correspondence: 
\[ 
\left\{\txt{integral affine\\ structures on $(B, \cB)$\\ \,}\right\}
\stackrel{1-1}{\longleftrightarrow}
\left\{\txt{isomorphism classes of\\ symplectic torus bundles over $(B, \cB)$\\ \,} \right\}
\]

\begin{example}\label{lemma:action-Hol-on-T-reform} 
The first part of Lemma \ref{lemma:Lambda-X-Mon}, together  with  Lemma \ref{lemma:action-Hol-on-T} can be reformulated 
as saying that an isotropic realization gives rise to a symplectic torus orbibundle $(\cT,\omega_\cT)$ over
the classical orbifold $M/\cF_{\pi}$ . 
%(hence endowed with the atlas $\Hol(M,\cF_\pi)\tto M$). 
% (with atlas $\Hol(M,\cF_\pi)\tto M$).
\end{example}

\subsubsection{$\cT$-torsors over an orbifold} Given a torus orbibundle $\cT$ over an orbifold $(B, \cB)$, a {\bf $\cT$-torsor} over $(B, \cB)$ is 
a $\cT$-torsor $X$ over the base $N$ of $\cB$, together with a (left) action of $\cB$ on $X$ which is compatible with 
the action of $\cT$ in the sense that for $u\in X$ and $\lambda\in \cT$ above $x\in N$ and $b: x\to y$ in $\cB$, one has: 
\[
\xymatrix{
\cB  \ar@<0.25pc>[dr] \ar@<-0.25pc>[dr]  & \ar@(dl, ul)    X \ar[d] \ar@(ur, dr)  &  \cT \ar[dl] \\
 & N &  }, \quad b\cdot (u\cdot \lambda)= (b\cdot u)\cdot b(\lambda),
\]
where $b(\lambda)$ refers to the action of $\cB$ on $\cT$. 

This notion can be transported along Morita equivalences, hence $X$ can be represented similarly with respect to any other atlases (\ref{fixed-XCE}). To see this directly is rather tedious, but the ``fibered point of view'' allows for a simple formal argument.

\begin{remark}[Fibered point of view] 
\label{rk:Morita-category}
As in the previous section when $(B,\cB)$ was a manifold, it is useful to realize that all our objects are ``$\cB$-fibered", i.e. they come with a ``map'' into $\cB$. For example, while torus bundles over a manifold $B$ can be looked at as group-like objects in the category $\textrm{Man}_{B}$ of manifolds fibered over $B$ (see Remark \ref{rk:B-fibered}), one should think of a torus bundle $\cT$ over $(B,\cB)$ as encoding a group-like object in the category of groupoids fibered over $\cB$, namely $\cT\Join \cB \tto N$ with the obvious projection into $\cB$. 

Furthermore, it is useful to relax the notion of ``map'' between groupoids from smooth functors to Morita maps \cite{Hae} (also called Hilsum-Skandalis maps \cite{Ha08,Mrcun}). For these generalized maps, the resulting ``generalized isomorphisms" will be precisely the Morita equivalences. One thinks of a principal $\cG_2$-bundle over $\cG_1$ (in the sense described above) as a graph of a map from $\cG_1$ to $\cG_2$. Hence such a principal bundle will also  be called a {\bf Morita map} from $\cG_1$ to $\cG_2$ and we set:
\[ \cM(\cG_1, \cG_2)= \Bun_{\cG_2}(\cG_1) .\]
Given a third groupoid $\cG_3$ one has a composition operation: 
\[ \cM(\cG_1, \cG_2)\times \cM(\cG_2, \cG_3)\to \cM(\cG_1, \cG_3),\quad
(P, Q)\mapsto Q\circ P= P\times_{M_2}Q/\cG_2.\]
Moreover, any smooth morphism $F: \cG_1\to \cG_2$ can be thought  of as a  Morita map
\[ P_{F}:= M_1\times_{M_2} \cG_2 \in \cM(\cG_1, \cG_2),\]
where the actions of $\cG_i$ on $P_{F}$ are the obvious ones. More details on Morita maps can be found in \cite{ALR, Ha08, Lerman,MM, Mrcun}.

Now, while a torus bundle $\cT$ over $(B,\cB)$ may be thought of as encoding the groupoid $\cT \Join \cB$ endowed with the obvious projection into $\cB$, a $\cT$-torsor $X$ over $(B,\cB)$ should be thought of as encoding a left principal bibundle $X\times_{N}\cB$:
\[ \xymatrix{
\cB \ar@<0.25pc>[d] \ar@<-0.25pc>[d]  & \ar@(dl, ul) & X\times_{N}\cB \ar[dll]_{\qq_1}\ar[drr]^{\qq_2} & \ar@(ur, dr) & \cT \Join \cB \ar@<0.25pc>[d] \ar@<-0.25pc>[d] \\
N&  & & & N}\]
where $\qq_1(u, b)= p_X(u)$, $\qq_2(u, b)= s(b)$ and the left/right actions on an element $(u, b)\in  X\times_{N}\cB$ are:
\[ b_0\cdot (u, b)= (b_0\cdot u, b_0b), \quad  (u, b)\cdot (\lambda, b_1)= (u\cdot b(\lambda), bb_1).\]
This bibundle describes a Morita map from $\cB$ to $\cT\Join \cB$, which is a right inverse to the projection 
$\cT\Join \cB\to \cB$. Conversely, from a Morita map $P\in \cM(\cB, \cT\Join \cB)$ right inverse to the projection $\pr_{\cB}: \cT\Join \cB\to \cB$, 
one recovers $X$ as $X:= P/\cB$, with $p_X$ induced by $\qq_1$, and the obvious left action of $\cB$. For the right action of $\cT$ on $X$
one remarks that the condition that $P$ be a right inverse makes $P$ into a principal $\cT$-bundle over $\cB$, with some projection map $\pr: P\to \cB$. 
Using this, for $u\in X$ and $\lambda\in \cT$ in the fiber above $p_X(u)$, one defines 
\[ u\cdot \lambda:= p\cdot b^{-1}(\lambda)\]
where $p\in P$ is any representative of $u$ and $b=\pr(p)$. All together, one obtains:

\begin{lemma}\label{lemma:reinterpret-torsors} The construction $X\mapsto X\times_{N}\cB$ describes, up to isomorphism, a 1-1 correspondence between:
\begin{enumerate}[(i)]
\item $\cT$-torsors $X$ over $(B,\cB)$.
\item Morita maps $P\in \cM(\cB, \cT\Join \cB)$ which are sections of $\pr_{\cB}: \cT\Join \cB\to \cB$.
\end{enumerate}
\end{lemma}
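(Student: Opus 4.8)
\textbf{Proof plan for Lemma \ref{lemma:reinterpret-torsors}.}

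The plan is to set up an explicit pair of mutually inverse constructions between the two sides and check that they are well-defined up to isomorphism and compatible with all the structure. In one direction, given a $\cT$-torsor $X$ over $(B,\cB)$, I would form the bibundle $P := X\times_N \cB$ with the maps $\qq_1, \qq_2$ and the left/right actions written out in the statement, and verify: (a) $\qq_1$ and $\qq_2$ are surjective submersions (immediate, since $p_X$ and $s$ are); (b) the left $\cB$-action is principal along $\qq_2$ (this uses that $p_X\colon X\to N$ is a submersion, so that $\qq_2$ admits local sections, and that the diagonal-type formula $b_0\cdot(u,b)=(b_0\cdot u, b_0 b)$ is free because $b_0 b$ determines $b_0$); (c) the right $\cT\Join\cB$-action makes $\qq_1$ into a principal bundle (here one uses freeness and transitivity of the $\cT$-action on the fibers of $p_X$, together with the compatibility identity $b\cdot(u\cdot\lambda)=(b\cdot u)\cdot b(\lambda)$ to see the two actions commute); (d) the projection $\pr_\cB\colon \cT\Join\cB\to\cB$, viewed as a Morita map $P_{\pr_\cB}$, composed with $P$, is isomorphic to the identity bibundle of $\cB$ — i.e. $P$ is a section of $\pr_\cB$ in the sense of the Morita-map composition recalled in Remark \ref{rk:Morita-category}. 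Point (d) is really just the observation that $\qq_1\colon P\to N$ with the left $\cB$-action is a principal $\cB$-bundle, so $P/(\cT\Join\cB)\cong \cB$ canonically.

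In the other direction, given a Morita map $P\in\cM(\cB,\cT\Join\cB)$ that is a section of $\pr_\cB$, I would recover $X:=P/\cB$ (quotient by the left $\cB$-action), with structure submersion $p_X$ induced by $\qq_1$ and left $\cB$-action descending from the residual left action of $\cB$ on $P$ coming from functoriality. The key point is that the section condition forces the composite $P\circ P_{\pr_\cB}\cong \mathrm{id}_\cB$, which, unwinding, says that $\pr\colon P\to\cB$ — the composition of $\qq_2$ with the projection $\cT\Join\cB\to\cB$ — makes $P$ into a \emph{principal $\cT$-bundle over} $\cB$. This is exactly what is needed to define $u\cdot\lambda := [p\cdot b^{-1}(\lambda)]$ for a representative $p\in P$ of $u\in X$ with $b=\pr(p)$; I would check this is independent of the choice of $p$ (using that $b$ changes by left multiplication under the $\cB$-action and the diagonal formula for the $\cB$-action on $P$ absorbs exactly the twist), that it is free and fiberwise transitive (from principality of $P$ over $\cB$), and that the compatibility identity with the $\cB$-action on $X$ holds. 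Then verify the two constructions are inverse: starting from $X$, one has $P/\cB = (X\times_N\cB)/\cB \cong X$ canonically, and the reconstructed $\cT$-action matches the original; starting from $P$, the bibundle $(P/\cB)\times_N\cB$ is canonically isomorphic to $P$ via $(u,b)\mapsto$ (the unique representative of $u$ lying over $b$). Finally I would remark that both constructions are manifestly natural with respect to isomorphisms, giving the claimed bijection of isomorphism classes.

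The main obstacle, I expect, is not any single computation but rather keeping the bookkeeping of the three commuting actions (two of $\cB$ — one "tautological/diagonal", one "residual" — and one of $\cT$) straight, and in particular pinning down precisely why the section condition is equivalent to "$\pr\colon P\to\cB$ is principal $\cT$-bundle": this is the hinge of the whole argument. I would isolate it as a small sublemma — reinterpreting $\cM(\cB,\cT\Join\cB)$-maps over $\cB$ as $\cB$-equivariant families of $\cT$-torsors — proved by a direct diagram chase in the Morita bicategory, appealing to the composition formula $Q\circ P = P\times_{M_2}Q/\cG_2$ from Remark \ref{rk:Morita-category} and the fact that $P_{\pr_\cB}$ is the bibundle $N\times_N(\cT\Join\cB)=\cT\Join\cB$ with its canonical actions. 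Once that sublemma is in place the rest is routine verification of submersion, freeness and commutativity properties, which I would present compactly rather than in full detail.
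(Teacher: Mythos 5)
Your proposal is correct and follows essentially the same route as the paper: the same forward construction $X\mapsto X\times_N\cB$ with the same action formulas, the same inverse $X:=P/\cB$, and the same key observation that the section condition makes $\pr\colon P\to\cB$ into a principal $\cT$-bundle, which is exactly what the paper uses to define $u\cdot\lambda:=p\cdot b^{-1}(\lambda)$. The paper in fact states the lemma with even less detail (the preceding discussion in Remark \ref{rk:Morita-category} serves as its proof), so your additional verifications of principality and mutual inverseness only fill in bookkeeping the paper leaves implicit.
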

\end{remark}

\subsubsection{Symplectic $\cT$-torsors over an orbifold}
In the case of presymplectic torus bundles $(\cT, \omega_{\cT})$ one can talk about {\bf symplectic $(\cT, \omega_{\cT})$-torsors over $(B,\cB)$}: 
then $X$ comes with a closed 2-form $\Oga$ and one requires that both actions $X\times_{N} \cT\to X$ and $\cB\times_{N}X \to X$ are compatible with the 2-forms that are present: 
\[
\xymatrix{
(\cB,0)  \ar@<0.2pc>[drr] \ar@<-0.2pc>[drr]  & \ar@(dl, ul) &   (X,\Oga) \ar[d] & \ar@(ur, dr)  &  (\cT,\omega_\cT) \ar[dll] \\
&  & N & & }
 \]
%\[
%\xymatrix{
%(\cB,0)  \ar@<0.25pc>[dr] \ar@<-0.25pc>[dr]  & \ar@(dl, ul)  &  (X,\omega_X) \ar[d] \ar@(ur, dr)  &  (\cT,\omega_{\cT}) \ar[dl] \\
% & N &  }
%\]
%Here $\cB$ is equipped with the zero form. 
Again, the situation is simpler when $\cB$ is \'etale, when the condition on the 
left action simply says that the form on $X$ is $\cB$-invariant. 
% For the fibered point of view, one has a version of 
Lemma \ref{lemma:reinterpret-torsors} has an obvious ``symplectic version", which uses 
(pre)symplectic bibundles (Appendix \ref{appendix:moment:maps}).

%%%%%%%%%%%%%%%%%%%%%%%%%%%%%%
\subsection{Symplectic gerbes over integral affine orbifolds}
\label{Symplectic gerbes over integral affine orbifolds}
%%%%%%%%%%%%%%%%%%%%%%%%%%%%%%

\subsubsection{Gerbes over orbifolds} We now fix  a torus bundle $\cT$  over the orbifold $(B, \cB)$.
% ; recall that  $\cB$ and $\cT$ are defined over $N$.  
Given an arbitrary atlas $Q_{\cE}: \cE\simeq \cB$, defined over some other manifold $M$ (so $\cE\tto M$), we are interested in extensions of type
\begin{equation}
\label{eq:gerbe-orb} 
\xymatrix{1\ar[r] & \cT_{\cE} \ar[r]^{i} & \cG \ar[r]^-{\pr} & \cE \ar[r]& 1}
\end{equation}
Here $\cT_{\cE}\to M$ is the $\cE$-torus bundle which represents $\cT$ in the new atlas. 
Such an extension induces an action of $\cE$ on $\cT_{\cE}$ obtained by lifting the elements of $\cE$ to elements of $\cG$ and conjugating by the lifts.
A {\bf central extension} is one for which the induced action coincides with the original action of $\cE$ on $\cT_{\cE}$. From now on we consider only central extensions, with no further notice.

For the notion of Morita equivalence, suppose we are given two such extensions $\cG_i$, relative to two foliated orbifold atlases $\cE_i\tto M_i$. Notice that the two atlases come with a specific Morita equivalence $Q$ between them
\[
\xymatrix{
\cE_1\ar@<0.25pc>[d] \ar@<-0.25pc>[d]  & \ar@(dl, ul) & Q \ar[dll]^{\underline{\qq}_1}\ar[drr]_{\underline{\qq}_2} & \ar@(ur, dr) & \cE_2 \ar@<0.25pc>[d] \ar@<-0.25pc>[d] \\
M_1&  & & & M_2}
\]
where $Q= Q_{\cE_1, \cE_2}$ is obtained by composing $Q_{\cE_1}: \cE_1\simeq \cB$ with the inverse of $Q_{\cE_2}: \cE_2\simeq \cB$. Via this Morita equivalence, $\cT_{\cE_1}$ corresponds to $\cT_{\cE_2}$, i.e. 
one has an isomorphism between the pull-back bundles
\begin{equation}\label{T-pull-back-identific} 
Q_*: \underline{\qq}_1^*\cT_{\cE_1}\cong \underline{\qq}_2^*\cT_{\cE_2};
\end{equation} 
Hence, for every $u\in Q$, denoting $\underline{\qq}_1(u)= x_1$, $\underline{\qq}_2(u)= x_2$,  one has an isomorphism 
\begin{equation}\label{eq:identif-pb-tori} 
\cT_{\cE_2, x_2}\cong \cT_{\cE_1, x_1}, \ \textrm{denoted}\ \lambda \mapsto \lambda^u.
\end{equation}

\begin{definition}
A {\bf  Morita equivalence of extensions} \eqref{eq:gerbe-orb}  is a Morita equivalence $P$ between the two groupoids,
\[
\xymatrix{
\cG_1\ar@<0.25pc>[d] \ar@<-0.25pc>[d]  & \ar@(dl, ul) & P \ar[dll]^{\qq_1}\ar[drr]_{\qq_2} & \ar@(ur, dr) & \cG_2 \ar@<0.25pc>[d] \ar@<-0.25pc>[d] \\
M_1&  & & & M_2},
\]
together with a submersion $\pr: P\to Q$ satisfying the following properties:
\begin{enumerate}[(i)]
\item it is left $\cG_1$-equivariant and right $\cG_2$-equivariant, where the actions of $\cG_i$ on $Q$ are induced from the actions of $\cE_i$
via the submersions $\cG_i\to \cE_i$;
\item it is a (left) principal $\cT_{\cE_1}$-bundle and a (right) principal $\cT_{\cE_2}$-bundle;
\item the two torus actions on $P$ are compatible: if $\widetilde{u}\in P$ and $u=\pr(\widetilde{u})\in Q$, then
\[ \widetilde{u}\cdot \lambda= \lambda^{u}\cdot \widetilde{u},\quad \forall\  \lambda\in \cT_{\cE_2,\qq_2(u)}.\]
\end{enumerate}
\end{definition}

\begin{example}[Pullback of extensions]
\label{ex:gerbes:pull-back:restrictions}
Central extensions cannot be trans\-por\-ted along general Morita maps or equivalences, so they behave quite differently than the other types
of objects over orbifolds that we looked at so far. However, extensions can be pulled-back. First of all, an atlas $Q_\cE: \cE\simeq \cB$ for the orbifold $(B, \cB)$, with $\cE\tto M$, can be pulled-back along any smooth map 
\[ \qq: P\to M\] 
that is transverse to the leaves of the orbit foliation on $M$ induced by $\cE$. It gives rise to the new atlas $\qq^*\cE \tto P$, where $\qq^*\cE= P\times_{M} \cE\times_{M} P$ is the pull-back groupoid and $Q_{\qq^*\cE}= P\times_{M} Q_{\cE}$. Now, given an extension (\ref{eq:gerbe-orb}) over $\cE$, one has a pull-back extension over the orbifold atlas $\qq^*\cE$:
\[ \xymatrix{1\ar[r] & \cT_{\qq^*\cE}  \ar[r]& \qq^*\cG \ar[r] & \qq^*\cE \ar[r] & 1} ,\]
where $\qq^*\cG$ is the pull-back groupoid, while $\cT_{\qq^*\cE}$ is the pull-back bundle. Notice that $\cG\times_{M} P$ gives a Morita bibundle between $\cG$ and $\qq^*\cG$. 

In particular, extensions can be pulled-back via submersions, without affecting the Morita class. One can also take for $\qq$ the inclusion of a complete transversal, and conclude that any extension is Morita equivalent to one over an \'etale atlas.
\end{example}

The pull-back operation can be used to reinterpret and even redefine the notion of Morita equivalence of extensions. First, a Morita equivalence
$Q: \cE_1\simeq \cE_2$ induces an isomorphism of Lie groupoids 
\begin{equation}\label{ME-pull-back-identific} 
Q_*: \underline{\qq}_1^*\cE_1\cong \underline{\qq}_2^*\cE_2.
\end{equation}
Explicitly, a point $(u, \gamma_1, u')\in Q\times_{M_1} \cE_1\times_{M_1} Q= \underline{\qq}_{1}^{*}\cE_1$ corresponds to the point $(u, \gamma_2, u')$  where $\gamma_2$ is uniquely determined by the condition $\gamma_1\cdot u'= u\cdot \gamma_2$. In a general Morita equivalence between extensions, the conditions on $P$ ensure that the similar isomorphism $P_*:\qq_1^*\cG_1\cong \qq_2^*\cG_2$ is an isomorphism of extensions (i.e. not only of groupoids) that is compatible with $Q$. More precisely, one has an isomorphism of commutative diagrams: 
\[ \xymatrix{
1\ar[r] & \cT_{\cE_1}\ar[d]_{\pr^*Q_*}  \ar[r] & \qq_{1}^*\cG_1\ar[d]_-{P_*} \ar[r] & \qq_{1}^*\cE_1 \ar[d]_-{\pr^*Q_*} \ar[r] & 1\\
1\ar[r] & \cT_{\cE_2}  \ar[r] & \qq_{2}^*\cG_2 \ar[r] & \qq_{2}^*\cE_2 \ar[r] & 1},\]
where the left and right vertical maps are the pull-backs of (\ref{ME-pull-back-identific}) and (\ref{T-pull-back-identific}) via $\pr: P\to X$ and we use $\qq_{i}^{*}= \pr^*\circ \underline{\qq}_{i}^{*}$. From the previous example, we conclude:

\begin{corollary} Two central extensions $\cG_i$ over $M_i$ $(i=1, 2)$, are Morita equivalent if and only if there is a manifold $P$ 
together with submersions $\qq_i: P\to M_i$ such that the pull-backs $\qq_{i}^{*}\cG_i$ are isomorphic extensions. 
\end{corollary}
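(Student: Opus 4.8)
The plan is to show the two directions separately, using the pull-back machinery from Example \ref{ex:gerbes:pull-back:restrictions} to reduce everything to a single manifold carrying compatible pulled-back extensions. For the forward implication, I would start from a Morita equivalence of extensions as in the Definition preceding the Corollary: a bibundle $P$ between $\cG_1$ and $\cG_2$ together with the submersion $\pr: P\to Q$ satisfying (i)--(iii). The key observation, which I would then spell out, is that composing $\pr$ with the moment maps of $Q$ gives submersions $\qq_i = \underline{\qq}_i\circ \pr: P\to M_i$; these are indeed submersions because $\pr$ is a submersion onto $Q$ and the $\underline{\qq}_i: Q\to M_i$ are submersions (being moment maps of a Morita equivalence). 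With these in hand, the discussion just after \eqref{ME-pull-back-identific} produces the isomorphism $P_*: \qq_1^*\cG_1\cong \qq_2^*\cG_2$, and property (iii) together with the compatibility of $\pr$ with the torus actions is exactly what is needed to upgrade $P_*$ to an isomorphism of extensions fitting into the commutative diagram displayed in the excerpt. So in this direction the work is essentially organizational: unwind the definitions, check that the maps assemble into the stated isomorphism of short exact sequences, and note that $\qq_i^* = \pr^*\circ\underline{\qq}_i^*$ so the pull-back groupoids are the honest pull-backs of $\cG_i$ along $\qq_i$.

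For the reverse implication, suppose we are given $P$ with submersions $\qq_i: P\to M_i$ and an isomorphism of extensions $\Phi: \qq_1^*\cG_1\cong \qq_2^*\cG_2$. First I would observe that from the underlying isomorphism of base-groupoid extensions we obtain, by projecting $\Phi$ modulo the torus bundles, an isomorphism $\qq_1^*\cE_1\cong \qq_2^*\cE_2$; since $\qq_i^*\cE_i$ is Morita equivalent to $\cE_i$ (via the bibundle $\cE_i\times_{M_i}P$ of Example \ref{ex:gerbes:pull-back:restrictions}), this already gives a Morita equivalence $Q: \cE_1\simeq \cE_2$, and one checks it is the one compatible with the orbifold atlas structure, i.e.\ identified with $Q_{\cE_1,\cE_2}$. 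The substance is then to promote this to a Morita equivalence of the extensions themselves: I would take $P$ (as a manifold) with the two commuting actions of $\cG_1$ and $\cG_2$ obtained by transporting the tautological actions on $\qq_1^*\cG_1$ and $\qq_2^*\cG_2$ through $\Phi$ — equivalently, view $\cG_i\times_{M_i}P$ as the canonical Morita bibundle between $\cG_i$ and $\qq_i^*\cG_i$, and compose the bibundle for $\cG_1$, the isomorphism $\Phi$, and the inverse of the bibundle for $\cG_2$. The resulting bibundle $\widehat P$ comes with a natural submersion onto $Q$ (induced by the projections of $\qq_i^*\cG_i\to\qq_i^*\cE_i$), and one verifies properties (i)--(iii) of the Definition of a Morita equivalence of extensions: equivariance is inherited from $\Phi$ being a groupoid isomorphism, the principal $\cT_{\cE_i}$-bundle structures come from $\Phi$ restricting to the torus-bundle isomorphism, and the compatibility $\widetilde u\cdot\lambda = \lambda^u\cdot\widetilde u$ is precisely the statement that $\Phi$ intertwines the two torus actions through the identification $\lambda\mapsto\lambda^u$ of \eqref{eq:identif-pb-tori}.

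The main obstacle I anticipate is bookkeeping rather than a genuine conceptual difficulty: one must keep straight the several layers of pull-backs and the identification $\qq_i^* = \pr^*\circ\underline{\qq}_i^*$, and verify that the torus-action compatibility conditions match up under all the identifications — in particular that the ad hoc notation $\lambda\mapsto\lambda^u$ from \eqref{eq:identif-pb-tori} agrees with what $\Phi$ does fiberwise. A secondary subtle point is that $P$ in the reverse direction need not a priori be connected or ``efficient'' as a bibundle, but this is harmless: the composition of Morita bibundles with an isomorphism is again a Morita bibundle, and principality of the actions is automatic once one exhibits them as such a composition. I would therefore organize the reverse direction so that $\widehat P$ is manifestly a composition of honest Morita equivalences and isomorphisms, which makes the axioms essentially automatic and confines the real checking to the torus-compatibility clause (iii).
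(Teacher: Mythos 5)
Your proposal is correct and follows essentially the same route as the paper, which derives the forward direction from the isomorphism $P_*:\qq_1^*\cG_1\cong\qq_2^*\cG_2$ constructed in the paragraph preceding the Corollary, and the reverse direction from Example \ref{ex:gerbes:pull-back:restrictions} by composing the canonical Morita bibundles $\cG_i\times_{M_i}P$ between $\cG_i$ and $\qq_i^*\cG_i$ with the given isomorphism. Your version merely spells out the verification of axioms (i)--(iii) that the paper leaves implicit.
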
 

% After these preparations, we are ready to introduce:

\begin{definition}
Given a torus orbibundle $\cT$ over the orbifold $(B, \cB)$, a {\bf $\cT$-gerbe} over $(B, \cB)$ is a Morita equivalence class 
of central extensions (\ref{eq:gerbe-orb}). We say that the extension represents the gerbe over the atlas $Q_\cE: \cE\simeq \cB$.
\end{definition}

As we explained above, a general gerbe can be represented only over {\it some} or\-bi\-fold atlas.
Still, Example \ref{ex:gerbes:pull-back:restrictions} 
shows that if a gerbe is represented by an extension over the atlas $Q_\cE: \cE\simeq \cB$, with $\cE\tto M$, then we can pullback along any map $\qq:M'\to M$ transverse to the orbits of $\cE$. Letting $\qq$ be the inclusion of a complete transversal, we see that any gerbe can be represented over an \'etale atlas. 

\subsubsection{Group structure} We start by observing that any atlas $Q_\cE:\cE\simeq \cB$ for the orbifold $(B,\cB)$ comes with a trivial central extension 
\begin{equation}\label{eq:triv:ext:orbif} 
\xymatrix{1\ar[r] & \cT_{\cE} \ar[r] & \cT\Join \cE \ar[r]^-{\pr} & \cE \ar[r] & 1}
\end{equation}
whose Morita class does not depend on the choice of atlas $\cE$. These define {\bf the trivial $\cT$-gerbe} over the orbifold $(B, \cB)$.

\begin{example}\label{ex:confused} 
As in the smooth case, there are non-trivial extensions that may still represent the trivial gerbe. Actually, one can give a full characterization of such extensions, generalizing Corollary \ref{cor:trivial-gerbe} to the orbifold context.
The starting point is an orbifold atlas $Q_\cE: \cE\simeq \cB$, with $\cE\tto M$, and a $\cT_{\cE}$-torsor $X$ over $M$. This gives rise to the groupoid $\cE_{X}(M)= (X\times_M\cE\times_{M} X)/\cT_{\cE}$ exactly as in Definition \ref{def:E:integration} (hence the quotient is modulo the action (\ref{act-t-hop})),
which fits into a central extension 
% \begin{equation}\label{eq:changed-gauge-ext} 
\[
\xymatrix{1\ar[r] & \cT_{\cE} \ar[r] & \cE_X(M) \ar[r] & \cE \ar[r] & 1}.
\]
% \end{equation}
Moreover, exactly as in Proposition \ref{prop:Morita:E:holonomy}, using $X\times_M\cE$ as Morita bibundle, it follows that this extension represents
the trivial gerbe. For the converse, one uses an argument similar to
that of Lemma \ref{lemma:reinterpret-torsors}. 
% We would like to emphasize that in this construction there is no assumed relationship between $\cE$ and $X$ at all. On the other hand, the constructions 
% makes $X$ into a $\cT_{\cE}$-torsor over $\cE_{X}(M)$. 
\end{example}

The {\bf inverse $\cT$-gerbe} of the gerbe defined by an extension (\ref{eq:gerbe-orb}) is represented by the opposite extension:
\[ \xymatrix{1\ar[r] & \cT_{\cE} \ar[r]^{i} & \cG^{\opp} \ar[r]^-{\pr} & \cE^{\opp} \ar[r]& 1}\]
Note that inversion gives a groupoid isomorphism $\cE\cong \cE^\opp$, 
so this extension represents a well defined gerbe over the same orbifold.
\\

Next, for the fusion product of extensions, we start with two extensions $\cG_i$ as above, relative to two orbifold atlases $\cE_i\tto M_i$. We still denote by $Q$ the induced Morita equivalence between $\cE_1$ and $\cE_2$, with projections denoted $\qq_i: Q\to M_i$ $(i=1, 2)$. While the pull-backs of $\cE_i$ to $Q$ are isomorphic by (\ref{ME-pull-back-identific}), there is a more symmetric way to represent the resulting groupoid over $Q$, namely as the fibered product over $M_1$ (for $\ltimes$) and $M_2$ (for $\rtimes$):
\[ \cE_{1,2}:= \cE_1\ltimes Q \rtimes \cE_2.\]
Indeed, $(\gamma_1, u, \gamma_2)\mapsto (\gamma_1\cdot u, \gamma_1)$ identifies $\cE_{1,2}$ with $\underline{\qq}_{1}^{*}\cE_1$ (and similarly for $\underline{\qq}_{2}^{*}\cE_2$). The groupoid $\cE_{1, 2}\tto Q$ is another atlas for $(B, \cB)$ and the advantage of this point of view is that one can define similarly the groupoid 
\[ \cG_{1, 2}= \cG_1\ltimes Q\rtimes \cG_2\tto X \]
and the torus bundle:
\[ \quad \cT_{1, 2}= \cT_{\cE_1}\ltimes Q \rtimes \cT_{\cE_2}= \underline{\qq}_{1}^{*}\cT_{\cE_1}\otimes \underline{\qq}_{2}^{*}\cT_{\cE_2}.\]
We now act with the smaller torus bundle (\ref{T-pull-back-identific}) using, e.g. its left hand side $\underline{\qq}_{2}^{*}\cT_{\cE_2}$. The action is along the map $\cG_{1, 2}\to M_2$, $(g_1, u, g_2)\mapsto \underline{q}_2(u)$ and it is given by 
\[ (g_1, u, g_2)\cdot \lambda:= (g_1\cdot \lambda^{u}, u, \lambda\cdot g_2),\]
where we use again the notation (\ref{eq:identif-pb-tori}). Finally, we define the {\bf fusion product of extensions} by considering the groupoid
\[ \cG_1\star \cG_2:= (\cG_1\ltimes Q\rtimes \cG_2)/\cT_{1,2}\]
and the associated central extension over the orbifold atlas $\cE_{1, 2}$. 

It is tedious but straightforward to extend the discussion from the smooth case to conclude that the fusion product gives rise to a group structure on the set of $\cT$-gerbes over the orbifold $(B, \cB)$ with the above identity gerbe and inverse operation on gerbes.

\subsubsection{The Dixmier-Douady class} We proceed as in the smooth case, starting with a characterization of triviality. This is the following generalization of Lemma \ref{lemma:trivial-gerbe} to the orbifold case, which should now be obvious:

\begin{lemma}\label{lemma:triv-gerbe-orbifold} Given a central extension (\ref{eq:gerbe-orb}), the following are equivalent:
\begin{enumerate}[(a)]
\item it represents the trivial gerbe over $(B,\cB)$;
\item there exists a principal $\cG$-bundle $P$ over $\cB$ which lifts the principal $\cE$-bundle over $\cB$ given by the atlas
$Q_{\cE}: \cE\simeq \cB$. Equivalently: $P\in \cM(\cB, \cG)$ fits into a commutative diagram of Morita maps
\[ \xymatrix{
 & & \cG \ar[d]^{\pr} \\
 \cB \ar[rr]^-{Q_{\cE}} \ar@{-->}[rru]^{P} & & \cE } \]
\end{enumerate}
\end{lemma}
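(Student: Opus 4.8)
The statement is the orbifold analogue of Lemma \ref{lemma:trivial-gerbe} together with Corollary \ref{cor:trivial-gerbe}, and the proof follows the same pattern, with the only extra bookkeeping being that all objects now carry an action of the atlas groupoid $\cE$. First I would prove the implication $(a)\Rightarrow(b)$. Assume the extension \eqref{eq:gerbe-orb} is Morita equivalent to the trivial extension \eqref{eq:triv:ext:orbif} (possibly represented over another atlas, but by pulling back as in Example \ref{ex:gerbes:pull-back:restrictions} we may arrange both extensions to live over the same atlas $\cE\tto M$). Let $P$ be the bibundle implementing the Morita equivalence of extensions, with submersion $\pr: P\to Q$ onto the trivializing bibundle $Q$ of $\cE$ with itself; since here $Q$ may be taken to be $\cE$ itself (the identity Morita self-equivalence), $\pr$ becomes a map $P\to \cE$. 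The condition that $P$ be left $\cG_1$-equivariant and right $(\cT\Join\cE)$-equivariant, combined with the fact that the right $(\cT\Join\cE)$-action is free and proper, shows that $P/\cT$ is a well-defined manifold carrying a residual right $\cE$-action (coming from the $\cE$-factor of $\cT\Join\cE$) and a left $\cG_1$-action; tracing through the definitions, $P/\cT$ is exactly a principal $\cG$-bundle over $\cB$, and the composite $P/\cT \to \cE$ exhibits it as a lift of the atlas bibundle $Q_\cE$. This is the commutative diagram of Morita maps in (b).

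For $(b)\Rightarrow(a)$, suppose we are given $P\in\cM(\cB,\cG)$ fitting into the commutative diagram with $Q_\cE:\cB\simeq\cE$. Since $\pr:\cG\to\cE$ is a surjective submersion with kernel $\cT_\cE$, the pullback along the bibundle structure maps endows $P$ with a free and proper (left) $\cT_\cE$-action — this uses centrality of the extension, exactly as in the smooth case where the principal $\cG$-bundle automatically became a principal $\cT$-bundle. One then recovers the extension $\cG$ from $P$ and this $\cT_\cE$-action via the gauge/fusion construction: $\cG\cong (P\times_M\cE\times_M P)/\cT_\cE$ in the notation of Example \ref{ex:confused}, i.e. $\cG$ is the $\cE_X(M)$-type groupoid built from the $\cT_\cE$-torsor $P/\cB$. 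By the discussion in Example \ref{ex:confused}, such an extension represents the trivial gerbe, with the Morita equivalence implemented by the bibundle $(P/\cB)\times_M\cE$. Hence (a) holds.

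The step I expect to be the main obstacle is the careful verification, in the $(a)\Rightarrow(b)$ direction, that the quotient $P/\cT$ is genuinely a \emph{smooth} principal $\cG$-bundle over $\cB$ and that the induced map to $\cE$ is a Morita map compatible with $Q_\cE$: one has to check that the various commuting actions descend correctly and that the equivariance and principality axioms of a Morita map are preserved under the quotient. When the atlases are non-\'etale this requires keeping track of the compatibility identifications \eqref{T-pull-back-identific}–\eqref{eq:identif-pb-tori}, and when they are \'etale one can instead phrase everything in terms of the embedding category $\Emb_\cU(\cB)$ and \v{C}ech cocycles $\{f_U,g_\sigma\}$ as in Section \ref{ssec:Torus orbibundles}, reducing to the lifting problem for cocycles along $\pr:\cG\to\cE$. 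Since, as noted in Example \ref{ex:gerbes:pull-back:restrictions}, every gerbe is Morita equivalent to one over an \'etale atlas, it suffices to treat the \'etale case, and there the argument is a direct transcription of the smooth proof of Lemma \ref{lemma:trivial-gerbe}. This reduction is what makes the lemma "obvious'' once the framework of the previous subsections is in place, so the write-up can be kept short by invoking Example \ref{ex:gerbes:pull-back:restrictions} and Example \ref{ex:confused} and spelling out only the cocycle translation.
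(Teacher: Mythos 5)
The paper itself offers no proof of this lemma (it is declared ``obvious'' as the orbifold analogue of Lemma \ref{lemma:trivial-gerbe}), so the only benchmark is the smooth-case discussion preceding that lemma, and your proposal deviates from it at the one point where something could go wrong. In the direction $(a)\Rightarrow(b)$ the object you produce, $P/\cT$, is \emph{not} a principal $\cG$-bundle over $\cB$. By the centrality condition (iii) in the definition of Morita equivalence of extensions, the right $\cT$-action on the bibundle $P$ coincides, via $\lambda\mapsto\lambda^{u}$, with the left action of $\cT_{\cE}\subset\cG$; hence the orbits of the two torus actions agree, the left $\cG$-action on $P/\cT$ factors through $\cG/\cT_{\cE}=\cE$ and has isotropy containing $\cT$ at every point, and in fact $P/\cT\cong Q$ is just the Morita bibundle between the two atlases (in the smooth case $P/\cT\cong M$, the tautological principal $M\times_{B}M$-bundle). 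A dimension count confirms this: a principal $\cG$-bundle over $\cB$ must have the dimension of $M$ plus the rank $q$ of $\cT$, which is the dimension of $P$, not of $P/\cT$. The correct statement --- and the whole content of the smooth-case argument you are transposing --- is that $P$ \emph{itself} is the required principal $\cG$-bundle over $\cB$: centrality makes the $\cT$-part of the right $(\cT\Join\cB)$-action redundant, so the data of a Morita equivalence of extensions with the trivial one is exactly a left principal $\cG$-bundle over the base of $\cB$ with a commuting $\cB$-action, together with the $\cT$-principal equivariant submersion $\pr\colon P\to Q_{\cE}$ exhibiting it as a lift of $Q_{\cE}$. With this correction the implication becomes immediate, and reversible.

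Your $(b)\Rightarrow(a)$ direction, via the gauge construction of Example \ref{ex:confused} applied to $X=P/\cB$, is sound in outline (one still has to check that the $\cB$-action on $P$ is free and proper, which holds because $P$ lifts the principal bibundle $Q_{\cE}$, and that $P/\cB\to M$ is indeed a $\cT_{\cE}$-torsor), though the shorter route is simply to reverse the corrected argument above: equip $P$ with the redundant right $\cT$-action and verify directly the axioms of a Morita equivalence with the trivial extension. The closing reduction to an \'etale atlas via Example \ref{ex:gerbes:pull-back:restrictions} and a \v{C}ech lifting argument is a legitimate alternative, but it does not rescue the second paragraph as written.
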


Next, % we observe that, 
eventually after pulling back to a complete transversal $T\hookrightarrow N$, 
we may assume that the atlas $\cB\tto N$ is already \'etale. This allows us to use a \v{C}ech-type description of orbifold cohomology,
due to Moerdijk \cite{Moe03}, which we now recall. 

We consider abelian sheaves over $(B,\cB)$, i.e. sheaves $\cS$ of abelian groups on $N$ together with an action of $\cB$ from the right: hence any arrow  $x\to y$ of $\cB$ gives a group homomorphism between germs $\cS_y\to \cS_x$. These form an abelian category with enough injectives and the {\bf sheaf cohomology} groups $H^\bullet(\cB,\cS)$ are defined as the right derived functors associated to the functor $\Gamma(\cdot)^\cB$ of taking invariant sections. Hence:
\[ H^n(\cB,\cS):=H^n(\Gamma(I^\bullet)^\cB), \]
for some injective resolution $\xymatrix{0\ar[r]& \cS\ar[r]& I^0\ar[r]& I^1\ar[r] &\cdots}$. This sheaf cohomology is invariant under Morita equivalences. 

For the \v{C}ech cohomology description we use the embedding  category $\Emb_\cU(\cB)$ \cite[Section 7]{Moe03} (see Section \ref{ssec:Torus orbibundles}) . 
Similar to the transition functions for principal bundles over $\cB$, any abelian $\cB$-sheaf $\cS$ gives rise to a preseheaf
$\widetilde{\cS}$ on $\Emb_\cU(\cB)$, i.e. to a contravariant functor $\cA:\Emb_{\cU}(\cB)\to \mathbf{AbGrp}$. Explicitly, $\widetilde{\cS}$ associates to $U\in \cU$ the space $\cS(U)$ of sections of $\cS$ over $U$ and to an arrow $\sigma:U\to V$ of $\Emb_{\cU}(\cB)$ the map $\sigma^*: \cS(V)\to \cS(U)$ described as follows: for $\lambda\in \cS(V)$ the section $\sigma^*(\lambda)\in \cS(U)$ has germ at $x\in U$:
\[ \sigma^*(\lambda)_x=\lambda_{t(\sigma(x))}\cdot\sigma(x). \]
The \v{C}ech complex of $\cB$ relative to $\cU$ with coefficients in $\cS$, denoted $(\check{C}^{\bullet}_{\cU}(\cB ,\cS),\d)$, is defined as the standard complex computing the cohomology of the discrete category $\Emb_{\cU}(\cB)$ with coefficients in $\widetilde{\cS}$. That means: \begin{itemize}
\item a cocycle $c\in \check{C}^{n}_{\cU}(\cB ,\cS)$ is a map which associates to each string of n-composable arrows $\xymatrix@C=15pt{U_0&U_1\ar[l]_{\sigma_1}&\cdots \ar[l]_{\sigma_2} & U_n\ar[l]_{\sigma_n}}$ an element $c_{\sigma_1,\dots,\sigma_n}\in \cS(U_n)$.
\item the differential $\d: \check{C}^{n}_{\cU}(\cB ,\cS)\to \check{C}^{n+1}_{\cU}(\cB ,\cS)$ is given by 
\begin{align*} 
(\d c)_{\sigma_1,\dots,\sigma_{n+1}}=c_{\sigma_2,\dots,\sigma_{n+1}}+\sum_{i=1}^n (-1)^n &c_{\sigma_1,\dots,\sigma_i\sigma_{i+1},\dots,\sigma_{n+1}}+\\
&+(-1)^{n+1}\sigma_{n+1}^{*} \left( c_{\sigma_1,\dots,\sigma_n}\right) .
\end{align*}
\end{itemize}
We denote the resulting cohomology by $\check{H}^{\bullet}_{\cU}(\cB,\cS)$. The \v{C}ech complex and the \v{C}ech cohomology are functorial with respect to refinements of covers, so one can pass to colimits and define the {\bf \v{C}ech cohomology of the orbifold} $(B, \cB)$ 
with coefficients in $\cS$ as:
\[ \check{H}^\bullet(\cB,\cS)=\text{colim} \, H^{\bullet}_{\cU}(\cB,\cS).\] 

We now have the following result:

\begin{proposition}[\cite{CrMo}] 
If $H^i(U,\cS)=0$ for $i>0$ and all $U\in\cU$, then
\[ H^\bullet(\cB,\cS)\cong \check{H}^{\bullet}_{\cU}(\cB, \cS),\]
and these isomorphisms are compatible with taking refinements. In particular,
\[ \check{H}^\bullet(\cB,\cS)\cong H^\bullet(\cB,\cS).\]
\end{proposition}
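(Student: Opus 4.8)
The statement is the standard comparison between derived-functor sheaf cohomology and \v{C}ech cohomology for an abelian $\cB$-sheaf $\cS$, under the hypothesis that the chosen basis $\cU$ is "acyclic" in the sense that $H^i(U,\cS)=0$ for all $i>0$ and all $U\in\cU$. The proof is the \v{C}ech-to-derived-functor spectral sequence argument, transported to the orbifold (étale groupoid) setting. First I would set up a double complex: take an injective resolution $0\to\cS\to I^0\to I^1\to\cdots$ of $\cS$ in the category of abelian $\cB$-sheaves (which has enough injectives, as recalled in the text), and for each $q$ apply the \v{C}ech construction $\check C^{\bullet}_{\cU}(\cB,I^q)$ to the injective sheaf $I^q$. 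This gives a first-quadrant double complex $K^{p,q}=\check C^{p}_{\cU}(\cB,I^q)$ with the \v{C}ech differential in the $p$-direction and the resolution differential in the $q$-direction.

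The two spectral sequences of this double complex compute the same total cohomology, and I would identify each of them. Filtering by $q$ first: fixing $p$, the complex $\check C^{p}_{\cU}(\cB,I^{\bullet})$ is a product (over strings of composable arrows $\sigma_1,\dots,\sigma_p$ in $\Emb_{\cU}(\cB)$) of the complexes $I^{\bullet}(U_p)$, and since $0\to\cS(U_p)\to I^0(U_p)\to\cdots$ is exact for injective $I^q$ restricted to the open $U_p$ (injectives restrict to flasque, hence acyclic, on opens — or one simply uses that $\cS\mapsto\cS(U)$ is exact on injectives), this row computes $\check C^p_{\cU}(\cB,\cS)$ in degree $0$ and vanishes in higher degrees. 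Hence this spectral sequence degenerates at $E_2$ to $\check H^{\bullet}_{\cU}(\cB,\cS)$. Filtering by $p$ first: fixing $q$, the \v{C}ech complex of the \emph{injective} sheaf $I^q$ has cohomology concentrated in degree $0$ equal to $\Gamma(I^q)^{\cB}$ — this is the key computational lemma, and it holds because for an injective (hence, in particular, "locally acyclic" and in fact possessing the needed sheaf-theoretic softness) abelian $\cB$-sheaf, the augmented \v{C}ech complex $0\to\Gamma(I^q)^{\cB}\to\check C^{\bullet}_{\cU}(\cB,I^q)$ is exact; one proves this exactly as in Moerdijk's treatment, by exhibiting a contracting homotopy built from a partition of unity subordinate to $\cU$ and the groupoid structure, or by invoking that $\check H^{>0}_{\cU}(\cB,-)$ is effaceable. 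Thus this spectral sequence degenerates at $E_2$ to $H^{\bullet}(\Gamma(I^{\bullet})^{\cB})=H^{\bullet}(\cB,\cS)$.

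Comparing the two degenerations of the spectral sequences of the same double complex yields a canonical isomorphism $\check H^{\bullet}_{\cU}(\cB,\cS)\cong H^{\bullet}(\cB,\cS)$. Finally, for the last assertion one checks naturality with respect to refinements of the cover $\cU'\preceq\cU$: a refinement induces a morphism of the corresponding double complexes commuting with both filtrations, hence a commuting square of the induced isomorphisms; passing to the colimit over all $\cU$ (each of which may be chosen acyclic since on a manifold/orbifold one can always refine to covers by domains of charts, where all abelian sheaves of the relevant type are acyclic) then gives $\check H^{\bullet}(\cB,\cS)\cong H^{\bullet}(\cB,\cS)$.

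\textbf{Main obstacle.} The genuinely substantive point, and the one I expect to require the most care, is the acyclicity lemma "$\check H^{>0}_{\cU}(\cB,I^q)=0$ for injective $I^q$", i.e. the construction of the contracting homotopy for the augmented \v{C}ech complex of the embedding category $\Emb_{\cU}(\cB)$. In the non-orbifold case this is the familiar Godement/partition-of-unity argument, but here one must handle the twist by the groupoid action (the maps $\sigma^*$ use the germs $\sigma(x)$ of bisections), so the homotopy operator must be built $\cB$-equivariantly; this is precisely what Moerdijk \cite{Moe03} does, so in the write-up I would state the lemma, sketch the homotopy, and refer to \cite{Moe03} (and \cite{CrMo}) for the details rather than reproducing them. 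Everything else is formal homological algebra.
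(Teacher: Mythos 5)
The paper does not actually prove this proposition --- it is quoted from \cite{CrMo} --- and the argument in that reference is precisely the Cartan--Leray double-complex comparison you describe, so your overall route is the intended one. Your second spectral sequence (the key lemma that $\check{H}^{>0}_{\cU}(\cB,I)=0$ for injective $I$, with $\check{H}^{0}=\Gamma(I)^{\cB}$) and the refinement/colimit step are fine as sketched, with one caveat: the partition-of-unity homotopy only makes sense for sheaves of $\R$-modules, so for general abelian $\cB$-sheaves you should rely on the effaceability route you mention, or on the fact that the forgetful functor to sheaves on the base has an exact left adjoint (because $\cB$ is \'etale) and therefore sends injective $\cB$-sheaves to injective, hence flasque, sheaves.

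The one step that is not right as written is the first spectral sequence. You assert that the augmented column $0\to\cS(U_p)\to I^{0}(U_p)\to I^{1}(U_p)\to\cdots$ is exact ``since injectives restrict to flasque, hence acyclic, on opens,'' or alternatively because ``$\cS\mapsto\cS(U)$ is exact on injectives.'' Neither justification yields exactness. Flasqueness of the $I^{q}|_{U_p}$ only tells you that $\Gamma(U_p,I^{\bullet})$ computes $H^{\bullet}(U_p,\cS)$; the vanishing of the cohomology of that complex in positive degrees is, by definition, the statement $H^{i}(U_p,\cS)=0$ for $i>0$ --- that is, it is exactly the hypothesis of the proposition, not a formal consequence of injectivity. (The second justification is also beside the point, since $\cS$ itself is not injective, so the resolution is not a complex of injectives.) As written, your argument never invokes the acyclicity hypothesis, and without it the statement is false. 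The fix is a single sentence: the column computes $H^{\bullet}(U_p,\cS)$ because injective $\cB$-sheaves are flasque on the base, and this vanishes in positive degrees by the hypothesis on $\cU$. With that correction the proof is complete.
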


And here is a result that is relevant for our discussion on orbifolds and which makes essential use of properness. 

\begin{lemma} For any proper \'etale groupoid $\cB$ and a torus bundle $\cT$ over $\cB$, with corresponding lattice $\Lambda$, one has 
\[ H^{\bullet}(\cB, \ucT)\cong H^{\bullet+1}(\cB, \Lambda) .\]
Similarly, if $(\cT,\omega_\cT)$ is a symplectic torus bundle over a proper \'etale groupoid $\cB$: 
\[ H^{\bullet}(\cB, \ucT_{\Lagr})\cong H^{\bullet+1}(\cB, \cO_{\Lambda}) .\]
\end{lemma}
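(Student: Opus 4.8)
The plan is to deduce both isomorphisms from short exact sequences of abelian $\cB$-sheaves together with the vanishing of the cohomology of fine sheaves over a proper \'etale groupoid. Recall from the smooth discussion (Section \ref{ssec:torsors}) the two exponential-type sequences of sheaves on a manifold: the exponential sequence $\xymatrix@C=15pt{1\ar[r]& \Lambda\ar[r]& \mathfrak{t}\ar[r]^-{\exp}& \ucT\ar[r]& 1}$ and the Lagrangian analogue $\xymatrix@C=15pt{1\ar[r]& \cO_{\Lambda}\ar[r]& \cO\ar[r]^-{\d}& \ucT_{\Lagr}\ar[r]& 1}$. Each of these is a sequence of abelian $\cB$-sheaves on the base $N$ of the proper \'etale groupoid $\cB$: the $\cB$-action on $\mathfrak t$ (resp. $\cO$) is the natural one on sections of the Lie-algebra bundle (resp. on smooth functions), and it is compatible with the maps, since $\exp$ and $\d$ are equivariant. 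So the first step is simply to check equivariance of these sequences — routine but necessary.

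Next I would invoke the long exact sequence in groupoid sheaf cohomology $H^\bullet(\cB,-)$ (the right-derived functors of $\Gamma(-)^\cB$, as recalled just before the lemma). For the torus case this reads
\[ \xymatrix@C=16pt{\cdots\ar[r]& H^k(\cB,\mathfrak t)\ar[r]& H^k(\cB,\ucT)\ar[r]^-{\delta}& H^{k+1}(\cB,\Lambda)\ar[r]& H^{k+1}(\cB,\mathfrak t)\ar[r]&\cdots} \]
and similarly with $(\mathfrak t,\ucT,\Lambda)$ replaced by $(\cO,\ucT_{\Lagr},\cO_{\Lambda})$. Thus the connecting maps $\delta$ give the desired isomorphisms the moment we know $H^k(\cB,\mathfrak t)=0$ and $H^k(\cB,\cO)=0$ for all $k\ge 1$. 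The crux of the argument is therefore the vanishing of $H^{\ge 1}(\cB,\cE)$ for $\cE$ a fine $\cB$-sheaf — here $\mathfrak t$ and $\cO$ are both sheaves of $C^\infty_N$-modules.

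The main obstacle, and the only substantive point, is exactly this vanishing: why is the cohomology of a fine sheaf over a \emph{proper} \'etale groupoid concentrated in degree $0$? On a manifold this is the standard fact that $C^\infty$-modules are acyclic because partitions of unity make them soft/fine. Over a proper \'etale groupoid the analogue holds because proper groupoids admit \emph{invariant} partitions of unity: using the Haar system of the proper \'etale $\cB$ one can average an ordinary partition of unity subordinate to $\cU$ to a $\cB$-invariant one, and this yields an averaging operator showing that the \v{C}ech complex $\check C^\bullet_{\cU}(\cB,\cE)$ is acyclic in positive degrees for any fine $\cE$; combined with the comparison isomorphism $\check H^\bullet_{\cU}(\cB,\cS)\cong H^\bullet(\cB,\cS)$ (the Proposition of \cite{CrMo} quoted just above, applicable since fine sheaves have no higher cohomology on the opens $U\in\cU$), this gives $H^{\ge 1}(\cB,\mathfrak t)=H^{\ge 1}(\cB,\cO)=0$. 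I would prove this by constructing the contracting homotopy on $\check C^\bullet_{\cU}$ explicitly from an invariant partition of unity, exactly as in Moerdijk's treatment \cite{Moe03}.

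Finally I would assemble the pieces: the equivariant sequences give the long exact sequences; the vanishing just established forces $\delta$ to be an isomorphism in every degree; hence $H^{\bullet}(\cB,\ucT)\cong H^{\bullet+1}(\cB,\Lambda)$ and $H^{\bullet}(\cB,\ucT_{\Lagr})\cong H^{\bullet+1}(\cB,\cO_{\Lambda})$, as claimed. I expect the write-up to be short: essentially one paragraph for equivariance, one invoking the long exact sequence, and one for the invariant-partition-of-unity acyclicity, the last being the part that genuinely uses properness.
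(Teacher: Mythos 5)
Your proposal is correct and follows essentially the same route as the paper: the paper's proof consists precisely of the remark that properness of $\cB$ forces $H^{k}(\cB,\cS)=0$ for $k\geq 1$ whenever $\cS$ is a $\cB$-sheaf of $\R$-vector spaces that is fine over the base, followed by the long exact sequences coming from the exponential sequence $1\to\Lambda\to\mathfrak t\to\cT\to 1$ and the sequence $1\to\cO_{\Lambda}\to\cO\to\ucT_{\Lagr}\to 1$. The only difference is that you spell out the invariant-partition-of-unity argument behind the vanishing, which the paper states without proof.
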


\begin{proof} The key remark is that, due to the properness of $\cB\tto N$, for any $\cB$-sheaf $\cS$ of $\R$-vector spaces which is fine as a sheaf over the base manifold $N$, one has 
\[ H^{\bullet}(\cB, \cS)= 0,\quad \forall\ k \geq 1.\]
%The statement then follows by using the cohomology long exact sequences associated to (\ref{exp-seseq}) and (\ref{eq:ses-O-Lambda}). 
Use now the exact sequences induced by (\ref{exp-seseq}) and (\ref{eq:ses-O-Lambda}) in cohomology.
\end{proof} 

With all the cohomology apparatus in place, we can now proceed to define the Dixmier-Douady class of a gerbe over $(B,\cB)$. The construction is entirely similar to the smooth case, the main difference being that we have to start with an extension (\ref{eq:gerbe-orb}) over an atlas $\cE\tto M$ 
which may be different from $\cB$. We denote by $Q= Q_{\cE}$ the given Morita equivalence between $\cE$ and $\cB$ and we would like to measure the failure of extending $Q$, viewed as a principal $\cE$-bundle over $\cB$, to a principal $\cG$-bundle over $\cB$. This can be rephrased in terms of transition functions: we choose the basis $\cU$ of $N$ together with local sections $s_U: U\to Q$ of $\qq_2: Q\to N$, so that we can consider the transition system $\{f_U, g_{\sigma}\}$. We may assume that each $U\in \cU$ is contractible. For each arrow $\sigma: U\to V$ of $\Emb_{\cU}(\cB)$, the pull-back of $\pr: \cG\to \cE$ via $g_{\sigma}: U\to \cE$ gives a $\cT$-torsor $\cG_{\sigma}\to U$. Since $U$ is contractible it will admit a section, i.e. we obtain a lift $\widetilde{g}_{\sigma}: U\to \cG$ of $g_{\sigma}$. Of course, the cocycle condition may fail: for composable arrows $\xymatrix@C=15pt{U_0&U_1\ar[l]_{\sigma_1}&U_2 \ar[l]_{\sigma_2}}$ in $\Emb_\cU(\cB)$, one has a section of $\cT$ over $U$ given by 
\[ c_{\sigma_1, \sigma_2}:= \widetilde{g}_{\sigma_1\circ \sigma_2} \widetilde{g}_{\sigma_1} \widetilde{g}_{\sigma_2}\] 
and $(\sigma_1,\sigma_2)\mapsto c_{\sigma_1,\sigma_2}$ defines a cocyle in $C^{2}_{\cU}(\cB,\ucT)$. This gives rise to a class in cohomology which, by the usual arguments, is independent of the various choices:

\begin{definition}
The {\bf Dixmier-Douady class of the gerbe} represented by the extension (\ref{eq:gerbe-orb}) is
\[ \DD(\cG):=[c_{\sigma_1, \sigma_2}]\in H^2(\cB, \ucT)\cong H^3(\cB,\Lambda).\] 
\end{definition}

To see that $\DD(\cG)$ only depends on the Morita equivalence class, one first proves the additivity with respect to the fusion product:
\[  \DD(\cG_1\star \cG_{2})= \DD(\cG_1)+\DD(\cG_{2}). \]
This is done exactly like in the smooth case, but using the embedding category. Also, it is clear that $\DD(\cG^{\opp})= -\DD(\cG)$. Hence, if $\cG_1$ and $\cG_2$ are Morita equivalent then, since $\cG_1\star \cG_{2}^{\opp}$ is Morita equivalent to the trivial extension, we find:
\[ \DD(\cG_1)- \DD(\cG_2)= \DD(\cG_1\star \cG_{2}^{\opp})= 0.\]
Notice that, by construction, the vanishing of this class is equivalent to the fact that $\cG$ represents the trivial gerbe. In this way we have extended the discussion of gerbes from the smooth to the orbifold case. 

\begin{theorem} 
\label{DD-clas:orbi} 
Given a torus bundle $\cT$ over the orbifold $(B,\cB)$ with lattice $\Lambda$, the Dixmier-Douady class induces an isomorphism:
\[ \DD: \Gerbes_{(B,\cB)}(\cT) \to H^2(\cB, \ucT)\cong H^3(\cB,\Lambda). \]
\end{theorem}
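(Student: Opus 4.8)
The statement asserts that $\DD$ is a bijective group homomorphism onto $H^2(\cB,\ucT)$, together with the identification $H^2(\cB,\ucT)\cong H^3(\cB,\Lambda)$. The last isomorphism has already been isolated as the Lemma preceding the theorem (the properness of $\cB$ kills the higher cohomology of fine $\R$-sheaves, so the exponential sequence \eqref{exp-seseq} gives a degree shift), so the real content is that $\DD$ is a group isomorphism onto $H^2(\cB,\ucT)$. The plan is to mirror the four-step pattern used in the smooth case (Theorem \ref{DD-clas}): (i) $\DD$ is well-defined on Morita equivalence classes; (ii) $\DD$ is additive for the fusion product, hence a homomorphism; (iii) $\DD(\cG)=0$ iff $\cG$ represents the trivial gerbe, giving injectivity; (iv) surjectivity by a direct construction of an extension from a given \v{C}ech cocycle. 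Steps (i)--(iii) are in fact already sketched in the paragraph immediately following the definition of $\DD(\cG)$ in the excerpt, so the proposal is to assemble those remarks carefully and supply (iv).

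\textbf{Step-by-step.} First I would record that, after pulling back along the inclusion of a complete transversal (Example \ref{ex:gerbes:pull-back:restrictions}), every gerbe is represented by an extension over an \emph{\'etale} atlas, and both the \v{C}ech description of $\check H^\bullet(\cB,\ucT)$ and the construction of $\DD$ are invariant under such pullbacks; so one may assume $\cB\tto N$ is proper \'etale throughout. Next, for well-definedness I would check independence of the choices in the construction of the cocycle $c_{\sigma_1,\sigma_2}=\widetilde g_{\sigma_1\circ\sigma_2}\,\widetilde g_{\sigma_1}\,\widetilde g_{\sigma_2}$: a different choice of local lifts $\widetilde g_\sigma$ differs by sections $\lambda_\sigma\colon U\to\cT$, which changes $c$ by the \v{C}ech coboundary $\d\lambda$ in $C^\bullet_\cU(\cB,\ucT)$; passing to a refinement handles the choice of $\cU$ and of the sections $s_U$. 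For the homomorphism property I would unwind the fusion product $\cG_1\star\cG_2=(\cG_1\ltimes Q\rtimes\cG_2)/\cT_{1,2}$ over the atlas $\cE_{1,2}=\cE_1\ltimes Q\rtimes\cE_2$: choosing compatible transition data $\{f_U^{(a)},g_\sigma^{(a)}\}$ for $a=1,2$ one obtains transition data for the product whose lifts are (the classes of) $(\widetilde g^{(1)}_\sigma,\widetilde g^{(2)}_\sigma)$, and the resulting cocycle is literally $c^{(1)}_{\sigma_1,\sigma_2}+c^{(2)}_{\sigma_1,\sigma_2}$ — exactly as in the smooth case but bookkept with composable strings in $\Emb_\cU(\cB)$. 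Together with $\DD(\cG^{\opp})=-\DD(\cG)$, additivity gives that Morita-equivalent extensions (for which $\cG_1\star\cG_2^{\opp}$ is trivial) have equal class, so $\DD$ descends to a group homomorphism on $\Gerbes_{(B,\cB)}(\cT)$.

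\textbf{Injectivity and surjectivity.} Injectivity reduces, by the homomorphism property, to showing $\DD(\cG)=0$ implies $\cG$ trivial. If $c_{\sigma_1,\sigma_2}=\d\lambda$ then, after refining, correcting $\widetilde g_\sigma$ to $\overline g_\sigma:=\widetilde g_\sigma\cdot\lambda_\sigma^{-1}$ (using the central $\cT_\cE$-action) produces an honest cocycle $\{f_U,\overline g_\sigma\}$ with values in $\cG$, i.e. a principal $\cG$-bundle $P$ over $\cB$ lifting the atlas bundle $Q_\cE$; by Lemma \ref{lemma:triv-gerbe-orbifold} this is precisely triviality. For surjectivity, given $u\in H^2(\cB,\ucT)$ represented by a \v{C}ech cocycle $c_{\sigma_1,\sigma_2}$ on $\Emb_\cU(\cB)$, I would take $M:=\coprod_{U\in\cU}U$ with $\cE:=M\times_N\cB\times_N M$ the pulled-back \'etale atlas, so $\cT_\cE$ is the pullback torus bundle; the cocycle $c$ then defines a groupoid extension of $\cE$ by $\cT_\cE$ (the ``twisted product'' construction) whose Dixmier--Douady class is $u$ by construction. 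Finally, splicing in the preceding Lemma converts the target to $H^3(\cB,\Lambda)$. I expect the main obstacle to be Step (iii)--(iv) done cleanly over a general \'etale atlas that is \emph{not} $\cB$ itself: one must keep straight the three layers (the base $N$, the atlas $\cE$ vs. $\cB$, and the identifications \eqref{T-pull-back-identific}, \eqref{ME-pull-back-identific}) and verify that the \v{C}ech cohomology of $\Emb_\cU(\cB)$ used to value $\DD$ is genuinely Morita-invariant — all of which is implicit in Moerdijk's framework \cite{Moe03} but needs to be invoked with the exactness-of-refinement statement of the Proposition quoted just above. None of this requires new ideas beyond the smooth case; it is a matter of transcribing the argument into the embedding-category language, which is why I would present it as an adaptation rather than a fresh proof.
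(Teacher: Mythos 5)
Your proposal is correct and follows essentially the same route as the paper: the paper also reduces to an \'etale atlas, establishes additivity under fusion and $\DD(\cG^{\opp})=-\DD(\cG)$ to get Morita invariance, characterizes triviality via the cocycle-correction/lifting argument (Lemma \ref{lemma:triv-gerbe-orbifold}), and obtains surjectivity by building an extension from a given cocycle over a disjoint-union base, all transcribed into the embedding-category language of \cite{Moe03}. The only difference is one of presentation: you spell out steps the paper leaves as ``exactly like in the smooth case.''
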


\subsubsection{Symplectic gerbes}
The symplectic version of gerbes over an orbifold should now be obvious. One starts with a symplectic torus bundle $(\cT, \omega_{\cT})$ over the orbifold $(B, \cB)$ or, equivalently, with an integral affine structure. Then one looks at symplectic central extensions of the form
\begin{equation}
\label{eq:sympl-gerbe-orb-sympl} 
\xymatrix{1\ar[r] & \cT_{\cE} \ar[r]^{i} & (\cG, \Omega) \ar[r]^-{\pr} & \cE \ar[r]& 1},
\end{equation}
where $(\cG, \Omega)$ is now a symplectic groupoid and $i^*\Omega=\omega_{\cT}$. Similarly for the notion of symplectic Morita equivalence, using symplectic Morita bibundles (Appendix \ref{App:Symplectic Morita equivalence}).
Therefore a {\bf symplectic $(\cT, \omega_{\cT})$-gerbe} over $(B, \cB)$ is a symplectic Morita equivalence class of such symplectic central extensions. 

The group structure on gerbes is, this time, a bit more subtle; e.g., even the trivial extension (\ref{eq:triv:ext:orbif}) may fail to be symplectic. The most satisfactory solution is to consider presymplectic extensions (see  Section \ref{The twisted Dirac setting}). The shortest solution is to pass to an \'etale atlas. % , e.g. by restricting to a transversal. 
Indeed, if $\cB$ is \'etale, then:

% As explained above, after passing to a transversal, one may assume that $\cB$ is \'etale, since the resulting extension will still be symplectic. For example, one has a group structure on symplectic gerbes, since:
\begin{itemize}
\item The trivial central extension \eqref{eq:triv:ext:orbif} is now symplectic, yielding the trivial symplectic gerbe. Similarly, the inverse of a symplectic gerbe is symplectic.
\item The fusion product of symplectic extensions is a symplectic extension, and is independent of the symplectic Morita class.
\end{itemize}
For the construction of the Lagrangian Dixmier-Douady class of a symplectic gerbe, we assume again that $\cB$ is \'etale,
and we proceed exactly as in the non-symplectic case, except that in the construction above one now choses \emph{Lagrangian} sections
$s_U: U\to Q$, making use of Corollary \ref{lemma:help-c-constr}. This leads to:

\begin{theorem}
\label{thm:Lagrangian:class:symp:gerbe-orbi} 
Given a symplectic groupoid $(\cG,\Omega)\tto M$ fitting into an extension \eqref{eq:sympl-gerbe-orb-sympl} there is an associated cohomology class:
\[ \DD(\cG,\Omega)\in H^2(\cB, \underline{\cT}_{\Lagr}).\]
Moreover:
\begin{enumerate}[(i)]
\item this construction induces an isomorphism between the group of symplectic $\cT$-gerbes over $(B,\cB)$ and $H^2(\cB, \underline{\cT}_{\Lagr})$. 
\item $\DD(\cG, \Omega)= 0$ if and only if $(\cG, \Omega)$ is isomorphic to $\cE_X(M, \pi)$, the $\cE$-integration of $(M, \pi)$ relative to a proper isotropic realization $(X, \Oga)\to (M, \pi)$. % (cf. Definition \ref{def:E:integration}).
\end{enumerate}
\end{theorem}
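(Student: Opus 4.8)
The statement is the orbifold analogue of Theorem \ref{thm:Lagrangian:class:symp:gerbe}, so the plan is to follow that proof essentially verbatim, making the three adaptations flagged at the start of Section \ref{sec:gerbes:manifolds} (replace $\S^1$ by the torus bundle $\cT$; add the symplectic/Lagrangian layer; and, here, pass from a manifold base to an orbifold base via Haefliger's philosophy). First I would reduce to an \'etale atlas: by Example \ref{ex:gerbes:pull-back:restrictions}, any symplectic extension \eqref{eq:sympl-gerbe-orb-sympl} is symplectically Morita equivalent to one over an \'etale atlas $\cB\tto N$ (pull back along the inclusion of a complete transversal, which does not affect the Morita class nor the symplectic structure up to the relevant equivalence). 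Over an \'etale atlas the trivial extension \eqref{eq:triv:ext:orbif} is genuinely symplectic, the inverse of a symplectic extension is symplectic, and the fusion product of symplectic extensions is symplectic and independent of the symplectic Morita class -- exactly as recorded in the bullet points preceding the statement. Thus $\Gerbes_{(B,\cB)}(\cT,\omega_\cT)$ is a well-defined abelian group and it suffices to construct $\DD$ and check it is an isomorphism at the level of \'etale atlases.

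\textbf{Construction of $\DD(\cG,\Omega)$ and independence of choices.} I would mimic the construction of the (non-symplectic) orbifold Dixmier--Douady class using the embedding category $\Emb_\cU(\cB)$, but at every stage choose \emph{Lagrangian} lifts. Concretely: choose a basis $\cU$ of $N$ by contractible opens with local sections $s_U\colon U\to Q_\cE$, giving the transition system $\{f_U,g_\sigma\}$; for each arrow $\sigma\colon U\to V$ of $\Emb_\cU(\cB)$, pull back the symplectic $\cT$-torsor $\pr\colon(\cG,\Omega)\to\cE$ along $g_\sigma$ to get a symplectic $\cT$-torsor $\cG_\sigma\to U$, which by Corollary \ref{lemma:help-c-constr} admits a Lagrangian section $\widetilde g_\sigma$; and set $c_{\sigma_1,\sigma_2}:=\widetilde g_{\sigma_1\circ\sigma_2}\,\widetilde g_{\sigma_1}^{-1}\,\widetilde g_{\sigma_2}^{-1}$. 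The multiplicativity of $\Omega$ forces $c_{\sigma_1,\sigma_2}\in\Gamma(U_2,\underline\cT_\Lagr)$ -- this is the orbifold counterpart of \eqref{eq:c-ijk-Lagr} and is the one computational point requiring care. Then $(\sigma_1,\sigma_2)\mapsto c_{\sigma_1,\sigma_2}$ is a \v Cech $2$-cocycle for $\Emb_\cU(\cB)$ with values in $\widetilde{\underline\cT_\Lagr}$, defining $\DD(\cG,\Omega)\in H^2(\cB,\underline\cT_\Lagr)$ after passing to the colimit over refinements. Independence of the Lagrangian lifts, of $\cU$, and of $s_U$, is checked by the standard coboundary argument, using once more that Lagrangian sections exist over contractible opens; invariance under symplectic Morita equivalence of extensions is the orbifold translation of the argument in Section \ref{Symplectic gerbes and their Lagrangian class}, using the identification of the pulled-back extensions via $P$ and choosing Lagrangian sections $u_i$ of the symplectic $\cT$-torsor $P$ over the contractible opens.

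\textbf{Isomorphism and the vanishing criterion.} For additivity $\DD(\cG_1\star\cG_2,\Omega_1\star\Omega_2)=\DD(\cG_1,\Omega_1)+\DD(\cG_2,\Omega_2)$ I would repeat the smooth computation (Section \ref{Symplectic gerbes and their Lagrangian class}) inside $\Emb_\cU(\cB)$: the Lagrangian lift for the fusion is the image under $\cG_1\ltimes Q\rtimes\cG_2\to\cG_1\star\cG_2$ of $(\widetilde g^1_\sigma,\widetilde g^2_\sigma)$, which is Lagrangian for $\Omega_1\star\Omega_2$, and the associated cocycle is $c^1+c^2$. Since $\DD(\cG^\opp,\Omega)=-\DD(\cG,\Omega)$, injectivity reduces to: $\DD(\cG,\Omega)=0$ implies the extension is trivial. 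Here the vanishing of the cocycle yields, after refining, Lagrangian sections $\lambda_\sigma\colon U\to\cT_\cE$ with $c_{\sigma_1,\sigma_2}=\lambda_{\sigma_1\circ\sigma_2}\lambda_{\sigma_1}^{-1}\lambda_{\sigma_2}^{-1}$; correcting $\overline g_\sigma:=\widetilde g_\sigma\lambda_\sigma^{-1}$ produces a genuine cocycle lifting $\{f_U,g_\sigma\}$ to $\cG$, i.e.\ a principal $\cG$-bundle over $\cB$ lifting the atlas, so by Lemma \ref{lemma:triv-gerbe-orbifold} the gerbe is trivial. By Example \ref{ex:confused}, the trivial symplectic gerbes over $(B,\cB)$ are exactly the $\cE$-integrations $\cE_X(M,\pi)$ associated with proper isotropic realizations $q\colon(X,\Oga)\to(M,\pi)$ (the presymplectic Morita equivalence $X\times_M\cE$ of Proposition \ref{prop:Morita:E:holonomy} witnesses triviality, and conversely a lifting bundle $P$ produces such an $X$ by the torsor-reinterpretation argument of Lemma \ref{lemma:reinterpret-torsors}); together with the non-degeneracy and multiplicativity constraints this identifies the symplectic structure, giving part (ii). Surjectivity of $\DD$ is as in the smooth case: any class in $H^2(\cB,\underline\cT_\Lagr)$ is represented, over a suitable refinement, by a cocycle in $\Emb_\cU(\cB)$, and taking $M$ to be the disjoint union of the opens in the cover (allowed since we permit disconnected bases) realizes it as $\DD$ of an explicit extension. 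The main obstacle I anticipate is bookkeeping rather than conceptual: correctly tracking the torus identifications \eqref{eq:identif-pb-tori} and the Morita-equivariance conditions through the embedding-category \v Cech complex, and verifying the Lagrangian refinement $c_{\sigma_1,\sigma_2}\in\Gamma(U_2,\underline\cT_\Lagr)$ from multiplicativity of $\Omega$.
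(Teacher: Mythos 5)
Your proposal is correct and follows essentially the same route as the paper: the paper's own argument is precisely to pass to an \'etale atlas (where the trivial extension, inverses and fusion products are symplectic), rerun the non-symplectic orbifold Dixmier--Douady construction in the embedding category $\Emb_\cU(\cB)$ with \emph{Lagrangian} lifts supplied by Corollary \ref{lemma:help-c-constr}, and then import the smooth-case arguments of Section \ref{Symplectic gerbes and their Lagrangian class} together with Lemma \ref{lemma:triv-gerbe-orbifold} and Example \ref{ex:confused} for injectivity, surjectivity and part (ii). Your write-up is in fact more detailed than the paper's sketch, and the points you flag as requiring care (the Lagrangian refinement of the cocycle via multiplicativity of $\Omega$, and the torus identifications through the Morita data) are exactly the ones the paper leaves implicit.
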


Finally, let us look at the relationship between gerbes and symplectic gerbes. 
At the level of the Dixmier-Douady classes, the relationship is provided by the cohomology sequence induced by (\ref{eq:ses-O-Lambda}), which gives:
\begin{equation}\label{eq:ses-O-Lambda-conseq} 
\xymatrix{  H^3(\cB, \R)\ar[r]^-{i_*}& H^3(\cB, \cO_{\Lambda}) \ar[r]^-{\d_*} &  H^3(\cB, \Lambda) \ar[r]& H^4(\cB, \R)}
\end{equation}
Since $\d_*$ maps $\DD(\cG, \Omega)$ to $\DD(\cG)$, the preimage ${\d^*}^{-1}(\DD(\cG))$ is the affine space $\DD(\cG,\Omega)+i_*H^3(\cB, \R)$. 
A geometric interpretation will be given in the next section, using twisted Dirac structures, where the role of $H^3(B)$ will be to provide the background 3-forms. 
For now, let us recall a more explicit model for $H^{\bullet}(\cB, \R)$: in the DeRham complex 
$(\Omega^{\bullet}(N), \d)$ on the base $\cB\tto N$ we consider the subcomplex $(\Omega^{\bullet}(N)^{\cB}, \d)$ of $\cB$-invariant forms. Of course, this makes sense for any \'etale groupoid $\cB$ and defines a cohomology $H^{\bullet}_{\bas}(\cB)$. 
% We will give a geometric interpretation for this in the next section in the smooth setting, using twisted Dirac structures, where $H^3(B)$ will provide the background 3-forms. For now, let us recall that a more explicit model for $H^{\bullet}(\cB, \R)$: in the DeRham complex 
% $(\Omega^{\bullet}(N), \d)$ on the base $\cG\tto N$ we consider the subcomplex $(\Omega^{\bullet}(N)^{\cB}, \d)$ of $\cB$-invariant forms. Of course, this makes sense for any \'etale groupoid $\cB$ and defines a cohomology 
% \[ H^{\bullet}_{\bas}(\cB).\]

\begin{corollary} For any proper \'etale groupoid $\cB$ one has natural isomorphisms 
\[ H^{\bullet}_{\bas}(\cB)\cong H^{\bullet}(\cB, \R).\]
\end{corollary}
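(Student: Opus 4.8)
The plan is to prove the isomorphism $H^{\bullet}_{\bas}(\cB)\cong H^{\bullet}(\cB,\R)$ for a proper \'etale groupoid $\cB\tto N$ by exhibiting the complex of $\cB$-basic forms $(\Omega^{\bullet}(N)^{\cB},\d)$ as the complex of invariant sections of a fine resolution of the constant sheaf $\R$ over $\cB$. First I would recall that the de Rham complex of sheaves $\cA^{\bullet}$ on $N$, with $\cA^{k}(U)=\Omega^{k}(U)$, carries a natural right $\cB$-action (pull-back by the germs of local diffeomorphisms $\sigma_g$ of \eqref{germ-of-bisection}, using that $\cB$ is \'etale), and that it is a resolution of the constant sheaf $\R$ by the Poincar\'e Lemma. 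Each $\cA^{k}$ is a fine sheaf over $N$. The key input, already isolated in the proof of the preceding Lemma, is that for a proper \'etale groupoid and a $\cB$-sheaf $\cS$ of $\R$-vector spaces which is fine over $N$, one has $H^{i}(\cB,\cS)=0$ for $i\ge 1$; this follows from the existence of invariant partitions of unity for proper groupoids, which gives a contracting homotopy on the \v{C}ech complex $\check{C}^{\bullet}_{\cU}(\cB,\cS)$.

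With this in hand, the argument is the standard resolution argument in the category of abelian $\cB$-sheaves. Since $\cA^{\bullet}$ is a resolution of $\R$ by sheaves that are acyclic for $H^{\bullet}(\cB,-)$, hyperderived functor / double complex bookkeeping gives $H^{\bullet}(\cB,\R)\cong H^{\bullet}\big(\Gamma(\cA^{\bullet})^{\cB}\big)$. The second step is then to identify $\Gamma(\cA^{k})^{\cB}$ with $\Omega^{k}(N)^{\cB}$: a global section of $\cA^{k}$ is exactly a $k$-form $\omega\in\Omega^{k}(N)$, and the $\cB$-invariance condition $\sigma_g^{*}(\mathrm{germ}_{s(g)}\omega)=\mathrm{germ}_{t(g)}\omega$ for all $g\in\cB$ is precisely the condition defining $\Omega^{k}(N)^{\cB}$ in the text preceding the corollary (this is the $\cS=\Omega^{\bullet}$ instance of Haefliger's construction, Remark \ref{rmk-Transversal geometric structures}). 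Since the differential on $\Gamma(\cA^{\bullet})^{\cB}$ is induced by the de Rham differential, this complex is $(\Omega^{\bullet}(N)^{\cB},\d)$ by definition, so $H^{\bullet}(\cB,\R)\cong H^{\bullet}_{\bas}(\cB)$. Naturality with respect to Morita equivalences follows because $H^{\bullet}(\cB,-)$ is Morita-invariant and both sides of the identification were built functorially; alternatively one notes the isomorphism is induced by the map of resolutions.

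I would be slightly careful about one technical point, which I expect to be the main (minor) obstacle: the acyclicity statement $H^{i}(\cB,\cA^{k})=0$ must be applied not to an arbitrary open cover but in the colimit over refinements, and one must check that fineness of $\cA^{k}$ over $N$ together with properness of $\cB$ really yields a contracting homotopy compatible with refinements; this is exactly the content used in the proof of the previous lemma (via the exact sequences induced by \eqref{exp-seseq} and \eqref{eq:ses-O-Lambda}), so I would simply invoke it, phrasing the present corollary as the analogous statement for the de Rham resolution of $\R$. The remaining verifications -- that $\cA^{\bullet}$ is $\cB$-equivariant, which needs \'etaleness so that $\sigma_g$ is an honest germ of diffeomorphism, and that $\Gamma(\cA^{k})^{\cB}=\Omega^{k}(N)^{\cB}$ -- are routine and I would state them without detailed computation. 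No new ideas beyond the previous lemma and the Poincar\'e Lemma are needed.
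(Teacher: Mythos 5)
Your proposal is correct and is essentially the argument the paper intends: the corollary is stated without proof, but it follows exactly as you describe from the key remark in the preceding lemma's proof (fine $\cB$-sheaves of $\R$-vector spaces are acyclic for proper \'etale $\cB$) applied to the de Rham resolution of the constant sheaf, together with the identification $\Gamma(\cA^{k})^{\cB}=\Omega^{k}(N)^{\cB}$. Your worry about refinements is not really an issue since $H^{\bullet}(\cB,-)$ is defined via derived functors of $\Gamma(\cdot)^{\cB}$, so the acyclic-resolution argument applies directly.
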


The concrete description of $H^{\bullet}(\cB, \R)$ can be extended to non-\'etale orbifold atlases $Q_{\cE}: \cE\simeq \cB$. If $\cE\tto M$ is any proper foliation groupoid one can talk about forms on $M$ which are $\cE$-basic: they are the forms on $M$ whose pull-back via the source and the target map of $\cE$ coincide. This defines a sub-complex $(\Omega^{\bullet}_{\textrm{bas}}(\cE), \d)$ of the DeRham complex of $M$ and gives rise to a (Morita invariant!) cohomology of $\cE$, denoted $H^{\bullet}_{\bas}(\cE)$. When $\cE$ is \'etale, this agrees with the previous definition. 

\begin{corollary}\label{cor:bas-obifold} For any proper \'etale groupoid $\cB$ and any orbifold atlas $\cE\simeq \cB$ one has natural isomorphisms 
\[ H^{\bullet}_{\bas}(\cE) \cong H^{\bullet}_{\bas}(\cB) \cong H^{\bullet}(\cB, \R).\]
\end{corollary}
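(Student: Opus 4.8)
The plan is to reduce the corollary to the preceding one, \emph{Corollary \ref{cor:bas-obifold}}'s \'etale ancestor, namely the statement that for a proper \'etale groupoid $\cB$ one has $H^{\bullet}_{\bas}(\cB)\cong H^{\bullet}(\cB,\R)$, together with the Morita invariance of $H^{\bullet}_{\bas}(\cE)$. Concretely: given an orbifold atlas $Q_{\cE}:\cE\simeq \cB$ with $\cE\tto M$ a proper foliation groupoid, restrict $\cE$ to a complete transversal $T\subset M$ to obtain the \'etale groupoid $\eE_T$ (see \eqref{ET}), which is Morita equivalent to $\cE$, hence also Morita equivalent to $\cB$. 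The chain of isomorphisms we want is then
\[ H^{\bullet}_{\bas}(\cE)\;\cong\; H^{\bullet}_{\bas}(\eE_T)\;\cong\; H^{\bullet}(\eE_T,\R)\;\cong\; H^{\bullet}(\cB,\R), \]
where the first isomorphism is Morita invariance of basic cohomology, the second is the \'etale case already established, and the third is Morita invariance of sheaf cohomology (also already used repeatedly, e.g. in stating that $H^{\bullet}(\cB,\cS)$ is Morita invariant). Since the same reasoning with $\cB$ itself (which we may assume \'etale up to Morita equivalence, or otherwise replace by a transversal) gives $H^{\bullet}_{\bas}(\cB)\cong H^{\bullet}(\cB,\R)$, the three groups in the statement are all identified.

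The steps I would carry out, in order: (1) First observe that basic forms on $M$ pull back isomorphically to basic forms on a complete transversal: restriction $\Omega^{\bullet}_{\bas}(\cE)\to \Omega^{\bullet}(T)^{\eE_T}$ is an isomorphism of complexes. This is the differential-form instance of Haefliger's transverse-structure formalism (Remark \ref{rmk-Transversal geometric structures}): a basic form on $M$ is precisely a holonomy-invariant form on $T$, and the correspondence is compatible with $\d$. This gives $H^{\bullet}_{\bas}(\cE)\cong H^{\bullet}_{\bas}(\eE_T)$, and more generally shows $H^{\bullet}_{\bas}$ is Morita invariant among foliation groupoids. (2) Next invoke the \'etale case: for the proper \'etale groupoid $\eE_T$, the invariant DeRham complex $(\Omega^{\bullet}(T)^{\eE_T},\d)$ computes $H^{\bullet}(\eE_T,\R)$. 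The argument here is the standard one — $(\Omega^{\bullet}(T),\d)$ is a fine resolution of the constant sheaf $\R$ on $T$, and the functor of $\eE_T$-invariant sections is exact on fine $\eE_T$-sheaves of $\R$-vector spaces by properness (the averaging argument over the finite isotropy, exactly the vanishing $H^{k}(\cB,\cS)=0$ for $k\geq 1$ used in the proof of the lemma preceding Theorem \ref{DD-clas:orbi}). Hence $H^{\bullet}(\Omega^{\bullet}(T)^{\eE_T},\d)\cong H^{\bullet}(\eE_T,\R)$. (3) Finally, transport along the Morita equivalences $\eE_T\simeq \cE \simeq \cB$ using the already-established Morita invariance of sheaf cohomology $H^{\bullet}(-,\R)$. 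Assembling (1)–(3) yields the displayed chain, and the $\cB$-version is the special case $\cE=\cB$ (after replacing $\cB$ by an \'etale Morita-equivalent atlas if needed, which changes none of the groups).

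The main obstacle I expect is step (2): making precise that properness forces the vanishing of higher cohomology of fine $\cB$-sheaves of $\R$-vector spaces, and hence that the invariant-sections functor computes $H^{\bullet}(\cB,-)$ on a fine resolution. The key point is that for a proper \'etale groupoid the isotropy groups are finite, so one can average a local section over the isotropy to produce invariant sections, giving a contracting homotopy on the \v{C}ech–de Rham bicomplex; this is exactly the mechanism already exploited in the excerpt (the lemma stating $H^{\bullet}(\cB,\ucT)\cong H^{\bullet+1}(\cB,\Lambda)$ rests on $H^{k}(\cB,\cS)=0$, $k\geq 1$, for fine $\R$-sheaves). Once that vanishing is in hand the rest is formal homological algebra and the Morita invariance statements already in the paper. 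I would also note the minor bookkeeping point that one should check compatibility of the restriction-to-transversal isomorphism in step (1) with the Morita equivalence $Q_{\cE}$, but since both $H^{\bullet}_{\bas}$ and $H^{\bullet}(-,\R)$ are Morita invariant and all the maps in sight are induced by the canonical bibundles, no choice-dependence survives — precisely the ``choice-free'' advantage over restricting to transversals that the paper emphasizes for $B^{\lin}$.
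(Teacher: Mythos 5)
Your proposal is correct and follows essentially the same route the paper intends: the statement is deliberately left as an immediate consequence of the Morita invariance of $H^{\bullet}_{\bas}$ (via restriction to a complete transversal, as in Remark \ref{rmk-Transversal geometric structures}) together with the \'etale case of the preceding corollary, which itself rests on the vanishing $H^{k}(\cB,\cS)=0$ for $k\geq 1$ for fine $\R$-sheaves over a proper \'etale groupoid. Your steps (1)--(3) are exactly this chain, and your identification of the properness/averaging argument as the crux of step (2) matches the paper's key lemma.
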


In particular, when $H^3_{\bas}(\cE)=H^3_{\bas}(\cB)=0$, if the ordinary gerbe defined by a symplectic extension is trivial, then so is the induced symplectic gerbe. 

% We conclude, e.g., that when $H^3_{\bas}(\cE)=H^3_{\bas}(\cB)=0$, the Lagrangian Dixmier-Douady class vanishes iff and only the usual Dixmier-Douady class vanishes, or in other words, there is a symplectic Morita equivalence with the trivial extension iff there is an ordinary Morita equivalence.

%%%%%%%%%%%%%%%%%%%%%%%%
\subsection{The twisted Dirac setting}
\label{The twisted Dirac setting}
%%%%%%%%%%%%%%%%%%%%%%%%

We close our discussion on gerbes by explaining briefly how to pass 
from the Poisson to the (twisted) Dirac setting. The main outcome will not be a new theory of ``twisted presymplectic gerbes'', but a 
new way to represent symplectic gerbes by more general extensions. 

Let $(\cG, \Omega, \phi)\tto M$ be $\phi$-twisted presymplectic groupoid integrating a Dirac 
manifold $(M,L)$ with background 3-form $\phi$.
As discussed in Remark \ref{rem:twisted:tias}, in the proper regular case this still gives rise to a central extension 
\[ \xymatrix{1\ar[r] & \cT(\cG) \ar[r] & (\G, \Omega) \ar[r] & \cE(\cG) \ar[r]& 1}\]
inducing  an integral affine orbifold structure on the space $B$ of orbits, with associated presymplectic torus bundle 
$\cT(\cG)$ equipped with the restriction of $\Omega$.  

Start now with a sympletic torus bundle $(\cT,\omega_\cT)$ over the orbifold $(B, \cB)$ and look at central extensions
defined over some orbifold atlas $Q_{\cE}: \cE\simeq \cB$, of type 
% \begin{equation}\label{eq:extension-Dirac} 
\[
\xymatrix{1\ar[r] & \cT_{\cE} \ar[r] & (\G, \Omega, \phi) \ar[r] & \cE \ar[r]& 1},
\]
% \end{equation}
with $i^*\Omega=\omega_{\cT_\cE}$, but where $(\cG, \Omega, \phi)$ is now a twisted presymplectic groupoid.  Such an extension will be called a central {\bf twisted presymplectic extension} on the integral affine orbifold $(B,\cB)$. The notion of {\bf presymplectic Morita equivalence} between such extension is defined exactly as in the symplectic case, with the only difference that one now allows twisted presymplectic bibundles and the twisting will vary (see Appendix \ref{appendix:moment:maps}).  The construction of the {\bf Lagrangian Dixmier-Douady class} of such an extension carries over modulo some obvious modifications, giving 
\[ \DD(\cG, \Omega, \phi)\in H^2(\cB, \ucT_{\Lagr})\cong H^3(\cB, \cO_{\Lambda}).\]
However, %as we already mentioned, 
one does not obtain a new notion of ``twisted presymplectic gerbes'':

\begin{proposition}
For an integral affine orbifold $(B,\cB)$, one has that:
\begin{enumerate}[(i)]
\item any twisted presymplectic extension is presymplectic Morita equivalent to a symplectic extension;
\item two symplectic extensions are presymplectic Morita equivalent if and only if they are symplectic Morita equivalent.
\end{enumerate}
\end{proposition}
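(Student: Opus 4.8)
The plan is to prove both statements by a ``completion of the twisting to zero'' argument, using the fact that over an integral affine orbifold the background 3-form can always be absorbed. For (i), start with a twisted presymplectic extension
\[ \xymatrix{1\ar[r] & \cT_{\cE} \ar[r] & (\cG, \Omega, \phi) \ar[r] & \cE \ar[r]& 1}. \]
The key observation is that the background 3-form $\phi$ on $M$ lives on the base of the orbifold atlas $\cE$ and, being closed and $\cE$-basic, represents a class in $H^3_{\bas}(\cE)\cong H^3(\cB,\R)$ (Corollary \ref{cor:bas-obifold}). The first step is to show that \emph{any} such class can be changed by a gauge transformation inside a given presymplectic Morita class. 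Concretely, if $B=\eta$ is an $\cE$-basic 2-form on $M$, one can perform a gauge transformation of the twisted presymplectic groupoid $(\cG, \Omega, \phi)\rightsquigarrow (\cG, \Omega + t^*\eta - s^*\eta, \phi + \d\eta)$, which is still an extension with the same restriction $\omega_{\cT}$ to $\cT_{\cE}$ (since $\eta$ is basic, its pullbacks to $\cT_\cE$ agree and cancel), and which is presymplectic Morita equivalent to the original one via the identity bibundle equipped with the appropriate $B$-field. Hence the Morita class only sees $[\phi]\in H^3(\cB,\R)$.

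Second, I would show that $[\phi]$ itself can be made to vanish. Here is where the integral affine structure is essential: the symplectic torus bundle $(\cT, \omega_{\cT})$ over $(B,\cB)$ determines, via Proposition \ref{prop:IAS-presympl-torus}, an integral affine structure, and the relevant sheaf cohomology sequence is \eqref{eq:ses-O-Lambda-conseq}. But the cleanest argument is the following: given $(\cG,\Omega,\phi)$, choose a primitive; more precisely, since $\phi$ is exact on the symplectic leaves (it has to be, as it is the obstruction form of a presymplectic structure whose leaves are honest symplectic leaves), and since by the remark following \eqref{eq:sympl-gerbe} and Remark \ref{rem:twisted:tias} the leaf-data is unchanged, one can run the Dazord--Delzant style argument: over a good cover $\{V_i\}$ of $M$ one finds 2-forms $\phi_i$ with $\d\phi_i = \phi$ on $V_i$, and these assemble to a \v{C}ech description that shows $\phi$ can be ``transferred into $\Omega$''. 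After such a gauge transformation the twisting becomes $0$ and we arrive at an honest symplectic extension, presymplectic Morita equivalent to the one we started with. This proves (i).

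For (ii), suppose two symplectic extensions $(\cG_1,\Omega_1)$ and $(\cG_2,\Omega_2)$ (twisting $0$) are presymplectic Morita equivalent via a twisted presymplectic bibundle $(P, \Omega_P, \psi)$. Since both sides are genuinely symplectic (the twisting of each groupoid is $0$), the compatibility conditions for the bibundle (the analogues of the multiplicativity equations in Appendix \ref{appendix:moment:maps}, twisted version) force $\psi = 0$: indeed $\psi$ must restrict to the twistings of the two groupoids under the moment maps, up to exact terms, and here both are $0$, so $\d\Omega_P = 0$ and $\psi=0$. Hence a twisted presymplectic Morita equivalence between two symplectic extensions is already an ordinary symplectic Morita equivalence. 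The converse direction is trivial since a symplectic Morita equivalence is in particular a presymplectic (twisted, with twisting $0$) one.

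The main obstacle I anticipate is the precise bookkeeping in step two of (i): showing that the twisting class $[\phi]\in H^3(\cB,\R)$ can be killed by a gauge transformation that \emph{simultaneously} preserves the condition $i^*\Omega = \omega_{\cT}$ and the extension structure, rather than merely preserving the groupoid. This requires checking that the chosen primitive/gauge 2-form can be taken $\cE$-basic (so that it is transparent to the central torus bundle $\cT_\cE$), which in turn uses the properness of $\cB$ to average, exactly as in the proof of the vanishing lemma $H^{\geq 1}(\cB,\cS)=0$ for fine $\R$-sheaves used in the lemma computing $H^\bullet(\cB,\ucT_{\Lagr})$. Once that averaging step is set up, the rest is routine diagram-chasing of the type already carried out in Sections \ref{sec:gerbes:manifolds} and \ref{Symplectic gerbes over integral affine orbifolds}, and I would simply refer to those arguments rather than repeat them.
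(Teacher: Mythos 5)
Part (ii) of your argument is essentially the paper's: the bibundle twisting condition $\d\Omega_P=\qq_1^*\phi_1-\qq_2^*\phi_2$ forces $\Omega_P$ to be closed when both twistings vanish, and one invokes the fact (recorded in Appendix \ref{App:twisted Dirac case}) that a presymplectic Morita equivalence between symplectic groupoids is automatically symplectic. That part is fine.

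Part (i), however, has a genuine gap, and it is the central one. Your strategy is to stay over the \emph{same} base $M$ and kill the twisting by gauge transformations. But a gauge transformation by $\tau\in\Omega^2(M)$ only changes $\phi$ to $\phi-\d\tau$; it can never remove a twisting whose class $[\phi]$ is nonzero. You acknowledge this yourself when you note that the gauge-equivalence class ``only sees $[\phi]\in H^3(\cB,\R)$'' --- and then your second step asserts that this class can be made to vanish via a ``Dazord--Delzant style'' \v{C}ech argument. That assertion amounts to claiming $\phi$ is exact (or at least basic-exact), which is false in general: nothing in the hypotheses forces $[\phi]=0$, and exactness of $\phi$ on the leaves does not give exactness on $M$. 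As written, your argument would prove that every closed background $3$-form on $M$ is exact. The paper's proof avoids this by using the extra freedom built into (presymplectic) Morita equivalence of extensions: one may \emph{change the base}, restricting to a complete transversal $T$ of the induced foliation (Example \ref{ex:gerbes:pull-back:restrictions} and Appendix \ref{App:twisted Dirac case}). It is this restriction that does the real work; only afterwards does one apply a gauge transformation, to an exact twisting.

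A second, related omission: a ``symplectic extension'' in the sense of \eqref{eq:sympl-gerbe-orb-sympl} requires the multiplicative $2$-form to be \emph{non-degenerate}, and your construction never produces that. Gauge transformations over $M$ do not repair degeneracy of $\Omega$ in general (the underlying Dirac structure need not be gauge-equivalent over $M$ to a Poisson structure). Again, the restriction to a complete transversal is exactly the step that renders the $2$-form non-degenerate, and you never perform it. So both defects of your proof of (i) --- the unremovable twisting and the unaddressed non-degeneracy --- trace back to the same missing idea: exploiting the base-changing flexibility of Morita equivalence rather than working over a fixed $M$.
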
 

\begin{proof} Part (i) follows by observing that:
\begin{enumerate}[(a)]
\item given a (regular) twisted presymplectic groupoid $(\cG, \Omega, \phi)$, after restricting to a complete transversal to the foliation induced on the base, one obtains a twisted presymplectic groupoid whose 2-form becomes non-degenerate. Moreover, as in Example \ref{ex:gerbes:pull-back:restrictions}, this operation does not change the (presymplectic) Morita equivalence class; 
\item given an extension $(\cG, \Omega, \phi)$ over $M$ and any 2-form $\tau\in\Omega^2(M)$, 
the original extension is presymplectic Morita equivalent to its $\tau$-gauge transform $(\cG, \Omega- t^*\tau+ s^*\tau, \phi- d\tau)$. In particular, if $\phi$ is exact, then by such a gauge transform one can pass to an untwisted presymplectic extension.
\end{enumerate}
To prove (ii) we invoke again the fact that a presymplectic Morita equivalence between symplectic groupoids is automatically symplectic. 
\end{proof}

However, it is still interesting to think about twisted presymplectic representations of symplectic gerbes. To illustrate that 
% over the integral affine orbifold. 
% The construction of the Lagrangian Dixmier-Douady class of such an extension,
% \[ \DD(\cG, \Omega, \phi)\in H^2(\cB, \ucT_{\Lagr})\cong H^3(\cB, \cO_{\Lambda}),\]
% carries over modulo some obvious modifications. 
let us assume first that $B$ is smooth, so that (\ref{eq:ses-O-Lambda-conseq}) gives us the exact sequence: 
\begin{equation}\label{eq:ses-O-Lambda-conseq2} 
\xymatrix{  H^3(B, \R)\ar[r]^-{i_*}& H^3(B, \cO_{\Lambda}) \ar[r]^-{\d_*} &  H^3(B, \Lambda) }. 
\end{equation}
From the construction of the Dixmier-Douady classes we immediately deduce:

\begin{proposition} Given an integral affine manifold $(B, \Lambda)$, any closed 3-form $\eta\in\Omega^3_\cl(B)$ and a twisted presymplectic extension over the submersion $p_M: M\to B$
\[ \xymatrix{1\ar[r] & (\cT_{\Lambda})_M \ar[r] & (\G, \Omega, \phi) \ar[r] & M\times_{B}M \ar[r]& 1},
 \]
then $(\G, \Omega, \phi+ p_{M}^{*}\eta)$ defines another twisted presymplectic extension with class:
\[ \DD(\G, \Omega, \phi+ p_{M}^{*}\eta)= \DD(\G, \Omega, \phi)+ i_{*}[\eta].\]
\end{proposition}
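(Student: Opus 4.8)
The statement is really a bookkeeping identity about how the Lagrangian Dixmier--Douady class changes when the twisting 3-form is modified by the pullback of a closed form on the base. The plan is to unwind the \v{C}ech-cocycle construction of $\DD(\cG,\Omega,\phi)$ and observe that adding $p_M^*\eta$ to the twisting does not force any change in the transition data $\{s_i, g_{ij}=(s_i,s_j), \widetilde g_{ij}\}$ used to build the cocycle $c_{ijk}$, but it does change the relevant notion of ``Lagrangian section'' in a controlled, measurable way. Concretely, first I would recall that $(\cG,\Omega,\phi+p_M^*\eta)$ is again a twisted presymplectic extension with $i^*(\Omega)=p_M^*\omega_{\cT}$ (the kernel condition on $\cT_M$ is unaffected since $p_M^*\eta$ pulls back trivially to the fibers of $\cT_M$, which are contained in $\ker\d p_M$); this is immediate from Remark~\ref{rem:twisted:tias} and the fact that $\eta$ is closed, so $(M,L,\phi+p_M^*\eta)$ is still a regular twisted Dirac manifold with the same underlying foliation and the same transverse integral affine structure $\Lambda_\cG$.

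Second, I would set up the two parallel cocycle computations. For $(\cG,\Omega,\phi)$ choose a good cover $\{V_i\}$ of $B$, local sections $s_i\colon V_i\to M$, and \emph{Lagrangian} lifts $\widetilde g_{ij}\colon V_{ij}\to \cG_{ij}$ of $g_{ij}=(s_i,s_j)$, producing $c_{ijk}=\widetilde g_{ij}\cdot\widetilde g_{jk}\cdot\widetilde g_{ki}$ as in \eqref{eq:c-ijk-Lagr}. For $(\cG,\Omega,\phi+p_M^*\eta)$ I would choose a global primitive of $p_M^*\eta$ locally: since $\{V_i\}$ is good, over $p_M^{-1}(V_i)$ we can write $p_M^*\eta=\d\beta_i$ with $\beta_i=p_M^*(\tau_i)$ for a local primitive $\tau_i\in\Omega^2(V_i)$ of $\eta$ (here one uses that $\eta$ is closed and $V_i$ contractible). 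Performing the $\tau_i$-gauge transform fiberwise on the torus part replaces the condition ``$\widetilde g_{ij}$ Lagrangian for $\Omega$'' by ``Lagrangian for a gauge-twisted form'', and the discrepancy between the two Lagrangian lifts over $V_{ij}$ is exactly a section of $\cT_{\Lagr}$ whose \v{C}ech coboundary contributes $i_*[\eta]$. The key identity to check is that the new cocycle equals $c_{ijk}$ multiplied by the \v{C}ech cocycle of $\eta$ transported via the de~Rham isomorphism $H^3(B,\R)\cong H^3_{\mathrm{dR}}(B)$ and the map $i_*\colon H^3(B,\R)\to H^3(B,\cO_\Lambda)$ (equivalently, living in $H^2(B,\ucT_{\Lagr})$ after the shift); this is the content of the short exact sequence \eqref{eq:ses-O-Lambda}, whose connecting map realizes $i_*$.

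Third, the main obstacle — and the only place requiring genuine care — is to track precisely how the \emph{choice of Lagrangian lift} interacts with the gauge transform, so that the extra term is manifestly the image of $[\eta]$ under $i_*$ and not some other class. The cleanest way I would handle this is to avoid re-choosing lifts altogether: keep the \emph{same} lifts $\widetilde g_{ij}$, but recompute the products $\widetilde g_{ij}\cdot\widetilde g_{jk}\cdot\widetilde g_{ki}$ with respect to the multiplicative structure, noting that the groupoid $\cG$ and its multiplication are literally unchanged — only the 2-form and twisting change. Thus $c_{ijk}$ as an \emph{element of $\cT$} is the same; what changes is merely whether $\widetilde g_{ij}$ is Lagrangian for the new form. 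Using $\beta_i=p_M^*\tau_i$, the failure of $\widetilde g_{ij}$ to be Lagrangian for $\Omega+s^*\beta_i-t^*\beta_i$ (the gauge transform aligning the two pictures) is governed by $p_M^*(\tau_i-\tau_j)|_{V_{ij}}$, and $\tau_i-\tau_j$ is a closed 2-form on $V_{ij}$ whose associated \v{C}ech 1-cochain has coboundary representing $[\eta]\in H^3(B,\R)$. Correcting the $\widetilde g_{ij}$ to Lagrangian lifts by multiplying by suitable local sections of $\cT$ (which exist by Corollary~\ref{lemma:help-c-constr}) then modifies $c_{ijk}$ by exactly the \v{C}ech representative of $i_*[\eta]$. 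Assembling these local computations and passing to cohomology yields $\DD(\cG,\Omega,\phi+p_M^*\eta)=\DD(\cG,\Omega,\phi)+i_*[\eta]$, which is the claimed formula. (For the orbifold version one would replace $\{V_{ij}\}$ by the embedding category $\Emb_\cU(\cB)$ and argue identically, but the stated proposition is only for $B$ smooth, so this refinement is not needed here.)
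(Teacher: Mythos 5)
Your overall route---comparing \v{C}ech cocycles built from local primitives $\tau_i$ of $\eta$ and identifying the discrepancy on triple overlaps with the \v{C}ech--de Rham representative of $i_*[\eta]$ via the comparison of the sequences $0\to\R\to\cO\to\Omega^1_\cl\to 0$ and $0\to\cO_{\Lambda}\to\cO\to\ucT_{\Lagr}\to 0$---is the right one, and is essentially what the paper has in mind when it calls the formula immediate from the construction. But there is a genuine gap at the one point that actually matters: you never explain why the cocycle depends on $\phi$ at all, and both mechanisms you propose for this collapse. First, strictly Lagrangian lifts (i.e.\ $\widetilde g_{ij}^{\,*}\Omega=0$) generally do not exist in the twisted case, since $\d(\widetilde g_{ij}^{\,*}\Omega)=\pm(s_i^*\phi-s_j^*\phi)\neq 0$; and when they do exist (e.g.\ when $\phi$ is itself a pullback from $B$) they produce a cocycle that is literally insensitive to replacing $\phi$ by $\phi+p_M^*\eta$, because $\cG$, $\Omega$, the $s_i$ and the lifts are all unchanged---which would give $\DD(\cG,\Omega,\phi+p_M^*\eta)=\DD(\cG,\Omega,\phi)$, contradicting the statement whenever $i_*[\eta]\neq 0$. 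Second, your gauge transform by $\beta_i=p_M^*\tau_i$ is vacuous on the $2$-form: since $p_M\circ s=p_M\circ t$ on $\cG$ (the extension lives over $M\times_B M$), one has $s^*p_M^*\tau_i=t^*p_M^*\tau_i$, so $\Omega+s^*\beta_i-t^*\beta_i=\Omega$ and the ``failure to be Lagrangian for the gauged form'' you invoke is identically zero; such a gauge transform only moves the twisting, $\phi+p_M^*\eta\mapsto\phi$, and tells you nothing unless you already know how $\DD$ depends on the twisting.

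The missing ingredient is the ``obvious modification'' of the Lagrangian condition in the twisted construction: one must fix primitives $\tau_i\in\Omega^2(V_i)$ of $s_i^*\phi$ and require $\widetilde g_{ij}^{\,*}\Omega=\tau_j-\tau_i$. Multiplicativity of $\Omega$ then forces $c_{ijk}^*\omega_{\cT}=(\tau_j-\tau_i)+(\tau_k-\tau_j)+(\tau_i-\tau_k)=0$, so $c_{ijk}\in\Gamma(V_{ijk},\cT_{\Lagr})$ as required, and changing the $\tau_i$ by exact forms changes $c_{ijk}$ by a $\ucT_{\Lagr}$-coboundary, so the class is well defined. With this definition the proposition really is immediate: replacing $\phi$ by $\phi+p_M^*\eta$ replaces $s_i^*\phi$ by $s_i^*\phi+\eta$, hence $\tau_i$ by $\tau_i+\sigma_i$ with $\d\sigma_i=\eta$, hence the lifts by $\widetilde g_{ij}\cdot\lambda_{ij}$ with $\d\lambda_{ij}=\sigma_j-\sigma_i$, and $c_{ijk}$ changes by $\lambda_{ij}+\lambda_{jk}+\lambda_{ki}$, which is precisely the \v{C}ech representative of $i_*[\eta]$. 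This is the computation you gesture at in your second paragraph, but it only works once the $\phi$-dependence is built into the definition of the lifts rather than into a (trivial) gauge transform of $\Omega$.
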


In particular, even if we are only interested in symplectic extensions, twisted (symplectic!) ones arise if one wants to understand the difference between symplectic gerbes and non-symplectic ones:

\begin{corollary} Given an integral affine manifold $(B, \Lambda)$, if two $\cT_{\Lambda}$-central symplectic extensions $(\cG_i, \Omega_i)$ induce the same (non-symplectic) gerbe, i.e. if there is a Morita equivalence of extensions:
\[ \cG_1\simeq \cG_2,\] 
then there exists a closed 3-form $\eta$ on $B$ and a twisted symplectic Morita equivalence of extensions:
\[(\cG_1, \Omega_1)\simeq (\cG_2, \Omega_2, \eta).\]
\end{corollary}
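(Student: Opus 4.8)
The statement asserts that if two $\cT_\Lambda$-central \emph{symplectic} extensions $(\cG_1,\Omega_1)$ and $(\cG_2,\Omega_2)$ over submersions $p_{M_i}\colon M_i\to B$ are Morita equivalent as (non-symplectic) extensions, then there is a closed $3$-form $\eta$ on $B$ and a twisted symplectic Morita equivalence $(\cG_1,\Omega_1)\simeq (\cG_2,\Omega_2,\eta)$. The strategy is to use the exact sequence \eqref{eq:ses-O-Lambda-conseq2} together with the construction of the Lagrangian Dixmier--Douady class. First I would record that, since $\cG_1\simeq\cG_2$ as (non-symplectic) extensions, we have $\DD(\cG_1)=\DD(\cG_2)$ in $H^3(B,\Lambda)$. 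On the other hand, each symplectic extension has a Lagrangian Dixmier--Douady class $\DD(\cG_i,\Omega_i)\in H^3(B,\cO_\Lambda)$ (via $H^2(B,\ucT_\Lagr)\cong H^3(B,\cO_\Lambda)$ by the lemma preceding Theorem~\ref{DD-clas:orbi}) mapping to $\DD(\cG_i)$ under $\d_*$, by the naturality of the connecting homomorphisms and the commutative square relating the exponential sequence and the sequence \eqref{eq:ses-O-Lambda}. Hence $\d_*\bigl(\DD(\cG_1,\Omega_1)-\DD(\cG_2,\Omega_2)\bigr)=0$, so by exactness of \eqref{eq:ses-O-Lambda-conseq2} there is a class $[\eta]\in H^3(B,\R)$ with $i_*[\eta]=\DD(\cG_1,\Omega_1)-\DD(\cG_2,\Omega_2)$; represent it by a closed $3$-form $\eta\in\Omega^3_\cl(B)$ using the de Rham description of $H^\bullet(B,\R)$.

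Next I would bring in the twisted picture. Consider the twisted presymplectic extension $(\cG_2,\Omega_2,p_{M_2}^*\eta)$ obtained by adding the pulled-back background $3$-form (here $\phi=0$ for the untwisted symplectic extension $\cG_2$). By the Proposition immediately preceding the Corollary, its Lagrangian Dixmier--Douady class is
\[
\DD(\cG_2,\Omega_2,p_{M_2}^*\eta)=\DD(\cG_2,\Omega_2)+i_*[\eta]=\DD(\cG_1,\Omega_1).
\]
So $(\cG_1,\Omega_1)$ (regarded as a twisted symplectic extension with zero background form) and $(\cG_2,\Omega_2,p_{M_2}^*\eta)$ have the same Lagrangian Dixmier--Douady class in $H^2(B,\ucT_\Lagr)$. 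By the classification result — Theorem~\ref{thm:Lagrangian:class:symp:gerbe}(i) in the smooth case, or the twisted refinement implicit in the preceding Proposition combined with part~(ii) of the Proposition on twisted presymplectic extensions — equality of Lagrangian Dixmier--Douady classes for two twisted presymplectic extensions forces them to be presymplectic Morita equivalent. This yields the desired twisted symplectic Morita equivalence $(\cG_1,\Omega_1)\simeq(\cG_2,\Omega_2,p_{M_2}^*\eta)$, and since $\eta$ lives on $B$ this is exactly the assertion, with $p_{M_2}^*\eta$ understood as the background $3$-form on $M_2$ in the notation of the Corollary.

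The one point that needs care — and which I expect to be the main obstacle — is establishing that the Lagrangian Dixmier--Douady class is a \emph{complete} invariant in the twisted setting, i.e. that two twisted presymplectic central extensions with the same class in $H^2(B,\ucT_\Lagr)$ are presymplectic Morita equivalent. The untwisted version is Theorem~\ref{thm:Lagrangian:class:symp:gerbe}, and the Proposition on twisted presymplectic extensions tells us every twisted presymplectic extension is presymplectic Morita equivalent to a symplectic one and that presymplectic and symplectic Morita equivalence coincide on symplectic extensions. Combining these: given the two twisted extensions with equal class, gauge-transform (or restrict to a transversal, as in Example~\ref{ex:gerbes:pull-back:restrictions}) each into a genuinely symplectic extension without changing the Morita class; check that this gauge/restriction operation also does not change the Lagrangian Dixmier--Douady class (this follows because it induces the identity on $H^2(B,\ucT_\Lagr)$, being a Morita-type move over the fixed base); then invoke the injectivity half of Theorem~\ref{thm:Lagrangian:class:symp:gerbe}(i) to conclude the two symplectic extensions are symplectically — hence presymplectically — Morita equivalent. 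Tracing the Morita equivalences back through the gauge transforms produces the twisted symplectic Morita equivalence between the original extensions, and the whole argument adapts verbatim to the orbifold case by replacing $H^\bullet(B,-)$ with $H^\bullet(\cB,-)$ and using Theorem~\ref{thm:Lagrangian:class:symp:gerbe-orbi} and Corollary~\ref{cor:bas-obifold}.
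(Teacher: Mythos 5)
Your proof is correct and follows exactly the route the paper intends: Morita invariance of the ordinary Dixmier--Douady class forces $\d_*\bigl(\DD(\cG_1,\Omega_1)-\DD(\cG_2,\Omega_2)\bigr)=0$, exactness of the sequence $H^3(B,\R)\to H^3(B,\cO_\Lambda)\to H^3(B,\Lambda)$ produces $[\eta]$, the preceding proposition identifies $\DD(\cG_2,\Omega_2,p_{M_2}^*\eta)$ with $\DD(\cG_1,\Omega_1)$, and the (twisted) classification by the Lagrangian class yields the presymplectic Morita equivalence. Your closing paragraph on reducing the twisted classification to Theorem~\ref{thm:Lagrangian:class:symp:gerbe} via restriction to a transversal and gauge transform is precisely the content of the proposition on twisted presymplectic extensions, so no new ideas are introduced beyond what the paper itself uses.
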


This discussion can be generalized from a smooth $B$ to orbifolds $(B, \cB)$ by using (\ref{eq:ses-O-Lambda-conseq2})
instead of (\ref{eq:ses-O-Lambda-conseq}) and the description of $H^3(\cB, \R)$ via basic forms, provided by Corollary
\ref{cor:bas-obifold}. In other words, one works with presymplectic extensions
\[ \xymatrix{1\ar[r] & \cT_{\cE} \ar[r] & (\G, \Omega, \phi) \ar[r] & \cE\ar[r] & 1}
 \]
defined over orbifold atlases $Q_{\cE}: \cE\simeq \cB$ and one replaces the closed 3-forms $\eta$ on $B$ by $\cE$-basic closed 3-forms on the base $M$ of $\cE$. One concludes that for any $\cE$-basic 3-form $\eta$ on $M$, the twisted presymplectic groupoid $(\G, \Omega, \phi+ \eta)$ has class satisfying the same formula as in the previous proposition.

\begin{remark}
It is not hard to see that all our definitions of (symplectic) gerbes over orbifolds extend to gerbes over any Lie groupoid (gerbes over stacks) and the definition of the (Lagrangian) Dixmier-Douady class makes sense at least for gerbes over any foliation groupoid.
\end{remark}

%%%%%%%%%%%%%%%%%%%%%%%%
%%%%%%%%%%%%%%%%%%%%%%%%
%%%%%%%%%%%%%%%%%%%%%%%%
\appendix
%%%%%%%%%%%%%%%%%%%%%%%%
%%%%%%%%%%%%%%%%%%%%%%%%
%%%%%%%%%%%%%%%%%%%%%%%%
%%%%%%%%%%%%%%%%%%%%%%%%
  
%%%%%%%%%%%%%%%%%%%%%%%%
%%%%%%%%%%%%%%%%%%%%%%%%
\section{Symplectic groupoids and moment maps}
\label{appendix:moment:maps}
%%%%%%%%%%%%%%%%%%%%%%%%
%%%%%%%%%%%%%%%%%%%%%%%%

In this appendix we recall some basic notions and results associated with symplectic groupoids and their
actions on symplectic manifolds, which are needed 
throughout the paper. Much of this material goes back 
to the work of Mikami and Weinstein \cite{WeMi} and Xu \cite{Xu}, complemented by the results in \cite{CF2}.

%%%%%%%%%%%%%%%%%%%%%%%%
\subsection{Hamiltonian $\cG$-spaces}
\label{App:Hamiltonian}
%%%%%%%%%%%%%%%%%%%%%%%%

Given a Poisson manifold $(M, \pi)$ and an integrating symplectic groupoid $(\cG, \Omega)$, a {\bf Hamiltonian $\cG$-space} 
is a symplectic manifold $(X, \Oga)$ endowed with a smooth map
\[ q: (X, \Oga)\to M,\]
as well as an action $m$ of $\cG$ on $X$ along $q$ which \emph{symplectic}. The condition that the action is symplectic can be expressed by saying that its graph: 
\[ \Graph(m)= \{(g, x,g\cdot x): g\in \cG, x\in X, \s(g)= q(x)\}\subset \cG\times X\times X\]
is a Lagrangian submanifold of $(\cG\times X\times X, \Omega\oplus\Oga\oplus -\Oga)$. Alternatively, this condition can be rewritten in the multplicative form: 
\[ m^*(\Oga)= \pr_{1}^{*}(\Omega)+ \pr_{2}^{*}(\Oga),\]
where $\pr_i$ are the two projections. 

Recall that a Poisson map $q:X\to M$ is called {\bf complete} if for any complete Hamiltonian vector field $X_{h}\in \X(M)$ the pullback $X_{h\circ q}\in \X(X)$ is complete. The very first basic fact about Hamiltonian $\cG$-spaces is:

\begin{lemma}\label{lemma-sympl-acts-infin} Given a symplectic groupoid $(\cG, \Omega)$ integrating the Poisson manifold $(M, \pi)$ and a symplectic action of $(\cG, \Omega)$ on $(X, \Oga)$, one has: 
\begin{enumerate}[(i)]
\item $q: (X, \Oga)\to (M, \pi)$ is a complete Poisson map; 
\item the induced infinitesimal action $T^*M$ on $X$,
\[ \sigma: q^*T^*M\to TX, \quad
\sigma(u, \alpha_x)= \left.\frac{\d}{\d t}\right|_{t=0} \exp(t\alpha_{x})\cdot u. \]
satisfies the moment map condition 
\begin{equation}\label{eq:gen-mom-map-cond0} 
i_{\sigma(\alpha)}(\Oga)= q^*\alpha\quad \forall\ \alpha\in \Omega^1(M).
\end{equation}
\end{enumerate}
\end{lemma}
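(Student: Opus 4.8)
The plan is to prove the two assertions of Lemma~\ref{lemma-sympl-acts-infin} in the order stated, deriving (ii) first as the infinitesimal shadow of the multiplicativity of $\Omega$, and then extracting (i) from the moment map condition together with the groupoid structure.

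First I would set up notation carefully. Given the symplectic action $m\colon \cG\,_\s\!\times_q X\to X$, the induced infinitesimal action $\sigma\colon q^*T^*M\to TX$ is well-defined because $A(\cG)=\Ker\d\s\cong T^*M$ via the symplectic form (the identification is the one recalled in the paper), so a covector $\alpha_x\in T^*_xM$ corresponds to a right-invariant vector field on $\cG$, whose flow through the identity $1_x$ is $t\mapsto \exp(t\alpha_x)$, and one transports this to $X$ via the action. Concretely, $\sigma(u,\alpha_x)=\tfrac{\d}{\d t}\big|_{t=0}\exp(t\alpha_x)\cdot u$ as in the statement. The key step is to prove the moment map identity \eqref{eq:gen-mom-map-cond0}. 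For this I would differentiate the multiplicativity relation $m^*\Oga=\pr_1^*\Omega+\pr_2^*\Oga$ along the curve $g_t=\exp(t\alpha_x)$ in the $\s$-fiber over $x=q(u)$: at $t=0$ the tangent vector to $g_t$ in $\cG$ is the right-invariant vector field $\overrightarrow{\alpha}$ associated with $\alpha$, and the tangent vector to $g_t\cdot u$ in $X$ is precisely $\sigma(u,\alpha_x)$. Contracting both sides of the multiplicativity equation with this combined vector field and using the fundamental property of multiplicative forms on symplectic groupoids --- namely that $i_{\overrightarrow{\alpha}}\Omega$ restricted to $TM$ (via the unit embedding) recovers $\alpha$, which is exactly how the isomorphism $A(\cG)\cong T^*M$ is normalized --- one obtains $i_{\sigma(\alpha)}\Oga=q^*\alpha$ on tangent vectors coming from the orbit directions, and a standard density/spanning argument upgrades this to all tangent vectors. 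This is essentially the computation done inside the proof of Theorem~\ref{thm-lattice-proper-case}, where $i_{X_\alpha}\Omega=t^*\alpha$ was verified by checking that both sides are right-invariant and agree on the unit section; here the analogous statement is transported to $X$ through the action.

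Once \eqref{eq:gen-mom-map-cond0} is established, the fact that $q$ is a Poisson map follows by a routine diagram chase: the moment map condition says $\sigma=\pi^\sharp_X\circ q^*$ in the appropriate sense, and combined with $\d q\circ\sigma(\alpha)=\pi^\sharp_M(\alpha)$ (which holds because the action covers the anchor, i.e. $q(g\cdot u)=\t(g)$ so differentiating gives $\d q\circ\sigma=\rho_{A(\cG)}=\pi^\sharp_M$ under the identification) one concludes $\d q\circ\pi_X^\sharp=\pi_M^\sharp\circ(\d q)^*$, which is precisely the statement that $q$ is Poisson; this is the standard relation between symplectic realizations and their infinitesimal actions from \cite{CF2}. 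For completeness of $q$, I would argue as follows: given $h\in C^\infty(M)$ with $X_h$ complete, the pullback $X_{h\circ q}$ on $X$ equals $\sigma(q^*\d h)$ by \eqref{eq:gen-mom-map-cond0}, and its flow is $u\mapsto \exp(t\,\d h_{q(u)})\cdot u$ --- but the flow of $X_h$ being complete means the path $t\mapsto\exp(t\,\d h)$ in the $\s$-fiber is defined for all $t$ (this is a property of the groupoid: completeness of the Hamiltonian vector field on the base is equivalent to completeness of the corresponding right-invariant vector field on $\cG$), and the action $m$ being everywhere defined then guarantees the flow on $X$ exists for all time. I expect the main obstacle to be pinning down precisely this last point --- making rigorous the claim that completeness of $X_h$ on $M$ forces the exponential path $\exp(t\,\d h)$ to exist for all $t$ in the relevant $\s$-fibers, and hence the flow on $X$ to be complete --- since it requires using the specific interaction between the symplectic form, the right-invariant vector fields, and the groupoid multiplication rather than a purely formal argument; everything else reduces to the normalization conventions for multiplicative forms already recalled in the paper.
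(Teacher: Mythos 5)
Your proof is essentially correct, and it follows the standard argument from the literature; note that the paper itself does not prove this lemma at all --- it is stated in Appendix \ref{appendix:moment:maps} as recalled background, with the proofs deferred to Mikami--Weinstein, Xu, and \cite{CF2}. Your key steps are the right ones: contracting the multiplicativity identity $m^*\Oga=\pr_1^*\Omega+\pr_2^*\Oga$ with the pair $(\overrightarrow{\alpha}|_{1_x},0_u)$, invoking the normalization $\alpha_x=(i_{v_x}\Omega)|_{T_xM}$ of the identification $A(\cG)\cong T^*M$, and for completeness using that a right-invariant vector field on a Lie groupoid is complete whenever its projection to the base under the anchor is complete (proved by the usual right-translation argument extending the flow through units).

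One small imprecision: you do not need a ``density/spanning argument'' to pass from orbit directions to all of $T_uX$. The moment map identity is verified directly against an arbitrary $w\in T_uX$ by pairing $(\overrightarrow{\alpha}|_{1_x},0_u)$ with the lift $\bigl(\d u(\d q(w)),w\bigr)$, where $u:M\to\cG$ is the unit embedding; this lift is tangent to the fiber product, satisfies $\d m\bigl(\d u(\d q(w)),w\bigr)=w$, and the right-hand side of multiplicativity evaluates to $\Omega\bigl(\overrightarrow{\alpha}|_{1_x},\d u(\d q(w))\bigr)=\alpha(\d q(w))=(q^*\alpha)(w)$, which is the claim for every $w$ at once.
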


Conversely, if one starts with a Poisson map $q: (X, \Oga)\to (M, \pi)$, it is immediate to check that the moment map condition \eqref{eq:gen-mom-map-cond0}  defines an infinitesimal Lie algebroid action $\sigma: q^*T^*M\to TX$. Moreover, one has:

\begin{lemma}\label{lemma-sympl-acts-infin-conv} Let  $q: (X, \Oga)\to (M, \pi)$ be a Poisson map with associate infinitesimal action $\sigma: q^*T^*M\to TX$. Then: 
\begin{enumerate}[(i)]
\item if for some symplectic groupoid $(\cG, \Omega)$ integrating $(M, \pi)$ the infinitesimal action $\sigma$ integrates to an action 
of $\cG$ on $X$, then the action is symplectic;
\item if $q: (X, \Oga)\to (M, \pi)$  is complete, the infinitesimal action always integrate to a symplectic action of the Weinstein groupoid 
$(\Sigma(M,\pi),\Omega)$ on $(X, \Oga)$.
\end{enumerate}
\end{lemma}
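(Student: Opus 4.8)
The statement to prove is Lemma~\ref{lemma-sympl-acts-infin-conv}, which has two parts: (i) whenever the infinitesimal action $\sigma$ associated to a Poisson map $q\colon (X,\Oga)\to (M,\pi)$ integrates to an action of a symplectic groupoid $(\cG,\Omega)$, that action is automatically symplectic; and (ii) if $q$ is a complete Poisson map, then $\sigma$ integrates to a symplectic action of the Weinstein groupoid $(\Sigma(M,\pi),\Omega)$.

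\textbf{Plan for part (i).} The strategy is to show that the multiplicativity condition $m^*\Oga = \pr_1^*\Omega + \pr_2^*\Oga$ holds, starting from the fact that it holds infinitesimally. First I would recall that for an action groupoid $\cG\ltimes X \tto X$ arising from $\sigma$, the 2-form $\Phi := m^*\Oga - \pr_1^*\Omega - \pr_2^*\Oga$ on $\cG_s\times_q X$ is closed (because $\Omega$ and $\Oga$ are closed, and $m$, $\pr_1$, $\pr_2$ are smooth maps). The key point is that $\Phi$ vanishes: it is a closed 2-form whose restriction to the unit section $X \hookrightarrow \cG_s\times_q X$ vanishes, and whose ``infinitesimal version'' along the source fibers is governed precisely by the moment map condition \eqref{eq:gen-mom-map-cond0}. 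Concretely, I would fix a point $(g,u)$ and differentiate: the tangent space splits using $\d m$, $\d \pr_1$, $\d \pr_2$, and vectors tangent to the $\cG$-orbit through $u$ are of the form $\sigma(\alpha)$ for right-invariant 1-forms; plugging these into $\Phi$ and using $i_{\sigma(\alpha)}\Oga = q^*\alpha$ together with the multiplicativity of $\Omega$ on $\cG$ shows $i_{v}\Phi = 0$ for all $v$ in the relevant directions. Since $\Phi$ is closed and its interior product with a complementary family of vector fields (generated by the right-invariant vector fields on $\cG$ acting on the product) vanishes, an argument using the Cartan formula $\Lie_{V}\Phi = \d i_V \Phi + i_V \d\Phi = 0$ propagates the vanishing from the unit section to all of $\cG_s\times_q X$ by flowing along these vector fields, which is exactly the standard proof that an infinitesimally multiplicative closed form that vanishes on units is identically zero. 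This is essentially the statement that a multiplicative 2-form on a (source-connected) Lie groupoid is determined by its associated IM-form; I would cite \cite{BCWZ} for this integrability-of-multiplicative-forms result and phrase the verification of the IM-form compatibility as the bulk of the argument.

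\textbf{Plan for part (ii).} Here I would invoke the construction already recalled in Section~\ref{ssec:sympl-realizs-Hamilt} (equations \eqref{eq:alt-def-sigma}--\eqref{identif-Sigma-ltimes-X}): because $q$ is a complete symplectic realization, every cotangent path $a$ has a well-defined horizontal lift $\widetilde\gamma_a^u$ starting at any $u$ in the fiber over $\gamma_a(0)$, the lifts land in the symplectic-orthogonal foliation $(\ker\d q)^\perp$, and two cotangent-homotopic paths have leafwise-homotopic horizontal lifts. This yields a well-defined action of $\Sigma(M,\pi)$ on $X$ by $([a],u)\mapsto \widetilde\gamma_a^u(1)$, and by \cite{CF2} its infinitesimal counterpart is exactly $\sigma$; the identification \eqref{identif-Sigma-ltimes-X} with $\Mon((\ker\d q)^\perp)$ shows it is a genuine smooth groupoid action. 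Then part (i), applied to $\cG = \Sigma(M,\pi)$, immediately gives that this action is symplectic, which completes the proof. Thus (ii) reduces to quoting the Crainic--Fernandes construction plus (i).

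\textbf{Main obstacle.} The technical heart is part (i): carefully setting up the ``infinitesimally multiplicative closed 2-form vanishing on units implies identically zero'' argument on the action groupoid $\cG\ltimes X$, i.e.\ checking that the interior product of $\Phi$ with the right-invariant vector fields (extended to the fibered product $\cG_s\times_q X$ via the action) vanishes, using only the moment map condition and the multiplicativity of $\Omega$. The cleanest route is probably to appeal directly to the van Est / integrability theorem for multiplicative 2-forms of \cite{BCWZ}, showing that $\Phi$ and the zero form have the same IM-data, rather than redoing the Moser-type flow argument by hand; the only thing one genuinely has to compute is that the IM-form of $m^*\Oga$ along the $\cG$-orbit directions equals $q^*$ applied to the Maurer--Cartan-type 1-form, which is precisely \eqref{eq:gen-mom-map-cond0}. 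Everything else is bookkeeping.
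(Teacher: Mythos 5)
Your proposal is correct in substance. Note first that the paper does not actually prove this lemma: it is stated in Appendix \ref{appendix:moment:maps} as recalled background, with the proof deferred to the literature (Mikami--Weinstein, Xu, and \cite{CF2}), so there is no in-text argument to compare against. Your route for (i) is the standard one and it works: $\Phi:=m^*\Oga-\pr_1^*\Omega-\pr_2^*\Oga$ is closed and multiplicative on the action groupoid $\cG\ltimes X$ (it is $t^*\Oga-s^*\Oga$ minus the pullback of the multiplicative form $\Omega$ along the morphism $\cG\ltimes X\to\cG$), it vanishes on the unit section because the units of $(\cG,\Omega)$ are Lagrangian, and its IM-form vanishes: for $a\in A(\cG)_{q(u)}\cong T^*_{q(u)}M$ and $v\in T_uX$ one gets $i_a(m^*\Oga)(v)=\Oga(\sigma(a),v)=\langle a,\d q(v)\rangle$ by the moment map condition \eqref{eq:gen-mom-map-cond0}, which exactly cancels $i_a(\pr_1^*\Omega)(v)=\langle a,\d q(v)\rangle$ coming from the canonical identification $A(\cG)\cong T^*M$. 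Then the uniqueness part of the correspondence between closed multiplicative $2$-forms and IM $2$-forms in \cite{BCWZ} gives $\Phi=0$. Two small points of care: this uniqueness requires $\cG$ (hence $\cG\ltimes X$) to be s-connected, which is the paper's standing convention for integrations; and one should observe that vanishing of the IM-form forces $i_a\Phi$ to vanish on all of $T(\cG\ltimes X)$ along the units (including the $\Ker\d s$-directions), which follows from the standard compatibility identities for multiplicative forms before one propagates the vanishing by right translations. For (ii), reducing to the horizontal-lift construction of \cite{CF2} recalled in Section \ref{ssec:sympl-realizs-Hamilt} and then applying (i) to $\Sigma(M,\pi)$ is precisely the intended argument. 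No gaps.
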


The standard theory of Hamiltonian $G$-spaces for a Lie group $G$ is recovered by letting $\cG= T^*G= G\ltimes \gg^*$ be the cotangent symplectic groupoid
integrating the linear Poisson structure on $\gg^*$, and $q:(X,\Oga)\to\gg^*$ be the usual moment map.

\subsection{Symplectic quotients of Hamiltonian $\cG$-spaces}
\label{App:Symplectic quotients of Hamiltonian}

Given a Hamiltonian $\cG$-space $(X,\Oga)$, we will say that:
\begin{enumerate}[(a)]
\item the {\bf action is free} at $u\in X$ if the isotropy group $\cG_u$ of the action is trivial.
\item the {\bf action is infinitesimally free} at $u\in X$ if the isotropy group $\cG_u$ of the action is discrete. Equivalentely, if the isotropy Lie algebra of the infinitesimal action $\sigma$ is trivial, or still if $\sigma_u$ is injective. By (\ref{eq:gen-mom-map-cond0}) this is also equivalent to $q$ being a submersion at $u$.
\end{enumerate}
%The moment map equation (\ref{eq:gen-mom-map-cond0}) shows that the action is infinitesimally free at $u$ if and only if $q$ is a submersion at $u$. 
Symplectic reduction makes sense in the general context of a Hamiltonian $\cG$-space $q: (X, \Oga)\to (M, \pi)$: the {\bf symplectic quotient} of $(X, \Oga)$ at a point $x\in M$ is:
\[ X//_{x}\ \cG:= q^{-1}(x)/\cG_x.\]
This carries a canonical symplectic form uniquely determined by the condition that its pull-back to $q^{-1}(x)$ is $\Oga|_{q^{-1}(x)}$. 

As in the classical case, to ensure smoothness one assumes that $\cG$ is proper and one restricts to points where the action is free, which form an open dense subspace of $M$. The following proposition shows that, in that case, if the fibers of $q$ are connected, then the symplectic reductions $q^{-1}(x)/\cG_x$ can be interpreted as the symplectic leaves of a second Poisson manifold:
\[ X_\red:= X/\cG, \]
which will still be of proper type, with the same space of symplectic leaves as $(M, \pi)$.

% As in the classical case, to ensure smoothness one assumes that $\cG$ is proper and one restricts to points where the action is free. More generally, one can consider points where the action is infinitesimally free, which amounts to allow mild singularities, so the symplectic quotients will then be orbifolds. Moreover, there are some very interesting phenomena taking place at points where the action fails to be free as we will see in \cite{CFMc}.

% Let us restrict ourselves to the case of free Hamiltonian spaces (e.g., by restricting to the open set where the action is free).
% The following proposition shows that in that case, if the fibers of $q$ are connected, then the symplectic reductions $q^{-1}(x)/\cG_x$ can be interpreted as the symplectic leaves of a second Poisson manifold:
% \[ X_\red:= X/\cG, \]
% which will still be of proper type, with the same space of symplectic leaves as $(M, \pi)$. 

\begin{proposition}\label{prop:free-reduction} 
Let $(\cG, \Omega)$ be a proper symplectic integration of $(M, \pi)$ and let $q: (X, \Oga)\to M$ be a free Hamiltonian $\cG$-space. Then:
\begin{enumerate}[(i)]
\item $X_\red$ is smooth and carries a unique Poisson structure $\pi_\red$ making the canonical projection
\[ p: (X, \Oga)\to (X_\red, \pi_\red)\]
into a Poisson submersion;
\item the gauge groupoid of the principal $\cG$-space $X$:
\begin{equation}\label{cG-red} 
\Gauge{\cG}{X}:=\left( X\times_{M} X/\cG \tto X_\red \right),
\end{equation}
with the 2-form induced from $\pr_{1}^{*}\Oga- \pr_{2}^{*}\Oga\in\Omega^2(X\times_{M} X)$, is a proper symplectic groupoid integrating $(M_\red, \pi_\red)$;
\item the connected components of the symplectic quotients $q^{-1}(x)/\cG_x$ are the symplectic leaves of $(X_\red, \pi_\red)$.
\end{enumerate}
\end{proposition}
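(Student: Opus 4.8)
The plan is to verify the three assertions of Proposition~\ref{prop:free-reduction} in order, building each on the previous one. For part (i), I would first recall that since $(\cG,\Omega)$ is proper and the action on $X$ is free, the action groupoid $\cG\ltimes X\tto X$ is a free proper Lie groupoid, so the orbit space $X_\red = X/\cG$ is a smooth manifold and $p:X\to X_\red$ is a surjective submersion. The key point is then to transport the Poisson structure: since $q:(X,\Oga)\to(M,\pi)$ is a Poisson map and the $\cG$-action is symplectic, the foliation $\ker\d p$ (the orbit directions) is spanned by the Hamiltonian vector fields $\sigma(\alpha)$, $\alpha\in q^*T^*M$, and these are symplectic vector fields on $(X,\Oga)$ whose flows preserve the Poisson bivector $\Oga^{-1}$. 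Hence the bivector on $X$ is projectable along $p$: for $f,g\in C^\infty(X_\red)$ the function $\{p^*f,p^*g\}_\Oga$ is $\cG$-invariant, so descends to a function $\{f,g\}_{\pi_\red}$ on $X_\red$; one checks Jacobi and that $p$ becomes a Poisson submersion, and uniqueness is automatic since $p$ is a surjective submersion.

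For part (ii), I would invoke the gauge groupoid construction from Example~\ref{ex:gauge} together with the observation (Lemma~\ref{lemma-sympl-acts-infin}~(i) and its converse Lemma~\ref{lemma-sympl-acts-infin-conv}) that $q:(X,\Oga)\to(M,\pi)$ is a complete Poisson map. Since $X$ is a principal $\cG$-bundle over $X_\red$ via $p$ and also maps to $M$ via $q$, the manifold $X\times_M X$ carries the submersion groupoid over $X$, and quotienting by the diagonal $\cG$-action yields $\Gauge{\cG}{X}\tto X_\red$; properness is inherited because $\cG$ is proper (Morita invariance of properness, Section~\ref{ssec:Morita}~(iii), applied to the Morita equivalence $\Gauge{\cG}{X}\simeq \cG$ given by the bibundle $X$). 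The form $\pr_1^*\Oga-\pr_2^*\Oga$ on $X\times_M X$ has kernel exactly the diagonal $\cG$-orbit directions — this is the standard computation, using that the $\cG$-action is symplectic and the moment map relation $i_{\sigma(\alpha)}\Oga=q^*\alpha$ — so it descends to a symplectic form $\Omega_\red$ on $\Gauge{\cG}{X}$, which is multiplicative because $\pr_1^*\Oga-\pr_2^*\Oga$ is multiplicative on the submersion groupoid. One then identifies the induced Poisson structure on the base $X_\red$ with $\pi_\red$ from part (i) by checking that the target map $t:\Gauge{\cG}{X}\to X_\red$ coincides (up to the identifications) with $p:X\to X_\red$ composed with an anchor computation, so $(\Gauge{\cG}{X},\Omega_\red)$ integrates $(X_\red,\pi_\red)$.

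For part (iii), the symplectic leaves of $(X_\red,\pi_\red)$ are the orbits of the integrating groupoid $\Gauge{\cG}{X}$, i.e.\ the sets $\{p(u') : u'\in X, q(u')\in \cG\cdot q(u)\}$ for $u\in X$; unwinding, the leaf through $p(u)$ is $p(q^{-1}(S_x))$ where $S_x$ is the symplectic leaf of $(M,\pi)$ through $x=q(u)$. Since $p$ restricted to $q^{-1}(S_x)$ is a submersion with fibers the $\cG$-orbits and $q^{-1}(S_x)/\cG$ is locally modeled on $q^{-1}(x)/\cG_x$ (using that $q$ restricted to $q^{-1}(S_x)$ is a submersion onto $S_x$ with connected fibers by hypothesis, and $\cG|_{S_x}$ acts transitively on $S_x$), one gets that each connected component of $q^{-1}(x)/\cG_x$ is a connected component of such a leaf. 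The symplectic form on $q^{-1}(x)/\cG_x$ coming from $\Oga|_{q^{-1}(x)}$ must then agree with the leafwise symplectic form of $\pi_\red$, which is a pointwise computation comparing $\Oga$ with the descended bivector.

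The main obstacle I anticipate is part (iii), specifically matching the \emph{symplectic} structures: showing that the reduced form on $q^{-1}(x)/\cG_x$ obtained by the classical Marsden--Weinstein-type reduction (pullback to the fiber, then descend) coincides with the restriction to that leaf of the Poisson-geometric symplectic form $\pi_\red^{-1}|_{\text{leaf}}$. This requires carefully tracking the moment map relation $i_{\sigma(\alpha)}\Oga = q^*\alpha$ through the two quotient operations (by $\ker\d q$ transversally and by the $\cG_x$-action along the fiber) and verifying they are compatible; the connectedness-of-fibers hypothesis enters precisely here to ensure that $p(q^{-1}(x))$ is (a union of components of) a single leaf rather than something larger. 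The smoothness statements in (i) and (ii) are routine once freeness and properness are in hand, and the multiplicativity of $\Omega_\red$ in (ii) is formal from the submersion-groupoid picture.
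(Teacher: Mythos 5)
Your proposal is correct and is essentially the argument the paper has in mind: the paper does not write out a proof but states, at the start of Appendix \ref{App:Symplectic Morita equivalence}, that the proposition is an immediate consequence of the theory of symplectic Morita equivalence of \cite{Xu} — namely that $q:(X,\Oga)\to M$ together with $p:X\to X_\red$ is (one leg of) a symplectic Morita bibundle, and that the missing groupoid is recovered by the gauge construction $\Gauge{\cG}{X}$. Your parts (ii) and (iii) — the kernel computation for $\pr_1^*\Oga-\pr_2^*\Oga$, Morita invariance of properness, and the identification of the leaves with the components of $q^{-1}(x)/\cG_x$ — reproduce exactly that machinery by hand.

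One imprecision worth fixing in part (i): the generators $\sigma(\alpha)$ of the orbit foliation are \emph{not} symplectic vector fields for a general $\alpha\in\Omega^1(M)$, since $\Lie_{\sigma(\alpha)}\Oga=\d\, i_{\sigma(\alpha)}\Oga=q^*\d\alpha$. The argument survives because locally the orbit directions are already spanned by the genuinely Hamiltonian fields $\sigma(\d f)=X_{q^*f}$, $f\in C^\infty(M)$, which is all you need to see that $\{p^*f',p^*g'\}$ is constant along the connected components of the orbits; for invariance under the full (possibly non-s-connected) $\cG$-action you should instead appeal directly to the multiplicativity of $\Oga$ (local bisections act preserving $\Oga$ up to a $q$-basic 2-form, which does not affect Hamiltonian vector fields of $\cG$-invariant functions). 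With that repair the descent of the bracket, and hence part (i), is complete.
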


\subsection{Symplectic Morita equivalence}
\label{App:Symplectic Morita equivalence}

Free Hamiltonian $\cG$-spaces are also the main ingredient in symplectic Morita equivalences. Moreover, the previous proposition becomes an immediate consequence of one of the main properties of such equivalences. We start by recalling the definition (see \cite{Xu}):

\begin{definition} 
A {\bf symplectic Morita equivalence} between two symplectic groupoids $(\cG_i, \Omega_i)\tto M_i$ is a Morita equivalence (see Section \ref{ssec:Morita}):
\[
\xymatrix{
 \cG_1 \ar@<0.25pc>[d] \ar@<-0.25pc>[d]  & \ar@(dl, ul) & X \ar[dll]^{\qq_1}\ar[drr]_{\qq_2}  & \ar@(ur, dr) & \cG_2\ar@<0.25pc>[d] \ar@<-0.25pc>[d]  \\
M_1&  & & & M_2}
\]
together with a symplectic form $\Oga$ on $X$ such that the actions of $\cG_1$ and $\cG_2$ on $(X, \Oga)$ are symplectic. 
\end{definition} 

Hence, the two legs in a symplectic Morita equivalence are left/right free Hamiltonian $\cG_i$-spaces. The two actions are proper, but the groupoids need not be proper. 
% Notice that here the groupoids $\cG_i$ need not need be proper, 
% only the actions need be proper, which is equivalent to requiring the action groupoids $\cG_i\ltimes X\tto X$ to be proper.

Similar to the non-symplectic case, one can recover one groupoid in a symplectic Morita equivalence from the other groupoid and the the bibundle $(X, \Oga)$ by the gauge construction:
\[ \cG_2\cong \Gauge{\cG_1}{X},\quad \cG_1\cong \Gauge{\cG_2}{X}. \]
This is precisely the construction of (\ref{cG-red}) in Proposition \ref{prop:free-reduction}.

A Morita equivalence allows to identify various ``transversal objects'' associated to $\cG_1$, such as leaf spaces, isotropy groups, isotropy Lie algebras, monodromy groups, etc., with the similar ones of $\cG_2$. Moreover, in the symplectic case, one obtains an equivalence between Hamiltonian $\cG_1$-spaces and Hamiltonian $\cG_2$-spaces (see \cite{Xu}). It is not difficult to see that all these fit nicely together in the proper case, when the homeomorphism between the two resulting leaf spaces will be a diffeomorphism of integral affine orbifolds and the variation of symplectic areas of symplectic reductions for Hamiltonian $\cG_1$-spaces will correspond to the ones for $\cG_2$.

\subsection{Hamiltonian $\cT_\Lambda$-spaces}
\label{ssec:q-Hamiltonian spaces}

The discussion above is interesting even in the case of proper integrations of the zero Poisson structure. 
By Proposition \ref{prop:IAS-sympl-torus}, these correspond to integral affine structures $\Lambda\subset T^*B$: the associated torus bundle $\cT_{\Lambda}= T^*B/\Lambda$ can be viewed as a symplectic groupoid 
integrating $(B, \pi\equiv 0)$.
% the zero Poisson structure on $B$. 
Therefore, for any integral affine manifold $(B, \Lambda)$ one can 
talk about Hamiltonian $\cT_\Lambda$-spaces in the sense discussed above, and we have:

\begin{corollary}\label{q-hamil-def-reform} Let $(B, \Lambda)$  be an integral affine manifold and let $(X, \Oga)$ be a symplectic manifold endowed with an action of $\cT_{\Lambda}$ along a smooth map $q: X\to B$. Then the following are equivalent:
\begin{enumerate}[(a)]
\item $(X, \Oga)$ is a Hamiltonian $\cT_\Lambda$-space;
\item the moment map condition $ i_{\sigma(\alpha)}(\Oga)= q^*\alpha$ holds for all $\alpha\in \Omega^1(B)$, where $\sigma: q^*T^*B\to TX$ is the infinitesimal action induced by the action of $\cT_{\Lambda}$ on $X$. 
\end{enumerate}
\end{corollary}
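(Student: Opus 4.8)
The statement to prove, Corollary \ref{q-hamil-def-reform}, is a direct specialization of the general equivalence between symplectic $\cG$-actions and the moment map condition (Lemmas \ref{lemma-sympl-acts-infin} and \ref{lemma-sympl-acts-infin-conv}), applied to $\cG = \cT_\Lambda$, the symplectic torus bundle integrating the zero Poisson structure on $B$. The plan is therefore to simply unwind the definitions and invoke the two lemmas.

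\medskip

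First I would note that by Proposition \ref{prop:IAS-sympl-torus}, the symplectic torus bundle $(\cT_\Lambda, \omega_{\cT_\Lambda})$ is an integration of the Poisson manifold $(B, 0)$, with Lie algebroid $T^*B$ carrying the zero anchor and zero bracket. For the implication (a) $\Rightarrow$ (b): assume $(X, \Oga)$ is a Hamiltonian $\cT_\Lambda$-space. Then by part (ii) of Lemma \ref{lemma-sympl-acts-infin}, applied with $(\cG, \Omega) = (\cT_\Lambda, \omega_{\cT_\Lambda})$ and $(M, \pi) = (B, 0)$, the induced infinitesimal action $\sigma : q^*T^*B \to TX$ satisfies $i_{\sigma(\alpha)}(\Oga) = q^*\alpha$ for all $\alpha \in \Omega^1(B)$. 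One should remark that the infinitesimal action described abstractly in Lemma \ref{lemma-sympl-acts-infin} via $\sigma(u,\alpha_x) = \left.\frac{\d}{\d t}\right|_{t=0}\exp(t\alpha_x)\cdot u$ is exactly the infinitesimal action of $\cT_\Lambda$ on $X$ in the statement, since the exponential map of the bundle of abelian Lie groups $T^*B$ (viewed as the Lie algebroid) onto $\cT_\Lambda$ is just the quotient projection composed with the fiberwise exponential, which for the torus bundle $T^*B/\Lambda$ is simply the quotient map $T^*B \to \cT_\Lambda$; hence $\exp(t\alpha_x)$ traces the one-parameter subgroup generated by $\alpha_x$.

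\medskip

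For the converse (b) $\Rightarrow$ (a): assume the moment map condition $i_{\sigma(\alpha)}(\Oga) = q^*\alpha$ holds, where $\sigma : q^*T^*B \to TX$ is the infinitesimal action of $\cT_\Lambda$. Since $\pi \equiv 0$ on $B$, the map $q : X \to B$ is automatically a Poisson map (every map into $(B,0)$ is Poisson), and moreover it is complete in a trivial way — there are no nonzero Hamiltonian vector fields on $(B,0)$. Now I would argue that the given $\cT_\Lambda$-action is exactly the integration of the infinitesimal action $\sigma$ determined by the Poisson map $q$ through the moment map condition: indeed, $\sigma$ is uniquely characterized by $i_{\sigma(\alpha)}\Oga = q^*\alpha$ (using nondegeneracy of $\Oga$), and differentiating the $\cT_\Lambda$-action gives an infinitesimal action satisfying precisely this relation by part (b) itself. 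Therefore, by part (i) of Lemma \ref{lemma-sympl-acts-infin-conv}, since the infinitesimal action associated to the Poisson map $q$ integrates to the action of the symplectic groupoid $\cT_\Lambda$, that action is automatically symplectic; hence $(X,\Oga)$ is a Hamiltonian $\cT_\Lambda$-space.

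\medskip

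The only point requiring a little care — and the closest thing to an obstacle — is matching the abstract exponential-map description of the infinitesimal action in Lemma \ref{lemma-sympl-acts-infin} with the concrete $\cT_\Lambda$-action in the corollary, i.e.\ checking that the infinitesimal generator of the $\cT_\Lambda$-action in the direction $\alpha \in \Omega^1(B)$ really is $\sigma(\alpha)$ with $i_{\sigma(\alpha)}\Oga = q^*\alpha$. This is a short verification using that, for $\cT_\Lambda = T^*B/\Lambda$, the fiberwise exponential $\mathfrak{t} \to \cT_\Lambda$ of the bundle of abelian Lie groups coincides (after the canonical identification of the Lie algebra bundle $\mathfrak{t}$ with $T^*B$) with the quotient projection, so that $\exp(t\alpha_x)\cdot u$ is the flow of the vector field determined by $i_X\Oga = q^*\alpha$, exactly as in the proof of Proposition \ref{prop:IAS-sympl-torus}. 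Once this identification is in place, both implications are immediate corollaries of the two lemmas, so no further calculation is needed.
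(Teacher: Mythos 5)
Your overall route is the one the paper intends: the corollary is stated there without a separate proof, as an immediate specialization of Lemmas \ref{lemma-sympl-acts-infin} and \ref{lemma-sympl-acts-infin-conv} to the symplectic groupoid $(\cT_\Lambda,\omega_{\cT_\Lambda})$ integrating $(B,0)$, and your two implications invoke exactly those lemmas, together with the correct identification of the groupoid exponential $T^*B\to\cT_\Lambda$ with the quotient projection.

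There is, however, one false step in your converse direction: the parenthetical claim that \emph{every} map into $(B,0)$ is Poisson. A Poisson map $q:(X,\Oga)\to (B,0)$ is precisely a map whose fibers are coisotropic, i.e. $(\Ker\d q)^{\perp}\subset \Ker(\d q)$; a symplectic fibration with fibers of positive codimension is a counterexample, so you cannot get the hypothesis of Lemma \ref{lemma-sympl-acts-infin-conv} for free. The fact you need does follow, but from hypothesis (b) itself: since $\cT_\Lambda$ is a bundle of groups acting along $q$, its orbits lie inside the fibers of $q$, so $\im(\sigma)\subset\Ker(\d q)$; on the other hand the moment map condition $i_{\sigma(\alpha)}\Oga=q^*\alpha$ together with non-degeneracy of $\Oga$ identifies $\im(\sigma)$ with $(\Ker\d q)^{\perp}$. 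Combining the two gives $(\Ker\d q)^{\perp}\subset\Ker(\d q)$, i.e. $q$ is a Poisson map onto $(B,0)$. With that substitution, the remainder of your argument --- uniqueness of $\sigma$ from non-degeneracy of $\Oga$, hence the given $\cT_\Lambda$-action integrates the infinitesimal action attached to the Poisson map $q$, hence Lemma \ref{lemma-sympl-acts-infin-conv}(i) applies --- is correct.
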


From Proposition \ref{prop:free-reduction} and the discussion on Morita equivalences we deduce:

\begin{corollary}\label{reduction-free-qHamilt} Given an integral affine manifold $(B, \Lambda)$ and a free $\cT_{\Lambda}$-Ha\-mil\-tonian space $q: (X, \Oga)\to B$, with quotient $X_\red:= X/\cT_{\Lambda}$, one has:
\begin{enumerate}[(i)] 
\item There a unique Poisson structure $\pi_{\red}$ on  $X_\red$ carries such that the projection is a Poisson submersion with connected fibers:
\begin{equation}
\label{red-q-Ham-pmap} 
p: (X, \Oga) \to (X_{\red}, \pi_{\red}).
\end{equation}
Moreover, $p$ makes $(X, \Oga)$ into a proper isotropic realization of $(X_{\red}, \pi_{\red})$. 
% \item $p$ makes $(X, \Oga)$ into a proper isotropic realization of $(X_{\red}, \pi_{\red})$ with connected fibers. 
%\item $(X_{\red}, \pi_{\red})$ is a Poisson manifold of proper type,
\item $(X_{\red}, \pi_{\red})$ admits the following proper integrating symplectic groupoid: 
\begin{equation}\label{cG-red2} 
\Gauge{\cT_\Lambda}{X}:= \left( X\times_{B} X\right)/\cT_{\Lambda} \tto X_\red ,
\end{equation}
with the 2-form induced from the 2-form $\pr_{1}^{*}\Oga-\pr_{2}^{*}\Oga\in\Omega^2(X\times_{B} X)$. 
\item $(X, \Oga)$ defines
a symplectic Morita equivalence:
\[
\xymatrix@R=15 pt{
\Gauge{\cT_\Lambda}{X} \ar@<0.25pc>[d] \ar@<-0.25pc>[d]  & \ar@(dl, ul) & X \ar[dll]\ar[drr]  & \ar@(ur, dr) & \cT_\Lambda\ar[d] \\
X_\red&  & & & B}
\]
In particular, there is a 1-1 correspondence between Hamiltonian $\cT_\Lambda$-spaces and Hamiltonian $\Gauge{\cT_\Lambda}{X}$-spaces.
\end{enumerate} 
\end{corollary}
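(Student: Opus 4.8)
The plan is to prove Corollary \ref{reduction-free-qHamilt} as a direct specialization of Proposition \ref{prop:free-reduction} (the general symplectic reduction result for proper symplectic groupoids) to the case $\cG = \cT_\Lambda$, the symplectic torus bundle integrating $(B, \pi\equiv 0)$. First I would recall that $\cT_\Lambda = T^*B/\Lambda$ is a \emph{proper} symplectic groupoid integrating $(B,0)$ by Proposition \ref{prop:IAS-sympl-torus}, so that all hypotheses of Proposition \ref{prop:free-reduction} are met once we are given a free Hamiltonian $\cT_\Lambda$-space $q:(X,\Oga)\to B$. Part (ii) of Corollary \ref{reduction-free-qHamilt} is then immediate: it is precisely the gauge groupoid statement of Proposition \ref{prop:free-reduction}(ii), with $M$ replaced by $B$, $\cG$ by $\cT_\Lambda$, and the 2-form $\pr_1^*\Oga - \pr_2^*\Oga$ descending to $(X\times_B X)/\cT_\Lambda$ exactly as there. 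This gives \eqref{cG-red2} and the fact that it is a proper symplectic groupoid integrating $(X_\red,\pi_\red)$.

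For part (i), the existence and uniqueness of $\pi_\red$ with $p$ a Poisson submersion is Proposition \ref{prop:free-reduction}(i). The only genuinely new claim is that $p:(X,\Oga)\to(X_\red,\pi_\red)$ is a \emph{proper isotropic realization}. To see this I would argue: the fibers of $p$ are the $\cT_\Lambda$-orbits, which are tori, hence compact and connected; and they are isotropic in $(X,\Oga)$ because the action of $\cT_\Lambda$ is symplectic with moment map $q:X\to B$ (so the tangent spaces to the orbits are spanned by the Hamiltonian vector fields $\sigma(\alpha)$ with $i_{\sigma(\alpha)}\Oga = q^*\alpha$, $\alpha\in\nu^*(\cF_{\pi_\red})=$ conormal directions, and $\Oga(\sigma(\alpha),\sigma(\beta)) = \langle q^*\beta,\sigma(\alpha)\rangle = 0$ since $q^*\beta$ annihilates the fiber tangent space). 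Properness of $p$ follows from compactness of the fibers together with the fact that the action of $\cT_\Lambda$ is proper (Proposition \ref{prop:free-reduction} already uses that $\cT_\Lambda$ is proper). Then Remark \ref{rk:symplectically complete isotropic fibrations} guarantees that an isotropic realization with connected fibers automatically has integrable symplectic orthogonal, so $p$ is indeed a proper isotropic realization of $(X_\red,\pi_\red)$.

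Part (iii) is the symplectic Morita equivalence statement, which again specializes Proposition \ref{prop:free-reduction}: the legs are $X\to X_\red = X/\cT_\Lambda$ and $X\to B$ (which is $q$), the left action of $\Gauge{\cT_\Lambda}{X}$ comes from the gauge construction, and the right action is the given $\cT_\Lambda$-action; that both are symplectic and principal is exactly what Proposition \ref{prop:free-reduction}(ii)-(iii) and the discussion in Appendix \ref{App:Symplectic Morita equivalence} provide. The induced $1$-$1$ correspondence between Hamiltonian $\cT_\Lambda$-spaces and Hamiltonian $\Gauge{\cT_\Lambda}{X}$-spaces is then a formal consequence of symplectic Morita invariance of Hamiltonian spaces recalled in Appendix \ref{App:Symplectic Morita equivalence}.

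I expect the routine parts (ii) and (iii) to cause no trouble, as they are verbatim specializations. The main obstacle — though still modest — is the isotropic realization claim in (i): one must carefully check that the fibers are isotropic and that $p$ is proper, rather than merely invoking Proposition \ref{prop:free-reduction} as a black box. The isotropy computation is the short linear-algebra argument above using the moment map condition $i_{\sigma(\alpha)}\Oga = q^*\alpha$; properness is the compactness-of-fibers argument. Once these two points are nailed down, the rest assembles mechanically from results already established in the paper, so no further hard estimates or constructions are needed.
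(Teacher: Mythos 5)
Your proposal is correct and follows exactly the route the paper takes: the corollary is stated there as an immediate consequence of Proposition \ref{prop:free-reduction} (specialized to $\cG=\cT_\Lambda$) together with the discussion of symplectic Morita equivalence in Appendix \ref{App:Symplectic Morita equivalence}, with no further written proof. Your explicit verification of the proper isotropic realization claim in (i) — compact connected torus fibers, isotropy via $\Oga(\sigma(\alpha),\sigma(\beta))=q^*\beta(\sigma(\alpha))=0$ since $\sigma(\alpha)\in\Ker(\d q)$ — correctly fills in the one point the paper leaves implicit.
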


Note that the s-fibers of the gauge groupoid \eqref{cG-red2} are copies of the q-fibers, so we  we conclude that $X_\red$ is of proper type
when the fibers of $q$ are connected. There are examples where the fibers are not connected, the gauge groupoid is proper, but its source connected component fails to be proper.

We show in Section \ref{Integral affine structures on manifold} that Hamiltonian $\cT_\Lambda$-spaces are nothing more than Lagrangian 
fibrations $(X,\Oga)\to B$ inducing the integral affine structure $\Lambda$ on $B$. When $B=\T^n$ with its 
standard integral affine structure, $\cT_{\Lambda}$ is the trivial $\T^n$-bundle over $\T^n$ and Hamiltonian 
$\cT_{\Lambda}$-spaces are the same thing as quasi-Hamiltonian $\T^n$-spaces in the sense of (Reference \cite{AMM}).

\subsection{The twisted Dirac case}
\label{App:twisted Dirac case}
Let us mention briefly how to modify the previous discussion in the case of twisted Dirac manifolds. For details see \cite{Xu04}.

Given a presymplectic groupoid $(\cG,\Omega,\phi)$ integrating a twisted Dirac manifold $(M,L)$ with background 3-form $\phi\in\Omega^2(M)$, a {\bf Hamiltonian $\cG$-space}  consists of a manifold $X$, endowed with a 2-form $\Oga$, together with a smooth map
\[ q: (X, \Oga)\to M,\]
as well as an action $m$ of $\cG$ on $X$ along $q$, satisfying:
\begin{enumerate}[(a)]
\item multiplicativity: $m^*(\Oga)= \pr_{1}^{*}(\Omega)+ \pr_{2}^{*}(\Oga)$;
\item twisting: $\d\Oga-q^*\phi$ is horizontal relative to the action.
\end{enumerate}
For such a Hamiltonian $\cG$-space, the map $q:(X,\Oga)\to (M,L)$ is a forward Dirac map. One also has analogues of Lemmas \ref{lemma-sympl-acts-infin} and \ref{lemma-sympl-acts-infin-conv}.

For a free Hamiltonian $\cG$-space, the {\bf reduced space}:
\[ X_\red:= X/\cG, \]
now carries a unique $\phi_\red$-twisted Dirac structure $L_\red$ for which the projection $p:(X,\omega)\to (X_\red,L_\red)$ is 
a forward Dirac map and the twisting satisfies:
\[ \d\Oga=q^*\phi-p^*\phi_\red. \]
The gauge groupoid:
\[ \Gauge{\cG}{X}:=\left( X\times_{M} X/\cG \tto M \right),\]
% equipped 
with the 2-form induced from $\pr_{1}^{*}\Oga- \pr_{2}^{*}\Oga$, is a $\phi_\red$-twisted presymplectic groupoid integrating $(M,L_\red)$, with leaves the {\bf presymplectic quotients} $q^{-1}(x)/\cG_x$.

A {\bf presymplectic Morita equivalence} between two $\phi_i$-twisted presymplectic groupoids $(\cG_i, \Omega_i)\tto M_i$ is a Morita equivalence:
\[
\xymatrix{
 \cG_1 \ar@<0.25pc>[d] \ar@<-0.25pc>[d]  & \ar@(dl, ul) & X \ar[dll]^{\qq_1}\ar[drr]_{\qq_2}  & \ar@(ur, dr) & \cG_2\ar@<0.25pc>[d] \ar@<-0.25pc>[d]  \\
M_1&  & & & M_2}
\]
together with a 2-form $\Oga$ on $X$ such that the actions are presymplectic:
\[ m_i^*(\Oga)= \pr_{1}^{*}(\Omega_i)+ \pr_{2}^{*}(\Oga)\quad (i=1,2),\]
and the following twisting condition holds:
\[ \d\Oga=\qq_1^*\phi_1-\qq_2^*\phi_2. \]

Hence, the two legs in a presymplectic Morita equivalence are left/right free Hamiltonian $\cG_i$-spaces and one can recover one groupoid from the other groupoid and the bibundle $(X, \Oga)$ by the gauge construction:
\[ \cG_2\cong \Gauge{\cG_1}{X},\quad \cG_1\cong \Gauge{\cG_2}{X}. \]

Note that a twisted presymplectic groupoid maybe presymplectic Morita equivalent to a non-twisted symplectic groupoid. Moreover, it is easy to check that:
\begin{enumerate}[(i)]
\item Any $\phi$-twisted presymplectic groupoid $(\cG,\Omega,\phi)\tto(M,L)$ is presymplectic Morita equivalent to its restriction to a complete transversal $\cG|_T\tto T$, which is a twisted symplectic groupoid with twisting the restriction of $\phi$ to the transversal $T$;
\item A presymplectic Morita equivalence between two symplectic groupoids is actually a symplectic Morita equivalence.
\end{enumerate}
Finally, given a 2-form $B\in\Omega^2(M)$, there is a presymplectic Morita equivalence between a $\phi$-twisted presymplectic groupoid $(\cG,\Omega)\tto(M,L)$ and its $B$-transform, namely the $(\phi+\d B)$-twisted presymplectic groupoid $(\cG,\Omega')\tto(M,e^B L)$, where $\Omega'=\Omega+t^*B-s^*B$.

%%%%%%%%%%%%%%%%%%%%%%%%
%%%%%%%%%%%%%%%%%%%%%%%%
%%%%%%%%%%%%%%%%%%%%%%%%
\section{Proper transverse integral affine foliations}
\label{appendix:molino}
%\subsection{Transverse integral affine proper foliations: a global quotient theorem}
%%%%%%%%%%%%%%%%%%%%%%%%
%%%%%%%%%%%%%%%%%%%%%%%%
%%%%%%%%%%%%%%%%%%%%%%%%
%%%%%%%%%%%%%%%%%%%%%%%%
In this section we show that some of the ideas from Molino's structure theory of riemannian foliations (\cite{MM,Molino}) can be adapted to the case of transverse integral affine foliations 
proving in particular that the leaf spaces of proper transverse integral affine foliations are good orbifolds, i.e.~global quotients
modulo proper actions of discrete groups. In the terminology of \cite{Thur, GHL}, we show that proper integral affine foliations are complete.

Throughout this section we fix a foliated manifold $(M, \cF)$ together with a transverse integral affine structure $\Lambda\subset \nu^*(\cF)$. 

\subsection{Transverse integral affine structures and the foliation holonomy}

We start by investigating the influence of the transverse integral affine structure $\Lambda$ on the holonomy of $\cF$
Note first that the various groupoids associated to the foliation discussed in Section \ref{ssec:foliation-groupoids} fit into a sequence of groupoid morphisms 
\[
\xymatrix{ 
\Mon(M, \cF) \ar[r]^-{\hol} & \Hol(M, \cF) \ar[r]^-{\lin} &\Hol^{\lin}(M, \cF)\subset\GL_\Lambda(\nu(\cF)),}
\]
where the $\Lambda$ in the last factor is justified by the fact that the holonomy of the foliation preserves $\Lambda$. As pointed out in Section \ref{ssec:foliation-groupoids}, unlike the other groupoids in this sequence, $\Hol^{\lin}(M, \cF)$ does not have a smooth structure making $\lin$ smooth unless the holonomy is linear. 

\begin{proposition} \label{Mol:immersed1}
If $(M, \cF)$ admits a transverse integral affine structure then its holonomy is linear, i.e. $\lin$ is bijective. Moreover, with the induced smooth structure, $\Hol^{\lin}(M, \cF)$ is an immersed Lie subgroupoid of 
$\GL(\nu(\cF))$.
\end{proposition}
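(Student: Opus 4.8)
The plan is to prove the two assertions of Proposition \ref{Mol:immersed1} in turn, the first being the real content and the second being a routine consequence of it.

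First I would establish that the holonomy is linear, i.e.\ that $\lin\colon \Hol(M,\cF)\to \Hol^{\lin}(M,\cF)$ is injective (it is always surjective by construction). By Theorem \ref{seq-mon-G-hol}, injectivity of $\lin$ is equivalent to saying that the linear holonomy groupoid admits a Lie groupoid structure making $\lin$ smooth, which is exactly what is needed for the second assertion as well. To show injectivity, I would argue locally: it suffices to show that any leafwise loop $\gamma$ at a point $x$ whose linear holonomy $\hol^{\lin}_\gamma = \d_x\hol_\gamma$ is the identity on $\nu_x(\cF)$ already has trivial holonomy germ $\sigma_\gamma$ on a transversal $T$ through $x$. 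Using a transverse integral affine coordinate system $(x_1,\dots,x_q)$ around $x$ (Proposition \ref{prop-transv-int-affine-folklore}), one identifies a neighborhood of $x$ in $T$ with an open set in $\R^q$ in such a way that the holonomy germ $\sigma_\gamma$ becomes (the germ of) an integral affine transformation $y\mapsto A y + v$ with $A\in\GL_\Z(\R^q)$; this is because $\sigma_\gamma$ preserves the lattice $\Lambda$ restricted to $T$ and takes closed $\cF$-basic $1$-forms spanning $\Lambda$ to closed $\cF$-basic $1$-forms spanning $\Lambda$, and more precisely because the transition functions between transverse integral affine charts are integral affine. The hypothesis $\hol^{\lin}_\gamma = \mathrm{id}$ forces the linear part $A$ to be the identity, so $\sigma_\gamma$ is a pure translation $y\mapsto y+v$. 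But a holonomy germ fixes $x$, so $v=0$, whence $\sigma_\gamma = \mathrm{id}$ as a germ, i.e.\ $\gamma$ has trivial holonomy. Thus $\lin$ is injective. The main obstacle here is making precise the claim that, in transverse integral affine coordinates, holonomy germs are genuinely integral affine maps (not just lattice-preserving local diffeomorphisms fixing $x$): the point is that an arrow of $\Hol^{\lin}(M,\cF)$ is a lattice isomorphism $\nu_x\to\nu_y$, and once we fix integral affine coordinates, the holonomy germ and its linearization are intertwined by the exponential of the flat Bott connection; since the connection is flat and torsion-free (Proposition \ref{prop-int-affine-folklore} applied transversally), parallel transport along leafwise paths is, in these coordinates, an affine map whose linear part is the linear holonomy. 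I would spell this out by noting that a closed $\cF$-basic $1$-form $\alpha$ with $\alpha_x\in\Lambda_x$ extends to a $\Lambda$-valued leafwise-flat section, so $\hol_\gamma$ pulls back such coordinate $1$-forms to coordinate $1$-forms up to the constant integral matrix $A$, which identifies $\sigma_\gamma$ with $y\mapsto Ay+v$.

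Second, with $\lin$ shown to be bijective, I transport the Lie groupoid structure of $\Hol(M,\cF)$ to $\Hol^{\lin}(M,\cF)$ via $\lin$, so that $\Hol^{\lin}(M,\cF)$ becomes a Lie groupoid and the tautological map $\Hol^{\lin}(M,\cF)\to\GL_\Lambda(\nu(\cF))\subset\GL(\nu(\cF))$ is a morphism of Lie groupoids. To see it is an immersion, observe that this morphism is injective on arrows (two leafwise holonomy classes with the same linear holonomy are equal, by the first part) and injective on objects (it is the identity on $M$); I would then check injectivity of the differential. On the unit manifold it is the identity, and along the $\s$-fibers the map sends the isotropy group $\Hol^{\lin}_x(M,\cF)$, which by Proposition \ref{fol-groupoid-isotropy} is discrete, into $\GL_\Z(\nu_x(\cF))$ injectively; since an injective homomorphism with discrete source is automatically an immersion on isotropy, and the map is a submersion onto its image transversally (it is étale in the transverse direction because both groupoids are foliation groupoids with the same leaf space), the differential is everywhere injective. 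Hence $\Hol^{\lin}(M,\cF)$ is an immersed Lie subgroupoid of $\GL(\nu(\cF))$. This second step is essentially bookkeeping once the linearity of the holonomy is in hand; the only mild subtlety is confirming that the image is an \emph{immersed} rather than embedded subgroupoid, which is all that is claimed and which follows because $\Hol(M,\cF)$ itself need not be embedded in $\GL(\nu(\cF))$ (its isotropy groups, though discrete, may fail to be closed).
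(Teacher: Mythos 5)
Your proof of the main claim follows the paper's argument exactly: in transverse integral affine charts the holonomy germ preserves the standard integral affine structure of $\R^q$ and fixes the origin, hence is linear and so determined by its linearization. The detour through "the exponential of the flat Bott connection" is unnecessary (and somewhat circular as phrased); the lattice computation you give at the end — the pullback under the germ of the closed lattice‑generating $1$-forms is an integer, hence locally constant, combination of such forms, so the germ is affine — is already the whole content of the step, and is precisely what the paper means by "such germs are clearly linear".

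One correction to your second half, which the paper itself leaves unproved: $\GL(\nu(\cF))$ is \emph{not} a foliation groupoid (it is transitive on each connected component of $M$ and has $q^2$-dimensional isotropy), so the assertion that the map is "\'etale in the transverse direction because both groupoids are foliation groupoids with the same leaf space" is false as stated; moreover, the discrete-isotropy observation only accounts for a zero-dimensional piece of each $\s$-fibre and says nothing about the leaf directions. The clean argument is that the anchor $(\s,\t)\colon \Hol(M,\cF)\to M\times M$ is an immersion — on each $\s$-fibre, $\t$ is the holonomy covering of an immersed leaf — and it factors as the anchor of $\GL(\nu(\cF))$ composed with the map $[\gamma]\mapsto \hol_{\gamma}^{\lin}$; hence that map is an immersion. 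This is a small, fixable gap in a routine step; the substantive part of your proof matches the paper.
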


\begin{proof} 
For a leafwise path $\gamma$ from $x$ to $y$, choosing small transversals $S$ through $x$ and $T$ through $y$, the induced holonomy germ $\hol(\gamma): (S, x)\to (T, y)$ preserves the integral affine structures on the transversals. Using transverse integral affine charts, we obtain a
a germ of a diffeomorphism $\R^q\to\R^q$ around the origin which preserves the standard integral affine structure. Such germs are clearly linear.  
\end{proof}

Note that $\Hol^{\lin}(M, \cF)\subset\GL(\nu(\cF))$ may fail to be an embedding: one example is given by the the Kronecker foliation on the torus, with an irrational slope. This problem will soon disappear, once we assume properness of the foliation. 
\vskip 0.1 in

Next we compare the holonomy of the foliation, which we will refer to as $\cF$-holonomy, with the ones associated to the transverse integral affine. Recall from Section \ref{sec:IAS:orbifolds} that, associated to $\Lambda$, we consider:
\begin{itemize}
\item the linear holonomy $h^{\lin}: \Pi_1(M) \to \GL_\Lambda(\nu(\cF))$ with image denoted by 
\[ \Pi_1^{\lin}(M, \Lambda)\subset\GL_\Lambda(\nu(\cF)).\]
\item the affine holonomy $h^{\Aff}: \Pi_1(M)\to \Aff_\Lambda(\nu(\cF))$ with image denoted by 
\[ \Pi_1^{\Aff}(M, \Lambda)\subset \Aff_\Lambda(\nu(\cF)).\]
\end{itemize}
While the holonomies associated to $\Lambda$ are defined on $\Pi_1(M)$, the $\cF$-holonomy is defined on $\Mon(M, \cF)$. Hence, to compare the two, we will use the tautological map sending the leafwise homotopy class of a leafwise path to its homotopy class as a path in $M$:
\[ i_{*}: \Mon(M, \cF)\to \Pi_1(M).\]

\begin{proposition}
\label{prop-lin-hol-fol-hol} For a transverse integral affine structure $\Lambda$ on a foliation $(M,\cF)$, its holonomies are related to the $\cF$-holonomy through the following commutative diagrams:
\[ \xymatrix{
\Pi_1(M) \ar[r]^-{ h^{\lin}} &\GL_\Lambda(\nu(\cF)) \\
\Mon(M, \cF)  \ar[u]^-{i_*}  \ar[ru]_{\hol^{\lin}} & } 
\qquad
\xymatrix{
\Pi_1(M) \ar[r]^-{ h^{\Aff}} & \Aff_\Lambda(\nu(\cF)) \\ 
\Mon(M, \cF)  \ar[u]^-{i_*}  \ar[ru]_{(0, \hol^{\lin})} & } 
\]
In particular, the $\cF$-holonomy sits inside both the linear and the affine holonomy, and $\Hol(M, \cF)$ sits as an immersed subgroupoid:
\[j: \Hol(M, \cF) \hookrightarrow \Pi_1^{\lin}(M, \Lambda),\qquad (0, j): \Hol(M, \cF) \hookrightarrow \Pi_1^{\Aff}(M, \Lambda).\]
\end{proposition}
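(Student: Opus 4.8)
The plan is to verify the two commutative triangles and then deduce the immersion statements. First I would recall the relevant definitions: the linear holonomy $h^{\lin}$ on $\Pi_1(M)$ is parallel transport for the flat connection $\nabla$ on $\nu(\cF)$ determined by $\Lambda$ (the local sections of $\Lambda$ being flat), while $\hol^{\lin}$ on $\Mon(M,\cF)$ is the linearization of the $\cF$-holonomy parallel transport, which by the discussion following \eqref{lin-hol-ref} is precisely the parallel transport of the Bott connection. So the first step is to observe that the Bott connection and the connection $\nabla$ induced by $\Lambda$ agree: this is exactly the content of Proposition \ref{prop-transv-int-affine-folklore}, since $\Lambda$ is locally spanned by closed $\cF$-basic $1$-forms, which are precisely the flat sections for both connections. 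Given $X\in\Gamma(\cF)$ and a section $\lambda\in\Gamma(\Lambda)$ one has $\nabla_X\lambda=0$ because $\lambda$ is closed and $\cF$-basic; hence $\nabla$ restricted to leaf directions is the Bott connection. Once the two connections coincide on $\cF$, parallel transport along a leafwise path computed either way gives the same isomorphism of normal fibers, which says exactly that $h^{\lin}\circ i_* = \hol^{\lin}$, commutativity of the left triangle.

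Next I would treat the affine triangle. Here the point is that the developing cocycle $\dev\colon\Pi_1(M)\to\nu(\cF)$ integrates the algebroid $1$-cocycle given by the projection $TM\to\nu(\cF)$ (Section \ref{sec:IAS:orbifolds}), and on a leafwise path this projection vanishes, since a leafwise path has velocity tangent to $\cF$. Concretely, using the integral formula $\dev([\gamma])=\int_0^1 h^{\lin}(\gamma_\epsilon)(\dot\gamma(\epsilon))\,\d\epsilon$ and the fact that $\dot\gamma(\epsilon)\in\cF$ maps to zero in $\nu(\cF)$, one gets $\dev(i_*(g))=0$ for every $g\in\Mon(M,\cF)$. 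Therefore the affine holonomy $h^{\Aff}=(\dev,h^{\lin})$ restricted along $i_*$ becomes $(0,\hol^{\lin})$, which is the right triangle. This also shows the composite $(0,\hol^{\lin})$ lands in $\Aff_\Lambda(\nu(\cF))$ as claimed, as it is the restriction of a morphism with that target.

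For the ``in particular'' statements, I would factor $\hol^{\lin}$ through the holonomy groupoid. By Theorem \ref{seq-mon-G-hol}, $\hol\colon\Mon(M,\cF)\to\Hol(M,\cF)$ is a surjective submersion of Lie groupoids, and by definition of the linear holonomy groupoid (Section \ref{ssec:foliation-groupoids}) $\hol^{\lin}$ factors as $\lin\circ\hol$ through $\Hol^{\lin}(M,\cF)$. Since $(M,\cF)$ carries a transverse integral affine structure, Proposition \ref{Mol:immersed1} gives that $\lin$ is bijective, so $\Hol(M,\cF)\cong\Hol^{\lin}(M,\cF)$ and the induced map $j\colon\Hol(M,\cF)\to\GL_\Lambda(\nu(\cF))$ is an immersion onto the subgroupoid $\Pi_1^{\lin}(M,\Lambda)$ (being the image of $h^{\lin}$, it lands there; being a bijective immersion of groupoids onto a subgroupoid, it realizes $\Hol(M,\cF)$ as an immersed one). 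The commutative triangles descend from $\Mon(M,\cF)$ to $\Hol(M,\cF)$ because $\hol$ is a surjective submersion and all maps involved factor through it; this yields the pair $(j,(0,j))$ of immersions into $\Pi_1^{\lin}(M,\Lambda)$ and $\Pi_1^{\Aff}(M,\Lambda)$ respectively. The main obstacle is the bookkeeping needed to check that the factorization through $\Hol(M,\cF)$ is well defined — i.e.\ that $h^{\lin}\circ i_*$ and $h^{\Aff}\circ i_*$ are constant on the fibers of $\hol$, equivalently that two leafwise paths with the same germ of holonomy have the same linear holonomy, which is immediate once one knows the holonomy is linear (Proposition \ref{Mol:immersed1}), so in fact no serious difficulty remains beyond carefully invoking the earlier results.
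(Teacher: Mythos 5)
Your proof is correct and follows essentially the same route as the paper: the first triangle is the statement that the $\Lambda$-connection restricted to leafwise directions is the Bott connection (you check it on a local frame of $\Lambda$ in $\nu^*(\cF)$, the paper dually on a frame in $\nu(\cF)$), and the second triangle reduces to the vanishing of $\dev$ on leafwise paths, which both you and the paper obtain from the fact that the integrated cocycle is the projection $TM\to\nu(\cF)$, zero on $\cF$. Your explicit factorization of the ``in particular'' part through $\Hol(M,\cF)$ via Theorem \ref{seq-mon-G-hol} and Proposition \ref{Mol:immersed1} is also fine (just note that $j$ is an injective immersion \emph{into} $\Pi_1^{\lin}(M,\Lambda)$, not onto it).
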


\begin{proof} 
For the  commutativity of the first diagram it suffices to check that the flat connection $\nabla$ on $\nu(\cF)$, whose parallel transport gives rise to $h^{\lin}$, when computed on vectors tangent to $\cF$, becomes the Bott $\cF$-connection, whose parallel transport gives rise to $\hol^{\lin}$. In other words, that we have:
\[ \nabla_{V}(\overline{X})= \overline{[V, X]},\quad \forall\ V\in \Gamma(\cF),\ \overline{X}\in \Gamma(\nu(\cF)).\] 
This is a local statement that follows right away using local vector fields $X^1, \ldots, X^q$ spanning the integral lattice and such that $[V, X^i]\in \Gamma(\cF)$ whenever $V\in \Gamma(\cF)$. 

The commutativity of the second diagram follows from the first one and the remark that the developing map $\dev: \Pi_1(M)\to \nu(\cF)$ vanishes on the image of $i_*$. To prove the remark observe that the projection $TM\to \nu(\cF)$, an algebroid 1-cocyle whose integration is $\dev$, is zero on the sub-algebroid $\cF\subset TM$; hence $h^{\Aff}\circ i_*$, as a groupoid cocycle integrating the zero algebroid cocycle, 
must be trivial. 
\end{proof}

% It suffices to check this locally. But then we can choose $X^1, \ldots, X^q$ local vector fields spanning the integral lattice and such that $[V, X^i]\in \Gamma(\cF)$ whenever $V\in \Gamma(\cF)$,%  so the claim follows.
%
% Next we claim that developing map $\dev: \Pi_1(M)\to \nu(\cF)$ vanishes on the image of $i_*$, so the commutativity of the second diagram follows from the first one. To prove this claim, we observe that the projection $TM\to \nu(\cF)$ viewed as an algebroid 1-cocyle, whose integration is $\dev$, when restricted to the sub-algebroid $\cF\subset TM$ is zero. Since $h^{\Aff}\circ i_*$ is a groupoid cocycle integrating the trivial algebroid cocycle, it must be trivial.

\subsection{Lifting to the linear holonomy cover}
As in Section \ref{sec:IAS:orbifolds}, one can be more concrete by fixing
\begin{itemize}
\item a base point $x\in M$, and 
\item a $\Z$-basis $\mathfrak{b}_\Lambda=\{\lambda_1, \ldots, \lambda_q\}$ for $\Lambda_{x}$.
\end{itemize}
Then one can represent the $\cF$-holonomy at $x$ as a map 
\[ \hol^\lin|_x:\pi_1(S,x)\to \GL_\Z(\R^q),\]
and similarly for the holonomies of $\Lambda$, $h^{\lin}|_x$ and $h^{\Aff}|_x$, defined on $\pi_1(M, x)$ (see (\ref{eq-hol-explic-TIAS})). 
The diagrams in Proposition \ref{prop-lin-hol-fol-hol} become the following diagrams: 
\[ 
\xymatrix{
\pi_1(M, x) \ar[r]^{h^{\lin}|_{x}} &\GL_\Z(\R^q) \\
\pi_1(L, x)  \ar[u]_{i_*} \ar[ru]_{\hol^{\lin}|_{x}} & },\qquad
\xymatrix{
\pi_1(M, x) \ar[r]^{h^{\Aff}|_{x}} &\Aff_\Z(\R^q) \\
\pi_1(L, x)  \ar[u]_{i_*} \ar[ru]_{(0,\hol^{\lin}|_{x})} & }
\]

The images of $h^{\lin}|_x$ and $h^{\Aff}|_x$ will be denoted by 
\[ \Gamma^{\lin}\subset\GL_\Z(\R^q),\quad \Gamma^{\Aff}\subset\Aff_\Z(\R^q),\]
respectively. These groups, being quotients of $\pi_1(M, x)$, give rise to a sequence of covering spaces endowed with pull-back foliations:
\[ \xymatrix{ (\widetilde{M},\widetilde{\cF})\ar[r] & (M^{\Aff},\cF^{\Aff}) \ar[r] & (M^{\lin},\cF^{\lin}) \ar[r] & (M,\cF)} \]
Each of these foliations has a (pull-back) transverse integral affine structure. Using the base point $x$, we can identify each of these spaces with the source fiber at $x$ of the corresponding groupoids:
\[ \widetilde{M}=\Pi_1(M)(x,-),\quad M^{\Aff}=\Pi^{\Aff}_1(M, \Lambda)(x,-),\quad M^{\lin}=\Pi^{\lin}_1(M, \Lambda)(x,-). \]

\begin{remark}
One can also use the basis $\mathfrak{b}_\Lambda$ to obtain a more concrete model for $M^{\lin}$, as the connected component
through $(x, \mathfrak{b}_\Lambda)$ of the $\Lambda$-frame bundle:
\[ \textrm{Fr}(\nu(\cF), \Lambda):=  \{(x, v_1, \ldots, v_q): x\in M, v_1, \ldots , v_q- \textrm{basis\ of}\ \Lambda_x \}\subset \textrm{Fr}(\nu(\cF)).\]
We will not use this description in what follows, but it provides some geometric insight into the linear holonomy cover and actions on them. Incidentally, it also shows that $\Pi_1^{\lin}(M)$ is the unit connected component of $\GL_\Lambda(\nu(\cF))$.
\end{remark}

Next, we state the main properties of the foliation $(M^{\lin},\cF^{\lin})$. An entirely similar result holds for $(M^{\Aff},\cF^{\Aff})$, but we leave the details to the reader.

\begin{lemma} 
\label{lem:linear:hol:cover}
If $p:M^{\lin}\to M$ is the covering projection, then the foliation $(M^{\lin}, \cF^{\lin})$ has the following properties:
\begin{enumerate}[(i)]
\item it has trivial $\cF$-holonomy;
\item the action of $\Gamma^{\lin}$ on $M^{\lin}$ takes leaves to leaves;
\item each leaf $L'$ of $\cF^\lin$ is isomorphic to the $\cF$-holonomy cover of a leaf $L$ of $\cF$, with covering projection the restriction of 
$p: M^{\lin}\to M$; 
\item there is a free, possibly non-proper, action of $\Hol(M,\cF)\tto M$ on $M^{\lin}\to M$ and $\Hol(M^{\lin},\cF^{\lin})\tto M^{\lin}$ is isomorphic to the resulting action groupoid $\Hol(M,\cF)\ltimes M^{\lin}\tto M^{\lin}$;
\end{enumerate}
\end{lemma}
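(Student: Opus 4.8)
The plan is to establish the four assertions of Lemma \ref{lem:linear:hol:cover} in order, exploiting the identification $M^{\lin}=\Pi_1^{\lin}(M,\Lambda)(x,-)$ and the commutative diagrams of Proposition \ref{prop-lin-hol-fol-hol}.

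For (i): the pullback foliation $\cF^{\lin}$ on the cover $M^{\lin}$ has leafwise homotopy described by $\Mon(M^{\lin},\cF^{\lin})$, and its $\cF$-holonomy is, by Proposition \ref{Mol:immersed1} and the fact that the transverse integral affine structure pulls back, its linear holonomy $\hol^{\lin}$ on $M^{\lin}$. By construction of $M^{\lin}$ as the cover associated to $\ker(h^{\lin}|_x)$ — equivalently, as the source fiber $\Pi_1^{\lin}(M,\Lambda)(x,-)$ — a leafwise loop in $M^{\lin}$ based at a point $\tilde{x}$ lifts a leafwise loop $\gamma$ in $M$ whose linear holonomy class is trivial, since otherwise its image in $\Pi_1^{\lin}(M,\Lambda)$ would be a non-trivial element of the isotropy group fixing $\tilde x$ and the lift would not close up. First I would make this precise: under $i_*:\Mon(M^{\lin},\cF^{\lin})\to\Pi_1(M^{\lin})$ the composite $h^{\lin}\circ i_*$ factors, by Proposition \ref{prop-lin-hol-fol-hol}, through $\hol^{\lin}$, and on $M^{\lin}$ the linear holonomy representation of $\pi_1(M^{\lin},\tilde x)$ is trivial by the very definition of the cover; hence $\hol^{\lin}$ vanishes on $\pi_1(L',\tilde x)$ for every leaf $L'$, i.e.\ the $\cF$-holonomy of $\cF^{\lin}$ is trivial.

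For (ii): this is immediate because the deck action of $\Gamma^{\lin}=\Pi_1^{\lin}(M,\Lambda)$ commutes with the covering projection $p:M^{\lin}\to M$, which takes leaves to leaves, and any deck transformation is an automorphism of the pullback structure $(M^{\lin},\cF^{\lin},\Lambda^{\lin})$. For (iii): fixing a leaf $L$ of $\cF$ through a point $y$ with $p(\tilde y)=y$, the connected component $L'$ of $\cF^{\lin}$ through $\tilde y$ maps via $p$ onto $L$ as a covering, and the subgroup of $\pi_1(L,y)$ corresponding to this cover is exactly $\ker(\hol^{\lin}|_y)$ — by (i) the $\cF$-holonomy of $L'$ is trivial, and by minimality (Theorem \ref{seq-mon-G-hol}) this forces $L'$ to be the $\cF$-holonomy cover of $L$; I would spell out that the deck group of $L'\to L$ is $\pi_1(L,y)/\ker(\hol^{\lin}|_y)=\Hol_y(M,\cF)$, using Proposition \ref{prop-lin-hol-fol-hol} to identify the relevant kernels.

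For (iv): I would construct the action of $\Hol(M,\cF)\tto M$ on $p:M^{\lin}\to M$ directly using the immersed inclusion $j:\Hol(M,\cF)\hookrightarrow\Pi_1^{\lin}(M,\Lambda)$ of Proposition \ref{prop-lin-hol-fol-hol}: an arrow $g:y\to z$ in $\Hol(M,\cF)$, viewed through $j$ as an element of $\Pi_1^{\lin}(M,\Lambda)(z,y)$, acts on the source fiber $M^{\lin}=\Pi_1^{\lin}(M,\Lambda)(x,-)$ by postcomposition, sending $\tilde y\in p^{-1}(y)$ to $j(g)\cdot\tilde y\in p^{-1}(z)$; this is well-defined and covers the target map. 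Freeness follows because $j$ is injective and the isotropy groups of $\Pi_1^{\lin}(M,\Lambda)$ act freely on its source fibers (they are the deck groups). Finally, combining (i) and (iii), the holonomy groupoid $\Hol(M^{\lin},\cF^{\lin})$ consists of leafwise paths in $M^{\lin}$ modulo holonomy, and since each such leaf is a holonomy cover of a leaf downstairs and $\cF^{\lin}$ has trivial holonomy, such a groupoid is canonically the pullback groupoid $p^*\Hol(M,\cF)$, which by the previous paragraph is exactly the action groupoid $\Hol(M,\cF)\ltimes M^{\lin}$. The main obstacle I anticipate is (iv): one must be careful that $j$ is only an immersion, not an embedding (as the Kronecker example shows), so the action need not be proper and one should check smoothness of the action and of the resulting identification of groupoids by working locally, via the description of $M^{\lin}$ as a connected component of the $\Lambda$-frame bundle and the fact that $\Pi_1^{\lin}(M,\Lambda)$ is the unit component of $\GL_\Lambda(\nu(\cF))$.
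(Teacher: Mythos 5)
Parts (i)--(iii) of your proposal are correct and follow essentially the same route as the paper: (i) is the factorization of the foliation holonomy through $h^{\lin}\circ i_*$ combined with Proposition \ref{Mol:immersed1}, and (iii) is the covering-space computation identifying the subgroup of $\pi_1(L,y)$ classifying $L'\to L$ with $\ker(h^{\lin}\circ i_*)=\ker(\hol^{\lin}|_y)$. (A small remark on (iii): the appeal to minimality via Theorem \ref{seq-mon-G-hol} only yields one inclusion --- trivial holonomy of $L'$ shows $L'$ is at least as large as the holonomy cover --- but your direct identification of the classifying subgroup, which is exactly what the paper does, settles the matter, so this is harmless.)

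There is, however, a genuine gap in (iv), at the final identification. The claim that $\Hol(M^{\lin},\cF^{\lin})$ ``is canonically the pullback groupoid $p^*\Hol(M,\cF)$'' is false: the pullback groupoid $M^{\lin}\times_M\Hol(M,\cF)\times_M M^{\lin}$ has orbits the full preimages $p^{-1}(L)$, which are typically disconnected, and isotropy groups isomorphic to $\Hol_y(M,\cF)$, whereas $\Hol(M^{\lin},\cF^{\lin})$ has orbits the connected components of $p^{-1}(L)$ and, by (i), trivial isotropy. For the same reason the pullback groupoid is not the action groupoid either --- the action groupoid is a wide but non-full subgroupoid of it. What your argument never establishes, and what is the real content of (iv), is that the orbits of the $\Hol(M,\cF)$-action on $M^{\lin}$ are \emph{precisely the leaves} of $\cF^{\lin}$; a priori the orbit of $\tilde y$ is only some subset of $p^{-1}(L)$ surjecting onto $L$. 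The paper supplies this by computing that the tangent space to the orbit through $\gamma$, namely the image of $\d R_\gamma$ where $R_\gamma(a)=j(a)\gamma$, projects under $\d p$ onto $T\cF$ and hence coincides with $T_\gamma\cF^{\lin}$ by dimension count; the connected immersed orbits are then the leaves, so $\Hol(M,\cF)\ltimes M^{\lin}$ is an s-connected integration of $\cF^{\lin}$ with injective anchor (by freeness), and Theorem \ref{seq-mon-G-hol} forces the canonical surjection onto $\Hol(M^{\lin},\cF^{\lin})$ to be an isomorphism. (Alternatively one can argue topologically: lifts of leafwise paths are leafwise, so the orbit of $\tilde y$ lies in its leaf $L'$, and any $\tilde z\in L'$ is the endpoint of the lift of a leafwise path $\gamma$, hence equals $j([\gamma]_{\hol})\cdot\tilde y$, the point being that this is well defined on holonomy classes precisely by the injectivity of $j$ from Proposition \ref{prop-lin-hol-fol-hol}.) One of these arguments must be supplied for (iv) to close.
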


\begin{proof} Property (i) should be clear since the transverse integral affine structure $p^*\Lambda$ on $(M^{\lin},\cF^{\lin})$ has linear holonomy map the composition of the linear holonomy map of $\Lambda$ on $(M,\cF)$ with $p_*: \pi_1(M^{\lin})\to \pi_1(M)$. Using Proposition \ref{prop-lin-hol-fol-hol}, we conclude that $(M^{\lin}, \cF^{\lin})$ must have trivial holonomy. 

Property (ii) follows from general properties of covers and pullback foliations.

For the proof of property (iii), we consider the s-fiber $\Pi_1^{\lin}(x, -)$ above $x$, together with the target map
\[ p: \Pi_1^{\lin}(x, -)\to M,\]
as a model for the covering $p: M^{\lin}\to M$. From the homotopy exact sequence of the $\Gamma^{\lin}$-cover $p: M^{\lin}\to M$, we obtain the
following short exact sequence:
\[ \xymatrix{ 0\ar[r] & \pi_1(M^{\lin})\ar[r] & \pi_1(M)\ar[r]^-{h^{\lin}} & \Gamma^{\lin}\ar[r] & 0}.\] 
For any embedded sub-manifold $L\subset M$ and for any connected component $L'$ of $p^{-1}(L)$, the restriction 
$p|_{L'}: L'\to L$ is a covering projection with group the image of $i_*(\pi_1(L))$ by $h^{\lin}$. In particular, $\pi_1(L')$ is isomorphic to the kernel of the composition $h^{\lin}\circ~i_*$ which equals the linear holonomy group of $(M, \cF)$ by Proposition \ref{prop-lin-hol-fol-hol}. This proves 
(iii) in the case where the leaves are embedded. With some care, the argument can be adapted to immersed leaves $L\subset M$. An alternative proof of (iii) follows also from the proof of (iv), to which we now turn.

In the model above, the action of $\Hol(M,\cF)$ on $M^{\lin}$ is induced from the inclusion $j: \Hol(M,\cF)\hookrightarrow \Pi_1^{\lin}$ (see Proposition \ref{prop-lin-hol-fol-hol}): this clearly gives a free, left, action 
\[ \Hol(M,\cF) \times \Pi_1^{\lin}(x, -) \to \Pi_1^{\lin}(x, -), \ \ (a, \gamma)\mapsto j(a)\gamma.\]
The orbit of the action through any $\gamma\in \Pi_1^{\lin}(x, -)$ is the image of the immersion:
\[ R_{\gamma}: \Hol(M,\cF)(x', -) \to \Pi_1^{\lin}(x, -), \ R_{\gamma}(a)= j(a)\gamma.\]
We claim that the tangent space at $\gamma$ to such an orbit coincides with $T_{\gamma}\cF^{\lin}$. Since these orbits are smooth, connected, immersed submanifolds of $M^{\lin}= \Pi_1^{\lin}(x, -)$, 
 it will follow that they are precisely the leaves of $\cF^{\lin}$. To prove the claim we compute
\[ (\d p)(\d R_{\gamma}(T_a\Hol(M,\cF)(x', -)))= (\d t)_{\gamma}(T_{\gamma} \Hol(M,\cF)(x, -))= T_{t(a)}\cF. \]
This shows that $\d R_{\gamma}(T_a\Hol(M,\cF)(x', -))\subset T_{j(\gamma)a}\cF^{\lin}$; by a dimension counting, this inclusion must be an equality, 
proving the claim. 
% and the claim follows. 
Therefore $\Hol(M,\cF)\ltimes M^{\lin}$ is a groupoid over $M^{\lin}$ integrating $\cF^{\lin}$, so it comes with a groupoid submersion 
\[ \Hol(M,\cF)\ltimes M^{\lin}\to \Hol(M^{\lin}, \cF^{\lin}).\]
Composing this map with the anchor $(s, t):  \Hol(M^{\lin}, \cF^{\lin})\to M^{\lin}\times M^{\lin}$ gives the anchor $(s, t):\Hol(M,\cF)\ltimes M^{\lin}\to M^{\lin}\times M^{\lin}$, which is injective by freeness of the action. Therefore the submersion is actually a diffeomorphism and (iv) follows.
\end{proof}

\subsection{Proper Foliations: a Molino type Theorem}
\label{Proper Foliations: a Molino Type Theorem}
In the proper case we have:
\begin{lemma} If $(M, \cF)$ is a proper foliation with a transverse integral affine structure then the immersions $\Hol(M, \cF)\hookrightarrow \GL(\nu(\cF))$, $\Hol(M, \cF) \hookrightarrow \Pi_1^{\lin}(M)$ and $\Hol(M, \cF) \hookrightarrow \Pi_1^{\Aff}(M)$ are embeddings of Lie groupoids.
\end{lemma}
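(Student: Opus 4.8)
The statement asserts that, for a \emph{proper} foliation $(M,\cF)$ equipped with a transverse integral affine structure $\Lambda$, the canonical immersions
\[
\Hol(M,\cF)\hookrightarrow \GL(\nu(\cF)),\qquad
\Hol(M,\cF)\hookrightarrow \Pi_1^{\lin}(M),\qquad
\Hol(M,\cF)\hookrightarrow \Pi_1^{\Aff}(M)
\]
are actually closed embeddings of Lie groupoids, not merely injective immersions. I would organize the argument around the observation that all three maps fit into the commutative picture of Proposition \ref{prop-lin-hol-fol-hol}, and that the linear holonomy map $\lin\colon\Hol(M,\cF)\to \GL_\Lambda(\nu(\cF))$ is bijective onto $\Hol^{\lin}(M,\cF)$ by Proposition \ref{Mol:immersed1}; moreover the affine immersion factors as $\Hol(M,\cF)\xhookrightarrow{(0,\lin)}\Aff_\Lambda(\nu(\cF))$, and composing with the projection $\Aff_\Lambda(\nu(\cF))\to\GL_\Lambda(\nu(\cF))$ recovers $\lin$. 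Hence it suffices to prove that the single map $\lin\colon \Hol(M,\cF)\hookrightarrow \GL(\nu(\cF))$ is an embedding; the factorizations then immediately upgrade the other two immersions to embeddings as well (an immersion is an embedding iff its composition with a closed embedding is, and the three targets differ from one another by the closed-embedding relations $\Pi_1^{\lin}\subset\GL(\nu(\cF))$, $\Pi_1^{\Aff}\subset\Aff(\nu(\cF))$).

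First I would fix an arrow $g\colon x\to y$ in $\Hol(M,\cF)$ and produce a chart. Choose small transversals $T_x\ni x$, $T_y\ni y$ adapted to transverse integral affine coordinates. Since $\cF$ is proper, Theorem \ref{thm-gen-model} (or, in the s-proper case, Theorem \ref{local-Reeb-stab} and its groupoid form) gives a normal form for $\cF$ on a saturated neighbourhood $U$ of the leaf through $x$: the restriction $\Hol(M,\cF)|_U$ is Morita equivalent to the action groupoid $\Gamma_x\ltimes T_x$ for the \emph{finite} holonomy group $\Gamma_x=\Hol_x(M,\cF)$ acting \emph{linearly} (by Bochner, using that $\Lambda$ forces linear holonomy, Proposition \ref{Mol:immersed1}) on $T_x\cong\nu_x(\cF)$. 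The key points extracted from this normal form are: (a) the holonomy groups are finite and act linearly, so $\Hol(M,\cF)|_U$ is a proper \'etale-type groupoid with closed orbits; (b) on the étale model, the linear-holonomy map sends an arrow to the \emph{linearization at the fixed point} of its germ, and for a linear action the germ is determined by its linearization, so $\lin$ is injective on $\Hol(M,\cF)|_U$ and its image in $\GL(\nu(\cF))|_U$ is exactly the image of the finite group $\Gamma_x$ — a closed, indeed properly embedded, submanifold.

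The main work is then to glue these local models into a global statement: an injective immersion of manifolds $f\colon P\to Q$ is an embedding provided it is proper, equivalently provided its image is locally closed and $f$ is a homeomorphism onto the image. Here $P=\Hol(M,\cF)$ and $Q=\GL(\nu(\cF))$. I would argue that $\lin$ is a \emph{proper} map: take a compact $K\subset \GL(\nu(\cF))$, project it to $M\times M$ via $(s,t)$, and use that $\Hol(M,\cF)$ is proper (Theorem \ref{prop-fol-crit-C}, since $(M,\cF)$ is of proper type because it carries $\Lambda$ — wait, properness of the \emph{foliation} is exactly the hypothesis) so that $(s,t)^{-1}(\text{compact})$ is compact in $\Hol(M,\cF)$; combined with the injectivity of $\lin$ on each fibre $\Hol(M,\cF)(x,y)$ (which is discrete, a torsor under the finite $\Gamma_x$) this shows $\lin^{-1}(K)$ is compact. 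A proper injective immersion between (Hausdorff, second countable) manifolds is a closed embedding, and this handles $\GL(\nu(\cF))$; the factorizations above then give the embeddings into $\Pi_1^{\lin}(M)$ and $\Pi_1^{\Aff}(M)$, which sit as (immersed, hence by the same properness argument embedded) Lie subgroupoids of $\GL(\nu(\cF))$ and $\Aff(\nu(\cF))$ respectively.

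The step I expect to be the genuine obstacle is the properness of $\lin$, or rather making the local normal-form information interact cleanly with the global topology of $\GL(\nu(\cF))$: the subtlety is the one flagged right after Proposition \ref{Mol:immersed1} (the Kronecker foliation shows immersed need not be embedded without properness), so one must use the proper-type hypothesis essentially — concretely, to guarantee that orbits of $\Hol(M,\cF)$ are embedded leaves and that the isotropy groups are finite, which is exactly what Lemma \ref{lemma-gen-pr-gpds} and Theorem \ref{thm-gen-model} provide. I would double-check the edge case where a leaf is only immersed (not embedded) in $M$: there one replaces the transversal argument by restricting to a saturated neighbourhood given by Theorem \ref{thm-gen-model} and running the same compactness argument on the (proper) gauge-groupoid model $(P\times_V P)/\Gamma$, where everything is manifestly Hausdorff and the map to $\GL(\nu(\cF))$ is proper because $\Gamma$ is finite. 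Once properness is in hand, the rest is the standard fact that a proper injective immersion is a closed embedding, and the Lie-groupoid structure maps are automatically compatible since they were already on the immersed level.
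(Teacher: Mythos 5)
Your proposal is correct and, at its core, is the same argument as the paper's: the properness of the foliation makes $\Hol(M,\cF)$ a proper groupoid (Theorem \ref{prop-fol-crit-C}), so any groupoid morphism out of it covering the identity is automatically a proper map (its preimage of a compact $K$ is closed in the compact $(s,t)^{-1}((s,t)(K))$), and a proper injective immersion is a closed embedding. The normal-form and local-model machinery you invoke is not needed for this step; the paper dispenses with all three maps at once by the single properness remark.
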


\begin{proof} 
By Proposition \ref{prop-fol-crit-C}, the properness of $(M, \cF)$ implies that $\Hol(M, \cF)$ is a proper groupoid. It then suffices to remark that if $\cH$ is a proper groupoid and $F: \cH\to \cG$ is a morphism of Lie groupoids over $M$ covering the identity on the base, then $F$ is automatically a proper map: for $K\subset \cG$ compact, $F^{-1}(K)$ is closed inside the compact $\cH(s(K), t(K))$), hence it must be compact. In  particular, if $F$ is an injective immersion, than it is automatically a closed embedding. % and the lemma follows.
\end{proof}

As explained in Example \ref{ex-trIAS-orbifolds}, under the present assumptions, $B=M/\cF$ is an integral affine orbifold. 
Our final result shows that $B$ is actually a good orbifold:

\begin{theorem} 
\label{Molino-theorem}
If $(M, \cF)$ is a foliation of proper type with a transverse integral affine structure, then the linear holonomy cover $(M^{\lin}, \cF^{\lin})$ is simple and carries a transverse integral affine structure. Hence, its space of leaves
\[ B^{\lin}:= M^{\lin}/\cF^{\lin}\]
is a smooth integral affine manifold. Moreover, the action of $\Gamma^{\lin}$ on $M^{\lin}$ descends to a proper action on 
$B^{\lin}$ by integral affine transformations and $M^{\lin}$ yields a Morita equivalence:
\[
\xymatrix{
 \Hol(M, \cF) \ar@<0.25pc>[d] \ar@<-0.25pc>[d]  & \ar@(dl, ul) & M^{\lin}\ar[dll]^-{p}\ar[drr]_-{q} & \ar@(dr, ur)   & B^{\lin}\rtimes \Gamma^{\lin} \ar@<0.25pc>[d] \ar@<-0.25pc>[d]\\
M & & & & B^{\lin}  }
\]
In particular, we have an isomorphism of integral affine orbifolds:
\[ M/\cF\cong B^{\lin}/\Gamma^{\lin}. \]
\end{theorem}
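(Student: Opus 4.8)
\textbf{Proof plan for Theorem \ref{Molino-theorem}.}

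The plan is to package together the facts already established about the linear holonomy cover $(M^{\lin},\cF^{\lin})$ in Lemma \ref{lem:linear:hol:cover} with the properness hypothesis, and then to verify the three assertions in turn: simplicity of $(M^{\lin},\cF^{\lin})$, properness of the descended $\Gamma^{\lin}$-action on $B^{\lin}$, and the existence of the displayed Morita equivalence. First I would recall that $(M^{\lin},\cF^{\lin})$ has trivial $\cF$-holonomy by Lemma \ref{lem:linear:hol:cover}(i) and carries the pulled-back transverse integral affine structure $p^*\Lambda$; since $(M,\cF)$ is of proper type, by Theorem \ref{prop-fol-crit-C} the holonomy groupoid $\Hol(M,\cF)$ is proper, and by Lemma \ref{lem:linear:hol:cover}(iv) we have $\Hol(M^{\lin},\cF^{\lin})\cong\Hol(M,\cF)\ltimes M^{\lin}$. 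The key point for simplicity is that this action groupoid is \emph{proper}: although the $\Hol(M,\cF)$-action on $M^{\lin}$ is a priori non-proper, for the \emph{pull-back along a surjective submersion} of a proper groupoid properness is preserved (the map $(s,t):\Hol(M,\cF)\ltimes M^{\lin}\to M^{\lin}\times M^{\lin}$ factors the proper map $(s,t)$ of $\Hol(M,\cF)$ through a submersion, hence is proper); combined with trivial holonomy this forces the leaves of $\cF^{\lin}$ to be closed embedded submanifolds which are exactly the fibers of a submersion $q:M^{\lin}\to B^{\lin}:=M^{\lin}/\cF^{\lin}$. Thus $(M^{\lin},\cF^{\lin})$ is a simple foliation (Example \ref{ex-simple-foliations}), $B^{\lin}$ is a smooth manifold, and by Example \ref{ex-trIAS-simple-fol} the transverse integral affine structure $p^*\Lambda$ descends to an honest integral affine structure on $B^{\lin}$.

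Next I would treat the $\Gamma^{\lin}$-action. By Lemma \ref{lem:linear:hol:cover}(ii) the deck action of $\Gamma^{\lin}$ on $M^{\lin}$ sends leaves to leaves and preserves $p^*\Lambda$, hence it descends to an action on $B^{\lin}$ by integral affine transformations. For properness: one observes that the action groupoid $\Gamma^{\lin}\ltimes M^{\lin}$ and $\Hol(M,\cF)\ltimes M^{\lin}$ are two foliation groupoids on $M^{\lin}$ with orbit spaces $M^{\lin}/\Gamma^{\lin}=M$ and $B^{\lin}$ respectively, and they commute; I would show that $M^{\lin}$, with the target maps $p$ to $M$ (equivalently $\Hol(M,\cF)$) and $q$ to $B^{\lin}$ (equivalently $\Gamma^{\lin}\ltimes B^{\lin}$), is a Morita bibundle, the $\Hol(M,\cF)$-action being principal with quotient $B^{\lin}$ (this is exactly the content of $\cF^{\lin}$ being simple with leaf space $B^{\lin}$ and $\Hol(M,\cF)\ltimes M^{\lin}\cong\Hol(M^{\lin},\cF^{\lin})\simeq M^{\lin}\times_{B^{\lin}}M^{\lin}$) and the $\Gamma^{\lin}$-action being free with quotient $M$. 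Since Morita equivalence preserves properness (item (iii) in Section \ref{ssec:Morita}) and $\Hol(M,\cF)$ is proper, the groupoid $\Gamma^{\lin}\ltimes B^{\lin}$ is proper, i.e. the $\Gamma^{\lin}$-action on $B^{\lin}$ is proper. The displayed Morita equivalence is then just the assembled statement, and the final isomorphism of integral affine orbifolds $M/\cF\cong B^{\lin}/\Gamma^{\lin}$ follows because Morita equivalent atlases define the same orbifold (Definition \ref{def-orbifold-fol} and the subsequent discussion) and, by Definition \ref{def:IAorbifold} together with the remark after it, the integral affine structure is transported along the equivalence independently of choices; concretely both sides are $M^{\lin}$ modulo the two commuting actions, carrying $p^*\Lambda$.

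The main obstacle I anticipate is the properness argument for the $\Hol(M,\cF)$-action on $M^{\lin}$, or equivalently showing that the leaves of $\cF^{\lin}$ are closed and the leaf space Hausdorff: the deck group $\Gamma^{\lin}$ need not be finite, so one cannot simply quote local Reeb stability on $M^{\lin}$ directly. The clean way around this is exactly the submersion-pullback observation above — properness of a Lie groupoid is stable under pulling back along a surjective submersion with the stated caveat that one restricts to s-connected components only at the end (cf. the warning in Section \ref{sec:foliations} item (i)), but here $\Hol(M^{\lin},\cF^{\lin})$ \emph{is} the full pull-back by Lemma \ref{lem:linear:hol:cover}(iv), so no s-connectedness issue arises. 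A secondary technical point, which I would handle with a short paragraph, is the passage between immersed versus embedded leaves when invoking Lemma \ref{lem:linear:hol:cover}(iii); this is routine and can be dealt with by restricting to saturated opens as in the proof of Theorem \ref{thm-gen-model}. Everything else is bookkeeping with Morita bibundles and the dictionary of Section \ref{ssec:IAS-orbifol}.
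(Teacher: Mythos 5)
Your argument is correct and follows essentially the same route as the paper's proof: properness of $\Hol(M,\cF)$ (via Theorem \ref{prop-fol-crit-C}) combined with Lemma \ref{lem:linear:hol:cover} makes $\Hol(M^{\lin},\cF^{\lin})\cong\Hol(M,\cF)\ltimes M^{\lin}$ proper, trivial holonomy then forces $\cF^{\lin}$ to be simple with smooth integral affine leaf space, and $M^{\lin}$ serves as the Morita bibundle from which properness of the $\Gamma^{\lin}$-action on $B^{\lin}$ follows by Morita invariance of properness. One small correction: Lemma \ref{lem:linear:hol:cover}(iv) identifies $\Hol(M^{\lin},\cF^{\lin})$ with the \emph{action} groupoid $\Hol(M,\cF)\ltimes M^{\lin}$, not with the full pull-back $p^*\Hol(M,\cF)$ (of which it is only an open-and-closed subgroupoid, $p$ being a covering), but this does not affect your conclusion, since the action groupoid of a proper groupoid acting along any map is automatically proper --- which is the one-line form of the argument the paper itself uses ("the free action is also proper").
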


\begin{proof} 
By Lemma \ref{lem:linear:hol:cover} (iv), the holonomy groupoid of $(M^{\lin},\cF^{\lin})$ is proper and this foliation has trivial holonomy. Hence, it must be a simple foliation with smooth orbit space $B^{\lin}$. Equivalently, now the free action of $\Hol(M, \cF)$ on $M^{\lin}$ is also proper, hence $B^{\lin}= M^{\lin}/\Hol(M, \cF)$ is smooth and $q: M^{\lin}\to B^{\lin}$ is a principal $\Hol(M,\cF)$-bundle. By Lemma \ref{lem:linear:hol:cover} (ii) we have an action of $\Gamma^{\lin}$ on $B^{\lin}$ and $p: M^{\lin}\to M$ is a principal $B^{\lin}\rtimes \Gamma^{\lin}$-bundle. 

The two actions on $M^{\lin}$ clearly commute, hence we obtain a Morita equivalence. Since properness is a Morita invariant, it follows that the action of $\Gamma^{\lin}$ on $B^{\lin}$ must be proper. The properties concerning the integral affine structure are straightforward.
\end{proof}

\begin{remark} 
There is a version of the previous theorem where the linear holonomy cover $M^{\lin}$ is replaced by the affine holonomy cover $M^{\Aff}$, giving rise to a similar Morita equivalence of $\Hol(M, \cF)\tto M$ with $B^{\Aff}\rtimes \Gamma^{\Aff}\tto B^{\Aff}$. Here $B^{\Aff}$ is the affine holonomy cover of the integral affine manifold $B^{\lin}$. This version allows us to view the developing map $\dev: M^{\Aff}\to \R^q$ as the composition of the projection $M^{\Aff}\to B^{\Aff}$ with the developing map $\dev: B^{\Aff}\to \R^q$ of the integral affine manifold $B$. The argument uses the affine version of Lemma \ref{lem:linear:hol:cover}. 
\end{remark}

Since a classical integral affine orbifold can always be obtained as the leaf space of a foliation of proper type with a transverse integral affine structure, we conclude:

\begin{corollary}
Any classical integral affine orbifold is a good orbifold.
\end{corollary}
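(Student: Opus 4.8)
The plan is to realize the given classical integral affine orbifold $(B,\Lambda)$ as the leaf space of a foliation of proper type carrying a transverse integral affine structure, and then to quote Theorem~\ref{Molino-theorem}, which rewrites any such leaf space as the quotient of a smooth integral affine manifold by a proper action of a discrete group — exactly the definition of a good orbifold.

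\textbf{Step 1: presenting $B$ by a foliation.} The hypothesis that $B$ is a \emph{classical} orbifold (effective atlases) is precisely what is needed to make a frame bundle construction produce a genuine manifold. Concretely, fix a Riemannian metric on $B$ and let $\pi\colon M\to B$ be the bundle of positively oriented orthonormal frames of the canonically oriented vector bundle $TB\oplus TB$, which has connected compact structure group $G=\mathrm{SO}(2q)$ with $q=\dim B$. In a local model $\R^q/\Gamma$ the finite group $\Gamma\hookrightarrow \mathrm O(q)$ acts diagonally on $G$ by $\gamma\mapsto\gamma\oplus\gamma\in G$, and this action is free; hence $M$ is a smooth manifold, $G$ acts on $M$ with finite isotropy groups, and $M/G\cong B$. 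By Proposition~\ref{fol-groupoid-isotropy}, the action groupoid $G\ltimes M$ is a Hausdorff foliation groupoid; it is s-connected (as $G$ is connected) and proper (as $G$ is compact), and the foliation $\cF$ it induces on $M$ — whose leaves are the $G$-orbits, all of dimension $\dim G$ — is regular with $M/\cF\cong B$. Thus $\cF$ is of proper type with leaf space $B$. Finally, under the identification $\nu(\cF)\cong\pi^*TB$ the pullback $\pi^*\Lambda\subset\pi^*T^*B$ becomes a lattice in $\nu^*(\cF)$ locally generated by closed $\cF$-basic $1$-forms (pullbacks of local generators of $\Lambda$), hence by Proposition~\ref{prop-transv-int-affine-folklore} a transverse integral affine structure $\Lambda_M$; by construction $(M/\cF,\Lambda_M)\cong(B,\Lambda)$ as integral affine orbifolds (cf.\ Example~\ref{ex-trIAS-orbifolds}).

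\textbf{Step 2: applying the Molino-type theorem.} Theorem~\ref{Molino-theorem} applies to $(M,\cF,\Lambda_M)$: the linear holonomy cover $M^{\lin}$ is a simple foliation whose leaf space $B^{\lin}$ is a smooth integral affine manifold, the discrete linear holonomy group $\Gamma^{\lin}$ acts properly on $B^{\lin}$ by integral affine transformations, and there is an isomorphism of integral affine orbifolds $M/\cF\cong B^{\lin}/\Gamma^{\lin}$. Composing with $M/\cF\cong B$ from Step~1 gives $B\cong B^{\lin}/\Gamma^{\lin}$, which by definition (the example on good orbifolds) exhibits $B$ as a good orbifold.

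The step I expect to be the main obstacle is the bookkeeping in Step~1: verifying that effectiveness of the atlas really forces $M$ to be a manifold and $\cF$ to be a regular foliation of proper type with leaf space exactly $B$, and checking that the transverse integral affine structure one puts on $(M,\cF)$ descends to the prescribed $\Lambda$ on $B$; everything after that is a direct citation of Theorem~\ref{Molino-theorem}. A variant of Step~1 avoids the explicit frame bundle by appealing to the standard fact that every effective orbifold is the leaf space of a foliation whose holonomy groupoid is an étale atlas, but a self-contained treatment still reduces to essentially the construction above.
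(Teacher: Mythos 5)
Your proposal is correct and follows essentially the same route as the paper: the paper's proof of this corollary is a one-line appeal to the fact that a classical integral affine orbifold can be realized as the leaf space of a foliation of proper type with a transverse integral affine structure, followed by Theorem \ref{Molino-theorem}. Your Step 1 simply makes that realization explicit via the (standard) frame bundle construction, and your Step 2 is exactly the paper's argument.
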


\end{document}